\newcounter{tmp}
\newtheorem{theorem}{Theorem}[section]
\newtheorem{proposition}[theorem]{Proposition}
\newtheorem{lemma}[theorem]{Lemma}
\newtheorem{corollary}[theorem]{Corollary}
\theoremstyle{definition}
\newtheorem*{claim*}{Claim}
\newtheorem{definition}[theorem]{Definition}
\newtheorem{remark}[theorem]{Remark}
\newtheorem{example}[theorem]{Example}
\begin{document}
\title{Definable $\mathrm{K}$-homology of separable C*-algebras}
\author{Martino Lupini}
\address{School of Mathematics and Statistics\\
Victoria University of Wellington\\
PO Box 600, 6140 Wellington, New Zealand}
\email{martino.lupini@vuw.ac.nz}
\thanks{The author was partially supported by the Marsden Fund Fast-Start
Grant VUW1816 from the Royal Society Te Ap\={a}rangi.}
\date{\today }
\subjclass[2020]{Primary 19K33, 54H05; Secondary 46M20, 46L80}
\keywords{$\mathrm{K}$-homology, $\mathrm{KK}$-theory, Universal Coefficient
Theorem, C*-algebra, definable group}

\begin{abstract}
In this paper we show that the $\mathrm{K}$-homology groups of a separable
C*-algebra can be enriched with additional descriptive set-theoretic
information, and regarded as \emph{definable groups}. Using a definable
version of the Universal Coefficient Theorem, we prove that the
corresponding \emph{definable }$\mathrm{K}$-homology is a finer invariant
than the purely algebraic one, even when restricted to the class of UHF
C*-algebras, or to the class of unital commutative C*-algebras whose
spectrum is a $1$-dimensional connected subspace of $\mathbb{R}^{3}$.
\end{abstract}

\maketitle









\section*{Introduction}

Given a compact metrizable space $X$, the group $\mathrm{Ext}\left( X\right) 
$ classifying extensions of the C*-algebra $C\left( X\right) $ by the
C*-algebra $K\left( H\right) $ of compact operators was initially considered
by Brown, Douglas, and Fillmore in their celebrated work \cite%
{brown_extensions_1977}. There, they showed that $\mathrm{Ext}\left(
-\right) $ is indeed a group, and that defining, for a compact metrizable
space $X$, 
\begin{equation*}
\mathrm{\tilde{K}}_{p}\left( X\right) :=\left\{ 
\begin{array}{ll}
\mathrm{Ext}\left( X\right) & \text{if }p\text{ is odd,} \\ 
\mathrm{Ext}\left( \Sigma X\right) & \text{if }p\text{ is even;}%
\end{array}%
\right.
\end{equation*}%
where $\Sigma X$ is the suspension of $X$, yields a (reduced) \emph{homology
theory }that satisfies all the Eilenberg--Steenrod--Milnor axioms for
Steenrod homology, apart from the Dimension Axiom; see also \cite%
{kaminker_theory_1977}. They furthermore observed, building on a previous
insight of Atiyah \cite{atiyah_global_1970}, that such a homology theory can
be seen as the Spanier--Whitehead dual of topological $\mathrm{K}$-theory 
\cite{atiyah_theory_1989}.

More generally, for an arbitrary separable unital C*-algebra $A$, one can
consider a semigroup $\mathrm{Ext}\left( A\right) $ classifying the
essential, unital extensions of $A$ by $K\left( H\right) $. By Voiculescu's
non-commutative Weyl-von Neumann Theorem \cite%
{voiculescu_non-commutative_1976,arveson_notes_1977}, the trivial element of 
$\mathrm{Ext}\left( A\right) $ correspond to the class of \emph{trivial }%
essential, unital extensions. The group $\mathrm{Ext}\left( A\right) ^{-1}$
of invertible elements of $\mathrm{Ext}\left( A\right) $ corresponds to the
essential, unital extensions that are \emph{semi-split}. Thus, by the
Choi--Effros lifting theorem \cite{choi_completely_1976}, $\mathrm{Ext}%
\left( A\right) $ is a group when $A$ is nuclear. One can extend $\mathrm{K}$%
-homology to the category of all separable C*-algebras by setting%
\begin{equation*}
\mathrm{K}^{p}\left( A\right) =\left\{ 
\begin{array}{ll}
\mathrm{Ext}\left( A^{+}\right) ^{-1} & \text{if }p\text{ is odd,} \\ 
\mathrm{Ext}((SA)^{+})^{-1} & \text{if }p\text{ is even;}%
\end{array}%
\right.
\end{equation*}%
where $SX$ is the suspension of $A$ and $A^{+}$ is the unitization of $A$.
This gives a cohomology theory on the category of separable C*-algebras,
which can be recognized as the dual of $\mathrm{K}$-theory via Paschke
duality \cite{paschke_theory_1981,higson_algebra_1995,kaminker_spanier_2017}%
. Kasparov's bivariant functor $\mathrm{KK}\left( -,-\right) $
simultaneously generalizes $\mathrm{K}$-homology and $\mathrm{K}$-theory,
where $\mathrm{K}^{1}\left( A\right) $ is recovered as $\mathrm{KK}\left( A,%
\mathbb{C}\right) $ and $\mathrm{K}^{0}\left( A\right) $ as $\mathrm{KK}%
\left( A,C_{0}\left( \mathbb{R}\right) \right) $.

It was already noticed in the seminal work of Brown, Douglas, and Fillmore 
\cite{brown_extensions_1977,brown_extensions_1973} that the invariant $%
\mathrm{K}^{p}\left( X\right) $ for a compact metrizable space $X$ can be
endowed with more structure than the purely algebraic group structure.
Indeed, one can write $X$ as the inverse limit of a tower $\left(
X_{n}\right) _{n\in \omega }$ of compact polyhedra, and endow $\mathrm{K}%
^{p}\left( X\right) $ with the topology induced by the maps $\mathrm{K}%
^{p}\left( X\right) \rightarrow \mathrm{K}^{p}\left( X_{n}\right) $ for $%
n\in \omega $, where $\mathrm{K}^{p}\left( X_{n}\right) $ is a countable
group endowed with the discrete topology. This gives to $\mathrm{K}%
^{p}\left( X\right) $ the structure of a topological group, which is however
in general not Hausdorff.

The study of $\mathrm{K}^{p}\left( A\right) $ as a topological group for a
separable unital C*-algebra $A$ was later systematically undertaken by
Dadarlat \cite{dadarlat_approximate_2000,dadarlat_topology_2005} and
Schochet \cite{schochet_fine_2001,schochet_fine_2002,schochet_fine_2005}
building on previous work of Salinas \cite{salinas_relative_1992}. (In fact,
they consider more generally Kasparov's $\mathrm{KK}$-groups.) In \cite%
{schochet_fine_2001,dadarlat_topology_2005} several natural topologies on $%
\mathrm{K}^{p}\left( A\right) $, corresponding to different ways to define $%
\mathrm{K}$-homology for separable C*-algebras, are shown to coincide and to
turn $\mathrm{K}^{p}\left( A\right) $ into a \emph{pseudo-Polish }group.
This means that, if $\mathrm{K}_{\infty }^{p}\left( A\right) $ denotes the
closure of zero in $\mathrm{K}^{p}\left( A\right) $, then the quotient of $%
\mathrm{K}^{p}\left( A\right) $ by $\mathrm{K}_{\infty }^{p}\left( A\right) $
is a Polish group. In \cite{schochet_fine_2002}, for a C*-algebra $A$
satisfying the Universal Coefficient Theorem\ (UCT), the topology on $%
\mathrm{K}^{p}\left( A\right) $ is related to the UCT exact sequence, and $%
\mathrm{K}_{\infty }^{p}\left( A\right) $ is shown to be isomorphic to the
group $\mathrm{PExt}\left( \mathrm{K}_{1-p}\left( A\right) ,\mathbb{Z}%
\right) $ classifying \emph{pure }extensions of $\mathrm{K}_{1-p}\left(
A\right) $ by $\mathbb{Z}$. A characterization of $\mathrm{K}_{\infty
}^{p}\left( A\right) $ for an arbitrary separable nuclear C*-algebra $A$ is
obtained in \cite{dadarlat_approximate_2000}. For a separable \emph{%
quasidiagonal }C*-algebra satisfying the UCT, $\mathrm{K}_{\infty
}^{1}\left( A\right) $ is shown to be the subgroup of $\mathrm{K}^{1}\left(
A\right) =\mathrm{Ext}\left( A^{+}\right) $ corresponding to \emph{%
quasidiagonal }extensions of $A^{+}$ by $K\left( H\right) $ \cite%
{schochet_fine_2002}; see also \cite{brown_universal_1984} for the
commutative case. The quotient $\mathrm{K}_{\mathrm{w}}^{p}\left( A\right) $
of $\mathrm{K}^{p}\left( A\right) $ by $\mathrm{K}_{\infty }^{p}\left(
A\right) $ is the group $\mathrm{KL}_{p}\left( A,\mathbb{C}\right) $
introduced by R\o rdam \cite{rordam_classification_1995}. A universal
multicoefficient theorem describing $\mathrm{K}_{\mathrm{w}}^{p}\left(
A\right) $ in terms of the $\mathrm{K}$-groups of $A$ with arbitrary cyclic
groups as coefficients is obtained in \cite{dadarlat_classification_2002}
for all separable nuclear C*-algebras satisfying the UCT; see \cite[Theorem
5.4]{dadarlat_topology_2005}.

In many cases of interest, the topology on $\mathrm{K}^{p}\left( A\right) $
turns out to be trivial, i.e.\ the closure of zero in $\mathrm{K}^{p}\left(
A\right) $ is the whole group. For example, the topology on $\mathrm{K}%
^{1}\left( A\right) $ is trivial when $A$ is a UHF C*-algebra, despite the
fact that $\mathrm{K}^{1}\left( A\right) $ is not trivial, and in fact
uncountable. Similarly, for every $1$-dimensional solenoid $X$, the topology
on $\mathrm{\tilde{K}}_{0}\left( X\right) $ is trivial, although $\mathrm{%
\tilde{K}}_{0}\left( X\right) $ is an uncountable group.

In this paper, we take a different approach and consider the group $\mathrm{K%
}^{p}\left( A\right) $, rather than as a pseudo-Polish topological group, as
a \emph{definable group}. This should be thought of as a group $G$
explicitly defined as the quotient of a Polish space $X$ by a
\textquotedblleft well-behaved\textquotedblright\ equivalence relation $E$,
in such a way that the multiplication and inversion operations in $G$ are
induced by a Borel functions on $X$. This is formally defined in Section \ref%
{Subsection:definable-groups}, where the notion of well-behaved equivalence
relation is made precise. The definition is devised to ensure that the
category of definable groups has good properties, and behaves similarly to
the category of standard Borel groups. A morphism in this category is a 
\emph{definable }group homomorphism, namely a group homomorphism that \emph{%
lifts }to a Borel function between the corresponding Polish spaces.

It has recently become apparent that several homological invariants in
algebra and topology can be seen as functors to the category of definable
groups. The homological invariants $\mathrm{Ext}$ and $\mathrm{lim}^{1}$ are
considered in \cite{bergfalk_ulam_2020}, whereas Steenrod homology and \v{C}%
ech cohomology are considered in \cite%
{bergfalk_cohomology_2020,lupini_definable_2020}. It is shown there that the
definable versions of these invariants are finer than the purely algebraic
versions.

In this paper, we show that, for an arbitrary separable C*-algebra $A$, $%
\mathrm{K}^{p}\left( A\right) $ can be regarded as a definable group.
Furthermore, different descriptions of $\mathrm{K}^{p}\left( A\right) $---in
terms of extensions, Paschke duality, Fredholm modules, and
quasi-homomorphisms---yield naturally \emph{definably }isomorphic definable
groups. For C*-algebras that have a $\mathrm{KK}$-filtration in the sense of
Schochet \cite{schochet_uct_1996}, we show that the definable subgroup $%
\mathrm{K}_{\infty }^{p}\left( A\right) $ of $\mathrm{K}^{p}\left( A\right) $
is \emph{definably }isomorphic to $\mathrm{PExt}\left( \mathrm{K}%
_{1-p}\left( A\right) ,\mathbb{Z}\right) $. The latter is regarded as a
definable group as in \cite[Section 7]{bergfalk_ulam_2020}. (In fact, $%
\mathrm{PExt}\left( \mathrm{K}_{1-p}\left( A\right) ,\mathbb{Z}\right) $ is
the quotient of a Polish group by a Borel Polishable subgroup, and hence a
group with a Polish cover in the parlance of \cite[Section 7]%
{bergfalk_ulam_2020}.)

Using this and the rigidity theorem for $\mathrm{PExt}\left( \Lambda ,%
\mathbb{Z}\right) $ from \cite[Section 7]{bergfalk_ulam_2020} where $\Lambda 
$ is a torsion-free abelian group without finitely-generated direct
summands, we prove that \emph{definable }$\mathrm{K}$-homology provides a
finer invariant than the purely algebraic (or topological) groups $\mathrm{K}%
^{p}\left( A\right) $ for a separable C*-algebra $A$, even when one
restricts to UHF C*-algebras or commutative unital C*-algebras whose
spectrum is a $1$-dimensional subspace of $\mathbb{R}^{3}$.

\begingroup
\setcounter{tmp}{\value{theorem}}
\setcounter{theorem}{0} 
\renewcommand\thetheorem{\Alph{theorem}} 

\begin{theorem}
\label{Theorem:A}The definable $\mathrm{K}^{1}$-group is a complete
invariant for UHF C*-algebras up to stable isomorphism. In contrast, there
exists an uncountable family of pairwise non stably isomorphic UHF
C*-algebras with algebraically isomorphic $\mathrm{K}^{1}$-groups (and
trivial $\mathrm{K}^{0}$-groups).
\end{theorem}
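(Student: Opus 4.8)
The plan is to reduce both assertions to the descriptive set theory and the abelian-group theory of the groups $\mathrm{PExt}(\mathbb{Q}_{\mathbf{n}},\mathbb{Z})$, where, for a supernatural number $\mathbf{n}=\prod_{p}p^{k_{p}}$, I write $\mathbb{Q}_{\mathbf{n}}\subseteq\mathbb{Q}$ for the subgroup of rationals with denominator dividing $\mathbf{n}$, and $A_{\mathbf{n}}$ for the UHF C*-algebra with supernatural number $\mathbf{n}$. Recall that $\mathrm{K}_{0}(A_{\mathbf{n}})\cong\mathbb{Q}_{\mathbf{n}}$, $\mathrm{K}_{1}(A_{\mathbf{n}})=0$, and that $A_{\mathbf{n}}$ and $A_{\mathbf{m}}$ are stably isomorphic precisely when $\mathbb{Q}_{\mathbf{n}}\cong\mathbb{Q}_{\mathbf{m}}$ as abelian groups, equivalently when $\mathbf{n}$ and $\mathbf{m}$ have the same type. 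First I would record, from the Universal Coefficient Theorem, the identifications $\mathrm{K}^{1}(A_{\mathbf{n}})\cong\mathrm{Ext}(\mathbb{Q}_{\mathbf{n}},\mathbb{Z})$ and $\mathrm{K}^{0}(A_{\mathbf{n}})\cong\mathrm{Hom}(\mathbb{Q}_{\mathbf{n}},\mathbb{Z})$; the latter vanishes for every infinite supernatural number $\mathbf{n}$, since then any homomorphism $\mathbb{Q}_{\mathbf{n}}\to\mathbb{Z}$ carries $1$ into $\bigcap_{p}p^{k_{p}}\mathbb{Z}=\{0\}$ (with the convention $p^{\infty}\mathbb{Z}=\{0\}$) and hence vanishes identically. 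Since the topology on $\mathrm{K}^{1}(A_{\mathbf{n}})$ is trivial, one has $\mathrm{K}^{1}(A_{\mathbf{n}})=\mathrm{K}_{\infty}^{1}(A_{\mathbf{n}})$ as definable groups; and as every UHF algebra is nuclear and satisfies the UCT it admits a $\mathrm{KK}$-filtration in the sense of \cite{schochet_uct_1996}, so the definable identification $\mathrm{K}_{\infty}^{p}\cong_{\mathrm{def}}\mathrm{PExt}(\mathrm{K}_{1-p}(-),\mathbb{Z})$ established above gives a definable isomorphism $\mathrm{K}^{1}(A_{\mathbf{n}})\cong_{\mathrm{def}}\mathrm{PExt}(\mathbb{Q}_{\mathbf{n}},\mathbb{Z})$, where $\mathrm{PExt}(\mathbb{Q}_{\mathbf{n}},\mathbb{Z})=\mathrm{Ext}(\mathbb{Q}_{\mathbf{n}},\mathbb{Z})$ because the latter is divisible.

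For the first assertion I would prove that $A_{\mathbf{n}}$ and $A_{\mathbf{m}}$ are stably isomorphic if and only if $\mathrm{K}^{1}(A_{\mathbf{n}})\cong_{\mathrm{def}}\mathrm{K}^{1}(A_{\mathbf{m}})$. If they are stably isomorphic then $\mathbb{Q}_{\mathbf{n}}\cong\mathbb{Q}_{\mathbf{m}}$, and functoriality of $\mathrm{PExt}(-,\mathbb{Z})$ on the category of definable groups promotes this to a definable isomorphism $\mathrm{PExt}(\mathbb{Q}_{\mathbf{n}},\mathbb{Z})\cong_{\mathrm{def}}\mathrm{PExt}(\mathbb{Q}_{\mathbf{m}},\mathbb{Z})$, hence $\mathrm{K}^{1}(A_{\mathbf{n}})\cong_{\mathrm{def}}\mathrm{K}^{1}(A_{\mathbf{m}})$. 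Conversely, a definable isomorphism $\mathrm{K}^{1}(A_{\mathbf{n}})\cong_{\mathrm{def}}\mathrm{K}^{1}(A_{\mathbf{m}})$ is a definable isomorphism $\mathrm{PExt}(\mathbb{Q}_{\mathbf{n}},\mathbb{Z})\cong_{\mathrm{def}}\mathrm{PExt}(\mathbb{Q}_{\mathbf{m}},\mathbb{Z})$; since $\mathbb{Q}_{\mathbf{n}}$ is torsion-free of rank one, hence not finitely generated and without nonzero finitely generated direct summands, the rigidity theorem for $\mathrm{PExt}(\Lambda,\mathbb{Z})$ of \cite[Section 7]{bergfalk_ulam_2020} applies and yields $\mathbb{Q}_{\mathbf{n}}\cong\mathbb{Q}_{\mathbf{m}}$, so $A_{\mathbf{n}}$ and $A_{\mathbf{m}}$ are stably isomorphic. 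This is exactly the point at which the additional descriptive set-theoretic information is indispensable, because the abstract group $\mathrm{Ext}(\mathbb{Q}_{\mathbf{n}},\mathbb{Z})$ is, as I explain next, very far from determining $\mathbf{n}$.

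For the second assertion I would compute the abstract isomorphism type of $\mathrm{Ext}(\mathbb{Q}_{\mathbf{n}},\mathbb{Z})$ for every infinite supernatural number $\mathbf{n}$ all of whose exponents $k_{p}$ are finite, and show it is independent of such $\mathbf{n}$. It is divisible: applying $\mathrm{Hom}(-,\mathbb{Z})$ to $0\to\mathbb{Q}_{\mathbf{n}}\xrightarrow{\,p\,}\mathbb{Q}_{\mathbf{n}}\to\mathbb{Q}_{\mathbf{n}}/p\mathbb{Q}_{\mathbf{n}}\to0$ and using $\mathrm{Ext}^{2}_{\mathbb{Z}}=0$ shows that multiplication by $p$ is surjective on $\mathrm{Ext}(\mathbb{Q}_{\mathbf{n}},\mathbb{Z})$ for every prime $p$. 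Its $p$-primary part has $\mathbb{F}_{p}$-rank equal to $\dim_{\mathbb{F}_{p}}\mathrm{Ext}(\mathbb{Q}_{\mathbf{n}}/p\mathbb{Q}_{\mathbf{n}},\mathbb{Z})=\dim_{\mathbb{F}_{p}}\mathrm{Ext}(\mathbb{Z}/p,\mathbb{Z})=1$, using $\mathrm{Hom}(\mathbb{Q}_{\mathbf{n}},\mathbb{Z})=0$ together with the fact that $1$ has finite $p$-height in $\mathbb{Q}_{\mathbf{n}}$, so that $\mathbb{Q}_{\mathbf{n}}/p\mathbb{Q}_{\mathbf{n}}\cong\mathbb{Z}/p$ for every prime $p$. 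Finally, the sequence $0\to\mathbb{Z}\to\mathbb{Q}_{\mathbf{n}}\to\mathbb{Q}_{\mathbf{n}}/\mathbb{Z}\to0$ with $\mathrm{Hom}(\mathbb{Q}_{\mathbf{n}},\mathbb{Z})=0$ gives $\mathrm{Ext}(\mathbb{Q}_{\mathbf{n}},\mathbb{Z})\cong\bigl(\prod_{p}\mathbb{Z}/p^{k_{p}}\bigr)/\mathbb{Z}$, which has cardinality $2^{\aleph_{0}}$ and therefore torsion-free rank $2^{\aleph_{0}}$. Thus $\mathrm{Ext}(\mathbb{Q}_{\mathbf{n}},\mathbb{Z})\cong\mathbb{Q}^{(2^{\aleph_{0}})}\oplus\bigoplus_{p}\mathbb{Z}(p^{\infty})\cong\mathbb{R}\oplus\mathbb{Q}/\mathbb{Z}$ for every such $\mathbf{n}$. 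Since there are $2^{\aleph_{0}}$ pairwise inequivalent characteristics $(k_{p})_{p}$ with each $k_{p}\in\{1,2\}$, the corresponding UHF algebras $A_{\mathbf{n}}$ form an uncountable family, pairwise non stably isomorphic (distinct types), each with $\mathrm{K}^{0}(A_{\mathbf{n}})=0$ and with $\mathrm{K}^{1}(A_{\mathbf{n}})$ algebraically isomorphic to $\mathbb{R}\oplus\mathbb{Q}/\mathbb{Z}$, which completes the plan.

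The formal write-up is then largely a matter of assembling these ingredients. There is no single deep obstacle beyond what is already available: the conceptual weight of the first assertion is carried entirely by the rigidity theorem of \cite[Section 7]{bergfalk_ulam_2020} and the definable UCT proved above, and the reduction via triviality of the topology on $\mathrm{K}^{1}(A_{\mathbf{n}})$ is routine. The one calculation requiring genuine care is the abelian-group computation of the previous paragraph, and in particular the fact that the $p$-primary component of $\mathrm{Ext}(\mathbb{Q}_{\mathbf{n}},\mathbb{Z})$ has rank exactly one; it is precisely this rank-one phenomenon that makes the abstract isomorphism type of $\mathrm{K}^{1}(A_{\mathbf{n}})$ completely insensitive to $\mathbf{n}$ once all exponents are finite, whereas the definable structure, by rigidity, still recovers $\mathbb{Q}_{\mathbf{n}}$ and hence $\mathbf{n}$ up to type.
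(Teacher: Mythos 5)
Your argument is correct, and for the first assertion it coincides with the paper's: identify $\mathrm{K}^{1}(A_{\mathbf{n}})$ definably with $\mathrm{Ext}(\mathbb{Q}_{\mathbf{n}},\mathbb{Z})=\mathrm{PExt}(\mathbb{Q}_{\mathbf{n}},\mathbb{Z})$ via the definable UCT, then invoke the rigidity theorem of \cite[Section 7]{bergfalk_ulam_2020} together with the classification of UHF algebras by $\mathrm{K}_{0}$. Where you genuinely diverge is the second assertion. The paper simply cites \cite[Theorem A.7]{bergfalk_ulam_2020}, which says that for rank-one torsion-free $\Lambda,\Lambda'$ the groups $\mathrm{Ext}(\Lambda,\mathbb{Z})$ and $\mathrm{Ext}(\Lambda',\mathbb{Z})$ are abstractly isomorphic iff the $p$-coranks agree for all $p$; you instead compute the isomorphism type outright, showing $\mathrm{Ext}(\mathbb{Q}_{\mathbf{n}},\mathbb{Z})\cong(\prod_{p}\mathbb{Z}/p^{k_{p}})/\mathbb{Z}\cong\mathbb{Q}^{(2^{\aleph_{0}})}\oplus\bigoplus_{p}\mathbb{Z}(p^{\infty})$ whenever all exponents are finite, via divisibility, the $p$-rank count $\dim\mathrm{Ext}(\mathbb{Z}/p,\mathbb{Z})=1$, and a cardinality argument for the torsion-free rank. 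That computation is correct and self-contained, and it is in fact the content underlying the cited Theorem A.7; what it buys you is an explicit isomorphism type ($\mathbb{R}\oplus\mathbb{Q}/\mathbb{Z}$) rather than only an equivalence criterion, at the cost of a page of abelian-group bookkeeping that the paper outsources.

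Two small points of hygiene. First, the identity $\mathrm{PExt}(\mathbb{Q}_{\mathbf{n}},\mathbb{Z})=\mathrm{Ext}(\mathbb{Q}_{\mathbf{n}},\mathbb{Z})$ is better justified by torsion-freeness of $\mathbb{Q}_{\mathbf{n}}$ (every finitely generated subgroup is free, so every extension is pure; equivalently $\mathrm{Ext}_{\mathrm{w}}=\varprojlim\mathrm{Ext}(C_{i},\mathbb{Z})=0$), which is the route the paper takes in Proposition \ref{Proposition:KK}; your appeal to divisibility of the Ext group can be made to work only after observing that $\mathrm{Ext}_{\mathrm{w}}$ is reduced, so it is not the most direct reason. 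Second, rather than asserting that the topology on $\mathrm{K}^{1}(A_{\mathbf{n}})$ is trivial, you should derive $\mathrm{K}^{1}=\mathrm{K}_{\infty}^{1}$ from the Milnor sequence: $\mathrm{K}_{\mathrm{w}}^{1}(A_{\mathbf{n}})\cong\mathrm{Hom}(\mathrm{K}_{1}(A_{\mathbf{n}}),\mathbb{Z})=0$ since $\mathrm{K}_{1}(A_{\mathbf{n}})=0$, exactly as in Corollary \ref{Corollary:KK1}. Neither issue affects the validity of the argument.
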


\begin{theorem}
\label{Theorem:B}The definable $\mathrm{\tilde{K}}_{0}$-group is a complete
invariant for $1$-dimensional solenoids up to homeomorphism. In contrast,
there exists an uncountable family of pairwise non homeomorphic $1$%
-dimensional solenoids with algebraically isomorphic $\mathrm{\tilde{K}}_{0}$%
-groups (and trivial $\mathrm{\tilde{K}}_{1}$-groups).
\end{theorem}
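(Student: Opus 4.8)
The plan is to mirror the proof of Theorem~\ref{Theorem:A}, with the UHF C*-algebra (and its $\mathrm{K}_{0}$-group) replaced by $C(X)$ (and the first \v{C}ech cohomology of the solenoid $X$). Recall that a $1$-dimensional solenoid $X$ is the inverse limit of a tower of circles along covering maps of degrees $n_{1},n_{2},\dots\geq 2$ (the circle $\mathbb{S}^{1}$ itself being the degenerate case), that $X$ is the Pontryagin dual of the subgroup $\Lambda_{X}:=\varinjlim(\mathbb{Z}\xrightarrow{n_{1}}\mathbb{Z}\xrightarrow{n_{2}}\cdots)$ of $\mathbb{Q}$, and that $\Lambda_{X}$ is a rank-$1$ torsion-free abelian group with $\Lambda_{X}\not\cong\mathbb{Z}$. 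First I would record two classical facts. (i) Since $\check{H}^{1}(X)\cong\Lambda_{X}$, $\check{H}^{n}(X)=0$ for $n\geq 2$, and $\check{H}^{\ast}(X)$ is torsion-free, the Chern character gives $\widetilde{\mathrm{K}}^{0}(X)=0$ and $\widetilde{\mathrm{K}}^{1}(X)\cong\Lambda_{X}$, i.e.\ the separable commutative C*-algebra $C_{0}(X\setminus\{\mathrm{pt}\})$ has $\mathrm{K}_{0}=0$ and $\mathrm{K}_{1}\cong\Lambda_{X}$. (ii) Two $1$-dimensional solenoids $X,X'$ are homeomorphic if and only if $\Lambda_{X}\cong\Lambda_{X'}$ as abstract groups: the forward implication because $\check{H}^{1}(-)$ is a homeomorphism invariant, the converse because an isomorphism $\Lambda_{X}\cong\Lambda_{X'}$ dualizes to a topological group isomorphism of the solenoids.

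Next I would compute the definable $\mathrm{K}$-homology of $X$. Since $C_{0}(X\setminus\{\mathrm{pt}\})$ is separable and commutative it satisfies the UCT and admits a $\mathrm{KK}$-filtration, so the definable UCT established in this paper applies; together with (i) it gives $\widetilde{\mathrm{K}}_{1}(X)\cong\mathrm{Hom}(\Lambda_{X},\mathbb{Z})=0$ (the last equality because $\Lambda_{X}$ is a rank-$1$ torsion-free group different from $\mathbb{Z}$) and a definable isomorphism $\widetilde{\mathrm{K}}_{0}(X)\cong_{\mathrm{def}}\mathrm{PExt}(\Lambda_{X},\mathbb{Z})$ --- here one uses that the topology on $\widetilde{\mathrm{K}}_{0}(X)$ is trivial, so that $\widetilde{\mathrm{K}}_{0}(X)=\mathrm{K}_{\infty}^{0}$ is the whole group, and one views $\mathrm{PExt}(\Lambda_{X},\mathbb{Z})$ as a group with a Polish cover as in \cite[Section~7]{bergfalk_ulam_2020} (its underlying group being $\mathrm{Ext}(\Lambda_{X},\mathbb{Z})$, which is divisible). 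For completeness I would then invoke the rigidity theorem for $\mathrm{PExt}(-,\mathbb{Z})$ of \cite[Section~7]{bergfalk_ulam_2020}: a rank-$1$ torsion-free group $\Lambda\not\cong\mathbb{Z}$ is indecomposable and not finitely generated, hence has no finitely generated direct summand, so a definable isomorphism $\mathrm{PExt}(\Lambda_{X},\mathbb{Z})\cong_{\mathrm{def}}\mathrm{PExt}(\Lambda_{X'},\mathbb{Z})$ forces $\Lambda_{X}\cong\Lambda_{X'}$, whence $X\cong X'$ by (ii); the converse implication is functoriality of definable $\mathrm{K}$-homology. (If the circle $\mathbb{S}^{1}$ counts as a $1$-dimensional solenoid this is harmless, since $\widetilde{\mathrm{K}}_{0}(\mathbb{S}^{1})=0$ while $\mathrm{Ext}(\Lambda,\mathbb{Z})=0$ forces $\Lambda$ free.)

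For the contrast statement I would exhibit the family directly, exactly as for UHF C*-algebras. For a set $A$ of primes let $\Lambda_{A}\subseteq\mathbb{Q}$ be the rank-$1$ torsion-free group of $p$-height $1$ for $p\in A$ and of $p$-height $2$ for $p\notin A$; by Baer's classification $\Lambda_{A}\cong\Lambda_{A'}$ if and only if $A\triangle A'$ is finite, so there are $2^{\aleph_{0}}$ pairwise non-isomorphic such groups, hence $2^{\aleph_{0}}$ pairwise non-homeomorphic $1$-dimensional solenoids $X_{A}$ with $\Lambda_{X_{A}}\cong\Lambda_{A}$. Every $\Lambda_{A}$ has all $p$-heights finite, so $\mathrm{Hom}(\Lambda_{A},\mathbb{Z})=0$ and hence $\widetilde{\mathrm{K}}_{1}(X_{A})=0$; moreover $\mathrm{Ext}(\Lambda_{A},\mathbb{Z})$ is divisible, of cardinality $2^{\aleph_{0}}$, with $p$-torsion $\mathrm{Ext}(\Lambda_{A},\mathbb{Z})[p]\cong\mathrm{Hom}(\Lambda_{A},\mathbb{Z}/p\mathbb{Z})\cong\mathbb{Z}/p\mathbb{Z}$ for every prime $p$ (using $\mathrm{Hom}(\Lambda_{A},\mathbb{Z})=0$ and that $\Lambda_{A}$ is not $p$-divisible). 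By the structure theorem for divisible abelian groups this forces $\widetilde{\mathrm{K}}_{0}(X_{A})\cong\mathrm{Ext}(\Lambda_{A},\mathbb{Z})\cong\mathbb{Q}^{(2^{\aleph_{0}})}\oplus\mathbb{Q}/\mathbb{Z}$, independently of $A$. So the $X_{A}$ have algebraically isomorphic $\mathrm{K}$-homology groups and trivial $\widetilde{\mathrm{K}}_{1}$, while by the completeness statement their definable $\widetilde{\mathrm{K}}_{0}$-groups are pairwise non-isomorphic.

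I expect the main difficulty to be not an isolated step but the careful identification of the reduced $\mathrm{K}$-homology of the \emph{space} $X$ with the definable $\mathrm{PExt}$-group: one must realize $\widetilde{\mathrm{K}}_{p}(X)$ as $\mathrm{K}^{p}$ of the appropriate separable commutative C*-algebra, check that the definable UCT proved here applies to it, and combine the vanishing of the relevant $\mathrm{Hom}$-term with the triviality of the topology on $\widetilde{\mathrm{K}}_{0}(X)$ to conclude that the \emph{entire} group, not merely $\mathrm{K}_{\infty}^{0}$, is definably isomorphic to $\mathrm{PExt}(\Lambda_{X},\mathbb{Z})$ with its Polish-cover structure. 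The remaining ingredients --- the classification of $1$-dimensional solenoids up to homeomorphism, the rigidity theorem, and the $\mathrm{Ext}$-computation --- are classical or elementary.
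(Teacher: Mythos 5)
Your proposal is correct and follows essentially the same route as the paper: identify a solenoid with the Pontryagin dual of a rank-one torsion-free group $\Lambda\not\cong\mathbb{Z}$, compute $\mathrm{\tilde{K}}^{1}(X)\cong\Lambda$ and $\mathrm{\tilde{K}}^{0}(X)=0$, use the definable UCT to get $\mathrm{\tilde{K}}_{0}(X)\cong\mathrm{PExt}(\Lambda,\mathbb{Z})=\mathrm{Ext}(\Lambda,\mathbb{Z})$ definably and $\mathrm{\tilde{K}}_{1}(X)=0$, and then apply the rigidity theorem of \cite{bergfalk_ulam_2020} for completeness and the non-rigidity of the algebraic $\mathrm{Ext}$-groups for the contrast. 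The only differences are cosmetic: you compute $\mathrm{\tilde{K}}^{\ast}(X)$ via the Chern character rather than directly from the degree maps on circles, and you exhibit the uncountable family and compute $\mathrm{Ext}(\Lambda_{A},\mathbb{Z})\cong\mathbb{Q}^{(2^{\aleph_{0}})}\oplus\mathbb{Q}/\mathbb{Z}$ by hand where the paper cites the $p$-corank criterion from \cite{bergfalk_ulam_2020}.
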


The historic evolution in the treatment of $\mathrm{K}$-homology described
above should be compared with the similar evolution in the study of unitary
duals of second countable, locally compact groups or, more generally,
separable C*-algebras. Given a separable C*-algebra $A$, its unitary dual $%
\hat{A}$ is the quotient of the Polish space $\mathrm{Irr}(A)$ of unitary
irreducible representations of $A$ by the relation of unitary equivalence.
This includes as a particular instance the case of second countable, locally
compact groups, by considering the corresponding universal C*-algebras.
While initially $\hat{A}$ was considered as a topological space endowed with
the quotient topology, it was recognized in the seminal work of Mackey,
Glimm, and Effros \cite%
{mackey_borel_1957,glimm_type_1961,effros_transformation_1965} that a more
fruitfuil theory is obtained by considering $\hat{A}$ endowed with the
quotient Borel structure, called the Mackey Borel structure. This led to the
notion of type I C*-algebra, which precisely captures those separable
C*-algebras with the property that the Mackey Borel structure is standard.
It was soon realized that, in the non type I case, the right notion of
\textquotedblleft isomorphism\textquotedblright\ of Macky Borel structures
on duals $\hat{A},\hat{B}$ corresponds to a bijection $\hat{A}\rightarrow 
\hat{B}$ that is induced by a Borel function $\mathrm{Irr}\left( A\right)
\rightarrow \mathrm{Irr}\left( B\right) $. In our terminology from Section %
\ref{Subsection:definable-set}, this corresponds to regarding a unitary dual 
$\hat{A}$ as a \emph{definable set}, where an isomorphism of Mackey Borel
structures on $\hat{A},\hat{B}$ is a \emph{definable bijection} $\hat{A}%
\rightarrow \hat{B}$. For example, this approach is taken by Elliott in \cite%
{elliott_mackey_1977}, where he proved that the unitary duals of any two
separable AF C*-algebras that are not type I are isomorphic in the category
of definable sets. It is a question of Dixmier from 1967 whether the unitary
duals of any two non-type I separable C*-algebras are isomorphic in the
category of definable sets; see \cite%
{thomas_descriptive_2015,farah_dichotomy_2012,kerr_turbulence_2010}. This
problem was recently considered in the case of groups by Thomas, who showed
that the unitary duals of any two countable amenable non-type I groups are
isomorphic in the category of definable sets \cite[Theorem 1.10]%
{thomas_descriptive_2015}. Furthermore, the unitary dual of any countable
group admits a \emph{definable injection }to the unitary dual of the free
group on two generators \cite[Theorem 1.9]{thomas_descriptive_2015}.

The work of Mackey, Glimm, and Effors on unitary representations pioneered
the application of methods from descriptive set theory to C*-algebras. More
recent applications have been obtained by Kechris \cite%
{kechris_descriptive_1998} and Farah--Toms--T\"{o}rnquist \cite%
{farah_turbulence_2014,farah_descriptive_2012}, who studied the problem of
classifying several classes of C*-algebras from the perspective of Borel
complexity theory; see also \cite%
{lupini_unitary_2014,gardella_conjugacy_2016,elliott_isomorphism_2013}.

\endgroup

\setcounter{theorem}{\thetmp} 
The rest of this paper is organized as follows. In Section \ref%
{Section:Polish-spaces} we recall fundamental results from descriptive set
theory about Polish spaces and standard Borel spaces, and make precise the
notions of definable set, and the corresponding notion of definable group.
In Section \ref{Section:strict} we introduce the notion of strict
C*-algebra, which is a (not necessarily norm-separable) C*-algebra whose
unit ball is endowed with a Polish topology induced by bounded seminorms,
called the strict topology, such that the C*-algebra operations are strictly
continuous on the unit ball. The main example we will consider are
multiplier algebras of separable C*-algebras, endowed with their usual
strict topology, as well as Paschke dual algebras of separable C*-algebras.
In Section \ref{Section:K-theory} we study the $\mathrm{K}$-theory of a
strict C*-algebra or, more generally, the quotient of a strict C*-algebra by
a strict ideal, such as a corona algebra or the commutant of a separable
C*-algebra in a corona algebra. We observe that the $\mathrm{K}$-theory
groups of a strict C*-algebra can be regarded as quotients of a Polish space
by an equivalence relation. As such an equivalence relation is not
necessarily well-behaved, they are in general only semidefinable groups,
although we they will be in fact definable groups in the case of Paschke
dual algebras of separable C*-algebras. In Section \ref{Section:Ext}
definable $\mathrm{K}$-homology for separable C*-algebras is introduced, and
shown to be given by definable groups by considering its description in
terms of the $\mathrm{K}$-theory of Pashcke dual algebras. The descriptions
of $\mathrm{K}$-homology due to Cuntz and Kasparov are considered in Section %
\ref{Section:Kasparov}, where they are shown to yield definably isomorphic
groups. In Section \ref{Section:properties} we discuss properties of
definable $\mathrm{K}$-homology, which can be seen as definable versions of
the general properties that an abstract cohomology theory for separable,
nuclear C*-algebras in the sense of Schochet satisfies \cite%
{schochet_topologicalIII_1984}. A definable version of the Universal
Coefficient Theorem of Brown \cite{brown_universal_1984}, later generalized
to $\mathrm{KK}$-groups by Rosenberg and Schochet \cite%
{rosenberg_kunneth_1987}, is considered in Section \ref{Section:UCT}.
Theorem \ref{Theorem:A} is a consequence of the definable UCT, the
classification of AF C*-algebras by $\mathrm{K}$-theory, and the rigidity
result for definable $\mathrm{PExt}$ of torsion-free finite-rank abelian
groups from \cite{bergfalk_ulam_2020}. Finally, Section \ref{Section:spaces}
considers definable $\mathrm{K}$-homology for compact metrizable spaces, and
Theorem \ref{Theorem:B} is obtained applying again the definable UCT and the
rigidity theorem for definable $\mathrm{PExt}$ from \cite{bergfalk_ulam_2020}%
.


\section{Polish spaces and definable groups\label{Section:Polish-spaces}}

In this section we recall some fundamental notions concerning Polish spaces
and Polish groups, as well as standard Borel spaces and standard Borel
groups, as can be found in \cite%
{becker_descriptive_1996,kechris_classical_1995,gao_invariant_2009}. We also
consider the notion of Polish category, which is a category whose hom-sets
are Polish spaces and composition of morphisms is a continuous function, and
establish some of its basic properties. Furthermore, we recall the notion of
idealistic equivalence relation on a standard Borel space and some of its
fundamental properties as established in \cite{kechris_borel_2016,
motto_ros_complexity_2012}. We then define precisely the notion of
(semi)definable set and (semi)definable group.

\subsection{Polish spaces and standard Borel spaces}

A \emph{Polish space} is a second countable topological space whose topology
is induced by a complete metric. A subset of a Polish space $X$ is $%
G_{\delta }$ if and only if it is a Polish space when endowed with the
subspace topology. If $X$ is a Polish space, then the Borel $\sigma $%
-algebra of $X$ is the $\sigma $-algebra generated by the collection of open
sets. By definition, a\ subset of $X$ is \emph{Borel} if it belongs to the
Borel $\sigma $-algebra. If $X,Y$ are Polish spaces, then the product $%
X\times Y$ is a Polish space when endowed with the product topology. More
generally, if $\left( X_{n}\right) _{n\in \omega }$ is a sequence of Polish
spaces, then the product $\prod_{n\in \omega }X_{n}$ is a Polish space when
endowed with the product topology. The class of Polish spaces includes all
locally compact second countable Hausdorff spaces. We denote by $\omega $
the set of natural numbers including $0$. We regard $\omega $ and any other
countable set as a Polish space endowed with the discrete topology. The 
\emph{Baire space }$\omega ^{\omega }$ is the Polish space obtained as the
infinite product of copies of $\omega $.

A \emph{standard Borel space }is a set $X$ endowed with a $\sigma $-algebra
(the Borel $\sigma $-algebra) that comprises the Borel sets with respect to
some Polish topology on $X$. A function between standard Borel spaces is 
\emph{Borel} if it is measurable with respect to the Borel $\sigma $%
-algebras. A subset of a standard Borel space $X$ is \emph{analytic }if it
is the image of a Borel function $f:Z\rightarrow X$ for some standard Borel
space $Z$. This is equivalent to the assertion that there exists a Borel
subset $B\subseteq X\times \omega ^{\omega }$ such that $B=\mathrm{proj}%
_{X}\left( A\right) $ is the projection of $A$ on the first coordinate. A
subset of $X$ is \emph{co-analytic} if its complement is analytic. One has
that a subset of $X$ is Borel if and only if it is both analytic and
co-analytic.

Given standard Borel spaces $X,Y$, we let $X\times Y$ be their product
endowed with the product Borel structure, which is also a standard Borel
space. If $\left( X_{n}\right) _{n\in \omega }$ is a sequence of standard
Borel spaces, then their disjoint union $X$ is a standard Borel space, where
a subset $A$ of $X$ is Borel if and only if $A\cap X_{n}$ is Borel for every 
$n\in \omega $. The product $\prod_{n\in \omega }X_{n}$ is also a standard
Borel space when endowed with the product Borel structure. In the following
proposition, we collect some well-known properties of the category of
standard Borel spaces and Borel functions.

\begin{proposition}
\label{Proposition:SB}Let $\mathbf{SB}$ be the category that has standard
Borel spaces as objects and Borel functions and morphisms.

\begin{enumerate}
\item If $X$ is a standard Borel space and $A\subseteq X$ is a Borel subset,
then $A$ is a standard Borel space when endowed with the induced standard
Borel structure;

\item If $X,Y$ are standard Borel spaces, $f:X\rightarrow Y$ is an injective
Borel function, and $A\subseteq X$ is Borel, then $f\left( A\right) $ is a
Borel subset of $Y$;

\item If $X,Y$ are standard Borel spaces, and $f:X\rightarrow Y$ is a
bijective Borel function, then the inverse function $f^{-1}:Y\rightarrow X$
is Borel;

\item If $X,Y$ are standard Borel spaces, and there exist injective Borel
functions $f:X\rightarrow Y$ and $g:Y\rightarrow X$, then there exists a
Borel bijection $h:X\rightarrow Y$;

\item The category $\mathbf{SB}$ has finite products, finite coproducts,
equalizers, and pullbacks;

\item A Borel function is monic in $\mathbf{SB}$ if and only if it is
injective, and epic in $\mathbf{SB}$ if and only if it is surjective;

\item An inductive sequence of standard Borel spaces and \emph{injective}%
\textrm{\ }Borel functions has a colimit in $\mathbf{SB}$.
\end{enumerate}
\end{proposition}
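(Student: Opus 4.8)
The plan is to assemble all seven items from the classical descriptive-set-theoretic literature, citing the standard references already listed in the excerpt (\cite{kechris_classical_1995,becker_descriptive_1996,gao_invariant_2009}), and then to verify the categorical clauses by hand using the point-set facts. First I would recall that a standard Borel space is, up to Borel isomorphism, either countable discrete or Borel-isomorphic to $\omega^\omega$ (Borel isomorphism theorem), which immediately gives item (4) via the Borel Schr\"oder--Bernstein theorem, and item (3) (a bijective Borel map between standard Borel spaces has Borel inverse) from the fact that injective Borel images of Borel sets are Borel. Item (2) is the Lusin--Souslin theorem: an injective Borel image of a Borel set in a standard Borel space is Borel. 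Item (1) is immediate since a Borel subset of a Polish space is again standard Borel (one can refine the topology to make the subset clopen, or just note the trace $\sigma$-algebra is standard). So items (1)--(4) reduce to quoting Lusin--Souslin, the Borel isomorphism theorem, and Borel Schr\"oder--Bernstein.

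For item (5), I would construct the limits and colimits explicitly. Finite products are the product Borel structure, which is standard because a finite (indeed countable) product of Polish spaces is Polish; the universal property is routine. Finite coproducts are the disjoint union with the $\sigma$-algebra $\{A : A\cap X_n \text{ Borel for each } n\}$, already noted in the text to be standard Borel. For equalizers: given $f,g\colon X\to Y$, the set $E=\{x\in X : f(x)=g(x)\}$ is the preimage of the diagonal $\Delta_Y\subseteq Y\times Y$ under $x\mapsto (f(x),g(x))$; since $Y$ is standard Borel the diagonal is Borel (it is closed in any compatible Polish topology), so $E$ is Borel, hence standard Borel by (1), and it is the equalizer in $\mathbf{SB}$. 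Pullbacks are then built from products and equalizers in the usual way (the pullback of $f\colon X\to Z$, $g\colon Y\to Z$ is the equalizer of the two composites $X\times Y\to Z$), or directly as the Borel set $\{(x,y): f(x)=g(y)\}\subseteq X\times Y$.

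For item (6), monic $\Leftrightarrow$ injective is immediate in one direction; conversely a non-injective Borel map is distinguished by two maps from a one-point space, so monic forces injective. Epic $\Leftrightarrow$ surjective: surjective clearly forces epic; for the converse, if $f\colon X\to Y$ is not surjective then its image $f(X)$ is analytic but I need a Borel set to separate — here I would instead argue that a Borel map with non-surjective image can be post-composed with two distinct Borel maps $Y\to\{0,1\}$ agreeing on $f(X)$ \emph{provided} $f(X)$ is Borel, which need not hold. The cleaner route, which I would take: if $f$ is epic, consider the two maps $Y\rightrightarrows Y\sqcup_{f(X)} Y$ (the pushout of $f$ with itself); these agree on the image of $f$, so epic-ness of $f$ forces them equal, forcing $Y=f(X)$. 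Thus the potential obstacle — that analytic non-Borel images obstruct the naive argument — is sidestepped by a purely categorical pushout argument, and the existence of the pushout $Y\sqcup_{f(X)}Y$ when $f$ is injective (needed only there) follows from item (2). For item (7), given an inductive sequence $X_0\to X_1\to\cdots$ with injective Borel bonding maps, I would realize each $X_n$ as a Borel subset of a fixed $\omega^\omega$ with $X_n\subseteq X_{n+1}$ using (2) and the Borel isomorphism theorem, take the colimit to be $\bigcup_n X_n$ with the $\sigma$-algebra generated by the $X_n$'s; one checks $\bigcup_n X_n$ is Borel in $\omega^\omega$ (a countable increasing union of Borel sets) hence standard Borel, and that it has the correct universal property, the only subtlety being that a cocone of Borel maps $X_n\to W$ glues to a well-defined Borel map on the union — which holds because on each piece it is Borel and the pieces form a countable Borel partition after telescoping. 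The main obstacle throughout is item (6)'s epic clause, where the failure of images to be Borel must be circumvented; everything else is bookkeeping on top of Lusin--Souslin and the Borel isomorphism theorem.
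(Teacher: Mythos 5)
The paper offers no proof of this proposition (it is stated as a collection of well-known facts with references to \cite{kechris_classical_1995,becker_descriptive_1996,gao_invariant_2009}), and your assembly of items (1)--(5) and (7) from the Lusin--Souslin theorem, the Borel isomorphism theorem, and explicit constructions of (co)products, equalizers, pullbacks, and increasing unions is correct and is exactly what one would write down.

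The one genuine problem is your treatment of the epic clause in (6). The obstacle you identify --- that $f(X)$ is only analytic, so its indicator function need not be Borel --- is illusory: to refute epicness you do not need a Borel set equal to the image, only a Borel set containing it and omitting some point. If $f\colon X\rightarrow Y$ is not surjective, pick $y_{0}\in Y\setminus f(X)$ and compare the indicator function of $Y\setminus\left\{ y_{0}\right\} $ with the constant function $1$; both are Borel maps $Y\rightarrow\left\{ 0,1\right\} $, they agree after composing with $f$ (since $f(X)\subseteq Y\setminus\left\{ y_{0}\right\} $), and they differ at $y_{0}$. So epic implies surjective for \emph{every} Borel $f$, with no reference to the image. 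By contrast, the pushout argument you substitute is both incomplete and unsalvageable in the missing case: you construct $Y\sqcup_{f(X)}Y$ only for injective $f$ (where $f(X)$ is Borel by item (2)), but the epic clause must be proved for arbitrary Borel $f$. For a non-injective $f$ whose image is analytic but not Borel, a standard Borel pushout with Borel coprojections cannot exist at all --- if it did, Lusin--Souslin applied to the two injective coprojections would force $f(X)$, the preimage under one coprojection of the intersection of their images, to be Borel. So replace the pushout detour by the one-line singleton argument; everything else in your proposal stands.
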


A \emph{Polish group }is a topological group whose topology is Polish. If $G$
is a Polish group, and $H$ is a closed subgroup of $G$, then $H$ is a Polish
group when endowed with the subspace topology.\ If furthermore $H$ is
normal, then $G/H$ is a Polish group when endowed with the quotient
topology. If $G_{0},G_{1}$ are Polish groups, and $\varphi :G_{0}\rightarrow
G_{1}$ is a Borel function, then $\varphi $ is continuous. In particular, if 
$G$ is a Polish space, then it has a unique Polish group topology that
induces its Borel structure. A subgroup $H$ of a Polish group $G$ is \emph{%
Polishable }if it is Borel and there is a (necessarily unique) Polish group
topology on $H$ that induces the Borel structure on $H$ inherited from $G$.
This is equivalent to the assertion that $H$ is equal to the range of a
continuous group homomorphisms $\hat{G}\rightarrow G$ for some Polish group $%
\hat{G}$. If $G$ is a Polish group, then a Polish $G$-space is a Polish
space $X$ endowed with a continuous action of $G$. A Borel $G$-space is a
standard Borel space $X$ endowed with a Borel action of $G$. Given a Borel $%
G $-space $X$, there exists a Polish topology $\tau $ on $X$ such that $%
\left( X,\tau \right) $ is a Polish $G$-space; see \cite[Theorem 5.2.1]%
{becker_descriptive_1996}.

A\emph{\ standard Borel group} is, simply, a group object in the category of
standard Borel spaces \cite[Section III.6]{mac_lane_categories_1998}.
Explicitly, a standard Borel group is a standard Borel space $G$ that is
also a group, and such that the group operation on $G$ and the function $%
G\rightarrow G$, $x\mapsto x^{-1}$ are Borel; see \cite[Definition 12.23]%
{kechris_classical_1995}. Clearly, every Polish group is, in particular, a
standard Borel group.

The notion of Polish topometric space was introduced and studied in \cite%
{ben_yaacov_grey_2015,ben_yaacov_polish_2013,ben_yaacov_topometric_2008,ben_yaacov_continuous_2010}%
. A\emph{\ topometric space }is a Hausdorff space $X$ endowed with a
topology $\tau $ and a $\left[ 0,\infty \right] $-valued metric $d$ such
that:

\begin{enumerate}
\item the metric-topology is finer than $\tau $;

\item the metric is lower-semicontinuous with respect to $\tau $, i.e.\ for
every $r\geq 0$ the set 
\begin{equation*}
\left\{ \left( a,b\right) \in X\times X:d\left( a,b\right) \leq r\right\}
\end{equation*}%
is $\tau $-closed in $X\times X$.
\end{enumerate}

A \emph{Polish topometric space} is a topometric space such that the
topology $\tau $ is Polish and the metric is complete. A \emph{Polish
topometric group }is a Polish topometric space $\left( G,\tau ,d\right) $
that is also a group, and such that $G$ endowed with the topology $\tau $ is
a Polish group, and the metric $d$ on $G$ is bi-invariant

\subsection{Polish categories\label{Subsection:Polish-categories}}

By definition, we let a Polish category be a category $\mathcal{C}$ enriched
over the category of Polish spaces (regarded as a monoidal category with
respect to binary products). Thus, for each pair of objects $a,b$ of $%
\mathcal{C}$, $\mathcal{C}\left( a,b\right) $ is a Polish space, such that
for objects $a,b,c$, the composition operation $\mathcal{C}\left( b,c\right)
\times \mathcal{C}\left( a,b\right) \rightarrow \mathcal{C}\left( a,c\right) 
$ is continuous.

Suppose that $\mathcal{C}$ is a Polish category. For objects $a,b$ of $%
\mathcal{C}$, define $\mathrm{Iso}_{\mathcal{C}}\left( a,b\right) \subseteq 
\mathcal{C}\left( a,b\right) $ be the set of $\mathcal{C}$-isomorphisms $%
a\rightarrow b$. While $\mathrm{Iso}_{\mathcal{C}}\left( a,b\right) $ is not
necessarily a $G_{\delta }$ subset of $\mathcal{C}\left( a,b\right) $, and
hence not necessarily a Polish space when endowed with the subspace
topology, $\mathrm{Iso}_{\mathcal{C}}\left( a,b\right) $ is endowed with a
canonical Polish topology, defined as follows. For a net $\left( \alpha
_{i}\right) $ in $\mathrm{Iso}_{\mathcal{C}}\left( a,b\right) $ and $\alpha
\in \mathrm{Iso}_{\mathcal{C}}\left( a,b\right) $, set $\alpha
_{i}\rightarrow \alpha $ if and only if $\alpha _{i}\rightarrow \alpha $ in $%
\mathcal{C}\left( a,b\right) $ and $\alpha _{i}^{-1}\rightarrow \alpha ^{-1}$
in $\mathcal{C}\left( b,a\right) $. One can then easily show the following.

\begin{lemma}
\label{Lemma:Iso-Polish}Adopt the notations above. Then $\mathrm{Iso}_{%
\mathcal{C}}\left( a,b\right) $ is a Polish space.
\end{lemma}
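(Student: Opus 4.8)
The plan is to realize $\mathrm{Iso}_{\mathcal{C}}(a,b)$, with the topology described above, as a closed subspace of a Polish space, and then invoke the fact that a closed subset of a Polish space is Polish in the subspace topology. Consider the product Polish space $\mathcal{C}(a,b)\times\mathcal{C}(b,a)$ together with the map $\iota\colon\mathrm{Iso}_{\mathcal{C}}(a,b)\to\mathcal{C}(a,b)\times\mathcal{C}(b,a)$, $\alpha\mapsto(\alpha,\alpha^{-1})$. This is well defined since every $\mathcal{C}$-isomorphism has a unique two-sided inverse, and it is injective since $\alpha$ is recovered as the first coordinate. By the very definition of the convergence relation on $\mathrm{Iso}_{\mathcal{C}}(a,b)$---namely $\alpha_i\to\alpha$ if and only if $\alpha_i\to\alpha$ in $\mathcal{C}(a,b)$ and $\alpha_i^{-1}\to\alpha^{-1}$ in $\mathcal{C}(b,a)$---this relation is exactly the one induced by the product topology through $\iota$. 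In particular it genuinely is the convergence relation of a topology, namely the initial topology making both $\alpha\mapsto\alpha\in\mathcal{C}(a,b)$ and $\alpha\mapsto\alpha^{-1}\in\mathcal{C}(b,a)$ continuous, and $\iota$ is a homeomorphism onto its image endowed with the subspace topology. Thus it suffices to show that $\iota(\mathrm{Iso}_{\mathcal{C}}(a,b))$ is a Polish subspace of $\mathcal{C}(a,b)\times\mathcal{C}(b,a)$.

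Next I would identify the image explicitly. I claim that $\iota(\mathrm{Iso}_{\mathcal{C}}(a,b))$ equals the set of pairs $(\alpha,\beta)\in\mathcal{C}(a,b)\times\mathcal{C}(b,a)$ with $\beta\circ\alpha=\mathrm{id}_a$ and $\alpha\circ\beta=\mathrm{id}_b$. The inclusion $\subseteq$ is immediate from the definition of $\iota$. Conversely, if $\beta\circ\alpha=\mathrm{id}_a$ and $\alpha\circ\beta=\mathrm{id}_b$, then $\alpha$ is a $\mathcal{C}$-isomorphism with inverse $\beta$, so $(\alpha,\beta)=\iota(\alpha)$ lies in the image.

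Finally I would argue that this set is closed. Since $\mathcal{C}$ is a Polish category, the composition maps $m_a\colon\mathcal{C}(a,b)\times\mathcal{C}(b,a)\to\mathcal{C}(a,a)$, $(\alpha,\beta)\mapsto\beta\circ\alpha$, and $m_b\colon\mathcal{C}(a,b)\times\mathcal{C}(b,a)\to\mathcal{C}(b,b)$, $(\alpha,\beta)\mapsto\alpha\circ\beta$, are continuous. As $\mathcal{C}(a,a)$ and $\mathcal{C}(b,b)$ are Polish, hence Hausdorff, the singletons $\{\mathrm{id}_a\}$ and $\{\mathrm{id}_b\}$ are closed, so $m_a^{-1}(\{\mathrm{id}_a\})\cap m_b^{-1}(\{\mathrm{id}_b\})$ is closed in $\mathcal{C}(a,b)\times\mathcal{C}(b,a)$; by the previous paragraph this intersection is precisely $\iota(\mathrm{Iso}_{\mathcal{C}}(a,b))$. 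A closed subspace of a Polish space is Polish, so $\iota(\mathrm{Iso}_{\mathcal{C}}(a,b))$, and therefore $\mathrm{Iso}_{\mathcal{C}}(a,b)$ with its canonical topology, is Polish.

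There is no genuinely hard step here; the only points that need care are checking that the stated net-convergence prescription really is the subspace topology transported along $\iota$ (so that the ``canonical topology'' is unambiguously a topology), and the use of the Hausdorffness of the hom-spaces to deduce closedness of the image. Both are routine, and I expect the argument above to be essentially complete modulo these verifications.
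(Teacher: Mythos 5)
Your argument is correct: embedding $\mathrm{Iso}_{\mathcal{C}}(a,b)$ into $\mathcal{C}(a,b)\times\mathcal{C}(b,a)$ via $\alpha\mapsto(\alpha,\alpha^{-1})$, identifying the image as the closed set cut out by $\beta\circ\alpha=\mathrm{id}_a$ and $\alpha\circ\beta=\mathrm{id}_b$ (using continuity of composition and Hausdorffness), and invoking that closed subspaces of Polish spaces are Polish is precisely the routine verification the paper leaves to the reader with ``one can then easily show.'' Nothing is missing; your two flagged points (that the net prescription is the subspace topology transported along the embedding, and the closedness of the image) are handled correctly.
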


It is clear from the definition that, for every object $a$ of $\mathcal{C}$, 
$\mathrm{\mathrm{Aut}}_{\mathcal{C}}\left( a\right) :=\mathrm{Iso}_{\mathcal{%
C}}\left( a,a\right) $ is a Polish group. Furthermore, the canonical (right
and left) actions of $\mathrm{\mathrm{Aut}}_{\mathcal{C}}\left( a\right) $
and $\mathrm{\mathrm{Aut}}_{\mathcal{C}}\left( b\right) $ on $\mathcal{C}%
\left( a,b\right) $ are continuous.

\begin{definition}
\label{Definition:topological-equivalence}Suppose that $\mathcal{C}$ and $%
\mathcal{D}$ are Polish categories, and $F:\mathcal{C}\rightarrow \mathcal{D}
$ is a functor. We say that $F$ is continuous if, for every pair of objects $%
a,b$ of $\mathcal{C}$, the map $\mathcal{C}\left( a,b\right) \rightarrow 
\mathcal{D}\left( F\left( a\right) ,F\left( b\right) \right) $, $f\mapsto
F\left( f\right) $ is continuous. We say that $F$ is a \emph{topological
equivalence} if it is continuous, and there exists a continuous functor $G:%
\mathcal{D}\rightarrow \mathcal{C}$ such that $GF$ is isomorphic to the
identity functor $I_{\mathcal{C}}$, and $FG$ is isomorphic to the identity
functor $I_{\mathcal{D}}$.
\end{definition}

The notion of topological equivalence of categories is the natural analogue
of the notion of equivalence of categories in the context of Polish
categories; see \cite[Section IV.4]{mac_lane_categories_1998}.\ The same
proof as \cite[Section IV.4, Theorem 1]{mac_lane_categories_1998} gives the
following characterization of topological equivalences.

\begin{lemma}
\label{Lemma:topological-equivalence}Suppose that $\mathcal{C}$ and $%
\mathcal{D}$ are Polish categories, and $F:\mathcal{C}\rightarrow \mathcal{D}
$ is a functor. The following assertions are equivalent:

\begin{enumerate}
\item $F$ is a topological equivalence;

\item each object of $\mathcal{D}$ is isomorphic to one of the form $F\left(
a\right) $ for some object $a$ of $\mathcal{C}$, and for each pair of
objects $c,d$ of $\mathcal{C}$, the map $\mathcal{C}\left( c,d\right)
\rightarrow \mathcal{C}\left( F\left( c\right) ,F\left( d\right) \right) $
is a homeomorphism.
\end{enumerate}
\end{lemma}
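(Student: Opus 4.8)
The plan is to mimic the classical proof that a fully faithful, essentially surjective functor is an equivalence (\cite[Section IV.4, Theorem 1]{mac_lane_categories_1998}), keeping track at each step of the extra topological structure. So suppose $F$ satisfies condition (2): every object of $\mathcal{D}$ is isomorphic to some $F(a)$, and for all objects $c,d$ of $\mathcal{C}$ the map $\mathcal{C}(c,d)\to\mathcal{D}(F(c),F(d))$ is a homeomorphism onto its image — in particular a bijection, so $F$ is fully faithful, and a homeomorphism onto $\mathcal{D}(F(c),F(d))$ (I read condition (2) as asserting the codomain is the full hom-set). The first step is to construct a functor $G:\mathcal{D}\to\mathcal{C}$. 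For each object $d$ of $\mathcal{D}$ choose (using the axiom of choice, exactly as in the classical argument — no Borel/definable selection is required here since $\mathcal{C},\mathcal{D}$ are honest categories, not definable ones) an object $G(d)$ of $\mathcal{C}$ together with an isomorphism $\eta_d:d\to F(G(d))$ in $\mathcal{D}$; when $d=F(a)$ we may take $G(F(a))=a$ and $\eta_{F(a)}=\mathrm{id}$. For a morphism $g:d\to d'$ in $\mathcal{D}$, the composite $\eta_{d'}\circ g\circ\eta_d^{-1}:F(G(d))\to F(G(d'))$ lies in $\mathcal{D}(F(G(d)),F(G(d')))$, so by full faithfulness there is a unique $G(g)\in\mathcal{C}(G(d),G(d'))$ with $F(G(g))=\eta_{d'}\circ g\circ\eta_d^{-1}$. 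Uniqueness gives functoriality ($G(\mathrm{id})=\mathrm{id}$ and $G(g'\circ g)=G(g')\circ G(g)$), and the $\eta_d$ assemble into a natural isomorphism $\mathrm{id}_{\mathcal{D}}\cong F\circ G$. Naturality of an inverse $\varepsilon:G\circ F\cong\mathrm{id}_{\mathcal{C}}$ is obtained the same way: since $FGF(a)\cong F(a)$ via $\eta_{F(a)}^{-1}=\mathrm{id}$, full faithfulness produces $\varepsilon_a\in\mathrm{Iso}_{\mathcal{C}}(GF(a),a)$ with $F(\varepsilon_a)$ the identity, and these are natural.

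The main point — the only place the Polish structure enters — is to check that $G$ is \emph{continuous}, i.e.\ that $\mathcal{D}(d,d')\to\mathcal{C}(G(d),G(d'))$, $g\mapsto G(g)$, is a continuous map of Polish spaces. By construction this map factors as
\begin{equation*}
\mathcal{D}(d,d')\xrightarrow{\ g\mapsto\eta_{d'}\circ g\circ\eta_d^{-1}\ }\mathcal{D}(F(G(d)),F(G(d')))\xrightarrow{\ \big(F_{G(d),G(d')}\big)^{-1}\ }\mathcal{C}(G(d),G(d')).
\end{equation*}
The first arrow is the composite of the continuous composition maps of $\mathcal{D}$ with the fixed elements $\eta_{d'}$ and $\eta_d^{-1}$ plugged in, so it is continuous by the definition of a Polish category. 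The second arrow is continuous because, by hypothesis (2), $F_{G(d),G(d')}:\mathcal{C}(G(d),G(d'))\to\mathcal{D}(F(G(d)),F(G(d')))$ is a homeomorphism, hence so is its inverse. Composing, $g\mapsto G(g)$ is continuous, so $G$ is a continuous functor, and therefore $F$ is a topological equivalence; this proves (2)$\Rightarrow$(1).

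For the converse (1)$\Rightarrow$(2), suppose $G:\mathcal{D}\to\mathcal{C}$ is a continuous functor with natural isomorphisms $\varepsilon:GF\cong I_{\mathcal{C}}$ and $\eta:FG\cong I_{\mathcal{D}}$. Essential surjectivity of $F$ is immediate from $\eta$. Fix objects $c,d$ of $\mathcal{C}$. The map $F_{c,d}:\mathcal{C}(c,d)\to\mathcal{D}(F(c),F(d))$ is continuous by definition of a continuous functor. The abstract two-sided inverse to $F_{c,d}$ is the classical one: $h\mapsto \varepsilon_d\circ G(h)\circ\varepsilon_c^{-1}$, which sends $\mathcal{D}(F(c),F(d))\to\mathcal{C}(c,d)$ and is inverse to $F_{c,d}$ by naturality of $\varepsilon$ exactly as in \cite[Section IV.4]{mac_lane_categories_1998}. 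This inverse is continuous: $G_{F(c),F(d)}$ is continuous since $G$ is a continuous functor, and pre/post-composition with the fixed morphisms $\varepsilon_c^{-1}$ and $\varepsilon_d$ is continuous by the Polish-category axiom for $\mathcal{C}$. Hence $F_{c,d}$ is a continuous bijection with continuous inverse, i.e.\ a homeomorphism, giving (2).

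I do not anticipate a genuine obstacle: the argument is the standard one, and the enrichment is benign precisely because "continuous functor" has been defined so that composition-with-fixed-morphisms and the functor's action on hom-spaces are continuous, which is all that the inverse formulas use. The one subtlety worth a word is the reading of condition (2): "the map $\mathcal{C}(c,d)\to\mathcal{C}(F(c),F(d))$ is a homeomorphism" should be understood with the obvious typo corrected to $\mathcal{C}(c,d)\to\mathcal{D}(F(c),F(d))$, asserting both bijectivity (full faithfulness) and that it is a homeomorphism for the Polish topologies; both halves are used above. No appeal to Lemma \ref{Lemma:Iso-Polish} is strictly needed for this proof, though it justifies that the $\mathrm{Aut}$-groups and the orbit actions implicit in the naturality diagrams live in the expected category.
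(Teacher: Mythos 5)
Your proof is correct and is exactly the argument the paper intends: the paper simply asserts that "the same proof as \cite[Section IV.4, Theorem 1]{mac_lane_categories_1998} gives" the lemma, and you have carried out that Mac Lane argument while verifying the two continuity points it requires (that the constructed $G$ is continuous on hom-spaces, and that the inverse of $F_{c,d}$ is continuous). Your reading of the typo in condition (2) — the codomain should be $\mathcal{D}(F(c),F(d))$ — is also the intended one.
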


\subsection{Idealistic equivalence relations}

Suppose that $C$ is a set. A $\sigma $-filter on $C$ is a nonempty family $%
\mathcal{F}$ of subsets of $C$ that is closed under countable intersections,
and such that $\varnothing \notin \mathcal{F}$ and if $A\subseteq B\subseteq
C$ and $A\in \mathcal{F}$ then $B\in \mathcal{F}$. The dual notion is the
one of $\sigma $-ideal. Thus, a nonempty family $\mathcal{I}$ of subsets of $%
C$ is a $\sigma $-ideal if it is closed under countable unions, $C\notin 
\mathcal{I}$, and $A\subseteq B\subseteq C$ and $B\in \mathcal{I}$ imply $%
A\in \mathcal{I}$. Clearly, if $\mathcal{F}$ is a $\sigma $-filter on $C$,
then $\left\{ C\setminus A:A\in \mathcal{F}\right\} $ is a $\sigma $-ideal
on $C$, and vice-versa.\ Thus, one can equivalently formulate notions in
terms of $\sigma $-filters or in terms of $\sigma $-ideals.

If $\mathcal{F}$ is a $\sigma $-filter on $C$, then $\mathcal{F}$ can be
thought of as a notion of \textquotedblleft largeness\textquotedblright\ for
subsets of $C$. Based on this interpretation, we use the \textquotedblleft $%
\sigma $-filter quantifier\textquotedblright\ notation \textquotedblleft $%
\mathcal{F}x$, $x\in A$\textquotedblright\ for a subset $A\subseteq C$ to
express the fact that $A\in \mathcal{F}$. If $P(x)$ is a unary relation for
elements of $C$, \textquotedblleft $\mathcal{F}x$, $P(x)$\textquotedblright\
is the assertion that the set of $x\in C$ that satisfy $P(x)$ belongs to $%
\mathcal{F}$.

\begin{example}
\label{Example:comeager}Suppose that $C$ is a Polish space. A subset $A$ of $%
C$ is \emph{meager} if it is contained in the union of a countable family of
closed nowhere dense sets. By the Baire Category Theorem \cite[Theorem 8.4]%
{kechris_classical_1995}, meager subsets of $C$ form a $\sigma $-ideal $%
\mathcal{I}_{C}$. The corresponding dual $\sigma $-filter is the $\sigma $%
-filter $\mathcal{F}_{C}$ of comeager sets, which are the subsets of $C$
whose complement is meager.
\end{example}

Suppose that $X$ is a standard Borel space. We consider an equivalence
relation $E$ on $X$ as a subset of $X\times X$, endowed with the product
Borel structure. Consistently, we say that $E$ is Borel or analytic,
respectively, if it is a Borel or analytic subset of $X\times X$. In the
following, we will exclusively consider analytic equivalence relations, most
of which will in fact be Borel. For an element $x$ of $X$ we let $\left[ x%
\right] _{E}$ be its corresponding $E$-class.

We now recall the notion of \emph{idealistic }equivalence relation,
initially considered in \cite{kechris_countable_1994}; see also \cite[%
Definition 5.4.9]{gao_invariant_2009} and \cite{kechris_borel_2016}. We will
consider a slightly more generous definition than the one from \cite%
{kechris_countable_1994,gao_invariant_2009,kechris_borel_2016}. The more
restrictive notion is recovered as a particular case by insisting that the
function $s$ in Definition \ref{Definition:idealistic} be the identity
function of $X$. In the following definition, for a subset $A$ of a product
space $X\times Y$ and $x\in X$, we let $A_{x}=\left\{ y\in Y:\left(
x,y\right) \in A\right\} $ be the corresponding \emph{vertical section}.

\begin{definition}
\label{Definition:idealistic}An equivalence relation $E$ on a standard Borel
space $X$ is \emph{idealistic }if there exist a Borel function $%
s:X\rightarrow X$ satisfying $s(x)Ex$ for every $x\in X$, and a function $%
C\mapsto \mathcal{F}_{C}$ that assigns to each $E$-class $C$ a $\sigma $%
-filter $\mathcal{F}_{C}$ of subsets of $C$ such that, for every Borel
subset $A$ of $X\times X$, the set%
\begin{equation*}
A_{s,\mathcal{F}}:=\left\{ x\in X:\mathcal{F}_{[x]_{E}}x^{\prime },\left(
s(x),x^{\prime }\right) \in A\right\} =\left\{ x\in X:A_{s(x)}\in \mathcal{F}%
_{\left[ x\right] _{E}}\right\} \text{.}
\end{equation*}%
is Borel.
\end{definition}

Idealistic equivalence relations arise naturally as orbit equivalence
relations of Polish group actions. Suppose that $G$ is a Polish group and $X$
is a Polish $G$-space. Let $E_{G}^{X}$ be the corresponding \emph{orbit
equivalence relation }on $X$, obtained by setting, for $x,y\in X$, $%
xE_{G}^{X}y$ if and only if there exists $g\in G$ such that $g\cdot x=y$.
Then $E_{G}^{X}$ is an idealistic equivalence relation, as witnessed by the
identity function $s$ on $X$ and the function $C\mapsto \mathcal{F}_{C}$
where $A\in \mathcal{F}_{C}$ if and only if $\mathcal{F}_{G}g$, $g\cdot x\in
A$. (As in Example \ref{Example:comeager}, $\mathcal{F}_{G}$ denotes the $%
\sigma $-filter of comeager subsets of $G$.) In particular, if $G$ is a
Polish group, and $H$ is a Polishable subgroup of $G$, then the coset
equivalence relation $E_{H}^{G}$ of $H$ in $G$ is Borel and idealistic.

Suppose that $E$ is an equivalence relation on a standard Borel space $X$. A
Borel selector for $E$ is a Borel function $s:X\rightarrow X$ such that, for 
$x,y\in x$, $xEy$ if and only if $s(x)=s(y)$. If $E$ has a Borel selector,
then $E$ is Borel and idealistic; see \cite[Theorem 5.4.11]%
{gao_invariant_2009}. (Precisely, an equivalence relation has a Borel
selector if and only if it is Borel, idealistic, and \emph{smooth }\cite[%
Definition 5.4.1]{gao_invariant_2009}.)

\subsection{Definable sets\label{Subsection:definable-set}}

Definable sets are a generalization of standard Borel sets, and can be
thought of as sets explicitly presented as the quotient of a standard Borel
space by a \textquotedblleft well-behaved\textquotedblright\ equivalence
relation $E$.

\begin{definition}
A \emph{definable set} $X$ is a pair $(\hat{X},E)$ where $\hat{X}$ is a
standard Borel space and $E$ is a Borel and idealistic equivalence relation
on $\hat{X}$. We think of $(\hat{X},E)$ as a presentation of the quotient
set $X=\hat{X}/E$. Consistently, we also write the definable set $(\hat{X}%
,E) $ as $\hat{X}/E$. A subset $Z$ of $X$ is \emph{Borel }if there is an $E$%
-invariant Borel subset $\hat{Z}$ of $\hat{X}$ such that $Z=\hat{Z}/E$.
\end{definition}

We now define the notion of morphism between definable sets. Let $X=\hat{X}%
/E $ and $Y=\hat{Y}/F$ be definable sets. A \emph{lift }of a function $%
f:X\rightarrow Y$ is a function $\hat{f}:\hat{X}\rightarrow \hat{Y}$ such
that $f\left( \left[ x\right] _{E}\right) =[\hat{f}(x)]_{F}$ for every $x\in 
\hat{X}$.

\begin{definition}
Let $X$ and $Y$ be definable sets. A function $f:X\rightarrow Y$ is \emph{%
Borel-definable }if it has a lift $\hat{f}:\hat{X}\rightarrow \hat{Y}$ that
is a Borel function.
\end{definition}

\begin{remark}
Since Borel-definability is the only notion of definability we will consider
in this paper, we will abbreviate \textquotedblleft
Borel-definable\textquotedblright\ to \textquotedblleft
definable\textquotedblright .
\end{remark}

We consider definable sets as objects of a category $\mathbf{DSet}$, whose
morphisms are the definable functions. We regard a standard Borel space $X$
as a particular instance of definable set $X=\hat{X}/E$ where $X=\hat{X}$
and $E$ is the relation of equality on $X$. This renders the category of
standard Borel spaces a full subcategory of the category of definable sets.

If $X=\hat{X}/E$ and $Y=\hat{Y}/F$ are definable sets, then their product $%
X\times Y$ in $\mathbf{DSet}$ is the definable set $X\times Y:=(\hat{X}%
\times \hat{Y})\left/ \left( E\times F\right) \right. $, $E\times F$ being
the equivalence relation on $\hat{X}\times \hat{Y}$ defined by setting $%
\left( x,y\right) \left( E\times F\right) \left( x^{\prime },y^{\prime
}\right) $ if and only if $xEx^{\prime }$ and $yFy^{\prime }$. (It is easy
to see that $E\times F$ is Borel and idealistic if both $E$ and $F$ are
Borel and idealistic.)

Many of the good properties of standard Borel spaces, including all the ones
listed in Proposition \ref{Proposition:SB}, generalize to definable sets.

\begin{proposition}
\label{Proposition:DSet}Let as above $\mathbf{DSet}$ be the category that
has definable as objects and definable functions as morphisms.

\begin{enumerate}
\item If $X$ is a definable and $A\subseteq X$ is a Borel subset, then $A$
is itself a definable set;

\item If $X,Y$ are definable sets, $f:X\rightarrow Y$ is an injective
definable function, and $A\subseteq X$ a Borel subset, then $f\left(
A\right) $ is a Borel subset of $Y$;

\item If $X,Y$ are definable sets, and $f:X\rightarrow Y$ is a bijective
definable function, then the inverse function $f^{-1}:Y\rightarrow X$ is
definable;

\item If $X,Y$ are definable sets, and there exist injective definable
functions $f:X\rightarrow Y$ and $g:Y\rightarrow X$, then there exists a
definable bijection $h:X\rightarrow Y$;

\item The category $\mathbf{DSet}$ has finite products, finite coproducts,
equalizers, and pullbacks;

\item A definable function is monic in $\mathbf{DSet}$ if and only if it is
injective, and epic in $\mathbf{DSet}$ if and only if it is surjective;

\item An inductive sequence of definable sets and \emph{injective}\textrm{\ }%
definable functions has a colimit in $\mathbf{DSet}$.
\end{enumerate}
\end{proposition}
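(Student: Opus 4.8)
The plan is to transfer each of the seven assertions of Proposition \ref{Proposition:SB} from the category $\mathbf{SB}$ to the category $\mathbf{DSet}$ by lifting statements about definable sets and definable functions to statements about their standard Borel presentations and Borel lifts, the key point throughout being that the class of Borel idealistic equivalence relations is stable under the operations involved (pullbacks, products, restriction to invariant Borel subsets, countable disjoint unions, and increasing unions along injective maps). For (1), given a Borel subset $A = \hat A/E \subseteq X = \hat X/E$ with $\hat A$ an $E$-invariant Borel subset of $\hat X$, I would observe that $E \cap (\hat A \times \hat A)$ is again Borel and idealistic — the witnessing function $s$ and the family $C \mapsto \mathcal F_C$ simply restrict, using that $\hat A$ is $E$-invariant so that $s$ maps $\hat A$ into $\hat A$ after composing with a retraction, or more simply noting $s$ already preserves classes — so $A$ is a definable set. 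Assertions (5) and (7) are mostly a matter of writing down the presentations: finite products were already constructed in the paragraph preceding the proposition; finite coproducts use the disjoint-union presentation $(\hat X \sqcup \hat Y)/(E \sqcup F)$; equalizers and pullbacks are cut out by the Borel, $E$-invariant subsets on which the relevant lifted Borel functions agree, invoking (1); and for (7) the colimit of $X_0 \to X_1 \to \cdots$ along injective definable maps is presented by the colimit in $\mathbf{SB}$ of a compatible system of Borel lifts together with the induced equivalence relation, which one checks remains Borel and idealistic.

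The genuinely substantive assertions are (2), (3), (4), and the "monic/epic" analysis in (6). For (3), the inverse of a bijective definable $f:X\to Y$ with Borel lift $\hat f : \hat X \to \hat Y$ need not be obtained by inverting $\hat f$ (which is typically neither injective nor surjective); instead I would use a uniformization or selection argument: since $f$ is a bijection, the set $R = \{(y,x) \in \hat Y \times \hat X : (\hat f(x), y) \in F\}$ is analytic with all sections nonempty and $F$-saturated in $x$, and I want a Borel map $\hat Y \to \hat X$ choosing, $F$-almost-everywhere in the sense of the idealistic filter, a preimage class — this is exactly where idealisticness of $F$ (and $E$) is used, via the Borelness of the sets $A_{s,\mathcal F}$, to produce a Borel lift of $f^{-1}$. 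Assertion (2) (definable images of Borel sets under injective definable maps are Borel) is the Lusin–Souslin theorem in definable clothing: injectivity of $f$ means the lift $\hat f$ descends to an injection on classes, and one argues that the $F$-saturation of $\hat f(\hat A)$ is Borel by combining the classical fact that $\hat f(\hat A)$ is analytic with a co-analyticity argument using that distinct $E$-classes have disjoint images — again the idealistic witnesses are what make "saturation" a Borel operation here. Assertion (4) is then the Schröder–Bernstein argument for $\mathbf{DSet}$: given injective definable $f:X\to Y$, $g:Y\to X$, one runs the usual back-and-forth construction at the level of presentations, using (2) to know the images appearing in the construction are Borel and (3) to invert on the pieces, and (1) to split $X$ and $Y$ into the requisite Borel pieces; the only subtlety is that the countably many images $g(f(g(\cdots)))$ must be simultaneously Borel, which follows by iterating (2).

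I expect the main obstacle to be assertion (3) — equivalently, the selection step underlying (2) and (4) — because this is the one place where one cannot simply manipulate presentations formally but must actually invoke the definition of idealisticness in an essential way to extract a Borel lift of an inverse or of a saturation map. Concretely, the crux is: given Borel idealistic equivalence relations $E$ on $\hat X$ and $F$ on $\hat Y$ and a Borel $\hat f$ inducing a bijection $\hat X/E \to \hat Y/F$, produce a Borel $\hat g : \hat Y \to \hat X$ with $(\hat f \hat g(y)) \mathrel F y$ for all $y$. I would handle this by considering, for the idealistic witness $(s_E, \mathcal F^E)$ of $E$, the analytic set $B = \{(y,x) : (\hat f(s_E(x)), y)\in F\} \subseteq \hat Y \times \hat X$, noting that $B_y$ is $E$-invariant and, by surjectivity of the induced map, belongs to $\mathcal F^E_{[x]_E}$ for suitable $x$; then a Jankov–von Neumann uniformization of $B$ composed with $s_E$ gives a lift that is Borel-measurable, and $\sigma$-ideal/Baire-category genericity arguments (as in \cite{kechris_borel_2016, gao_invariant_2009}) upgrade "$\sigma(\mathbf\Sigma^1_1)$-measurable" to "Borel modulo the idealistic filter," which is all that is needed since lifts are only required to respect classes. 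Once (3) is in hand, (2), (4), and (6) follow by the routine adaptations sketched above, and the remaining parts are bookkeeping.
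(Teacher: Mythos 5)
Your overall architecture is right, and you correctly locate the substance in items (2)--(4) and the role of idealisticness; the paper itself simply disposes of (1), (5), (6), (7) as immediate, quotes \cite[Lemma 3.7]{kechris_borel_2016} for (2) and (3) (noting that its proof survives the more generous notion of idealistic relation used here, where the witness $s$ need not be the identity), and quotes \cite[Proposition 2.3]{motto_ros_complexity_2012} for (4). Your Schr\"oder--Bernstein sketch for (4) is essentially that argument, and your treatment of (1), (5), (7) is fine.

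The genuine gap is in your key step for (3). You propose to uniformize the Borel set $B=\{(y,x):(\hat f(s_E(x)),y)\in F\}$ (note: $B$ is Borel, not merely analytic, since $F$ is Borel for a definable set) by Jankov--von Neumann and then ``upgrade'' the resulting $\sigma(\Sigma^1_1)$-measurable selector to something ``Borel modulo the idealistic filter.'' This does not work as stated: Jankov--von Neumann gives only a C-measurable uniformization, and the definition of a definable function demands an honest Borel lift $\hat Y\to\hat X$ --- ``Borel modulo a filter'' is not a notion that appears anywhere in the framework, and there is no general mechanism for promoting a C-measurable selector to a Borel one. The correct mechanism, and the one implicit in the Kechris--Macdonald lemma the paper cites, is the large-section uniformization theorem (in the style of \cite[Theorem 18.6]{kechris_classical_1995}): the idealistic condition says precisely that the assignment $C\mapsto\mathcal F_C$ is ``Borel-on-Borel,'' i.e.\ that $\{x:A_{s(x)}\in\mathcal F_{[x]_E}\}$ is Borel for Borel $A$, and since each nonempty section $B_y$ is a full $E$-class (hence belongs to the corresponding filter), this yields a Borel uniformizing function directly. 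A similar remark applies to (2): your ``co-analyticity argument using that distinct $E$-classes have disjoint images'' shows the saturation of $\hat f(\hat A)$ is co-analytic only when $f$ is surjective (so that every $y$ meets the range); for merely injective $f$ one again needs the filter quantifier to convert the existential over $\hat A$ into a Borel condition. So the proposal correctly names the ingredient (idealisticness) but invokes the wrong uniformization theorem at the one point where the proof is not formal bookkeeping.
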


\begin{proof}
(1) is immediate from the definition. (2) and (3) are consequences of \cite[%
Lemma 3.7]{kechris_borel_2016}, after observing that the same proof there
applies in the case of the more generous notion of idealistic equivalence
relation considered here. (4) is a consequence of (2) and \cite[Proposition
2.3]{motto_ros_complexity_2012}. Finally, (5), (6), and (7) are easily
verified.
\end{proof}

Occasionally we will need to consider quotients $X=\hat{X}/E$ where $\hat{X}$
is a standard Borel space $E$ is an \emph{analytic} equivalence relation on $%
\hat{X}$ that is not Borel and idealistic, or has not yet been shown to be
Borel and idealistic. In this case, we say that $X=\hat{X}/E$ is a \emph{%
semidefinable set}. Clearly, every definable set is, in particular, a
semidefinable set. The notion of Borel subset and definable function are the
same as in the case of definable sets. Thus, if $X=\hat{X}/E$ and $Y=\hat{Y}%
/F$ are semidefinable sets, $Z\subseteq X$ is a subset and $f:X\rightarrow Y$
is a function, then $f$ is \emph{definable} if it has a Borel lift $\hat{f}:%
\hat{X}\rightarrow \hat{Y}$, and $Z$ is Borel if there is a Borel $E$%
-invariant subset $\hat{Z}$ of $\hat{X}$ such that $Z=\hat{Z}/E$. The
category $\mathbf{SemiDSet}$ has semidefinable sets as objects and definable
functions as morphisms. Notice that, in particular, an isomorphism from $X$
to $Y$ in $\mathbf{SemiDSet}$ is a bijection $f:X\rightarrow Y$ such that
both $f$ and the inverse function $f^{-1}:Y\rightarrow X$ are definable.

\begin{lemma}
\label{Lemma:iso-semi}Suppose that $X=\hat{X}/E$ is a definable set, $Y=\hat{%
Y}/F$ is a semidefinable set. If $X$ and $Y$ are isomorphic in $\mathbf{%
SemiDSet}$, then $Y$ is a definable set.
\end{lemma}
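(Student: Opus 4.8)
The plan is to \emph{transport} the witnesses of idealisticity across the isomorphism. First I would fix the data. Let $f\colon X\to Y$ be an isomorphism in $\mathbf{SemiDSet}$, so that $f$ and $f^{-1}$ are both definable, and choose Borel lifts $\hat{f}\colon\hat{X}\to\hat{Y}$ of $f$ and $\hat{g}\colon\hat{Y}\to\hat{X}$ of $f^{-1}$. From the defining property of a lift one records that $\hat{f}$ carries each $E$-class into a single $F$-class and $\hat{g}$ carries each $F$-class into a single $E$-class, compatibly with $f$ and $f^{-1}$; and since $f\circ f^{-1}$ and $f^{-1}\circ f$ are the identity maps, $\hat{g}(\hat{f}(x))\,E\,x$ and $\hat{f}(\hat{g}(y))\,F\,y$ for all $x\in\hat X$, $y\in\hat Y$. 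Because $f$ is a bijection of the quotient sets, each $F$-class $D$ has $C_D:=f^{-1}(D)$ equal to a single $E$-class, and then $\hat{g}(D)\subseteq C_D$ and $\hat{f}(C_D)\subseteq D$; write $\hat{f}_D\colon C_D\to D$ for the corresponding restriction.

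The first step is that $F$ is Borel. One checks that $y\,F\,y'$ if and only if $\hat g(y)\,E\,\hat g(y')$: the forward implication applies the well-defined function $f^{-1}$ to $[y]_F=[y']_F$, and the converse applies the bijection $f$ to $[\hat g(y)]_E=[\hat g(y')]_E$. Hence $F=(\hat g\times\hat g)^{-1}(E)$, which is Borel since $\hat g$ is Borel and $E$ is Borel.

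The heart of the argument is that $F$ is idealistic. Let $s\colon\hat X\to\hat X$ and $C\mapsto\mathcal F_C$ witness that $E$ is idealistic. I would take $s':=\hat f\circ s\circ\hat g$, which is Borel and satisfies $s'(y)\,F\,y$ by the compatibility recorded above, and, for each $F$-class $D$, define $\mathcal G_D$ to be the pushforward $\sigma$-filter $\{\,B\subseteq D:\hat f_D^{-1}(B)\in\mathcal F_{C_D}\,\}$; a pushforward of a $\sigma$-filter along any map is again a $\sigma$-filter, and $\mathcal G_D$ depends only on $D$ (through $C_D$ and $\hat f_D$). Then, given a Borel $A\subseteq\hat Y\times\hat Y$, I would set $B:=(\hat f\times\hat f)^{-1}(A)$, which is Borel, and compute directly that for each $y\in\hat Y$, writing $x_0:=\hat g(y)$ and $C:=[x_0]_E=C_{[y]_F}$,
\[
\hat f_C^{-1}\bigl(A_{s'(y)}\cap[y]_F\bigr)=\{x\in C:(s(x_0),x)\in B\}=B_{s(x_0)}\cap[x_0]_E,
\]
using that $s'(y)=\hat f(s(x_0))$ with $s(x_0)\in C$ and that $\hat f_C$ maps $C$ into $[y]_F$. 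Consequently $A_{s'(y)}\cap[y]_F\in\mathcal G_{[y]_F}$ if and only if $B_{s(x_0)}\cap[x_0]_E\in\mathcal F_{[x_0]_E}$, i.e.\ if and only if $\hat g(y)\in B_{s,\mathcal F}$, so that $A_{s',\mathcal G}=\hat g^{-1}(B_{s,\mathcal F})$ is Borel because $B_{s,\mathcal F}$ is Borel by idealisticity of $E$. Thus $F$ is Borel and idealistic and $Y=\hat Y/F$ is definable.

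The step I expect to be the main obstacle is that last computation: one must track carefully which $E$- or $F$-class the images under $\hat f$, $\hat g$ and $s$ land in, and arrange the substitution $B=(\hat f\times\hat f)^{-1}(A)$ so that the $\mathcal G$-membership condition collapses exactly onto the $\mathcal F$-membership condition that appears in the idealisticity of $E$. The remaining verifications—that $s'$ is a Borel function with $s'(y)\,F\,y$, that each $\mathcal G_D$ is a $\sigma$-filter, and the Borelness of the various preimages—are routine.
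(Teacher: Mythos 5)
Your proposal is correct and follows essentially the same route as the paper's proof: you show $F$ is Borel by pulling $E$ back along the lift of $f^{-1}$, take $s_Y:=\hat f\circ s\circ\hat g$, and equip each $F$-class with the pushforward $\sigma$-filter, exactly as in the paper. The only difference is that you write out explicitly the verification of the Borelness condition (via $B=(\hat f\times\hat f)^{-1}(A)$ and $A_{s',\mathcal G}=\hat g^{-1}(B_{s,\mathcal F})$), which the paper leaves as "easy to verify."
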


\begin{proof}
Suppose that the Borel function $s_{X}:\hat{X}\rightarrow \hat{X}$ and the
assignment $C\mapsto \mathcal{E}_{C}$ witness that $E$ is idealistic. By
assumption, there exists a bijection $f:X\rightarrow Y$ such that $f$ has a
Borel lift $\alpha :\hat{X}\rightarrow \hat{Y}$, and $f^{-1}$ has a Borel
lift $\beta :\hat{Y}\rightarrow \hat{X}$. For $y,y^{\prime }\in \hat{Y}$ we
have that $yFy^{\prime }$ if and only if $\beta (y)E\beta \left( y^{\prime
}\right) $, whence $F$ is Borel. We now show that $F$ is idealistic.

Define an assignment $D\mapsto \mathcal{F}_{D}$ from $F$-classes to $\sigma $%
-filters, by setting $S\in \mathcal{F}_{D}$ if and only if $\alpha
^{-1}\left( S\right) \in \mathcal{E}_{C}$ where $f\left( C\right) =D$.
Consider also the Borel map $s_{Y}:=\alpha \circ s_{X}\circ \beta :\hat{Y}%
\rightarrow \hat{Y}$. Then it is easy to verify that $s_{Y}$ and the
assignment $D\mapsto \mathcal{F}_{D}$ witness that $F$ is idealistic.
\end{proof}

\begin{lemma}
\label{Lemma:Borel-target}Suppose that $X=\hat{X}/E$ and $Y=\hat{Y}/F$ are
semidefinable sets. Assume that there exists a definable bijection $%
f:X\rightarrow Y$ (which is not necessarily an isomorphism in $\mathbf{%
SemiDSet}$). If $E$ is Borel, then $F$ is Borel.
\end{lemma}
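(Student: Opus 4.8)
The plan is to prove that $F$ is \emph{co-analytic}; since $Y=\hat Y/F$ is a semidefinable set, $F$ is analytic, and an equivalence relation that is both analytic and co-analytic is Borel. The subtlety to keep in mind is that $f$ is only assumed to be a definable bijection, not an isomorphism in $\mathbf{SemiDSet}$, so we cannot lift $f^{-1}$; instead I will use a Borel lift $\hat f\colon\hat X\to\hat Y$ of $f$ together with the bijectivity of $f$ to describe the complement of $F$ by an existential quantifier over $\hat X\times\hat X$ of a condition that mentions $F$ only positively and mentions $E$ only through its complement. It is precisely this last point that makes the hypothesis ``$E$ is Borel'' enter the argument.

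First I would record two consequences of $\hat f$ being a lift of the bijection $f$. Because $\hat f$ is a lift, $xEx'$ implies $\hat f(x)\mathrel{F}\hat f(x')$; conversely $\hat f(x)\mathrel{F}\hat f(x')$ says $f([x]_E)=[\hat f(x)]_F=[\hat f(x')]_F=f([x']_E)$, so injectivity of $f$ forces $[x]_E=[x']_E$. Hence
\[
xEx'\iff \hat f(x)\mathrel{F}\hat f(x')\qquad(x,x'\in\hat X).
\]
Second, surjectivity of $f$ gives that for every $y\in\hat Y$ there is $x\in\hat X$ with $\hat f(x)\mathrel{F}y$: simply pick $x$ with $f([x]_E)=[y]_F$.

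Next I would verify the set-theoretic identity
\[
(\hat Y\times\hat Y)\setminus F=\bigl\{(y,y'):\exists\,x,x'\in\hat X\ \bigl(\hat f(x)\mathrel{F}y\ \wedge\ \hat f(x')\mathrel{F}y'\ \wedge\ (x,x')\notin E\bigr)\bigr\}.
\]
For $\subseteq$: given $(y,y')\notin F$, use the second observation to choose $x,x'$ with $\hat f(x)\mathrel{F}y$ and $\hat f(x')\mathrel{F}y'$; if we had $xEx'$, the displayed equivalence would give $\hat f(x)\mathrel{F}\hat f(x')$, hence $y\mathrel{F}\hat f(x)\mathrel{F}\hat f(x')\mathrel{F}y'$ and so $y\mathrel{F}y'$, a contradiction; thus $(x,x')\notin E$. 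For $\supseteq$: if such $x,x'$ exist and $y\mathrel{F}y'$ held, then $\hat f(x)\mathrel{F}y\mathrel{F}y'\mathrel{F}\hat f(x')$, so $\hat f(x)\mathrel{F}\hat f(x')$, whence $xEx'$ by the displayed equivalence, again a contradiction.

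Finally I would check the descriptive complexity. The set $\{(x,y)\in\hat X\times\hat Y:\hat f(x)\mathrel{F}y\}$ is the preimage of the analytic set $F$ under the Borel map $(x,y)\mapsto(\hat f(x),y)$, hence analytic, and similarly for the primed version; the set $\{(x,x')\in\hat X\times\hat X:(x,x')\notin E\}$ is Borel because $E$ is Borel by hypothesis. Therefore the subset of $\hat X\times\hat X\times\hat Y\times\hat Y$ appearing inside the braces is analytic, being a finite intersection of analytic sets, and its projection onto the $(y,y')$-coordinates is again analytic. Thus $(\hat Y\times\hat Y)\setminus F$ is analytic, so $F$ is co-analytic, and being also analytic $F$ is Borel. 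I do not expect any real obstacle beyond the structural observation that one must keep $E$ on the ``negative'' side of the defining formula so that its Borelness is available — if one naively tried to lift $f^{-1}$ or to express $F$ directly as co-analytic, the argument would not go through.
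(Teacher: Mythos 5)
Your proof is correct and is essentially the same argument as the paper's: the paper establishes the equivalence $yFy'\iff\forall x,x'\,\bigl((\hat f(x)Fy\wedge\hat f(x')Fy')\rightarrow xEx'\bigr)$ directly and reads off co-analyticity, while you state the negation of the same formula to exhibit the complement of $F$ as a projection of an analytic set. The two are logically identical, and your verification via injectivity (one direction) and surjectivity (the other) matches the paper's.
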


\begin{proof}
By assumption $E\subseteq \hat{X}\times \hat{X}$ is Borel, and $F\subseteq 
\hat{Y}\times \hat{Y}$ is analytic. Furthermore, $f$ has a Borel lift $\hat{f%
}:\hat{X}\rightarrow \hat{Y}$. Since $f$ is a bijection, we have that, for $%
y,y^{\prime }\in \hat{Y}$, 
\begin{equation*}
yFy^{\prime }\Leftrightarrow \forall x,x^{\prime }\in \hat{X}\text{, }\left(
(\hat{f}(x)Fy\wedge \hat{f}\left( x^{\prime }\right) Fy^{\prime
})\rightarrow xEx^{\prime }\right) \text{.}
\end{equation*}%
This shows that $F$ is co-analytic. As $F$ is also analytic, we have that $F$
is Borel.
\end{proof}

Lemma 3.7 in \cite{kechris_borel_2016} can be stated as the following
proposition, which generalizes items (2) and (3) in Proposition \ref%
{Proposition:DSet}.

\begin{proposition}[Kechris--Macdonald]
\label{Proposition:KM2}Let $X=\hat{X}/E$ be a definable set, $Y=\hat{Y}/F$
be semidefinable set such that $F$ is Borel, and $f:X\rightarrow Y$ be a
definable function. If $f$ is injective, then the range of $f$ a Borel
subset of $Y$. If $f$ is bijective, then the inverse function $%
f^{-1}:Y\rightarrow X$ is definable.
\end{proposition}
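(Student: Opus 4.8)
The plan is to read the statement as a reformulation of Lemma 3.7 in \cite{kechris_borel_2016}, and to check that the argument there is insensitive to the slightly more generous notion of idealistic equivalence relation adopted in Definition \ref{Definition:idealistic}. Fix a Borel lift $\hat{f}:\hat{X}\rightarrow \hat{Y}$ of $f$, and let $\hat{R}:=\{y\in \hat{Y}:\exists x\in \hat{X},\ \hat{f}(x)Fy\}$ be the $F$-saturation of $\hat{f}(\hat{X})$, an $F$-invariant set with $\hat{R}/F$ equal to the range of $f$. Since $\hat{f}$ and $F$ are Borel, $\hat{R}$ is a priori only analytic; the whole content is to upgrade this to ``Borel'' and, in the bijective case, to exhibit a Borel lift of $f^{-1}$.

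First I would use injectivity of $f$ together with the lifting identity $f([x]_{E})=[\hat{f}(x)]_{F}$ to note that $xEx'$ holds precisely when $\hat{f}(x)F\hat{f}(x')$; hence for each $y\in \hat{R}$ the fibre $\hat{f}^{-1}([y]_{F})=\{x\in \hat{X}:\hat{f}(x)Fy\}$ is a single $E$-class, and it is precisely the class that $f^{-1}([y]_{F})$ ought to name. The problem thus reduces to a selection problem: choose, in a Borel way in $y$, a point of this fibre. This is where idealisticity of $E$ enters. Writing $s$ and $C\mapsto \mathcal{E}_{C}$ for a witness of idealisticity, one uses the $\sigma$-filters $\mathcal{E}_{C}$ to single out from each fibre a canonical ``large'' representative $\hat{g}(y)\in \hat{X}$, while simultaneously seeing that ``$y\in \hat{R}$'' is equivalent to the assertion that a certain Borel section is $\mathcal{E}$-large; by the defining property of idealistic equivalence relations --- Borelness of the sets $A_{s,\mathcal{F}}$ for Borel $A$ --- this condition on $y$ is Borel. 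Hence $\hat{R}$ is at once analytic and co-analytic, so Borel, and the resulting map $\hat{g}:\hat{Y}\rightarrow \hat{X}$ is a Borel lift of $f^{-1}$ on $\hat{R}/F$; when $f$ is bijective this yields the definability of $f^{-1}$.

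The only point that needs attention --- and the reason this is slightly more than a citation --- is that \cite{kechris_borel_2016} states idealisticity with the selector $s$ taken to be the identity, whereas Definition \ref{Definition:idealistic} allows an arbitrary Borel $s$ with $s(x)Ex$. So the main (and very mild) obstacle is to verify that each appeal to the idealistic condition in the Kechris--Macdonald argument survives the insertion of $s$, using only that $A_{s,\mathcal{F}}$ is Borel for every Borel $A\subseteq \hat{X}\times \hat{X}$, and --- if the argument calls for it --- the parametrized form of this condition over a standard Borel parameter space, which is part of the standard theory of idealistic equivalence relations. Granting these checks, the Borelness of the range and the definability of $f^{-1}$ follow exactly as in \cite{kechris_borel_2016}, which is the brief form of the proof one should expect.
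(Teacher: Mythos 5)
Your proposal matches the paper's treatment: the paper offers no independent proof of this proposition, simply presenting it as a restatement of Lemma 3.7 of Kechris--Macdonald and noting (in the proof of Proposition \ref{Proposition:DSet}) that the same argument goes through for the more generous notion of idealistic equivalence relation in which the identity selector is replaced by a Borel $s$ with $s(x)Ex$. Your sketch of how the fibres reduce to single $E$-classes and how idealisticity yields both the co-analyticity of the saturated range and a Borel lift of $f^{-1}$ is a faithful account of that argument, so this is the same approach with slightly more detail than the paper records.
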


The following result is a consequence of Lemma \ref{Lemma:Borel-target} and
Proposition \ref{Proposition:KM2}.

\begin{corollary}
\label{Corollary:Kechris--MacDonald}Suppose that $X=\hat{X}/E$ is a
definable set, and $Y=\hat{Y}/F$ is a semidefinable set. If $f:X\rightarrow
Y $ is a definable bijection, then $Y$ is a definable set and $f$ is an
isomorphism in $\mathbf{DSet}$.
\end{corollary}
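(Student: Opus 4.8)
The plan is to deduce this corollary by combining the two preceding results, Lemma~\ref{Lemma:Borel-target} and Proposition~\ref{Proposition:KM2}, in the obvious two-step fashion. So suppose $X=\hat{X}/E$ is a definable set, $Y=\hat{Y}/F$ is a semidefinable set, and $f:X\to Y$ is a definable bijection. First I would apply Lemma~\ref{Lemma:Borel-target}: since $E$ is Borel (as $X$ is definable) and $f$ is a definable bijection, the lemma yields that $F$ is Borel.

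Next I would invoke Proposition~\ref{Proposition:KM2} with the now-established fact that $F$ is Borel: $X$ is definable, $Y=\hat{Y}/F$ is semidefinable with $F$ Borel, and $f:X\to Y$ is a definable bijection, so the proposition gives that $f^{-1}:Y\to X$ is definable. Thus $f$ is an isomorphism in $\mathbf{SemiDSet}$.

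Finally, to conclude that $Y$ is actually a \emph{definable} set (not merely semidefinable), I would apply Lemma~\ref{Lemma:iso-semi}: $X$ is a definable set, $Y$ is a semidefinable set, and $X$ and $Y$ are isomorphic in $\mathbf{SemiDSet}$, so $Y$ is a definable set. Once $Y$ is known to be definable, the isomorphism $f$ (with definable inverse) is an isomorphism in $\mathbf{DSet}$.

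There is essentially no obstacle here: the corollary is a pure bookkeeping assembly of the three immediately preceding statements, and the only mild care needed is to apply them in the right logical order, namely first upgrading $F$ to Borel, then upgrading $f^{-1}$ to definable, and only then upgrading $Y$ to definable. The proof is therefore just a couple of lines.

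\begin{proof}
Since $X$ is a definable set, the equivalence relation $E$ is Borel. As $f:X\to Y$ is a definable bijection, Lemma~\ref{Lemma:Borel-target} implies that $F$ is Borel. Therefore we may apply Proposition~\ref{Proposition:KM2} to conclude that $f^{-1}:Y\to X$ is definable. Hence $f$ is an isomorphism in $\mathbf{SemiDSet}$ between the definable set $X$ and the semidefinable set $Y$, and Lemma~\ref{Lemma:iso-semi} implies that $Y$ is a definable set. Consequently $f$ is an isomorphism in $\mathbf{DSet}$.
\end{proof}
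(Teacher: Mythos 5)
Your proof is correct and follows exactly the paper's own argument: apply Lemma~\ref{Lemma:Borel-target} to upgrade $F$ to Borel, then Proposition~\ref{Proposition:KM2} to make $f$ an isomorphism in $\mathbf{SemiDSet}$, then Lemma~\ref{Lemma:iso-semi} to conclude $Y$ is definable and $f$ is an isomorphism in $\mathbf{DSet}$. No gaps.
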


\begin{proof}
By Lemma \ref{Lemma:Borel-target}, $F$ is Borel. Whence, by Proposition \ref%
{Proposition:KM2}, $f$ is an isomorphism in $\mathbf{SemiDSet}$. Since $X$
is a definable set, it follows from Lemma \ref{Lemma:iso-semi} that $Y$ is
also a definable set, and $f$ is an isomorphism in $\mathbf{DSet}$.
\end{proof}

\subsection{Definable groups\label{Subsection:definable-groups}}

A definable group can be simply defined as a group in the category $\mathbf{%
DSet}$ in the sense of \cite[Section III.6]{mac_lane_categories_1998}. Thus,
a definable group is a definable set $G=\hat{G}/E$ that is also a group, and
such that the group operation $G\times G\rightarrow G$ is definable, and the
function $G\rightarrow G$, $x\mapsto x^{-1}$ is also definable. As in the
case of sets, we regard a standard Borel group as a particular instance of
definable group $G=\hat{G}/E$ where $G=\hat{G}$ is a standard Borel group
and $E$ is the relation of equality on $\hat{G}$. Thus, standard Borel
groups form a full subcategory of the category of definable groups.

Naturally, a semidefinable group will be a group in $\mathbf{SemiDSets}$,
i.e.\ a semidefinable set $G=\hat{G}/E$ that is also a group, and such that
the group operation $G\times G\rightarrow G$ is definable, and the function $%
G\rightarrow G$ that maps every element to its inverse is definable.

\begin{lemma}
\label{Lemma:definable-group}If $G=\hat{G}/E$ is a semidefinable group, then
the equivalence relation $E$ is Borel if and only if the identity element of 
$G$, which is the $E$-class $\left[ \ast \right] _{E}$ of some element $\ast 
$ of $\hat{G}$, is a Borel subset of $\hat{G}$.
\end{lemma}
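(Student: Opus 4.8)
The statement is a biconditional. The forward direction is trivial: if $E$ is Borel, then every $E$-class is Borel (the slice of a Borel set), so in particular $[\ast]_E$ is Borel. The substance is the reverse direction: assuming the identity class $[\ast]_E$ is a Borel subset of $\hat G$, show that $E$ itself is Borel. The idea is to reduce membership in $E$ to membership in the identity class by translating, i.e.\ to express $xEy$ in terms of $\hat G$-level data and the Borel set $[\ast]_E$, using a \emph{Borel lift of multiplication and inversion} that a semidefinable group comes equipped with.

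\medskip

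\textbf{Key steps.} Let $\mu:\hat G\times\hat G\to\hat G$ be a Borel lift of the group operation $G\times G\to G$ and $\iota:\hat G\to\hat G$ a Borel lift of inversion; these exist by definition of a semidefinable group (recalling that the product $G\times G$ is presented by $\hat G\times\hat G$ with $E\times E$, and a definable function has a Borel lift). First I would check that $\mu$ descends correctly: for $x,x',y,y'\in\hat G$ with $xEx'$ and $yEy'$ one has $\mu(x,y)\,E\,\mu(x',y')$, and $[\mu(x,y)]_E = [x]_E\cdot[y]_E$ in $G$; similarly $[\iota(x)]_E = [x]_E^{-1}$. Now the crucial observation is that for $x,y\in\hat G$,
\begin{equation*}
xEy \iff [x]_E=[y]_E \iff [x]_E\cdot[y]_E^{-1} = [\ast]_E \iff \mu\bigl(x,\iota(y)\bigr)\in[\ast]_E .
\end{equation*}
The map $(x,y)\mapsto\mu(x,\iota(y))$ is Borel as a composition of Borel functions $\hat G\times\hat G\to\hat G$, and $[\ast]_E$ is Borel by hypothesis; hence $E$ is the preimage of a Borel set under a Borel map, so $E$ is a Borel subset of $\hat G\times\hat G$.

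\medskip

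\textbf{Main obstacle.} The only delicate point is justifying the middle equivalence $[x]_E=[y]_E \iff [x]_E\cdot[y]_E^{-1}=[\ast]_E$ at the level of $G$ — this is just elementary group theory in $G$ (an element equals the identity iff it is trivial, and $a=b \iff ab^{-1}=e$) — together with the verification that $\mu,\iota$ genuinely implement multiplication and inversion on classes, which is exactly what it means for the corresponding maps on $G$ to be well defined and to be the group operations. There is no real set-theoretic difficulty here: idealisticness of $E$ plays no role in this direction, and one does not even need $E$ to be analytic a priori beyond what the semidefinable-group hypothesis already grants. So I would present this as a short, clean argument: set up $\mu$ and $\iota$, record that they respect $E$ and compute correctly on classes, and then exhibit $E$ as $\{(x,y):\mu(x,\iota(y))\in[\ast]_E\}$.
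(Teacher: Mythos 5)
Your proof is correct and follows exactly the paper's argument: the forward direction is immediate, and for the converse the paper likewise takes Borel lifts $m,\zeta$ of multiplication and inversion and observes that $xEy$ if and only if $m(x,\zeta(y))\in[\ast]_E$, exhibiting $E$ as the preimage of the Borel set $[\ast]_E$ under a Borel map.
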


\begin{proof}
Clearly, if $E$ is Borel, then $\left[ \ast \right] _{E}$ is Borel.
Conversely, suppose that $\left[ \ast \right] _{E}$ is Borel. If $m:\hat{G}%
\times \hat{G}\rightarrow \hat{G}$ and $\zeta :\hat{G}\rightarrow \hat{G}$
are Borel lifts of the group operation in $X$ and of the function that maps
each element to its inverse, respectively, then we have that $xEy$ if and
only if $m(x,\zeta (y))\in \lbrack \ast ]_{E}$. This shows that $E$ is Borel.
\end{proof}

\begin{corollary}
\label{Corollary:definable-group}Suppose that $G=\hat{G}/E$ is a
semidefinable group. If $E$ is the orbit equivalence relation of a Borel
action of a Polish group $H$ on the standard Borel space $\hat{G}$, then $G$
is a definable group.
\end{corollary}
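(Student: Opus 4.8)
<br>

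The goal is to deduce Corollary~\ref{Corollary:definable-group} from Lemma~\ref{Lemma:definable-group}. The plan is as follows. By Lemma~\ref{Lemma:definable-group}, since $G=\hat{G}/E$ is already assumed to be a semidefinable group, to conclude that $G$ is a \emph{definable} group it suffices to show that the equivalence relation $E$ is Borel and idealistic. The hypothesis gives us that $E=E_{H}^{\hat{G}}$ is the orbit equivalence relation of a Borel action of a Polish group $H$ on the standard Borel space $\hat{G}$.

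First I would reduce to the case of a \emph{continuous} action on a Polish space. Invoking the result quoted in Subsection~1.1 (\cite[Theorem~5.2.1]{becker_descriptive_1996}), there is a Polish topology $\tau$ on $\hat{G}$, compatible with its Borel structure, making $(\hat{G},\tau)$ a Polish $H$-space; the orbit equivalence relation is unchanged by this retopologisation. Now, as recorded in the discussion immediately following Definition~\ref{Definition:idealistic}, the orbit equivalence relation of a Polish group acting on a Polish space is always idealistic, witnessed by the identity selector $s=\mathrm{id}_{\hat{G}}$ together with the assignment sending an orbit $C=H\cdot x$ to the $\sigma$-filter $\mathcal{F}_{C}$ defined by $A\in\mathcal{F}_{C}$ iff $\{h\in H: h\cdot x\in A\}$ is comeager in $H$ (this is well-defined on the orbit by the Pettis/Baire-category argument underlying that remark). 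Thus $E$ is idealistic regardless of whether it is Borel.

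It remains only to check that $E$ is Borel, and here is where I would apply Lemma~\ref{Lemma:definable-group}. The identity element of the group $G$ is an $E$-class $[\ast]_{E}=H\cdot\ast$ for some $\ast\in\hat{G}$, i.e.\ a single orbit. For a Borel action of a Polish group, every orbit is Borel: this is a standard consequence of Effros's theorem (or, concretely, the orbit $H\cdot\ast$ is the image of the Polish space $H$ under the Borel map $h\mapsto h\cdot\ast$ with Borel — indeed Polish — point-stabiliser, so it is analytic with all fibres "small", and one appeals to the Lusin–Novikov / Effros Borel-orbit theorem). Hence $[\ast]_{E}$ is a Borel subset of $\hat{G}$. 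By Lemma~\ref{Lemma:definable-group}, it follows that $E$ is Borel. Combining this with the previous paragraph, $E$ is Borel and idealistic, so $(\hat{G},E)$ is a definable set; since its group operations were already assumed definable, $G$ is a definable group.

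The only genuinely substantive input is the Borelness of a single orbit of a Borel Polish-group action, which is classical (Effros); everything else is bookkeeping assembling the hypotheses of Lemma~\ref{Lemma:definable-group} and the remark on idealistic orbit equivalence relations. I expect no serious obstacle; the one point requiring a little care is that Lemma~\ref{Lemma:definable-group} is stated for semidefinable groups and presupposes the existence of Borel lifts of the multiplication and inversion, which is exactly what "semidefinable group" provides, so the hypothesis is used correctly.
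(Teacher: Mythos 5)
Your proposal is correct and follows essentially the same route as the paper: retopologise via \cite[Theorem 5.2.1]{becker_descriptive_1996} to get a Polish $H$-space, use the Borelness of orbits and Lemma \ref{Lemma:definable-group} to conclude $E$ is Borel, and invoke the general fact that orbit equivalence relations of Polish group actions are idealistic (the paper cites \cite[Proposition 3.1.10]{gao_invariant_2009} and \cite[Proposition 5.4.10]{gao_invariant_2009} for these two facts). The only quibble is your parenthetical appeal to Lusin--Novikov for Borelness of a single orbit, which concerns countable sections and is not the right tool here; the correct reference is the Effros/Miller theorem that orbits of Borel Polish group actions are Borel, which you also name.
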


\begin{proof}
By \cite[Theorem 5.2.1]{becker_descriptive_1996} one can assume that $\hat{G}
$ is a Polish $H$-space, and $E$ is the orbit equivalence relation of a
continuous $H$-action on $\hat{G}$. By \cite[Proposition 3.1.10]%
{gao_invariant_2009}, every $E$-class is Borel. Therefore $E$ is Borel by
Lemma \ref{Lemma:definable-group}. Furthermore, $E$ is idealistic by \cite[%
Proposition 5.4.10]{gao_invariant_2009}.
\end{proof}

\begin{remark}
\label{Remark:Polish-cover}A particular instance of definable group is
obtained as follows. Suppose that $G$ is a Polish group and $H$ is a Borel
Polishable subgroup. Let $E_{H}^{G}$ be the coset equivalence relation of $H$
in $G$. The quotient group $G/H$ is the quotient of $G$ by the equivalence
relation $E_{H}^{G}$. Since $H$ is Polishable, $E_{H}^{G}$ is the orbit
equivalence relation of a Borel action of a Polish group on $G$. Thus, $%
G/H=G/E_{H}^{G}$ is a definable group by Corollary \ref%
{Corollary:definable-group}. The definable groups obtained in this way are
called \emph{groups with a Polish cover }in \cite{bergfalk_ulam_2020}.
\end{remark}

\section{Strict C*-algebras\label{Section:strict}}

In this section we introduce the notion of strict Banach space and strict
C*-algebra and some of their properties. Briefly, a strict Banach space is a
Banach space whose unit ball is endowed with a Polish topology (called the
strict topology) that is coarser than the norm-topology and induced by a
sequence of bounded seminorms. A suitable semicontinuity requirement relates
the norm and the strict topology.\ A strict C*-algebra is a strict Banach
space that is also a C*-algebra with some suitable continuity requirement
relating the C*-algebra operations and the strict topology. The name is
inspired by the strict topology on the multiplier algebra of a separable
C*-algebra, which will be one of the main examples. Other examples are
Paschke dual algebras of separable C*-algebras.

\subsection{Strict Banach spaces}

Let $X$ be a Banach space. We denote by $\mathrm{\mathrm{Ball}}\left(
X\right) $ its unit ball. A seminorm $p$ on $X$ is bounded if $\left\Vert
p\right\Vert :=\mathrm{sup}_{x\in \mathrm{\mathrm{Ball}}\left( X\right)
}p(x)<\infty $. We say that $p$ is contractive if $\left\Vert p\right\Vert
\leq 1$.

\begin{definition}
\label{Definition:strictB1}A strict Banach space is a Banach space $%
\mathfrak{X}$ such that $\mathrm{\mathrm{Ball}}\left( \mathfrak{X}\right) $
is endowed with a topology (called the strict topology) such that, for some
sequence $\left( p_{n}\right) $ of contractive seminorms on $\mathfrak{X}$,
letting $d$ be the pseudometric on $\mathrm{\mathrm{Ball}}\left( \mathfrak{X}%
\right) $ defined by%
\begin{equation*}
d\left( x,y\right) =\sum_{n\in \omega }2^{-n}p_{n}\left( x-y\right) \text{,}
\end{equation*}%
one has that:

\begin{enumerate}
\item $d$ is a complete metric that induces the strict topology on $\mathrm{%
\mathrm{Ball}}\left( \mathfrak{X}\right) $;

\item $\mathrm{\mathrm{Ball}}\left( \mathfrak{X}\right) $ contains a
countable strictly dense subset;

\item $\left\Vert x\right\Vert =\mathrm{sup}_{n\in \omega }p_{n}(x)$ for
every $x\in \mathfrak{X}$.
\end{enumerate}
\end{definition}

\begin{example}
Suppose that $X$ is a separable Banach space. Then $X$ is a strict Banach
space where the strict topology on $\mathrm{\mathrm{Ball}}\left( X\right) $
is the norm topology.
\end{example}

\begin{example}
Suppose that $Y$ is a separable Banach space, and $Y^{\ast }$ is its Banach
space dual. Then $Y^{\ast }$ is a strict Banach space where the strict
topology on $\mathrm{\mathrm{Ball}}\left( Y^{\ast }\right) $ is the
weak*-topology.
\end{example}

Let $X$ be a seminormed space, and consider the cone $\mathcal{S}\left(
X\right) $ of bounded seminorms on $X$ as a complete metric space, with
respect to the metric defined by $d\left( p,q\right) =\mathrm{sup}_{x\in 
\mathrm{\mathrm{Ball}}\left( X\right) }\left\vert p(x)-q(x)\right\vert $.
For a subset $\mathfrak{S}\subseteq \mathcal{S}\left( X\right) $, we let $%
\sigma \left( X,\mathfrak{S}\right) $ be the topology on $\mathrm{\mathrm{%
Ball}}\left( X\right) $ generated by $\mathfrak{S}$. We denote by \textrm{%
Ball}$\left( \mathfrak{S}\right) $ the set of contractive seminorms in $%
\mathfrak{S}$. If $p\in \mathcal{S}\left( X\right) $, $\mathfrak{S}\subseteq 
\mathcal{S}\left( X\right) $, and $\left( x_{n}\right) $ is a sequence in $%
\mathrm{\mathrm{Ball}}\left( X\right) $, then we say that:

\begin{itemize}
\item $\left( x_{n}\right) _{n\in \omega }$ is $p$-Cauchy if for every $%
\varepsilon >0$ there exists $n_{0}\in \omega $ such that, for $n,m\geq
n_{0} $, $p\left( x_{n}-x_{m}\right) <\varepsilon $;

\item $\left( x_{n}\right) _{n\in \omega }$ is $\mathfrak{S}$-Cauchy if it
is $p$-Cauchy for every $p\in \mathfrak{S}$;

\item $\mathrm{\mathrm{Ball}}\left( X\right) $ is $\mathfrak{S}$-complete
if, for every sequence $\left( x_{n}\right) _{n\in \omega }$ in \textrm{Ball}%
$\left( X\right) $, if $\left( x_{n}\right) _{n\in \omega }$ is $\mathfrak{S}
$-Cauchy, then $\left( x_{n}\right) _{n\in \omega }$ is $\sigma \left( X,%
\mathfrak{S}\right) $-convergent to some element of $\mathrm{\mathrm{Ball}}%
\left( X\right) $.
\end{itemize}

The following lemma is elementary.

\begin{lemma}
\label{Lemma:strict-Cauchy}Suppose that $X$ is a seminormed space. Let $\tau 
$ be a topology on $\mathrm{\mathrm{Ball}}\left( X\right) $. Assume that $%
\mathfrak{S}$ and $\mathfrak{T}$ are two sets of bounded seminorms on $X$
such that the topologies $\sigma \left( X,\mathfrak{S}\right) $ and $\sigma
\left( X,\mathfrak{T}\right) $ on $\mathrm{\mathrm{Ball}}\left( X\right) $
coincide with $\tau $. Then, for a sequence $\left( x_{n}\right) $ in $%
\mathrm{\mathrm{Ball}}\left( X\right) $, $\left( x_{n}\right) $ is $%
\mathfrak{S}$-Cauchy if and only if it is $\mathfrak{T}$-Cauchy. In this
case, we say that $\left( x_{n}\right) $ is $\tau $-Cauchy. It follows that $%
\mathrm{\mathrm{Ball}}\left( X\right) $ is $\mathfrak{S}$-complete if and
only if it is $\mathfrak{T}$-complete. In this case, we say that $\mathrm{%
\mathrm{Ball}}\left( X\right) $ is $\tau $-complete.
\end{lemma}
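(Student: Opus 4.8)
The plan is to reduce everything to the single implication \emph{$(x_n)$ is $\mathfrak{S}$-Cauchy $\Rightarrow$ $(x_n)$ is $\mathfrak{T}$-Cauchy}: once this is established, exchanging the roles of $\mathfrak{S}$ and $\mathfrak{T}$ gives the stated equivalence of Cauchyness, and the equivalence of completeness then follows formally, since $\sigma(X,\mathfrak{S})$-convergence of a sequence to a point of $\mathrm{Ball}(X)$ is \emph{literally} the same as $\tau$-convergence to that point (the two topologies on $\mathrm{Ball}(X)$ agree by hypothesis), and likewise for $\mathfrak{T}$. So the real content is one $\varepsilon$--$\delta$ argument.

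To prove that implication, I would fix a sequence $(x_n)$ in $\mathrm{Ball}(X)$ that is $\mathfrak{S}$-Cauchy, a seminorm $q\in\mathfrak{T}$, and $\varepsilon>0$, and look for $n_0$ with $q(x_n-x_m)<\varepsilon$ for all $n,m\geq n_0$. Since $0\in\mathrm{Ball}(X)$, the set $V:=\{z\in\mathrm{Ball}(X):q(z)<\varepsilon/2\}$ is an open neighbourhood of $0$ for $\sigma(X,\mathfrak{T})=\tau=\sigma(X,\mathfrak{S})$, so there are $p_1,\dots,p_k\in\mathfrak{S}$ and $\delta>0$ with $\{z\in\mathrm{Ball}(X):p_i(z)<\delta\ (i\leq k)\}\subseteq V$ (sets of this form are a neighbourhood base at $0$ for the topology generated by a family of seminorms). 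As $(x_n)$ is $p_i$-Cauchy for each $i$, pick $n_0$ with $p_i(x_n-x_m)<2\delta$ for all $i\leq k$ and all $n,m\geq n_0$. For such $n,m$ the crucial observation is that $\tfrac12(x_n-x_m)$ lies in $\mathrm{Ball}(X)$ and $p_i\bigl(\tfrac12(x_n-x_m)\bigr)=\tfrac12 p_i(x_n-x_m)<\delta$, so $\tfrac12(x_n-x_m)\in V$, i.e.\ $q(x_n-x_m)=2q\bigl(\tfrac12(x_n-x_m)\bigr)<\varepsilon$, as required.

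I expect no genuine obstacle: the lemma really is elementary. The one point that is not purely formal, and the only place any care is needed, is that $\mathrm{Ball}(X)$ is not a linear subspace of $X$, so the differences $x_n-x_m$ occurring in the definition of Cauchyness need not lie in $\mathrm{Ball}(X)$; this is handled, as above, by the harmless rescaling $z\mapsto\tfrac12 z$, which sends each difference $x-y$ with $x,y\in\mathrm{Ball}(X)$ back into $\mathrm{Ball}(X)$ and scales every seminorm by $\tfrac12$. Everything else is a direct unwinding of the definitions of $\sigma(X,\mathfrak{S})$, of $\mathfrak{S}$-Cauchy, and of $\mathfrak{S}$-completeness; in particular the completeness clause needs no separate argument, because once "$\mathfrak{S}$-Cauchy" has been identified with "$\mathfrak{T}$-Cauchy" ($=$ "$\tau$-Cauchy") and "$\sigma(X,\mathfrak{S})$-convergent in $\mathrm{Ball}(X)$" with "$\tau$-convergent in $\mathrm{Ball}(X)$", the defining condition of $\mathfrak{S}$-completeness becomes word-for-word that of $\mathfrak{T}$-completeness.
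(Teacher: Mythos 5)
Your proof is correct, and it is exactly the elementary argument the paper has in mind (the paper omits the proof, calling the lemma elementary): compare the neighbourhood filters of $0$ for the two seminorm topologies on $\mathrm{Ball}(X)$, which coincide by hypothesis, and transfer the Cauchy condition through a finite subfamily of $\mathfrak{S}$, with the completeness clause following formally once Cauchyness and convergence have been identified. The one genuinely non-formal point --- that $x_n-x_m$ need not lie in $\mathrm{Ball}(X)$, remedied by the rescaling $z\mapsto\tfrac12 z$ --- is correctly identified and handled.
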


In view of Lemma \ref{Lemma:strict-Cauchy} one can equivalently define a
strict Banach space as follows.

\begin{definition}
\label{Definition:strictB2}A strict Banach space is a Banach space $%
\mathfrak{X}$ such that $\mathrm{\mathrm{Ball}}\left( \mathfrak{X}\right) $
is endowed with a topology (called the strict topology) such that, for some
separable cone $\mathfrak{S}$ of bounded seminorms on $X$, one has that:

\begin{enumerate}
\item the strict topology on $\mathrm{\mathrm{Ball}}\left( \mathfrak{X}%
\right) $ is the $\sigma \left( \mathfrak{X},\mathfrak{S}\right) $-topology,
and $\mathrm{\mathrm{Ball}}\left( \mathfrak{X}\right) $ is strictly complete;

\item $\mathrm{\mathrm{Ball}}\left( \mathfrak{X}\right) $ contains a
countable strictly dense subset;

\item $\left\Vert x\right\Vert =\mathrm{sup}_{p\in \mathrm{\mathrm{Ball}}%
\left( \mathfrak{S}\right) }p(x)$ for every $x\in \mathfrak{X}$.
\end{enumerate}
\end{definition}

\begin{proposition}
Suppose that $\mathfrak{X}$ is a strict Banach space. Then $\mathrm{\mathrm{%
Ball}}\left( \mathfrak{X}\right) $ is a Polish topometric space when endowed
with the strict topology and the norm-distance.
\end{proposition}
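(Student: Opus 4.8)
The plan is to check directly the four defining properties of a Polish topometric space for $\mathrm{Ball}(\mathfrak{X})$ equipped with the strict topology $\tau$ and the norm distance $(x,y)\mapsto\|x-y\|$, reading each property off the three axioms of Definition~\ref{Definition:strictB1}. So fix a sequence $(p_{n})$ of contractive seminorms on $\mathfrak{X}$ witnessing that $\mathfrak{X}$ is a strict Banach space, and let $d(x,y)=\sum_{n\in\omega}2^{-n}p_{n}(x-y)$ be the associated metric on $\mathrm{Ball}(\mathfrak{X})$.

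First I would note that $(\mathrm{Ball}(\mathfrak{X}),\tau)$ is Polish: by axiom~(1) the metric $d$ is complete and induces $\tau$, and by axiom~(2) $\mathrm{Ball}(\mathfrak{X})$ has a countable $\tau$-dense subset, so $\tau$ is completely metrizable and separable; in particular $\tau$ is metrizable and hence Hausdorff, as a topometric space requires. Next, the norm distance is a complete metric on $\mathrm{Ball}(\mathfrak{X})$, since the closed unit ball of a Banach space is norm-closed, hence norm-complete. For the requirement that the norm topology refines $\tau$, I would invoke axiom~(3): from $p_{n}(w)\le\sup_{m}p_{m}(w)=\|w\|$ for every $n$ and every $w\in\mathfrak{X}$ one gets $d(x,y)\le\sum_{n\in\omega}2^{-n}\|x-y\|=2\|x-y\|$ for all $x,y\in\mathrm{Ball}(\mathfrak{X})$, so the identity map from $\mathrm{Ball}(\mathfrak{X})$ with the norm metric to $(\mathrm{Ball}(\mathfrak{X}),d)$ is Lipschitz; this is exactly the assertion that the norm topology is finer than $\tau$.

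The only point requiring a small argument is the $\tau$-lower-semicontinuity of the norm distance, i.e.\ that $\{(x,y)\in\mathrm{Ball}(\mathfrak{X})\times\mathrm{Ball}(\mathfrak{X}):\|x-y\|\le r\}$ is $\tau\times\tau$-closed for each $r\ge0$. The key observation is that every function $(x,y)\mapsto p_{n}(x-y)$ is $\tau\times\tau$-continuous: from $p_{n}(w)\le 2^{n}\sum_{m\in\omega}2^{-m}p_{m}(w)$ for $w\in\mathfrak{X}$ and the inequality $|p_{n}(x-y)-p_{n}(x'-y')|\le p_{n}(x-x')+p_{n}(y-y')$ one obtains $|p_{n}(x-y)-p_{n}(x'-y')|\le 2^{n}\bigl(d(x,x')+d(y,y')\bigr)$, and $\tau$ is metrized by $d$. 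Since axiom~(3) gives $\|x-y\|=\sup_{n}p_{n}(x-y)$, the norm distance is a pointwise supremum of $\tau\times\tau$-continuous functions, hence $\tau\times\tau$-lower-semicontinuous; concretely $\{(x,y):\|x-y\|\le r\}=\bigcap_{n}\{(x,y):p_{n}(x-y)\le r\}$ is an intersection of $\tau\times\tau$-closed sets. Assembling the three paragraphs verifies all the topometric axioms. I do not expect any genuine obstacle; the one thing to get right is that the seminorms $p_{n}$ are strictly continuous, which is exactly what the domination $2^{-n}p_{n}\le d$ supplies — or, working instead from Definition~\ref{Definition:strictB2}, the strict topology is the $\sigma(\mathfrak{X},\mathfrak{S})$-topology and every $p\in\mathfrak{S}$ is by construction $\sigma(\mathfrak{X},\mathfrak{S})$-continuous, which gives the same conclusion.
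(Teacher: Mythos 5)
Your proof is correct and follows essentially the same route as the paper's: the strict topology is Polish by definition, the norm topology is finer because the strict topology is induced by bounded (here contractive) seminorms, the norm distance is lower-semicontinuous as a supremum of strictly continuous functions via axiom (3), and it is complete because the norm on $\mathfrak{X}$ is. You simply spell out the details (e.g.\ the $\tau\times\tau$-continuity of each $(x,y)\mapsto p_{n}(x-y)$) that the paper leaves implicit.
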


\begin{proof}
By definition, the strict topology on $\mathrm{\mathrm{Ball}}\left( 
\mathfrak{X}\right) $ is Polish. Since the strict topology is induced by
bounded seminorms on $X$, it is coarser than the norm topology. The function 
$\left( x,y\right) \mapsto \left\Vert x-y\right\Vert $ is strictly
lower-semicontinuous, being the supremum of strictly continuous functions.
Since the norm on $X$ is complete, the distance $\left( x,y\right) \mapsto
\left\Vert x-y\right\Vert $ on \textrm{Ball}$\left( \mathfrak{X}\right) $ is
complete.
\end{proof}

Suppose that $\mathfrak{X}$ is a strict Banach space. We extend the strict
topology of \textrm{Ball}$\left( \mathfrak{X}\right) $ to any bounded subset
of $\mathfrak{X}$ by declaring the function%
\begin{equation*}
n\mathrm{\mathrm{Ball}}\left( \mathfrak{X}\right) \rightarrow \mathrm{%
\mathrm{Ball}}\left( \mathfrak{X}\right) \text{, }z\mapsto \frac{1}{n}z
\end{equation*}%
to be a homeomorphism with respect to the strict topology, where%
\begin{equation*}
n\mathrm{\mathrm{Ball}}\left( \mathfrak{X}\right) =\left\{ z\in \mathfrak{X}%
:\left\Vert z\right\Vert \leq n\right\} \text{.}
\end{equation*}
Then we have that addition and scalar multiplication on $\mathfrak{X}$ are
strictly continuous \emph{on bounded sets}, and the norm is strictly
lower-semicontinuous \emph{on bounded sets}. In particular $n\mathrm{\mathrm{%
Ball}}\left( X\right) $ is a strictly closed subspace of $m\mathrm{\mathrm{%
Ball}}\left( X\right) $ for $n\leq m$. Notice that, if $\mathfrak{Y}$ is a
norm-closed subspace of $X$ such that $\mathrm{\mathrm{Ball}}\left( 
\mathfrak{Y}\right) $ is strictly closed in \textrm{Ball}$\left( \mathfrak{X}%
\right) $, then $\mathfrak{Y}$ is a strict Banach space with the induced
norm and the induced strict topology.

\begin{definition}
\label{Definition:Borel}Let $\mathfrak{X}$ be a strict Banach space. The
(standard) Borel structure on $\mathfrak{X}$ is defined by declaring a
subset $A$ of $\mathfrak{X}$ to be Borel if and only if $A\cap n\mathrm{%
\mathrm{Ball}}\left( \mathfrak{X}\right) $ is Borel for every $n\geq 1$.
\end{definition}

Notice that the Borel structure on $\mathfrak{X}$ is standard, as $\mathfrak{%
X}$ is Borel isomorphic to the disjoint union of the standard Borel spaces $%
\left( n+1\right) \mathrm{\mathrm{Ball}}\left( \mathfrak{X}\right) \setminus
n\mathrm{\mathrm{Ball}}\left( \mathfrak{X}\right) $ for $n\geq 1$.

\begin{definition}
\label{Definition:strictL}If $\mathfrak{X}$ and $\mathfrak{Y}$ are strict
Banach spaces. A bounded linear map $T:\mathfrak{X}\rightarrow \mathfrak{Y}$
is \emph{contractive }if $\left\Vert T\right\Vert \leq 1$, and \emph{strict }%
if it is strictly continuous on bounded sets. A bounded seminorm $p$ on $%
\mathfrak{X}$ is \emph{strict }if it is strictly continuous on bounded sets.
\end{definition}

Clearly, strict Banach spaces form a category where the morphisms are the
strict contractive linear maps. Notice that, if $T$ is a strict, bijective,
and isometric linear map $T:\mathfrak{X}\rightarrow \mathfrak{Y}$ between
strict Banach spaces, then the inverse $T^{-1}:\mathfrak{Y}\rightarrow 
\mathfrak{X}$ is not necessarily strict, whence $T$ is not necessarily an
isomorphism in the category of strict Banach spaces. Nonetheless, $T:%
\mathfrak{X}\rightarrow \mathfrak{Y}$ is a \emph{Borel isomorphism}, as both 
$\mathfrak{X}$ and $\mathfrak{Y}$ are standard Borel spaces.

\begin{definition}
\label{Definition:strict-seminorms}Let $\mathfrak{X}$ be a strict Banach
space. Define $\mathcal{S}_{\mathrm{strict}}\left( \mathfrak{X}\right) $ to
be the space of bounded, strict seminorms on $\mathfrak{X}$.
\end{definition}

Notice that $\mathcal{S}_{\mathrm{strict}}\left( \mathfrak{X}\right) $ is a
closed subspace of the complete metric space $\mathcal{S}\left( \mathfrak{X}%
\right) $. A sequence $\left( x_{n}\right) $ in $\mathrm{\mathrm{Ball}}%
\left( \mathfrak{X}\right) $ is strictly convergent if and only if it is $%
\mathcal{S}_{\mathrm{strict}}\left( \mathfrak{X}\right) $-Cauchy. A bounded
linear map $T:\mathfrak{X}\rightarrow Y$ is strict if and only if $p\circ
T\in \mathcal{S}_{\mathrm{strict}}\left( \mathfrak{X}\right) $ for every $%
p\in \mathcal{S}_{\mathrm{strict}}\left( Y\right) $.

\begin{remark}
\label{Remark:non-countable}Suppose that $\mathfrak{X}$ is a strict Banach
space, and $\mathfrak{S}$ is a separable cone of bounded, strict seminorms
on $\mathfrak{X}$ that induces the strict topology on $\mathrm{\mathrm{Ball}}%
\left( \mathfrak{X}\right) $. One can consider the globally defined topology 
$\sigma \left( \mathfrak{X},\mathfrak{S}\right) $ on $\mathfrak{X}$, induced
by all the seminorms in $\mathfrak{S}$. This topology coincides with the
strict topology on $\mathrm{\mathrm{Ball}}\left( \mathfrak{X}\right) $.
However, it is not first countable on the whole of $\mathfrak{X}$, unless $%
\mathfrak{X}$ is a separable Banach space and the strict topology is equal
to the norm topology. Indeed, if the $\sigma \left( \mathfrak{X},\mathfrak{S}%
\right) $-topology on $\mathfrak{X}$ is first-countable, then $\left( 
\mathfrak{X},\sigma \left( \mathfrak{X},\mathfrak{S}\right) \right) $ is a
Frechet space. By the Open Mapping Theorem for Frechet spaces \cite[Theorem
8, page 120]{robertson_topological_1964}, any two comparable Frechet space
topologies must be equal. Thus, $\sigma \left( \mathfrak{X},\mathfrak{S}%
\right) $ equals the norm topology. In particular, the norm-topology on $%
\mathrm{\mathrm{Ball}}\left( \mathfrak{X}\right) $ is equal to the strict
topology, and it has a countable dense subset. Hence the norm-topology on $%
\mathfrak{X}$ is separable.
\end{remark}

For future reference, we record the easily proved observation that a uniform
limit of strictly continuous functions is strictly continuous.

\begin{lemma}
\label{Lemma:uniform-convergence}Suppose that $\mathfrak{X}$ and $\mathfrak{Y%
}$ are strict Banach spaces, and $A\subseteq \mathrm{\mathrm{Ball}}\left( 
\mathfrak{X}\right) $. Suppose that $f:A\rightarrow \mathfrak{Y}$ is an
function. Assume that there exists an sequence $\left( f_{n}\right) $ of
strictly continuous function $f_{n}:A\rightarrow \mathfrak{Y}$ such that%
\begin{equation*}
\mathrm{lim}_{n\rightarrow \infty }\mathrm{sup}_{x\in A}\left\Vert
f_{n}(x)-f(x)\right\Vert =0\mathrm{.}
\end{equation*}%
Then $f$ is strictly continuous.
\end{lemma}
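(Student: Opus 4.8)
The plan is to run the classical argument that a uniform limit of continuous functions is continuous, carried out one bounded strict seminorm at a time.

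First I would fix, using Definition~\ref{Definition:strictB2}, a separable cone $\mathfrak{S}$ of bounded seminorms on $\mathfrak{Y}$ whose associated topology $\sigma(\mathfrak{Y},\mathfrak{S})$ is the strict topology on $\mathrm{\mathrm{Ball}}(\mathfrak{Y})$. Each $p\in\mathfrak{S}$ is then strictly continuous on bounded sets, so $\mathfrak{S}\subseteq\mathcal{S}_{\mathrm{strict}}(\mathfrak{Y})$; and since $z\mapsto\tfrac1n z$ is declared to be a strict homeomorphism $n\,\mathrm{\mathrm{Ball}}(\mathfrak{Y})\to\mathrm{\mathrm{Ball}}(\mathfrak{Y})$, the topology $\sigma(\mathfrak{Y},\mathfrak{S})$ still induces the strict topology on each $n\,\mathrm{\mathrm{Ball}}(\mathfrak{Y})$. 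Hence a $\mathfrak{Y}$-valued map $g$ defined on a subset of $\mathrm{\mathrm{Ball}}(\mathfrak{X})$ is strictly continuous at a point $x_0$ of its domain precisely when $p(g(x_k)-g(x_0))\to 0$ for every $p\in\mathfrak{S}$ whenever $x_k\to x_0$ strictly in the domain (the strict topology being metrizable, sequences suffice). So it is enough to verify this condition for $f$.

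Next I would record the uniform estimate on a fixed $p\in\mathfrak{S}$: since $p$ is a bounded seminorm, $p(z)\le\|p\|\,\|z\|$ for all $z\in\mathfrak{Y}$, whence
\[
\sup_{x\in A}p\bigl(f_n(x)-f(x)\bigr)\le\|p\|\,\sup_{x\in A}\bigl\|f_n(x)-f(x)\bigr\|\longrightarrow 0\qquad(n\to\infty)
\]
by hypothesis. Then, given $x_0\in A$, a strictly convergent sequence $x_k\to x_0$ in $A$, and $\varepsilon>0$, I would choose $n$ with $\sup_{x\in A}p(f_n(x)-f(x))<\varepsilon/3$, use strict continuity of $f_n$ to find $k_0$ with $p(f_n(x_k)-f_n(x_0))<\varepsilon/3$ for all $k\ge k_0$, and conclude from the triangle inequality
\[
p\bigl(f(x_k)-f(x_0)\bigr)\le p\bigl(f(x_k)-f_n(x_k)\bigr)+p\bigl(f_n(x_k)-f_n(x_0)\bigr)+p\bigl(f_n(x_0)-f(x_0)\bigr)
\]
that $p(f(x_k)-f(x_0))<\varepsilon$ for all $k\ge k_0$. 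As $p$, $x_0$ and $\varepsilon$ were arbitrary, this shows that $f$ is strictly continuous.

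I do not expect any genuine obstacle: this is the textbook $3\varepsilon$-argument. The only points that deserve a line of justification are those in the first paragraph — that strict continuity of a $\mathfrak{Y}$-valued map on a bounded set is detected by the individual seminorms of a generating cone, and that such a cone may be chosen inside $\mathcal{S}_{\mathrm{strict}}(\mathfrak{Y})$ so that the hypothesis on the $f_n$ is used in the intended form — together with the elementary inequality $p(z)\le\|p\|\,\|z\|$ transferring uniform norm convergence to each seminorm. The estimate is insensitive to whether ``strictly continuous'' is read with values in a fixed ball $n\,\mathrm{\mathrm{Ball}}(\mathfrak{Y})$ or with values in $\mathfrak{Y}$ under the globally defined topology $\sigma(\mathfrak{Y},\mathcal{S}_{\mathrm{strict}}(\mathfrak{Y}))$, so I would not dwell on that distinction.
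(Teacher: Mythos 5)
Your proof is correct: the paper states this lemma as an ``easily proved observation'' and gives no argument, and your $3\varepsilon$ proof---reducing strict continuity to convergence in each seminorm of a generating cone via $p(z)\le\Vert p\Vert\,\Vert z\Vert$---is exactly the intended one. The points you flag (sequences suffice by metrizability, and the insensitivity to reading the strict topology on a fixed ball versus globally via $\sigma(\mathfrak{Y},\mathcal{S}_{\mathrm{strict}}(\mathfrak{Y}))$) are handled appropriately.
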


A standard Baire Category argument shows that one can characterize bounded
subsets in terms of bounded, strict seminorms, as follows.

\begin{lemma}
\label{Lemma:bounded}Let $\mathfrak{X}$ be a strict Banach space. If $%
A\subseteq \mathfrak{X}$, then $A$ is bounded if and only if, for every $%
p\in \mathcal{S}_{\mathrm{strict}}\left( \mathfrak{X}\right) $, $p\left(
A\right) $ is a bounded subset of $\mathbb{R}$.
\end{lemma}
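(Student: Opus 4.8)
The plan is to treat the two implications separately. The forward direction is immediate: if $A$ is bounded, pick $n\geq 1$ with $A\subseteq n\mathrm{Ball}(\mathfrak{X})$; then for every bounded seminorm $p$ on $\mathfrak{X}$ and every $x\in A$ one has $p(x)\leq \|p\|\,\|x\|\leq n\|p\|$, so $p(A)$ is a bounded subset of $\mathbb{R}$, in particular for every $p\in\mathcal{S}_{\mathrm{strict}}(\mathfrak{X})$. The reverse direction is where the work is, and I would argue it as an instance of the uniform boundedness principle carried out over the complete metric space $\mathcal{S}_{\mathrm{strict}}(\mathfrak{X})$.

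For the reverse direction, first record two preliminary facts. First, $\mathcal{S}_{\mathrm{strict}}(\mathfrak{X})$ is a complete metric space, being a closed subspace of the complete metric space $\mathcal{S}(\mathfrak{X})$ (as noted after Definition \ref{Definition:strict-seminorms}), hence a Baire space. Second, for each fixed $x\in\mathfrak{X}$ the evaluation $p\mapsto p(x)$ is continuous on $\mathcal{S}(\mathfrak{X})$: rescaling $x$ into $\mathrm{Ball}(\mathfrak{X})$ gives $|p(x)-q(x)|\leq \|x\|\,d(p,q)$, where $d$ is the metric on $\mathcal{S}(\mathfrak{X})$. Now assume $p(A)$ is bounded for every $p\in\mathcal{S}_{\mathrm{strict}}(\mathfrak{X})$, and for $M>0$ set
\[
E_M=\bigl\{p\in\mathcal{S}_{\mathrm{strict}}(\mathfrak{X}): p(x)\leq M\text{ for all }x\in A\bigr\}=\bigcap_{x\in A}\{p: p(x)\leq M\}.
\]
Each $E_M$ is closed by continuity of the evaluations, and $\bigcup_{M>0}E_M=\mathcal{S}_{\mathrm{strict}}(\mathfrak{X})$ by hypothesis, so by the Baire Category Theorem some $E_{M_0}$ has nonempty interior: there are $p_0\in\mathcal{S}_{\mathrm{strict}}(\mathfrak{X})$ and $\varepsilon>0$ with $d(q,p_0)<\varepsilon\Rightarrow q\in E_{M_0}$.

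The key step is then a homogenization argument. If $q\in\mathcal{S}_{\mathrm{strict}}(\mathfrak{X})$ has $\|q\|<\varepsilon$, then $p_0+q\in\mathcal{S}_{\mathrm{strict}}(\mathfrak{X})$ and $d(p_0+q,p_0)=\sup_{\|y\|\leq 1}q(y)=\|q\|<\varepsilon$ (using $q\geq 0$), so $p_0+q\in E_{M_0}$ and hence $q(x)\leq (p_0+q)(x)\leq M_0$ for all $x\in A$; scaling $q$ gives $\sup_{x\in A}q(x)\leq (M_0/\varepsilon)\|q\|$ for \emph{every} $q\in\mathcal{S}_{\mathrm{strict}}(\mathfrak{X})$. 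Applying this to the seminorms $p_n$ of Definition \ref{Definition:strictB1}, which are contractive and strict, yields $\sup_{x\in A}p_n(x)\leq M_0/\varepsilon$ for every $n$, so $\sup_{x\in A}\|x\|=\sup_{x\in A}\sup_n p_n(x)\leq M_0/\varepsilon<\infty$ and $A$ is bounded.

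The one point requiring a little care — the ``hard part'', such as it is — is checking that the $p_n$ of Definition \ref{Definition:strictB1} genuinely lie in $\mathcal{S}_{\mathrm{strict}}(\mathfrak{X})$, i.e. are strictly continuous on \emph{every} bounded set and not just on $\mathrm{Ball}(\mathfrak{X})$; this uses that $p_n$ is contractive together with the fact, recorded before Definition \ref{Definition:Borel}, that $z\mapsto z/n$ is a strict homeomorphism of $n\mathrm{Ball}(\mathfrak{X})$ onto $\mathrm{Ball}(\mathfrak{X})$, so strict continuity on the unit ball propagates to all bounded sets. Everything else is routine. Alternatively, one can avoid Baire category entirely: if $A$ is unbounded then $\sup_n\sup_{x\in A}p_n(x)=\infty$, and either some $p_n$ is already unbounded on $A$, or, passing to a fast-growing subsequence $(n_k)$, the contractive seminorm $\sum_k 2^{-k}p_{n_k}$ — strict by Lemma \ref{Lemma:uniform-convergence} — is unbounded on $A$.
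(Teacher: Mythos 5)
Your proof is correct, and it is essentially the ``standard Baire Category argument'' that the paper invokes for this lemma without writing out any details: your uniform-boundedness argument over the complete metric cone $\mathcal{S}_{\mathrm{strict}}\left( \mathfrak{X}\right) $, with the homogenization step and the final reduction to the contractive seminorms $p_{n}$ via $\left\Vert x\right\Vert =\sup_{n}p_{n}(x)$, supplies exactly what is needed, and the side check that each $p_{n}$ is strict is handled correctly. The Baire-free alternative at the end (constructing $\sum_{k}2^{-k}p_{n_{k}}$, strict by Lemma \ref{Lemma:uniform-convergence}) is also valid, but it is an extra, not a correction.
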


A natural way to obtain strict Banach spaces is via pairings.

\begin{definition}
\label{Definition:Banach-pairing}A Banach pairing is a bounded bilinear map $%
\left\langle \cdot ,\cdot \right\rangle :\mathfrak{X}\times Y\rightarrow Z$,
where $\mathfrak{X},Y,Z$ are Banach spaces. Define the $\sigma \left( 
\mathfrak{X},Y\right) $-topology to be the topology on $\mathfrak{X}$
generated by the cone $\mathfrak{S}_{Y}$ of bounded seminorms $x\mapsto
\left\Vert \left\langle x,y\right\rangle \right\Vert $ for $y\in Y$.
\end{definition}

The following lemma is an immediate consequence of the definition of strict
Banach space.

\begin{lemma}
\label{Lemma:Banach-pairing}Suppose that $\left\langle \cdot ,\cdot
\right\rangle :\mathfrak{X}\times Y\rightarrow Z$ is a Banach pairing.\
Assume that:

\begin{itemize}
\item $Y,Z$ are norm-separable Banach spaces;

\item for every $x_{0}\in \mathfrak{X}$,%
\begin{equation*}
\left\Vert x_{0}\right\Vert =\mathrm{sup}_{y\in \mathrm{\mathrm{Ball}}\left(
Y\right) }\left\Vert \left\langle x_{0},y\right\rangle \right\Vert \text{;}
\end{equation*}

\item $\mathrm{\mathrm{Ball}}\left( \mathfrak{X}\right) $ is $\sigma \left( 
\mathfrak{X},Y\right) $-complete;

\item $\mathrm{\mathrm{Ball}}\left( \mathfrak{X}\right) $ has a countable $%
\sigma \left( \mathfrak{X},Y\right) $-dense subset.
\end{itemize}

Then $\mathfrak{X}$ is a strict Banach space where the strict topology on $%
\mathrm{\mathrm{Ball}}\left( \mathfrak{X}\right) $ is the $\sigma \left( 
\mathfrak{X},Y\right) $-topology.
\end{lemma}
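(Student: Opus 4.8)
The plan is to verify directly that $\mathfrak{X}$ satisfies Definition~\ref{Definition:strictB2}, taking as the witnessing cone $\mathfrak{S}$ the cone of bounded seminorms on $\mathfrak{X}$ generated by the seminorms $p_{y}\colon x\mapsto \Vert \langle x,y\rangle \Vert$ for $y\in Y$; by Definition~\ref{Definition:Banach-pairing} this cone generates the $\sigma(\mathfrak{X},Y)$-topology on $\mathrm{Ball}(\mathfrak{X})$, since passing from a family of seminorms to the cone it generates leaves the induced topology unchanged. The first observation I would make is that the second hypothesis already forces the pairing to be contractive: for $\Vert y\Vert \leq 1$ and $x_{0}\in \mathfrak{X}$ one has $\Vert \langle x_{0},y\rangle \Vert \leq \sup_{y'\in \mathrm{Ball}(Y)}\Vert \langle x_{0},y'\rangle \Vert =\Vert x_{0}\Vert$, and hence $\Vert \langle x_{0},y\rangle \Vert \leq \Vert x_{0}\Vert\,\Vert y\Vert$ for all $y$ by homogeneity. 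Consequently each $p_{y}$ is a bounded seminorm with $\Vert p_{y}\Vert \leq \Vert y\Vert$, each $p_{y}$ with $\Vert y\Vert \leq 1$ is contractive, and $y\mapsto p_{y}$ is $1$-Lipschitz from $Y$ into the metric cone $\mathcal{S}(\mathfrak{X})$, because $d(p_{y},p_{z})\leq \sup_{x\in \mathrm{Ball}(\mathfrak{X})}\Vert \langle x,y-z\rangle \Vert \leq \Vert y-z\Vert$.

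Granting this, I would check the three clauses of Definition~\ref{Definition:strictB2}. For separability of $\mathfrak{S}$: since $Y$ is separable and $y\mapsto p_{y}$ is Lipschitz, $\{p_{y}:y\in Y\}$ is separable in $\mathcal{S}(\mathfrak{X})$, and the countable family of finite sums $\sum_{i}\mu_{i}p_{z_{i}}$ with $\mu_{i}\in \mathbb{Q}_{\geq 0}$ and $z_{i}$ ranging over a fixed countable dense subset of $Y$ is dense in $\mathfrak{S}$ (approximate $\sum_{i}\lambda_{i}p_{y_{i}}$ term by term, using $\Vert p_{y}\Vert \leq \Vert y\Vert$ and the Lipschitz bound). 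Clause (1): the $\sigma(\mathfrak{X},\mathfrak{S})$-topology on $\mathrm{Ball}(\mathfrak{X})$ is the $\sigma(\mathfrak{X},Y)$-topology, and by Lemma~\ref{Lemma:strict-Cauchy} the resulting notion of $\mathfrak{S}$-completeness of $\mathrm{Ball}(\mathfrak{X})$ coincides with $\sigma(\mathfrak{X},Y)$-completeness, which is the third hypothesis. Clause (2) is exactly the fourth hypothesis. Clause (3): any contractive seminorm $p$ satisfies $p(x)\leq \Vert x\Vert$, while each $p_{y}$ with $\Vert y\Vert \leq 1$ belongs to $\mathrm{Ball}(\mathfrak{S})$ and $\sup_{\Vert y\Vert \leq 1}p_{y}(x)=\Vert x\Vert$ by the second hypothesis, so $\sup_{p\in \mathrm{Ball}(\mathfrak{S})}p(x)=\Vert x\Vert$.

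The proof is essentially bookkeeping against the definitions, so I do not anticipate a real difficulty; the one point worth isolating is that the norm-recovery hypothesis $\Vert x_{0}\Vert =\sup_{y\in \mathrm{Ball}(Y)}\Vert \langle x_{0},y\rangle \Vert$ does double duty, yielding both contractivity of the pairing (and thus that the $p_{y}$ with $\Vert y\Vert \leq 1$ are contractive and $y\mapsto p_{y}$ is Lipschitz, which is what makes $\mathfrak{S}$ separable) and clause (3) of Definition~\ref{Definition:strictB2}. The norm-separability of $Z$ itself plays no role in this verification.
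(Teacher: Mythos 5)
Your proof is correct and follows the route the paper intends: the paper states this lemma as an immediate consequence of Definition \ref{Definition:strictB2}, and your argument is exactly the direct verification of that definition (with the cone generated by the seminorms $x\mapsto \Vert\langle x,y\rangle\Vert$ as the witness $\mathfrak{S}$), including the correct observations that the norm-recovery hypothesis yields contractivity of the pairing and clause (3), and that separability of $Z$ is not needed.
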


Suppose that $X$ is a norm-separable Banach space, and $\mathfrak{Y}$ is a
strict Banach spaces. A linear map $T:X\rightarrow \mathfrak{Y}$ is bounded
if it maps bounded sets to bounded sets or, equivalently,%
\begin{equation*}
\left\Vert T\right\Vert =\mathrm{sup}_{x\in \mathrm{\mathrm{Ball}}\left(
X\right) }\left\Vert T(x)\right\Vert <\infty \text{.}
\end{equation*}%
This defines a norm on the space $L\left( X,\mathfrak{Y}\right) $ of bounded
linear maps $X\rightarrow \mathfrak{Y}$.\ We also define the strict topology
on \textrm{Ball}$\left( L\left( X,\mathfrak{Y}\right) \right) $ to be the
topology of pointwise convergence in the strict topology of \textrm{Ball}$%
\left( \mathfrak{Y}\right) $. Then one can easily show the following.

\begin{proposition}
\label{Proposition:strict-operators}Suppose that $X$ is a norm-separable
Banach space, and $\mathfrak{Y}$ is a strict Banach space. Then $L\left( X,%
\mathfrak{Y}\right) $ is a strict Banach space.
\end{proposition}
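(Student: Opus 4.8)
The plan is to verify directly the three conditions of Definition~\ref{Definition:strictB1} for $\mathfrak{X}:=L(X,\mathfrak{Y})$ (note that $\mathfrak{X}$ is a Banach space in the operator norm by the usual argument, since $\mathfrak{Y}$ is norm-complete), using as defining contractive seminorms the evaluations at a countable dense subset of $\mathrm{Ball}(X)$ composed with the defining seminorms of $\mathfrak{Y}$. So first I would fix a countable dense subset $\{x_{k}:k\in\omega\}$ of $\mathrm{Ball}(X)$ and a sequence $(p_{n})_{n\in\omega}$ of contractive seminorms on $\mathfrak{Y}$ as in Definition~\ref{Definition:strictB1}, and set $q_{k,n}(T):=p_{n}(Tx_{k})$ for $k,n\in\omega$. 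Since $p_{n}$ is contractive and $\|x_{k}\|\leq 1$ we get $q_{k,n}(T)\leq\|Tx_{k}\|\leq\|T\|$, so each $q_{k,n}$ is a contractive seminorm; I would then enumerate $\{q_{k,n}:k,n\in\omega\}$ as a single sequence $(\tilde q_{m})_{m\in\omega}$ and let $d(S,T):=\sum_{m\in\omega}2^{-m}\tilde q_{m}(S-T)$.

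The key step will be a Banach--Steinhaus-type density transfer showing that on $\mathrm{Ball}(\mathfrak{X})$ the topology generated by the $\tilde q_{m}$ agrees with the prescribed strict topology (pointwise convergence in the strict topology of $\mathrm{Ball}(\mathfrak{Y})$): given $x\in\mathrm{Ball}(X)$, $n\in\omega$ and $\varepsilon>0$, choosing $k$ with $\|x-x_{k}\|<\varepsilon$ gives, for all $S,T\in\mathrm{Ball}(\mathfrak{X})$,
\[
p_{n}(Sx-Tx)\leq\|S(x-x_{k})\|+p_{n}(Sx_{k}-Tx_{k})+\|T(x_{k}-x)\|\leq 2\varepsilon+q_{k,n}(S-T),
\]
so each evaluation $T\mapsto Tx$ ($x\in\mathrm{Ball}(X)$) is continuous for the topology of the countably many $\tilde q_{m}$, while the reverse inclusion is trivial; from this it follows that $d$ induces the strict topology on $\mathrm{Ball}(\mathfrak{X})$. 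That $d$ separates points, and condition~(3) of Definition~\ref{Definition:strictB1}, both come from condition~(3) for $\mathfrak{Y}$: since $\|z\|=\sup_{n}p_{n}(z)$ on $\mathfrak{Y}$ and $t\mapsto\|Tt\|$ is norm-continuous on the dense set $\{x_{k}\}\subseteq\mathrm{Ball}(X)$,
\[
\sup_{m}\tilde q_{m}(T)=\sup_{k}\sup_{n}p_{n}(Tx_{k})=\sup_{k}\|Tx_{k}\|=\sup_{t\in\mathrm{Ball}(X)}\|Tt\|=\|T\|.
\]

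To finish condition~(1) I would check completeness: if $(T_{j})_{j}$ is $d$-Cauchy then $(T_{j}x_{k})_{j}$ is $p_{n}$-Cauchy for all $k,n$, and the displayed inequality propagates this to show $(T_{j}x)_{j}$ is strictly Cauchy for every $x\in\mathrm{Ball}(X)$, hence (after rescaling) strictly Cauchy inside a fixed ball for every $x\in X$; strict completeness of $\mathrm{Ball}(\mathfrak{Y})$ then lets me define $Tx:=\lim_{j}T_{j}x$ strictly, and $T$ is linear because addition and scalar multiplication are strictly continuous on bounded sets, while $\|Tx\|\leq\liminf_{j}\|T_{j}x\|\leq\|x\|$ because the norm is strictly lower-semicontinuous on bounded sets, so $T\in\mathrm{Ball}(\mathfrak{X})$ and $T_{j}\to T$ in $d$. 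For condition~(2), the map $T\mapsto(Tx_{k})_{k\in\omega}$ embeds $\mathrm{Ball}(\mathfrak{X})$, with the strict topology, homeomorphically onto a subspace of the separable metrizable space $\prod_{k\in\omega}\mathrm{Ball}(\mathfrak{Y})$ (injective by density of $\{x_{k}\}$ and norm-continuity of operators; a homeomorphism onto its image again by the density transfer), so $\mathrm{Ball}(\mathfrak{X})$ is separable and has a countable strictly dense subset. The main obstacle I anticipate is the completeness step: producing the limit operator $T$ and verifying it is well defined, linear, norm-contractive, and the genuine $d$-limit is precisely where all the hypotheses on $\mathfrak{Y}$ (strict completeness, strict continuity of the operations on bounded sets, lower semicontinuity of the norm) are needed; the density transfer, though the conceptual heart, is routine once set up.
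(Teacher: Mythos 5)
Your proof is correct and fills in exactly the verification the paper leaves to the reader (the paper states this proposition with no proof, remarking only that "one can then easily show" it): the countable family $q_{k,n}(T)=p_{n}(Tx_{k})$, the $2\varepsilon$ density-transfer estimate identifying the resulting metric topology with pointwise strict convergence, and the completeness argument via strict completeness of $\mathrm{Ball}(\mathfrak{Y})$ together with strict continuity of the vector operations and lower semicontinuity of the norm on bounded sets are precisely the intended ingredients. No gaps.
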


\subsection{Strict C*-algebras}

We now introduce the notion of strict C*-algebra. Given a C*-algebra $A$, we
let $A_{\mathrm{sa}}$ be the set of its self-adjoint elements. We also
denote by $M_{n}\left( A\right) $ the C*-algebra of $n\times n$ matrices
over $A$, which can be identified with the tensor product $M_{n}\left( 
\mathbb{C}\right) \otimes A$. We refer to \cite%
{blackadar_operator_2006,davidson_algebras_1996,murphy_algebras_1990,pedersen_algebras_1979}
for fundamental notions and results from the theory of C*-algebras.

\begin{definition}
\label{Definition:strictC}A \emph{strict C*-algebra} is a C*-algebra $%
\mathfrak{A}$ such that, for every $n\geq 1$, $M_{n}\left( \mathfrak{A}%
\right) $ is also a strict Banach space satisfying the following properties:

\begin{enumerate}
\item the *-operation and the multiplication operation on $M_{n}\left( 
\mathfrak{A}\right) $ are strictly continuous on bounded sets;

\item the strict topology on $\mathrm{\mathrm{Ball}}\left( M_{n}\left( 
\mathfrak{A}\right) \right) $ is induced by the inclusion 
\begin{equation*}
\mathrm{\mathrm{Ball}}\left( M_{n}\left( \mathfrak{A}\right) \right)
\subseteq M_{n}\left( \mathrm{\mathrm{Ball}}\left( \mathfrak{A}\right)
\right) \text{,}
\end{equation*}%
where $\mathrm{\mathrm{Ball}}\left( \mathfrak{A}\right) $ is endowed with
the strict topology, and $M_{n}\left( \mathrm{\mathrm{Ball}}\left( \mathfrak{%
A}\right) \right) $ is endowed with the product topology.
\end{enumerate}
\end{definition}

\begin{example}
Suppose that $A$ is a \emph{separable} C*-algebra. Then we have that $A$ is
a strict C*-algebra where, for every $n\geq 1$, $\mathrm{\mathrm{Ball}}%
\left( M_{n}\left( A\right) \right) $ is endowed with the norm-topology.
\end{example}

Suppose that $\mathfrak{A}$ is a strict C*-algebra. Then, for every $n\geq 1$%
, $M_{n}\left( \mathfrak{A}\right) $ is also a strict C*-algebra. If $%
\mathfrak{A}$ is a strict C*-algebra, then we regard $\mathfrak{A}$ as a
standard Borel space with respect to the standard Borel structure induced by
the strict topology on $\mathrm{\mathrm{Ball}}\left( \mathfrak{A}\right) $
as in Definition \ref{Definition:Borel}. We say that a subset of $\mathfrak{A%
}$ is Borel if it is Borel with respect to such a Borel structure. We have
that the Borel structure on $M_{n}\left( \mathfrak{A}\right) $ (as a strict
C*-algebra) coincides with the product Borel structure.

\begin{definition}
\label{Definition:strict-ideal}Suppose that $\mathfrak{A}$ is a strict
unital C*-algebra. A \emph{strict ideal} of $\mathfrak{A}$ is a norm-closed
proper two-sided Borel ideal $\mathfrak{J}$ of $\mathfrak{A}$ that is also a
strict C*-algebra, and such that the inclusion map $\mathfrak{J}\rightarrow 
\mathfrak{A}$ is strict.
\end{definition}

\begin{remark}
In order for $\mathfrak{J}$ to be a strict ideal of $\mathfrak{A}$, we do
not require that $\mathrm{\mathrm{Ball}}\left( \mathfrak{J}\right) $ be
strictly closed in $\mathrm{\mathrm{Ball}}\left( \mathfrak{A}\right) $ nor
that the strict topology on $\mathrm{\mathrm{Ball}}\left( \mathfrak{J}%
\right) $ be the subspace topology induced by the strict topology of $%
\mathrm{\mathrm{Ball}}\left( \mathfrak{A}\right) $.
\end{remark}

\begin{example}
Suppose that $\mathfrak{A}$ is a strict unital C*-algebra and $J\subseteq 
\mathfrak{A}$ is a norm-closed and norm-separable proper two-sided ideal of $%
\mathfrak{A}$. Then $J$ is a strict ideal of $\mathfrak{A}$.
\end{example}

We regard \emph{strict} (unital)\emph{\ }C*-algebras as objects of a
category with \emph{strict }(unital)\emph{\ }*-homomorphisms as morphisms.
(Recall that a bounded linear map is strict if it is strictly continuous on
bounded sets.) If $\mathfrak{A}\subseteq \mathfrak{B}$, then we say that $%
\mathfrak{A}$ is \emph{strictly dense} in $\mathfrak{B}$ if $\mathrm{\mathrm{%
Ball}}\left( \mathfrak{A}\right) $ is dense in $\mathrm{\mathrm{Ball}}\left( 
\mathfrak{B}\right) $ with respect to the strict topology.

It follows from the axioms of a strict C*-algebra that, if $\mathfrak{A}$ is
a strict C*-algebra, and $p\left( x_{1},\ldots ,x_{n}\right) $ is a
*-polynomial, then $p$ defines a function $\mathfrak{A}^{n}\rightarrow 
\mathfrak{A}$ that is strictly continuous on bounded sets. In particular,
the sets of normal, self-adjoint, and positive elements of norm at most $1$
are strictly closed in $\mathrm{\mathrm{Ball}}\left( \mathfrak{A}\right) $.
If $f:[-1,1]^{n}\rightarrow n\mathrm{\mathrm{Ball}}\left( \mathbb{C}\right) $
is a continuous function, then $f$ induces by continuous functional calculus
and Lemma \ref{Lemma:uniform-convergence} a strictly continuous functions $%
\left( x_{1},\ldots ,x_{n}\right) \mapsto f\left( x_{1},\ldots ,x_{n}\right) 
$ from the strictly closed set of $n$-tuples of pairwise commuting
self-adjoint elements in $\mathrm{\mathrm{Ball}}\left( \mathfrak{A}\right) $
to $n\mathrm{\mathrm{Ball}}\left( \mathfrak{A}\right) $. Similarly, if $f:%
\mathrm{\mathrm{Ball}}\left( \mathbb{C}\right) ^{n}\rightarrow k\mathrm{%
\mathrm{Ball}}\left( \mathbb{C}\right) $ is a continuous function, then $f$
induces by continuous functional calculus and Lemma \ref%
{Lemma:uniform-convergence} a strictly continuous function $\left(
x_{1},\ldots ,x_{n}\right) \mapsto f\left( x_{1},\ldots ,x_{n}\right) $ from
the strictly closed set of $n$-tuples of pairwise commuting normal elements
in $\mathrm{\mathrm{Ball}}\left( \mathfrak{A}\right) $ to $k\mathrm{\mathrm{%
Ball}}\left( \mathfrak{A}\right) $.

Suppose that $\mathfrak{A}$ is a strict C*-algebra. Let $\mathrm{Normal}%
\left( \mathfrak{A}\right) $ be the Borel set of normal elements of $%
\mathfrak{A}$. For $a\in \mathrm{Normal}\left( \mathfrak{A}\right) $, the
spectrum $\sigma \left( a\right) $ is a closed subset of $\mathbb{C}$. We
consider the space $\mathrm{Closed}\left( \mathbb{C}\right) $ of closed
subsets of $\mathbb{C}$ as a standard Borel space endowed with the Effros
Borel structure \cite[Section 12.C]{kechris_classical_1995}. If $X$ is a
standard Borel space and $\mathcal{B}$ is a basis of open subsets of $%
\mathbb{C}$, then a function $\Phi :X\rightarrow \mathrm{Closed}\left( 
\mathbb{C}\right) $ is Borel if and only if, for every $U\in \mathcal{B}$, $%
\left\{ x\in X:\Phi (x)\cap U\neq \varnothing \right\} $ is Borel. The proof
of the following lemma is standard; see \cite[Lemma 1.6]%
{simon_operators_1995}.

\begin{lemma}
\label{Lemma:Borel-spectrum}Suppose that $\mathfrak{A}$ is a strict
C*-algebra. The function $\mathrm{Normal}\left( \mathfrak{A}\right)
\rightarrow \mathrm{Closed}\left( \mathbb{C}\right) $, $a\mapsto \sigma
\left( a\right) $ is Borel.
\end{lemma}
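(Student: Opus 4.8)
The plan is to apply the Borel-measurability criterion for maps into $\mathrm{Closed}(\mathbb{C})$ recalled just before the statement: since the rational open balls $B(q,r)$ with $q\in\mathbb{Q}^{2}$ and $r\in\mathbb{Q}_{>0}$ form a countable basis of $\mathbb{C}$, it suffices to show that $\{a\in\mathrm{Normal}(\mathfrak{A}):\sigma(a)\cap B(q,r)\neq\varnothing\}$ is Borel for each such ball. For this I fix, for every rational ball, the continuous bump function $f_{q,r}\colon\mathbb{C}\to[0,1]$, $f_{q,r}(\mu)=\max\{0,1-|\mu-q|/r\}$, whose zero set is exactly the complement of $B(q,r)$, and I reduce to the spectral identity
\[
\sigma(a)\cap B(q,r)\neq\varnothing\quad\Longleftrightarrow\quad f_{q,r}(a)\neq 0 ,
\]
valid for every $a\in\mathrm{Normal}(\mathfrak{A})$. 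Indeed, $f_{q,r}(a)$ is a normal element and $\|f_{q,r}(a)\|=\sup_{\mu\in\sigma(a)}f_{q,r}(\mu)$ by the spectral radius formula together with the spectral mapping theorem $\sigma(f_{q,r}(a))=f_{q,r}(\sigma(a))$; this supremum (over the compact set $\sigma(a)$) is positive precisely when $\sigma(a)$ meets the open set $\{f_{q,r}>0\}=B(q,r)$.

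It then remains to check that $a\mapsto f_{q,r}(a)$ is a Borel map on $\mathrm{Normal}(\mathfrak{A})$. Because the Borel structure of the strict Banach space $\mathfrak{A}$ is by definition the one of the disjoint union of the sets $n\mathrm{Ball}(\mathfrak{A})$, it is enough to verify Borelness on $\mathrm{Normal}(\mathfrak{A})\cap n\mathrm{Ball}(\mathfrak{A})$ for each $n\geq 1$. On that set I write $a=n\cdot\tfrac1n a$, where $\tfrac1n a$ is a normal element of $\mathrm{Ball}(\mathfrak{A})$, and $f_{q,r}(a)=g_{n}(\tfrac1n a)$ with $g_{n}(z):=f_{q,r}(nz)$ a continuous function from $\mathrm{Ball}(\mathbb{C})$ into $\mathrm{Ball}(\mathbb{C})$. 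By the property of the continuous functional calculus on strict C*-algebras recorded before the lemma (which rests on Lemma \ref{Lemma:uniform-convergence}), the map $a\mapsto f_{q,r}(a)$ is then strictly continuous, hence Borel, on the strictly closed set of normal elements of $n\mathrm{Ball}(\mathfrak{A})$, with values in $\mathrm{Ball}(\mathfrak{A})$. Composing with the norm — which is strictly lower-semicontinuous, hence Borel, on bounded sets — shows that $a\mapsto\|f_{q,r}(a)\|$ is Borel, so $\{a:f_{q,r}(a)\neq 0\}=\{a:\|f_{q,r}(a)\|>0\}$ is Borel.

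Putting the two steps together, $\{a\in\mathrm{Normal}(\mathfrak{A}):\sigma(a)\cap B(q,r)\neq\varnothing\}$ is Borel for every rational ball $B(q,r)$, and the criterion yields that $a\mapsto\sigma(a)$ is Borel. I do not expect a genuine obstacle here: the only points requiring care are the passage from all of $\mathfrak{A}$ to the individual balls $n\mathrm{Ball}(\mathfrak{A})$ — handled by the definition of the Borel structure on a strict Banach space — and the rescaling needed to fit $f_{q,r}$ into the version of functional calculus available for functions on the unit ball of $\mathbb{C}$; both are routine. (If $\mathfrak{A}$ is non-unital, $\sigma(a)$ is to be understood as computed in the unitization throughout, which does not affect the argument.)
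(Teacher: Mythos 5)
Your proposal is correct and follows essentially the same route as the paper: both reduce to testing, for a basis of open sets of the form $\{f>0\}$ with $f:\mathbb{C}\to[0,1]$ continuous, whether $\sigma(a)$ meets the set, and both convert this to the condition $f(a)\neq 0$ via the spectral mapping theorem and the strict continuity of the functional calculus on the (rescaled) unit ball. Your version merely makes explicit the rational bump functions and the reduction from $n\mathrm{Ball}(\mathfrak{A})$ to $\mathrm{Ball}(\mathfrak{A})$, which the paper leaves implicit.
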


\begin{proof}
It suffices to show that the map $\mathrm{Normal}\left( \mathfrak{A}\right)
\cap \mathrm{\mathrm{Ball}}\left( \mathfrak{A}\right) \rightarrow \mathrm{%
Closed}\left( \mathbb{C}\right) $, $a\mapsto \sigma \left( a\right) $ is
Borel. Observe that $\mathbb{C}$ has a basis of open sets of the form $%
U_{f}:=\left\{ x\in \mathbb{C}:f(x)>0\right\} $ where $f:\mathbb{C}%
\rightarrow \lbrack 0,1]$ is a continuous function. For such a continuous
function $f:\mathbb{C}\rightarrow \lbrack 0,1]$, we have that%
\begin{equation*}
\left\{ a\in \mathrm{Normal}\left( \mathfrak{A}\right) \cap \mathrm{\mathrm{%
Ball}}\left( \mathfrak{A}\right) :\sigma \left( a\right) \cap U_{f}\neq
\varnothing \right\} =\left\{ a\in \mathrm{Normal}\left( \mathfrak{A}\right)
\cap \mathrm{\mathrm{Ball}}\left( \mathfrak{A}\right) :f\left( a\right) \neq
0\right\} \text{,}
\end{equation*}%
which is closed in $\mathrm{Normal}\left( \mathfrak{A}\right) \cap \mathrm{%
\mathrm{Ball}}\left( \mathfrak{A}\right) $. This concludes the proof.
\end{proof}

Suppose that $\mathfrak{A}$ is a strict C*-algebra. Fix $r\in \left(
0,1\right) $ and consider the set%
\begin{equation*}
X=\left\{ x\in \mathfrak{A}:\left\Vert 1-x\right\Vert \leq r\right\}
\subseteq 2\mathrm{\mathrm{Ball}}\left( \mathfrak{A}\right) \text{.}
\end{equation*}%
Then, for $x\in X$ we have that $x$ is invertible, $\left\Vert
x^{-1}\right\Vert \leq \frac{1}{1-r}$, and 
\begin{equation*}
x^{-1}=\sum_{n\in \omega }x^{n}\text{.}
\end{equation*}%
It follows from Lemma \ref{Lemma:uniform-convergence} that the function $%
X\rightarrow \frac{1}{1-r}\mathrm{\mathrm{Ball}}\left( \mathfrak{A}\right) $%
, $x\mapsto x^{-1}$ is strictly continuous.

More generally, suppose that $\Omega $ is an open subset of $\mathbb{C}$,
and $f:\Omega \rightarrow \mathbb{C}$ is a holomorphic function. Suppose
that $0\in \Omega $ and $r>0$ is such that $\left\{ z\in \mathbb{C}%
:\left\vert z\right\vert \leq r\right\} \subseteq \Omega $. Then $f$ admits
a Taylor expansion%
\begin{equation*}
f(z)=\sum_{n=0}^{\infty }a_{n}z^{n}
\end{equation*}%
that converges uniformly for $\left\vert z\right\vert \leq r$ \cite[Chapter
5, Theorem 3 and Chapter 2, Theorem 2]{ahlfors_complex_1978}. Fix $b_{0}\in 
\mathfrak{A}$ and set%
\begin{equation*}
X:=\left\{ x\in \mathfrak{A}:\left\Vert x-b_{0}\right\Vert \leq r\right\}
\subseteq \left( 1+\left\Vert b_{0}\right\Vert \right) \mathrm{\mathrm{Ball}}%
\left( \mathfrak{A}\right)
\end{equation*}%
Then for $x\in X$, 
\begin{equation*}
f\left( x-b_{0}\right) :=\sum_{n=0}^{\infty }a_{n}\left( x-b_{0}\right)
^{n}\in \mathfrak{A}\text{;}
\end{equation*}%
see \cite[Lemma 4.1.11]{pedersen_analysis_1989}. Furthermore, the function $%
X\rightarrow c\mathrm{\mathrm{Ball}}\left( \mathfrak{A}\right) $, $x\mapsto
f\left( x-b_{0}\right) $ is strictly continuous on $X$ by Lemma \ref%
{Lemma:uniform-convergence}, where $c=\mathrm{sup}\left\{ \left\vert
f(z)\right\vert :\left\vert z\right\vert \leq r\right\} $.

\subsection{Multiplier algebras\label{Subsection:multiplier}}

Suppose that $A$ is a separable C*-algebra. A \emph{double centralizer} for $%
A$ is a pair $\left( L,R\right) $ of bounded linear maps $L,R:A\rightarrow A$
such that $\left\Vert L\right\Vert =\left\Vert R\right\Vert $ and $%
L(x)y=xR(y)$ for every $x,y\in A$. Let $M\left( A\right) $ be the set of
double centralizers for $A$. Then $M\left( A\right) $ is a C*-algebra with
respect to the operations%
\begin{equation*}
\left( L_{1},R_{1}\right) +\left( L_{2},R_{2}\right) =\left(
L_{1}+L_{2},R_{1}+R_{2}\right)
\end{equation*}%
\begin{equation*}
\left( L_{1},R_{1}\right) \left( L_{2},R_{2}\right) =\left(
L_{1}L_{2},R_{2}R_{1}\right)
\end{equation*}%
\begin{equation*}
\lambda \left( L,R\right) =\left( \lambda L,\lambda R\right)
\end{equation*}%
\begin{equation*}
\left( L,R\right) ^{\ast }=\left( R^{\ast },L^{\ast }\right)
\end{equation*}%
and the norm%
\begin{equation*}
\left\Vert \left( L,R\right) \right\Vert =\left\Vert L\right\Vert
=\left\Vert R\right\Vert
\end{equation*}%
for $\left( L,R\right) ,\left( L_{1},R_{1}\right) ,\left( L_{2},R_{2}\right)
\in M\left( A\right) $ and $\lambda \in \mathbb{C}$. The \emph{strict
topology} on $\mathrm{\mathrm{Ball}}\left( M\left( A\right) \right) $ is the
topology of pointwise convergence, namely the topology induced by the
seminorms%
\begin{equation*}
p_{a}:\left( L,R\right) \mapsto \max \{\left\Vert L\left( a\right)
\right\Vert ,\left\Vert R\left( a\right) \right\Vert \}
\end{equation*}%
for $a\in A$.

An element $a\in A$ can be identified with the multiplier $\left(
L_{a},R_{a}\right) \in M\left( A\right) $ defined by setting $L_{a}(x)=ax$
and $R_{a}(x)=xa$ for $x\in X$. This allows one to regard $A$ as an
essential ideal of $M\left( A\right) $. (An ideal $J$ of a C*-algebra $B$ is
essential if $J^{\bot }:=\left\{ b\in B:bJ=0\right\} $ is zero or,
equivalently, $J$ has nonzero intersection with every nonzero ideal of $B$.)
If $\left( v_{n}\right) _{n\in \omega }$ is an approximate unit for $A$ \cite%
[Definition 1.7.1]{higson_analytic_2000} then, by definition, $\left(
v_{n}\right) $ strictly converges to $1$ in $\mathrm{\mathrm{Ball}}\left(
M\left( A\right) \right) $. In particular, $\mathrm{\mathrm{Ball}}\left(
A\right) $ is strictly dense in $\mathrm{\mathrm{Ball}}\left( M\left(
A\right) \right) $.

If $\left( x_{i}\right) _{i\in \omega }$ is a strictly Cauchy sequence in $%
\mathrm{\mathrm{Ball}}\left( M\left( A\right) \right) $, in the sense that $%
\left( x_{i}\right) _{i\in \omega }$ is $p_{a}$-Cauchy for every $a\in A$,
then setting%
\begin{equation*}
L\left( a\right) :=\mathrm{lim}_{i\rightarrow \infty }x_{i}a
\end{equation*}%
\begin{equation*}
R\left( a\right) :=\mathrm{lim}_{i\rightarrow \infty }ax_{i}
\end{equation*}%
for $a\in A$ defines a double centralizer $\left( L,R\right) \in \mathrm{%
\mathrm{Ball}}\left( M\left( A\right) \right) $ that is the strict limit of $%
\left( x_{i}\right) _{i\in \omega }$ in $\mathrm{\mathrm{Ball}}\left(
M\left( A\right) \right) $. For $n\geq 1$, one can identify $M_{n}\left(
M\left( A\right) \right) $ with $M\left( M_{n}\left( A\right) \right) $ and
consider the corresponding strict topology. From the above remarks and Lemma %
\ref{Lemma:Banach-pairing}, one easily obtains the following; see \cite[%
Chapter 13]{farah_combinatorial_2019} or \cite[Chapter 2]%
{wegge-olsen_theory_1993}.

\begin{proposition}
\label{Proposition:multiplier}Let $A$ be a separable C*-algebra. Then $%
M\left( A\right) $ is a strict unital C*-algebra containing $A$ as a
strictly dense essential strict ideal where, for every $n\geq 1$, the strict
topology on $\mathrm{\mathrm{Ball}}\left( M\left( A\right) \right) $ is as
described above, and $M_{n}\left( M\left( A\right) \right) $ is identified
with $M\left( M_{n}\left( A\right) \right) $.
\end{proposition}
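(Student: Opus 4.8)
The plan is to verify the three axioms of Definition \ref{Definition:strictC} for $\mathfrak{A}=M(A)$ together with the claimed containment, assembling the pieces already recorded above. First I would check that $M(A)$ is a strict Banach space: apply Lemma \ref{Lemma:Banach-pairing} to the Banach pairing $M(A)\times (A\oplus A)\to A\oplus A$ given by $\big((L,R),(a,b)\big)\mapsto (L(a),R(b))$. The norm identity $\left\Vert(L,R)\right\Vert=\mathrm{sup}_{a\in\mathrm{Ball}(A)}\max\{\left\Vert L(a)\right\Vert,\left\Vert R(a)\right\Vert\}$ follows from $\left\Vert(L,R)\right\Vert=\left\Vert L\right\Vert=\left\Vert R\right\Vert$; $\sigma(M(A),A\oplus A)$-completeness of $\mathrm{Ball}(M(A))$ is exactly the computation displayed above, where a strictly Cauchy net defines $L,R$ by pointwise limits and one checks $L(x)y=xR(y)$ passes to the limit; separability of the strict topology on $\mathrm{Ball}(M(A))$ follows because $A$ is norm-separable and $\mathrm{Ball}(A)$ is strictly dense in $\mathrm{Ball}(M(A))$ (via an approximate unit), so a countable norm-dense subset of $\mathrm{Ball}(A)$ is strictly dense in $\mathrm{Ball}(M(A))$.

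Next I would treat the matrix amplifications. The identification $M_n(M(A))\cong M(M_n(A))$ is standard: a double centralizer for $M_n(A)=M_n(\mathbb{C})\otimes A$ is determined by its action on the corner copies of $A$, giving an $n\times n$ matrix of multipliers; conversely a matrix over $M(A)$ acts on $M_n(A)$ by the usual matrix multiplication formulas, and one checks the left/right centralizer conditions entrywise. Since $M_n(A)$ is again a separable C*-algebra, the first part shows $M(M_n(A))$ is a strict Banach space, so axiom (i)'s hypothesis that each $M_n(\mathfrak{A})$ be a strict Banach space holds. For axiom (i) — strict continuity of the $*$-operation and multiplication on bounded sets — I would use that on $\mathrm{Ball}(M_n(A))$ the strict topology is pointwise convergence against elements of $M_n(A)$: if $x_i\to x$ and $y_i\to y$ strictly with all norms bounded by $1$, then for $a\in M_n(A)$ we have $x_iy_i a = x_i(y_i a)$, and since $y_ia\to ya$ in norm while $\left\Vert x_i\right\Vert\le 1$, and $x_i(ya)\to x(ya)$ strictly, a standard $\varepsilon/2$ estimate gives $x_iy_ia\to xya$; the right action is symmetric, and the $*$-operation is pointwise-to-pointwise continuous because $(L,R)^*=(R^*,L^*)$ with $R^*(a)=R(a^*)^*$. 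Axiom (ii), that the strict topology on $\mathrm{Ball}(M_n(M(A)))$ is induced by the entrywise inclusion into $M_n(\mathrm{Ball}(M(A)))$, is immediate once one checks that convergence of a bounded net in $M(M_n(A))$ in the pointwise-on-$M_n(A)$ topology is equivalent to entrywise strict convergence of the associated matrices — this is a routine matrix-unit bookkeeping argument.

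Finally I would record that $A\subseteq M(A)$ is an essential ideal (the condition $J^{\perp}=0$ follows since $L(a)y=xR(y)$ forces a multiplier killing all of $A$ on the left to be zero), that it is a \emph{strict} ideal in the sense of Definition \ref{Definition:strict-ideal} — it is norm-closed, a two-sided ideal, Borel (the inclusion is strictly continuous on bounded sets, as $\left\Vert a\cdot b\right\Vert\to$ limits behave well, so $A\cap n\mathrm{Ball}(M(A))$ is Borel), and the inclusion map $A\to M(A)$ is strict because norm convergence on bounded sets implies strict convergence — and that $\mathrm{Ball}(A)$ is strictly dense in $\mathrm{Ball}(M(A))$ via the approximate unit remark above. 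I do not expect any genuine obstacle here; the statement is a packaging result and every ingredient has been prepared. The one point requiring a little care is axiom (ii) for $n>1$, i.e.\ matching the intrinsic strict topology on $\mathrm{Ball}(M(M_n(A)))$ with the product topology coming from the entrywise inclusion — this is where the identification $M_n(M(A))\cong M(M_n(A))$ must be used compatibly with the topologies, and it is the step I would write out most carefully.
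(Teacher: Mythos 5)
Your proposal is correct and follows exactly the route the paper takes: the paper's proof is a one-line assembly of the preceding remarks (the completeness computation for strictly Cauchy sequences, the approximate-unit density argument, and the identification $M_{n}\left( M\left( A\right) \right) \cong M\left( M_{n}\left( A\right) \right) $) together with Lemma \ref{Lemma:Banach-pairing}, with the remaining routine verifications delegated to standard references. You simply write out those verifications in full, including the correct $\varepsilon/2$ estimate for strict continuity of multiplication on bounded sets and the matrix-unit bookkeeping for axiom (ii).
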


\begin{example}
When $A$ is the algebra $K\left( H\right) $ of compact operators on a
separable Hilbert space, then $M\left( A\right) =B\left( H\right) $ and the
strict topology on $\mathrm{\mathrm{Ball}}\left( B\left( H\right) \right) $
is the strong-* topology \cite[Proposition I.8.6.3]{blackadar_operator_2006}.
\end{example}

\begin{example}
One can also regard $B\left( H\right) $ as the dual space of the Banach
space $\mathfrak{L}^{1}\left( H\right) $ of trace-class operators. This
turns $B\left( H\right) $ into a strict Banach space, where the strict
topology on $\mathrm{\mathrm{Ball}}\left( B\left( H\right) \right) $ is the
weak* topology, which coincides with the weak operator topology \cite[%
Definition I.8.6.2]{blackadar_operator_2006}. As the identity map $\mathrm{%
\mathrm{Ball}}\left( B\left( H\right) \right) \rightarrow \mathrm{\mathrm{%
Ball}}\left( B\left( H\right) \right) $ is strong-*--weak continuous, the
strong-* topology and weak operator topology on $\mathrm{\mathrm{Ball}}%
\left( B\left( H\right) \right) $ define the same standard Borel structure
on $B\left( H\right) $.
\end{example}

One can define as above the strict topology on the whole multiplier algebra $%
M\left( A\right) $ to be the topology of pointwise convergence of double
multipliers. However, this topology on $M\left( A\right) $ is not first
countable whenever $A$ is not unital; see Remark \ref{Remark:non-countable}. 

Suppose that $A$ is a separable C*-algebra, and $X$ is a compact metrizable
space. One can then consider the separable C*-algebra $C\left( X,A\right) $
of continuous functions $X\rightarrow A$. Let also $C_{\beta }\left(
X,M\left( A\right) \right) $ be the C*-algebra of \emph{strictly continuous }%
bounded functions $X\rightarrow M\left( A\right) $. There is an obvious
unital *-homomorphism $C_{\beta }\left( X,M\left( A\right) \right)
\rightarrow M\left( C\left( X,A\right) \right) $, where $C_{\beta }\left(
X,M\left( A\right) \right) $ acts on $C\left( X,A\right) $ by pointwise
multiplication. The unital *-homomorphism $C_{\beta }\left( X,M\left(
A\right) \right) \rightarrow M\left( C\left( X,A\right) \right) $ is in fact
a *-isomorphism \cite[Corollary 3.4]{akemann_multipliers_1973}. We can thus
identify $C_{\beta }\left( X,M\left( A\right) \right) $ with $M\left(
C\left( X,A\right) \right) $ and regard it as a strict C*-algebra. Observe
that, for $t\in X$, the function $\mathrm{\mathrm{Ball}}\left( C_{\beta
}\left( X,M\left( A\right) \right) \right) \rightarrow \mathrm{\mathrm{Ball}}%
\left( M\left( A\right) \right) $, $f\mapsto f\left( t\right) $ is strictly
continuous. We let $C\left( X,M\left( A\right) \right) $ be the C*-algebra
of \emph{norm-continuous }functions $X\rightarrow M\left( A\right) $, which
is a C*-subalgebra of $C_{\beta }\left( X,M\left( A\right) \right) $.

\begin{lemma}
Suppose that $A$ is a separable C*-algebra, and $X$ is a compact metrizable
space. Then $C\left( X,M\left( A\right) \right) $ is a Borel subset of $%
C_{\beta }\left( X,M\left( A\right) \right) $.
\end{lemma}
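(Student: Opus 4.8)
The plan is to fix a compatible metric $\rho$ on $X$ and a countable dense subset $D=\{s_k:k\in\omega\}$ of $X$, and to proceed in three steps. First I would characterize norm-continuity of an element $f$ of $\mathrm{Ball}(C_\beta(X,M(A)))$ by a condition referring only to the values $f(s_k)$, $k\in\omega$. Then I would check that this condition defines a Borel subset of $\mathrm{Ball}(C_\beta(X,M(A)))$. Finally I would pass from the unit ball to all of $C_\beta(X,M(A))$ by rescaling, invoking the definition of the Borel structure on a strict Banach space (Definition~\ref{Definition:Borel}). Throughout, I use that $C_\beta(X,M(A))$ is the strict C*-algebra $M(C(X,A))$, that for each $t\in X$ the evaluation $\mathrm{Ball}(C_\beta(X,M(A)))\to\mathrm{Ball}(M(A))$, $f\mapsto f(t)$, is strictly continuous, and that the norm of $M(A)$ is strictly lower-semicontinuous on bounded sets.

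For the first step, I claim that for $f\in\mathrm{Ball}(C_\beta(X,M(A)))$ the function $f\colon X\to M(A)$ is norm-continuous if and only if
\[
\forall\varepsilon\in\mathbb{Q}_{>0}\ \exists\delta\in\mathbb{Q}_{>0}\ \forall k,l\in\omega\ \bigl(\rho(s_k,s_l)<\delta\ \Rightarrow\ \lVert f(s_k)-f(s_l)\rVert\le\varepsilon\bigr).
\]
Since $X$ is compact, norm-continuity is the same as uniform norm-continuity, which immediately gives the displayed condition. Conversely, assume the condition; given $\varepsilon,\delta$ and $s,t\in X$ with $\rho(s,t)<\delta$, pick sequences in $D$ with $s_{k_n}\to s$ and $s_{l_n}\to t$. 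Then $\rho(s_{k_n},s_{l_n})<\delta$ for $n$ large, so $\lVert f(s_{k_n})-f(s_{l_n})\rVert\le\varepsilon$ eventually; since $r\mapsto f(r)$ is strictly continuous and subtraction is strictly continuous on bounded sets, $f(s_{k_n})-f(s_{l_n})\to f(s)-f(t)$ strictly in $2\,\mathrm{Ball}(M(A))$, and strict lower-semicontinuity of the norm yields $\lVert f(s)-f(t)\rVert\le\varepsilon$. Hence $f$ is uniformly norm-continuous.

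For the second step, fix $k,l$: the map $f\mapsto f(s_k)-f(s_l)$ from $\mathrm{Ball}(C_\beta(X,M(A)))$ into $2\,\mathrm{Ball}(M(A))$ is strictly continuous, so composing with the (strictly lower-semicontinuous) norm shows that $f\mapsto\lVert f(s_k)-f(s_l)\rVert$ is strictly lower-semicontinuous; thus $\{f:\lVert f(s_k)-f(s_l)\rVert\le\varepsilon\}$ is strictly closed, in particular Borel. The displayed condition is a countable intersection (over $\varepsilon\in\mathbb{Q}_{>0}$) of countable unions (over $\delta\in\mathbb{Q}_{>0}$) of countable intersections (over the countably many pairs $(k,l)$ with $\rho(s_k,s_l)<\delta$) of such sets, hence it defines a Borel subset of $\mathrm{Ball}(C_\beta(X,M(A)))$; equivalently, $C(X,M(A))\cap\mathrm{Ball}(C_\beta(X,M(A)))$ is Borel in $\mathrm{Ball}(C_\beta(X,M(A)))$. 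For the third step, for each $n\ge1$ the dilation $z\mapsto n^{-1}z$ is a strict homeomorphism $n\,\mathrm{Ball}(C_\beta(X,M(A)))\to\mathrm{Ball}(C_\beta(X,M(A)))$ carrying $C(X,M(A))\cap n\,\mathrm{Ball}(C_\beta(X,M(A)))$ onto $C(X,M(A))\cap\mathrm{Ball}(C_\beta(X,M(A)))$; hence the former is Borel in $n\,\mathrm{Ball}(C_\beta(X,M(A)))$, and therefore in $C_\beta(X,M(A))$, since $n\,\mathrm{Ball}(C_\beta(X,M(A)))$ is strictly closed. By Definition~\ref{Definition:Borel}, $C(X,M(A))$ is a Borel subset of $C_\beta(X,M(A))$.

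I expect the main obstacle to be the first step: one has to be certain that testing a modulus-of-continuity condition only at points of the countable dense set $D$ genuinely recovers norm-continuity everywhere on $X$, given that $f$ is a priori only strictly continuous. This is exactly where the combination of strict continuity of evaluation and lower-semicontinuity of the norm on bounded sets is essential; the remaining two steps are then routine descriptive-set-theoretic bookkeeping together with the definition of the Borel structure on a strict Banach space.
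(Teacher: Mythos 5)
Your proof is correct and follows essentially the same route as the paper: reduce to the unit ball, characterize norm-continuity of a strictly continuous $f$ by a countable uniform-continuity condition tested only on a countable dense subset of $X$, and verify that each basic set in that condition is strictly closed using strict continuity of the evaluations together with strict lower-semicontinuity of the norm on bounded sets. The only cosmetic difference is that the paper phrases the modulus-of-continuity condition via finite open covers by small sets with chosen base points, whereas you quantify over pairs of dense points; the rescaling step you spell out is implicit in the paper's reduction to the unit ball.
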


\begin{proof}
Fix a compatible metric $d$ on $X$, and a countable dense subset $X_{0}$ of $%
X$. Clearly, it suffices to show that $\mathrm{\mathrm{Ball}}\left( C\left(
X,M\left( A\right) \right) \right) $ is a Borel subset of $\mathrm{\mathrm{%
Ball}}\left( C_{\beta }\left( X,M\left( A\right) \right) \right) $. Fix, for
every $k\in \omega $, a finite cover $\left\{ A_{0}^{k},\ldots ,A_{\ell
_{k}-1}^{k}\right\} $ of $X$ consisting of open sets of diameter less than $%
2^{-k}$, and fix elements $t_{i}^{k}\in A_{i}^{k}$ for $i<\ell _{k}$. We
have that a strictly continuous function $f:X\rightarrow \mathrm{\mathrm{Ball%
}}\left( M\left( A\right) \right) $ is norm-continuous if and only if, for
every $n\in \omega $ there exists $k\in \omega $ such that, for every $%
i<\ell _{k}$ and $s\in A_{i}^{k}$, $\left\Vert f\left( s\right) -f\left(
t_{i}^{k}\right) \right\Vert \leq 2^{-k}$. Since $2^{-k}\mathrm{\mathrm{Ball}%
}\left( M\left( A\right) \right) $ is strictly closed and $f$ is strictly
continuous, we have that $f$ is norm-continuous if and only if for every $%
n\in \omega $ there exists $k\in \omega $ such that, for every $i<\ell _{k}$
and for every $s\in A_{i}^{k}\cap X_{0}$, $\left\Vert f\left( s\right)
-f\left( t_{i}^{k}\right) \right\Vert \leq 2^{-k}$. This shows that the set
of norm-continuous functions is Borel.
\end{proof}

\begin{corollary}
\label{Corollary:norm-continuous-paths}Suppose that $A$ is a separable
C*-algebra. Then the set $C\left( [0,1],M\left( A\right) \right) $ of
norm-continuous paths $[0,1]\rightarrow M\left( A\right) $ is a Borel subset
of $C_{\beta }\left( [0,1],M\left( A\right) \right) $.
\end{corollary}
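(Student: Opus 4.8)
The plan is to simply specialize the Lemma immediately preceding this corollary to the case $X=[0,1]$. First I would note that the closed unit interval $[0,1]$, equipped with its usual metric, is a compact metrizable space, so the hypotheses of that Lemma are met with $X=[0,1]$. The Lemma then gives directly that $C\left([0,1],M\left(A\right)\right)$, the C*-subalgebra of norm-continuous functions, is a Borel subset of $C_{\beta}\left([0,1],M\left(A\right)\right)$. There is no real obstacle here; the only reason to record this as a separate statement is that norm-continuous paths in $M\left(A\right)$ (and in corona and Paschke dual algebras) will be used repeatedly in the sequel, for instance when discussing homotopies relevant to $\mathrm{K}$-theory, so it is convenient to have the Borelness of this particular set isolated.

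For completeness one can also unwind the argument directly in this concrete case. Fix the metric $d(s,t)=\left\vert s-t\right\vert$ on $[0,1]$ and the countable dense subset $[0,1]\cap\mathbb{Q}$; for each $k\in\omega$ take a finite cover of $[0,1]$ by open intervals of diameter less than $2^{-k}$ together with chosen centers $t_{i}^{k}$, and use that a strictly continuous $f\colon[0,1]\rightarrow\mathrm{Ball}\left(M\left(A\right)\right)$ is norm-continuous if and only if for every $n\in\omega$ there is $k\in\omega$ with $\left\Vert f(s)-f(t_{i}^{k})\right\Vert\leq 2^{-k}$ for all $i$ and all $s$ in the $i$-th cover element. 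Each set $\{f:\left\Vert f(s)-f(t_{i}^{k})\right\Vert\leq 2^{-k}\}$ is strictly closed, since $f\mapsto f(s)$ is strictly continuous and $2^{-k}\mathrm{Ball}\left(M\left(A\right)\right)$ is strictly closed, and, as in the proof of the Lemma, strict continuity of $f$ lets one replace the quantifier ``for all $s$'' by ``for all $s$ in the countable dense set''. The resulting description exhibits $C\left([0,1],M\left(A\right)\right)$ as a countable Boolean combination of Borel sets, hence Borel.
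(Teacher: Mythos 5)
Your proposal is correct and is exactly the paper's (implicit) argument: the corollary is the special case $X=[0,1]$ of the preceding lemma, and $[0,1]$ is compact metrizable, so nothing further is needed. The concrete unwinding you add is a faithful restatement of the lemma's proof and is harmless but not required.
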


Suppose that $A$ and $B$ are separable C*-algebra. A \emph{morphism }from $A$
to $B$ in the sense of \cite%
{woronowicz_pseudospaces_1980,woronowicz_unbounded_1991,woronowicz_operator_1992,woronowicz_algebras_1995}
is a *-homomorphism $\varphi :A\rightarrow M(B)$ such that $\varphi \left(
A\right) B$ is norm-dense in $B$. (This is called $S$-morphism in \cite[%
Definition 0.2.7]{vallin_algebres_1985} and a nondegenerate *-homomorphism
in \cite{lance_hilbert_1995}.) We recall the well-known fact that there is a
correspondence between morphisms from $A$ to $B$ and strict unital
*-homomorphisms $M\left( A\right) \rightarrow M(B)$; see \cite[Proposition
2.1]{lance_hilbert_1995}.

\begin{lemma}
\label{Lemma:W-morphism}Let $A$ and $B$ be separable C*-algebra.

\begin{itemize}
\item Suppose that $\psi :M(A)\rightarrow M(B)$ is a strict unital
*-homomorphism. Then $\psi |_{A}$ is a morphism from $A$ to $B$.

\item Conversely, if $\varphi $ is a morphism from $A$ to $B$, then $\varphi 
$ extends to a unique strict unital *-homomorphism $\bar{\varphi}%
:M(A)\rightarrow M(B)$. If $\varphi $ is injective, then $\bar{\varphi}$ is
injective.

\item If $\left( e_{n}\right) $ is an approximate unit for $A$, then a
*-homomorphism $\varphi :A\rightarrow M(B)$ is a morphism from $A$ to $B$ if
and only if $\left( \varphi \left( e_{n}\right) \right) $ strictly converges
to $1$.
\end{itemize}
\end{lemma}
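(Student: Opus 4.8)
The plan is to prove the three bullets in turn, the common engine being two facts from Subsection~\ref{Subsection:multiplier}: an approximate unit $(e_n)$ for $A$, which we may take to consist of positive contractions, converges strictly to $1$ in $\mathrm{Ball}(M(A))$; and $\mathrm{Ball}(A)$ is a strictly dense subset of the strictly complete metric space $\mathrm{Ball}(M(A))$ (Proposition~\ref{Proposition:multiplier}). Recall also that, by definition of the strict topology on $\mathrm{Ball}(M(A))$, a bounded net $x_i\to x$ strictly precisely when $x_i a\to xa$ and $ax_i\to ax$ in norm for every $a\in A$.

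For the first bullet, let $\psi\colon M(A)\to M(B)$ be a strict unital $\ast$-homomorphism. Since $\psi$ is strictly continuous on bounded sets and $e_n\to 1$ strictly in $M(A)$, we get $\psi(e_n)\to\psi(1)=1$ strictly in $M(B)$; unwinding the strict topology, this says $\|\psi(e_n)b-b\|\to 0$ for every $b\in B$. As $\psi(e_n)\in\psi(A)$, every $b\in B$ lies in the norm-closure of $\psi(A)B$, so $\psi|_A$ is a morphism. The third bullet is of the same flavor: if $\varphi\colon A\to M(B)$ is a $\ast$-homomorphism and a morphism, then for $b$ of the form $\varphi(a)b'$ one has $\varphi(e_n)\varphi(a)b'=\varphi(e_n a)b'\to\varphi(a)b'=b$ because $e_n a\to a$; since $\|\varphi(e_n)\|\le 1$ and such $b$ span a dense subspace, an $\varepsilon/3$-argument gives $\varphi(e_n)b\to b$ for all $b\in B$, and likewise $b\varphi(e_n)\to b$, which is exactly $\varphi(e_n)\to 1$ strictly in $M(B)$. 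Conversely, if $\varphi(e_n)\to 1$ strictly then $\varphi(e_n)b\to b$ in norm with $\varphi(e_n)b\in\varphi(A)B$, so $\varphi(A)B$ is norm-dense and $\varphi$ is a morphism.

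The substance is the second bullet. Given a morphism $\varphi\colon A\to M(B)$, I would build $\bar\varphi$ as follows. For $m\in\mathrm{Ball}(M(A))$ choose, by strict density, a sequence $a_i\in\mathrm{Ball}(A)$ with $a_i\to m$ strictly in $M(A)$. For $b=\varphi(c)b'$ we have $\varphi(a_i)b=\varphi(a_ic)b'$ with $a_ic\to mc$ in norm, so $\varphi(a_i)b$ converges; using $\|\varphi(a_i)\|\le 1$ together with norm-density of $\varphi(A)B$ and of $B\varphi(A)$, it follows that $(\varphi(a_i))_i$ is strictly Cauchy in the strictly complete $\mathrm{Ball}(M(B))$, hence strictly convergent, and its limit is independent of the chosen sequence; define $\bar\varphi(m)$ to be this limit, and extend to $M(A)$ by scaling. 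The computation just performed gives the key identity $\bar\varphi(m)\varphi(c)=\varphi(mc)$, and symmetrically $\varphi(c)\bar\varphi(m)=\varphi(cm)$, for $m\in M(A)$ and $c\in A$. Since an element of $M(B)$ is determined by its left-multiplication action on $B$ and $\varphi(A)B$ spans a norm-dense subspace, this identity makes the remaining verifications mechanical: $\bar\varphi$ extends $\varphi$ (use the constant sequence $a_i=a$); $\bar\varphi$ is multiplicative ($\bar\varphi(m)\bar\varphi(m')\varphi(c)=\bar\varphi(m)\varphi(m'c)=\varphi(mm'c)=\bar\varphi(mm')\varphi(c)$); $\bar\varphi$ is unital ($\bar\varphi(1)\varphi(c)=\varphi(c)$); and $\bar\varphi$ is $\ast$-preserving, by comparing $\varphi(c)\bar\varphi(m^\ast)=\varphi(cm^\ast)$ with $\bigl(\bar\varphi(m)\varphi(c^\ast)\bigr)^\ast=\bigl(\varphi(mc^\ast)\bigr)^\ast=\varphi(cm^\ast)$. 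Strict continuity on bounded sets follows the same pattern: if $m_i\to m$ strictly with $\|m_i\|\le C$, then $\bar\varphi(m_i)\varphi(c)b'=\varphi(m_ic)b'\to\varphi(mc)b'=\bar\varphi(m)\varphi(c)b'$ in norm, and the uniform bound $\|\bar\varphi(m_i)\|\le C$ propagates this to all of $B$, and likewise on the right, so $\bar\varphi(m_i)\to\bar\varphi(m)$ strictly. Uniqueness is immediate, since any strict unital $\ast$-homomorphism extending $\varphi$ agrees with $\bar\varphi$ on the strictly dense $\mathrm{Ball}(A)$ and both are strictly continuous on bounded sets. Finally, if $\varphi$ is injective and $\bar\varphi(m)=0$, then $\varphi(me_n)=\bar\varphi(m)\varphi(e_n)=0$ forces $me_n=0$ for every $n$, and $me_n\to m$ strictly yields $m=0$.

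The only point demanding genuine care is showing that $\varphi$ sends bounded strictly Cauchy sequences of $M(A)$ to strictly Cauchy sequences of $M(B)$, so that $\bar\varphi$ is well defined; once the identity $\bar\varphi(m)\varphi(c)=\varphi(mc)$ is in hand, everything else reduces to bookkeeping, using only that $B$ acts nondegenerately on itself. The argument is the familiar one behind the morphism/multiplier correspondence, e.g.\ \cite[Proposition~2.1]{lance_hilbert_1995}, recast in the strict-C*-algebra framework of this section.
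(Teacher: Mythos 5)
Your proof is correct. The paper gives no argument for this lemma and simply cites \cite[Proposition 2.1]{lance_hilbert_1995}; what you have written is a complete and accurate rendition of that standard argument, recast in the strict-topology language of Section \ref{Section:strict}. The only stylistic difference from the usual presentation is that you realize $\bar{\varphi}(m)$ as the strict limit of $\varphi(a_i)$ for $a_i\rightarrow m$ strictly in $\mathrm{Ball}(M(A))$, rather than defining it directly by its action $\varphi(c)b'\mapsto\varphi(mc)b'$ on the dense subspace $\varphi(A)B$; the two definitions coincide via your key identity $\bar{\varphi}(m)\varphi(c)=\varphi(mc)$, and all remaining verifications go through exactly as you indicate.
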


A further characterization of morphisms is provided in \cite[Lemme 0.2.6]%
{vallin_algebres_1985} and \cite[Proposition 1.1]{iorio_hopf_1980}. It
follows from Lemma \ref{Lemma:W-morphism} that the composition of morphisms $%
A\rightarrow B$ and $B\rightarrow C$ is meaningful, and it gives a morphism $%
A\rightarrow C$.

Suppose that $A,B$ are separable C*-algebras. A *-homomorphism $\varphi
:A\rightarrow M(B)$ is\emph{\ quasi-unital} \cite[Definition 1.3.13]%
{jensen_elements_1991}\emph{\ }(also called \emph{strict }\cite[page 49]%
{lance_hilbert_1995}) if there exists a projection $p_{\varphi }\in M(B)$,
called the \emph{relative unit }of $\varphi $, such that $\overline{\varphi
(A)B}=p_{\varphi }B$. One has the following generalization of Lemma \ref%
{Lemma:W-morphism}; see \cite[Corollary 5.7]{lance_hilbert_1995}.

\begin{lemma}
\label{Lemma:strict-morphism}Let $A$ and $B$ be separable C*-algebra.

\begin{itemize}
\item Suppose that $\psi :M(A)\rightarrow M(B)$ is a strict *-homomorphism.
Then $\psi |_{A}$ is a quasi-unital *-homomorphism from $A$ to $M(B)$ with
relative unit $\psi \left( 1\right) $.

\item Conversely, if $\varphi $ is a quasi-unital *-homomorphism from $A$ to 
$M(B)$ with relative unit $p_{\varphi }$, then $\varphi $ extends to a
unique strict *-homomorphism $\bar{\varphi}:M(A)\rightarrow M(B)$ with $\bar{%
\varphi}=p_{\varphi }$. If $\varphi $ is injective, then $\bar{\varphi}$ is
injective.

\item If $\left( e_{n}\right) $ is an approximate unit for $A$, then a
*-homomorphism $\varphi :A\rightarrow M(B)$ is quasi-unital if and only if $%
\left( \varphi \left( e_{n}\right) \right) $ is strictly Cauchy.
\end{itemize}
\end{lemma}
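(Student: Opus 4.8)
The three statements run parallel to the nondegenerate case recorded in Lemma \ref{Lemma:W-morphism}, and the whole assertion is \cite[Corollary 5.7]{lance_hilbert_1995}; the plan is to give a self-contained argument inside the framework of strict C*-algebras, the point being to keep track of which topology each limit is taken in.

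\textbf{First bullet.} Put $p:=\psi(1)$, a projection in $M(B)$. Fix an approximate unit $(e_n)$ for $A$; by Proposition \ref{Proposition:multiplier} the $e_n$ lie in $\mathrm{\mathrm{Ball}}(M(A))$ and converge strictly to $1$, so, $\psi$ being strictly continuous on bounded sets, $\psi(e_n)\to p$ strictly in $M(B)$. For $a\in A$ one has $e_na\to a$ and $ae_n\to a$ in norm, hence $\psi(e_n)\psi(a)=\psi(e_na)\to\psi(a)$ and $\psi(a)\psi(e_n)=\psi(ae_n)\to\psi(a)$ in norm; on the other hand, since multiplication is strictly continuous on bounded sets (Definition \ref{Definition:strictC}), $\psi(e_n)\psi(a)\to p\psi(a)$ and $\psi(a)\psi(e_n)\to\psi(a)p$ strictly. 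Uniqueness of strict limits gives $\psi(a)=p\psi(a)=\psi(a)p=p\psi(a)p$, so $\psi(a)b\in pB$ for all $b\in B$ and $\overline{\psi(A)B}\subseteq pB$; conversely $pb=\lim_n\psi(e_n)b\in\overline{\psi(A)B}$ for every $b\in B$. Thus $\psi|_A$ is quasi-unital with relative unit $p=\psi(1)$.

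\textbf{Second bullet.} Uniqueness: if $\bar\varphi$ is strict and extends $\varphi$, then for $m\in M(A)$ the elements $e_nm$ lie in $\Vert m\Vert\,\mathrm{\mathrm{Ball}}(M(A))$ and converge strictly to $m$, so $\varphi(e_nm)=\bar\varphi(e_nm)\to\bar\varphi(m)$ strictly, which determines $\bar\varphi(m)$. For existence, note first that $\varphi(a)=p\varphi(a)=\varphi(a)p$ for the relative unit $p=p_\varphi$: from $\overline{\varphi(A)B}=pB$ we get $\varphi(a)b=p\varphi(a)b$ for all $b\in B$, whence $\varphi(a)=p\varphi(a)$ by essentiality of $B$ in $M(B)$; taking adjoints in $\overline{\varphi(A)B}=pB$ gives $\overline{B\varphi(A)}=Bp$, whence likewise $\varphi(a)=\varphi(a)p$. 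Realizing $M(B)$ as the adjointable operators on the Hilbert $B$-module $B$ and using the orthogonal decomposition $B=pB\oplus(1-p)B$, define $\bar\varphi(m)$, for $m\in M(A)$, to vanish on $(1-p)B$ and to send $\varphi(a)b\mapsto\varphi(ma)b$ on the dense submodule $\varphi(A)B$ of $pB$. The one nonformal step is the estimate
\begin{equation*}
\left\Vert \sum_i\varphi(ma_i)b_i\right\Vert \le\left\Vert m\right\Vert \left\Vert \sum_i\varphi(a_i)b_i\right\Vert ,
\end{equation*}
which follows by squaring, rewriting $\varphi(ma_i)^*\varphi(ma_j)=\varphi(a_i^*m^*ma_j)$, using $[a_i^*m^*ma_j]\le\Vert m\Vert^2[a_i^*a_j]$ as a positive matrix over $A$, and applying the positive contraction $\varphi\otimes\mathrm{id}_{M_n}$ followed by compression by $(b_i)$. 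Hence $\bar\varphi(m)$ is a well-defined contraction, extends to an adjointable operator on $B$ with $\bar\varphi(m)^*=\bar\varphi(m^*)$ (checked on the dense submodule), $\bar\varphi$ is multiplicative, $\bar\varphi|_A=\varphi$ (using $\varphi(a)(1-p)=0$), and $\bar\varphi(1)=p$. Strictness is then automatic: if $m_i\to m$ strictly in a ball of $M(A)$, then $m_ia\to ma$ in norm in $A$ for each $a$, so $\bar\varphi(m_i)(\varphi(a)b)=\varphi(m_ia)b\to\varphi(ma)b$, and uniform boundedness of the $\bar\varphi(m_i)$ together with their vanishing on $(1-p)B$ gives $\bar\varphi(m_i)b\to\bar\varphi(m)b$ in norm for every $b\in B$. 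If $\varphi$ is injective and $\bar\varphi(m)=0$, then $\varphi(ma)b=\bar\varphi(m)\varphi(a)b=0$ for all $a,b$, so $\varphi(ma)=0$, so $ma=0$ for all $a\in A$, so $m=0$.

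\textbf{Third bullet.} If $\varphi$ is quasi-unital, the second bullet extends it to a strict $\bar\varphi$ with $\bar\varphi(1)=p_\varphi$, and $e_n\to1$ strictly in $M(A)$ forces $\varphi(e_n)=\bar\varphi(e_n)\to p_\varphi$ strictly, so $(\varphi(e_n))$ is strictly Cauchy. Conversely, if $(\varphi(e_n))$ is strictly Cauchy it lies in $\mathrm{\mathrm{Ball}}(M(B))$ and hence has a strict limit $p$, which is self-adjoint and positive (both conditions are strictly closed on bounded sets) and idempotent: for fixed $n$, $\varphi(e_n)\varphi(e_m)=\varphi(e_ne_m)\to\varphi(e_n)$ in norm as $m\to\infty$ while $\varphi(e_n)\varphi(e_m)\to\varphi(e_n)p$ strictly, so $\varphi(e_n)p=\varphi(e_n)$, and letting $n\to\infty$ strictly yields $p^2=p$. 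The computation of the first bullet then gives $\varphi(a)=p\varphi(a)$ and $pb=\lim_n\varphi(e_n)b$, so $\overline{\varphi(A)B}=pB$ and $\varphi$ is quasi-unital. Fixing the evident typo, the extension produced in the second bullet satisfies $\bar\varphi(1)=p_\varphi$.

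The only step requiring genuine work, as opposed to routine manipulation of the strict topology and of essentiality, is the norm estimate in the second bullet establishing that the prescribed formula for $\bar\varphi(m)$ defines a bounded operator; I expect this, and the careful separation of strict from norm convergence, to be where the argument needs to be written out in full.
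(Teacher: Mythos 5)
Your proof is correct. Note, though, that the paper offers no argument for this lemma at all: it is stated as a known generalization of Lemma \ref{Lemma:W-morphism} and delegated to the citation \cite[Corollary 5.7]{lance_hilbert_1995}, so there is no internal proof to compare against. What you have written is essentially the standard argument underlying that citation: the uniqueness and the two easy bullets by playing norm convergence on $A$ (respectively $B$) against strict convergence and invoking uniqueness of strict limits and essentiality of $B$ in $M(B)$; the existence of $\bar\varphi$ by realizing $M(B)$ as the adjointable operators on the Hilbert $B$-module $B$, splitting $B=pB\oplus(1-p)B$, defining $\bar\varphi(m)$ on the dense submodule $\varphi(A)B$, and controlling the norm via the matrix inequality $[a_i^{\ast}m^{\ast}ma_j]\leq\Vert m\Vert^{2}[a_i^{\ast}a_j]$ and complete positivity of $\varphi$. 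That estimate, the adjointability check, and the idempotence of the strict limit $p$ in the third bullet are all carried out correctly.

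Two small points. First, in verifying strictness of $\bar\varphi$ you only check the seminorms $x\mapsto\Vert xb\Vert$; the strict topology on $\mathrm{Ball}(M(B))$ also involves $x\mapsto\Vert bx\Vert$, so you should add the one line that $m_i^{\ast}\to m^{\ast}$ strictly and apply the same argument to adjoints, giving $b\bar\varphi(m_i)\to b\bar\varphi(m)$ in norm as well. Second, the identification $M(B)\cong\mathcal{L}_{B}(B)$ with the adjointable operators on the Hilbert module $B$ is itself a (standard but) nontrivial input not established in the paper, so if the argument is to be genuinely self-contained within the strict C*-algebra framework you should either cite it or rephrase the construction of $\bar\varphi(m)$ directly as a double centralizer, defining $L(b)$ as the norm limit of $\varphi(me_n)b$ and $R(b)$ as the limit of $b\varphi(e_nm)$; your boundedness estimate serves equally well in that formulation.
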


We now observe that the category of multiplier algebras of separable
C*-algebras, regarded as a full subcategory of the category of strict unital
C*-algebras, can be regarded as a Polish category; see Section \ref%
{Subsection:Polish-categories}. This means that, for every separable
C*-algebras $A$ and $B$, the set \textrm{Mor}$\left( M(A),M(B)\right) $ of
strict unital *-homomorphisms $M(A)\rightarrow M(B)$ is a Polish space, and
composition of morphisms is a continuous function.

Following \cite{woronowicz_algebras_1995} we consider $\mathrm{Mor}\left(
M(A),M(B)\right) $ as endowed with the topology of pointwise strict
convergence. This is the subspace topology induced by regarding, as in Lemma %
\ref{Lemma:W-morphism}, $\mathrm{Mor}\left( M(A),M(B)\right) $ as a subspace
of $\mathrm{\mathrm{Ball}}\left( L\left( A,M(B)\right) \right) $, where $%
L\left( A,M(B)\right) $ is the space of bounded linear maps from $A$ to $%
M(B) $. (Recall that, if $X$ is a Banach space and $\mathfrak{Y}$ is a
strict Banach space, then the space $L\left( X,\mathfrak{Y}\right) $ of
bounded linear maps $X\rightarrow \mathfrak{Y}$ is a strict Banach space
when $\mathrm{\mathrm{Ball}}\left( L\left( X,\mathfrak{Y}\right) \right) $
is endowed with the topology of pointwise strict convergence; see
Proposition \ref{Proposition:strict-operators}.) As $\mathrm{Mor}\left(
M(A),M(B)\right) $ is a $G_{\delta }$ subset of $\mathrm{\mathrm{Ball}}%
\left( L\left( A,M(B)\right) \right) $, it is a Polish space with the
induced topology. It is easy to see that this turns the category of
muliplier algebras of separable C*-algebras into a Polish category.

If $A,B$ are separable C*-algebras, then the space $\mathrm{Iso}\left(
M(A),M(B)\right) $ of isomorphisms $M(A)\rightarrow M(B)$ in the category of
strict unital C*-algebras endowed with the Polish topology as in Lemma \ref%
{Lemma:Iso-Polish} can be identified, via the correspondence given by Lemma %
\ref{Lemma:W-morphism}, with the space $\mathrm{Iso}\left( A,B\right) $ of
*-isomorphisms $A\rightarrow B$ endowed with the topology of pointwise
norm-convergence.

Consider now the category of locally compact second countable Hausdorff
spaces, where a morphism is simply a continuous map. Given locally compact
second countable Hausdorff spaces $X,Y$, let $\mathrm{\mathrm{Mor}}\left(
X,Y\right) $ be the set of all continuous maps $X\rightarrow Y$. This is
endowed with a Polish topology called the compact-open topology, that has as
subbasis of open sets the sets of the form%
\begin{equation*}
\left( K,U\right) :=\left\{ f\in \mathrm{\mathrm{Mor}}\left( X,Y\right)
:f\left( K\right) \subseteq U\right\}
\end{equation*}%
for a compact subset $K$ of $X$ and an open subset $U$ of $Y$. This turns
the category of locally compact second countable Hausdorff spaces and
continuous maps into a Polish category. We let $\mathrm{Iso}\left(
X,Y\right) \subseteq \mathrm{\mathrm{Mor}}\left( X,Y\right) $ be the set of
homeomorphisms $X\rightarrow Y$.\ The Polish topology induced on $\mathrm{Iso%
}\left( X,Y\right) $ as in Lemma \ref{Lemma:Iso-Polish} was shown in \cite[%
Theorem 5]{arens_topologies_1946}, where it is called the $g$-topology, to
have as subbasis of open sets the sets of then $\left( K,Y\setminus L\right) 
$ where $K,L$ are closed sets and at least one between $K$ and $L$ is
compact. For a locally compact second countable Hausdorff space $X$, let $%
X^{+}$ be its one-point compactification, obtained by adjoining a point at
infinity $\infty _{X}$. Each $f\in \mathrm{Iso}\left( X,Y\right) $ admits a
unique extension to $f^{+}\in \mathrm{Iso}(X^{+},Y^{+})$ that fixes the
point at infinity, in the sense that $f^{+}\left( \infty _{X}\right) =\infty
_{Y}$. By \cite[Theorem 5]{arens_topologies_1946}, the assignment $f\mapsto
f^{+}$ defines a homeomorphism from $\mathrm{Iso}\left( X,Y\right) $ onto
the closed subset of $\mathrm{Iso}(X^{+},Y^{+})$ consisting of the
homeomorphisms that fix the point at infinity.

Given a locally compact second countable Hausdorff space $X$, we let $%
C_{0}\left( X\right) $ be the separable C*-algebra of continuous
complex-valued functions on $X$ vanishing at infinity. Its multiplier
algebra $M\left( C_{0}\left( X\right) \right) $ is the algebra $C_{b}\left(
X\right) $ of \emph{bounded }continuous complex-valued functions on $X$. The
unit ball $\mathrm{\mathrm{Ball}}\left( C_{b}\left( X\right) \right) $ of $%
C_{b}\left( X\right) =M\left( C_{0}\left( X\right) \right) $ endowed with
the strict topology can be identified with the space $\mathrm{\mathrm{Mor}}%
\left( X,\mathrm{\mathrm{Ball}}\left( \mathbb{C}\right) \right) $ of
continuous functions $X\rightarrow \mathrm{\mathrm{Ball}}\left( \mathbb{C}%
\right) $ endowed with the compact-open topology. Every separable \emph{%
commutative }C*-algebra $A$ is isomorphic to $C_{0}(\hat{A})$, where $\hat{A}
$ is the locally compact second countable Hausdorff space of nonzero
homomorphisms $A\rightarrow \mathbb{C}$ (the spectrum of $A$).

A continuous map $f:X\rightarrow Y$ induces a strict unital *-homomorphism $%
C_{b}\left( Y\right) \rightarrow C_{b}\left( X\right) $ given by $\varphi
_{f}:C_{b}\left( Y\right) \rightarrow C_{b}\left( X\right) $, $a\mapsto
a\circ f$. This defines a \emph{fully faithful} contravariant functor from
the category of locally compact second countable Hausdorff spaces to the
category of strict unital C*-algebras. In fact, the assignment $\mathrm{%
\mathrm{Mor}}\left( X,Y\right) \rightarrow \mathrm{\mathrm{Mor}}\left(
C_{b}\left( Y\right) ,C_{b}\left( X\right) \right) $, $f\mapsto \varphi _{f}$
is a homeomorphism, where \textrm{Mor}$\left( X,Y\right) $ is endowed as
above with the compact-open topology and $\mathrm{\mathrm{Mor}}\left(
C_{b}\left( X\right) ,C_{b}\left( Y\right) \right) $ is endowed with the
topology of pointwise strict convergence. Thus, by Lemma \ref%
{Lemma:topological-equivalence}, the assignment $X\rightarrow C_{b}\left(
X\right) $ is a \emph{contravariant topological equivalence }of categories
from the Polish category of locally compact second countable Hausdorff
spaces to the Polish category of multiplier algebras of \emph{commutative }%
separable C*-algebras; see Definition \ref%
{Definition:topological-equivalence}.

\subsection{Essential commutants and Paschke dual algebras}

Suppose that $B$ is a separable C*-algebra, and $C\subseteq M(B)$ is a
separable C*-subalgebra. Define then the \emph{essential commutant} $%
\mathfrak{D}\left( C\right) $ of $C$ in $M(B)$ to be the C*-algebra 
\begin{equation*}
\left\{ x\in M(B):\forall c\in B,\left[ x,c\right] \in B\right\} \text{,}
\end{equation*}%
where $\left[ x,c\right] $ is the commutator $xc-cx$. Define the strict
topology on \textrm{Ball}$\left( \mathfrak{D}\left( C\right) \right) $ to be
the topology generated by the seminorms%
\begin{equation*}
x\mapsto \max \left\{ \left\Vert xb\right\Vert ,\left\Vert bx\right\Vert
,\left\Vert \left[ x,c\right] \right\Vert \right\}
\end{equation*}%
for $c\in C$ and $b\in B$. If $\left( v_{n}\right) _{n\in \omega }$ is a
approximate unit for $B$ that is approximately central for $C$ \cite[%
Definition 3.2.4]{higson_analytic_2000}, then $\left( v_{n}\right) _{n\in
\omega }$ converges strictly to $1$ in \textrm{Ball}$\left( \mathfrak{D}%
(B)\right) $.

We have that $\mathfrak{D}\left( C\right) $ is strictly complete. Indeed,
consider a strictly Cauchy sequence $\left( x_{i}\right) _{i\in \omega }$ in 
\textrm{Ball}$\left( \mathfrak{D}\left( C\right) \right) $. Then we have
that $\left( x_{i}\right) _{i\in \omega }$ converges to some $x\in \mathrm{%
\mathrm{Ball}}\left( M(B)\right) $ in the strict topology of $M(B)$. For
every $c\in C$, the sequence $\left( \left[ x_{i},c\right] \right) _{i\in
\omega }$ is norm-Cauchy in $B$, whence it norm-converges to some element of 
$B$, which must be equal to $\left[ x,c\right] $. This shows that $x\in 
\mathrm{\mathrm{Ball}}\left( \mathfrak{D}\left( C\right) \right) $ is the
strict limit of $\left( x_{i}\right) _{i\in \omega }$ in $\mathrm{\mathrm{%
Ball}}\left( \mathfrak{D}\left( C\right) \right) $. For $n\geq 1$, we can
identify $M_{n}\left( \mathfrak{D}\left( C\right) \right) $ with $\mathfrak{D%
}\left( \Delta _{n}\left( C\right) \right) \subseteq M\left( M_{n}(B)\right) 
$, where $\Delta _{n}\left( C\right) \subseteq M_{n}(B)$ is the image of $C$
under the diagonal embedding $\Delta _{n}:B\rightarrow M_{n}(B)$. From the
above remarks and Lemma \ref{Lemma:Banach-pairing} we thus obtain the
following.

\begin{proposition}
\label{Proposition:essential-commutant}Let $B$ be a separable C*-algebra,
and let $C\subseteq M(B)$ be a separable C*-subalgebra. Let $\mathfrak{D}%
\left( C\right) $ be the corresponding essential commutant. Then $\mathfrak{D%
}\left( C\right) $ is a strict C*-algebra containing $B$ as a strictly dense
essential strict ideal where, for every $n\geq 1$, $M_{n}\left( \mathfrak{D}%
\left( C\right) \right) $ is identified with $\mathfrak{D}\left( \Delta
_{n}\left( C\right) \right) $, and $\mathrm{\mathrm{Ball}}\left( \mathfrak{D}%
\left( \Delta _{n}\left( C\right) \right) \right) $ is endowed with the
strict topology described above.
\end{proposition}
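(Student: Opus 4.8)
The plan is to derive the statement from Lemma \ref{Lemma:Banach-pairing} together with the observations already recorded in the paragraph immediately preceding it; what remains beyond those is essentially bookkeeping. First I would exhibit a Banach pairing whose induced topology is the strict topology on $\mathrm{Ball}(\mathfrak{D}(C))$ described above. Set $Y:=B\oplus B\oplus C$ and $Z:=B\oplus B\oplus B$, which are norm-separable because $B$ and $C$ are, and define
\[
\langle\cdot,\cdot\rangle:\mathfrak{D}(C)\times Y\longrightarrow Z,\qquad \bigl\langle x,(b,b',c)\bigr\rangle:=\Bigl(xb,\;b'x,\;\tfrac{1}{2}[x,c]\Bigr).
\]
This is well defined precisely because $x\in\mathfrak{D}(C)$ forces $[x,c]\in B$, and it is a contractive bilinear map; by inspection of the generating seminorms, the associated $\sigma(\mathfrak{D}(C),Y)$-topology on $\mathrm{Ball}(\mathfrak{D}(C))$ is exactly the strict topology defined before the statement.

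Next I would verify the four hypotheses of Lemma \ref{Lemma:Banach-pairing}. Norm-separability of $Y$ and $Z$ is immediate. For the norm identity, I use the standard fact that $\|x\|=\sup\{\|xb\|:b\in\mathrm{Ball}(B)\}$ for every $x\in M(B)$: taking $b'=0$, $c=0$ and letting $b$ range over $\mathrm{Ball}(B)$ gives $\|x\|\le\sup_{y\in\mathrm{Ball}(Y)}\|\langle x,y\rangle\|$, while the reverse inequality holds because the pairing is contractive. Strict completeness of $\mathrm{Ball}(\mathfrak{D}(C))$ was already established before the statement: a $\sigma(\mathfrak{D}(C),Y)$-Cauchy sequence $(x_i)$ in the unit ball is strictly Cauchy in $\mathrm{Ball}(M(B))$, hence strictly convergent there to some $x$ with $\|x\|\le1$, while $([x_i,c])_i$ is norm-Cauchy in $B$ and converges to $[x,c]$, so $[x,c]\in B$ and $x\in\mathrm{Ball}(\mathfrak{D}(C))$ is the strict limit. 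For the countable dense subset, I would fix an approximate unit $(v_n)$ for $B$ that is approximately central for $C$; then for $x\in\mathrm{Ball}(\mathfrak{D}(C))$ one has $v_nx\in\mathrm{Ball}(B)$, with $v_nxb\to xb$, $b'v_nx\to b'x$, and $[v_nx,c]=v_n[x,c]+[v_n,c]x\to[x,c]$ since $\|[v_n,c]\|\to0$ and $[x,c]\in B$; hence $\mathrm{Ball}(B)$ is strictly dense in $\mathrm{Ball}(\mathfrak{D}(C))$, and any countable norm-dense subset of $\mathrm{Ball}(B)$ is a countable strictly dense subset of $\mathrm{Ball}(\mathfrak{D}(C))$. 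By Lemma \ref{Lemma:Banach-pairing}, $\mathfrak{D}(C)$ is a strict Banach space with the asserted strict topology.

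It remains to assemble the C*-algebraic structure. Since $1\in\mathfrak{D}(C)$, it is a unital C*-algebra, and the adjoint and multiplication are strictly continuous on bounded sets by routine estimates from the defining seminorms together with the strict lower-semicontinuity of the norm. Moreover $B\subseteq\mathfrak{D}(C)$ because $[b,c]\in B$ for $b\in B$ and $c\in C$, and $B$ is a norm-closed two-sided ideal of $\mathfrak{D}(C)$ which is essential since its annihilator inside $\mathfrak{D}(C)$ is contained in the annihilator of $B$ in $M(B)$, which is zero; by the example of strict ideals following Definition \ref{Definition:strict-ideal}, $B$ is a strict ideal of $\mathfrak{D}(C)$, and it is strictly dense by the density computation above. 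Finally, for $n\geq1$ the identification $M_n(\mathfrak{D}(C))=\mathfrak{D}(\Delta_n(C))$ inside $M(M_n(B))=M_n(M(B))$, noted before the statement, together with the first part of the argument applied to $M_n(B)$ and $\Delta_n(C)$, shows that $\mathfrak{D}(\Delta_n(C))$ is a strict Banach space whose operations are strictly continuous on bounded sets; one then checks that its strict topology coincides with the topology induced by the inclusion $\mathrm{Ball}(M_n(\mathfrak{D}(C)))\subseteq M_n(\mathrm{Ball}(\mathfrak{D}(C)))$, which gives conditions (1) and (2) of Definition \ref{Definition:strictC}.

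The step I expect to require the most care is this last compatibility with matrix amplification: one must check that the seminorms generating the strict topology of $\mathrm{Ball}(\mathfrak{D}(\Delta_n(C)))$---built from $M_n(B)$ and $\Delta_n(C)$---generate the same topology as the coordinatewise seminorms inherited from the strict topology of $\mathrm{Ball}(\mathfrak{D}(C))$, so that the two a priori different strict structures on $M_n(\mathfrak{D}(C))$ agree. Everything else is either immediate from the definitions or was already carried out in the discussion preceding the statement.
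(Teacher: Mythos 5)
Your proposal is correct and follows essentially the same route as the paper, which likewise deduces the proposition from Lemma \ref{Lemma:Banach-pairing} together with the strict-completeness computation and the approximately central approximate unit recorded in the paragraph preceding the statement; you have simply made the pairing and the verification of its hypotheses explicit. The additional care you devote to the compatibility of the matrix amplifications is a reasonable filling-in of a step the paper leaves implicit.
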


Suppose now as above that $B$ is a separable C*-algebra, and $C\subseteq
M(B) $ is a separable C*-subalgebra. Let also $I\subseteq C$ be a closed
two-sided ideal. Define the essential annihilator 
\begin{equation*}
\mathfrak{D}\left( C//I\right) =\left\{ x\in \mathfrak{D}\left( C\right)
:\forall a\in I,xa\in B\right\} \text{,}
\end{equation*}%
which is a closed two-sided ideal of $\mathfrak{D}\left( C\right) $. The
strict topology on $\mathrm{\mathrm{Ball}}\left( \mathfrak{D}\left(
C//I\right) \right) $ is the topology generated by the seminorms%
\begin{equation*}
x\mapsto \mathrm{\max }\left\{ \left\Vert xb\right\Vert ,\left\Vert
bx\right\Vert ,\left\Vert \left[ x,c\right] \right\Vert \right\}
\end{equation*}%
for $b\in B\cup I$ and $c\in C$. A straightforward argument as above gives
the following.

\begin{proposition}
\label{Proposition:essential-annihilator}Let $B$ be a separable C*-algebra,
let $C\subseteq M(B)$ be a separable C*-subalgebra, and $I\subseteq C$ be a
closed two-sided ideal. Let $\mathfrak{D}\left( C\right) $ be the
corresponding essential commutant, and $\mathfrak{D}\left( C//I\right) $ be
the essential annihilator. Then $\mathfrak{D}\left( C//I\right) $ is a
strict ideal of $\mathfrak{D}\left( C\right) $, where for every $n\geq 1$, $%
M_{n}\left( \mathfrak{D}\left( C//I\right) \right) $ is identified with $%
\mathfrak{D}\left( \Delta _{n}\left( C\right) //\Delta _{n}\left( I\right)
\right) $ and $\mathrm{\mathrm{Ball}}\left( \mathfrak{D}\left( \Delta
_{n}\left( C\right) //\Delta _{n}\left( I\right) \right) \right) $ is
endowed with the strict topology described above.
\end{proposition}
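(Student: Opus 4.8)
The plan is to follow the proof of Proposition \ref{Proposition:essential-commutant}, reducing everything to one application of Lemma \ref{Lemma:Banach-pairing} for each matrix amplification. It has already been observed that $\mathfrak{D}\left( C//I\right) $ is a norm-closed two-sided ideal of $\mathfrak{D}\left( C\right) $; it is proper whenever $I\not\subseteq B$ (equivalently $1\notin \mathfrak{D}\left( C//I\right) $), which holds in all the instances we consider, and it is Borel in $\mathfrak{D}\left( C\right) $ since $B$ is a Borel ideal of $\mathfrak{D}\left( C\right) $ by Proposition \ref{Proposition:essential-commutant}, the map $x\mapsto xa$ is strictly continuous on bounded sets and hence Borel, and $\mathfrak{D}\left( C//I\right) =\bigcap_{a\in D}\left\{ x\in \mathfrak{D}\left( C\right) :xa\in B\right\} $ for any countable norm-dense subset $D$ of $I$.

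For the strict C*-algebra structure, fix $n\geq 1$; I will check that $\mathfrak{D}\left( \Delta _{n}\left( C\right) //\Delta _{n}\left( I\right) \right) \subseteq M\left( M_{n}(B)\right) $ is a strict Banach space with the asserted topology, the algebraic identification with $M_{n}\left( \mathfrak{D}\left( C//I\right) \right) $ and the compatibility with the product topology on $M_{n}\left( \mathrm{Ball}\left( \mathfrak{D}\left( C//I\right) \right) \right) $ being verified exactly as in the $\mathfrak{D}\left( C\right) $ case via $M_{n}\left( M(B)\right) =M\left( M_{n}(B)\right) $. Consider the Banach pairing
\begin{equation*}
\left\langle x,\left( b,a,c\right) \right\rangle =\left( xb,\,bx,\,xa,\,ax,\,\tfrac{1}{2}\left[ x,c\right] \right) \in M_{n}(B)^{\oplus 5}
\end{equation*}
for $x\in \mathfrak{D}\left( \Delta _{n}\left( C\right) //\Delta _{n}\left( I\right) \right) $ and $\left( b,a,c\right) \in M_{n}(B)\oplus \Delta _{n}\left( I\right) \oplus \Delta _{n}\left( C\right) $; all five outputs lie in $M_{n}(B)$, using that $M_{n}(B)$ is an ideal of $M\left( M_{n}(B)\right) $, that $xa\in M_{n}(B)$ by definition of the essential annihilator, that $\left[ x,c\right] \in M_{n}(B)$ since $x\in \mathfrak{D}\left( \Delta _{n}\left( C\right) \right) $, and hence $ax=xa-\left[ x,a\right] \in M_{n}(B)$. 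The source and target are norm-separable; the scaling by $\tfrac{1}{2}$ ensures $\left\Vert \left\langle x,y\right\rangle \right\Vert \leq \left\Vert x\right\Vert $ for $y$ in the unit ball, while $\left\Vert x\right\Vert =\sup_{b\in \mathrm{Ball}\left( M_{n}(B)\right) }\left\Vert xb\right\Vert $ gives the reverse inequality, so the norm formula of Lemma \ref{Lemma:Banach-pairing} holds, and the topology generated by the pairing seminorms is precisely the strict topology described above. That $\mathrm{Ball}\left( \mathfrak{D}\left( \Delta _{n}\left( C\right) //\Delta _{n}\left( I\right) \right) \right) $ is $\sigma $-complete is shown as in Proposition \ref{Proposition:essential-commutant}: a $\sigma $-Cauchy sequence $\left( x_{i}\right) $ is strictly Cauchy in $\mathfrak{D}\left( \Delta _{n}\left( C\right) \right) $ and so converges strictly to some $x\in \mathrm{Ball}\left( \mathfrak{D}\left( \Delta _{n}\left( C\right) \right) \right) $; for $a\in \Delta _{n}\left( I\right) $ the sequence $\left( x_{i}a\right) $ is norm-Cauchy in $M_{n}(B)$, and multiplying on the left by elements of $M_{n}(B)$ and invoking nondegeneracy identifies its norm-limit with $xa$, so $x\in \mathfrak{D}\left( \Delta _{n}\left( C\right) //\Delta _{n}\left( I\right) \right) $ and $x_{i}\rightarrow x$ in the $\sigma $-topology. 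Finally, $\mathrm{Ball}\left( M_{n}(B)\right) $ is $\sigma $-dense: taking an approximate unit $\left( v_{k}\right) $ for $M_{n}(B)$ that is approximately central for $\Delta _{n}\left( C\right) $, one has $v_{k}x\in \mathrm{Ball}\left( M_{n}(B)\right) $ and $v_{k}x\rightarrow x$ in every pairing seminorm, the only point beyond Proposition \ref{Proposition:essential-commutant} being the seminorms $x\mapsto \left\Vert xa\right\Vert ,\left\Vert ax\right\Vert $ for $a\in \Delta _{n}\left( I\right) $, which is handled using $xa,ax\in M_{n}(B)$ and $v_{k}\rightarrow 1$ strictly. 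Lemma \ref{Lemma:Banach-pairing} then applies.

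It remains to verify Definition \ref{Definition:strictC}(1) and that the inclusion $\mathfrak{D}\left( C//I\right) \hookrightarrow \mathfrak{D}\left( C\right) $ is strict. The latter is immediate, since the strict topology of $\mathrm{Ball}\left( \mathfrak{D}\left( C//I\right) \right) $ is generated by the strict seminorms of $\mathfrak{D}\left( C\right) $ together with the extra seminorms $x\mapsto \left\Vert xa\right\Vert ,\left\Vert ax\right\Vert $ for $a\in I$, hence refines the subspace topology. For the former, strict continuity on bounded sets of multiplication and of the $\ast $-operation on $M_{n}\left( \mathfrak{D}\left( C//I\right) \right) $ is obtained by the same computation as for $\mathfrak{D}\left( C\right) $ for the seminorms coming from $B$ and from commutators with $C$, augmented by the routine check that if $x_{i}\rightarrow x$ and $y_{i}\rightarrow y$ strictly on a bounded set then $x_{i}y_{i}\rightarrow xy$ and $x_{i}^{\ast }\rightarrow x^{\ast }$ in the seminorms $x\mapsto \left\Vert xa\right\Vert ,\left\Vert ax\right\Vert $ for $a\in I$, which uses $xa\in B$ and the strict continuity of multiplication on $M(B)$. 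I expect the only non-bookkeeping steps to be the two already singled out: identifying the $\sigma $-limit as an element of the essential annihilator through nondegeneracy of the ideal, and the approximate-unit argument yielding $\sigma $-density in the refined strict topology; the rest runs parallel to the proof of Proposition \ref{Proposition:essential-commutant}.
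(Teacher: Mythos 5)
Your proof is correct and follows exactly the route the paper intends: the paper's own "proof" is the single sentence "a straightforward argument as above gives the following," deferring to the completeness-plus-pairing argument given for $\mathfrak{D}\left( C\right) $ before Proposition \ref{Proposition:essential-commutant}, which is precisely what you carry out (with the extra seminorms $x\mapsto \left\Vert xa\right\Vert ,\left\Vert ax\right\Vert $ for $a\in I$ handled via $xa\in B$, the approximate unit, and nondegeneracy). The two points you single out — identifying the $\sigma $-limit as an element of the essential annihilator and the density of $\mathrm{Ball}\left( M_{n}(B)\right) $ in the refined topology — are indeed the only non-bookkeeping steps, and your treatment of them is sound.
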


\begin{example}
\label{Exampl:Paschke-dual}Suppose that $A$ is a separable unital
C*-algebra, $J$ is a closed two-sided ideal of $A$, and $\rho :A\rightarrow
B\left( H\right) $ is a nondegenerate representation of $A$ that is \emph{%
ample}, in the sense that $\rho (A)\cap K\left( H\right) =\left\{ 0\right\} $%
. We regard $B\left( H\right) $ as the multiplier algebra of $K\left(
H\right) $. The\textrm{\ }\emph{Paschke dual} $\mathfrak{D}_{\rho }(A)$ as
defined in \cite[Definition 5.1.1]{higson_analytic_2000} is the essential
commutant $\mathfrak{D}\left( \rho (A)\right) $ of $\rho (A)$ inside $%
B\left( H\right) $; see also \cite{paschke_theory_1981}. The relative dual
algebra $\mathfrak{D}_{\rho }\left( A//J\right) $ as defined in \cite[%
Definition 5.3.2]{higson_analytic_2000} is the strict ideal $\mathfrak{D}%
\left( \rho (A)//\rho \left( J\right) \right) $ of $\mathfrak{D}_{\rho }(A)=%
\mathfrak{D}\left( \rho (A)\right) $.
\end{example}

\subsection{Homotopy of projections\label{Subsection:strict-pair}}

Suppose that $\mathfrak{A}$ is a strict unital C*-algebra. Recall that a
strict ideal of $\mathfrak{A}$ is a proper norm-closed Borel two-sided ideal 
$\mathfrak{J}$ of $\mathfrak{A}$ that is also a strict C*-algebra and such
that the inclusion map $\mathfrak{J}\rightarrow \mathfrak{A}$ is a strict
*-homomorphism.

\begin{definition}
A strict (unital) C*-pair is a pair $\left( \mathfrak{A},\mathfrak{J}\right) 
$ where $\mathfrak{A}$ is a strict (unital) C*-algebra and $\mathfrak{J}$ is
a strict ideal of $\mathfrak{A}$.
\end{definition}

We regard strict unital C*-pairs as objects of a category, where a morphism
from $\left( \mathfrak{A},\mathfrak{I}\right) $ to $\left( \mathfrak{B},%
\mathfrak{J}\right) $ is a strict unital *-homomorphism $\varphi :\mathfrak{A%
}\rightarrow \mathfrak{B}$ that maps $\mathfrak{I}$ to $\mathfrak{J}$.

Every strict unital C*-pair $\left( \mathfrak{A},\mathfrak{J}\right) $
determines a quotient unital C*-algebra $\mathfrak{A/J}$. If $\mathfrak{A/I}$
and $\mathfrak{B/J}$ are two unital C*-algebras obtained in this way, then
we say that a unital *-homomorphism $\varphi :\mathfrak{A/I}\rightarrow 
\mathfrak{B/J}$ is \emph{definable }if it has a Borel lift \emph{\ }(or a
Borel representation\emph{\ }in the terminology of \cite%
{farah_automorphisms_2011,ghasemi_isomorphisms_2015}). This is a Borel
function $f:\mathfrak{A}\rightarrow \mathfrak{B}$ (which is not necessarily
a *-homomorphism) such that $\varphi \left( a+\mathfrak{I}\right) =f\left(
a\right) +\mathfrak{J}$ for every $a\in A$. The notion of definable unital
*-homomorphisms determines a category, whose objects are strict unital
C*-pairs and whose morphisms are the definable unital *-homomorphism. When
the strict unital C*-pair $\left( \mathfrak{A},\mathfrak{J}\right) $ is
considered as the object of this category, we call it a unital C*-algebra
with a strict cover, and denote it by $\mathfrak{A/J}$, as we think of it as
a unital C*-algebra explicitly presented as the quotient of a strict unital
C*-algebra by a strict ideal. The category of unital C*-algebras with a
strict cover thus has unital C*-algebras with strict cover as objects and
definable unital *-homomorphisms as morphisms. The notion of a unital
C*-algebra with a strict cover is the analogue in the context of C*-algebras
to the notion of group with a Polish cover considered in \cite%
{bergfalk_ulam_2020}; see Remark \ref{Remark:Polish-cover}.

Notice that every strict unital *-homomorphism $\left( \mathfrak{A},%
\mathfrak{I}\right) \rightarrow \left( \mathfrak{B},\mathfrak{J}\right) $
between strict unital\ C*-pairs induces a definable unital *-homomorphism $%
\mathfrak{A/J}\rightarrow \mathfrak{B/J}$ between the corresponding unital
C*-algebras with a strict cover. This allows one to regard the category of
strict unital C*-pairs as a subcategory of the category of unital
C*-algebras with a strict cover. These categories have the same objects, but
different morphisms.

If $\left( \mathfrak{A},\mathfrak{J}\right) $ is a strict unital C*-pair and 
$a,b\in \mathfrak{A}$, we write $a\equiv b\mathrm{\ \mathrm{mod}}\ \mathfrak{%
J}$ if $a-b\in \mathfrak{J}$. If $a\in M_{n}\left( \mathfrak{A}\right) $ and 
$b\in M_{k}\left( \mathfrak{A}\right) $, then we set%
\begin{equation*}
a\oplus b=%
\begin{bmatrix}
a & 0 \\ 
0 & b%
\end{bmatrix}%
\in M_{n+k}\left( \mathfrak{A}\right) \text{.}
\end{equation*}%
We let $1_{n}$ be the identity element of $M_{n}\left( \mathfrak{A}\right) $
and $0_{n}$ be the zero element of $M_{n}\left( \mathfrak{A}\right) $.

Suppose that $\left( \mathfrak{A},\mathfrak{J}\right) $ is a strict unital
C*-pair. A positive element of \textrm{Ball}$\left( \mathfrak{A}\right) $ is
a projection $\mathrm{\mathrm{mod}}\ \mathfrak{J}$ if $p^{2}\equiv p\mathrm{%
\ \mathrm{mod}}\ \mathfrak{J}$ or, equivalently, $p+\mathfrak{J}$ is a
projection in $\mathfrak{A/J}$. Define the set $\mathrm{Proj}\left( 
\mathfrak{A/J}\right) \subseteq \mathrm{\mathrm{Ball}}\left( \mathfrak{A}%
\right) $ to be the Borel set of projections $\mathrm{\mathrm{mod}}\ 
\mathfrak{J}$ in $\mathfrak{A}$. The Borel structure on $\mathrm{Proj}\left( 
\mathfrak{A/J}\right) $ is induced by the Polish topology defined by
declaring a net $\left( p_{i}\right) _{i\in I}$ to converge to $p$ if and
only if $p_{i}\rightarrow p$ strictly in \textrm{Ball}$\left( \mathfrak{A}%
\right) $ and $p_{i}^{2}-p_{i}\rightarrow p^{2}-p$ strictly in $2$\textrm{%
Ball}$\left( \mathfrak{J}\right) $. (Recall that the strict topology on $%
\mathrm{\mathrm{Ball}}\left( \mathfrak{J}\right) $ might be different from
the topology induced by the strict topology on $\mathrm{\mathrm{Ball}}\left( 
\mathfrak{A}\right) $.)

We also say that an element $u$ of \textrm{Ball}$\left( \mathfrak{A}\right) $
is a unitary $\mathrm{\mathrm{mod}}\ \mathfrak{J}$ if $uu^{\ast }\equiv 1%
\mathrm{\ \mathrm{mod}}\ \mathfrak{J}$ and $u^{\ast }u\equiv 1\mathrm{\ 
\mathrm{mod}}\ \mathfrak{J}$ or, equivalently, $u+\mathfrak{J}$ is a unitary
in $\mathfrak{A/J}$. We let $U\left( \mathfrak{A/J}\right) $ be the Borel
set of unitaries $\mathrm{\mathrm{\mathrm{mod}}\ }\mathfrak{J}$ in $%
\mathfrak{A}$. The Borel structure on $U\left( \mathfrak{A/J}\right) $ is
induced by the Polish topology defined by declaring a net $\left(
u_{i}\right) _{i\in I}$ to converge to $u$ if and only if $u_{i}\rightarrow
u $ strictly in $\mathrm{\mathrm{Ball}}\left( \mathfrak{A}\right) $, $%
u_{i}u_{i}^{\ast }-1\rightarrow uu^{\ast }-1$ strictly in $2\mathrm{\mathrm{%
Ball}}\left( \mathfrak{J}\right) $, and $u_{i}^{\ast }u_{i}-1\rightarrow
u^{\ast }u-1$ strictly in $2\mathrm{\mathrm{Ball}}\left( \mathfrak{J}\right) 
$.

More generally, an element $v$ of $\mathrm{\mathrm{Ball}}\left( \mathfrak{A}%
\right) $ is called a \emph{partial unitary} $\mathrm{\mathrm{mod}}\ 
\mathfrak{J}$ if $uu^{\ast }\equiv uu^{\ast }$ $\mathrm{\ \mathrm{mod}}\ 
\mathfrak{J}$ and $uu^{\ast }$ is a $\mathrm{\mathrm{mod}}\ \mathfrak{J}$
projection or, equivalently, if $v+\mathfrak{J}$ is a partial unitary in $%
\mathfrak{A/J}$ as in \cite[8.2.12]{rordam_introduction_2000}. We let $%
\mathrm{PU}\left( \mathfrak{A/J}\right) $ be the Borel set of $\mathrm{%
\mathrm{mod}}\ \mathfrak{J}$ partial unitaries in $\mathfrak{A}$. In a
similar fashion one can define the Borel set \textrm{PI}$\left( \mathfrak{A/J%
}\right) $ of $\mathrm{\mathrm{\mathrm{\mathrm{mod}}\ }}\mathfrak{J}$
partial isometries in $\mathfrak{A}$, consisting of those $v\in \mathrm{%
\mathrm{Ball}}\left( \mathfrak{A}\right) $ such that $vv^{\ast }$ and $%
v^{\ast }v$ are $\mathrm{\mathrm{mod}}\ \mathfrak{J}$ projections.

In the rest of this section we record some lemmas about unitaries and
projections modulo a strict ideal in a strict unital C*-algebra. The content
of these lemmas can be summarized as the assertion that a homotopy between
projections and unitaries in a unital C*-algebra with a strict cover is
witnessed by unitary elements in the path-component of the identity of the
unitary group that can be chosen in a Borel fashion. The proofs follow
standard arguments from the literature on $\mathrm{K}$-theory for
C*-algebras; see \cite%
{rordam_introduction_2000,higson_analytic_2000,blackadar_theory_1998,wegge-olsen_theory_1993}%
.

Given elements $y_{1},\ldots ,y_{n}$ of $\mathrm{\mathrm{\mathrm{\mathrm{Ball%
}}}}\left( \mathfrak{A}\right) $, subject to a certain relation $P\left(
y_{1},\ldots ,y_{n}\right) $, we say that an element $z\in \mathrm{\mathrm{%
Ball}}\left( \mathfrak{A}\right) $ satisfying a relation $R\left(
y_{1},\ldots ,y_{n},z\right) $ \emph{can be chosen in a Borel fashion }(from 
$y_{1},\ldots ,y_{n}$) if there is a Borel function $\left( y_{1},\ldots
,y_{n}\right) \mapsto z\left( y_{1},\ldots ,y\right) $ that assign to each $%
n $-tuple $\left( y_{1},\ldots ,y_{n}\right) $ in $\mathrm{\mathrm{\mathrm{%
\mathrm{Ball}}}}\left( \mathfrak{A}\right) $ satisfying $P$ an element $%
z\left( y_{1},\ldots ,y_{n}\right) $ in $\mathrm{\mathrm{\mathrm{\mathrm{Ball%
}}}}\left( \mathfrak{A}\right) $ such that $\left( y_{1},\ldots
,y_{n},z\left( y_{1},\ldots ,y\right) \right) $ satisfies $R$. In other
words, the set of tuples $\left( y_{1},\ldots ,y_{n},z\right) \in \mathrm{%
\mathrm{Ball}}\left( \mathfrak{A}\right) ^{n}\times \mathrm{\mathrm{Ball}}%
\left( \mathfrak{A}\right) $ such that $\left( y_{1},\ldots ,y_{n}\right) $
satisfies $P$ and $\left( y_{1},\ldots ,y_{n},z\right) $ satisfies $R$ has a
Borel uniformization \cite[Section 18.A]{kechris_classical_1995}.

Suppose that $A$ is a unital C*-algebra. Let $\mathrm{Proj}(A)$ be the set
of projections in $A$. Two projections $p,q$ in $A$ are:

\begin{itemize}
\item Murray--von Neumann equivalent if there exists $v\in A$ such that $%
v^{\ast }v=p$ and $vv^{\ast }=q$, in which case we write $p\sim _{\mathrm{MvN%
}}q$;

\item unitary equivalent if there exists $u\in U(A)$ such that $u^{\ast
}qu=p $;

\item homotopic if there is a\emph{\ norm-continuous} path $\left(
p_{t}\right) _{t\in \left[ 0,1\right] }$ in $\mathrm{Proj}\left(
M_{n}(A)\right) $ with $p_{0}=p$ and $p_{1}=q$.
\end{itemize}

\begin{lemma}
\label{Lemma:log}Suppose that $\left( \mathfrak{A},\mathfrak{J}\right) $ is
a strict unital C*-pair, $d\in \mathbb{N}$, and $u\in d\mathrm{\mathrm{Ball}}%
\left( \mathfrak{A}\right) $ satisfies $\left\Vert u-1+\mathfrak{J}%
\right\Vert \leq 1/2$. Then one can choose in a Borel way a self-adjoint
element $y\in \mathrm{\mathrm{\mathrm{Ball}}}\left( \mathfrak{A}\right) $
such that $e^{iy}\equiv u\mathrm{\ \mathrm{mod}}\ \mathfrak{J}$.
\end{lemma}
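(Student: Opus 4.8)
The plan is as follows. Let $q\colon\mathfrak A\to\mathfrak A/\mathfrak J$ be the quotient $*$-homomorphism. The difficulty is that, although $q(u)-1=q(u-1)$ has norm at most $1/2$, the element $u-1$ may have large norm in $\mathfrak A$, so one cannot apply a branch of the logarithm to $u$ itself. I would instead (i) replace $u$, by an \emph{explicit Borel formula} rather than a measurable selection, by an element $u_0\equiv u\ \mathrm{mod}\ \mathfrak J$ with $\|u_0-1\|\le 1/2$; (ii) form $y_0:=-i\log u_0$ using the norm-convergent power series of the logarithm, so that $e^{iy_0}=u_0$; and (iii) pass to the self-adjoint part $y:=\tfrac12(y_0+y_0^{*})$, checking that $y\in\mathrm{Ball}(\mathfrak A)$ and $e^{iy}\equiv u\ \mathrm{mod}\ \mathfrak J$.

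For step (i): put $c:=u-1$ and $a:=c^{*}c\ge 0$, so $\|a\|\le(d+1)^{2}$, and let $h\colon[0,\infty)\to[0,1]$ be the continuous function $h(t):=\min\{1,(2\sqrt t)^{-1}\}$ with $h(0):=1$, which equals $1$ on $[0,1/4]$ and satisfies $h(t)^{2}t=\min\{t,1/4\}$. Set
\begin{equation*}
c_0:=c\,h(a)=(u-1)\,h\big((u-1)^{*}(u-1)\big),\qquad u_0:=1+c_0 .
\end{equation*}
Since $q(a)=q(c)^{*}q(c)$ has norm $\|u-1+\mathfrak J\|^{2}\le 1/4$, its spectrum lies in $[0,1/4]$, where $h\equiv 1$; hence $h(a)-1\in\ker q=\mathfrak J$, so $c_0-c=c(h(a)-1)\in\mathfrak J$ and $u_0\equiv u\ \mathrm{mod}\ \mathfrak J$. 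Also $\|c_0\|^{2}=\|h(a)\,a\,h(a)\|=\|\min\{a,1/4\}\|\le 1/4$, so $\|u_0-1\|\le 1/2$. Finally $u\mapsto a$ is a $*$-polynomial and hence strictly continuous on bounded sets, $a\mapsto h(a)$ is strictly continuous on bounded sets by continuous functional calculus together with Lemma~\ref{Lemma:uniform-convergence}, and multiplication is strictly continuous on bounded sets, so $u\mapsto u_0$ is Borel. (Strict lower-semicontinuity of the norm on bounded sets also shows that the set of $u$ to which the lemma applies is Borel.)

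For steps (ii)--(iii): since $\|u_0-1\|\le 1/2<1$, the series $\log u_0:=\sum_{n\ge 1}\tfrac{(-1)^{n+1}}{n}(u_0-1)^{n}$ converges in norm with $e^{\log u_0}=u_0$ and $\|\log u_0\|\le\sum_{n\ge 1}2^{-n}/n=\ln 2<1$, and $u_0\mapsto\log u_0$ is strictly continuous on $\{\|u_0-1\|\le 1/2\}$ by the holomorphic functional calculus for strict C*-algebras and Lemma~\ref{Lemma:uniform-convergence}; hence $u\mapsto\log u_0$ is Borel. Put $y_0:=-i\log u_0$ and $y:=\tfrac12(y_0+y_0^{*})$, so that $y$ is self-adjoint, $\|y\|\le\|y_0\|\le\ln 2\le 1$, and $u\mapsto y$ is Borel. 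It remains to see $e^{iy}\equiv u\ \mathrm{mod}\ \mathfrak J$. In $\mathfrak A/\mathfrak J$, $q(u_0)=q(u)$ is a unitary with $\|q(u)-1\|\le 1/2$, so the functional-calculus identity $\log(z^{-1})=-\log z$ valid near $z=1$ gives $(\log q(u))^{*}=\log\big(q(u)^{*}\big)=\log\big(q(u)^{-1}\big)=-\log q(u)$; thus $q(y_0)=-i\log q(u)$ is self-adjoint, whence $q(y)=q(y_0)$ and $q(e^{iy})=e^{iq(y_0)}=e^{\log q(u)}=q(u)$.

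The step I expect to be the main obstacle is step (i): the obvious approach, using a measurable uniformization to choose a norm-small representative of the coset $u-1+\mathfrak J$, produces only a universally measurable selection, which is not a Borel function in the sense required; the functional-calculus cutoff $c_0=(u-1)h((u-1)^{*}(u-1))$ is engineered to exhibit such a representative explicitly, as a strictly continuous function of $u$ on bounded sets. A secondary point is that the symmetrization in step (iii) relies on $q(u)$ being genuinely a unitary --- which it must be, since $e^{iy}$ is a unitary in $\mathfrak A$ --- so I read that as part of the hypotheses.
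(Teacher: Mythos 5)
Your proof is correct and follows essentially the same route as the paper's: replace $u$ by a representative $u_0$ of its coset with $\left\Vert u_0-1\right\Vert \le 1/2$ chosen in a Borel way via continuous functional calculus, apply the logarithm power series, and symmetrize to obtain a self-adjoint $y$ with $e^{iy}\equiv u\ \mathrm{mod}\ \mathfrak{J}$. You additionally supply the explicit cutoff formula $u_0=1+(u-1)h\left((u-1)^{\ast}(u-1)\right)$ that the paper only asserts implicitly, and your symmetrization $y=\tfrac12(y_0+y_0^{\ast})$ with $y_0=-i\log u_0$ corrects a stray factor of $i$ in the paper's definition $y=\tfrac12(\log u+\log(u)^{\ast})$ (which as written would give $y\equiv 0$ modulo $\mathfrak{J}$).
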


\begin{proof}
Consider $u-1+\mathfrak{J}\in \mathfrak{A/J}$, and observe that there exists 
$a\in \mathfrak{A}$ such that $\left\Vert a\right\Vert \leq 1/2$ and $a+%
\mathfrak{J}=u-1+\mathfrak{J}$, which can be chosen in a Borel way by strict
continuity of the continuous functional calculus. Hence, setting $\tilde{u}%
:=a+1\in \left( d+1\right) \mathrm{\mathrm{Ball}}\left( \mathfrak{A}\right) $%
, we have that $\tilde{u}\equiv u\mathrm{\mathrm{\ \mathrm{mod}}\ }\mathfrak{%
J}$ and $\left\Vert \tilde{u}-1\right\Vert \leq 1/2$. Thus, after replacing $%
d$ with $d+1$ and $u$ with $\tilde{u}$, we can assume that $\left\Vert
u-1\right\Vert \leq 1/2$.

Let $\log :D\rightarrow \mathbb{C}$ be an holomorphic branch of the
logarithm defined on $\left\{ z\in \mathbb{C}:\left\vert z-1\right\vert
<1\right\} $. Considering the holomorphic functional calculus, one can
define the element $\mathrm{\log }(u)\in \mathfrak{A}$. As%
\begin{equation*}
\mathrm{\log }(z)=\sum_{n=0}^{\infty }\frac{\left( 1-z\right) ^{n}}{n}
\end{equation*}%
is the uniformly convergent power series expansion in $\left\{ z\in \mathbb{C%
}:\left\vert z\right\vert \leq 1/2\right\} $, we have that%
\begin{equation*}
\mathrm{\log }(u)=\sum_{n=0}^{\infty }\frac{\left( 1-u\right) ^{n}}{n}\text{.%
}
\end{equation*}%
In particular, $\left\Vert \mathrm{\log }(u)\right\Vert \leq 1$. Define%
\begin{equation*}
y:=\frac{\mathrm{\log }(u)+\log (u)^{\ast }}{2}\in \mathrm{\mathrm{Ball}}%
\left( \mathfrak{A}_{\mathrm{sa}}\right) \text{.}
\end{equation*}%
Then we have that $y\equiv \log (u)\mathrm{\ \mathrm{mod}}\ \mathfrak{J}$
satisfies $\exp \left( iy\right) \equiv u\mathrm{\ \mathrm{mod}}\ \mathfrak{J%
}$.
\end{proof}

\begin{corollary}
\label{Corollary:homotopy-unitaries}Suppose that $\left( \mathfrak{A},%
\mathfrak{J}\right) $ is a strict unital C*-pair, and $u,w\in \mathfrak{A}$
are $\mathrm{\mathrm{mod}}\ \mathfrak{J}$ unitaries. Then there following
assertions are equivalent:

\begin{enumerate}
\item there is a norm-continuous path from $u+\mathfrak{J}$ to $w+\mathfrak{J%
}$ in $\mathfrak{A/J}$;

\item there exists $\ell \geq 1$ and $y_{1},\ldots ,y_{\ell }\in \mathrm{%
\mathrm{Ball}}\left( \mathfrak{A}_{\mathrm{sa}}\right) $ such that $%
e^{iy_{1}}\cdots e^{iy_{\ell }}u\equiv w\mathrm{\ \mathrm{mod}}\ \mathfrak{J}
$.
\end{enumerate}
\end{corollary}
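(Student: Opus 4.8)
The plan is to prove the two implications separately; the direction (2) $\Rightarrow$ (1) is the easy one, and the substance lies in (1) $\Rightarrow$ (2), where I would turn a finite subdivision of a path into a product of exponentials via Lemma~\ref{Lemma:log}. For (2) $\Rightarrow$ (1): given $y_{1},\ldots ,y_{\ell }\in \mathrm{Ball}\left( \mathfrak{A}_{\mathrm{sa}}\right) $ with $e^{iy_{1}}\cdots e^{iy_{\ell }}u\equiv w\ \mathrm{mod}\ \mathfrak{J}$, pass to $\mathfrak{A/J}$ and write $\bar{a}$ for the image of $a\in \mathfrak{A}$. Each $\bar{y}_{j}$ is self-adjoint in $\mathfrak{A/J}$ and $e^{i\bar{y}_{1}}\cdots e^{i\bar{y}_{\ell }}\bar{u}=\bar{w}$, so $t\mapsto e^{it\bar{y}_{1}}\cdots e^{it\bar{y}_{\ell }}\bar{u}$ is a norm-continuous path of unitaries in $\mathfrak{A/J}$ from $\bar{u}$ to $\bar{w}$, giving (1).

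For (1) $\Rightarrow$ (2), I would start from a norm-continuous path of unitaries $\left( v_{t}\right) _{t\in \left[ 0,1\right] }$ in $\mathfrak{A/J}$ with $v_{0}=\bar{u}$ and $v_{1}=\bar{w}$, and use uniform continuity on $\left[ 0,1\right] $ to pick $0=t_{0}<t_{1}<\cdots <t_{N}=1$ with $\left\Vert v_{t_{k}}-v_{t_{k-1}}\right\Vert <1/2$ for $1\leq k\leq N$. Since $\mathfrak{A}\rightarrow \mathfrak{A/J}$ is surjective, I would choose preimages $b_{k}\in \mathfrak{A}$ of $v_{t_{k}}$, taking $b_{0}:=u$ and $b_{N}:=w$; then the image of $b_{k}b_{k-1}^{*}$ in $\mathfrak{A/J}$ is the unitary $v_{t_{k}}v_{t_{k-1}}^{*}$, so
\begin{equation*}
\left\Vert b_{k}b_{k-1}^{*}-1+\mathfrak{J}\right\Vert =\left\Vert v_{t_{k}}v_{t_{k-1}}^{*}-1\right\Vert =\left\Vert \left( v_{t_{k}}-v_{t_{k-1}}\right) v_{t_{k-1}}^{*}\right\Vert \leq \left\Vert v_{t_{k}}-v_{t_{k-1}}\right\Vert <\tfrac{1}{2}\text{.}
\end{equation*}
As $b_{k}b_{k-1}^{*}$ is a bounded element of $\mathfrak{A}$, it lies in $d\mathrm{Ball}\left( \mathfrak{A}\right) $ for some $d\in \mathbb{N}$, so Lemma~\ref{Lemma:log} produces $z_{k}\in \mathrm{Ball}\left( \mathfrak{A}_{\mathrm{sa}}\right) $ with $e^{iz_{k}}\equiv b_{k}b_{k-1}^{*}\ \mathrm{mod}\ \mathfrak{J}$.

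Finally I would multiply these congruences and telescope modulo $\mathfrak{J}$: because $\mathfrak{J}$ is a two-sided ideal and $b_{k-1}^{*}b_{k-1}\equiv 1\ \mathrm{mod}\ \mathfrak{J}$ for each $k$,
\begin{equation*}
e^{iz_{N}}e^{iz_{N-1}}\cdots e^{iz_{1}}u\equiv \left( b_{N}b_{N-1}^{*}\right) \left( b_{N-1}b_{N-2}^{*}\right) \cdots \left( b_{1}b_{0}^{*}\right) b_{0}\equiv b_{N}=w\pmod{\mathfrak{J}}\text{,}
\end{equation*}
and setting $y_{j}:=z_{N+1-j}$ and $\ell :=N$ yields (2). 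I expect the only delicate point to be this telescoping step together with the quotient-norm estimate that licenses the use of Lemma~\ref{Lemma:log}: the cancellations $b_{k-1}^{*}b_{k-1}\equiv 1$ hold only in the quotient, not on the nose, so the collapse of the product has to be carried out \emph{modulo} $\mathfrak{J}$ throughout; everything else is routine bookkeeping. Note also that the Borel-selection clause of Lemma~\ref{Lemma:log} is not needed here, since the corollary only asserts the bare existence of $y_{1},\ldots ,y_{\ell }$.
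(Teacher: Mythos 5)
Your proof is correct and is exactly the argument the paper intends: the corollary is stated without proof as an immediate consequence of Lemma~\ref{Lemma:log}, and the same subdivision-plus-telescoping scheme (cut the path into steps of quotient-norm distance at most $1/2$, apply Lemma~\ref{Lemma:log} to each increment, multiply the exponentials) is the one carried out explicitly, in Borel form, in the proof of Lemma~\ref{Lemma:relative-path-unitary}. Your reading of (1) as a path in the unitary group of $\mathfrak{A/J}$ is the intended one, and your observation that the Borel-selection clause of Lemma~\ref{Lemma:log} is not needed here is also correct.
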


\begin{lemma}
\label{Lemma:relative-path-unitary}Suppose that $\left( \mathfrak{A},%
\mathfrak{J}\right) $ is a strict unital C*-pair, and $p,q,x\in \mathrm{%
\mathrm{Ball}}\left( \mathfrak{A}\right) $ are such that $p,q$ are $\mathrm{%
\mathrm{mod}}\ \mathfrak{J}$ projections, $x^{\ast }x\equiv p\mathrm{\ 
\mathrm{mod}}\ \mathfrak{J}$, and $xx^{\ast }\equiv q\mathrm{\ \mathrm{mod}}%
\ \mathfrak{J}$. Then one can choose $Y_{1},\ldots ,Y_{\ell }\in \mathrm{%
\mathrm{Ball}}\left( M_{2}\left( \mathfrak{A}\right) _{\mathrm{sa}}\right) $
in a Borel fashion from $p,q,x$ such that, setting $U:=e^{iY_{1}}\cdots
e^{iY_{\ell }}\mathrm{,}$ one has that 
\begin{equation*}
U^{\ast }\left( q\oplus 0\right) U\equiv \left( p\oplus 0\right) \mathrm{\ 
\mathrm{mod}}\ M_{2}\left( \mathfrak{J}\right)
\end{equation*}%
and%
\begin{equation*}
\left( q\oplus 0\right) U\left( p\oplus 0\right) \equiv x\oplus 0\mathrm{\ 
\mathrm{mod}}\ M_{2}\left( \mathfrak{J}\right) \text{,}
\end{equation*}%
where $\ell \geq 1$ does not depend on $\left( \mathfrak{A},\mathfrak{J}%
\right) $ and $p,q,x$.
\end{lemma}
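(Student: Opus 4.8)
The plan is to avoid the fact that we only have data modulo $\mathfrak{J}$ by writing down a single closed formula for the $Y_{j}$ in terms of $p,q,x$, and then checking all the required congruences in the quotient C*-algebra $\mathfrak{A}/\mathfrak{J}$, in which $\bar p:=p+\mathfrak{J}$, $\bar q:=q+\mathfrak{J}$ are genuine projections and $\bar x:=x+\mathfrak{J}$ is a genuine partial isometry with $\bar x^{\ast}\bar x=\bar p$ and $\bar x\bar x^{\ast}=\bar q$ (so in particular $\bar q\bar x=\bar x=\bar x\bar p$). The algebraic heart of the matter is the classical two-symmetries decomposition of the rotation unitary: setting
\[
z:=\begin{pmatrix} x & 1-q\\ 1-p & x^{\ast}\end{pmatrix},\qquad a:=\begin{pmatrix} 1-q & x\\ x^{\ast} & 1-p\end{pmatrix},\qquad b:=\begin{pmatrix} 0 & 1\\ 1 & 0\end{pmatrix}
\]
in $M_{2}(\mathfrak{A})$, one has $z=ab$ as an identity in $M_{2}(\mathfrak{A})$, the element $b$ is an honest self-adjoint unitary, and $\bar a$ is a self-adjoint unitary in $M_{2}(\mathfrak{A}/\mathfrak{J})$. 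Indeed $c:=1_{2}-a=\begin{pmatrix} q & -x\\ -x^{\ast} & p\end{pmatrix}$ is honestly self-adjoint, and using $p^{2}\equiv p$, $q^{2}\equiv q$, $x^{\ast}x\equiv p$, $xx^{\ast}\equiv q$ (hence $qx\equiv x$ and $xp\equiv x$) one computes $c^{2}\equiv 2c$ modulo $M_{2}(\mathfrak{J})$, so $\tfrac12\bar c$ is a projection and $\bar a=1_{2}-\bar c=e^{i\pi\bar c/2}$; likewise, with the honest projection $r_{b}:=\tfrac12\begin{pmatrix} 1 & -1\\ -1 & 1\end{pmatrix}$ one has $b=1_{2}-2r_{b}=e^{i\pi r_{b}}$.

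Accordingly I would put $\ell:=8$ and
\[
Y_{1}=\dots=Y_{4}:=\tfrac{\pi}{8}\begin{pmatrix} q & -x\\ -x^{\ast} & p\end{pmatrix},\qquad Y_{5}=\dots=Y_{8}:=\tfrac{\pi}{8}\begin{pmatrix} 1 & -1\\ -1 & 1\end{pmatrix}.
\]
These are honestly self-adjoint, and since $\|p\|,\|q\|,\|x\|\le 1$ one has $\|Y_{j}\|\le\pi/4<1$, so $Y_{j}\in\mathrm{Ball}(M_{2}(\mathfrak{A})_{\mathrm{sa}})$; moreover $(p,q,x)\mapsto(Y_{1},\dots,Y_{8})$ is strictly continuous, hence Borel, because the $\ast$-operation and the formation of a $2\times 2$ matrix from bounded entries are strictly continuous on bounded sets. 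Since the continuous functional calculus on bounded self-adjoint elements of a strict C*-algebra is strictly continuous (Lemma \ref{Lemma:uniform-convergence}) and commutes with the quotient $\ast$-homomorphism $M_{2}(\mathfrak{A})\to M_{2}(\mathfrak{A}/\mathfrak{J})$, the element
\[
U:=e^{iY_{1}}\cdots e^{iY_{8}}=(e^{iY_{1}})^{4}(e^{iY_{5}})^{4}
\]
satisfies $(e^{iY_{1}})^{4}=e^{i\pi c/2}\equiv a$ and $(e^{iY_{5}})^{4}=e^{i\pi r_{b}}=b$, whence $U\equiv ab=z$ modulo $M_{2}(\mathfrak{J})$; in particular $U$ is a Borel function of $(p,q,x)$.

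Finally I would deduce the two asserted congruences from $U\equiv z$. They are purely algebraic identities valid for honest projections and partial isometries, so they hold in $M_{2}(\mathfrak{A}/\mathfrak{J})$: with $v:=\bar x$ one gets $(\bar q\oplus 0)\bar z=\begin{pmatrix} v & 0\\ 0 & 0\end{pmatrix}$ (using $qv=v$ and $q^{2}=q$), then $\bar z^{\ast}(\bar q\oplus 0)\bar z=\begin{pmatrix} v^{\ast}v & 0\\ 0 & 0\end{pmatrix}=\bar p\oplus 0$ (using $(1-q)v=0$) and $(\bar q\oplus 0)\bar z(\bar p\oplus 0)=\begin{pmatrix} vp & 0\\ 0 & 0\end{pmatrix}=\bar x\oplus 0$ (using $vp=v$). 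As $U\equiv z$ and $M_{2}(\mathfrak{J})$ is an ideal, this yields $U^{\ast}(q\oplus 0)U\equiv p\oplus 0$ and $(q\oplus 0)U(p\oplus 0)\equiv x\oplus 0$ modulo $M_{2}(\mathfrak{J})$, with $\ell=8$ independent of $(\mathfrak{A},\mathfrak{J})$ and of $p,q,x$. I expect the only delicate points to be the \emph{uniformity} of $\ell$ and of the bound $\|Y_{j}\|\le 1$ --- which works precisely because the unit-ball hypothesis forces $\|c\|\le 2$, so a fixed number of ``square roots'' of the symmetries $\bar a$ and $b$ suffices --- together with the Borel measurability of $(p,q,x)\mapsto(Y_{j})$ and of $U$, which reduces to the strict continuity of the C*-operations and of the continuous functional calculus on bounded sets already established in this section.
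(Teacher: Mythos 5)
Your proof is correct, but it takes a genuinely different route from the paper's. Both arguments start from the same mod-$M_{2}(\mathfrak{J})$ unitary $X=\left[\begin{smallmatrix}x & 1-q\\ 1-p & x^{\ast}\end{smallmatrix}\right]$ and reduce the two congruences to the identities $X^{\ast}(q\oplus 0)X\equiv p\oplus 0$ and $(q\oplus 0)X(p\oplus 0)\equiv x\oplus 0$, which, as you note, follow from the standard partial-isometry identities $\bar q\bar x=\bar x=\bar x\bar p$ in the quotient. The difference is in how $X$ is exhibited as a product of exponentials. The paper joins $X$ to a fixed unitary by an explicit norm-continuous path $(X_{t})_{t\in[0,1]}$ of mod-$M_{2}(\mathfrak{J})$ unitaries whose modulus of continuity is independent of the data, and then applies the Borel logarithm (Lemma \ref{Lemma:log}) recursively at $\ell$ equally spaced points; the uniformity of $\ell$ comes from the uniform modulus of continuity, and Borel-ness comes from Lemma \ref{Lemma:log}. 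You instead factor $X=ab$ into two symmetries --- $b$ an honest self-adjoint unitary and $a$ a self-adjoint unitary modulo $M_{2}(\mathfrak{J})$ (via $c:=1_{2}-a$ satisfying $c^{2}\equiv 2c$) --- and use the elementary identity $1-2r=e^{i\pi r}$ for projections to write each symmetry as a power of a single exponential $e^{iY}$ with $\|Y\|\le\pi/4$. This yields closed formulas for the $Y_{j}$, so the Borel (indeed strictly continuous) dependence on $(p,q,x)$ and the uniformity of $\ell=8$ are immediate, and no appeal to Lemma \ref{Lemma:log} or to moduli of continuity of paths is needed; the only facts used are that the exponential power series commutes with the quotient $\ast$-homomorphism and is strictly continuous on bounded sets (Lemma \ref{Lemma:uniform-convergence}), and that $\bar x^{\ast}\bar x$ being a projection forces $\bar x\bar x^{\ast}\bar x=\bar x$. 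Your version is arguably cleaner and more self-contained; the paper's path method is more flexible (it is reused verbatim in Lemmas \ref{Lemma:homotopy-path-unitary} and \ref{Lemma:homotopy-orthogonal-unitaries}, where no two-symmetry factorization is available).
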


\begin{proof}
Consider the $\mathrm{\mathrm{mod}}\ M_{2}\left( \mathfrak{J}\right) $
unitary%
\begin{equation*}
X:=%
\begin{bmatrix}
x & 1-q \\ 
1-p & x^{\ast }%
\end{bmatrix}%
\in M_{2}\left( \mathfrak{A}\right) \text{.}
\end{equation*}%
Notice that $X$ satisfies%
\begin{equation*}
X^{\ast }\left( q\oplus 0_{d}\right) X\equiv \left( p\oplus 0_{d}\right) 
\mathrm{\ \mathrm{mod}}\ M_{2}\left( \mathfrak{J}\right)
\end{equation*}%
and%
\begin{equation*}
\left( q\oplus 0_{d}\right) X\left( p\oplus 0_{d}\right) \equiv x\oplus 0_{d}%
\mathrm{\ \mathrm{mod}}\ M_{2}\left( \mathfrak{J}\right) \text{.}
\end{equation*}%
Consider the norm-continuous path of $\mathrm{\mathrm{mod}}\ M_{2}\left( 
\mathfrak{J}\right) $ unitaries%
\begin{equation*}
X_{t}:=%
\begin{bmatrix}
\mathrm{cos}\left( \frac{\pi t}{2}\right) x & 1-\left( 1-\mathrm{sin}\left( 
\frac{\pi t}{2}\right) \right) q \\ 
\left( 1-\mathrm{sin}\left( \frac{\pi t}{2}\right) \right) p-1 & \mathrm{cos}%
\left( \frac{\pi t}{2}\right) x^{\ast }%
\end{bmatrix}%
\end{equation*}%
for $t\in \lbrack 0,1]$. Notice that the modulus of continuity of $\left(
X_{t}\right) _{t\in \left[ 0,1\right] }$ does not depend on $\left( 
\mathfrak{A},\mathfrak{J}\right) $ and $\left( p,q,x\right) $. Fix $\ell
\geq 1$ such that if $t,s\in \left[ 0,1\right] $ satisfy $\left\vert
s-t\right\vert \leq 1/\ell $, then%
\begin{equation*}
\left\Vert X_{t}-X_{s}\right\Vert \leq 1/2\text{.}
\end{equation*}%
Thus, for $i\in \left\{ 1,2,\ldots ,\ell \right\} $ we have that%
\begin{equation*}
\left\Vert X_{i/\ell }-X_{(i+1)/\ell }\right\Vert \leq 1/2\text{.}
\end{equation*}%
By Lemma \ref{Lemma:log} we can choose in a Borel fashion $Y_{1}\in \mathrm{%
\mathrm{Ball}}\left( \mathfrak{A}_{\mathrm{sa}}\right) $ such that $%
e^{iY_{1}}\equiv X_{1/\ell }\mathrm{\ \mathrm{mod}}\ M_{2}\left( \mathfrak{J}%
\right) $. Thus 
\begin{equation*}
R_{1}:=\mathrm{\exp }\left( iY_{1}\right) -X_{1/\ell }\in M_{2}\left( 
\mathfrak{J}\right) .
\end{equation*}%
Consider now $X_{2/\ell }$ and the fact that 
\begin{equation*}
\left\Vert X_{2/\ell }-X_{1/\ell }\right\Vert \leq 1/2\text{.}
\end{equation*}%
Thus%
\begin{equation*}
\left\Vert \mathrm{\exp }\left( iY_{1}\right) -\left( X_{2/\ell
}+R_{1}\right) \right\Vert \leq 1/2\text{.}
\end{equation*}%
and%
\begin{equation*}
\left\Vert 1-\mathrm{\exp }\left( -iY_{1}\right) \left( X_{2/\ell
}+R_{1}\right) \right\Vert \leq 1/2\text{.}
\end{equation*}%
Thus by Lemma \ref{Lemma:log} one can choose in a Borel fashion $Y_{2}\in 
\mathrm{\mathrm{Ball}}\left( \mathfrak{A}_{\mathrm{sa}}\right) $ such that 
\begin{equation*}
\exp \left( iY_{2}\right) \equiv \mathrm{\exp }\left( -iY_{1}\right) \left(
X_{2/\ell }+R_{1}\right) \equiv \exp \left( -iY_{1}\right) X_{2/\ell }%
\mathrm{\ \mathrm{mod}}\ M_{2}\left( \mathfrak{J}\right)
\end{equation*}%
and hence%
\begin{equation*}
\exp \left( iY_{1}\right) \exp \left( iY_{2}\right) \equiv X_{2/\ell }%
\mathrm{\ \mathrm{mod}}\ M_{2}\left( \mathfrak{J}\right) \text{.}
\end{equation*}%
Proceeding recursively in this way, one can choose $Y_{1},\ldots ,Y_{\ell
}\in \mathrm{\mathrm{Ball}}\left( \mathfrak{A}_{\mathrm{sa}}\right) $ in a
Borel fashion such that%
\begin{equation*}
\mathrm{\exp }\left( iY_{1}\right) \cdots \mathrm{\exp }\left( iY_{\ell
}\right) \equiv X\mathrm{\ \mathrm{mod}}\ M_{2}\left( \mathfrak{J}\right) 
\text{.}
\end{equation*}%
Then we have that, setting $U:=\mathrm{\exp }\left( iY_{1}\right) \cdots 
\mathrm{\exp }\left( iY_{\ell }\right) $,%
\begin{equation*}
U^{\ast }\left( q\oplus 0_{d}\right) U\equiv X^{\ast }\left( q\oplus
0_{d}\right) X\equiv p\oplus 0_{d}\mathrm{\ \mathrm{mod}}\ M_{2}\left( 
\mathfrak{J}\right)
\end{equation*}%
and%
\begin{equation*}
\left( q\oplus 0_{d}\right) U\left( p\oplus 0_{d}\right) \equiv \left(
q\oplus 0_{d}\right) X\left( p\oplus 0_{d}\right) \equiv x\oplus 0_{d}%
\mathrm{\ \mathrm{mod}}\ M_{2}\left( \mathfrak{J}\right) \text{.}
\end{equation*}%
\ This concludes the proof.
\end{proof}

\begin{lemma}
\label{Lemma:homotopy-path-unitary}Suppose that $\left( \mathfrak{A},%
\mathfrak{J}\right) $ is a strict unital C*-pair, and $p,q\in \mathfrak{A}_{%
\mathrm{sa}}$ are $\mathrm{\mathrm{mod}}\ \mathfrak{J}$ projections such
that $\left\Vert p-q\right\Vert \leq 1/2$. Then one can choose $y_{1},\ldots
,y_{\ell }\in \mathrm{\mathrm{Ball}}\left( \mathfrak{A}_{\mathrm{sa}}\right) 
$ in a Borel fashion from $p,q$ such that, setting $u:=e^{iy_{1}}\cdots
e^{iy_{n}}$, one has that $u^{\ast }qu\equiv p\mathrm{\ \mathrm{mod}}\ 
\mathfrak{J}$, where $\ell \geq 1$ does not depend on $\left( \mathfrak{A},%
\mathfrak{J}\right) $ and $p,q$.
\end{lemma}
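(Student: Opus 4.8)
The plan is to obtain the required unitary from the classical intertwiner of two close projections, and then to factor it into short exponentials by subdividing a path with a \emph{universal} modulus of continuity, exactly in the spirit of the proof of Lemma~\ref{Lemma:relative-path-unitary}.

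First I would set $z:=qp+(1-q)(1-p)\in\mathfrak{A}$; it depends in a Borel way on $(p,q)$, being a $*$-polynomial and hence strictly continuous on bounded sets. In $\mathfrak{A}/\mathfrak{J}$, where $\bar p,\bar q$ are genuine projections with $\|\bar p-\bar q\|\le 1/2$, one has $zp\equiv qz\pmod{\mathfrak{J}}$ together with the identities $1-\bar z=\bar q(1-\bar p)+(1-\bar q)\bar p$ and $1-\bar z^{*}\bar z=\bar p(1-\bar q)\bar p+(1-\bar p)\bar q(1-\bar p)$; in each case the two summands have vanishing mutual products (in the pertinent one-sided sense) and individual norm at most $\|\bar p-\bar q\|\le 1/2$, so $\|1-\bar z\|\le 1/2$ and $\|1-\bar z^{*}\bar z\|\le 1/2$. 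Thus $\bar z$ is invertible in $\mathfrak{A}/\mathfrak{J}$ with $\bar z\bar p=\bar q\bar z$. By the truncation via continuous functional calculus already used in the proof of Lemma~\ref{Lemma:log}, I may Borel-replace $z$ by a congruent element with $\|z-1\|\le 1/2$ \emph{in $\mathfrak{A}$}; after this replacement $z$ is invertible in $\mathfrak{A}$, $z^{*}z$ has spectrum in $[1/4,9/4]$, and $w:=z(z^{*}z)^{-1/2}\in\mathfrak{A}$ (continuous functional calculus) is a genuine unitary, Borel in $(p,q)$, whose image in $\mathfrak{A}/\mathfrak{J}$ is $\bar z(\bar z^{*}\bar z)^{-1/2}$. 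Since $(\bar z^{*}\bar z)^{-1/2}$ commutes with $\bar p$ (as $\bar z^{*}\bar z$ does, by $\bar z\bar p=\bar q\bar z$), that image conjugates $\bar q$ to $\bar p$, i.e.\ $w^{*}qw\equiv p\pmod{\mathfrak{J}}$.

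Next I would interpolate between $1$ and $w$. The segment $\zeta_{t}:=1+t(z-1)$, $t\in[0,1]$, consists of invertible elements of $\mathfrak{A}$ with $\|\zeta_{t}\|\le 3/2$ and $\|\zeta_{t}^{-1}\|\le 2$, so $w_{t}:=\zeta_{t}(\zeta_{t}^{*}\zeta_{t})^{-1/2}$ is a norm-continuous path of genuine unitaries of $\mathfrak{A}$ from $w_{0}=1$ to $w_{1}=w$, Borel in $(p,q)$. The one point needing care is that $t\mapsto w_{t}$ admits a modulus of continuity not depending on $(\mathfrak{A},\mathfrak{J},p,q)$: each $\zeta_{t}^{*}\zeta_{t}$ has spectrum in the fixed interval $[1/4,9/4]$, so approximating $s\mapsto s^{-1/2}$ uniformly on $[1/4,9/4]$ by a polynomial $P$ reduces the estimate to the evident continuity of $(t,s)\mapsto\|P(\zeta_{t}^{*}\zeta_{t})-P(\zeta_{s}^{*}\zeta_{s})\|$. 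Consequently I can fix, once and for all, an integer $\ell\ge 1$ independent of $(\mathfrak{A},\mathfrak{J})$ and of $p,q$ such that $\|w_{i/\ell}-w_{(i+1)/\ell}\|\le 1/2$ for $0\le i<\ell$.

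Finally I would run the incremental-logarithm recursion of the proof of Lemma~\ref{Lemma:relative-path-unitary}. Assume $y_{1},\dots,y_{i-1}\in\mathrm{Ball}(\mathfrak{A}_{\mathrm{sa}})$ have been chosen in a Borel way from $(p,q)$ with $e^{iy_{1}}\cdots e^{iy_{i-1}}\equiv w_{(i-1)/\ell}\pmod{\mathfrak{J}}$ (the empty product when $i=1$). Then $v_{i}:=e^{-iy_{i-1}}\cdots e^{-iy_{1}}\,w_{i/\ell}$ is a genuine unitary of $\mathfrak{A}$, Borel in $(p,q)$, satisfying $\|v_{i}-1+\mathfrak{J}\|=\|\overline{w_{i/\ell}}-\overline{w_{(i-1)/\ell}}\|\le\|w_{i/\ell}-w_{(i-1)/\ell}\|\le 1/2$, so Lemma~\ref{Lemma:log} furnishes, in a Borel way, a self-adjoint $y_{i}\in\mathrm{Ball}(\mathfrak{A})$ with $e^{iy_{i}}\equiv v_{i}\pmod{\mathfrak{J}}$; then $e^{iy_{1}}\cdots e^{iy_{i}}\equiv w_{i/\ell}\pmod{\mathfrak{J}}$. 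After $\ell$ steps $u:=e^{iy_{1}}\cdots e^{iy_{\ell}}\equiv w\pmod{\mathfrak{J}}$, hence $u^{*}qu\equiv w^{*}qw\equiv p\pmod{\mathfrak{J}}$, with $\ell$ as above. The main obstacle is precisely the universality of $\ell$, that is, the uniform modulus-of-continuity estimate of the third paragraph; granting that, Borel measurability of every choice is automatic from strict continuity of the functional calculus (Lemma~\ref{Lemma:uniform-convergence}) and Lemma~\ref{Lemma:log}, and the recursion itself is the standard $\mathrm{K}$-theory argument.
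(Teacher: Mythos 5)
Your proposal is correct, and it follows the same overall skeleton as the paper's proof: produce a norm-continuous path of unitaries from $1$ to an intertwiner of $p$ and $q$ whose modulus of continuity is universal, subdivide into steps of size at most $1/2$, and run the incremental recursion through Lemma \ref{Lemma:log} to extract the exponentials in a Borel way with a universal $\ell$. Where you differ is in the construction of the intertwiner and of the path. The paper first smooths the segment $a_{t}=(1-t)p+tq$ by continuous functional calculus into a path $p_{t}$ of $\mathrm{mod}\ \mathfrak{J}$ projections, reduces to the case $\left\Vert p_{t}-p\right\Vert \leq 1/6$, and then uses the one-sided intertwiner $x_{t}=(2p-1)(p_{t}-p)+1$ together with its polar decomposition $x_{t}=u_{t}\left\vert x_{t}\right\vert$. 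You instead take the classical two-sided intertwiner $z=qp+(1-q)(1-p)$ directly (your norm estimates $\left\Vert 1-\bar z\right\Vert\leq 1/2$ and $\left\Vert 1-\bar z^{\ast}\bar z\right\Vert\leq 1/2$ are correct, and $\bar z^{\ast}\bar z$ does commute with $\bar p$ since $\bar z\bar p=\bar q\bar z$), truncate it into $\mathfrak{A}$ as in Lemma \ref{Lemma:log}, and interpolate linearly via $\zeta_{t}=1+t(z-1)$, taking unitary parts $\zeta_{t}(\zeta_{t}^{\ast}\zeta_{t})^{-1/2}$ by functional calculus on the fixed spectral window $[1/4,9/4]$. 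This buys you a shorter argument — no preliminary reduction along a path of projections, and the universality of the modulus of continuity is immediate from the fixed spectral bounds — at the cost of having to justify the truncation of the non-self-adjoint element $z$, which you correctly offload to the same device already used in Lemma \ref{Lemma:log}. Both routes yield the same conclusion $u^{\ast}qu\equiv p\ \mathrm{mod}\ \mathfrak{J}$ with $\ell$ independent of $(\mathfrak{A},\mathfrak{J},p,q)$, so I see no gap.
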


\begin{proof}
As in the proof of \cite[Proposition 2.2.4]{rordam_introduction_2000},
consider the norm-continuous path of $\mathrm{\mathrm{\mathrm{mod}}\ }%
\mathfrak{J}$ projections $a_{t}:=\left( 1-t\right) p+tq$ for $t\in \left[
0,1\right] $. Let also $K=[-1/4,1/4]\cup \lbrack 3/4,5/4]\subseteq \mathbb{R}
$, and $f:K\rightarrow \mathbb{C}$ be the continuous function that is $0$ on 
$[-1/4,1/4]$ and $1$ on $[3/4,5/4]$. Then $p_{t}:=f\left( a_{t}\right) $ for 
$t\in \left[ 0,1\right] $ is a norm-continuous path of $\mathrm{\mathrm{mod}}%
\ \mathfrak{J}$ projections from $p$ to $q$. Notice that the uniform
continuity moduli of $t\mapsto a_{t}$ and $t\mapsto p_{t}$ do not depend on $%
\left( \mathfrak{A},\mathfrak{J}\right) $ and $p,q$.

Thus, there exists $k\in \omega $ (that does depend on $\left( \mathfrak{A},%
\mathfrak{J}\right) $ and $p,q$) such that, for every $t,s\in \left[ 0,1%
\right] $ such that $\left\vert t-s\right\vert \leq 1/k$, one has that $%
\left\Vert p_{t}-p_{s}\right\Vert \leq 1/6$. Thus, $p_{0}=p,p_{1/k},p_{2/k},%
\ldots ,p_{1}=q$ are $\mathrm{\mathrm{mod}}\ \mathfrak{J}$ projections (that
depend in a Borel way from $p,q$ by strict continuity of the continuous
functional calculus) such that $\left\Vert p_{(i+1)/k}-p_{i/k}\right\Vert
\leq 1/6$ for $i\in \left\{ 0,1,\ldots ,k-1\right\} $ and $(p_{\frac{i\left(
1-s\right) +\left( 1+i\right) s}{k}})_{s\in \lbrack 0,1]}$ is a
norm-continuous path from $p_{i/k}$ to $p_{(i+1)/k}$ (whose modulus of
continuity does not depend on $\mathfrak{A}$ and $p,q\in \mathfrak{A}$)
satisfying 
\begin{equation*}
\left\Vert p_{\frac{i\left( 1-s\right) +\left( 1+i\right) s}{k}%
}-p_{i/k}\right\Vert \leq 1/6
\end{equation*}%
for $s\in \left[ 0,1\right] $.

Thus, we can assume without loss of generality that $\left\Vert
p_{t}-p\right\Vert \leq 1/6$ for every $t\in \left[ 0,1\right] $. We now
proceed as in the proof of \cite[Proposition 2.17]{nest_excision_2017}.
Define%
\begin{equation*}
x_{t}:=\left( 2p-1\right) \left( p_{t}-p\right) +1
\end{equation*}%
By definition, we have that $x_{0}=1$. Notice that%
\begin{equation*}
x_{t}\equiv pp_{t}+\left( p-1\right) \left( p_{t}-1\right) \mathrm{\ \mathrm{%
mod}}\ \mathfrak{J}
\end{equation*}%
\begin{equation*}
px_{t}\equiv pp_{t}\equiv x_{t}p_{t}\mathrm{\ \mathrm{mod}}\ \mathfrak{J}
\end{equation*}%
\begin{equation*}
p_{t}x_{t}^{\ast }x_{t}\equiv x_{t}^{\ast }px_{t}\equiv x_{t}^{\ast
}x_{t}p_{t}\mathrm{\ \mathrm{mod}}\ \mathfrak{J}
\end{equation*}%
and%
\begin{equation*}
px_{t}x_{t}^{\ast }\equiv x_{t}p_{t}x_{t}^{\ast }\equiv x_{t}x_{t}^{\ast }p%
\mathrm{\ \mathrm{mod}}\ \mathfrak{J}\text{.}
\end{equation*}%
This implies that 
\begin{equation*}
p\left\vert x_{t}^{\ast }\right\vert \equiv \left\vert x_{t}^{\ast
}\right\vert p\mathrm{\ \mathrm{mod}}\ \mathfrak{J}
\end{equation*}%
and%
\begin{equation*}
p_{t}\left\vert x_{t}\right\vert \equiv \left\vert x_{t}\right\vert p_{t}%
\mathrm{\ \mathrm{mod}}\ \mathfrak{J}
\end{equation*}%
for $t\in \left[ 0,1\right] $. We have that 
\begin{equation*}
\left\Vert x_{t}-1\right\Vert =\left\Vert \left( 2p-1\right) \left(
p_{t}-p\right) \right\Vert \leq \left\Vert 2p-1\right\Vert \left\Vert
p_{t}-p\right\Vert \leq 1/2\text{.}
\end{equation*}%
Thus, $x_{t}$ is invertible. Let $x_{t}:=u_{t}\left\vert x_{t}\right\vert $
be its polar decomposition, where $u_{t}$ is a unitary. Then we have that $%
pu_{t}\equiv u_{t}p_{t}\mathrm{\ \mathrm{mod}}\ \mathfrak{J}$. Indeed,%
\begin{equation*}
pu_{t}\equiv px_{t}\left\vert x_{t}\right\vert ^{-1}\equiv
x_{t}p_{t}\left\vert x_{t}\right\vert ^{-1}\equiv x_{t}\left\vert
x_{t}\right\vert ^{-1}p_{t}\equiv u_{t}p_{t}\mathrm{\ \mathrm{mod}}\ 
\mathfrak{J}\text{.}
\end{equation*}%
Thus%
\begin{equation*}
u_{t}^{\ast }pu_{t}\equiv p_{t}\mathrm{\ \mathrm{mod}}\ \mathfrak{J}
\end{equation*}%
for $t\in \left[ 0,1\right] $ and in particular $u_{1}^{\ast }pu_{1}\equiv q%
\mathrm{\ \mathrm{mod}}\ \mathfrak{J}$.

Notice that $\left( u_{t}\right) _{t\in \left[ 0,1\right] }$ is a
norm-continuous path, whose modulus of continuity does not depend on $\left( 
\mathfrak{A},\mathfrak{J}\right) $ and $p,q$. Therefore, there exists $k\geq
1$ (which do not depend on $\left( \mathfrak{A},\mathfrak{J}\right) $ and $%
p,q$) such that, whenever $s,t\in \left[ 0,1\right] $ satisfy $\left\vert
s-t\right\vert \leq 1/k$, we have $\left\Vert u_{t}-u_{s}\right\Vert \leq
1/2 $. By Lemma \ref{Lemma:log} one can then choose in a Borel way $%
y_{1},\ldots ,y_{k}\in \mathrm{\mathrm{Ball}}\left( \mathfrak{A}_{\mathrm{sa}%
}\right) $ such that, setting $u:=\exp \left( iy_{1}\right) \cdots \exp
\left( iy_{k}\right) $, then $u\equiv u_{1}\mathrm{\ \mathrm{mod}}\ 
\mathfrak{J}$ and hence $u^{\ast }pu\equiv q\mathrm{\ \mathrm{mod}}\ 
\mathfrak{J}$. This concludes the proof.
\end{proof}

\begin{lemma}
\label{Lemma:orthogonal-projections}Suppose that $\left( \mathfrak{A},%
\mathfrak{J}\right) $ is a strict unital C*-pair, and $p,q\in \mathfrak{A}$
are $\mathrm{\mathrm{\mathrm{mod}}\ }\mathfrak{J}$ projections that satisfy $%
pq\equiv qp\equiv 0\mathrm{\ \mathrm{mod}}\ \mathfrak{J}$. Then one can
choose $Y_{1},\ldots ,Y_{\ell }\in \mathrm{\mathrm{Ball}}\left( M_{2}\left( 
\mathfrak{A}\right) _{\mathrm{sa}}\right) $ in a Borel fashion from $p,q$
such that, setting $U:=e^{iY_{1}}\cdots e^{iY_{\ell }}$, one has that $%
U^{\ast }\left( p\oplus q\right) U\equiv \left( p+q\right) \oplus 0\mathrm{\ 
\mathrm{mod}}\ M_{2}\left( \mathfrak{J}\right) $, where $\ell \geq 1$ does
not depend on $\left( \mathfrak{A},\mathfrak{J}\right) $ and $p,q$.
\end{lemma}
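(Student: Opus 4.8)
The plan is to follow the proof of Lemma~\ref{Lemma:relative-path-unitary} almost verbatim, the only genuinely new ingredient being an explicit norm-continuous path $(V_{t})_{t\in [0,1]}$ of $\ \mathrm{mod}\ M_{2}(\mathfrak{J})$ unitaries lying \emph{inside} $M_{2}(\mathfrak{A})$ --- rather than inside $M_{4}(\mathfrak{A})$, which is what a direct appeal to Lemma~\ref{Lemma:relative-path-unitary} would produce --- with $V_{0}=1$, with modulus of continuity independent of $(\mathfrak{A},\mathfrak{J})$ and $p,q$, and such that $V_{1}^{\ast }(p\oplus q)V_{1}\equiv (p+q)\oplus 0\ \mathrm{mod}\ M_{2}(\mathfrak{J})$. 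The point is that a global rotation in $M_{2}(\mathfrak{A})$ would move both $p$ and $q$; one should instead use a rotation supported on the ``$q$-corner'', which, because $p$ and $q$ are $\ \mathrm{mod}\ \mathfrak{J}$ orthogonal, will fix $p\oplus 0$ modulo $M_{2}(\mathfrak{J})$ while rotating $0\oplus q$ into $q\oplus 0$.

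Concretely, writing $c_{t}=\cos(\pi t/2)$, $s_{t}=\sin(\pi t/2)$ and $b_{t}:=1-(1-c_{t})q\in \mathfrak{A}$, I would set
\[
V_{t}:=\begin{bmatrix} b_{t} & -s_{t}q \\ s_{t}q & b_{t}\end{bmatrix}\in M_{2}(\mathfrak{A}),\qquad t\in [0,1].
\]
Then $V_{0}=1$, $\lVert V_{t}\rVert \leq 3$, the map $t\mapsto V_{t}$ is norm-continuous with modulus of continuity controlled solely by that of $t\mapsto (c_{t},s_{t})$, and $p,q\mapsto V_{t}$ is strictly continuous. Since $b_{t}$ is a polynomial in $q$ it commutes with $q$, whence a direct computation gives $V_{t}^{\ast }V_{t}\equiv V_{t}V_{t}^{\ast }\equiv 1\ \mathrm{mod}\ M_{2}(\mathfrak{J})$. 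Using $p^{2}\equiv p$, $q^{2}\equiv q$ and $pq\equiv qp\equiv 0\ \mathrm{mod}\ \mathfrak{J}$ one computes $V_{t}^{\ast }(p\oplus 0)V_{t}\equiv p\oplus 0$ and $V_{t}^{\ast }(0\oplus q)V_{t}\equiv \begin{bmatrix} s_{t}^{2}q & s_{t}c_{t}q \\ s_{t}c_{t}q & c_{t}^{2}q\end{bmatrix}\ \mathrm{mod}\ M_{2}(\mathfrak{J})$, and the sum of these equals $p\oplus q$ at $t=0$ and $(p+q)\oplus 0$ at $t=1$. This is the only place where the orthogonality hypothesis $pq\equiv qp\equiv 0$ is used.

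Next I would fix $\ell \geq 1$, depending only on the absolute modulus of continuity of $t\mapsto V_{t}$, so that $\lVert V_{(i+1)/\ell }-V_{i/\ell }\rVert \leq 1/2$ for $0\leq i<\ell $; since the image of each $V_{i/\ell }$ in $M_{2}(\mathfrak{A})/M_{2}(\mathfrak{J})=M_{2}(\mathfrak{A}/\mathfrak{J})$ is a unitary, this gives $\lVert 1-V_{i/\ell }^{\ast }V_{(i+1)/\ell }+M_{2}(\mathfrak{J})\rVert \leq 1/2$. Working in the strict unital C*-pair $(M_{2}(\mathfrak{A}),M_{2}(\mathfrak{J}))$, I would then run the recursion from the proof of Lemma~\ref{Lemma:relative-path-unitary}: using Lemma~\ref{Lemma:log} (with $d=3$, legitimate since $\lVert V_{t}\rVert \leq 3$) choose $Y_{1}\in \mathrm{Ball}(M_{2}(\mathfrak{A})_{\mathrm{sa}})$ in a Borel fashion from $p,q$ with $e^{iY_{1}}\equiv V_{1/\ell }\ \mathrm{mod}\ M_{2}(\mathfrak{J})$, and, given $Y_{1},\dots ,Y_{j}$ with $e^{iY_{1}}\cdots e^{iY_{j}}\equiv V_{j/\ell }$, apply Lemma~\ref{Lemma:log} to $e^{-iY_{j}}\cdots e^{-iY_{1}}V_{(j+1)/\ell }$ --- which has norm at most $3$ and is within $1/2$ of $1$ modulo $M_{2}(\mathfrak{J})$, since $e^{-iY_{j}}\cdots e^{-iY_{1}}$ is a genuine unitary and $e^{iY_{1}}\cdots e^{iY_{j}}\equiv V_{j/\ell }$ --- to obtain $Y_{j+1}$, Borel from $p,q$, with $e^{iY_{1}}\cdots e^{iY_{j+1}}\equiv V_{(j+1)/\ell }\ \mathrm{mod}\ M_{2}(\mathfrak{J})$.

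Setting $U:=e^{iY_{1}}\cdots e^{iY_{\ell }}$ would then give $U\equiv V_{1}\ \mathrm{mod}\ M_{2}(\mathfrak{J})$, and since $M_{2}(\mathfrak{J})$ is an ideal of $M_{2}(\mathfrak{A})$ and $U-V_{1}\in M_{2}(\mathfrak{J})$, this would yield the desired $U^{\ast }(p\oplus q)U\equiv V_{1}^{\ast }(p\oplus q)V_{1}\equiv (p+q)\oplus 0\ \mathrm{mod}\ M_{2}(\mathfrak{J})$, with $\ell $ independent of $(\mathfrak{A},\mathfrak{J})$ and $p,q$ by construction. The Borel dependence of the $Y_{i}$ on $p,q$ would follow by composing the Borel selections of Lemma~\ref{Lemma:log} with the strictly continuous maps $p,q\mapsto V_{i/\ell }$ and with the strictly continuous operations of multiplication and exponentiation on bounded sets (Lemma~\ref{Lemma:uniform-convergence}). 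I expect the only point requiring real care to be the one already handled in Lemma~\ref{Lemma:relative-path-unitary}: checking at each recursion step that the element fed to Lemma~\ref{Lemma:log} stays in a fixed ball and within $1/2$ of $1$ modulo $M_{2}(\mathfrak{J})$; the rest is the routine $\ \mathrm{mod}\ \mathfrak{J}$ matrix algebra recorded above.
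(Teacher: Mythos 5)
Your proof is correct. The paper takes a closely related but slightly different route: instead of writing down the conjugating unitaries, it writes down the conjugated projections, namely the norm-continuous path
\begin{equation*}
r_{t}:=\begin{bmatrix}1&0\\0&0\end{bmatrix}p+\begin{bmatrix}\cos ^{2}\left( \tfrac{\pi t}{2}\right) &\cos \left( \tfrac{\pi t}{2}\right) \sin \left( \tfrac{\pi t}{2}\right) \\ \cos \left( \tfrac{\pi t}{2}\right) \sin \left( \tfrac{\pi t}{2}\right) &\sin ^{2}\left( \tfrac{\pi t}{2}\right) \end{bmatrix}q
\end{equation*}
of $\mathrm{mod}\ M_{2}(\mathfrak{J})$ projections from $(p+q)\oplus 0$ to $p\oplus q$ (which is, up to the reparametrization $t\mapsto 1-t$, exactly your $V_{t}^{\ast }(p\oplus q)V_{t}$), and then delegates the extraction of the exponentials $e^{iY_{1}}\cdots e^{iY_{\ell }}$ to Lemma \ref{Lemma:homotopy-path-unitary}, whose proof manufactures the unitaries from the projection path via the elements $x_{t}=(2p-1)(p_{t}-p)+1$ and their polar decompositions. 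You instead exhibit the rotation unitaries $V_{t}$ explicitly (using mod-$\mathfrak{J}$ orthogonality to see that the rotation supported on the $q$-corner fixes $p\oplus 0$), and then run the subdivide-and-take-logarithms recursion of Lemma \ref{Lemma:relative-path-unitary} directly. The two arguments rest on the same rotation homotopy and the same uniform-modulus-of-continuity bookkeeping that makes $\ell $ independent of the data; yours is more self-contained at this point (it bypasses the polar-decomposition machinery of Lemma \ref{Lemma:homotopy-path-unitary}), while the paper's is shorter on the page because it reuses that lemma wholesale. All of your mod-$\mathfrak{J}$ computations, the norm bound $\lVert V_{t}\rVert \leq 3$ needed to invoke Lemma \ref{Lemma:log}, and the Borel-selection bookkeeping check out.
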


\begin{proof}
Consider the path%
\begin{equation*}
r_{t}:=%
\begin{bmatrix}
1 & 0 \\ 
0 & 0%
\end{bmatrix}%
p+%
\begin{bmatrix}
\mathrm{cos}^{2}\left( \frac{\pi t}{2}\right) & \mathrm{cos}\left( \frac{\pi
t}{2}\right) \mathrm{sin}\left( \frac{\pi t}{2}\right) \\ 
\mathrm{cos}\left( \frac{\pi t}{2}\right) \mathrm{sin}\left( \frac{\pi t}{2}%
\right) & \mathrm{sin}^{2}\left( \frac{\pi t}{2}\right)%
\end{bmatrix}%
q
\end{equation*}%
for $t\in \left[ 0,1\right] $. This is a norm-continuous path of $\mathrm{%
\mathrm{mod}}\ M_{2}\left( \mathfrak{J}\right) $ projections in $M_{2}\left( 
\mathfrak{A}\right) $ from $\left( p+q\right) \oplus 0$ to $p\oplus q$,
whose modulus of continuity does not depend on $\mathfrak{A}$ and $p,q$.
Therefore, the conclusion follows from Lemma \ref%
{Lemma:homotopy-path-unitary}.
\end{proof}

\begin{lemma}
\label{Lemma:homotopy-orthogonal-unitaries}Suppose that $\left( \mathfrak{A},%
\mathfrak{J}\right) $ is a strict unital C*-pair, and $u,v\in \mathfrak{A}$
are $\mathrm{\mathrm{mod}}\ \mathfrak{J}$ unitaries.\ Then one can choose $%
y_{1},\ldots ,y_{\ell }\in \mathrm{\mathrm{Ball}}\left( M_{2}\left( 
\mathfrak{A}\right) _{\mathrm{sa}}\right) $ in a Borel fashion from $u$ and $%
v$ such that $\left( u\oplus v\right) \equiv e^{iy_{1}}\cdots e^{iy_{\ell
}}\left( uv\oplus 1\right) \mathrm{\ \mathrm{mod}}\ M_{2}\left( \mathfrak{J}%
\right) $, where $\ell \geq 1$ does not depend on $\mathfrak{A},\mathfrak{J}$
and $u,v$.
\end{lemma}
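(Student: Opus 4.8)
The plan is to write down the classical ``rotation'' homotopy joining $uv\oplus 1$ to $u\oplus v$ inside $M_2(\mathfrak A)$ and then feed it into the recursive Borel selection already used in the proof of Lemma~\ref{Lemma:relative-path-unitary}; recall that $(M_2(\mathfrak A),M_2(\mathfrak J))$ is again a strict unital C*-pair. For $t\in[0,1]$ set
\[
R_t:=\begin{bmatrix}\cos(\pi t/2) & -\sin(\pi t/2)\\ \sin(\pi t/2) & \cos(\pi t/2)\end{bmatrix}\in M_2(\mathbb C)\subseteq M_2(\mathfrak A),
\]
a genuine unitary with $R_0=1_2$ and $R_1=\left[\begin{smallmatrix}0&-1\\1&0\end{smallmatrix}\right]$, and let $W_t:=(u\oplus 1)R_t(v\oplus 1)R_t^{\ast}$. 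Since $u\oplus 1$ and $v\oplus 1$ are $\mathrm{mod}\ M_2(\mathfrak J)$ unitaries and $R_t$ is a genuine unitary, each $W_t$ lies in $\mathrm{Ball}(M_2(\mathfrak A))$ and is a $\mathrm{mod}\ M_2(\mathfrak J)$ unitary; a direct computation gives $W_0=(u\oplus 1)(v\oplus 1)=uv\oplus 1$ and $W_1=(u\oplus 1)(1\oplus v)=u\oplus v$. From $\left\Vert W_t-W_s\right\Vert\leq 2\left\Vert R_t-R_s\right\Vert$ and the fact that $u,v$ are contractive, $(W_t)_{t\in[0,1]}$ is norm-continuous with a modulus of continuity depending only on the universal modulus of continuity of $t\mapsto R_t$.

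I would then replace $W_t$ by the auxiliary path $V_t:=W_tW_0^{\ast}=W_t(v^{\ast}u^{\ast}\oplus 1)$, which is again a norm-continuous path in $\mathrm{Ball}(M_2(\mathfrak A))$ of $\mathrm{mod}\ M_2(\mathfrak J)$ unitaries, with the \emph{same} universal modulus of continuity (as $\left\Vert V_t-V_s\right\Vert\leq\left\Vert W_t-W_s\right\Vert$), with $V_0=W_0W_0^{\ast}\equiv 1_2\ \mathrm{mod}\ M_2(\mathfrak J)$, and with $V_1=(u\oplus v)(v^{\ast}u^{\ast}\oplus 1)$. Fix $\ell\geq 1$, depending only on this modulus of continuity and hence not on $\mathfrak A,\mathfrak J,u,v$, such that $\left\Vert V_{i/\ell}-V_{(i+1)/\ell}\right\Vert\leq 1/2$ for $i=0,\dots,\ell-1$. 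Now I would run the recursion of Lemma~\ref{Lemma:relative-path-unitary}: as $\left\Vert V_{1/\ell}-1_2+M_2(\mathfrak J)\right\Vert\leq\left\Vert V_{1/\ell}-V_0\right\Vert\leq 1/2$, Lemma~\ref{Lemma:log} gives, in a Borel fashion from $(u,v)$, an element $Y_1\in\mathrm{Ball}(M_2(\mathfrak A)_{\mathrm{sa}})$ with $e^{iY_1}\equiv V_{1/\ell}\ \mathrm{mod}\ M_2(\mathfrak J)$; and once $Y_1,\dots,Y_{j-1}$ have been chosen with $e^{iY_1}\cdots e^{iY_{j-1}}\equiv V_{(j-1)/\ell}\ \mathrm{mod}\ M_2(\mathfrak J)$, the element $w_j:=e^{-iY_{j-1}}\cdots e^{-iY_1}V_{j/\ell}$ lies in $\mathrm{Ball}(M_2(\mathfrak A))$ and satisfies $\left\Vert w_j-1_2+M_2(\mathfrak J)\right\Vert\leq\left\Vert V_{j/\ell}-V_{(j-1)/\ell}\right\Vert\leq 1/2$, so Lemma~\ref{Lemma:log} yields a Borel choice of $Y_j\in\mathrm{Ball}(M_2(\mathfrak A)_{\mathrm{sa}})$ with $e^{iY_j}\equiv w_j\ \mathrm{mod}\ M_2(\mathfrak J)$, and hence $e^{iY_1}\cdots e^{iY_j}\equiv V_{j/\ell}\ \mathrm{mod}\ M_2(\mathfrak J)$. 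After $\ell$ steps, $e^{iY_1}\cdots e^{iY_\ell}\equiv V_1=(u\oplus v)(v^{\ast}u^{\ast}\oplus 1)\ \mathrm{mod}\ M_2(\mathfrak J)$.

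Finally, multiplying on the right by $uv\oplus 1$ and using $(v^{\ast}u^{\ast}\oplus 1)(uv\oplus 1)=v^{\ast}u^{\ast}uv\oplus 1\equiv 1_2\ \mathrm{mod}\ M_2(\mathfrak J)$ gives $e^{iY_1}\cdots e^{iY_\ell}(uv\oplus 1)\equiv u\oplus v\ \mathrm{mod}\ M_2(\mathfrak J)$, so $y_j:=Y_j$ works: $\ell$ was fixed independently of $\mathfrak A,\mathfrak J,u,v$, and the assignment $(u,v)\mapsto(Y_1,\dots,Y_\ell)$ is Borel, being a composition of the strictly continuous maps $(u,v)\mapsto V_{i/\ell}$ with the Borel selectors supplied by Lemma~\ref{Lemma:log}. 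I expect the only genuinely delicate point to be the bookkeeping that keeps the recursion anchored at $1_2$ modulo $M_2(\mathfrak J)$: this is exactly why one passes from $W_t$ to $V_t=W_tW_0^{\ast}$, and why $w_j$ is taken to be $e^{-iY_{j-1}}\cdots e^{-iY_1}V_{j/\ell}$, so that it is automatically contractive and within $1/2$ of $1_2$ modulo $M_2(\mathfrak J)$ and Lemma~\ref{Lemma:log} applies at every step; once this is in place, the argument is the verbatim recursion of Lemma~\ref{Lemma:relative-path-unitary}.
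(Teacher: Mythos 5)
Your proposal is correct and follows essentially the same route as the paper: the paper also takes a unitary rotation path $W_t$ in $U(M_2(\mathbb{C}))$ from $1_2$ to the flip matrix, forms the path $t\mapsto (u\oplus 1)W_t^{\ast}(v\oplus 1)W_t(uv\oplus 1)^{\ast}$ of $\mathrm{mod}\ M_2(\mathfrak{J})$ unitaries from $1_2$ to $(u\oplus v)(uv\oplus 1)^{\ast}$, subdivides it with a universal $\ell$, and runs the same Borel recursion via Lemma \ref{Lemma:log} as in Lemma \ref{Lemma:relative-path-unitary}. Your passage from $W_t$ to $V_t=W_tW_0^{\ast}$ and the telescoping bookkeeping are exactly the details the paper leaves implicit.
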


\begin{proof}
Fix a unitary path $\left( W_{t}\right) _{t\in \left[ 0,1\right] }$ in $%
U\left( M_{2}\left( \mathbb{C}\right) \right) $ from $1$ to%
\begin{equation*}
\begin{bmatrix}
0 & 1 \\ 
1 & 0%
\end{bmatrix}%
\text{.}
\end{equation*}%
Fix $\ell \geq 1$ such that, for $s,t\in \left[ 0,1\right] $, $\left\Vert
W_{s}-W_{t}\right\Vert \leq 1/2$. Then%
\begin{equation*}
u_{t}:=\left( u\oplus 1\right) W_{t}^{\ast }\left( v\oplus 1\right)
W_{t}\left( uv\oplus 1\right) ^{\ast }
\end{equation*}%
is a path of $\mathrm{\mathrm{mod}}\ M_{2}\left( \mathfrak{J}\right) $
unitaries from $1$ to $\left( u\oplus v\right) \left( uv\oplus 1\right)
^{\ast }$ to $1$. Then, as in the proof of Lemma \ref%
{Lemma:relative-path-unitary}, using Lemma \ref{Lemma:log} one can
recursively choose in a Borel fashion $y_{1},\ldots ,y_{\ell }\in \mathrm{%
\mathrm{Ball}}\left( M_{2}\left( \mathfrak{A}\right) _{\mathrm{sa}}\right) $
such that $e^{iy_{k}}\equiv u_{k/\ell }^{\ast }u_{(k+1)/\ell }\mathrm{\ 
\mathrm{mod}}\ M_{2}\left( \mathfrak{J}\right) $ for $k\in \left\{
0,1,\ldots ,\ell -1\right\} $ and hence $\left( u\oplus v\right) \left(
uv\oplus 1\right) ^{\ast }\equiv e^{iy_{1}}\cdots e^{iy_{\ell }}\mathrm{\ 
\mathrm{mod}}\ M_{2}\left( \mathfrak{J}\right) $.
\end{proof}

\subsection{The Definable Arveson Extension Theorem}

In the rest of this section, we present definable versions of some
fundamental results in operator algebras, to be used in the development of
definable $\mathrm{K}$-homology. Suppose that $H$ is a separable Hilbert
space. We regard $B\left( H\right) $ as the multiplier algebra of the
C*-algebra $K\left( H\right) $ of compact operators on $H$. The
corresponding strict topology on $\mathrm{\mathrm{\mathrm{\mathrm{Ball}}}}%
\left( B\left( H\right) \right) $ is the strong-* topology. Consistently, we
consider $B\left( H\right) $ as a standard Borel space with respect to the
induced standard Borel structure. If $Z$ is a separable Banach space, we
consider $L\left( Z,B\left( H\right) \right) $ as a strict Banach space,
where $\mathrm{\mathrm{Ball}}\left( L\left( Z,B\left( H\right) \right)
\right) $ is endowed with the topology of pointwise strong-* convergence. We
denote by $U\left( H\right) $ the unitary group of $B\left( H\right) $,
which is a Polish group when endowed with the strong-* topology.

Suppose that $A$ is a separable unital\ C*-algebra, and $X\subseteq A$ is an
operator system \cite[Chapter 2]{paulsen_completely_2002}. Let $H$ be a
separable Hilbert space. Arveson's Extension Theorem asserts that every
contractive completely positive (ccp) map $\phi :X\rightarrow B\left(
H\right) $ \cite[Section 1.5]{brown_algebras_2008} admits a contractive
completely positive \emph{extension} $\hat{\phi}:A\rightarrow B\left(
H\right) $ \cite[Theorem 7.5]{paulsen_completely_2002}. We observe now that $%
\hat{\phi}$ can be chosen in a Borel way from $\phi $. Notice that the space 
$\mathrm{CCP}\left( X,B\left( H\right) \right) $ of contractive completely
positive maps is closed (hence, compact) in $\mathrm{\mathrm{Ball}}\left(
L\left( X,B\left( H\right) \right) \right) $ endowed with the topology of
pointwise \emph{weak*} convergence.

\begin{lemma}
\label{Lemma:definable-Arveson}Suppose that $A$ is a separable unital\
C*-algebra, and $X\subseteq A$ is an operator system. Let $H$ be a separable
Hilbert space. Then there exists a \emph{Borel} function $\mathrm{CCP}\left(
X,B\left( H\right) \right) \rightarrow \mathrm{CCP}\left( A,B\left( H\right)
\right) $, $\phi \mapsto \hat{\phi}$ such that $\hat{\phi}$ is an extension
of $\phi $.
\end{lemma}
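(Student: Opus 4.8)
The plan is to prove the statement by reducing the general case to a universal situation and then applying a Borel uniformization / selection theorem. First I would recall the standard, non-Borel proof of Arveson's Extension Theorem, which proceeds in two stages: (i) extend $\phi : X \to B(H)$ to a completely positive map on all of $A$ by first dilating to Stinespring form and invoking Hahn--Banach at the level of states, or, more concretely, by realizing $\phi$ via a Wittstock-type extension; (ii) deduce from the structure of $B(H) = M(K(H))$ that the extension can be taken contractive. Rather than trying to carry this out Borel-measurably step by step, I would instead observe that the set
\[
\mathcal{E} := \bigl\{ (\phi, \psi) \in \mathrm{CCP}(X, B(H)) \times \mathrm{CCP}(A, B(H)) : \psi|_X = \phi \bigr\}
\]
is a \emph{Borel} subset of the product of two standard Borel spaces, and that by Arveson's theorem its projection onto the first coordinate is all of $\mathrm{CCP}(X, B(H))$. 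Indeed, both $\mathrm{CCP}(X, B(H))$ and $\mathrm{CCP}(A, B(H))$ are compact metrizable spaces in the topology of pointwise weak$^*$ convergence (as noted just before the statement), hence standard Borel; and the condition $\psi|_X = \phi$ is closed, since it amounts to $\psi(x) = \phi(x)$ for every $x$ in a fixed countable norm-dense $\mathbb{Q}$-linear subspace of $X$, each such equality being a weak$^*$-closed condition.

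Next I would invoke a Borel uniformization theorem to select $\psi$ from $\phi$. Since $\mathcal{E}$ is Borel and its sections $\mathcal{E}_\phi = \{\psi : (\phi,\psi) \in \mathcal{E}\}$ are nonempty, closed subsets of the compact metrizable space $\mathrm{CCP}(A, B(H))$, the Kuratowski--Ryll-Nardzewski selection theorem (\cite[Theorem 12.13]{kechris_classical_1995}; equivalently, the existence of Borel uniformizations for Borel sets with $\sigma$-compact — here compact — sections, \cite[Section 18.A]{kechris_classical_1995}) yields a Borel map $\phi \mapsto \hat\phi$ with $(\phi, \hat\phi) \in \mathcal{E}$ for every $\phi$. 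This $\hat\phi$ is then, by construction, a ccp extension of $\phi$, and the map $\phi \mapsto \hat\phi$ is Borel, which is exactly the assertion. Care is needed only in checking that the Borel structures match up: the topology of pointwise weak$^*$ convergence and the topology of pointwise strong-$*$ convergence on $\mathrm{Ball}(L(Z, B(H)))$ induce the \emph{same} standard Borel structure (on bounded sets the weak operator topology and the strong-$*$ topology agree Borel-wise, as recorded in the discussion of $B(H)$ as a dual space earlier in this section), so a function that is Borel for one is Borel for the other.

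The main obstacle, and the only genuinely non-formal point, is verifying that the extension relation $\mathcal{E}$ has \emph{closed} (equivalently, compact) sections so that the classical selection theorem applies in its simplest form. The set of \emph{all} ccp extensions of a fixed $\phi$ is manifestly closed in the pointwise weak$^*$ topology; the point is simply that this compactness is inherited from $\mathrm{CCP}(A, B(H))$ being weak$^*$-compact, which in turn uses that $B(H) = K(H)^*$ has weak$^*$-compact unit ball and that complete positivity plus contractivity are preserved under pointwise weak$^*$ limits. Once that is in hand, no turbulence or descriptive-set-theoretic subtlety remains, and the argument is a clean application of Kuratowski--Ryll-Nardzewski. (One could alternatively cite the Jankov--von Neumann uniformization theorem and avoid even the closedness of sections, at the cost of only an analytically-measurable — rather than Borel — selection; but since the sections really are closed, the stronger Borel conclusion costs nothing.)
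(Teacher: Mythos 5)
Your proposal is correct and follows essentially the same route as the paper: form the Borel set of pairs $(\phi,\psi)$ with $\psi|_X=\phi$ inside the product of the two compact metrizable spaces $\mathrm{CCP}(X,B(H))$ and $\mathrm{CCP}(A,B(H))$ (pointwise weak* topology), note that the sections are closed and, by Arveson's theorem, nonempty, and apply a Borel selection theorem for relations with compact sections. The paper cites the same selection principle (Kechris, Theorem 28.8, stated as a preparatory lemma) rather than Kuratowski--Ryll-Nardzewski by name, and your extra check that the weak* and strong-* Borel structures agree on bounded sets is a correct and harmless addition.
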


Towards the proof of Lemma \ref{Lemma:definable-Arveson}, we recall the
following particular case of the selection theorem for relations with
compact sections \cite[Theorem 28.8]{kechris_classical_1995}.

\begin{lemma}
\label{Lemma:select-compact-square}Suppose that $X,Y$ are compact metrizable
spaces, and $A\subseteq X\times Y$ is a Borel subset such that, for every $%
x\in X$, the vertical section%
\begin{equation*}
A_{x}=\left\{ y\in Y:\left( x,y\right) \in A\right\}
\end{equation*}%
is a closed nonempty set. Then there exists a Borel function $f:X\rightarrow
Y$ such that $\left( x,f(x)\right) \in A$ for every $x\in X$.
\end{lemma}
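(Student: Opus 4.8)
The plan is to read this off as the special case of the cited selection theorem for Borel relations with compact sections \cite[Theorem 28.8]{kechris_classical_1995}; the only work is to check that its hypotheses hold in the present setting.

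First I would observe that a compact metrizable space is, in particular, a Polish space, being separable and completely metrizable; hence $X$ and $Y$ are Polish and $A$ is a Borel subset of the Polish space $X\times Y$. For each $x\in X$ the section $A_{x}$ is closed in $Y$ by hypothesis, and since $Y$ is compact this makes $A_{x}$ compact. Thus $A$ is a Borel subset of $X\times Y$ all of whose vertical sections are compact, so \cite[Theorem 28.8]{kechris_classical_1995} applies and produces a Borel uniformization of $A$: a Borel function $f$ defined on $\mathrm{proj}_{X}(A)$, with values in $Y$, such that $(x,f(x))\in A$ for every $x$ in its domain. It remains only to note that the hypothesis ``$A_{x}\neq\varnothing$ for every $x\in X$'' is precisely the assertion that $\mathrm{proj}_{X}(A)=X$, so that $f$ is defined on all of $X$ and is the desired Borel selector.

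There is no genuine obstacle here: all of the content resides in the cited descriptive set-theoretic theorem, and the lemma is its restatement under the two simplifications that $Y$ is compact---so closedness of the sections already yields compactness, and no regularity beyond Borelness of $A$ is needed---and that every section is nonempty---so the uniformizing function is total. The one point worth flagging, both here and for the numerous later ``Borel choice'' arguments, is that compactness of the target space is exactly what licenses the selection; I would not attempt to replace Theorem 28.8 by an elementary construction, since verifying Borelness of the natural candidates (such as $x\mapsto$ the lexicographically least point of $A_{x}$ in a fixed embedding $Y\hookrightarrow[0,1]^{\omega}$) already requires the same theorem.
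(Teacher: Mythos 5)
Your proposal is correct and matches the paper exactly: the lemma is stated there as a "particular case of the selection theorem for relations with compact sections \cite[Theorem 28.8]{kechris_classical_1995}," with no further proof given. Your verification that closed sections of a compact $Y$ are compact and that nonemptiness of all sections makes the uniformization total is precisely the intended reduction.
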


Using this selection theorem, Lemma \ref{Lemma:definable-Arveson} follows
immediately from the Arveson Extension Theorem.

\begin{proof}[Proof of Lemma \protect\ref{Lemma:definable-Arveson}]
We consider $\mathrm{CCP}\left( X,B\left( H\right) \right) $ as a compact
metrizable space, endowed with the topology of pointwise weak* convergence.
Consider the Borel set $A\subseteq \mathrm{CCP}\left( X,B\left( H\right)
\right) \times \mathrm{CCP}\left( A,B\left( H\right) \right) $ of pairs $%
\left( \phi ,\psi \right) $ such that $\psi |_{X}=\phi $. Then by the
Arveson Extension Theorem, the vertical sections of $A$ are nonempty, and
clearly closed. Thus, by Lemma \ref{Lemma:select-compact-square} there
exists a Borel function $f:\mathrm{CCP}\left( X,B\left( H\right) \right)
\rightarrow \mathrm{CCP}\left( A,B\left( H\right) \right) $ such that $%
f\left( \phi \right) |_{X}=\phi $ for every $\phi \in \mathrm{CCP}\left(
X,B\left( H\right) \right) $.
\end{proof}

\subsection{The Definable Stinespring Dilation Theorem}

Suppose that $A$ is a separable unital C*-algebra, and $H$ is a separable
Hilbert space. Stinespring's Dilation Theorem asserts that, for every
contractive completely positive map $\phi :A\rightarrow B\left( H\right) $,
there exists a linear map $V_{\phi }:H\rightarrow H$ with $\left\Vert
V_{\phi }\right\Vert ^{2}=\left\Vert \phi \right\Vert $ and a nondegenerate
representation $\pi _{\phi }$ of $A$ on $H$ such that $\phi \left( a\right)
=V_{\phi }^{\ast }\pi _{\phi }\left( a\right) V_{\phi }$ for every $a\in A$.
Notice that the set $\mathrm{Rep}\left( A,H\right) $ of nondegenerate
representations of $A$ on $H$ is a $G_{\delta }$ subset of $\mathrm{\mathrm{%
Ball}}\left( L\left( A,B\left( H\right) \right) \right) $, whence Polish
with the subspace topology, where $\mathrm{\mathrm{Ball}}\left( B\left(
H\right) \right) $ is endowed with the strong-* topology. It follows from
the proof of the Stinespring Dilation Theorem, where $V$ and $\pi $ are
explicitly defined in terms of $\phi $, that they can be chosen in a Borel
way from $\phi $; see \cite[Theorem II.6.9.7]{blackadar_operator_2006}

\begin{lemma}
\label{Lemma:definable-Stinespring}Suppose that $A$ is a separable unital
C*-algebra, and $H$ is a separable Hilbert space. Then there exists a Borel
function $\mathrm{CCP}\left( A,B\left( H\right) \right) \rightarrow \mathrm{%
\mathrm{Ball}}\left( B\left( H\right) \right) \times \mathrm{Rep}\left(
A,B\left( H\right) \right) $, $\phi \mapsto \left( V_{\phi },\pi _{\phi
}\right) $ such that $\phi \left( a\right) =V_{\phi }^{\ast }\pi _{\phi
}\left( a\right) V_{\phi }$ for $a\in A$ and $\left\Vert V_{\phi
}\right\Vert ^{2}=\left\Vert \phi \right\Vert $ for every contractive
completely positive map $\phi :A\rightarrow B\left( H\right) $.
\end{lemma}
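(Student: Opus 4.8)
The plan is to reprise the standard GNS-style proof of Stinespring's theorem and to check that every step of the construction can be performed in a Borel fashion in $\phi$; a direct appeal to a measurable selection theorem, as in the proof of Lemma \ref{Lemma:definable-Arveson}, does not seem available here, since $\mathrm{Rep}(A,H)$ is not closed in the natural compact space $\mathrm{Ball}(L(A,B(H)))$ (with the topology of pointwise weak operator convergence) and the relevant fibers are not visibly $\sigma$-compact. We may assume $H$ is infinite-dimensional. First I would fix an orthonormal basis $(e_n)_{n\in\omega}$ of $H$, a countable norm-dense $\mathbb{Q}[i]$-linear $\ast$-subalgebra $A_0\subseteq A$ with $1\in A_0$, a countable dense $\mathbb{Q}[i]$-linear subspace $H_0\subseteq H$ with $e_n\in H_0$ for all $n$, and --- once and for all --- a nondegenerate representation $\rho_0\colon A\to B(H_0')$ on a separable infinite-dimensional Hilbert space $H_0'$ (such a $\rho_0$ exists because $A$ is a separable unital C*-algebra). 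Given $\phi\in\mathrm{CCP}(A,B(H))$, recall that one equips the algebraic tensor product $A\odot H$ with the positive semidefinite sesquilinear form determined on elementary tensors by $\langle a\otimes\xi,\,b\otimes\eta\rangle_\phi=\langle\phi(b^{\ast}a)\xi,\eta\rangle_H$, separates and completes it to a Hilbert space $K_\phi$ with quotient map $v\mapsto\hat v$, and sets $\pi_\phi^0(a)\widehat{b\otimes\xi}=\widehat{ab\otimes\xi}$ and $V_\phi^0\xi=\widehat{1\otimes\xi}$; then $\phi(a)=(V_\phi^0)^{\ast}\pi_\phi^0(a)V_\phi^0$ for all $a\in A$, and $\|V_\phi^0\|^{2}=\|(V_\phi^0)^{\ast}V_\phi^0\|=\|\phi(1)\|=\|\phi\|$ since $\pi_\phi^0$ is unital.

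Next I would record that, for each fixed $c\in A$, the map $\phi\mapsto\phi(c)$ is Borel (being continuous for the topology of pointwise weak* convergence), so that $\phi\mapsto\langle v,v'\rangle_\phi$ is Borel for any fixed pair $v,v'$ of elementary tensors with entries in $A_0$ and $H_0$; consequently every Gram determinant of a finite tuple from the countable set $A_0\odot H_0$ is a Borel function of $\phi$. Using this I would construct a Borel-in-$\phi$ orthonormal basis of $K_\phi$: enumerate $A_0\odot H_0$ as $(v_i)_{i\in\omega}$, recursively keep $v_i$ precisely when $\hat v_i$ is not in the closed span of the images of the vectors kept so far (a Borel condition, being decided by the Borel Gram determinants), and apply Gram--Schmidt with respect to $\langle\cdot,\cdot\rangle_\phi$; this yields vectors $u_k(\phi)\in A\odot H$, each a finite linear combination of the kept $v_i$'s with coefficients that are Borel in $\phi$ (obtained from the Gram data via field operations and real square roots), whose images $\widehat{u_k(\phi)}$ form an orthonormal basis of $K_\phi$. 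Then I would partition $\mathrm{CCP}(A,B(H))$ into the Borel sets $C_d=\{\phi:\dim K_\phi=d\}$, $d\in\omega\cup\{\infty\}$, and on $C_d$ let $H_d\subseteq H$ be the closed span of $e_0,\dots,e_{d-1}$ (all of $H$ if $d=\infty$) and $W_\phi\colon K_\phi\to H_d$ the unitary with $W_\phi\widehat{u_k(\phi)}=e_k$.

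On each $C_d$ I would compute, for fixed $a\in A_0$, $\xi\in H_0$, $j,k\in\omega$, that $\langle W_\phi\pi_\phi^0(a)W_\phi^{\ast}e_j,e_k\rangle=\langle a\cdot u_j(\phi),u_k(\phi)\rangle_\phi$ and $\langle W_\phi V_\phi^0\xi,e_k\rangle=\langle\widehat{1\otimes\xi},\widehat{u_k(\phi)}\rangle_\phi$ expand into finite sums of the Borel coefficients of the $u_k(\phi)$ times Borel scalars of the form $\langle\phi(\,\cdot\,)\zeta,\zeta'\rangle$ ($\zeta,\zeta'\in H_0$), hence are Borel in $\phi$; since they describe uniformly bounded operators and $A_0,H_0$ are dense, this makes $\phi\mapsto W_\phi\pi_\phi^0(a)W_\phi^{\ast}$ (for $a\in A_0$, hence for all $a\in A$ by uniform norm-approximation) and $\phi\mapsto W_\phi V_\phi^0$ Borel. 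Finally, on $C_d$ I would set
\[
\pi_\phi:=\bigl(W_\phi\pi_\phi^0(\,\cdot\,)W_\phi^{\ast}\bigr)\oplus\rho_0^{(\infty)}\ \text{ on }\ H=H_d\oplus H_d^{\perp},\qquad V_\phi:=\iota_{H_d}\circ W_\phi\circ V_\phi^0,
\]
with $\iota_{H_d}\colon H_d\hookrightarrow H$ the inclusion and $\rho_0^{(\infty)}$ a fixed ampliation of $\rho_0$ realized on $H_d^{\perp}\cong(H_0')^{(\infty)}$ (no summand being added when $d=\infty$, where $\pi_\phi$ is already unital). One checks that $\pi_\phi\in\mathrm{Rep}(A,H)$ is nondegenerate, $V_\phi\in\mathrm{Ball}(B(H))$, $\phi(a)=V_\phi^{\ast}\pi_\phi(a)V_\phi$ for all $a$, $\|V_\phi\|^{2}=\|V_\phi^0\|^{2}=\|\phi\|$, and $\phi\mapsto(V_\phi,\pi_\phi)$ is Borel. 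The step I expect to be the main obstacle --- essentially the only nonroutine one --- is producing a Borel-in-$\phi$ orthonormal basis of the fibered Hilbert space $K_\phi$, which is what forces both the Gram-determinant recursion and the partition into the pieces $C_d$, together with the padding of $\pi_\phi^0$ by $\rho_0^{(\infty)}$ needed to land in $\mathrm{Rep}(A,H)$ rather than merely in $\mathrm{Rep}(A,H_d)$.
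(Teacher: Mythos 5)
Your proof is correct and follows exactly the route the paper takes, which is simply to observe that $V_\phi$ and $\pi_\phi$ are explicitly constructed from $\phi$ in the GNS-style proof of Stinespring's theorem and hence depend on $\phi$ in a Borel way; you supply the details (the Gram-determinant recursion producing a Borel orthonormal basis of $K_\phi$, the partition by $\dim K_\phi$, and the padding by a fixed nondegenerate representation so as to land in $\mathrm{Rep}(A,H)$) that the paper leaves implicit. All of these steps check out, so this is a faithful, fully worked-out version of the paper's argument.
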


\subsection{The Definable Voiculescu Theorem}

Suppose that $A$ is a separable unital C*-algebra, and $\rho ,\rho ^{\prime
}:A\rightarrow B\left( H\right) $ are two maps. If $U\in U\left( H\right) $
is a unitary operator, write $\rho ^{\prime }\thickapprox _{U}\rho $ if $%
\rho ^{\prime }\left( a\right) \equiv U^{\ast }\rho \left( a\right) U\mathrm{%
\ \mathrm{mod}}\ K\left( H\right) $ for every $a\in A$. If $V:H\rightarrow H$
is an isometry, write $\rho ^{\prime }\lesssim _{V}\rho $ if $\rho ^{\prime
}\left( a\right) \equiv V^{\ast }\rho \left( a\right) V\mathrm{\ \mathrm{mod}%
}\ K\left( H\right) $ for every $a\in A$. A nondegenerate representation $%
\rho $ of $A$ on $B\left( H\right) $ is \emph{ample }if, for every $a\in
B\left( H\right) $, $\rho \left( a\right) \in K\left( H\right) \Rightarrow
a=0$. Notice that the set \textrm{ARep}$\left( A,H\right) $ of ample
representations of $A$ on $B\left( H\right) $ is a $G_{\delta }$ subset of 
\textrm{Ball}$\left( L\left( A,B\left( H\right) \right) \right) $.
Similarly, the set $\mathrm{Iso}\left( H\right) $ of isometries $%
H\rightarrow H$ is a $G_{\delta }$ subset of $\mathrm{\mathrm{Ball}}\left(
B\left( H\right) \right) $. A formulation of Voiculescu's Theorem asserts
that if $\rho :A\rightarrow B\left( H\right) $ is an ample representation,
and $\sigma :A\rightarrow B\left( H\right) $ is a unital completely positive
(ucp) map, then there exists an isometry $V:H\rightarrow H$ such that $%
\sigma \lesssim _{V}\rho $; see \cite[Theorem 3.4.3, Theorem 3.4.6, Theorem
3.4.7]{higson_analytic_2000}. We will observe that one can select $V$ in a
Borel fashion from $\rho $ and $\sigma $.

\begin{lemma}
\label{Lemma:Voiculescu}Let $A$ be a separable unital C*-algebra, and $H$ a
separable Hilbert space. There exists a Borel function $\mathrm{\mathrm{UCP}}%
\left( A,B\left( H\right) \right) \times \mathrm{ARep}\left( A,H\right)
\rightarrow \mathrm{Iso}\left( H\right) $, $\left( \sigma ,\rho \right)
\mapsto V_{\sigma ,\rho }$ such that $\sigma \lesssim _{V_{\sigma ,\rho
}}\rho $.
\end{lemma}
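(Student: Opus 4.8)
The plan is to revisit the classical proof of Voiculescu's theorem (\cite[Section 3.4]{higson_analytic_2000}) and verify that every choice it involves can be organized into a Borel function of the parameters $(\sigma ,\rho )$, using the definable Stinespring theorem (Lemma \ref{Lemma:definable-Stinespring}) together with the compact-section selection theorem (Lemma \ref{Lemma:select-compact-square}). A purely soft argument does not suffice: the graph $\{(\sigma ,\rho ,V):\sigma \lesssim _{V}\rho \}$ is Borel with full projection onto $\mathrm{UCP}(A,B(H))\times \mathrm{ARep}(A,H)$, so Jankov--von Neumann only yields a $\sigma (\Sigma ^{1}_{1})$-measurable (in particular universally measurable) uniformization, and a genuinely Borel one must be extracted from the construction. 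I would first reduce to the case of representations. By Lemma \ref{Lemma:definable-Stinespring} one can write, in a Borel way in $\sigma $, $\sigma (a)=V_{\sigma }^{\ast }\pi _{\sigma }(a)V_{\sigma }$ with $\pi _{\sigma }\in \mathrm{Rep}(A,H)$ and $V_{\sigma }\in \mathrm{\mathrm{Ball}}(B(H))$; since $\sigma $ is unital, $V_{\sigma }^{\ast }V_{\sigma }=\sigma (1)=1$, so $V_{\sigma }$ is an isometry. It therefore suffices to produce, in a Borel way in $(\pi ,\rho )\in \mathrm{Rep}(A,H)\times \mathrm{ARep}(A,H)$, an isometry $S=S_{\pi ,\rho }$ with $\pi \lesssim _{S}\rho $: then $V_{\sigma ,\rho }:=S_{\pi _{\sigma },\rho }V_{\sigma }$ is an isometry, $\sigma (a)-V_{\sigma ,\rho }^{\ast }\rho (a)V_{\sigma ,\rho }=V_{\sigma }^{\ast }(\pi _{\sigma }(a)-S^{\ast }\rho (a)S)V_{\sigma }\in K(H)$ (as $V_{\sigma }$ is bounded), and $(\sigma ,\rho )\mapsto V_{\sigma ,\rho }$ is Borel because evaluation and composition of the C*-operations are Borel on bounded sets.

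To make the iterative construction of $S$ Borel, fix once and for all, independently of $(\sigma ,\rho )$: an orthonormal basis of $H$, the associated increasing sequence of finite-rank projections $P_{1}\le P_{2}\le \cdots $ with $P_{n}\uparrow 1$, and a norm-dense sequence $(a_{n})$ in $A$. Voiculescu's theorem builds $S$ as the strong-$\ast $ limit of a coherent sequence of finite-rank partial isometries $s_{n}$, where $s_{n}$ agrees with $s_{n-1}$ on the range of $s_{n-1}^{\ast }s_{n-1}$, has initial and final spaces contained in $\mathrm{ran}(P_{j})$ and $\mathrm{ran}(P_{k})$ for some $j,k\geq n$, and satisfies the estimates on $a_{1},\dots ,a_{n}$ (with errors $\leq 2^{-n}$) that force $S$ to be an isometry and $S^{\ast }\rho (a)S-\pi (a)$ to be a norm-convergent sum of finite-rank operators; ampleness of $\rho $ is precisely what guarantees, via the quantitative approximation lemma underlying Voiculescu's theorem, that for all sufficiently large $j,k$ such an $s_{n}$ exists. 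For fixed $j,k$ the set of partial isometries from $\mathrm{ran}(P_{j})$ into $\mathrm{ran}(P_{k})$ is a compact metrizable space, on which all the admissibility constraints are \emph{closed} and Borel in the data $d=(\pi ,\rho ,s_{1},\dots ,s_{n-1})$, because $v\mapsto v^{\ast }\rho (a)v$, $v\mapsto \rho (a)v$ and $v\mapsto v\pi (a)$ are strong-$\ast $ continuous on bounded sets and the norm is strong-$\ast $ lower semicontinuous. Since a projection of a Borel set with compact sections is Borel, the map $d\mapsto (j,k)$ returning the least admissible pair with $j,k\geq n$ is Borel; restricting to that value and passing to a compact parameter space as in the proof of Lemma \ref{Lemma:definable-Arveson}, Lemma \ref{Lemma:select-compact-square} supplies a Borel map $d\mapsto s_{n}$. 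Composing these stagewise Borel maps yields a Borel assignment $(\pi ,\rho )\mapsto (s_{n})_{n}$, hence a Borel assignment $(\pi ,\rho )\mapsto S=\lim _{n}s_{n}$ (a pointwise limit of Borel maps into a standard Borel space is Borel); and $S$ is an isometry with $S^{\ast }\rho (a)S-\pi (a)\in K(H)$ for every $a$ — first for $a$ in the dense sequence, then for all $a$ by norm-continuity and the closedness of $K(H)$ — exactly as in the classical argument.

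The main obstacle is the bookkeeping in the last step: Voiculescu's induction must be phrased so that at each stage the object being chosen ranges over a compact space that \emph{does not depend on} $(\sigma ,\rho )$, the admissibility conditions cut out a closed subset of it, and the data produced so far enters only through a Borel parameter; one then has to check that imposing $j,k\geq n$ does not obstruct the existence guaranteed by ampleness, and that ``least admissible pair'' is genuinely a Borel function of the data. Everything else — strong-$\ast $ continuity of the C*-operations on bounded sets, lower semicontinuity of the norm, and Borelness of evaluation maps — is routine within the framework of strict C*-algebras developed above and already used in the proofs of Lemmas \ref{Lemma:definable-Arveson} and \ref{Lemma:definable-Stinespring}.
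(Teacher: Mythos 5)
Your overall strategy---rerun Voiculescu's construction and check that every choice can be made by a Borel selection over compact or countable sections---is the right one and is what the paper does, and your preliminary points (Jankov--von Neumann only gives a $\sigma(\Sigma^{1}_{1})$-measurable uniformization; strong-$*$ continuity of the operations on bounded sets; lower semicontinuity of the norm) are all correct. The genuine gap is in the second paragraph: the iterative scheme you describe omits the block-diagonalization step, which is the essential analytic ingredient of Voiculescu's theorem and not ``bookkeeping.'' If $S=\lim_{n}s_{n}$ with increments $t_{n}:=s_{n}-s_{n-1}$ whose initial projections $E_{n}$ sum to $1$, then the estimates you impose make $S^{\ast}\rho(a)S$ differ by a norm-convergent sum of finite-rank operators from the block-diagonal compression $\sum_{n}E_{n}\pi(a)E_{n}$, \emph{not} from $\pi(a)$; the off-diagonal part $\pi(a)-\sum_{n}E_{n}\pi(a)E_{n}$ is compact only if $\pi$ is block-diagonalizable modulo compacts with respect to $(E_{n})$, and a representation admits such a decomposition (for \emph{some} sequence of finite-rank projections, let alone one subordinate to a filtration $(P_{j})$ fixed in advance from a fixed orthonormal basis) exactly when it is quasidiagonal. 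For a non-quasidiagonal $\pi$---e.g.\ an ample extension of the identity representation of the Toeplitz algebra---no choice of admissible $s_{n}$ can make the limit work, so the existence ``for all sufficiently large $j,k$'' that you attribute to ampleness fails. Your Stinespring reduction makes this worse rather than better: the object one can block-diagonalize is the \emph{ucp map} $\sigma$ (compressing by a quasicentral approximate unit preserves complete positivity but destroys multiplicativity), which is precisely why the classical proof, and the paper, reduce to block-diagonal ucp maps rather than to representations.

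Concretely, the paper splits the statement into Lemma \ref{Lemma:block-diagonal}, which produces in a Borel way from $\sigma$ a block-diagonal ucp map $\sigma^{\prime}$ together with its own, $\sigma$-dependent, sequence of projections $\left( P_{n}\right)_{n\in\omega}$ and an isometry $V^{\prime}$ with $\sigma\lesssim_{V^{\prime}}\sigma^{\prime}$ (the definable form of the quasicentral-approximate-unit argument of Higson--Roe, Theorem 3.5.5), and Lemma \ref{Lemma:block-diagonal0}, which absorbs a block-diagonal map into an ample representation by essentially the coherent-partial-isometry iteration you sketch, with the finite-dimensional existence statement supplied in definable form by Lemma \ref{Lemma:Glimm} (there obtained via Luzin--Novikov rather than compact-section selection). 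Your second paragraph is a reasonable outline of Lemma \ref{Lemma:block-diagonal0}, but it is only valid after the block-diagonal reduction, and the projections entering it must be part of the Borel output of that reduction rather than fixed beforehand; as written, the proposal is missing the harder half of the proof.
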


Towards obtaining a proof of Lemma \ref{Lemma:Voiculescu}, we argue as in
the proof of Voiculescu's theorem as expounded in \cite[Chapter 3]%
{higson_analytic_2000}. First, one considers the case of ucp maps $%
A\rightarrow B\left( H\right) $ where $H$ is finite-dimensional. The
following can be seen a definable version of \cite[Proposition 3.6.7]%
{higson_analytic_2000}. Notice that the set $\mathrm{Proj}\left( H\right) $
of orthogonal projections $H\rightarrow H$ is closed subset of $\mathrm{%
\mathrm{Ball}}\left( B\left( H\right) \right) $. Let $\mathrm{Proj}_{\mathrm{%
fd}}\left( H\right) $ be the Borel subset of finite-dimensional projections.
The following lemma is a consequence of \cite[Proposition 3.6.7]%
{higson_analytic_2000} itself and the Luzin--Novikov Uniformization Theorem
for Borel relations with countable sections \cite[Theorem 18.10]%
{kechris_classical_1995}.

\begin{lemma}
\label{Lemma:Glimm}Fix a finite-dimensional subspace $H_{0}$ of $H$, and
regard $B\left( H_{0}\right) $ as a C*-subalgebra of $B\left( H\right) $.
For every finite subset $F$ of $A$ and $\varepsilon >0$, there exists a
Borel map $\mathrm{\mathrm{UCP}}\left( A,B\left( H_{0}\right) \right) \times 
\mathrm{ARep}\left( A,H\right) \times \mathrm{Proj}_{\mathrm{fd}}\left(
H\right) \rightarrow \mathrm{\mathrm{Ball}}\left( H\right) $, $\left( \sigma
,\rho ,P\right) \mapsto V$ such that $\mathrm{Ran}(V)$ is orthogonal to $%
P\left( H\right) $ and $\left\Vert \sigma \left( a\right) -V^{\ast }\rho
\left( a\right) V\right\Vert <\varepsilon $ for $a\in F$.
\end{lemma}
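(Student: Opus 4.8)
The plan is to derive this from the non-definable statement \cite[Proposition 3.6.7]{higson_analytic_2000} together with the Luzin--Novikov uniformization theorem \cite[Theorem 18.10]{kechris_classical_1995}, after arranging that the operator $V$ is searched for in a \emph{countable} set. The first move is to eliminate the orthogonality constraint by parametrizing $V$ through the orthogonal complement of $P$. Fix an orthonormal basis $\left( f_{k}\right) _{k\geq 1}$ of $H$. For $P\in \mathrm{Proj}_{\mathrm{fd}}\left( H\right) $, running the Gram--Schmidt procedure on the sequence $\left( \left( 1-P\right) f_{k}\right) _{k\geq 1}$ produces an orthonormal basis of $\left( 1-P\right) \left( H\right) $ whose members depend in a Borel way on $P$, since each step only involves finite-dimensional linear algebra applied to Borel-in-$P$ vectors together with the test whether such a vector vanishes. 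This gives a Borel map $P\mapsto J_{P}$, where $J_{P}\colon \ell ^{2}\rightarrow H$ is an isometry with range $\left( 1-P\right) \left( H\right) $; in particular $\mathrm{Ran}\left( J_{P}W\right) \perp P\left( H\right) $ for every contraction $W\colon H_{0}\rightarrow \ell ^{2}$.

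Next I would fix a countable norm-dense subset $D$ of the (Polish) unit ball of $L\left( H_{0},\ell ^{2}\right) $, for instance the finitely supported operators with entries in $\mathbb{Q}+i\mathbb{Q}$ lying in the unit ball, set $M:=1+\max _{a\in F}\left\Vert a\right\Vert $, and pick $\delta >0$ with $2M\delta <\varepsilon /2$. Consider
\begin{equation*}
R:=\left\{ \left( \sigma ,\rho ,P,W\right) :W\in D\text{ and }\left\Vert \sigma \left( a\right) -\left( J_{P}W\right) ^{\ast }\rho \left( a\right) \left( J_{P}W\right) \right\Vert <\varepsilon \text{ for all }a\in F\right\} \text{.}
\end{equation*}
Since $D$ is countable, $R$ is a countable union over $W\in D$ of sets of the form $\left\{ \left( \sigma ,\rho ,P\right) :\max _{a\in F}\left\Vert \sigma \left( a\right) -\left( J_{P}W\right) ^{\ast }\rho \left( a\right) \left( J_{P}W\right) \right\Vert <\varepsilon \right\} $, and each of these is Borel because $\rho \mapsto \rho \left( a\right) $ is strong-$\ast $ continuous, $\left( P,W\right) \mapsto J_{P}W$ is Borel by the previous paragraph, the map $\left( T,V\right) \mapsto V^{\ast }TV$ is jointly continuous on $\mathrm{Ball}\left( B\left( H\right) \right) \times \mathrm{Ball}\left( L\left( H_{0},H\right) \right) $ for the strong-$\ast $ and norm topologies, and $\sigma \mapsto \sigma \left( a\right) $ is continuous. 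Thus $R$ is Borel with all sections $R_{\left( \sigma ,\rho ,P\right) }\subseteq D$ countable. To see that these sections are nonempty, I would invoke \cite[Proposition 3.6.7]{higson_analytic_2000}: for each $\left( \sigma ,\rho ,P\right) $ there is an isometry $V\colon H_{0}\rightarrow H$ with $\mathrm{Ran}\left( V\right) \perp P\left( H\right) $ and $\left\Vert \sigma \left( a\right) -V^{\ast }\rho \left( a\right) V\right\Vert <\varepsilon /2$ for $a\in F$; writing $V=J_{P}W_{0}$ with $W_{0}$ a contraction $H_{0}\rightarrow \ell ^{2}$ and choosing $W\in D$ with $\left\Vert W-W_{0}\right\Vert <\delta $, we get $\left\Vert J_{P}W-V\right\Vert <\delta $, and since $\left\Vert \rho \left( a\right) \right\Vert \leq \left\Vert a\right\Vert $ the telescoping estimate $\left\Vert \left( J_{P}W\right) ^{\ast }\rho \left( a\right) \left( J_{P}W\right) -V^{\ast }\rho \left( a\right) V\right\Vert \leq 2\delta \left\Vert a\right\Vert \leq 2M\delta <\varepsilon /2$ yields $\left( \sigma ,\rho ,P,W\right) \in R$.

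Finally I would apply the Luzin--Novikov uniformization theorem \cite[Theorem 18.10]{kechris_classical_1995} to the Borel set $R$, whose sections are nonempty and countable: this produces a Borel map $\left( \sigma ,\rho ,P\right) \mapsto W\left( \sigma ,\rho ,P\right) \in D$ with $\left( \sigma ,\rho ,P,W\left( \sigma ,\rho ,P\right) \right) \in R$. Setting $V:=J_{P}W\left( \sigma ,\rho ,P\right) $, viewed as a contraction $H_{0}\rightarrow H$, gives the required Borel map: $\mathrm{Ran}\left( V\right) \subseteq \left( 1-P\right) \left( H\right) $ is orthogonal to $P\left( H\right) $, and $\left\Vert \sigma \left( a\right) -V^{\ast }\rho \left( a\right) V\right\Vert <\varepsilon $ for every $a\in F$. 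The main obstacle, and the only point that requires genuine care rather than routine bookkeeping, is establishing the Borel dependence of the complementary isometry $J_{P}$ on $P$ and, relatedly, the Borel measurability of $\left( \rho ,P,W\right) \mapsto \left( J_{P}W\right) ^{\ast }\rho \left( a\right) \left( J_{P}W\right) $; once these are in hand, the argument is simply a discretization of the search space followed by the cited selection theorems.
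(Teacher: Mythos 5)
Your argument is correct and follows exactly the route the paper indicates: it derives the lemma from \cite[Proposition 3.6.7]{higson_analytic_2000} together with the Luzin--Novikov uniformization theorem, with the discretization over a countable dense set $D$ of contractions being precisely what is needed to make the sections countable. The only detail the paper leaves implicit that you supply explicitly --- the Borel dependence of the complementary isometry $J_{P}$ on $P$ via Gram--Schmidt --- is handled correctly, so this is a faithful elaboration of the paper's one-line proof rather than a different approach.
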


One then uses Lemma \ref{Lemma:Glimm} to establish Lemma \ref%
{Lemma:Voiculescu} in the case of \emph{block-diagonal }maps. Recall that $%
\sigma $ is block-diagonal with respect to $\left( P_{n}\right) _{n\in
\omega }$ if $\left( P_{n}\right) _{n\in \omega }$ is a sequence of pairwise
orthogonal finite-rank projections $P_{n}\in B\left( H\right) $ such that $%
\sum_{n}P_{n}=I$ and $\sigma \left( a\right) =\sum_{n}P_{n}\sigma \left(
a\right) P_{n}$ for every $a\in A$ (where the convergence is in the strong-*
topology). Consider the set $\mathrm{BlockUCP}\left( A,B\left( H\right)
\right) $ of pairs $\left( \sigma ,\left( P_{n}\right) _{n\in \omega
}\right) \in \mathrm{\mathrm{UCP}}\left( A,B\left( H\right) \right) \times 
\mathrm{Proj}_{\mathrm{fd}}\left( H\right) ^{\omega }$ such that $\sigma $
is block-diagonal with respect to $\left( P_{n}\right) _{n\in \omega }$. The
proof of \cite[Lemma 3.5.2]{higson_analytic_2000} shows the following.

\begin{lemma}
\label{Lemma:block-diagonal0}There exists a Borel function $\mathrm{BlockUCP}%
\left( A,B\left( H\right) \right) \times \mathrm{ARep}\left( A,H\right)
\rightarrow \mathrm{Iso}\left( H\right) $,$\left( \sigma ,\left(
P_{n}\right) _{n\in \omega },\rho \right) \mapsto V$ such that $\sigma
\lesssim _{V}\rho $.
\end{lemma}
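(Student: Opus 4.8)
The plan is to run the recursive construction from the proof of \cite[Lemma 3.5.2]{higson_analytic_2000}, performing every choice Borel-measurably in the data $(\sigma ,(P_{n})_{n},\rho )$ by repeated appeals to Lemma \ref{Lemma:Glimm} and to standard Borel-selection facts. Fix once and for all an orthonormal basis $(e_{i})_{i\in \omega }$ of $H$, let $H_{d}=\mathrm{span}\{e_{0},\ldots ,e_{d-1}\}$ for $d\geq 1$, and fix a norm-dense sequence $(a_{k})_{k\in \omega }$ in $\mathrm{Ball}(A)$ chosen so that each $F_{n}:=\{a_{0},\ldots ,a_{n}\}$ is closed under the adjoint. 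Given an input, the goal is to build partial isometries $V_{n}$ on $H$ with initial space $P_{n}H$ and pairwise orthogonal final spaces, together with the auxiliary isometries $\tilde V_n$ produced below, such that, letting $Q_{n}$ denote the orthogonal projection onto the (finite-dimensional) span of $\bigcup_{m\leq n}\mathrm{Ran}(V_{m})$ together with $\bigcup_{m\leq n,\,a\in F_{n+1}}\rho (a)\mathrm{Ran}(V_{m})$, one has $\mathrm{Ran}(V_{n})\perp Q_{n-1}H$ and $\Vert V_{n}^{\ast }\rho (a)V_{n}-P_{n}\sigma (a)P_{n}\Vert <2^{-n}$ for all $a\in F_{n}$. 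Since $\sum_{n}P_{n}=I$ and the final spaces are orthogonal, $V:=\sum_{n}V_{n}$ (strong limit) is then an isometry, and it depends Borel-measurably on the input as soon as each $V_{n}$ does.

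To make the recursion Borel, I would first record a Borel map $P\mapsto s_{P}$ sending each nonzero finite-rank projection $P$ to a partial isometry on $H$ with initial space $H_{\mathrm{rank}(P)}$ and final space $PH$: on each Borel piece $\{\mathrm{rank}(P)=d\}$ one selects, Borel-measurably in $P$, an orthonormal basis of $PH$ (the relevant relation has compact nonempty sections, so a Borel uniformization exists, as in Lemma \ref{Lemma:select-compact-square}) and lets $s_{P}$ carry $e_{i}$ to the $i$-th basis vector. At stage $n$, set $d_{n}=\mathrm{rank}(P_{n})$ and $\tilde{\sigma }_{n}(a)=s_{P_{n}}^{\ast }\sigma (a)s_{P_{n}}$; because $\sigma $ is block-diagonal with respect to $(P_{n})$, the map $\tilde{\sigma }_{n}$ is a ucp map $A\to B(H_{d_{n}})$ depending Borel-measurably on the input. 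On the Borel set $\{d_{n}=d\}$, apply the Borel map furnished by Lemma \ref{Lemma:Glimm} with $H_{0}=H_{d}$, finite set $F_{n}$, and $\varepsilon =2^{-n}$ to the triple $(\tilde{\sigma }_{n},\rho ,Q_{n-1})$, obtaining an isometry $\tilde{V}_{n}:H_{d}\to H$ with range orthogonal to $Q_{n-1}H$ and $\Vert \tilde{\sigma }_{n}(a)-\tilde{V}_{n}^{\ast }\rho (a)\tilde{V}_{n}\Vert <2^{-n}$ for $a\in F_{n}$; these maps, one for each $d\in \omega $, are dispatched by the Borel function $d_{n}$, so input $\mapsto \tilde{V}_{n}$ is Borel. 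Put $V_{n}=\tilde{V}_{n}s_{P_{n}}^{\ast }$, so $V_{n}^{\ast }\rho (a)V_{n}=s_{P_{n}}\tilde{V}_{n}^{\ast }\rho (a)\tilde{V}_{n}s_{P_{n}}^{\ast }$ lies within $2^{-n}$ of $s_{P_{n}}\tilde{\sigma }_{n}(a)s_{P_{n}}^{\ast }=P_{n}\sigma (a)P_{n}$ for $a\in F_{n}$, while $\mathrm{Ran}(V_{n})=\mathrm{Ran}(\tilde{V}_{n})$ carries the explicit orthonormal basis $(\tilde{V}_{n}e_{i})_{i<d_{n}}$, so $Q_{n}$ is finite-rank and depends Borel-measurably on the previously constructed data (the orthogonal projection onto the span of a finite tuple of vectors is a Borel function of the tuple, e.g.\ via the Moore--Penrose pseudoinverse of the Gram matrix, with a further dispatch on the number of vectors). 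This completes the Borel construction of $V$.

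It then remains to check $\sigma \lesssim _{V}\rho $, which is the classical computation. For $a=a_{k}$, the initial/final-space relations for the $V_{n}$ give $P_{m}V^{\ast }\rho (a_{k})VP_{n}=V_{m}^{\ast }\rho (a_{k})V_{n}$. When $m=n\geq k$ this block is within $2^{-n}$ of $P_{n}\sigma (a_{k})P_{n}$, so the diagonal part of $V^{\ast }\rho (a_{k})V$ differs from $\sigma (a_{k})=\sum_{n}P_{n}\sigma (a_{k})P_{n}$ by a finite-rank operator plus a block-diagonal operator with block norms tending to $0$, hence by a compact operator. When $m\neq n$ with $\max \{m,n\}\geq k$, the arrangement $\mathrm{Ran}(V_{n})\perp Q_{n-1}H\supseteq \rho (b)\mathrm{Ran}(V_{m})$ for $m<n$ and $b\in F_{n}$, together with $F_{n}=F_{n}^{\ast }$ and passage to adjoints, forces $V_{m}^{\ast }\rho (a_{k})V_{n}=0$; hence the off-diagonal part of $V^{\ast }\rho (a_{k})V$ is a finite sum of finite-rank operators. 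Thus $V^{\ast }\rho (a_{k})V-\sigma (a_{k})\in K(H)$ for every $k$, and since $K(H)$ is norm-closed, $a\mapsto V^{\ast }\rho (a)V$ and $a\mapsto \sigma (a)$ are norm-continuous, and $\{a_{k}\}$ is norm-dense in $\mathrm{Ball}(A)$, we conclude $V^{\ast }\rho (a)V-\sigma (a)\in K(H)$ for all $a\in A$, i.e.\ $\sigma \lesssim _{V}\rho $.

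The step I expect to be the main obstacle is the bookkeeping that makes the recursion genuinely Borel: Lemma \ref{Lemma:Glimm} fixes the finite-dimensional domain $H_{0}$ once and for all, whereas here the subspaces $P_{n}H$ vary with the input and their dimensions need not be bounded. This is overcome by the Borel transport family $(s_{P})$ and a dispatch over the countably many Borel pieces $\{\mathrm{rank}(P_{n})=d\}$ --- so that no uncountable case distinction is ever needed --- together with the routine verifications that the orthonormal-basis selections are Borel and that spans of finitely many vectors (and of their images under $\rho (a)$) produce finite-rank projections depending Borel-measurably on the data.
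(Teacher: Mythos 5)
Your argument is correct and is essentially the one the paper intends: the paper dispatches this lemma with the single remark that ``the proof of \cite[Lemma 3.5.2]{higson_analytic_2000} shows the following,'' and your proposal is exactly that recursive construction with the selections made Borel via Lemma \ref{Lemma:Glimm}, a transport family $(s_{P})$ to reduce varying blocks to the fixed finite-dimensional domains of that lemma, and a countable dispatch over the ranks. The only cosmetic point is that a single enumeration cannot have every initial segment $\{a_{0},\ldots ,a_{n}\}$ literally closed under adjoints; one should instead take the nested self-adjoint finite sets $F_{n}=\{a_{0},a_{0}^{\ast },\ldots ,a_{n},a_{n}^{\ast }\}$, which changes nothing else in the proof.
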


Finally, one shows that the general case of Voiculescu's theorem can be
reduced to the block-diagonal case, as in \cite[Theorem 3.5.5]%
{higson_analytic_2000}.

\begin{lemma}
\label{Lemma:block-diagonal}There exists a Borel function $\mathrm{\mathrm{%
UCP}}\left( A,B\left( H\right) \right) \rightarrow \mathrm{BlockUCP}\left(
A,B\left( H\right) \right) \times \mathrm{Iso}\left( H\right) $, $\sigma
\mapsto \left( \sigma ^{\prime },\left( P_{n}\right) _{n\in \omega
},V^{\prime }\right) $ such that $\sigma \lesssim _{V^{\prime }}\sigma
^{\prime }$.
\end{lemma}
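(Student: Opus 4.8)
The plan is to follow the standard reduction of Voiculescu's theorem to the block-diagonal case, as in \cite[Theorem 3.5.5]{higson_analytic_2000}, but to carry it out so that every choice made along the way depends in a Borel fashion on the input $\sigma \in \mathrm{UCP}\left( A,B\left( H\right) \right)$. The classical argument proceeds as follows: fix a countable dense $\ast$-subalgebra-generating sequence $\left( a_{k}\right)_{k\in\omega}$ of $\mathrm{Ball}(A)$, and an increasing exhaustion $\left( F_{n}\right)_{n\in\omega}$ of this sequence by finite sets together with a summable sequence $\left( \varepsilon_{n}\right)_{n\in\omega}$ of tolerances. One then constructs recursively an increasing sequence of finite-rank projections $Q_{0}\leq Q_{1}\leq \cdots$ with $Q_{n}\uparrow I$ strongly, such that the compressions $Q_{n}\sigma(a)Q_{n}$ nearly commute with $\sigma(a)$ for $a\in F_{n}$, up to $\varepsilon_{n}$; the ``quasicentral approximate unit'' argument of Arveson (cf.\ \cite[Theorem 3.2.5]{higson_analytic_2000}) supplies such a sequence, and at each stage the projection $Q_{n}$ can be taken from a fixed countable family of finite-rank projections, so a choice is a point in a countable (hence standard Borel) space. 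Setting $P_{n}:=Q_{n}-Q_{n-1}$ (with $Q_{-1}:=0$) and $\sigma'(a):=\sum_{n}P_{n}\sigma(a)P_{n}$ gives a block-diagonal ucp map $\sigma'$ with $\sigma' \equiv \sigma \bmod K(H)$ in the appropriate summable sense, and one checks, exactly as in \cite[Theorem 3.5.5]{higson_analytic_2000}, that the identity operator serves (up to a compact perturbation absorbed into an isometry) as a witness $V'$ with $\sigma \lesssim_{V'} \sigma'$; more precisely one produces $V'$ as a strong-$\ast$ limit of a Borel-chosen sequence of partial isometries implementing the termwise approximation $\left\Vert \sigma(a) - \sigma'(a)\right\Vert$-control.

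First I would make the recursion explicit as a single Borel construction. The key device is that the selection of $Q_{n}$ at stage $n$, given $\sigma$ and the previously chosen $Q_{0},\dots,Q_{n-1}$, is a selection from a \emph{Borel relation with nonempty sections in a standard Borel space}: the set of pairs $\big(\left(\sigma,Q_{<n}\right), Q_{n}\big)$ with $Q_{<n}\leq Q_{n}$ finite-rank (from the fixed countable family) and $\left\Vert [\,Q_{n}\sigma(a)Q_{n},\, \sigma(a)\,]\right\Vert < \varepsilon_{n}$ for all $a\in F_{n}$, plus $\left\Vert (I-Q_{n})\sigma(a)\right\Vert$ small on $F_{n}$, is Borel in $\sigma$ (the norms are strong-$\ast$-lower-semicontinuous hence Borel functions of $\sigma$, and $Q_{n}$ ranges over a countable set), and Arveson's quasicentral approximate unit argument guarantees the sections are nonempty. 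By the Luzin--Novikov uniformization theorem for Borel relations with countable sections \cite[Theorem 18.10]{kechris_classical_1995} — exactly the tool already invoked for Lemma \ref{Lemma:Glimm} — each stage admits a Borel uniformization, and composing the stages (a countable recursion of Borel maps) yields a Borel map $\sigma \mapsto \left( Q_{n}\right)_{n\in\omega}$, hence a Borel map $\sigma \mapsto \left(\sigma', \left(P_{n}\right)_{n\in\omega}\right)$. That $\sigma'$ lands in $\mathrm{BlockUCP}(A,B(H))$ is immediate from the construction. Finally, the witnessing isometry $V'$: the termwise control $\sum_{n}\left\Vert P_{n}\sigma(a)P_{n} - (\text{piece of }\sigma(a))\right\Vert < \infty$ together with Lemma \ref{Lemma:relative-path-unitary} (or a direct series estimate as in the proof of \cite[Theorem 3.5.5]{higson_analytic_2000}) lets one assemble a Borel-chosen sequence of partial isometries whose strong-$\ast$ limit $V'$ is an isometry with $\sigma(a) \equiv (V')^{\ast}\sigma'(a)V' \bmod K(H)$; since $\mathrm{Iso}(H)$ is $G_\delta$ in $\mathrm{Ball}(B(H))$ and a strong-$\ast$-convergent series of Borel functions is Borel, $\sigma \mapsto V'$ is Borel.

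I expect the main obstacle to be \textbf{the verification that the recursion genuinely stays inside the standard Borel category at every stage simultaneously}, i.e.\ that the ``previously chosen data'' $Q_{<n}$ can be carried along as a coordinate of a single standard Borel parameter space without the construction secretly requiring a choice from a space with uncountable, non-compact, non-countable sections. This is handled by being careful to fix \emph{once and for all} a countable family of finite-rank projections (e.g.\ those that are sums of a finite subfamily of a fixed orthonormal basis, or rational-coefficient finite-rank projections) from which all the $Q_{n}$ are drawn — the quasicentral approximate unit can be taken within this family by a density argument — so that the relation uniformized at each stage has \emph{countable} sections and Luzin--Novikov applies. A secondary technical point is controlling the moduli (the $\varepsilon_{n}$ and $F_{n}$) uniformly in $\sigma$, but since $A$ is fixed and separable these are fixed data, independent of $\sigma$, so no uniformity issue arises there. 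The remaining estimates — that $\sigma'$ is ucp and block-diagonal, and that the assembled $V'$ is an isometry implementing $\sigma \lesssim_{V'}\sigma'$ — are the routine content of \cite[Theorem 3.5.5]{higson_analytic_2000}, reproduced verbatim.
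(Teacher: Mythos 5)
Your proposal is correct and matches the paper's (essentially unstated) argument: the paper simply cites the reduction to the block-diagonal case from \cite[Theorem 3.5.5]{higson_analytic_2000}, and the intended Borel refinement is exactly what you describe, namely running the quasicentral-approximate-unit recursion for the projections $Q_{n}$ over a fixed countable family of finite-rank projections and uniformizing each stage via Luzin--Novikov, just as the paper does for Lemma \ref{Lemma:Glimm}. Two cosmetic remarks: the relevant almost-commutation condition is on $\left[ Q_{n},\sigma \left( a\right) \right] $ rather than $\left[ Q_{n}\sigma \left( a\right) Q_{n},\sigma \left( a\right) \right] $, and since the construction yields $\sigma \left( a\right) -\sigma ^{\prime }\left( a\right) \in K\left( H\right) $ for all $a$, the witnessing isometry can simply be taken to be $V^{\prime }=I$, so the limit-of-partial-isometries step (and the appeal to Lemma \ref{Lemma:relative-path-unitary}) is unnecessary.
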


Lemma \ref{Lemma:Voiculescu} is then obtained by combining Lemma \ref%
{Lemma:block-diagonal0} and Lemma \ref{Lemma:block-diagonal}.

As a consequence of the definable Voiculescu Theorem, one obtains the
following; see \cite[Theorem 3.4.6]{higson_analytic_2000}.

\begin{lemma}
\label{Lemma:trivial-representations}Let $A$ be a separable unital
C*-algebra, and $H$ a separable Hilbert space. There exist:

\begin{itemize}
\item a Borel map $\mathrm{Rep}\left( A,H\right) \times \mathrm{ARep}\left(
A,H\right) \rightarrow U\left( H\right) $, $\left( \rho ^{\prime },\rho
\right) \mapsto U_{\rho ^{\prime },\rho }$ such that $\rho ^{\prime }\oplus
\rho \thickapprox _{U_{\rho ^{\prime },\rho }}\rho $;

\item a Borel map $\mathrm{ARep}\left( A,H\right) \times \mathrm{ARep}\left(
A,H\right) \rightarrow U\left( H\right) $, $\left( \rho ^{\prime },\rho
\right) \mapsto W_{\rho ^{\prime },\rho }$ such that $\rho \thickapprox
_{W_{\rho ^{\prime },\rho }}\rho ^{\prime }$.
\end{itemize}
\end{lemma}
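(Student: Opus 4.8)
The plan is to carry out the classical argument for \cite[Theorem 3.4.6]{higson_analytic_2000} (together with the absorption statement \cite[Theorem 3.4.7]{higson_analytic_2000} from which it is deduced), keeping track at every step that the unitaries and isometries produced depend in a Borel way on the data. The only non-constructive ingredient of that argument is Voiculescu's theorem, which has already been made definable in Lemma \ref{Lemma:Voiculescu}; everything else is algebraic manipulation of operators, polar decomposition of invertible (or mod $K(H)$ invertible) elements via holomorphic functional calculus, strong-$\ast$ limits of Borel sequences, and finitely many further appeals to Lemmas \ref{Lemma:Voiculescu} and \ref{Lemma:definable-Stinespring} --- all Borel operations, by the strict C*-algebra structure of $B(H)=M(K(H))$ (Proposition \ref{Proposition:multiplier}) and the remarks on functional calculus in Section \ref{Section:strict}. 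Fix a unitary $w\colon H\oplus H\to H$ once and for all, read "$\rho'\oplus\rho\thickapprox_{U}\rho$" as "$\sigma_{\rho',\rho}\thickapprox_{U}\rho$" where $\sigma_{\rho',\rho}(a):=w\bigl(\rho'(a)\oplus\rho(a)\bigr)w^{\ast}$, and note that $(\rho',\rho)\mapsto\sigma_{\rho',\rho}$ is a continuous map into the space of nondegenerate representations of $A$ on $H$.

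First I would prove the first bullet. If $\rho$ is ample and $\rho'$ is a representation, then $\sigma_{\rho',\rho}$ contains $\rho$ as a direct summand, hence is itself ample; applying Lemma \ref{Lemma:Voiculescu} to the ample representation $\rho$ and the unital completely positive map $\sigma_{\rho',\rho}$ gives, Borel-ly in $(\rho',\rho)$, an isometry $V$ with $\sigma_{\rho',\rho}\lesssim_{V}\rho$. Inspecting the construction behind Lemma \ref{Lemma:Voiculescu}, which proceeds through Lemmas \ref{Lemma:Glimm}, \ref{Lemma:block-diagonal0} and \ref{Lemma:block-diagonal}, one checks that $V$ may additionally be taken to intertwine modulo $K(H)$, i.e.\ $\rho(a)V-V\sigma_{\rho',\rho}(a)\in K(H)$ for all $a\in A$, and to have infinite-dimensional cokernel, still by a Borel assignment. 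Then $P:=VV^{\ast}$ is a projection with $[\rho(a),P]\in K(H)$ for all $a$, the compression $P\rho(\cdot)P$ is carried by $V$ onto $\sigma_{\rho',\rho}$, and $\rho$ is unitarily equivalent modulo $K(H)$ to $P\rho(\cdot)P\oplus(1-P)\rho(\cdot)(1-P)$. As in the proof of \cite[Theorem 3.4.7]{higson_analytic_2000}, the large complementary summand $(1-P)\rho(\cdot)(1-P)$ in turn absorbs $\rho$, and assembling these identifications yields a unitary $U_{\rho',\rho}\in U(H)$ with $\sigma_{\rho',\rho}\thickapprox_{U_{\rho',\rho}}\rho$. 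Since each intermediate object is obtained from the earlier ones by the Borel operations listed above, the map $(\rho',\rho)\mapsto U_{\rho',\rho}$ is Borel.

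The second bullet reduces to the first by the usual symmetry argument. Let $\theta\colon H\oplus H\to H\oplus H$ be the coordinate flip and $s:=w\theta w^{\ast}\in U(H)$, so that $s\,\sigma_{\rho_1,\rho_2}(a)\,s^{\ast}=\sigma_{\rho_2,\rho_1}(a)$ for all $a$ and all $\rho_1,\rho_2$. Given ample $\rho_1,\rho_2$, the first bullet applied to $(\rho_1,\rho_2)$ produces a Borel unitary $U$ with $\sigma_{\rho_1,\rho_2}\thickapprox_{U}\rho_2$, and the first bullet applied to $(\rho_2,\rho_1)$ followed by conjugation with $s$ produces a Borel unitary $U'$ with $\sigma_{\rho_1,\rho_2}\thickapprox_{U'}\rho_1$. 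Composing and inverting these two equivalences yields a unitary $W_{\rho',\rho}\in U(H)$ with $\rho\thickapprox_{W_{\rho',\rho}}\rho'$ (after matching the pair $(\rho_1,\rho_2)$ with $(\rho',\rho)$ in the order dictated by the statement), and the assignment is Borel because multiplication, inversion and adjunction on $U(H)$ with the strong-$\ast$ topology are Borel.

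I expect the crux to be the middle of the first bullet: strengthening the output of Lemma \ref{Lemma:Voiculescu} to an isometry that intertwines modulo $K(H)$ with infinite cokernel, and then performing the construction of $U_{\rho',\rho}$ of \cite[Theorem 3.4.7]{higson_analytic_2000} by Borel maps --- that is, checking that the recursive gluing of finite-dimensional approximations in the proof of Voiculescu's theorem (as organised in Lemmas \ref{Lemma:Glimm}--\ref{Lemma:block-diagonal}) outputs operators depending Borel-measurably on the inputs, and that the finitely many further uses of Voiculescu's theorem and Stinespring dilation needed to absorb the complementary summand can be made uniform. No analytic content beyond Lemma \ref{Lemma:Voiculescu} is needed; the work lies entirely in the uniformity bookkeeping.
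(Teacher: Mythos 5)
Your proposal is correct and follows essentially the same route as the paper, which derives this lemma directly from the definable Voiculescu theorem (Lemma \ref{Lemma:Voiculescu}) by running the classical absorption argument of \cite[Theorems 3.4.6 and 3.4.7]{higson_analytic_2000} and observing that every intermediate operator (the intertwining isometry, the compressions, the final unitary) is produced by Borel operations. The paper gives no further detail beyond that citation, and your bookkeeping of the Borel dependence — including reducing the second bullet to the first via the coordinate flip — is exactly the intended argument.
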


\subsection{Spectrum}

Suppose now that $\mathfrak{A}$ is a strict unital C*-algebra, and $J$ is a 
\emph{norm-separable }closed two-sided ideal of $\mathfrak{A}$. One can
consider the quotient C*-algebra $\mathfrak{A}/J$ and, for $a\in \mathfrak{A}
$, the spectrum $\sigma _{\mathfrak{A}/J}\left( a\right) $ of $a+J$ in $%
\mathfrak{A}/J$. We also let the resolvent $\rho _{\mathfrak{A}/J}\left(
a\right) $ be the complement in $\mathbb{C}$ of $\sigma _{\mathfrak{A}%
/J}\left( a\right) $. The following lemma is analogous to \cite[Theorem 3.16]%
{ando_weyl_2015}.

\begin{lemma}
\label{Lemma:spectrum-quotient}Suppose that $\mathfrak{A}$ is a strict
C*-algebra, and $J$ a norm-separable closed two-sided ideal of $\mathfrak{A}$%
. Suppose that every invertible self-adjoint element of $\mathfrak{A}/J$
lifts to an invertible self-adjoint element of $\mathfrak{A}$. If $a\in 
\mathfrak{A}_{\mathrm{sa}}$, and $J_{0}$ is a countable dense subset of $%
J\cap \mathfrak{A}_{\mathrm{sa}}$, then 
\begin{equation*}
\sigma _{\mathfrak{A}/J}\left( a\right) =\bigcap_{d\in J_{0}}\sigma \left(
a+d\right) \text{.}
\end{equation*}
\end{lemma}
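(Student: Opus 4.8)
The plan is to prove the two inclusions separately, with essentially all the content concentrated in $\supseteq$, which is where the lifting hypothesis is used. Throughout I will use that $\mathfrak{A}$ is unital and that a self-adjoint element of a unital C*-algebra has real spectrum; since $a$ and each $a+d$ with $d\in J_{0}$ are self-adjoint (as $d\in J\cap\mathfrak{A}_{\mathrm{sa}}$), and $a+J$ is self-adjoint in $\mathfrak{A}/J$, both sides of the claimed identity are subsets of $\mathbb{R}$. Hence it suffices to decide, for a fixed real number $\lambda$, whether $\lambda$ lies in each side.

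For $\sigma_{\mathfrak{A}/J}(a)\subseteq\bigcap_{d\in J_{0}}\sigma(a+d)$ I would argue contrapositively, and this direction uses only $J_{0}\subseteq J$: if $\lambda\notin\sigma(a+d)$ for some $d\in J_{0}$, then $a+d-\lambda$ is invertible in $\mathfrak{A}$; applying the quotient $*$-homomorphism $\mathfrak{A}\to\mathfrak{A}/J$, which carries invertibles to invertibles, and using $d\in J$, we get that $a-\lambda+J=(a+d-\lambda)+J$ is invertible in $\mathfrak{A}/J$, i.e.\ $\lambda\notin\sigma_{\mathfrak{A}/J}(a)$.

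For the reverse inclusion, fix $\lambda\in\mathbb{R}\setminus\sigma_{\mathfrak{A}/J}(a)$. Then $a-\lambda+J$ is an invertible \emph{self-adjoint} element of $\mathfrak{A}/J$, so by hypothesis it lifts to an invertible self-adjoint $b\in\mathfrak{A}$; put $d_{0}:=b-a+\lambda$, which lies in $J$ and is self-adjoint (difference of self-adjoints plus a real scalar), so $d_{0}\in J\cap\mathfrak{A}_{\mathrm{sa}}$, and $a+d_{0}-\lambda=b$ is invertible in $\mathfrak{A}$. It then remains to replace $d_{0}$ by an element of the countable dense set $J_{0}$: since the invertibles of the unital C*-algebra $\mathfrak{A}$ form a norm-open set (any $x$ with $\|x-b\|<\|b^{-1}\|^{-1}$ is invertible) and $J_{0}$ is norm-dense in $J\cap\mathfrak{A}_{\mathrm{sa}}$, choose $d\in J_{0}$ with $\|d-d_{0}\|<\|b^{-1}\|^{-1}$; then $\|(a+d-\lambda)-b\|=\|d-d_{0}\|<\|b^{-1}\|^{-1}$, so $a+d-\lambda$ is invertible and $\lambda\notin\sigma(a+d)$. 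Hence $\lambda\notin\bigcap_{d\in J_{0}}\sigma(a+d)$, which finishes the argument.

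The proof is otherwise soft; the only step where something genuinely could fail is the passage from ``$\lambda\notin\sigma_{\mathfrak{A}/J}(a)$'' to ``$a+d-\lambda$ invertible in $\mathfrak{A}$ for some $d\in J$'', a bona fide lifting problem (invertibles in a quotient need not lift to invertibles, by index-type obstructions). This is exactly what the self-adjoint lifting hypothesis is designed to bypass, exploiting that $a-\lambda$ is self-adjoint, so I expect this to be the main point to get right; the remaining ingredients (openness of the invertibles, norm-density of $J_{0}$, and functoriality of the quotient map on invertibles) are routine.
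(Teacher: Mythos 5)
Your proof is correct and follows essentially the same route as the paper: the easy inclusion via the quotient map preserving invertibles, and the reverse inclusion by translating to make $a-\lambda+J$ invertible self-adjoint, applying the lifting hypothesis to get a self-adjoint perturbation $d_{0}\in J\cap\mathfrak{A}_{\mathrm{sa}}$ with $a+d_{0}-\lambda$ invertible, and then passing to $d\in J_{0}$ via norm-openness of the invertible group. The only (cosmetic) difference is that the paper normalizes to $\lambda=0$ first.
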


\begin{proof}
It suffices to prove that $\rho _{\mathfrak{A}/J}\left( a\right) $ is the
union of $\rho \left( a+d\right) $ for $d\in J_{0}$. Clearly, $\rho \left(
a+d\right) \subseteq \rho _{\mathfrak{A}/J}\left( a\right) $ for every $d\in
J_{0}$, so it suffices to prove the other inclusion. Suppose that $\lambda
\in \rho _{\mathfrak{A}/J}\left( a\right) \cap \mathbb{R}$. We want to show
that $\lambda \in \rho \left( a+d\right) $ for some $d\in J_{0}$. After
replacing $a$ with $a-\lambda $, it suffices to consider the case when $%
\lambda =0$. In this case, $a+J$ is invertible in $\mathfrak{A}/J$.
Therefore, by assumption there exists $d\in J\cap \mathfrak{A}_{\mathrm{sa}}$
such that $a+d$ is invertible in $\mathfrak{A}$. Since the set of invertible
elements of $\mathfrak{A}$ is norm-open, there exists $d_{0}\in J_{0}$ such
that $a+d_{0}$ is invertible in $\mathfrak{A}$, and hence $0\in \rho \left(
a+d_{0}\right) $.
\end{proof}

\begin{lemma}
\label{Lemma:Borel-spectrum-quotient}Suppose that $\mathfrak{A}$ is a strict
unital C*-algebra, and $J$ a norm-separable closed two-sided ideal of $%
\mathfrak{A}$. Suppose that every invertible self-adjoint element of $%
\mathfrak{A}/J$ lifts to an invertible self-adjoint element of $\mathfrak{A}$%
. Then the function $\mathfrak{A}_{\mathrm{sa}}\rightarrow \mathrm{Closed}%
\left( \mathbb{R}\right) $, $a\mapsto \sigma _{\mathfrak{A}/J}\left(
a\right) $ is Borel.
\end{lemma}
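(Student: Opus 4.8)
The plan is to derive the statement from the explicit description of $\sigma_{\mathfrak A/J}$ in Lemma \ref{Lemma:spectrum-quotient} combined with the Borel measurability of the ordinary spectrum in Lemma \ref{Lemma:Borel-spectrum}. Since $J$ is norm-separable, so is its self-adjoint part $J\cap\mathfrak A_{\mathrm{sa}}$; fix a countable norm-dense subset $J_0$ of it. For each fixed $d\in J_0$ the map $\mathfrak A_{\mathrm{sa}}\to\mathfrak A_{\mathrm{sa}}$, $a\mapsto a+d$, is Borel: it is strictly continuous on every ball $n\mathrm{Ball}(\mathfrak A)$, because addition is strictly continuous on bounded sets and $d$ is fixed. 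Moreover $a+d$ is self-adjoint, in particular normal, so composing with the Borel map of Lemma \ref{Lemma:Borel-spectrum} shows that $a\mapsto\sigma(a+d)$ is a Borel map $\mathfrak A_{\mathrm{sa}}\to\mathrm{Closed}(\mathbb C)$ whose values are compact subsets of $\mathbb R$. Restricting the codomain, it is Borel as a map into the space $\mathrm K(\mathbb R)$ of compact subsets of $\mathbb R$, whose Borel structure coincides with the one induced from $\mathrm{Closed}(\mathbb C)$. By Lemma \ref{Lemma:spectrum-quotient}, $\sigma_{\mathfrak A/J}(a)=\bigcap_{d\in J_0}\sigma(a+d)$ for every $a\in\mathfrak A_{\mathrm{sa}}$.

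It therefore remains to see that the countable-intersection operation $\mathrm K(\mathbb R)^{\omega}\to\mathrm K(\mathbb R)$, $(K_n)_n\mapsto\bigcap_n K_n$, is Borel; composing it with $a\mapsto(\sigma(a+d))_{d\in J_0}$ and then with the Borel inclusion $\mathrm K(\mathbb R)\hookrightarrow\mathrm{Closed}(\mathbb R)$ gives the result. For this it suffices to show that $\{(K_n)_n:(\bigcap_n K_n)\cap U\neq\varnothing\}$ is Borel for $U$ ranging over a countable basis of open subsets of $\mathbb R$. Write $U=\bigcup_m C_m$ with each $C_m$ compact. Then $(\bigcap_n K_n)\cap U\neq\varnothing$ iff $(\bigcap_n K_n)\cap C_m\neq\varnothing$ for some $m$, and since $(K_0\cap\cdots\cap K_N\cap C_m)_N$ is a decreasing sequence of compact sets, by the finite intersection property this holds iff
\[
\exists m\ \forall N\ \bigl(K_0\cap\cdots\cap K_N\cap C_m\neq\varnothing\bigr).
\]
For fixed $m$ and $N$ the set $\{(K_0,\dots,K_N)\in\mathrm K(\mathbb R)^{N+1}:K_0\cap\cdots\cap K_N\cap C_m\neq\varnothing\}$ is closed in the Vietoris topology: if $(K_i^{(j)})_i\to(K_i)_i$ and $x_j\in K_0^{(j)}\cap\cdots\cap K_N^{(j)}\cap C_m$, then a subsequence of $(x_j)$ converges to some $x\in C_m$, and $\mathrm{dist}(x_j,K_i)\to 0$ for each $i$ forces $x\in K_i$. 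Hence the displayed condition defines a Borel subset of $\mathrm K(\mathbb R)^{\omega}$, as required.

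The argument is essentially bookkeeping around two facts already available: Lemma \ref{Lemma:spectrum-quotient}, into which the hypothesis on lifting invertible self-adjoint elements is absorbed, and Lemma \ref{Lemma:Borel-spectrum}. I do not anticipate a serious obstacle; the only points that require care are the compatibility of the various Borel structures on spaces of closed or compact subsets of $\mathbb R$ (and their images inside $\mathrm{Closed}(\mathbb C)$), which is standard, and the elementary compactness arguments above.
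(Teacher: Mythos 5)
Your proof is correct and follows essentially the same route as the paper: apply Lemma \ref{Lemma:spectrum-quotient} to write $\sigma_{\mathfrak{A}/J}(a)=\bigcap_{d\in J_0}\sigma(a+d)$ over a countable dense $J_0\subseteq J\cap\mathfrak{A}_{\mathrm{sa}}$, invoke Lemma \ref{Lemma:Borel-spectrum} for the Borel measurability of each $a\mapsto\sigma(a+d)$, and conclude via the Borel measurability of countable intersection. The only difference is that the paper simply asserts that $(F_n)\mapsto\bigcap_n F_n$ is Borel on $\mathrm{Closed}(\mathbb{R})^{\omega}$, whereas you supply a (correct) verification of this for compact sets via the Vietoris topology and the finite intersection property.
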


\begin{proof}
Fix a countable norm-dense subset $J_{0}$ of $J\cap \mathfrak{A}_{\mathrm{sa}%
}$. Then by the previous lemma we have that, for $a\in \mathfrak{A}_{\mathrm{%
sa}}$,%
\begin{equation*}
\sigma _{\mathfrak{A}/J}\left( a\right) =\bigcap_{d\in J_{0}}\sigma \left(
a+d\right) \text{.}
\end{equation*}%
As the function $\mathrm{Closed}\left( \mathbb{R}\right) ^{\omega
}\rightarrow \mathrm{Closed}\left( \mathbb{R}\right) $, $\left( F_{n}\right)
\mapsto \bigcap_{n\in \omega }F_{n}$ is Borel, this concludes the proof.
\end{proof}

\begin{lemma}
\label{Lemma:Borel-spectrum-B(H)}The function $B\left( H\right) _{\mathrm{sa}%
}\rightarrow K\left( \mathbb{R}\right) $, $T\mapsto \sigma _{\mathrm{ess}%
}\left( T\right) =\sigma _{Q\left( H\right) }\left( T\right) $ is Borel.
\end{lemma}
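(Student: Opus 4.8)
The plan is to apply Lemma~\ref{Lemma:Borel-spectrum-quotient} with $\mathfrak{A}=B(H)$, which is a strict unital C*-algebra as the multiplier algebra of $K(H)$ by Proposition~\ref{Proposition:multiplier}, and with $J=K(H)$, which is a norm-separable closed two-sided ideal of $B(H)$. In this case $\mathfrak{A}/J=Q(H)$ is the Calkin algebra, and $\sigma_{\mathrm{ess}}(T)=\sigma_{Q(H)}(T)$ by the very definition of the essential spectrum. Thus the only thing to check before invoking Lemma~\ref{Lemma:Borel-spectrum-quotient} is its hypothesis: that every invertible self-adjoint element of $Q(H)$ lifts to an invertible self-adjoint element of $B(H)$.

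To verify the hypothesis, let $x\in Q(H)$ be self-adjoint and invertible. First I would produce a self-adjoint lift: starting from any lift $Y\in B(H)$, the element $\frac{1}{2}(Y+Y^{\ast})$ is again a lift of $x$ since the quotient map is a $*$-homomorphism and $x=x^{\ast}$, so fix a lift $T\in B(H)_{\mathrm{sa}}$. As $T+K(H)=x$ is invertible in $Q(H)$, Atkinson's theorem shows $T$ is Fredholm, and self-adjointness forces $\mathrm{ind}(T)=\dim\ker T-\dim\ker T^{\ast}=0$; in particular $\ker T$ is finite-dimensional, the range of $T$ is closed, and $\mathrm{Ran}(T)=(\ker T^{\ast})^{\perp}=(\ker T)^{\perp}$. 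Let $P$ be the orthogonal projection of $H$ onto $\ker T$, which is finite-rank and hence compact. Then, with respect to the decomposition $H=\ker T\oplus(\ker T)^{\perp}$, the operator $T+P$ acts as the identity on $\ker T$ and as $T|_{(\ker T)^{\perp}}$ (a bijection of $(\ker T)^{\perp}$ onto itself, invertible by the open mapping theorem) on $(\ker T)^{\perp}$; hence $T+P$ is self-adjoint and invertible, and $T+P\equiv T\pmod{K(H)}$, so it is the desired lift of $x$.

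With the hypothesis verified, Lemma~\ref{Lemma:Borel-spectrum-quotient} yields that $B(H)_{\mathrm{sa}}\to\mathrm{Closed}(\mathbb{R})$, $T\mapsto\sigma_{Q(H)}(T)$, is Borel. It remains to upgrade the codomain to $K(\mathbb{R})$. This is immediate: $\sigma_{\mathrm{ess}}(T)\subseteq\sigma(T)\subseteq\{z\in\mathbb{R}:|z|\le\|T\|\}$, so every value is compact, and on each Borel piece $n\,\mathrm{Ball}(B(H))\cap B(H)_{\mathrm{sa}}$ of the standard Borel space $B(H)_{\mathrm{sa}}$ the values lie among the compact subsets of $[-n,n]$; since $K(\mathbb{R})$ is a Borel subset of $\mathrm{Closed}(\mathbb{R})$ carrying the induced Borel structure, the corestriction $T\mapsto\sigma_{\mathrm{ess}}(T)$ into $K(\mathbb{R})$ is Borel.

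None of these steps is genuinely delicate; the only one with any mathematical content is the index-zero argument for self-adjoint Fredholm operators that produces the invertible self-adjoint lift, and that is classical, so I do not anticipate a real obstacle here.
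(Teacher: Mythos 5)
Your proof is correct and follows essentially the same route as the paper: both reduce to Lemma \ref{Lemma:Borel-spectrum-quotient} and verify its hypothesis by showing that a self-adjoint Fredholm lift $T$ has index $0$, so that adding the finite-rank projection onto $\ker T$ produces an invertible self-adjoint lift. Your extra remarks (averaging to get a self-adjoint lift, and corestricting from $\mathrm{Closed}(\mathbb{R})$ to $K(\mathbb{R})$) are minor points the paper leaves implicit.
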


\begin{proof}
An operator $T\in B\left( H\right) $ induces an invertible element of $%
Q\left( H\right) $ if and only if it is Fredholm. If $T$ is Fredholm and
self-adjoint, then it has index $0$, and $0$ is an isolated point of the
spectrum of $T$ that is an eigenvalue with finite multiplicity. Thus, if $P$
is the finite-rank projection onto the eigenspace of $0$ for $T$, then we
have that $T+P$ is invertible and self-adjoint and induces the same element
of $Q\left( H\right) $ as $T$. This shows that every invertible self-adjoint
element of $Q\left( H\right) $ lifts to an invertible self-adjoint element
of $B\left( H\right) $. Therefore, the conclusion follows from Proposition %
\ref{Lemma:Borel-spectrum-quotient}.
\end{proof}

Suppose that $T\in \mathrm{\mathrm{Ball}}\left( B\left( H\right) \right) $
is a $\mathrm{\mathrm{\ \mathrm{mod}}\ }K\left( H\right) $ projective.
Recall that this means that $T$ is a positive operator satisfying $%
T^{2}\equiv T\mathrm{\ \mathrm{mod}}\ K\left( H\right) $. Then it is
well-known that there exists a projection $P\equiv T\mathrm{\ \mathrm{mod}}\
K\left( H\right) $. We observe that one can choose such a $P$ in a Borel
fashion from $T$; see \cite[Lemma 3.1]{andruchow_note_2020}.

\begin{lemma}
\label{Lemma:essential-projection}Consider the Borel set $\mathrm{Proj}%
\left( B\left( H\right) /K\left( H\right) \right) $ of $\mathrm{\mathrm{mod}}%
\ K\left( H\right) $ projections in $B\left( H\right) $. Then there is a
Borel function $\mathrm{Proj}\left( B\left( H\right) /K\left( H\right)
\right) \rightarrow \mathrm{\mathrm{Proj}}\left( B\left( H\right) \right) $, 
$T\mapsto P_{T}$ such that $T\equiv P_{T}\mathrm{\ \mathrm{mod}}\ K\left(
H\right) $ for every $T\in \mathrm{Proj}\left( B\left( H\right) /K\left(
H\right) \right) $.
\end{lemma}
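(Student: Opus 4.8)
The plan is to take $P_{T}$ to be a spectral projection of $T$ for an interval $[s,1]$, where the threshold $s=s(T)\in(1/4,3/4)$ is chosen in a Borel fashion so as to lie in a gap of $\sigma(T)$. The reason for not simply setting $s=1/2$ is that $1/2$ may be an isolated eigenvalue of $T$ of finite multiplicity, in which case $\chi_{[1/2,1]}(T)$ depends discontinuously on $T$ and a direct continuity argument breaks down; a Borel choice of a threshold avoiding $\sigma(T)$ circumvents this, and this is the one point of the argument that requires any care.

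First I would record the spectral picture. For $T\in\mathrm{Proj}(B(H)/K(H))$ we have $0\le T\le 1$ and $T-T^{2}\in K(H)$, so the image of $T$ in the Calkin algebra $Q(H)$ is a self-adjoint idempotent, i.e.\ a projection; hence $\sigma_{\mathrm{ess}}(T)\subseteq\{0,1\}$ and $\sigma(T)\subseteq[0,1]$. Moreover $\sigma(T-T^{2})=\{t-t^{2}:t\in\sigma(T)\}$ is countable (as $T-T^{2}$ is compact), while $t\mapsto t-t^{2}$ is strictly monotone on $(1/4,1/2]$ and on $[1/2,3/4)$; hence $\sigma(T)\cap(1/4,3/4)$ has empty interior, so $(1/4,3/4)\setminus\sigma(T)$ is a nonempty open subset of $(1/4,3/4)$. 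Next, using that $T\mapsto\sigma(T)$ is Borel by Lemma \ref{Lemma:Borel-spectrum} (applied to the strict C*-algebra $B(H)=M(K(H))$), I would note that for each dyadic rational $d\in(1/4,3/4)$ the set $\{T:d\in\sigma(T)\}=\bigcap_{n}\{T:\sigma(T)\cap B(d,1/n)\neq\varnothing\}$ is Borel; fixing an enumeration $(d_{j})_{j}$ of these dyadics and setting $s(T):=d_{j(T)}$, where $j(T)$ is least with $d_{j}\notin\sigma(T)$, then gives a Borel map $T\mapsto s(T)\in(1/4,3/4)\setminus\sigma(T)$, and likewise $\varepsilon(T):=\tfrac12\,\mathrm{dist}(s(T),\sigma(T))>0$ is Borel (the distance function on the Effros Borel space being Borel).

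I would then define $P_{T}:=g_{s(T),\varepsilon(T)}(T)$, where for $s\in(1/4,3/4)$ and $\varepsilon>0$ the function $g_{s,\varepsilon}:\mathbb{R}\to[0,1]$ is continuous, equal to $0$ on $(-\infty,s-\varepsilon]$ and to $1$ on $[s+\varepsilon,\infty)$, and affine in between. Since the family $(g_{s,\varepsilon})$ varies norm-continuously in its parameters, Lemma \ref{Lemma:uniform-convergence} shows that $(T,s,\varepsilon)\mapsto g_{s,\varepsilon}(T)$ is strictly continuous, so $T\mapsto P_{T}$ is Borel. Because $[s(T)-\varepsilon(T),s(T)+\varepsilon(T)]\cap\sigma(T)=\varnothing$, the function $g_{s(T),\varepsilon(T)}$ agrees on $\sigma(T)$ with $\chi_{(s(T),\infty)}$, so $P_{T}$ is the spectral projection $\chi_{[s(T),1]}(T)$, and in particular lies in $\mathrm{Proj}(B(H))$. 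Finally, writing $Q:=1-P_{T}$ (the spectral projection of $T$ for $[0,s(T))$, which commutes with $T$), one has $T-P_{T}=TQ-(1-T)P_{T}$; here $TQ$ is self-adjoint with $\sigma(TQ)\subseteq[0,3/4]$ and $TQ-(TQ)^{2}=(T-T^{2})Q\in K(H)$, so its Calkin image is a projection with spectrum in $[0,3/4]$, hence $0$, i.e.\ $TQ\in K(H)$; symmetrically $(1-T)P_{T}$ is self-adjoint with spectrum in $[0,3/4]$ and $(1-T)P_{T}-((1-T)P_{T})^{2}=(T-T^{2})P_{T}\in K(H)$, so $(1-T)P_{T}\in K(H)$. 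Hence $T\equiv P_{T}\bmod K(H)$, as required. The hard part, as noted, is the Borel selection of the threshold $s(T)$, which is exactly what lets one sidestep the possibility that $1/2$ is an eigenvalue of $T$; everything else is a routine combination of Lemma \ref{Lemma:Borel-spectrum}, Lemma \ref{Lemma:uniform-convergence}, and elementary functional calculus.
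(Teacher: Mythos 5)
Your proposal is correct and follows essentially the same route as the paper: both arguments make a Borel selection of a point in $(0,1)\setminus\sigma(T)$ (you via a fixed enumeration of dyadic rationals and the Borelness of $T\mapsto\sigma(T)$, the paper via a Borel increasing enumeration of the countable set $\sigma(T)$) and then obtain $P_T$ as the corresponding spectral projection through continuous functional calculus with a continuous function that is $\{0,1\}$-valued on $\sigma(T)$. Your explicit verification that $TQ$ and $(1-T)P_T$ are compact is a clean way to finish, and the whole argument is sound.
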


\begin{proof}
Suppose that $T\in \mathrm{Proj}\left( B\left( H\right) /K\left( H\right)
\right) $. Observe $\sigma _{\mathrm{ess}}\left( T\right) \subseteq \left\{
0,1\right\} $. In particular, $\sigma _{\mathrm{ess}}\left( T\right) $ is
countable, with only accumulation points $0$ and $1$. From Lemma \ref%
{Lemma:Borel-spectrum-B(H)}, the maps $T\mapsto \sigma _{\mathrm{ess}}\left(
T\right) $ and $T\mapsto \sigma \left( T\right) $ are Borel. If $\sigma _{%
\mathrm{ess}}\left( T\right) =\left\{ 0\right\} $ then one can set $P_{T}=0$%
. If $\sigma _{\mathrm{ess}}\left( T\right) =\left\{ 1\right\} $, one can
set $P_{T}=1$.

Let us consider the case when $\left\{ 0,1\right\} =\sigma _{\mathrm{ess}%
}(T) $. By \cite[Theorem 12.13]{kechris_classical_1995} there exists a Borel
map $\mathrm{Proj}\left( B\left( H\right) /K\left( H\right) \right)
\rightarrow \left[ 0,1\right] ^{\omega }$, $T\mapsto \left( t_{n}\right) $
such that $\left( t_{n}\right) $ is an increasing enumeration of $\sigma
\left( T\right) $. One can then choose in a Borel way $n_{0}\in \omega $
such that $t_{n_{0}}<t_{n_{0}+1}$ and then a continuous function $f:\left[
0,1\right] \rightarrow \left[ 0,1\right] $ such that 
\begin{equation*}
f\left( t_{i}\right) =\left\{ 
\begin{array}{ll}
0 & \text{if }i\leq n_{0}\text{,} \\ 
1 & \text{if }i\geq n_{0}+1\text{.}%
\end{array}%
\right.
\end{equation*}%
One can then set $P_{T}=f\left( T\right) $.
\end{proof}

\subsection{Polar decompositions\label{Subsection:polar}}

We now observe that the polar decomposition of an operator is given by a
Borel function. We will use the following version of the selection theorem
for relations with compact sections from \cite[Theorem 28.8]%
{kechris_classical_1995}.

\begin{lemma}
\label{Lemma:select-compact-sections}Suppose that $X$ is a standard Borel
space, $Y$ is a compact metrizable space, and $A\subseteq X\times Y$ is a
Borel subset such that, for every $x\in X$, the vertical section%
\begin{equation*}
A_{x}=\left\{ y\in Y:\left( x,y\right) \in A\right\}
\end{equation*}%
is a closed nonempty set. Then the assignment $X\rightarrow \mathrm{Closed}%
\left( Y\right) $, $x\mapsto A_{x}$, is Borel, where $\mathrm{Closed}\left(
Y\right) $ is endowed with the Effros Borel structure.
\end{lemma}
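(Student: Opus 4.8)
The plan is to verify the Borel measurability of $x\mapsto A_{x}$ by checking, for a fixed countable basis $\{U_{n}:n\in\omega\}$ of $Y$, that each set $\{x\in X:A_{x}\cap U_{n}\neq\varnothing\}$ is Borel; by the definition of the Effros Borel structure (which is generated by the sets $\{C:C\cap U\neq\varnothing\}$ for $U$ open, cf.\ the discussion preceding Lemma~\ref{Lemma:Borel-spectrum}), this is exactly what is needed. The first step will be to reduce to \emph{compact} test sets: since $Y$ is metrizable, every open $U\subseteq Y$ is a countable union $U=\bigcup_{k}F_{k}$ of closed sets $F_{k}$, and each $F_{k}$ is compact because $Y$ is compact; since $A_{x}\cap U\neq\varnothing$ if and only if $A_{x}\cap F_{k}\neq\varnothing$ for some $k$, we get $\{x:A_{x}\cap U\neq\varnothing\}=\bigcup_{k}\{x:A_{x}\cap F_{k}\neq\varnothing\}$, so it is enough to show that $\{x\in X:A_{x}\cap L\neq\varnothing\}$ is Borel for an arbitrary compact $L\subseteq Y$.

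For the main step, fix such an $L$ and set $B:=A\cap(X\times L)$, a Borel subset of the standard Borel space $X\times L$ all of whose vertical sections $B_{x}=A_{x}\cap L$ are closed in $L$, hence compact. Then $\{x:A_{x}\cap L\neq\varnothing\}$ is exactly the projection $\mathrm{proj}_{X}(B)$, and the point is that the projection of a Borel set with compact vertical sections is Borel --- this is part of the selection theorem for relations with compact sections \cite[Theorem 28.8]{kechris_classical_1995}. If one prefers to use only the form of that theorem producing Borel selections, one can instead adjoin an isolated point $\infty$ to $L$, forming the compact metrizable space $L^{+}$, apply the theorem to $B^{+}:=B\cup(X\times\{\infty\})\subseteq X\times L^{+}$ --- which is Borel with nonempty compact sections --- to obtain Borel maps $f_{n}\colon X\to L^{+}$ whose ranges are dense in the sections of $B^{+}$, and observe that $B_{x}\neq\varnothing$ if and only if $f_{n}(x)\in L$ for some $n$, since $\infty$ is isolated; hence $\mathrm{proj}_{X}(B)=\bigcup_{n}f_{n}^{-1}(L)$ is Borel. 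Combining the two steps proves the lemma.

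I do not expect a genuine obstacle here: the argument is an elementary reduction to the cited compact-section selection theorem, which carries all the weight. The only places that need a little attention are that intersecting $A$ with $X\times L$ destroys the nonemptiness of the sections, so one really needs the "projection is Borel" conclusion (and not merely the existence of a single Borel selector defined on all of $X$), and the routine facts that in a compact metrizable space closed sets are compact and open sets are $F_{\sigma}$.
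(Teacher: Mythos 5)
Your proposal is correct, and it takes essentially the same route as the paper, which simply presents this lemma as a version of the compact-section selection theorem \cite[Theorem 28.8]{kechris_classical_1995} without further argument; your write-up just supplies the routine bridge (Effros-generating sets, open sets as countable unions of compact sets, and Borelness of projections of Borel sets with compact sections) that the paper leaves implicit. The details you flag — that intersecting with $X\times L$ may empty out sections, and the isolated-point device to restore nonemptiness — are handled correctly.
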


As an application, we obtain the following. Let $H$ be a separable Hilbert
space. We consider the unit ball $\mathrm{\mathrm{Ball}}\left( H\right) $ of 
$H$ as a compact metrizable space endowed with the weak topology. We also
consider $\mathrm{Closed}\left( \mathrm{\mathrm{Ball}}\left( H\right)
\right) $ as a standard Borel space, endowed with the Effros Borel structure.

\begin{lemma}
\label{Lemma:select-Kernel}The function $B\left( H\right) \rightarrow 
\mathrm{Closed}\left( \mathrm{\mathrm{Ball}}\left( H\right) \right) $, $%
T\mapsto \mathrm{\mathrm{Ker}}\left( T\right) \cap \mathrm{Ball}\left(
H\right) $, is Borel.
\end{lemma}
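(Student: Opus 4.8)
The plan is to deduce this from the selection theorem for relations with compact sections, Lemma \ref{Lemma:select-compact-sections}, in the same spirit as the proof of Lemma \ref{Lemma:Borel-spectrum}. Take $X = B(H)$ with its standard Borel structure (induced by the strong-* topology on balls) and $Y = \mathrm{Ball}(H)$ with the weak topology, which is compact metrizable since $H$ is separable, and set
\[
A = \{ (T,\xi) \in B(H) \times \mathrm{Ball}(H) : T\xi = 0 \} .
\]
For each $T$ the vertical section is $A_T = \mathrm{Ker}(T) \cap \mathrm{Ball}(H)$, which is nonempty because $0 \in A_T$, and weakly closed because $\mathrm{Ker}(T)$ is a norm-closed linear subspace, hence weakly closed, and $\mathrm{Ball}(H)$ is weakly closed. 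So, once $A$ is shown to be Borel, Lemma \ref{Lemma:select-compact-sections} yields that $T \mapsto A_T = \mathrm{Ker}(T) \cap \mathrm{Ball}(H)$ is Borel into $\mathrm{Closed}(\mathrm{Ball}(H))$, which is exactly the assertion.

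The one point requiring care is the Borelness of $A$, since the Borel structure on $B(H)$ comes from the strong-* topology while $\mathrm{Ball}(H)$ carries the weak topology, and one must check that the pairing is compatible with this mismatch. As the Borel structure on $B(H)$ is the disjoint union of the standard Borel structures on the balls $(n+1)\mathrm{Ball}(B(H))$, it suffices to prove that $A \cap \bigl( (n+1)\mathrm{Ball}(B(H)) \times \mathrm{Ball}(H) \bigr)$ is Borel for each $n$. Fix an orthonormal basis $(e_k)_{k \in \omega}$ of $H$; then $T\xi = 0$ if and only if $\langle \xi, T^{\ast} e_k \rangle = 0$ for all $k$. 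The map $T \mapsto T^{\ast} e_k$ is continuous from $(n+1)\mathrm{Ball}(B(H))$ with the strong-* topology into $H$ with the norm topology, by the very definition of the strong-* topology, and $(\eta,\xi) \mapsto \langle \xi, \eta \rangle$ is jointly continuous from $H$ (norm) times $\mathrm{Ball}(H)$ (weak) into $\mathbb{C}$: for $\eta_i \to \eta$ in norm and $\xi_i \to \xi$ weakly with $\|\xi_i\| \le 1$, writing $\langle \xi_i, \eta_i \rangle - \langle \xi, \eta \rangle = \langle \xi_i, \eta_i - \eta \rangle + \langle \xi_i - \xi, \eta \rangle$ and estimating the two summands separately shows the difference tends to $0$. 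Hence $(T,\xi) \mapsto \langle \xi, T^{\ast} e_k \rangle$ is continuous on $(n+1)\mathrm{Ball}(B(H)) \times \mathrm{Ball}(H)$, so each set $\{(T,\xi) : \langle \xi, T^{\ast} e_k \rangle = 0\}$ is closed there, and the countable intersection over $k$, which is $A \cap \bigl( (n+1)\mathrm{Ball}(B(H)) \times \mathrm{Ball}(H) \bigr)$, is closed and in particular Borel.

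With $A$ Borel, the hypotheses of Lemma \ref{Lemma:select-compact-sections} are satisfied and the lemma follows. I expect the joint-continuity verification reconciling the strong-* topology on $B(H)$ with the weak topology on $\mathrm{Ball}(H)$ to be the only nonroutine step; the rest is bookkeeping with the disjoint-union Borel structure on $B(H)$ and with weak closedness of kernels.
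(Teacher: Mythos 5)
Your proof is correct and follows the same route as the paper: both apply Lemma \ref{Lemma:select-compact-sections} to the set $A=\{(T,x):Tx=0\}$ after checking its sections are closed and nonempty. The only difference is in the routine verification that $A$ is Borel — the paper uses a countable norm-dense subset of $\mathrm{Ball}(H)$ together with weak lower semicontinuity of the norm, whereas you use an orthonormal basis and the joint continuity of $(T,\xi)\mapsto\langle\xi,T^{\ast}e_k\rangle$ to show $A$ is in fact closed on each bounded piece; both verifications are sound.
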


\begin{proof}
By Lemma \ref{Lemma:select-compact-sections}, it suffices to show that the
set%
\begin{equation*}
A=\left\{ \left( T,x\right) \in B\left( H\right) \times \mathrm{\mathrm{Ball}%
}\left( H\right) :Tx=0\right\}
\end{equation*}%
is Borel. Fix a countable norm-dense subset $\left\{ x_{n}:n\in \omega
\right\} $ of $\mathrm{\mathrm{Ball}}\left( H\right) $. Then we have that,
if $\left( T,x\right) \in \mathrm{\mathrm{Ball}}\left( B\left( H\right)
\right) \times \mathrm{\mathrm{Ball}}\left( H\right) $, then $\left(
T,x\right) \in A$ if and only if $\forall k\in \omega $ $\exists n\in \omega 
$ such that $\left\Vert x-x_{n}\right\Vert <2^{-k}$ and $\left\Vert
Tx_{k}\right\Vert <2^{-k}$. Since the norm on $\mathrm{\mathrm{Ball}}\left(
H\right) $ is weakly lower-semicontinuous, this shows that $A$ is Borel.
\end{proof}

Recall that, for an operator $T\in B\left( H\right) $, one sets $\left\vert
T\right\vert :=\left( T^{\ast }T\right) ^{1/2}$. By strong-* continuity on
bounded sets of continuous functional calculus, the function $T\mapsto
\left\vert T\right\vert $ is Borel. Furthermore, there exists a unique
partial isometry $U$ with $\mathrm{\mathrm{Ker}}\left( U\right) =\mathrm{%
\mathrm{\mathrm{Ker}}}\left( T\right) $ such that $T=U\left\vert
T\right\vert $ \cite[Theorem 3.2.17]{pedersen_analysis_1989}. The
decomposition $T=U\left\vert T\right\vert $ is then called the \emph{polar
decomposition} of $T$.

\begin{lemma}
\label{Lemma:polar}The function $B\left( H\right) \rightarrow B\left(
H\right) $, $T\mapsto U$ that assigns to an operator the partial isometry $U$
in the polar decomposition of $T$ is Borel.
\end{lemma}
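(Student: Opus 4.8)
The plan is to realize the map $T\mapsto U$ as a pointwise limit, in the strict (strong-$*$) topology of $\mathrm{\mathrm{Ball}}\left(B\left(H\right)\right)$, of a sequence of Borel maps built from the continuous functional calculus. Concretely, for each $n\geq 1$ let $g_{n}\colon[0,\infty)\to(0,n]$ be the continuous function $g_{n}(t)=(t+\tfrac1n)^{-1}$, and set $\Phi_{n}(T):=T\,g_{n}(|T|)$ for $T\in B\left(H\right)$. First I would check that each $\Phi_{n}$ is a Borel map from $B\left(H\right)$ into $\mathrm{\mathrm{Ball}}\left(B\left(H\right)\right)$: the map $T\mapsto|T|$ is Borel, as recalled above, and on each of the Borel pieces $\left\{T:\left\Vert T\right\Vert\leq r\right\}$ (for $r\in\omega$) the assignment $S\mapsto g_{n}(S)$ is the continuous functional calculus with the continuous function $g_{n}|_{[0,r]}$, hence strictly continuous on bounded sets by Lemma~\ref{Lemma:uniform-convergence}, while multiplication is likewise strictly continuous on bounded sets; splitting $B\left(H\right)$ into these pieces shows that $\Phi_{n}$ is Borel. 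Writing the polar decomposition $T=U|T|$ one has $\Phi_{n}(T)=U\,|T|g_{n}(|T|)=U\,\psi_{n}(|T|)$ with $\psi_{n}(t)=t/(t+\tfrac1n)\in[0,1)$, so $\left\Vert\Phi_{n}(T)\right\Vert\leq 1$ and indeed $\Phi_{n}$ takes values in $\mathrm{\mathrm{Ball}}\left(B\left(H\right)\right)$.

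Next I would verify that $\Phi_{n}(T)\to U$ in the strong-$*$ topology for every $T$. Let $Q$ be the orthogonal projection onto $\overline{\mathrm{Ran}\,|T|}=\left(\mathrm{Ker}\,T\right)^{\perp}$; since $\mathrm{Ker}\,U=\mathrm{Ker}\,T$ we have $U^{*}U=Q$ and $UQ=U$. As $0\leq\psi_{n}\leq 1$ and $\psi_{n}(t)\to\mathbf{1}_{(0,\infty)}(t)$ for every $t\geq 0$, the spectral theorem (dominated convergence against the spectral measures $\left\langle E(\cdot)\xi,\xi\right\rangle$ of $|T|$) gives $\psi_{n}(|T|)\to Q$ in the strong operator topology, and since all these operators are self-adjoint the convergence holds in the strong-$*$ topology. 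Consequently $\Phi_{n}(T)=U\psi_{n}(|T|)\to UQ=U$ in the strong operator topology, and passing to adjoints $\Phi_{n}(T)^{*}=\psi_{n}(|T|)U^{*}\to QU^{*}=U^{*}$ in the strong operator topology as well; hence $\Phi_{n}(T)\to U$ in the strong-$*$ topology.

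Finally, each $\Phi_{n}$ is a Borel map into the Polish space $\mathrm{\mathrm{Ball}}\left(B\left(H\right)\right)$ and the sequence converges there pointwise to $T\mapsto U$; a pointwise limit of Borel maps valued in a Polish space is Borel~\cite{kechris_classical_1995}, so $T\mapsto U$ is Borel as a map $B\left(H\right)\to\mathrm{\mathrm{Ball}}\left(B\left(H\right)\right)$, and \emph{a fortiori} as a map $B\left(H\right)\to B\left(H\right)$. The only genuinely delicate points are the Borelness of the functional calculus $T\mapsto g_{n}(|T|)$, which reduces to the strong-$*$ continuity on bounded sets of the continuous functional calculus recorded earlier together with Lemma~\ref{Lemma:uniform-convergence}, and the upgrade of the spectral convergence $\psi_{n}(|T|)\to Q$ from the strong operator topology to the strong-$*$ topology so that $\Phi_{n}(T)\to U$ strong-$*$, which is handled by the self-adjointness of $\psi_{n}(|T|)$ and passage to adjoints. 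An alternative route would compute the Borel graph of $T\mapsto U$ directly, using Lemma~\ref{Lemma:select-Kernel} to control $\mathrm{Ker}\,U=\mathrm{Ker}\,T$, but the functional-calculus argument above is cleaner and self-contained.
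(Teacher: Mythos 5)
Your argument is correct, but it is genuinely different from the paper's. The paper disposes of the lemma in one line: the graph of $T\mapsto U$ is the set of pairs $\left( T,U\right) $ such that $U$ is a partial isometry with $\mathrm{\mathrm{Ker}}\left( U\right) =\mathrm{\mathrm{Ker}}\left( T\right) $ and $T=U\left\vert T\right\vert $, which is Borel by Lemma \ref{Lemma:select-Kernel}; a function between standard Borel spaces whose graph is Borel is itself Borel (both $f^{-1}(B)$ and $f^{-1}(Y\setminus B)$ are projections of Borel sets, hence analytic, hence Borel). This is exactly the ``alternative route'' you mention at the end, so the paper's proof is the one you set aside. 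Your approximation argument $\Phi _{n}(T)=Tg_{n}(\left\vert T\right\vert )=U\psi _{n}(\left\vert T\right\vert )\rightarrow UQ=U$ is sound: the Borelness of each $\Phi _{n}$ on the bounded Borel pieces follows from the strict continuity on bounded sets of functional calculus and multiplication, the upgrade from strong to strong-$*$ convergence via self-adjointness of $\psi _{n}(\left\vert T\right\vert )$ and the identity $QU^{\ast }=\left( UQ\right) ^{\ast }=U^{\ast }$ is handled correctly, and a pointwise limit of Borel maps into the Polish space $\mathrm{\mathrm{Ball}}\left( B\left( H\right) \right) $ is Borel. What your route buys is that it is constructive and self-contained, not relying on the graph/uniformization machinery or on Lemma \ref{Lemma:select-Kernel}; what the paper's route buys is brevity and the reuse of a lemma already established for exactly this purpose. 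Either proof is acceptable.
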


\begin{proof}
It suffices to notice that is graph, which is the set of pairs $\left(
T,U\right) $ such that $U$ is a partial isometry with $\mathrm{\mathrm{Ker}}%
\left( U\right) =\mathrm{\mathrm{\mathrm{Ker}}}\left( T\right) $ and $%
T=U\left\vert T\right\vert $, is Borel by Lemma \ref{Lemma:select-Kernel}.
\end{proof}

Consider the Borel set $U\left( B\left( H\right) /K\left( H\right) \right) $
of $\mathrm{\mathrm{mod}}\ K\left( H\right) $\emph{\ unitaries }in $B\left(
H\right) $. Thus, $T\in U\left( B\left( H\right) /K\left( H\right) \right) $
if and only if $T^{\ast }T\equiv TT^{\ast }\equiv I\mathrm{\ \mathrm{mod}}\
K\left( H\right) $. If $U$ is the partial isometry in the polar
decomposition of $T$, then $U\equiv T\mathrm{\ \mathrm{mod}}\ K\left(
H\right) $ and $U$ is an essential unitary. In fact, one can easily define
(in a Borel fashion from $T$) an isometry or co-isometry $V$ such that $%
T\equiv V\mathrm{\ \mathrm{mod}}\ K\left( H\right) $. One has that $T$ is in
particular a Fredholm operator. Its index is defined by%
\begin{equation*}
\mathrm{index}\left( T\right) =\mathrm{\mathrm{rank}}\left( 1-V^{\ast
}V\right) -\mathrm{\mathrm{rank}}\left( 1-VV^{\ast }\right) \text{.}
\end{equation*}%
Thus, $\mathrm{index}\left( T\right) $ is a Borel function of $T\in U\left(
B\left( H\right) /K\left( H\right) \right) $.

More generally, consider the Borel set of pairs $\left( P,T\right) \in 
\mathrm{\mathrm{Ball}}\left( B\left( H\right) \right) ^{2}$ such that $P$ is
a projection, $PT=TP=T$ and $TT^{\ast }\equiv T^{\ast }T\equiv P$.\ If $V$
is the partial isometry in the polar decomposition of $T$, then $V\equiv T%
\mathrm{\ \mathrm{mod}}\ K\left( H\right) $ and the index of $PTP$ regarded
as a Fredholm operator on $PH$ is given by the Borel function%
\begin{equation*}
\mathrm{index}\left( PTP\right) =\mathrm{\mathrm{rank}}\left( P-V^{\ast
}V\right) -\mathrm{\mathrm{rank}}\left( P-VV^{\ast }\right) \text{.}
\end{equation*}

\section{$\mathrm{K}$-theory of unital C*-algebras with a strict cover\label%
{Section:K-theory}}

In this section we explain how the $\mathrm{K}_{0}$ and $\mathrm{K}_{1}$
groups of a unital C*-algebra with a strict cover can be regarded as
semidefinable groups. We also recall the definition of the index map and the
exponential map between the $\mathrm{K}_{0}$ and $\mathrm{K}_{1}$ groups,
and observe that they are definable homomorphisms. Finally, we consider the
six-term exact sequence associated with a strict unital C*-pair, and observe
that the connective maps are all definable group homomorphisms.

\subsection{$\mathrm{K}_{0}$-group}

Suppose that $\mathfrak{A/J}$ is a unital C*-algebra with a strict cover.
Recall that $\mathrm{Proj}\left( \mathfrak{A/J}\right) $ denotes the Polish
space of $\mathrm{\mathrm{\mathrm{mod}}\ }\mathfrak{J}$ projections in $%
\mathfrak{A}$. Similarly, for $n\geq 1$ we have that $\mathrm{Proj}\left(
M_{n}\left( \mathfrak{A/J}\right) \right) :=\mathrm{Proj}\left( M_{n}\left( 
\mathfrak{A}\right) /M_{n}\left( \mathfrak{J}\right) \right) $ is a Polish
space. We say that an element of $\mathrm{Proj}\left( M_{n}\left( \mathfrak{%
A/J}\right) \right) $ for some $n\geq 1$ is a $\mathrm{\mathrm{\mathrm{mod}}%
\ }\mathfrak{J}$ projection \emph{over} $\mathfrak{A}$. We define \textrm{Z}$%
_{0}\left( \mathfrak{A/J}\right) $ be the set of \emph{pairs of} $\mathrm{%
\mathrm{mod}}\ \mathfrak{J}$ projections over\emph{\ }$\mathfrak{A}$, which
is the disjoint union of \textrm{Z}$_{0}^{\left( n\right) }\left( \mathfrak{%
A/J}\right) :=\mathrm{Proj}\left( M_{n}\left( \mathfrak{A/J}\right) \right)
\times \mathrm{Proj}\left( M_{n}\left( \mathfrak{A/J}\right) \right) $ for $%
n\geq 1$ endowed with the induced standard Borel structure. Two $\mathrm{%
\mathrm{mod}}\ \mathfrak{J}$ projections $p,q$ in $\mathfrak{A}$ are
Murray--von Neumann equivalent (respectively, unitary equivalent, and
homotopic) $\mathrm{\mathrm{mod}}\ \mathfrak{J}$ if and only if $p+\mathfrak{%
J}$ and $q+\mathfrak{J}$ are Murray--von Neumann equivalent (respectively,
unitary equivalent, and homotopic) in $\mathfrak{A/J}$.

The $\mathrm{K}_{0}$-group $\mathrm{K}_{0}\left( \mathfrak{A/J}\right) $ of $%
\mathfrak{A/J}$---see \cite[Chapter 4]{higson_analytic_2000}---is defined as
a quotient of $\mathrm{Z}_{0}\left( \mathfrak{A/J}\right) $ by an
equivalence relation $\mathrm{B}_{0}\left( \mathfrak{A/J}\right) $, defined
as follows. For $\left( p,p^{\prime }\right) ,\left( q,q^{\prime }\right)
\in \mathrm{Z}_{0}\left( \mathfrak{A/J}\right) $, $\left( p,p^{\prime
}\right) \mathrm{B}_{0}\left( \mathfrak{A/J}\right) \left( q,q^{\prime
}\right) $ if and only if there exist $m,n\in \omega $ and $r\in \mathrm{Proj%
}\left( M_{m}\left( \mathfrak{A/J}\right) \right) $ such that $p\oplus
q^{\prime }\oplus r\oplus 0_{n}$ and $q\oplus p^{\prime }\oplus r\oplus
0_{n} $ are Murray--von Neumann equivalent $\mathrm{\mathrm{mod}}\ \mathfrak{%
J}$. By Lemma \ref{Lemma:orthogonal-projections}, we have the following
equivalent description of $\mathrm{B}_{0}\left( \mathfrak{A/J}\right) $.

\begin{lemma}
\label{Lemma:equivalent-B0}Suppose that $\mathfrak{A/J}$ is a unital
C*-algebra with a strict cover, and $\left( p,p^{\prime }\right) ,\left(
q,q^{\prime }\right) \in \mathrm{Z}_{0}\left( \mathfrak{A/J}\right) $ where $%
p,p^{\prime }\in M_{d}\left( \mathfrak{A/J}\right) $ and $q,q^{\prime }\in
M_{k}\left( \mathfrak{A/J}\right) $. Then $\left( p,p^{\prime }\right) 
\mathrm{B}_{0}\left( \mathfrak{A/J}\right) \left( q,q^{\prime }\right) $ if
and only if there exist $m,n\in \omega $ and 
\begin{equation*}
y_{1},\ldots ,y_{\ell }\in \mathrm{\mathrm{Ball}}\left( M_{d+k+m+n}\left( 
\mathfrak{A}\right) _{\mathrm{sa}}\right)
\end{equation*}
such that, setting $u:=e^{iy_{1}}\cdots e^{iy_{\ell }}$, one has that 
\begin{equation*}
u\left( p\oplus q^{\prime }\oplus 1_{m}\oplus 0_{n}\right) u^{\ast }\equiv
q\oplus p^{\prime }\oplus 1_{m}\oplus 0_{n}\mathrm{\ \mathrm{mod}}\ 
\mathfrak{J}\text{,}
\end{equation*}%
where $\ell \geq 1$ does not depend on $\mathfrak{A/J}$ and $\left(
p,p^{\prime }\right) ,\left( q,q^{\prime }\right) \in \mathrm{Z}_{0}\left( 
\mathfrak{A/J}\right) $.
\end{lemma}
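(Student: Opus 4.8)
The plan is to treat the two implications separately, importing all the \emph{quantitative} content --- the uniform bound on $\ell$ --- from Lemmas~\ref{Lemma:orthogonal-projections} and~\ref{Lemma:relative-path-unitary}, the rest being a routine stabilization argument in the $\mathrm{K}_0$-picture.

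For the implication from the displayed condition to $(p,p')\,\mathrm{B}_0(\mathfrak{A/J})\,(q,q')$: given $u=e^{iy_1}\cdots e^{iy_\ell}$ as in the statement, I would take $r:=1_m$, a genuine projection in $M_m(\mathfrak{A})$ hence an element of $\mathrm{Proj}(M_m(\mathfrak{A/J}))$, and set $x:=u\,(p\oplus q'\oplus 1_m\oplus 0_n)\in\mathrm{Ball}(M_{d+k+m+n}(\mathfrak{A}))$. Using $u^*u=1$ and that $p\oplus q'\oplus 1_m\oplus 0_n$ is a projection $\mathrm{mod}\ \mathfrak{J}$, one gets $x^*x\equiv p\oplus q'\oplus r\oplus 0_n$ and $xx^*\equiv q\oplus p'\oplus r\oplus 0_n\ \mathrm{mod}\ \mathfrak{J}$, so these two $\mathrm{mod}\ \mathfrak{J}$ projections over $\mathfrak{A}$ are Murray--von Neumann equivalent $\mathrm{mod}\ \mathfrak{J}$ --- which is exactly the defining condition of $\mathrm{B}_0(\mathfrak{A/J})$ with this $m$, $n$, $r$.

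For the converse I would start from a Murray--von Neumann equivalence $\mathrm{mod}\ \mathfrak{J}$ between $p\oplus q'\oplus r\oplus 0_{n_0}$ and $q\oplus p'\oplus r\oplus 0_{n_0}$, with $r\in\mathrm{Proj}(M_{m_0}(\mathfrak{A/J}))$, and proceed in three steps. First, noting that $1_{m_0}-r$ again lies in $\mathrm{Proj}(M_{m_0}(\mathfrak{A/J}))$ (from $0\le r\le 1$ one gets $\|1_{m_0}-r\|\le 1$ and $(1_{m_0}-r)^2\equiv 1_{m_0}-r$), and using that Murray--von Neumann equivalence in the C*-algebra $\mathfrak{A/J}$ is stable under direct sum with a fixed projection and under permutation of summands, I would pass to a Murray--von Neumann equivalence $\mathrm{mod}\ \mathfrak{J}$ between $p\oplus q'\oplus\big(r\oplus(1_{m_0}-r)\big)\oplus 0_{n_0}$ and $q\oplus p'\oplus\big(r\oplus(1_{m_0}-r)\big)\oplus 0_{n_0}$. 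Second, since $r(1_{m_0}-r)\equiv(1_{m_0}-r)r\equiv 0\ \mathrm{mod}\ \mathfrak{J}$, I would invoke Lemma~\ref{Lemma:orthogonal-projections} over $M_{m_0}(\mathfrak{A})$ to obtain a unitary $W$, a product of boundedly many exponentials of self-adjoint contractions, with $W^*\big(r\oplus(1_{m_0}-r)\big)W\equiv 1_{m_0}\oplus 0_{m_0}\ \mathrm{mod}\ \mathfrak{J}$, and conjugate both sides by $1_{d+k}\oplus W^*\oplus 1_{n_0}$; writing $N:=d+k+2m_0+n_0$, $a:=p\oplus q'\oplus 1_{m_0}\oplus 0_{m_0+n_0}$, $b:=q\oplus p'\oplus 1_{m_0}\oplus 0_{m_0+n_0}$, this gives a Murray--von Neumann equivalence $\mathrm{mod}\ \mathfrak{J}$ between $a$ and $b$ in $M_N(\mathfrak{A})$, and a continuous functional calculus adjustment of a lift of the implementing partial isometry from $M_N(\mathfrak{A/J})$ (e.g.\ $v\mapsto v\,g(v^*v)$ with $g(t)=\min(t^{1/2},t^{-1/2})$) furnishes $x\in\mathrm{Ball}(M_N(\mathfrak{A}))$ with $x^*x\equiv a$, $xx^*\equiv b\ \mathrm{mod}\ \mathfrak{J}$. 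Third, I would apply Lemma~\ref{Lemma:relative-path-unitary} to $(a,b,x)$ over the strict unital C*-pair $(M_N(\mathfrak{A}),M_N(\mathfrak{J}))$: it yields $Y_1,\dots,Y_\ell\in\mathrm{Ball}\big(M_2(M_N(\mathfrak{A}))_{\mathrm{sa}}\big)$, with $\ell$ the absolute constant of that lemma, such that $U:=e^{iY_1}\cdots e^{iY_\ell}$ satisfies $U^*(b\oplus 0_N)U\equiv a\oplus 0_N$, equivalently $U(a\oplus 0_N)U^*\equiv b\oplus 0_N\ \mathrm{mod}\ \mathfrak{J}$. Identifying $M_2(M_N(\mathfrak{A}))$ with $M_{2N}(\mathfrak{A})$ and taking $m:=m_0$, $n:=m_0+n_0+N$ (so that $d+k+m+n=2N$), $u:=U$ and $y_i:=Y_i$, one has $a\oplus 0_N=p\oplus q'\oplus 1_m\oplus 0_n$ and $b\oplus 0_N=q\oplus p'\oplus 1_m\oplus 0_n$, which is precisely the displayed condition.

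I do not expect a genuine obstacle: the whole argument is bookkeeping once the two quoted lemmas are available. The points needing care are the matrix-size accounting ensuring the final $y_i$ land in $\mathrm{Ball}(M_{d+k+m+n}(\mathfrak{A})_{\mathrm{sa}})$, and the observation that $\ell$ can be taken equal to the data-independent constant of Lemma~\ref{Lemma:relative-path-unitary} --- the rearrangements of summands and the unitary $W$ are used only to rewrite the Murray--von Neumann equivalence and do not enter the final $u$. The lone analytic subtlety, normalizing the implementing partial isometry to a contraction, is handled by the functional-calculus trick already indicated.
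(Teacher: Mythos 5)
Your proof is correct and follows the route the paper intends: the paper states this lemma without proof, citing only Lemma \ref{Lemma:orthogonal-projections}, and your argument supplies the missing details using exactly that lemma together with Lemma \ref{Lemma:relative-path-unitary}. The bookkeeping (the choice $r=1_m$ in the easy direction, the replacement of $r$ by $1_{m_0}\oplus 0_{m_0}$ after adjoining $1_{m_0}-r$, the contractive lift of the implementing partial isometry, and the matrix-size accounting) all checks out, and the uniform bound on $\ell$ is correctly inherited from the data-independent constant of Lemma \ref{Lemma:relative-path-unitary}.
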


The (commutative) group operation on $\mathrm{K}_{0}\left( \mathfrak{A/J}%
\right) $ is induced by the Borel function on $\mathrm{Z}_{0}\left( 
\mathfrak{A/J}\right) $, $\left( \left( p,p^{\prime }\right) ,\left(
q,q^{\prime }\right) \right) \mapsto \left( p\oplus q,p^{\prime }\oplus
q^{\prime }\right) $. The neutral element of $\mathrm{K}_{0}\left( \mathfrak{%
A/J}\right) $ corresponds to $\left( 0,0\right) \in \mathrm{Z}_{0}\left( 
\mathfrak{A/J}\right) $. The function that maps an element to its additive
inverse is induced by the Borel function on $\mathrm{Z}_{0}\left( \mathfrak{%
A/J}\right) $ given by $\left( p,p^{\prime }\right) \mapsto \left( p^{\prime
},p\right) $. Thus, $\mathrm{K}_{0}\left( \mathfrak{A/J}\right) $ is in fact
a semidefinable group.

If $\mathfrak{A/I}$ and $\mathfrak{B/J}$ are unital C*-algebras with a
strict cover, and $\varphi :\mathfrak{A/I}\rightarrow \mathfrak{B/J}$ is a
definable unital *-homomorphism, then the induced group homomorphism \textrm{%
K}$_{0}\left( \mathfrak{A/I}\right) \rightarrow \mathrm{K}_{0}\left( 
\mathfrak{B/J}\right) $ is also definable. Thus, the assignment $\mathfrak{%
A/J}\rightarrow \mathrm{K}_{0}\left( \mathfrak{A/J}\right) $ gives a functor
from the category of unital C*-algebras with a strict cover to the category
of semidefinable abelian groups.

Suppose that $\left( \mathfrak{A},\mathfrak{J}\right) $ is a strict unital
C*-pair. We denote by $\mathfrak{J}^{+}$ the unitization of $\mathfrak{J}$,
which can be identified with the C*-subalgebra $\mathfrak{J}^{+}=\mathrm{%
\mathrm{span}}\left\{ \mathfrak{J},1\right\} \subseteq \mathfrak{A}$. Since $%
\mathfrak{J}$ is a proper ideal of $\mathfrak{A}$, we can write every
element of $\mathfrak{J}^{+}$ uniquely as $a+\lambda 1$ where $a\in 
\mathfrak{J}$ and $\lambda \in \mathbb{C}$. More generally, every element of 
$M_{n}\left( \mathfrak{J}^{+}\right) $ can be written uniquely as $a+\alpha
1 $ where $a\in M_{n}\left( \mathfrak{J}\right) $ and $\alpha \in
M_{n}\left( \mathbb{C}\right) $. As the map $M_{n}\left( \mathfrak{J}%
^{+}\right) \rightarrow M_{n}\left( \mathbb{C}\right) $, $a+\alpha 1\mapsto
\alpha $ is a unital *-homomorphism, we have that $\left\Vert \alpha
\right\Vert \leq \left\Vert a+\alpha 1\right\Vert $ and hence $\left\Vert
a\right\Vert \leq 2\left\Vert a+\alpha 1\right\Vert $ for $a+\alpha 1\in
M_{n}\left( \mathfrak{J}^{+}\right) $.

We define $\mathrm{Proj}(M_{n}(\mathfrak{J}^{+}))$ to be the set of
projections in $M_{n}(\mathfrak{J}^{+})$, which we regard as a Borel subset
of $2\mathrm{\mathrm{Ball}}\left( M_{n}\left( \mathfrak{J}\right) \right)
\times \mathrm{\mathrm{Ball}}\left( M_{n}\left( \mathbb{C}\right) \right) $.
Similarly, the unitary group $U(M_{n}(\mathfrak{J}^{+}))$ of $M_{n}(%
\mathfrak{J}^{+})$ is regarded as a Borel subset of $2\mathrm{\mathrm{Ball}}%
\left( M_{n}\left( \mathfrak{J}\right) \right) \times \mathrm{\mathrm{Ball}}%
\left( M_{n}\left( \mathbb{C}\right) \right) $. Define also $\mathrm{Z}%
_{0}^{\left( n\right) }\left( \mathfrak{J}\right) $ to be the Borel subset
of $\mathrm{Proj}(M_{n}(\mathfrak{J}^{+}))\times \mathrm{Proj}(M_{n}(%
\mathfrak{J}^{+}))$ consisting of pairs $\left( p,p^{\prime }\right) $ such
that $p\equiv p^{\prime }\mathrm{\ \mathrm{mod}}\ M_{n}\left( \mathfrak{J}%
\right) $. Finally, let $\mathrm{Z}_{0}\left( \mathfrak{J}\right) $ to be
the disjoint union of $\mathrm{Z}_{0}^{\left( n\right) }\left( \mathfrak{J}%
\right) $ for $n\geq 1$.

The $\mathrm{K}_{0}$-group $\mathrm{K}_{0}\left( \mathfrak{J}\right) $ of $%
\mathfrak{J}$---see \cite[Definition 4.2.1]{higson_analytic_2000}---is
defined as a quotient of $\mathrm{Z}_{0}\left( \mathfrak{J}\right) $ by an
equivalence relation $\mathrm{B}_{0}\left( \mathfrak{J}\right) $, defined as
follows. One has that, for $\left( p,p^{\prime }\right) ,\left( q,q^{\prime
}\right) \in \mathrm{Z}_{0}\left( \mathfrak{J}\right) $, $\left( p,p^{\prime
}\right) \mathrm{B}_{0}\left( \mathfrak{J}\right) \left( q,q^{\prime
}\right) $ if and only if there exist $m,n\in \omega $ and $x\in \mathrm{Proj%
}(M_{m}(\mathfrak{J}^{+}))$ such that $p\oplus q^{\prime }\oplus x\oplus
0_{n}$ and $q\oplus p^{\prime }\oplus x\oplus 0_{n^{\prime }}$ are
Murray--von Neumann equivalent. For $\left( p,p^{\prime }\right) \in \mathrm{%
Z}_{0}\left( \mathfrak{J}\right) $, we let $\left[ p\right] -\left[
p^{\prime }\right] $ be the corresponding element of $\mathrm{K}_{0}\left( 
\mathfrak{J}\right) $. The (commutative) group operation on $\mathrm{K}%
_{0}\left( \mathfrak{J}\right) $ is induced by the Borel function on $%
\mathrm{Z}_{0}\left( \mathfrak{J}\right) $, $\left( \left( p,p^{\prime
}\right) ,\left( q,q^{\prime }\right) \right) \mapsto \left( p\oplus
q,p^{\prime }\oplus q^{\prime }\right) $. The neutral element of $\mathrm{K}%
_{0}\left( \mathfrak{J}\right) $ corresponds to $\left( 0,0\right) \in 
\mathrm{Z}_{0}\left( \mathfrak{J}\right) $. The function that maps an
element of $\mathrm{K}_{0}\left( \mathfrak{J}\right) $ to its additive
inverse is induced by the Borel function on $\mathrm{Z}_{0}\left( \mathfrak{J%
}\right) $ given by $\left( p,p^{\prime }\right) \mapsto \left( p^{\prime
},p\right) $. Thus, $\mathrm{K}_{0}\left( \mathfrak{J}\right) $ is a
semidefinable group.

If $\left( \mathfrak{A},\mathfrak{I}\right) $ are $\left( \mathfrak{B},%
\mathfrak{J}\right) $ are strict C*-pairs, and $\varphi :\left( \mathfrak{A},%
\mathfrak{I}\right) \rightarrow \left( \mathfrak{B},\mathfrak{J}\right) $ is
a strict *-homomorphism, then it induces a strict *-homomorphism $\varphi |_{%
\mathfrak{I}}:\mathfrak{I}\rightarrow \mathfrak{J}$. In turn, this induces a
definable group homomorphism $\mathrm{K}_{0}\left( \mathfrak{J}\right)
\rightarrow \mathrm{K}_{0}\left( \mathfrak{I}\right) $. This gives a functor 
$\left( \mathfrak{A},\mathfrak{I}\right) \mapsto \mathrm{K}_{0}\left( 
\mathfrak{I}\right) $ from strict unital C*-pairs to semidefinable groups.

\begin{lemma}
Suppose that $\left( \mathfrak{A},\mathfrak{J}\right) $ is a strict unital
C*-pair. Then there is a Borel map $\mathrm{Z}_{0}\left( \mathfrak{J}\right)
\rightarrow \mathrm{Z}_{0}\left( \mathfrak{J}\right) $, $\left( P,P^{\prime
}\right) \mapsto \left( p,p^{\prime }\right) $ such that $\left[ P\right] -%
\left[ P^{\prime }\right] =\left[ p\right] -\left[ p^{\prime }\right] $ and $%
p^{\prime }\in M_{n}\left( \mathbb{C}\right) $.
\end{lemma}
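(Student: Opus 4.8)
The plan is to realize $[P]-[P']$ by the standard device of replacing the ``negative'' projection $P'$ by its scalar part, at the cost of absorbing the complement $1_n-P'$ into the ``positive'' projection $P$. Suppose $(P,P')\in\mathrm{Z}_0^{(n)}(\mathfrak{J})$, and write $P=a+\alpha 1_n$ and $P'=a'+\alpha 1_n$ with $a,a'\in M_n(\mathfrak{J})$ and $\alpha\in M_n(\mathbb{C})$; the scalar parts agree because $P-P'\in M_n(\mathfrak{J})$, and $\alpha$ is a projection in $M_n(\mathbb{C})$, being the image of $P$ under the unital *-homomorphism $M_n(\mathfrak{J}^+)\to M_n(\mathbb{C})$. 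I would then set
$$p:=P\oplus(1_n-P')\in\mathrm{Proj}(M_{2n}(\mathfrak{J}^+)),\qquad p':=\alpha\oplus(1_n-\alpha)\in M_{2n}(\mathbb{C}),$$
and define the desired function by $(P,P')\mapsto(p,p')$ separately on each summand $\mathrm{Z}_0^{(n)}(\mathfrak{J})$ of $\mathrm{Z}_0(\mathfrak{J})$.

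First I would check that this lands in $\mathrm{Z}_0(\mathfrak{J})$ and is Borel. In the coordinates in which $\mathrm{Proj}(M_{2n}(\mathfrak{J}^+))$ sits inside $2\mathrm{Ball}(M_{2n}(\mathfrak{J}))\times\mathrm{Ball}(M_{2n}(\mathbb{C}))$, one has $p=(a\oplus(-a'))+(\alpha\oplus(1_n-\alpha))1_{2n}$ while $p'$ has vanishing $\mathfrak{J}$-part, so $p-p'=a\oplus(-a')\in M_{2n}(\mathfrak{J})$; hence $(p,p')\in\mathrm{Z}_0^{(2n)}(\mathfrak{J})$ and $p'\in M_{2n}(\mathbb{C})$, as required. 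For measurability it is enough to argue on each $\mathrm{Z}_0^{(n)}(\mathfrak{J})$ separately, as these form a Borel partition of $\mathrm{Z}_0(\mathfrak{J})$; there the assignment is built from the continuous operation of passing to scalar parts and from the block-diagonal embeddings $M_k(\mathfrak{J})\times M_l(\mathfrak{J})\to M_{k+l}(\mathfrak{J})$ and $M_k(\mathbb{C})\times M_l(\mathbb{C})\to M_{k+l}(\mathbb{C})$, which are strictly continuous on bounded sets by the axioms of a strict C*-algebra (Definition \ref{Definition:strictC}), and hence Borel.

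The substantive point is the identity $[p]-[p']=[P]-[P']$ in $\mathrm{K}_0(\mathfrak{J})$, for which I would verify $(p,p')\,\mathrm{B}_0(\mathfrak{J})\,(P,P')$ directly. The core fact is that the partial isometry
$$\begin{bmatrix}P' & 0\\ 1_n-P' & 0\end{bmatrix}\in M_{2n}(\mathfrak{J}^+)$$
implements $P'\oplus(1_n-P')\sim_{\mathrm{MvN}}1_n\oplus 0_n$, and the same formula with $\alpha$ in place of $P'$ gives $\alpha\oplus(1_n-\alpha)\sim_{\mathrm{MvN}}1_n\oplus 0_n$ (alternatively one may invoke Lemma \ref{Lemma:orthogonal-projections}). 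Since Murray--von Neumann equivalence is transitive, preserved under $\oplus$, and absorbs conjugation by the permutation unitaries in $M_k(\mathbb{C})$, one obtains, after permuting the last two blocks and absorbing,
$$p\oplus P'=P\oplus(1_n-P')\oplus P'\ \sim_{\mathrm{MvN}}\ P\oplus 1_n\oplus 0_n\ \sim_{\mathrm{MvN}}\ P\oplus\alpha\oplus(1_n-\alpha)=P\oplus p'.$$
Taking $m=1$, $x=0_1$, and padding with equally many zero projections, this is exactly the Murray--von Neumann equivalence required in the definition of $\mathrm{B}_0(\mathfrak{J})$, so $[p]-[p']=[P]-[P']$. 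There is no serious obstacle here; the only point that needs care---and the reason one sums with $1_n-P'$ rather than naively replacing $P'$ by $\alpha$---is that the two entries of the new pair must share a common scalar part, so that $(p,p')$ indeed lies in $\mathrm{Z}_0(\mathfrak{J})$; everything else is routine bookkeeping with the stabilized relation $\mathrm{B}_0(\mathfrak{J})$ and the elementary measurability of the matrix operations involved.
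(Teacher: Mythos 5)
Your construction is exactly the one in the paper: $p:=P\oplus(1_{n}-P^{\prime })$ and $p^{\prime }:=\alpha \oplus (1_{n}-\alpha )$, where $\alpha $ is the common scalar part of $P$ and $P^{\prime }$. The only difference is cosmetic --- you verify $[p]-[p^{\prime }]=[P]-[P^{\prime }]$ by exhibiting the Murray--von Neumann equivalence $p\oplus P^{\prime }\sim _{\mathrm{MvN}}P\oplus p^{\prime }$ required by the definition of $\mathrm{B}_{0}\left( \mathfrak{J}\right) $, whereas the paper carries out the equivalent one-line computation $[P]+[1_{n}-P^{\prime }]-[\alpha ]-[1_{n}-\alpha ]=[P]-[P^{\prime }]$ in the group; your added checks that $(p,p^{\prime })$ lands in $\mathrm{Z}_{0}\left( \mathfrak{J}\right) $ and that the assignment is Borel are correct.
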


\begin{proof}
Suppose that $\left( P,P^{\prime }\right) \in \mathrm{Z}_{0}^{\left(
d\right) }\left( \mathfrak{J}\right) $. By definition, we have that for some 
$x,x^{\prime }\in M_{d}\left( \mathfrak{J}\right) $ and $\alpha \in
M_{d}\left( \mathbb{C}\right) $, $P=x+\alpha $ and $P^{\prime }=x^{\prime
}+\alpha $.\ Thus, we can define%
\begin{equation*}
p:=%
\begin{bmatrix}
P & 0 \\ 
0 & 1_{d}-P^{\prime }%
\end{bmatrix}%
\in M_{2d}\left( \mathfrak{J}^{+}\right)
\end{equation*}%
and%
\begin{equation*}
p^{\prime }:=%
\begin{bmatrix}
\alpha & 0 \\ 
0 & 1_{d}-\alpha%
\end{bmatrix}%
\in M_{2d}\left( \mathbb{C}\right) \text{.}
\end{equation*}%
Then we have that%
\begin{equation*}
\left[ p\right] -\left[ p^{\prime }\right] =\left[ P\right] +\left[
1_{d}-P^{\prime }\right] +\left[ \alpha \right] -\left[ 1_{d}-\alpha \right]
=\left[ P\right] -\left[ P^{\prime }\right] \text{.}
\end{equation*}%
This concludes the proof.
\end{proof}

\subsection{Relative $\mathrm{K}_{0}$-group}

Suppose now that $\left( \mathfrak{A},\mathfrak{J}\right) $ is a strict
unital C*-pair. For $n\geq 1$, define $\mathrm{Z}_{0}^{\left( n\right)
}\left( \mathfrak{A},\mathfrak{A/J}\right) $ to be the Borel set of triples $%
\left( p,q,x\right) \in \mathrm{\mathrm{Ball}}\left( \mathfrak{A}\right)
^{3} $ where $p,q$ are projections and $x\in \mathrm{\mathrm{Ball}}\left(
M_{n}\left( \mathfrak{A}\right) \right) $ satisfies $x^{\ast }x\equiv p%
\mathrm{\ \mathrm{mod}}\ M_{n}\left( \mathfrak{J}\right) $ and $xx^{\ast
}\equiv q\mathrm{\ \mathrm{mod}}\ M_{n}\left( \mathfrak{J}\right) $. Define $%
\mathrm{Z}_{0}\left( \mathfrak{A},\mathfrak{A/J}\right) $ to be the disjoint
union of $\mathrm{K}_{0}^{\left( n\right) }\left( \mathfrak{A},\mathfrak{A/J}%
\right) $ for $n\geq 1$ endowed with the induced standard Borel structure.
The elements of $\mathrm{Z}_{0}\left( \mathfrak{A},\mathfrak{A/J}\right) $
are called relative $\mathrm{K}$-cycles for $\left( \mathfrak{A},\mathfrak{%
A/J}\right) $; see \cite[Definition 4.3.1]{higson_analytic_2000}. If $\left(
p,q,x\right) \in \mathrm{Z}_{0}^{\left( n\right) }\left( \mathfrak{A},%
\mathfrak{A/J}\right) $, then we say that $\left( p,q,x\right) $ is a
relative $\mathrm{K}$-cycle of dimension $n$. A relative $\mathrm{K}$-cycle $%
\left( p,q,x\right) $ for $\left( \mathfrak{A},\mathfrak{A/J}\right) $ is 
\emph{degenerate }if $x^{\ast }x=p$ and $xx^{\ast }=q$. Two relative $%
\mathrm{K}$-cycles $\left( p,q,x\right) $ and $\left( p^{\prime },q^{\prime
},x^{\prime }\right) $ of dimension $n$ are homotopic if there exists a 
\emph{norm-continuous }path $\left( \left( p_{t},q_{t},x_{t}\right) \right)
_{t\in \left[ 0,1\right] }$ of relative $\mathrm{K}$-cycles for $\left( 
\mathfrak{A},\mathfrak{A/J}\right) $ of dimension $n$ with $\left(
p,q,x\right) =\left( p_{0},q_{0},x_{0}\right) $ and $\left( p^{\prime
},q^{\prime },x^{\prime }\right) =\left( p_{1},q_{1},x_{1}\right) $.

Notice that if $\left( p,q,x\right) $ is a relative $\mathrm{K}$-cycle of
dimension $d$, and $\left( u_{t}\right) _{t\in \left[ 0,1\right] }$ is a
path of unitaries in $M_{d}\left( \mathfrak{A}\right) $ starting at $1$, then%
\begin{equation*}
\left( u_{t}^{\ast }pu_{t},u_{t}^{\ast }qu_{t},u_{t}^{\ast }xu_{t}\right)
\end{equation*}%
and%
\begin{equation*}
\left( p,u_{t}^{\ast }qu_{t},u_{t}^{\ast }x\right)
\end{equation*}%
are norm-continuous paths of relative $\mathrm{K}$-cycles starting at $%
\left( p,q,x\right) $. If $p\equiv q\equiv x\mathrm{\ \mathrm{mod}}\
M_{d}\left( \mathfrak{J}\right) $,$\mathrm{\ }$then%
\begin{equation*}
\left( p,q,tp+\left( 1-t\right) q\right)
\end{equation*}%
is a norm-continuous path of relative cycles from $\left( p,q,x\right) $ to $%
\left( p,q,p\right) $. We have the following lemma; see \cite[Proposition 3.4%
]{nest_excision_2017}.

\begin{lemma}
\label{Lemma:relative-K-cycles}Suppose that $\left( p,q,x\right) $ is a
relative cycle of dimension $n$ for $\left( \mathfrak{A},\mathfrak{A/J}%
\right) $. Then $r_{0}:=\left( p\oplus q,q\oplus p,x\oplus x^{\ast }\right) $
is homotopic to the degenerate cycle $\left( p\oplus q,p\oplus q,p\oplus
q\right) $.
\end{lemma}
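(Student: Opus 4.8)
The plan is to construct an explicit homotopy from $r_{0}$ to $(p\oplus q,p\oplus q,p\oplus q)$ in two stages, following the rotation argument of \cite[Proposition 3.4]{nest_excision_2017}. The only structural fact about the cycle $(p,q,x)$ that I would use is this: since $p$ is a genuine projection, so is $p+M_{n}(\mathfrak{J})$ in $M_{n}(\mathfrak{A})/M_{n}(\mathfrak{J})$, and $x^{\ast }x\equiv p$; hence $x+M_{n}(\mathfrak{J})$ is a partial isometry there, so that, modulo $M_{n}(\mathfrak{J})$,
\begin{equation*}
x^{\ast }x\equiv p,\qquad xx^{\ast }\equiv q,\qquad xp\equiv x,\qquad qx\equiv x,\qquad x^{\ast }q\equiv x^{\ast },\qquad px^{\ast }\equiv x^{\ast }.
\end{equation*}
These are exactly the congruences that keep the rotations below inside the class of relative $\mathrm{K}$-cycles.

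For the first stage I would rotate the target projection back onto $p\oplus q$. Set $R_{t}=\left[\begin{smallmatrix}\cos(\pi t/2) & -\sin(\pi t/2)\\ \sin(\pi t/2)&\cos(\pi t/2)\end{smallmatrix}\right]$ and $w_{t}=R_{t}\otimes 1_{n}\in M_{2n}(\mathbb{C})\subseteq M_{2n}(\mathfrak{A})$, a norm-continuous path of unitaries with $w_{0}=1$ and $w_{1}=\left[\begin{smallmatrix}0&-1_{n}\\ 1_{n}&0\end{smallmatrix}\right]$. Applying the operation $(a,b,v)\mapsto(a,u_{t}^{\ast }bu_{t},u_{t}^{\ast }v)$ recorded just before the statement of the lemma, with $u_{t}=w_{t}^{\ast }$, to the dimension-$2n$ cycle $r_{0}=(p\oplus q,\,q\oplus p,\,x\oplus x^{\ast })$ produces a norm-continuous path of relative $\mathrm{K}$-cycles
\begin{equation*}
t\longmapsto\big(\,p\oplus q,\ w_{t}(q\oplus p)w_{t}^{\ast },\ w_{t}(x\oplus x^{\ast })\,\big),\qquad t\in[0,1],
\end{equation*}
which starts at $r_{0}$. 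A two-by-two block computation gives $w_{1}(q\oplus p)w_{1}^{\ast }=p\oplus q$ and $w_{1}(x\oplus x^{\ast })=\left[\begin{smallmatrix}0&-x^{\ast }\\ x&0\end{smallmatrix}\right]$, so the path ends at $r_{1}:=\big(p\oplus q,\,p\oplus q,\,\left[\begin{smallmatrix}0&-x^{\ast }\\ x&0\end{smallmatrix}\right]\big)$; the $x$-slot stays in the unit ball since $w_{t}$ is unitary.

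For the second stage I would ``unscrew'' $r_{1}$ using the arc $z_{\theta }=\cos(\theta )(p\oplus q)+\sin(\theta )\left[\begin{smallmatrix}0&-x^{\ast }\\ x&0\end{smallmatrix}\right]$ for $\theta\in[0,\pi/2]$, together with the path $\theta\mapsto(p\oplus q,\,p\oplus q,\,z_{\theta })$. Using the congruences above one checks that $z_{\theta }^{\ast }z_{\theta }\equiv p\oplus q\equiv z_{\theta }z_{\theta }^{\ast }$ modulo $M_{2n}(\mathfrak{J})$ for every $\theta$: the diagonal blocks collapse to $\cos^{2}(\theta )p+\sin^{2}(\theta )p=p$ and $\cos^{2}(\theta )q+\sin^{2}(\theta )q=q$, while the off-diagonal blocks (such as $\cos(\theta )\sin(\theta )(x^{\ast }q-px^{\ast })$) lie in $M_{2n}(\mathfrak{J})$. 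So each triple $(p\oplus q,p\oplus q,z_{\theta })$ is a relative $\mathrm{K}$-cycle, the path is norm-continuous, and it joins $(p\oplus q,p\oplus q,p\oplus q)$ at $\theta=0$ to $r_{1}$ at $\theta=\pi/2$. Concatenating the two stages gives the homotopy asserted in the lemma.

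I expect the only delicate point --- and the one most likely to require a cosmetic fix --- to be that when $\mathfrak{J}\neq\{0\}$ the element $z_{\theta }$ need not be a contraction, since its $M_{2n}(\mathfrak{J})$-valued off-diagonal correction can have norm close to $1$. This is harmless: replace $z_{\theta }$ by $z_{\theta }\,g(z_{\theta }^{\ast }z_{\theta })$, where $g\colon[0,\infty)\to[0,1]$ is a fixed continuous function that is identically $1$ on $[0,1]$ (for instance $g(t)=\min\{1,t^{-1/2}\}$, with $g(0)=1$). Since $z_{\theta }^{\ast }z_{\theta }\,g(z_{\theta }^{\ast }z_{\theta })^{2}=\min\{z_{\theta }^{\ast }z_{\theta },1\}$ (and likewise on the other side), this keeps the path norm-continuous and inside $\mathrm{Ball}(M_{2n}(\mathfrak{A}))$; it does not move the endpoints, where $\|z_{\theta }^{\ast }z_{\theta }\|\le 1$ forces $g$ to act as the identity; and it preserves $z_{\theta }^{\ast }z_{\theta }\equiv p\oplus q\equiv z_{\theta }z_{\theta }^{\ast }$ because $g\equiv 1$ on the spectrum $\{0,1\}$ of $p\oplus q$. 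Apart from this, the whole argument is routine block-matrix bookkeeping of the kind already carried out in Lemmas \ref{Lemma:relative-path-unitary}--\ref{Lemma:homotopy-orthogonal-unitaries}.
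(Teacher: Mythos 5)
Your argument is correct, and it is essentially the standard rotation proof that the paper delegates to the cited reference \cite[Proposition 3.4]{nest_excision_2017}: the scalar rotation $w_t$ carries $r_0$ to $(p\oplus q,\,p\oplus q,\,\left[\begin{smallmatrix}0&-x^{*}\\ x&0\end{smallmatrix}\right])$ using the operation recorded before the lemma, and the partial-isometry congruences $xp\equiv qx\equiv x$ (valid because $\bar{x}^{*}\bar{x}=\bar{p}$ is a projection in $M_n(\mathfrak{A})/M_n(\mathfrak{J})$) kill the cross terms in $z_\theta^{*}z_\theta$ and $z_\theta z_\theta^{*}$. You also correctly spotted the one genuine issue — that $z_\theta$ need only be a contraction modulo $M_{2n}(\mathfrak{J})$, not on the nose — and your normalization $z_\theta\mapsto z_\theta g(z_\theta^{*}z_\theta)$ fixes it while preserving the endpoints, the congruences (since functional calculus commutes with the quotient map and $g\equiv 1$ on $\{0,1\}$), and norm-continuity.
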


The relative $\mathrm{K}_{0}$-group $\mathrm{K}_{0}\left( \mathfrak{A},%
\mathfrak{A/J}\right) $ is defined to be the quotient of $\mathrm{Z}%
_{0}\left( \mathfrak{A},\mathfrak{A/J}\right) $ by the equivalence relation $%
\mathrm{B}_{0}\left( \mathfrak{A},\mathfrak{A/J}\right) $ defined as
follows. For $\left( p,q,x\right) ,\left( p^{\prime },q^{\prime },x^{\prime
}\right) $, set $\left( p,q,x\right) \mathrm{B}_{0}\left( \mathfrak{A},%
\mathfrak{A/J}\right) \left( p^{\prime },q^{\prime },x^{\prime }\right) $ if
and only if there exists a \emph{degenerate }relative $\mathrm{K}$-cycles $%
\left( p_{0},q_{0},x_{0}\right) ,\left( p_{0}^{\prime },q_{0}^{\prime
},x_{0}^{\prime }\right) $ such that $\left( p\oplus p_{0},q\oplus
q_{0},x\oplus x_{0}\right) $ and $\left( p^{\prime }\oplus p_{0}^{\prime
},q^{\prime }\oplus q_{0}^{\prime },x^{\prime }\oplus x_{0}^{\prime }\right) 
$ are of the same dimension and homotopic. The group operations on $\mathrm{K%
}_{0}\left( \mathfrak{A},\mathfrak{A/J}\right) $ are induced by the Borel
maps 
\begin{equation*}
\left( \left( p,q,x\right) ,\left( p^{\prime },q^{\prime },x^{\prime
}\right) \right) \mapsto \left( p\oplus p^{\prime },q\oplus q^{\prime
},x\oplus x^{\prime }\right)
\end{equation*}%
and%
\begin{equation*}
\left( p,q,x\right) \mapsto \left( q,p,x^{\ast }\right) \text{.}
\end{equation*}%
It follows from Lemma \ref{Lemma:relative-K-cycles} that $\mathrm{K}%
_{0}\left( \mathfrak{A},\mathfrak{A/J}\right) $ is indeed a group. We let $%
\left[ p,q,x\right] $ be the element of $\mathrm{K}_{0}\left( \mathfrak{A},%
\mathfrak{A/J}\right) $ represented by the relative $\mathrm{K}$-cycle $%
\left( p,q,x\right) $. The trivial element of $\mathrm{K}_{0}\left( 
\mathfrak{A},\mathfrak{A/J}\right) $ is equal to $\left[ p,q,x\right] $
where $\left( p,q,x\right) $ is any degenerate relative $\mathrm{K}$-cycle.
Let $\mathfrak{J}^{+}$ be the unitization of $\mathfrak{J}$, which we
identify with $\mathrm{\mathrm{span}}\left( \mathfrak{J},1\right) \subseteq 
\mathfrak{A}$.

\begin{lemma}
\label{Lemma:relative-cycle-unitization}There is a Borel function $\mathrm{Z}%
_{0}\left( \mathfrak{A},\mathfrak{A/J}\right) \rightarrow \mathrm{Z}%
_{0}\left( \mathfrak{A},\mathfrak{A/J}\right) $, $\left( P,Q,X\right)
\mapsto \left( p,q,p\right) $ such that $p\in M_{n}\left( \mathbb{C}\right) $%
, $q\in M_{n}\left( \mathfrak{J}^{+}\right) $, $p\equiv q\in M_{n}\left( 
\mathfrak{J}\right) $, and $[P,Q,X]=[p,q,p]\in \mathrm{K}_{0}\left( 
\mathfrak{A},\mathfrak{A/J}\right) $.
\end{lemma}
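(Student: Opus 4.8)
The plan is to normalise $(P,Q,X)$ in two movements, using only moves that are legitimate for the equivalence relation $\mathrm{B}_{0}(\mathfrak{A},\mathfrak{A}/\mathfrak{J})$, namely adding a genuinely degenerate relative cycle and the two standard homotopies of relative cycles recorded before Lemma~\ref{Lemma:relative-K-cycles} (conjugation of the coordinates along a path of unitaries starting at $1$, and convex-combination deformation of the third coordinate). First I would bring the cycle into the shape $(p_{1},q_{1},p_{1})$ with $q_{1}\equiv p_{1}$ modulo the ideal. Given $(P,Q,X)\in\mathrm{Z}_{0}^{(d)}(\mathfrak{A},\mathfrak{A}/\mathfrak{J})$, apply Lemma~\ref{Lemma:relative-path-unitary} in the strict unital C*-pair $(M_{d}(\mathfrak{A}),M_{d}(\mathfrak{J}))$ to obtain, Borel-measurably in $(P,Q,X)$, self-adjoint $Y_{1},\dots,Y_{\ell}\in\mathrm{Ball}(M_{2d}(\mathfrak{A})_{\mathrm{sa}})$ with $U:=e^{iY_{1}}\cdots e^{iY_{\ell}}$ satisfying $U^{\ast}(Q\oplus0_{d})U\equiv P\oplus0_{d}$ and $(Q\oplus0_{d})U(P\oplus0_{d})\equiv X\oplus0_{d}$ mod $M_{2d}(\mathfrak{J})$. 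Since $(0_{d},0_{d},0_{d})$ is degenerate, $(P,Q,X)$ and $(P\oplus0_{d},Q\oplus0_{d},X\oplus0_{d})$ have the same class; the homotopy $t\mapsto(P\oplus0_{d},u_{t}^{\ast}(Q\oplus0_{d})u_{t},u_{t}^{\ast}(X\oplus0_{d}))$ along a path $u_{t}$ from $1$ to $U$ carries the latter to $(P\oplus0_{d},U^{\ast}(Q\oplus0_{d})U,U^{\ast}(X\oplus0_{d}))$, and from the two displayed congruences one gets $U^{\ast}(X\oplus0_{d})\equiv P\oplus0_{d}$ mod $M_{2d}(\mathfrak{J})$, so a convex-combination deformation of the third coordinate reaches $c_{1}:=(p_{1},q_{1},p_{1})$ with $p_{1}:=P\oplus0_{d}$ and $q_{1}:=U^{\ast}(Q\oplus0_{d})U\equiv p_{1}$ mod $M_{2d}(\mathfrak{J})$.

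The key point of the second movement is that, although $1_{d}-P$ need not be a genuine projection, it is both the source and the range projection of the genuine partial isometry $(1_{d}-P)^{1/2}$ (continuous functional calculus, hence Borel on bounded sets), so $(1_{d}-P,\,1_{d}-P,\,(1_{d}-P)^{1/2})$ is an honest degenerate relative cycle and may be added without changing the class: $c_{2}:=(p_{1}\oplus(1_{d}-P),\,q_{1}\oplus(1_{d}-P),\,p_{1}\oplus(1_{d}-P)^{1/2})$ still represents $[c_{1}]$. Conjugating all three coordinates by a fixed path of permutation unitaries with scalar entries --- legitimate since the unitary group of $M_{3d}(\mathbb{C})$ is connected --- I would rearrange the three $d$-blocks so that the summand $P$ is adjacent to the summand $1_{d}-P$; since $(1_{d}-P)^{1/2}\equiv1_{d}-P$ mod $M_{d}(\mathfrak{J})$, a further convex-combination deformation makes the third coordinate equal the first, yielding $c_{2}'':=(P\oplus(1_{d}-P)\oplus0_{d},\,\tilde q,\,P\oplus(1_{d}-P)\oplus0_{d})$ with $\tilde q\equiv P\oplus(1_{d}-P)\oplus0_{d}$ mod $M_{3d}(\mathfrak{J})$. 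Now apply Lemma~\ref{Lemma:orthogonal-projections} to the mod $M_{d}(\mathfrak{J})$ orthogonal projections $P$ and $1_{d}-P$ to get, Borel-measurably, a unitary $V_{0}\in M_{2d}(\mathfrak{A})$ in the identity component with $V_{0}^{\ast}(P\oplus(1_{d}-P))V_{0}\equiv1_{d}\oplus0_{d}$ mod $M_{2d}(\mathfrak{J})$; conjugating $c_{2}''$ by $V:=V_{0}\oplus1_{d}$ and then running a convex-combination deformation of the (still equal) first and third coordinates down to the scalar projection $s:=1_{d}\oplus0_{2d}\in M_{3d}(\mathbb{C})$ --- permissible because $s$ is idempotent, so each intermediate positive contraction remains a projection mod $M_{3d}(\mathfrak{J})$ congruent to $s$ --- produces $(s,q,s)$ with $q:=V^{\ast}\tilde q V$.

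Putting $n:=3d$ we have $q\equiv s$ mod $M_{n}(\mathfrak{J})$, hence $q\in M_{n}(\mathbb{C})+M_{n}(\mathfrak{J})=M_{n}(\mathfrak{J}^{+})$, $s\in M_{n}(\mathbb{C})$, $s-q\in M_{n}(\mathfrak{J})$, and $[s,q,s]=[c_{1}]=[P,Q,X]$ in $\mathrm{K}_{0}(\mathfrak{A},\mathfrak{A}/\mathfrak{J})$, so $(P,Q,X)\mapsto(s,q,s)$ is the sought map; it is Borel because Lemmas~\ref{Lemma:relative-path-unitary} and~\ref{Lemma:orthogonal-projections} supply Borel selections, the continuous functional calculus and the $\ast$-polynomial operations are strictly continuous on bounded sets, the rearranging unitaries and the homotopy parameters are fixed, and $n=3d$ is locally constant on $\mathrm{Z}_{0}(\mathfrak{A},\mathfrak{A}/\mathfrak{J})$. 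The step I expect to cost the most effort is not conceptual but the verification, at every stage, that each triple written down is genuinely a relative $\mathrm{K}$-cycle (positivity and contractivity of the convex combinations, and the congruences $x^{\ast}x\equiv p$, $xx^{\ast}\equiv q$ modulo the ideal) and that each deformation used is of one of the two forms permitted by the definition of $\mathrm{B}_{0}(\mathfrak{A},\mathfrak{A}/\mathfrak{J})$; the small observation that unlocks the scalarisation is precisely the passage from $1_{d}-P$ to the honest partial isometry $(1_{d}-P)^{1/2}$.
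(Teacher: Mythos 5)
Your first movement is fine, and the bookkeeping in the second movement is fine up to the conjugation by $V$, but the last step has a genuine gap. A relative $\mathrm{K}$-cycle $(p,q,x)$ requires $p$ and $q$ to be \emph{honest} projections, not merely projections $\mathrm{mod}\ \mathfrak{J}$, and a homotopy in the sense of $\mathrm{B}_{0}(\mathfrak{A},\mathfrak{A/J})$ must pass through honest relative $\mathrm{K}$-cycles. (This is also why your detour through $(1_{d}-P)^{1/2}$ is unnecessary: $P$ is an honest projection, so $1_{d}-P$ is one as well.) Lemma \ref{Lemma:orthogonal-projections} only gives $V_{0}^{\ast}(P\oplus(1_{d}-P))V_{0}\equiv 1_{d}\oplus 0_{d}$ \emph{modulo} $M_{2d}(\mathfrak{J})$, so after conjugating you are left with a first coordinate $r$ that is an honest projection merely congruent to the scalar $s=1_{d}\oplus 0_{2d}$. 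The convex combinations $ts+(1-t)r$ are positive contractions that are in general not projections, so the triples $\left(ts+(1-t)r,\,q,\,ts+(1-t)r\right)$ are not relative $\mathrm{K}$-cycles, and your final deformation is not a legitimate move; ``projection mod $M_{3d}(\mathfrak{J})$ congruent to $s$'' is exactly the condition the definition does not accept for the first two coordinates.

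The repair is to use the fact that $P$ and $1_{d}-P$ are \emph{exactly} complementary projections, so no approximate conjugation is needed: the self-adjoint unitary $W=\left[\begin{smallmatrix}P & 1_{d}-P\\ 1_{d}-P & P\end{smallmatrix}\right]$ satisfies $W\left(P\oplus(1_{d}-P)\right)W=1_{d}\oplus 0_{d}$ exactly, and since $W=\exp(i\pi Q)$ for the projection $Q=\frac{1}{2}(1-W)$, it is a Borel-selectable product of exponentials $e^{iY_{j}}$ with $Y_{j}$ self-adjoint contractions, hence lies in the path component of the identity. Conjugating by $W$ makes the first coordinate exactly scalar, after which the remaining normalizations (Lemma \ref{Lemma:relative-path-unitary} to replace the second coordinate by one congruent to the first, and a convex-combination deformation of the \emph{third} coordinate only) go through as you describe. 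This is precisely what the paper does, in the opposite order: it first adds the degenerate cycle $(1_{d}-P,1_{d}-P,1_{d}-P)$ and scalarizes the first coordinate by an exact unitary equivalence, and only then invokes Lemma \ref{Lemma:relative-path-unitary} and the convex-combination homotopy of the third coordinate. The ordering is immaterial; the exactness of the first conjugation is not.
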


\begin{proof}
Notice that $\left( 1-P,1-P,1-P\right) $ is a degenerate relative $\mathrm{K}
$-cycle of dimension $d$. Consider then 
\begin{equation*}
\left( P\oplus \left( 1-P\right) ,Q\oplus \left( 1-P\right) ,X\oplus \left(
1-P\right) \right) .
\end{equation*}%
By Lemma \ref{Lemma:homotopy-orthogonal-unitaries} one can choose $%
Y_{1},\ldots ,Y_{\ell }\in \mathrm{\mathrm{Ball}}\left( M_{2d}\left( 
\mathfrak{A}\right) _{\mathrm{sa}}\right) $ in a Borel way such that,
setting $U:=e^{iY_{1}}\cdots e^{iY_{\ell }}$, one has that $U\left( P\oplus
\left( 1-P\right) U^{\ast }\right) =1_{d}\oplus 0_{d}$, where $\ell \geq 1$
does not depend on $\mathfrak{A},\mathfrak{J}$ and $\left( P,Q,X\right) $.
Thus, after replacing $\left( P,Q,X\right) $ with 
\begin{equation*}
\left( U\left( P\oplus \left( 1-P\right) \right) U^{\ast },U\left( Q\oplus
\left( 1-P\right) \right) U^{\ast },U\left( X\oplus \left( 1-P\right)
\right) U^{\ast }\right) ,
\end{equation*}%
we can assume without loss of generality that $P=1_{d}\oplus 0_{d}$.

By Lemma \ref{Lemma:relative-path-unitary} one can choose $Y_{1},\ldots
,Y_{\ell }\in \mathrm{\mathrm{Ball}}\left( M_{4d}\left( \mathfrak{A}\right)
_{\mathrm{sa}}\right) $ in a Borel fashion from $\left( P,Q,X\right) $ such
that, setting $U:=e^{iY_{1}}\cdots e^{iY_{\ell }}$, one has that 
\begin{equation*}
U^{\ast }\left( Q\oplus 0_{2d}\right) U\equiv \left( P\oplus 0_{2d}\right) 
\mathrm{\ \mathrm{mod}}\ M_{2d}\left( \mathfrak{J}\right) \text{.}
\end{equation*}%
Thus, after replacing $\left( P,Q,X\right) $ with $\left( P\oplus
0_{2d},U^{\ast }\left( Q\oplus 0_{2d}\right) U,U^{\ast }\left( X\oplus
0_{2d}\right) \right) $ we can assume without loss of generality that $%
P=1_{d}\oplus 0_{3d}\in M_{4d}\left( \mathbb{C}\right) $ and $Q\in
M_{4d}\left( \mathfrak{A}\right) $ satisfy $P\equiv Q\mathrm{\ \mathrm{mod}}%
\ M_{4d}\left( \mathfrak{J}\right) $ and hence $Q\in M_{4n}\left( \mathfrak{J%
}^{+}\right) $.

In this case, we have that $\left[ P,Q,X\right] =\left[ P,Q,P\right] $,
since $\left( P_{t},Q_{t},tP+\left( 1-t\right) X\right) _{t\in \left[ 0,1%
\right] }$ is a norm-continuous path of relative $\mathrm{K}$-cycles from $%
\left( P,Q,X\right) $ to $\left( P,Q,P\right) $. This concludes the proof.
\end{proof}

\begin{proposition}
\label{Proposition:excision-K-theory}Suppose that $\left( \mathfrak{A},%
\mathfrak{J}\right) $ is a strict C*-pair. Then $\mathrm{K}_{0}\left( 
\mathfrak{A},\mathfrak{A/J}\right) $ is a definable group. The assignment $%
\mathrm{K}_{0}\left( \mathfrak{J}\right) \rightarrow \mathrm{K}_{0}\left( 
\mathfrak{A},\mathfrak{A/J}\right) $, $[P]-\left[ Q\right] \mapsto \lbrack
P,Q,P]$ is a natural definable isomorphism, called the\emph{\ excision
isomorphism}.
\end{proposition}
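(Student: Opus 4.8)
The plan is to realise the claimed excision isomorphism as a definable group homomorphism whose inverse is also definable, and then to upgrade $\mathrm{K}_{0}(\mathfrak{A},\mathfrak{A/J})$ from a semidefinable to a genuine definable group. First I would check that the algebraic excision map is definable. The assignment $(P,P')\mapsto (P,P',P)$ is strictly continuous, hence Borel, as a map $\mathrm{Z}_{0}(\mathfrak{J})\to\mathrm{Z}_{0}(\mathfrak{A},\mathfrak{A/J})$: a pair $(P,P')\in\mathrm{Z}_{0}^{(n)}(\mathfrak{J})$ consists of projections in $M_{n}(\mathfrak{J}^{+})\subseteq M_{n}(\mathfrak{A})$ with $P\equiv P'\bmod M_{n}(\mathfrak{J})$, so $(P,P',P)$ satisfies $P^{*}P=P$ and $PP^{*}=P\equiv P'\bmod M_{n}(\mathfrak{J})$ and is a relative $\mathrm{K}$-cycle. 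That this map carries $\mathrm{B}_{0}(\mathfrak{J})$-equivalent pairs to $\mathrm{B}_{0}(\mathfrak{A},\mathfrak{A/J})$-equivalent relative cycles and intertwines the $\oplus$ operations (up to the permutation homotopies of Lemma~\ref{Lemma:relative-K-cycles}) is precisely the computation underlying the classical excision theorem for the $\mathrm{K}$-theory of C*-pairs; see \cite[Section 4.3]{higson_analytic_2000}. Thus $[P]-[P']\mapsto[P,P',P]$ is a well-defined group homomorphism $\Phi\colon\mathrm{K}_{0}(\mathfrak{J})\to\mathrm{K}_{0}(\mathfrak{A},\mathfrak{A/J})$, and it is definable because $(P,P')\mapsto(P,P',P)$ is a Borel lift of it; naturality with respect to strict $*$-homomorphisms of C*-pairs is immediate from this explicit formula.

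Next I would produce a Borel lift of the inverse. Given a relative $\mathrm{K}$-cycle $(P,Q,X)$, apply the Borel map of Lemma~\ref{Lemma:relative-cycle-unitization} to obtain $(p,q,p)$ with $p\in M_{n}(\mathbb{C})$, $q\in M_{n}(\mathfrak{J}^{+})$, $p\equiv q\bmod M_{n}(\mathfrak{J})$ and $[P,Q,X]=[p,q,p]$; since $p\in M_{n}(\mathbb{C})\subseteq M_{n}(\mathfrak{J}^{+})$, the pair $(p,q)$ lies in $\mathrm{Z}_{0}^{(n)}(\mathfrak{J})$. This defines a Borel map $\Psi\colon\mathrm{Z}_{0}(\mathfrak{A},\mathfrak{A/J})\to\mathrm{Z}_{0}(\mathfrak{J})$, and by construction $\Phi$ sends the class of $\Psi(P,Q,X)$, namely $[p]-[q]$, back to $[p,q,p]=[P,Q,X]$; so $\Psi$ descends to a right inverse of $\Phi$ on $\mathrm{K}$-groups. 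The classical excision theorem gives that $\Phi$ is bijective, whence $\Psi$ lifts $\Phi^{-1}$ and $\Phi^{-1}$ is definable; so $\Phi$ is an isomorphism in $\mathbf{SemiDSet}$.

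Finally I would prove that $\mathrm{K}_{0}(\mathfrak{A},\mathfrak{A/J})$ is genuinely a definable group, i.e.\ that $\mathrm{B}_{0}(\mathfrak{A},\mathfrak{A/J})$ is Borel and idealistic. The strategy is to exhibit this relation as the orbit equivalence relation of a Borel action of a Polish group and to invoke Corollary~\ref{Corollary:definable-group} (with Lemma~\ref{Lemma:definable-group} for the identity class). Two relative cycles are $\mathrm{B}_{0}$-equivalent exactly when, after adding degenerate cycles to bring them to a common dimension, they are joined by a norm-continuous path of relative cycles; the space of such paths is Borel by Corollary~\ref{Corollary:norm-continuous-paths}, so a priori $\mathrm{B}_{0}$ is only analytic. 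The key point is that the Borel selection lemmas of Section~\ref{Subsection:strict-pair}---Lemmas~\ref{Lemma:log}, \ref{Lemma:relative-path-unitary}, \ref{Lemma:homotopy-path-unitary}, \ref{Lemma:orthogonal-projections} and \ref{Lemma:homotopy-orthogonal-unitaries}---together with the normalisation of Lemma~\ref{Lemma:relative-cycle-unitization}, allow one to replace every homotopy and every degenerate-stabilisation step by conjugation with a unitary of the form $e^{iy_{1}}\cdots e^{iy_{\ell}}$ (with $\ell$ uniform and the $y_{i}$ a Borel function of the data) over a matrix algebra of controlled size. This rewrites $\mathrm{B}_{0}(\mathfrak{A},\mathfrak{A/J})$ as the orbit equivalence relation of a Borel action of the unitary group of the relevant matrix algebra over $\mathfrak{A}$, which is Polish in the strict topology; Corollary~\ref{Corollary:definable-group} then gives definability. (Alternatively, once one knows that the semidefinable group $\mathrm{K}_{0}(\mathfrak{J})$ is definable, this transfers to $\mathrm{K}_{0}(\mathfrak{A},\mathfrak{A/J})$ along the definable bijection $\Phi$ via Lemma~\ref{Lemma:iso-semi}.) Having both that $\mathrm{K}_{0}(\mathfrak{A},\mathfrak{A/J})$ is definable and that $\Phi$ is a definable bijection with definable inverse, Corollary~\ref{Corollary:Kechris--MacDonald} upgrades $\Phi$ to an isomorphism in $\mathbf{DSet}$, and naturality is inherited from the lifts.

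I expect the main obstacle to be this last step: controlling the homotopies and degenerate stabilisations defining $\mathrm{B}_{0}(\mathfrak{A},\mathfrak{A/J})$ uniformly enough to present the relation as a single Borel, idealistic orbit equivalence relation, rather than the a-priori-analytic union over unboundedly many dimensions that the definition provides. This is exactly the role the Borel-fashion lemmas of Section~\ref{Subsection:strict-pair} and the normalisation Lemma~\ref{Lemma:relative-cycle-unitization} are designed to play; the delicate bookkeeping is matching dimensions, ranges of partial isometries, and the sizes of the ambient matrix algebras so that a single Polish unitary group suffices.
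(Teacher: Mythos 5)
Your proposal matches the paper's proof in its essential content: the excision homomorphism is lifted by the Borel map $(P,P')\mapsto(P,P',P)$, bijectivity is imported from the classical excision theorem, and the inverse is lifted by the Borel normalisation of Lemma~\ref{Lemma:relative-cycle-unitization}; definability of $\mathrm{K}_{0}\left( \mathfrak{A},\mathfrak{A/J}\right)$ and of the isomorphism then follow. Where you diverge is in the final step. The paper does \emph{not} attempt to present $\mathrm{B}_{0}\left( \mathfrak{A},\mathfrak{A/J}\right)$ as the orbit equivalence relation of a single Polish group action --- the ``delicate bookkeeping'' you flag as the main obstacle is simply not undertaken. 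Instead, the paper takes exactly the route you relegate to a parenthetical: having mutually inverse Borel-lifted bijections between $\mathrm{K}_{0}\left( \mathfrak{J}\right)$ and $\mathrm{K}_{0}\left( \mathfrak{A},\mathfrak{A/J}\right)$, it transfers definability along this isomorphism in $\mathbf{SemiDSet}$ (Lemma~\ref{Lemma:iso-semi}). So your heavier orbit-equivalence-relation argument, while plausible in outline, is unnecessary for this proposition; the useful observation buried in your alternative is the right one, and your hesitation there (``once one knows that $\mathrm{K}_{0}\left( \mathfrak{J}\right)$ is definable'') is in fact the only point at which the short argument requires an input beyond the two Borel lifts. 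In the situations where the proposition is subsequently applied (e.g.\ $\mathfrak{J}$ norm-separable, so that $\mathrm{K}_{0}\left( \mathfrak{J}\right)$ is a countable, hence standard Borel, group), that input is available, and the short proof closes.
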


\begin{proof}
By \cite[Theorem 3.9]{nest_excision_2017}, the excision homomorphism $%
\mathrm{K}_{0}\left( \mathfrak{J}\right) \rightarrow \mathrm{K}_{0}\left( 
\mathfrak{A},\mathfrak{A/J}\right) $ is bijective; see also \cite[Theorem
4.3.8]{higson_analytic_2000}. Clearly, it is induced by a Borel function $%
\mathrm{Z}_{0}\left( \mathfrak{J}\right) \rightarrow \mathrm{Z}_{0}\left( 
\mathfrak{A},\mathfrak{A/J}\right) $. By Lemma \ref%
{Lemma:relative-cycle-unitization} the inverse homomorphism $\mathrm{K}%
_{0}\left( \mathfrak{A},\mathfrak{A/J}\right) \rightarrow \mathrm{K}%
_{0}\left( \mathfrak{J}\right) $ is also induced by a Borel function $%
\mathrm{Z}_{0}\left( \mathfrak{A},\mathfrak{A/J}\right) \rightarrow \mathrm{Z%
}_{0}\left( \mathfrak{J}\right) $. Thus, $\mathrm{K}_{0}\left( \mathfrak{A},%
\mathfrak{A/J}\right) $ is a definable group, and the excision isomorphism
is a definable isomorphism.
\end{proof}

There is a natural definable homomorphism $\mathrm{K}_{0}\left( \mathfrak{A},%
\mathfrak{A/J}\right) \rightarrow \mathrm{K}_{0}\left( \mathfrak{A}\right) $
that is induced by the Borel map \textrm{Z}$_{0}\left( \mathfrak{A},%
\mathfrak{A/J}\right) \rightarrow \mathrm{Z}_{0}\left( \mathfrak{A}\right) $ 
$\left( p,q,x\right) \mapsto \left( p,q\right) $. We also have a natural
definable homomorphism $\mathrm{K}_{0}\left( \mathfrak{A}\right) \rightarrow 
\mathrm{K}_{0}\left( \mathfrak{A/J}\right) $ induced by the Borel map $%
\mathrm{Z}_{0}\left( \mathfrak{A}\right) \rightarrow \mathrm{Z}_{0}\left( 
\mathfrak{A/J}\right) $. We have the following result; see \cite[Proposition
4.3.5]{higson_analytic_2000}.

\begin{proposition}
\label{Proposition:exact-K-theory-relative}Suppose that $\left( \mathfrak{A},%
\mathfrak{J}\right) $ is a strict unital C*-pair. The (natural) sequence of
definable groups and definable group homomorphisms 
\begin{equation*}
\mathrm{K}_{0}\left( \mathfrak{A},\mathfrak{A/J}\right) \rightarrow \mathrm{K%
}_{0}\left( \mathfrak{A}\right) \rightarrow \mathrm{K}_{0}\left( \mathfrak{%
A/J}\right)
\end{equation*}%
is exact.
\end{proposition}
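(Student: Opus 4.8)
The plan is as follows. Since both homomorphisms in the sequence have already been exhibited as \emph{definable} group homomorphisms (they are induced by the explicit Borel maps $\mathrm{Z}_{0}(\mathfrak{A},\mathfrak{A/J})\to\mathrm{Z}_{0}(\mathfrak{A})$, $(p,q,x)\mapsto(p,q)$, and $\mathrm{Z}_{0}(\mathfrak{A})\to\mathrm{Z}_{0}(\mathfrak{A/J})$), it suffices to establish exactness of the underlying sequence of abelian groups; no further descriptive set-theoretic input is needed. This is then a definable refinement of the classical half-exactness of $\mathrm{K}_{0}$, cf.\ \cite[Proposition 4.3.5]{higson_analytic_2000}. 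I would verify the two inclusions $\mathrm{im}\subseteq\ker$ and $\ker\subseteq\mathrm{im}$ at $\mathrm{K}_{0}(\mathfrak{A})$ directly from the definitions of relative $\mathrm{K}$-cycles and of the relations $\mathrm{B}_{0}$, using the homotopy lemmas of Section \ref{Subsection:strict-pair}; as a cross-check, exactness can also be deduced from the excision isomorphism of Proposition \ref{Proposition:excision-K-theory}, which identifies the sequence with the inclusion--quotient sequence $\mathrm{K}_{0}(\mathfrak{J})\to\mathrm{K}_{0}(\mathfrak{A})\to\mathrm{K}_{0}(\mathfrak{A/J})$, whose middle exactness holds for arbitrary C*-algebras \cite{blackadar_theory_1998}.

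For $\mathrm{im}\subseteq\ker$, let $(p,q,x)$ be a relative $\mathrm{K}$-cycle of dimension $n$. Its image in $\mathrm{K}_{0}(\mathfrak{A})$ is $[p]-[q]$, and its further image in $\mathrm{K}_{0}(\mathfrak{A/J})$ is $[p+M_{n}(\mathfrak{J})]-[q+M_{n}(\mathfrak{J})]$; but $x^{\ast}x\equiv p$ and $xx^{\ast}\equiv q\ \mathrm{mod}\ M_{n}(\mathfrak{J})$, so $x+M_{n}(\mathfrak{J})$ implements a Murray--von Neumann equivalence of $p+M_{n}(\mathfrak{J})$ and $q+M_{n}(\mathfrak{J})$ in $M_{n}(\mathfrak{A/J})$, whence this image is $0$. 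For the reverse inclusion, suppose $[p]-[q]\in\mathrm{K}_{0}(\mathfrak{A})$ lies in the kernel, i.e.\ $(p,q)\,\mathrm{B}_{0}(\mathfrak{A/J})\,(0,0)$. Unwinding the definition of $\mathrm{B}_{0}(\mathfrak{A/J})$, there are $m,n\in\omega$ and a $\mathrm{mod}\ \mathfrak{J}$ projection $r$ over $\mathfrak{A}$ so that, after padding with zeros and reordering summands,
\[
p\oplus r\oplus 0_{n}\ \sim_{\mathrm{MvN}}\ q\oplus r\oplus 0_{n}\pmod{\mathfrak{J}}.
\]
Adjoining $1_{m}-r$ to both sides and applying Lemma \ref{Lemma:orthogonal-projections} to $r$ and $1_{m}-r$ (which are orthogonal $\mathrm{mod}\ \mathfrak{J}$ projections, since $r(1_{m}-r)\equiv r-r^{2}\equiv 0$), one replaces $r\oplus(1_{m}-r)$ by $1_{m}\oplus 0_{m}$ up to unitary equivalence $\mathrm{mod}\ \mathfrak{J}$, obtaining a contraction $X$ over $\mathfrak{A}$ with $X^{\ast}X\equiv p\oplus 1_{m}\oplus 0_{n'}$ and $XX^{\ast}\equiv q\oplus 1_{m}\oplus 0_{n'}\ \mathrm{mod}\ \mathfrak{J}$ for a suitable $n'$. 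Then $(p\oplus 1_{m}\oplus 0_{n'},\,q\oplus 1_{m}\oplus 0_{n'},\,X)$ is a relative $\mathrm{K}$-cycle for $(\mathfrak{A},\mathfrak{A/J})$ whose image in $\mathrm{K}_{0}(\mathfrak{A})$ is $[p\oplus 1_{m}\oplus 0_{n'}]-[q\oplus 1_{m}\oplus 0_{n'}]=[p]-[q]$, so $[p]-[q]$ lies in the image.

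I expect the only step requiring genuine care to be this last bit of bookkeeping: passing from the relation ``$[p+\mathfrak{J}]=[q+\mathfrak{J}]$ in $\mathrm{K}_{0}(\mathfrak{A/J})$'' to an \emph{honest} relative $\mathrm{K}$-cycle, one must stabilise by the auxiliary $\mathrm{mod}\ \mathfrak{J}$ projection $r$ supplied by $\mathrm{B}_{0}(\mathfrak{A/J})$ and then use Lemma \ref{Lemma:orthogonal-projections} to absorb it into a genuine identity block, so that both outer coordinates of the resulting triple become honest projections over $\mathfrak{A}$ while the middle $\mathrm{mod}\ \mathfrak{J}$ conditions are preserved; compare the analogous manoeuvre in Lemma \ref{Lemma:relative-cycle-unitization}. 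Everything else --- definability of the maps, additivity of the direct-sum operation on $\mathrm{K}_{0}$, and the cancellation identity $[p\oplus 1_{m}\oplus 0_{n'}]-[q\oplus 1_{m}\oplus 0_{n'}]=[p]-[q]$ --- is a routine unwinding of the definitions already recorded above.
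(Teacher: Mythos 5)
Your proof is correct. The paper itself offers no argument for this proposition beyond a citation to \cite[Proposition 4.3.5]{higson_analytic_2000}, so there is nothing to diverge from: your direct verification is exactly the standard one, and you correctly isolate the only nonroutine step, namely absorbing the auxiliary $\mathrm{mod}\ \mathfrak{J}$ projection $r$ supplied by $\mathrm{B}_{0}(\mathfrak{A/J})$ into an honest identity block via Lemma \ref{Lemma:orthogonal-projections} so that the outer coordinates of the resulting triple are genuine projections over $\mathfrak{A}$. The only detail left implicit is that the $\mathrm{mod}\ \mathfrak{J}$ Murray--von Neumann witness should be chosen to be a contraction in $M_{n'}(\mathfrak{A})$ so that the triple lies in $\mathrm{Z}_{0}(\mathfrak{A},\mathfrak{A/J})$; since contractions in a C*-quotient lift to contractions, this is routine.
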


Combining the excision isomorphism $\mathrm{K}_{0}\left( \mathfrak{J}\right)
\rightarrow \mathrm{K}_{0}\left( \mathfrak{A},\mathfrak{A/J}\right) $ with
the natural definable homomorphism $\mathrm{K}_{0}\left( \mathfrak{A},%
\mathfrak{A/J}\right) \rightarrow \mathrm{K}_{0}\left( \mathfrak{A}\right) $%
, we obtain a natural definable group homomorphism $\mathrm{K}_{0}\left( 
\mathfrak{J}\right) \rightarrow \mathrm{K}_{0}\left( \mathfrak{A}\right) $.
This is defined by mapping $\left( p,q\right) \in \mathrm{Z}_{0}^{\left(
n\right) }\left( \mathfrak{J}\right) $ to $\left( p,q\right) $ regarded as
an element of $\mathrm{Z}_{0}^{\left( n\right) }\left( \mathfrak{A}\right) $%
. Combining Proposition \ref{Proposition:excision-K-theory} with Proposition %
\ref{Proposition:exact-K-theory-relative} we have the following.

\begin{corollary}
\label{Corollary:exact-K-theory}Suppose that $\mathfrak{A}$ is a unital
strict C*-algebra and $\mathfrak{J}$ is a proper strict ideal of $\mathfrak{A%
}$. Then the natural sequence 
\begin{equation*}
\mathrm{K}_{0}\left( \mathfrak{J}\right) \rightarrow \mathrm{K}_{0}\left( 
\mathfrak{A}\right) \rightarrow \mathrm{K}_{0}\left( \mathfrak{A/J}\right)
\end{equation*}%
is exact.
\end{corollary}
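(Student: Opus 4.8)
The plan is to deduce the statement formally from Proposition~\ref{Proposition:excision-K-theory} and Proposition~\ref{Proposition:exact-K-theory-relative}, the only genuine content being a routine check that the maps in the statement fit together correctly. First I would record that the natural definable homomorphism $\mathrm{K}_0(\mathfrak{J}) \to \mathrm{K}_0(\mathfrak{A})$ occurring in the statement---the one induced by regarding a pair $(p,q) \in \mathrm{Z}_0^{(n)}(\mathfrak{J})$ as an element of $\mathrm{Z}_0^{(n)}(\mathfrak{A})$ via $\mathfrak{J}^{+} \subseteq \mathfrak{A}$---factors as the composite of the excision isomorphism $e \colon \mathrm{K}_0(\mathfrak{J}) \to \mathrm{K}_0(\mathfrak{A}, \mathfrak{A/J})$ of Proposition~\ref{Proposition:excision-K-theory} with the natural homomorphism $j \colon \mathrm{K}_0(\mathfrak{A}, \mathfrak{A/J}) \to \mathrm{K}_0(\mathfrak{A})$ induced by $(p,q,x) \mapsto (p,q)$. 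This is immediate from the explicit formulas: $e$ sends $[p]-[q]$ to $[p,q,p]$ and $j$ sends $[p,q,p]$ to $[p]-[q]$, so $j \circ e$ is precisely the inclusion-induced map. Likewise, the second arrow $\mathrm{K}_0(\mathfrak{A}) \to \mathrm{K}_0(\mathfrak{A/J})$ in the sequence of the statement is verbatim the second arrow in the sequence of Proposition~\ref{Proposition:exact-K-theory-relative}.

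With this in place, the argument is a one-line diagram chase. Proposition~\ref{Proposition:exact-K-theory-relative} gives exactness of $\mathrm{K}_0(\mathfrak{A}, \mathfrak{A/J}) \xrightarrow{j} \mathrm{K}_0(\mathfrak{A}) \to \mathrm{K}_0(\mathfrak{A/J})$, that is, $\mathrm{im}(j) = \ker(\mathrm{K}_0(\mathfrak{A}) \to \mathrm{K}_0(\mathfrak{A/J}))$. Since $e$ is an isomorphism by Proposition~\ref{Proposition:excision-K-theory}, we have $\mathrm{im}(j \circ e) = \mathrm{im}(j)$, and therefore the image of $\mathrm{K}_0(\mathfrak{J}) \to \mathrm{K}_0(\mathfrak{A})$ equals $\ker(\mathrm{K}_0(\mathfrak{A}) \to \mathrm{K}_0(\mathfrak{A/J}))$, which is exactly the asserted exactness. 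Definability of all the homomorphisms involved is inherited from the two cited propositions, so no additional descriptive set-theoretic verification is needed.

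I do not anticipate any real obstacle: the substantive work---the excision isomorphism and the exactness of the relative sequence---has already been done in Propositions~\ref{Proposition:excision-K-theory} and~\ref{Proposition:exact-K-theory-relative} (resting in turn on \cite{nest_excision_2017} and \cite{higson_analytic_2000}). The only point calling for care, and it is minimal, is the compatibility check identifying $j \circ e$ with the natural homomorphism $\mathrm{K}_0(\mathfrak{J}) \to \mathrm{K}_0(\mathfrak{A})$, which amounts to comparing the defining formulas for $e$ and $j$.
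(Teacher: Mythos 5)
Your proof is correct and is essentially the paper's own argument: the paper likewise defines the map $\mathrm{K}_0(\mathfrak{J})\to\mathrm{K}_0(\mathfrak{A})$ as the composite of the excision isomorphism of Proposition~\ref{Proposition:excision-K-theory} with the natural homomorphism $\mathrm{K}_0(\mathfrak{A},\mathfrak{A/J})\to\mathrm{K}_0(\mathfrak{A})$, and deduces exactness directly from Proposition~\ref{Proposition:exact-K-theory-relative}. Your explicit compatibility check that $j\circ e$ agrees with the inclusion-induced map is a reasonable (and correct) elaboration of what the paper leaves implicit.
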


\subsection{$\mathrm{K}_{1}$ group}

Suppose that $\left( \mathfrak{A},\mathfrak{J}\right) $ is a strict unital
C*-pair. We can then consider the Borel set $U\left( \mathfrak{A/J}\right) $
of elements of $\mathrm{\mathrm{Ball}}\left( \mathfrak{A}\right) $ that are
unitaries $\mathrm{\mathrm{mod}}\ \mathfrak{J}$. We then let $\mathrm{Z}%
_{1}\left( \mathfrak{A/J}\right) $ to be the disjoint union of $U\left(
M_{n}\left( \mathfrak{A}\right) /M_{n}\left( \mathfrak{J}\right) \right) $
for $n\geq 1$. The equivalence relation \textrm{B}$_{1}\left( \mathfrak{A/J}%
\right) $ on $\mathrm{Z}_{1}\left( \mathfrak{A/J}\right) $ is defined by
setting $u$\textrm{B}$_{1}\left( \mathfrak{A/J}\right) u^{\prime }$ for $%
u\in U\left( M_{n}\left( \mathfrak{A}\right) /M_{n}\left( \mathfrak{J}%
\right) \right) $ and $u^{\prime }\in U\left( M_{n^{\prime }}\left( 
\mathfrak{A}\right) /M_{n^{\prime }}\left( \mathfrak{J}\right) \right) $ if
and only if there exist $k,k^{\prime }\in \omega $ with $n+k=n^{\prime
}+k^{\prime }$ and such that there is a \emph{norm-continuous} path from $%
u\oplus 1_{k}+M_{n+k}\left( \mathfrak{J}\right) $ to $u^{\prime }\oplus
1_{k^{\prime }}+M_{n+k}\left( \mathfrak{J}\right) $ in the unitary group of
the quotient unital C*-algebra $M_{n+k}\left( \mathfrak{A}\right)
/M_{n+k}\left( \mathfrak{J}\right) $. This equivalence relation is analytic
by Corollary \ref{Corollary:homotopy-unitaries}. The definable $\mathrm{K}%
_{1}$-group $\mathrm{K}_{1}\left( \mathfrak{A/J}\right) $ is then the
semidefinable group obtained as quotient $\mathrm{Z}_{1}\left( \mathfrak{A/J}%
\right) /\mathrm{B}_{1}\left( \mathfrak{A/J}\right) $ with group operations
defined as above. This defines a functor $\mathfrak{A/J}\mapsto \mathrm{K}%
_{1}\left( \mathfrak{A/J}\right) $ from unital C*-algebras with strict cover
to semidefinable groups.

Given a strict unital C*-pair $\left( \mathfrak{A},\mathfrak{J}\right) $, we
also consider the definable $\mathrm{K}_{1}$-group $\mathrm{K}_{1}\left( 
\mathfrak{J}\right) $ of $\mathfrak{J}$. As above, we identify the
unitization $\mathfrak{J}^{+}$ of $\mathfrak{J}$ with $\mathrm{\mathrm{span}}%
\left\{ \mathfrak{J},1\right\} \subseteq \mathfrak{A}$. For $n\geq 1$ we let 
$U\left( M_{d}\left( \mathfrak{J}^{+}\right) \right) $ be the unitary group
of $M_{d}\left( \mathfrak{J}^{+}\right) $. Recall that every element of $%
M_{d}\left( \mathfrak{J}^{+}\right) $ can be written uniquely as $x+\alpha 1$
where $x\in M_{d}\left( \mathfrak{J}\right) $ and $\alpha \in M_{d}\left( 
\mathbb{C}\right) $. We consider $U\left( M_{d}\left( \mathfrak{J}%
^{+}\right) \right) $ as a Borel subset of $\mathrm{\mathrm{Ball}}\left(
M_{d}\left( \mathfrak{J}\right) \right) \times \mathrm{\mathrm{Ball}}\left(
M_{d}\left( \left( \mathbb{C}\right) \right) \right) $. We then set $\mathrm{%
Z}_{1}\left( \mathfrak{J}\right) $ to be the disjoint union of $U\left(
M_{d}\left( \mathfrak{J}^{+}\right) \right) $ for $d\geq 1$, and let $%
\mathrm{B}_{1}\left( \mathfrak{J}\right) $ be the (analytic) equivalence
relation on $\mathrm{Z}_{1}\left( \mathfrak{J}\right) $ obtained by setting $%
u$\textrm{B}$_{1}\left( \mathfrak{J}\right) u^{\prime }$ for $u\in U\left(
M_{n}\left( \mathfrak{J}^{+}\right) \right) $ and $u^{\prime }\in U\left(
M_{n^{\prime }}\left( \mathfrak{J}^{+}\right) \right) $ if and only if there
exist $k,k^{\prime }\in \omega $ with $n+k=n^{\prime }+k^{\prime }$ and such
that there is a \emph{norm-continuous }path from $u\oplus 1_{k}$ to $%
u^{\prime }\oplus 1_{k^{\prime }}$ in $U\left( M_{n+k}\left( \mathfrak{J}%
^{+}\right) \right) $. The definable $\mathrm{K}_{1}$-group $\mathrm{K}%
_{1}\left( \mathfrak{J}\right) $ is then the semidefinable group obtained as
quotient $\mathrm{Z}_{1}\left( \mathfrak{J}\right) /\mathrm{B}_{1}\left( 
\mathfrak{J}\right) $ with group operations defined as above.

Suppose that $\left( \mathfrak{A},\mathfrak{J}\right) $ is a strict unital
C*-pair. We have natural definable group homomorphisms%
\begin{equation*}
\mathrm{K}_{1}\left( \mathfrak{J}\right) \rightarrow \mathrm{K}_{1}\left( 
\mathfrak{A}\right) \rightarrow \mathrm{K}_{1}\left( \mathfrak{A/J}\right) 
\text{.}
\end{equation*}%
The definable group homomorphism $\mathrm{K}_{1}\left( \mathfrak{J}\right)
\rightarrow \mathrm{K}_{1}\left( \mathfrak{A}\right) $ is induced by the
inclusion $\mathfrak{J}^{+}\subseteq \mathfrak{A}$, which gives an inclusion 
$U\left( M_{d}\left( \mathfrak{J}^{+}\right) \right) \rightarrow U\left(
M_{d}\left( \mathfrak{A}\right) \right) $ for every $d\geq 1$. The definable
group homomorphism $\mathrm{K}_{1}\left( \mathfrak{A}\right) \rightarrow 
\mathrm{K}_{1}\left( \mathfrak{A/J}\right) $ is also induced by the
inclusion maps $U\left( M_{d}\left( \mathfrak{A}\right) \right) \rightarrow
U\left( M_{d}\left( \mathfrak{A}\right) /M_{d}\left( \mathfrak{J}\right)
\right) $ for $d\geq 1$. We have the following result, which can be easily
verified directly, and also follows from Corollary \ref%
{Corollary:exact-K-theory} via the Bott isomorphism theorem \cite[Theorem
4.9.1]{higson_analytic_2000}.

\begin{proposition}
\label{Proposition:exact-K1}Suppose that $\left( \mathfrak{A},\mathfrak{J}%
\right) $ is a strict unital C*-pair. The sequence of natural definable
homomorphisms%
\begin{equation*}
\mathrm{K}_{1}\left( \mathfrak{J}\right) \rightarrow \mathrm{K}_{1}\left( 
\mathfrak{A}\right) \rightarrow \mathrm{K}_{1}\left( \mathfrak{A/J}\right)
\end{equation*}%
is exact.
\end{proposition}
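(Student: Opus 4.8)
The plan is to verify exactness at $\mathrm{K}_1(\mathfrak{A})$ directly; definability of the two maps has already been recorded above, so the only thing to check is that the image of $\mathrm{K}_1(\mathfrak{J})\to\mathrm{K}_1(\mathfrak{A})$ coincides with the kernel of $\mathrm{K}_1(\mathfrak{A})\to\mathrm{K}_1(\mathfrak{A/J})$ as abstract abelian groups. For the inclusion $\mathrm{im}\subseteq\ker$, I would start from a representative $u\in U(M_d(\mathfrak{J}^+))$ of a class in $\mathrm{K}_1(\mathfrak{J})$, written uniquely as $u=x+\alpha 1$ with $x\in M_d(\mathfrak{J})$ and $\alpha\in M_d(\mathbb{C})$; since $M_d(\mathfrak{J}^+)\to M_d(\mathbb{C})$ is a unital $*$-homomorphism, $\alpha$ is a scalar unitary matrix and $u+M_d(\mathfrak{J})=\alpha 1+M_d(\mathfrak{J})$. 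As $U(M_d(\mathbb{C}))$ is connected, a norm-continuous path from $1_d$ to $\alpha$ there descends to a norm-continuous path from $1_d+M_d(\mathfrak{J})$ to $u+M_d(\mathfrak{J})$ in the unitary group of $M_d(\mathfrak{A})/M_d(\mathfrak{J})$, so the image of $[u]$ in $\mathrm{K}_1(\mathfrak{A/J})$ is trivial.

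For the reverse inclusion $\ker\subseteq\mathrm{im}$, I would take $u\in U(M_d(\mathfrak{A}))$ whose class dies in $\mathrm{K}_1(\mathfrak{A/J})$; unwinding the definition of $\mathrm{B}_1(\mathfrak{A/J})$ gives $k\geq 1$ and a norm-continuous path from $(u\oplus 1_k)+M_{d+k}(\mathfrak{J})$ to $1_{d+k}+M_{d+k}(\mathfrak{J})$ in the unitary group of $M_{d+k}(\mathfrak{A})/M_{d+k}(\mathfrak{J})$. Applying Corollary \ref{Corollary:homotopy-unitaries} to the strict unital C*-pair $(M_{d+k}(\mathfrak{A}),M_{d+k}(\mathfrak{J}))$ produces $y_1,\dots,y_\ell\in\mathrm{Ball}(M_{d+k}(\mathfrak{A})_{\mathrm{sa}})$ with $e^{iy_1}\cdots e^{iy_\ell}(u\oplus 1_k)\equiv 1_{d+k}\bmod M_{d+k}(\mathfrak{J})$. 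Then $v:=e^{iy_1}\cdots e^{iy_\ell}(u\oplus 1_k)$ is a genuine unitary of $M_{d+k}(\mathfrak{A})$ with $v-1_{d+k}\in M_{d+k}(\mathfrak{J})$, hence $v\in U(M_{d+k}(\mathfrak{J}^+))$, and $[v]$ lies in the image of $\mathrm{K}_1(\mathfrak{J})\to\mathrm{K}_1(\mathfrak{A})$. Since each $e^{iy_j}$ is joined to $1_{d+k}$ by the path $t\mapsto e^{ity_j}$, one has $[v]=[u\oplus 1_k]=[u]$ in $\mathrm{K}_1(\mathfrak{A})$, so $[u]\in\mathrm{im}$.

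I would close by observing that every choice in the second half can be arranged in a Borel fashion, using Lemma \ref{Lemma:log} as in the proof of Lemma \ref{Lemma:relative-path-unitary}, and the logarithm path $t\mapsto\exp(t\log\alpha)$ is Borel in $\alpha$; thus the argument actually shows the image of $\mathrm{K}_1(\mathfrak{J})\to\mathrm{K}_1(\mathfrak{A})$ is a Borel subset, which is what one wants in the semidefinable setting. Alternatively, the statement follows formally from Corollary \ref{Corollary:exact-K-theory} applied to the suspension of $(\mathfrak{A},\mathfrak{J})$ together with the Bott isomorphism \cite[Theorem 4.9.1]{higson_analytic_2000}. I do not expect a real obstacle: the only points needing care are the bookkeeping of stabilization indices and the fact that $\mathrm{B}_1(\mathfrak{A/J})$ is only manifestly analytic, so ``kernel of the second map'' should be read as the preimage of the distinguished class rather than as a Borel kernel.
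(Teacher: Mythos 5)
Your argument is correct and follows exactly the route the paper indicates: the paper leaves Proposition \ref{Proposition:exact-K1} as a direct verification (or as a consequence of Corollary \ref{Corollary:exact-K-theory} via Bott periodicity), and your proof supplies that direct verification, with Corollary \ref{Corollary:homotopy-unitaries} converting the homotopy in the quotient into a product of exponentials so that $v:=e^{iy_1}\cdots e^{iy_\ell}(u\oplus 1_k)$ lands in $U(M_{d+k}(\mathfrak{J}^+))$ while representing $[u]$. Both inclusions and the bookkeeping are sound, so no gap.
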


\subsection{The six-term exact sequence}

Suppose that $\left( \mathfrak{A},\mathfrak{J}\right) $ is a strict unital
C*-pair. One can define a natural definable group homomorphism $\partial
_{1}:\mathrm{K}_{1}\left( \mathfrak{A/J}\right) \rightarrow \mathrm{K}%
_{0}\left( \mathfrak{J}\right) $ called the \emph{index map}, as follows. An
element of $\mathrm{K}_{1}\left( \mathfrak{A/J}\right) $ is of the form $%
\left[ u\right] $ where $u\in U\left( M_{d}\left( \mathfrak{A}\right)
/M_{d}\left( \mathfrak{J}\right) \right) $ for some $d\geq 1$. Then define%
\begin{equation*}
P:=%
\begin{bmatrix}
uu^{\ast } & u\left( 1-u^{\ast }u\right) ^{1/2} \\ 
u^{\ast }\left( 1-uu^{\ast }\right) ^{1/2} & 1-u^{\ast }u%
\end{bmatrix}%
\in M_{2d}\left( \mathfrak{J}^{+}\right)
\end{equation*}%
and%
\begin{equation*}
Q:=1_{d}\oplus 0_{d}\in M_{2d}\left( \mathfrak{J}^{+}\right) \text{.}
\end{equation*}%
Then $P,Q$ are projections such that $P\equiv Q\mathrm{\ \mathrm{mod}}\
M_{2d}\left( \mathfrak{J}\right) $ and hence $\left( P,Q\right) \in \mathrm{Z%
}_{0}\left( \mathfrak{J}\right) $. One then defines $\partial _{1}\left( %
\left[ u\right] \right) =\left[ P\right] -\left[ Q\right] $; see \cite[%
Proposition 4.8.10]{higson_analytic_2000}. As $\left( P,Q\right) $ is
obtained in a Borel fashion from $u$, the boundary map $\partial _{1}:%
\mathrm{K}_{1}\left( \mathfrak{A/J}\right) \rightarrow \mathrm{K}_{0}\left( 
\mathfrak{J}\right) $ is definable.

Equivalently, one can define $\partial _{1}$ as follows. Given an element $%
\left[ u\right] $ of $\mathrm{K}_{1}\left( \mathfrak{A/J}\right) $ for some $%
u\in U\left( M_{d}\left( \mathfrak{A}\right) /M_{d}\left( \mathfrak{J}%
\right) \right) $. Consider the partial isometry $v\in M_{2d}\left( 
\mathfrak{A}\right) $ defined by%
\begin{equation*}
v=%
\begin{bmatrix}
u & 0 \\ 
\left( 1-u^{\ast }u\right) ^{1/2} & 0%
\end{bmatrix}%
\end{equation*}%
and observe that $v\equiv u\oplus 0\mathrm{\ \mathrm{mod}}\ \mathfrak{J}$ 
\cite[Lemma 9.2.1]{rordam_introduction_2000}. Then $1_{2d}-v^{\ast }v$ and $%
1_{2d}-vv^{\ast }$ are projections in $M_{2d}\left( \mathfrak{J}^{+}\right) $
such that $1_{2d}-v^{\ast }v\equiv 1_{2d}-vv^{\ast }\equiv 0_{d}\oplus
1_{d}\in M_{d}\left( \mathbb{C}\right) $. Therefore, $\left( 1_{2d}-v^{\ast
}v,1_{2d}-vv^{\ast }\right) \in \mathrm{Z}_{0}\left( \mathfrak{J}\right) $.
One has that $\partial _{1}[u]=[1_{2d}-v^{\ast }v]-[1_{2d}-vv^{\ast }]\in 
\mathrm{K}_{0}\left( \mathfrak{J}\right) $; see \cite[Proposition 9.2.3]%
{rordam_introduction_2000}. Then we have the following; see \cite[Lemma
9.3.1 and Lemma 9.3.2]{rordam_introduction_2000}.

\begin{proposition}
\label{Proposition:exact-boundary1}Suppose that $\mathfrak{A}$ is a strict
unital C*-algebra and $\mathfrak{J}$ is a strict ideal of $\mathfrak{A}$.
Then the sequence%
\begin{equation*}
\mathrm{K}_{1}\left( \mathfrak{A}\right) \rightarrow \mathrm{K}_{1}\left( 
\mathfrak{A/J}\right) \overset{\partial _{1}}{\rightarrow }\mathrm{K}%
_{0}\left( \mathfrak{J}\right) \rightarrow \mathrm{K}_{0}\left( \mathfrak{A}%
\right)
\end{equation*}%
is exact.
\end{proposition}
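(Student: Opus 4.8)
The plan is to reduce the statement to the classical exactness of the index-map portion of the six-term sequence attached to a short exact sequence of C*-algebras, and then observe that the semidefinable $\mathrm{K}$-groups and maps introduced above agree, \emph{as abstract abelian groups and group homomorphisms}, with the classical ones. Since $\mathfrak{J}$ is a norm-closed two-sided ideal of $\mathfrak{A}$, the sequence $0\to\mathfrak{J}\to\mathfrak{A}\to\mathfrak{A}/\mathfrak{J}\to 0$ is an honest short exact sequence of C*-algebras, with $\mathfrak{A}$ and $\mathfrak{A}/\mathfrak{J}$ unital and no separability hypothesis involved. First I would record the identifications. For the $\mathrm{K}_{0}$- and $\mathrm{K}_{1}$-groups of $\mathfrak{J}$ this is immediate, as their definitions in terms of the unitization $\mathfrak{J}^{+}$ coincide verbatim with the classical ones for a (not necessarily separable) non-unital C*-algebra. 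For $\mathrm{K}_{0}(\mathfrak{A/J})$ one checks that $[(p,p')]\mapsto[p+\mathfrak{J}]-[p'+\mathfrak{J}]$ is a well-defined group isomorphism onto $\mathrm{K}_{0}(\mathfrak{A}/\mathfrak{J})$: every projection over $\mathfrak{A}/\mathfrak{J}$ lifts to a $\ \mathrm{mod}\ \mathfrak{J}$ projection over $\mathfrak{A}$ (replace a self-adjoint contractive lift $a$ by $a^{2}$), $\ \mathrm{mod}\ \mathfrak{J}$ Murray--von Neumann equivalence is by definition Murray--von Neumann equivalence in $\mathfrak{A}/\mathfrak{J}$, and the coincidence of the sum operation and of the relation $\mathrm{B}_{0}(\mathfrak{A/J})$ with the Grothendieck construction is routine (cf.\ Lemma~\ref{Lemma:orthogonal-projections}). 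The case of $\mathrm{K}_{1}(\mathfrak{A/J})$ is analogous, Corollary~\ref{Corollary:homotopy-unitaries} identifying $\mathrm{B}_{1}(\mathfrak{A/J})$ with homotopy of unitaries in matrix algebras over $\mathfrak{A}/\mathfrak{J}$. Under these identifications the maps $\mathrm{K}_{1}(\mathfrak{J})\to\mathrm{K}_{1}(\mathfrak{A})$, $\mathrm{K}_{1}(\mathfrak{A})\to\mathrm{K}_{1}(\mathfrak{A/J})$ and $\mathrm{K}_{0}(\mathfrak{J})\to\mathrm{K}_{0}(\mathfrak{A})$ become the homomorphisms induced functorially by $\mathfrak{J}\hookrightarrow\mathfrak{A}$ and $\mathfrak{A}\twoheadrightarrow\mathfrak{A}/\mathfrak{J}$.

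Next I would check that the index map $\partial_{1}:\mathrm{K}_{1}(\mathfrak{A/J})\to\mathrm{K}_{0}(\mathfrak{J})$ defined above becomes, under these identifications, exactly the classical index map of the extension $0\to\mathfrak{J}\to\mathfrak{A}\to\mathfrak{A}/\mathfrak{J}\to 0$. This is clear from the second (equivalent) description given above: for $u\in U(M_{d}(\mathfrak{A})/M_{d}(\mathfrak{J}))$ one forms a partial isometry $v\in M_{2d}(\mathfrak{A})$ with $v\equiv u\oplus 0\ \mathrm{mod}\ \mathfrak{J}$ and sets $\partial_{1}[u]=[1_{2d}-v^{\ast}v]-[1_{2d}-vv^{\ast}]$, which is precisely the formula for the index map in \cite[Proposition~9.2.3]{rordam_introduction_2000}; see also \cite[Proposition~4.8.10]{higson_analytic_2000}.

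With these identifications in place, the claim reduces to the classical exactness of
\[
\mathrm{K}_{1}(\mathfrak{A})\to\mathrm{K}_{1}(\mathfrak{A}/\mathfrak{J})\overset{\partial_{1}}{\rightarrow}\mathrm{K}_{0}(\mathfrak{J})\to\mathrm{K}_{0}(\mathfrak{A})
\]
at $\mathrm{K}_{1}(\mathfrak{A}/\mathfrak{J})$ and at $\mathrm{K}_{0}(\mathfrak{J})$, which is \cite[Lemma~9.3.1 and Lemma~9.3.2]{rordam_introduction_2000}. The proofs there use only the homotopy lifting properties of unitaries and projections modulo an ideal and hence apply verbatim without any separability hypothesis; alternatively one may verify these two exactness points directly by the same arguments, now phrased with ``$\ \mathrm{mod}\ \mathfrak{J}$'' in place of ``in $\mathfrak{A}/\mathfrak{J}$'', invoking the Borel homotopy lemmas of Section~\ref{Subsection:strict-pair}. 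Since all four homomorphisms in the sequence were already observed to be definable, this simultaneously establishes the statement at the level of (semi)definable groups. The only real work is the bookkeeping of the first paragraph---matching the quotient-space presentations of the $\mathrm{K}$-groups with the classical ones so that the four maps correspond---and I expect no genuine obstacle there, each identification being witnessed by a homotopy lemma already established above.
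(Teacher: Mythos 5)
Your proposal is correct and follows essentially the same route as the paper, which proves this proposition simply by appeal to \cite[Lemma 9.3.1 and Lemma 9.3.2]{rordam_introduction_2000} after noting that $\partial_{1}$ agrees with the classical index map of the extension $0\rightarrow\mathfrak{J}\rightarrow\mathfrak{A}\rightarrow\mathfrak{A}/\mathfrak{J}\rightarrow 0$. The identifications you spell out in your first paragraph are exactly the bookkeeping the paper leaves implicit, and they present no obstacle.
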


Suppose that $\left( \mathfrak{A},\mathfrak{J}\right) $ is a strict unital
C*-pair. One can consider a natural definable homomorphism $\partial _{0}:%
\mathrm{K}_{0}\left( \mathfrak{A/J}\right) \rightarrow \mathrm{K}_{1}\left( 
\mathfrak{J}\right) $ called the \emph{exponential map}. This is defined as
follows. Consider an element of $\mathrm{K}_{0}\left( \mathfrak{A/J}\right) $
of the form $[p]-[q]$ for some $p,q\in \mathrm{Proj}\left( M_{n}\left( 
\mathfrak{A}\right) \mathfrak{/}M_{n}\left( \mathfrak{J}\right) \right) $.\
Then we have that $\exp \left( 2\pi ip\right) $ and $\exp \left( 2\pi
iq\right) $ are unitary elements of $M_{n}\left( \mathfrak{J}^{+}\right) $
such that $\exp \left( 2\pi ip\right) \equiv \exp \left( 2\pi iq\right) 
\mathrm{\ \mathrm{mod}}\ M_{n}\left( \mathfrak{J}\right) $. Then one has
that $\partial _{0}\left( [p]-[q]\right) =[\exp \left( 2\pi ip\right)
]-[\exp \left( 2\pi iq\right) ]\in \mathrm{K}_{1}\left( \mathfrak{J}\right) $%
; see \cite[Proposition 12.2.2]{rordam_introduction_2000} and \cite[Section
4.9]{higson_analytic_2000}. From Proposition \ref%
{Proposition:exact-boundary1} one can obtain via the Bott isomorphism
theorem \cite[Theorem 4.9.1]{higson_analytic_2000} the following.

\begin{proposition}
\label{Proposition:exact-boundary0}Suppose that $\left( \mathfrak{A},%
\mathfrak{J}\right) $ is a strict unital C*-pair. Then the sequence%
\begin{equation*}
\mathrm{K}_{0}\left( \mathfrak{A}\right) \rightarrow \mathrm{K}_{0}\left( 
\mathfrak{A/J}\right) \overset{\partial _{0}}{\rightarrow }\mathrm{K}%
_{1}\left( \mathfrak{J}\right) \rightarrow \mathrm{K}_{1}\left( \mathfrak{A}%
\right)
\end{equation*}%
is exact.
\end{proposition}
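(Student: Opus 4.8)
The plan is to deduce the exactness of the displayed sequence from Proposition \ref{Proposition:exact-boundary1}, the exactness of the index sequence, together with Bott periodicity, in precisely the way that the exponential exact sequence is classically derived from the index exact sequence. As a preliminary step I would record that, as \emph{abstract} abelian groups, $\mathrm{K}_{0}(\mathfrak{A})$, $\mathrm{K}_{0}(\mathfrak{A/J})$, $\mathrm{K}_{1}(\mathfrak{J})$ and $\mathrm{K}_{1}(\mathfrak{A})$ coincide with the operator $\mathrm{K}$-theory groups of the C*-algebras $\mathfrak{A}$, $\mathfrak{A/J}$, $\mathfrak{J}$, $\mathfrak{A}$ — immediate from the definitions, which differ from the standard ones only by recording a Polish topology on the spaces of cycles — and that the homomorphisms in the statement agree with the classical homomorphisms attached to the short exact sequence $0\to\mathfrak{J}\to\mathfrak{A}\to\mathfrak{A/J}\to 0$ of C*-algebras; for the exponential map $\partial_{0}$, defined here by $[p]-[q]\mapsto[\exp(2\pi i p)]-[\exp(2\pi i q)]$, this identification is \cite[Proposition 12.2.2]{rordam_introduction_2000}. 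In particular the asserted exactness is a purely algebraic property of operator $\mathrm{K}$-theory, the definability of the maps having already been noted above.

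I would then run the standard argument. Applying Proposition \ref{Proposition:exact-boundary1} to the suspended extension $0\to S\mathfrak{J}\to S\mathfrak{A}\to S(\mathfrak{A/J})\to 0$ and transporting along the natural Bott isomorphisms $\mathrm{K}_{0}(B)\cong\mathrm{K}_{1}(SB)$ of \cite[Theorem 4.9.1]{higson_analytic_2000}, the index map $\partial_{1}^{S}\colon\mathrm{K}_{1}(S(\mathfrak{A/J}))\to\mathrm{K}_{0}(S\mathfrak{J})$ is carried, up to sign, to the exponential map $\partial_{0}\colon\mathrm{K}_{0}(\mathfrak{A/J})\to\mathrm{K}_{1}(\mathfrak{J})$, and the segment $\mathrm{K}_{1}(S\mathfrak{A})\to\mathrm{K}_{1}(S(\mathfrak{A/J}))\to\mathrm{K}_{0}(S\mathfrak{J})\to\mathrm{K}_{0}(S\mathfrak{A})$ of the suspended index sequence is carried to the sequence $\mathrm{K}_{0}(\mathfrak{A})\to\mathrm{K}_{0}(\mathfrak{A/J})\to\mathrm{K}_{1}(\mathfrak{J})\to\mathrm{K}_{1}(\mathfrak{A})$ of the statement, which is therefore exact. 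The only non-formal point is that $\partial_{1}^{S}$ corresponds to $\partial_{0}$ under Bott periodicity, which is classical; here I would simply cite \cite[Chapter 12]{rordam_introduction_2000} (see also \cite[Section 4.9]{higson_analytic_2000}).

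The main obstacle — really the only place where care is needed — is that the suspensions $S\mathfrak{A}$, $S\mathfrak{J}$, $S(\mathfrak{A/J})$ are non-unital, so Proposition \ref{Proposition:exact-boundary1}, stated for strict \emph{unital} C*-pairs, does not literally apply to $(S\mathfrak{A},S\mathfrak{J})$. I would circumvent this by working with the strict unital C*-pair $((S\mathfrak{A})^{+},S\mathfrak{J})$ — noting that $S\mathfrak{J}$ is still a proper strict ideal and that the $\mathrm{K}$-groups entering the argument do not change when the ambient algebra is unitalized — or, more economically, by dropping the internal derivation and citing the classical six-term exact sequence of the extension $0\to\mathfrak{J}\to\mathfrak{A}\to\mathfrak{A/J}\to 0$ directly, which is legitimate by the identifications of the first paragraph. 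Should one wish to stay entirely within the strict framework, one would also check that $C_{0}(\mathbb{R},-)$ carries strict C*-algebras to strict C*-algebras, with the strict topology on the unit ball given by compact-open convergence in the strict topology of the fibre; this is routine, along the lines of the treatment of $C(X,M(A))$ in Section \ref{Subsection:multiplier}.
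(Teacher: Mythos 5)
Your proposal is correct and follows essentially the same route as the paper, which likewise obtains this exactness from Proposition \ref{Proposition:exact-boundary1} via the Bott isomorphism theorem \cite[Theorem 4.9.1]{higson_analytic_2000} (with \cite[Proposition 12.2.2]{rordam_introduction_2000} for the identification of $\partial_{0}$), the exactness itself being a purely algebraic statement once the maps are known to be definable. Your extra care about the non-unitality of the suspensions is a reasonable elaboration of a point the paper leaves implicit.
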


Suppose that $\left( \mathfrak{A},\mathfrak{J}\right) $ is a strict unital
C*-pair. Then as discussed above we have exact sequences 
\begin{equation*}
\mathrm{K}_{0}\left( \mathfrak{J}\right) \rightarrow \mathrm{K}_{0}\left( 
\mathfrak{A}\right) \rightarrow \mathrm{K}_{0}\left( \mathfrak{A/J}\right)
\end{equation*}%
and 
\begin{equation*}
\mathrm{K}_{1}\left( \mathfrak{J}\right) \rightarrow \mathrm{K}_{1}\left( 
\mathfrak{A}\right) \rightarrow \mathrm{K}_{1}\left( \mathfrak{A/J}\right) 
\text{.}
\end{equation*}%
These are joined together by the index and exponential maps. From
Proposition \ref{Proposition:exact-boundary0}, Proposition \ref%
{Proposition:exact-boundary1}, Corollary \ref{Corollary:exact-K-theory} and
Proposition \ref{Proposition:exact-K1}, one obtains the six-term exact
sequence
\begin{center}
\begin{tikzcd}
\mathrm{K}_{1}\left( \mathfrak{J}\right) \arrow[r]  &   \mathrm{K}_{1}\left( \mathfrak{A}\right) \arrow[r]  &  \mathrm{K}_{1}\left(\mathfrak{A/J}\right) \arrow[d,"\partial _0"]  \\ 
\mathrm{K}_{0}\left( \mathfrak{A/J}\right) \arrow[u, "\partial _1"]  &  \mathrm{K}_{0}\left( \mathfrak{A}\right) \arrow[l] &  \mathrm{K}_{0}\left(\mathfrak{J}\right) \arrow[l]
\end{tikzcd}
\end{center}
for the strict unital C*-pair $\left( \mathfrak{A},\mathfrak{J}\right) $,
where the vertical arrows are the index map and the exponential map; see 
\cite[Theorem 12.1.2]{rordam_introduction_2000}.

\section{Definable $\mathrm{K}$-homology of separable C*-algebras\label%
{Section:Ext}}

In this section we recall the definition of the \textrm{Ext }invariant for
separable unital C*-algebras, and its description due to Paschke in terms of
the $\mathrm{K}$-theory of Paschke dual algebras as defined in \cite%
{higson_algebra_1995,higson_analytic_2000} or, equivalently, of commutants
in the Calkin algebra. Following \cite[Chapter 3]{higson_analytic_2000}, we
consider the group \textrm{Ext}$\left( -\right) ^{-1}$ defined in terms of 
\emph{unital }semi-split extensions. In the case of separable unital \emph{%
nuclear }C*-algebras, every unital extension is semi-split, and the group 
\textrm{Ext}$\left( -\right) ^{-1}$ coincides with the group \textrm{Ext}$%
\left( -\right) $ defined in terms of unital extensions. Using Paschke's $%
\mathrm{K}$-theoretical description of \textrm{Ext }from \cite%
{paschke_theory_1981}, we show that $\mathrm{Ext}\left( -\right) ^{-1}$
yields a contravariant functor from separable unital C*-algebras to the
category of \emph{definable }groups.

We also recall the definition of the $\mathrm{K}$-homology groups of
separable C*-algebras as in \cite[Chapter 5]{higson_analytic_2000}. Using
their description in terms of $\mathrm{Ext}$, we conclude that they can be
endowed with the structure of \emph{definable groups}, in such a way that
the assignments $A\mapsto \mathrm{K}^{0}\left( A\right) $ and $A\mapsto 
\mathrm{K}^{1}\left( A\right) $ are functors from the category of separable
C*-algebras to the category of definable groups.

We define a \emph{separable C*-pair }to be a pair $\left( A,I\right) $ where 
$A$ is a separable C*-algebra and $I$ is a closed two-sided ideal of $A$. A
morphism $\left( A,I\right) \rightarrow \left( B,J\right) $ between
separable C*-pairs is a *-homomorphisms $A\rightarrow B$ that maps $I$ to $J$%
. Recall that a C*-algebra $A$ is \emph{nuclear }if the identity map of $A$
is the pointwise limit of contractive completely positive maps that factor
through finite-dimensional C*-algebras; see \cite[Section 3.3]%
{higson_analytic_2000}. We say that a separable C*-pair $\left( A,I\right) $
is nuclear if $A$ is nuclear. In this section, we will also introduce the
relative definable $\mathrm{K}$-homology groups, and the six-term exact
sequence in $\mathrm{K}$-homology associated with a separable nuclear
C*-pair.

\subsection{C*-algebra extensions and the \textrm{Ext group\label%
{Subsection:definable-Ext}}}

Let $H$ be a separable Hilbert space, and $B\left( H\right) $ be the algebra
of bounded linear operators on $H$. We let $K\left( H\right) \subseteq
B\left( H\right) $ be the closed ideal of compact operators, and $Q\left(
H\right) $ be the \emph{Calkin algebra}, which is the quotient of $B\left(
H\right) $ by $K\left( H\right) $. Let $\pi :B\left( H\right) \rightarrow
Q\left( H\right) $ be the quotient map.

If $U\in U\left( H\right) $ is a unitary operator, then $U$ defines an
automorphism \textrm{Ad}$\left( U\right) :B\left( H\right) \rightarrow
B\left( H\right) $ given by $T\mapsto U^{\ast }TU$. As $K\left( H\right) $
is $\mathrm{Ad}\left( U\right) $-invariant, we have an induced automorphism
of $Q\left( H\right) $, still denoted by \textrm{Ad}$\left( U\right) $.

Suppose that $A$ is a unital, separable C*-algebra. A unital\emph{\ extension%
} of $A$ (by $K\left( H\right) $) is a unital *-homomorphism $\varphi
:A\rightarrow Q\left( H\right) $. A unital extension of $A$ is \emph{%
injective} or \emph{essential} if it is an injective *-homomorphism $%
A\rightarrow Q\left( H\right) $. Two extensions $\varphi ,\varphi ^{\prime
}:A\rightarrow Q\left( H\right) $ are\emph{\ equivalent }if there exists $%
U\in U\left( H\right) $ such that \textrm{Ad}$\left( U\right) \circ \varphi
^{\prime }=\varphi $. An injective, unital extension $\varphi :A\rightarrow
Q\left( H\right) $ is \emph{semi-split }(or \emph{weakly nuclear }in the
terminology of \cite{elliott_abstract_2001}) if there exists a unital
completely positive (ucp) map $\sigma :A\rightarrow B\left( H\right) $ such
that $\varphi =\pi \circ \sigma $ \cite[Theorem 3.1.5]{higson_analytic_2000}%
. An injective unital extension $\varphi :A\rightarrow Q\left( H\right) $ is 
\emph{split }or \emph{trivial }if there exists a unital *-homomorphism $%
\tilde{\varphi}:A\rightarrow B\left( H\right) $ such that $\varphi =\pi
\circ \tilde{\varphi}$.

Every unital, essential extension $\varphi :A\rightarrow Q\left( H\right) $
determines an exact sequence%
\begin{equation*}
0\rightarrow K\left( H\right) \rightarrow E_{\varphi }\rightarrow
A\rightarrow 0
\end{equation*}%
where 
\begin{equation*}
E_{\varphi }=\left\{ \left( x,y\right) \in A\oplus B\left( H\right) :\varphi
(x)=\pi (y)\right\} \text{.}
\end{equation*}%
and $K\left( H\right) $ is an essential ideal of $E_{\varphi }$. The
extension is split if and only if the map $E_{\varphi }\rightarrow A$ is a
split epimorphism in the category of unital C*-algebras and unital
*-homomorphisms.

Conversely, given an exact sequence%
\begin{equation*}
0\rightarrow K\left( H\right) \rightarrow E\overset{p}{\rightarrow }%
A\rightarrow 0
\end{equation*}%
where $p:E\rightarrow A$ is a unital *-homomorphism and $K\left( H\right) $
is an essential ideal of $E$, one can define an essential unital extension $%
\varphi :A\rightarrow Q\left( H\right) $ as follows. Consider $K\left(
H\right) \subseteq E\subseteq B\left( H\right) $, then define $\varphi
\left( a\right) =\pi \left( \widehat{a}\right) \in Q\left( H\right) $ for $%
a\in A$ where $\widehat{a}\in E$ is such that $p\left( \widehat{a}\right) =a$%
. Again, we have that $\varphi $ is trivial if and only if $p:E\rightarrow A$
is a split epimorphism.

Let $A$ be a separable, unital C*-algebra. One defines \textrm{Ext}$\left(
A\right) $ to be the set of unitary equivalence classes of unital, injective
extensions of $A$ by $K\left( H\right) $; see \cite[Definition 2.7.1]%
{higson_analytic_2000}, and \textrm{Ext}$_{\text{nuc}}\left( A\right) =%
\mathrm{Ext}\left( A\right) ^{-1}$ to be the subset of unitary equivalence
classes of unital, injective \emph{semi-split }(or weakly nuclear)\emph{\ }%
extensions of $A$ by $K\left( H\right) $ \cite[15.7.2]{blackadar_theory_1998}%
.

One can define a \emph{commutative monoid operation }on $\mathrm{Ext}\left(
A\right) $. The (additively denoted) operation on \textrm{Ext}$\left(
A\right) $ is induced by the map $\left( \varphi ,\varphi ^{\prime }\right)
\mapsto \mathrm{Ad}\left( V\right) \circ (\varphi \oplus \varphi ^{\prime })$
where $V:H\rightarrow H\oplus H$ is a surjective linear isometry; see \cite[%
Proposition 2.7.2]{higson_analytic_2000}. By Voiculescu's Theorem \cite[%
Theorem 3.4.3]{higson_analytic_2000}, one has that the neutral element of $%
\mathrm{Ext}\left( A\right) $ is the set of split extensions, which form a
single unitary equivalence class \cite[Theorem 3.4.7]{higson_analytic_2000}.
Furthermore, the set \textrm{Ext}$\left( A\right) ^{-1}$ is equal to the set
of elements of \textrm{Ext}$\left( A\right) $ that have an additive inverse,
whence it forms a group \cite[Definition 2.7.6]{higson_analytic_2000}. When $%
A$ is a \emph{nuclear} unital separable C*-algebra, by the Choi--Effros
lifting theorem \cite[Theorem 3.3.6]{higson_analytic_2000}, one has that
every extension of $A$ is semi-split, and \textrm{Ext}$\left( A\right) =%
\mathrm{Ext}\left( A\right) ^{-1}$. In particular, in this case \textrm{Ext}$%
\left( A\right) $ is itself a group.

Let $A$ be a separable unital C*-algebra. We regard \textrm{Ext}$\left(
A\right) ^{-1}$ as a definable group, as follows. Fix a separable Hilbert
space $H$. Let us say that a ucp map $\phi :A\rightarrow B\left( H\right) $
is \emph{ample }if $\left\Vert \left( \pi \circ \phi \right) (x)\right\Vert
=\left\Vert x\right\Vert $ for every $x\in A$. Notice that the set $\mathrm{A%
\mathrm{UCP}}\left( A,B\left( H\right) \right) $ of ample ucp maps $%
A\rightarrow B\left( H\right) $ is a $G_{\delta }$ subset of the space $%
\mathrm{\mathrm{Ball}}\left( L\left( A,B\left( H\right) \right) \right) $ of
bounded linear maps of norm at most $1$ endowed with the topology of
pointwise strong-* convergence. Thus, $\mathrm{A\mathrm{UCP}}\left(
A,B\left( H\right) \right) $ is a Polish space.

Let $\mathcal{E}\left( A\right) \subseteq \mathrm{A\mathrm{UCP}}\left(
A,B\left( H\right) \right) $ be the Borel set of ample ucp maps $\varphi
:A\rightarrow B\left( H\right) $ such that%
\begin{equation*}
\varphi \left( xy\right) \equiv \varphi (x)\varphi (y)\mathrm{\ \mathrm{mod}}%
\ K\left( H\right)
\end{equation*}%
for $x,y\in A$. An injective, unital semi-split extension of $A$ by
definition has a ucp lift, which is an element of $\mathcal{E}\left(
A\right) $, and conversely every element of $\mathcal{E}\left( A\right) $
gives rise to an injective, unital semi-split extension of $A$. Thus, we can
regard $\mathcal{E}\left( A\right) $ as the space of representatives of
injective, unital semi-split extensions of $A$.\ We define a Polish topology
on $\mathcal{E}\left( A\right) $ that induces the Borel structure on $%
\mathcal{E}\left( A\right) $ by declaring a net $\left( \varphi _{i}\right)
_{i\in I}$ in $\mathcal{E}\left( A\right) $ to converge to $\varphi $ if and
only if, for every $x,y\in X$, $\left( \varphi _{i}(x)\right) _{i\in \omega
} $ strong-* converges to $\varphi (x)$, and $\left( \varphi _{i}\left(
xy\right) -\varphi _{i}(x)\varphi _{i}(y)\right) _{i\in \omega }$
norm-converges to $\varphi \left( xy\right) -\varphi (x)\varphi (y)$.

Two elements $\varphi ,\varphi ^{\prime }$ of $\mathcal{E}\left( A\right) $
represent the same element $\left[ \varphi \right] $ of \textrm{Ext}$\left(
A\right) ^{-1}$ if and only if there exists $U\in U\left( H\right) $ such
that $U^{\ast }\varphi \left( a\right) U\equiv \varphi ^{\prime }\left(
a\right) \mathrm{\ \mathrm{mod}}\ K\left( H\right) $ for every $a\in A$.
This defines an analytic equivalence relation $\thickapprox $ on $\mathcal{E}%
\left( A\right) $. We can thus regard \textrm{Ext}$\left( A\right) ^{-1}$ as
the semidefinable set $\mathcal{E}\left( A\right) \left/ \thickapprox
\right. $.

We now observe that the group operations on \textrm{Ext}$\left( A\right)
^{-1}$ are definable, and thus this turns $\mathrm{Ext}\left( A\right) ^{-1}$
into a semidefinable group. We will later show in Proposition \ref%
{Proposition:K-Ext} that in fact $\mathrm{Ext}\left( A\right) ^{-1}$ is a 
\emph{definable }group.

\begin{proposition}
Let $A$ be a separable unital C*-algebra. The addition operation $\left(
x,y\right) \mapsto x+y$ and the additive inverse operation $x\mapsto -x$ on $%
\mathrm{Ext}\left( A\right) ^{-1}$ are definable functions. Thus, $\mathcal{E%
}\left( A\right) \left/ \thickapprox \right. =\mathrm{Ext}\left( A\right)
^{-1}$ is a semidefinable group.
\end{proposition}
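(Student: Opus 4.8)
The plan is to produce explicit Borel lifts of the addition and of the additive-inverse operations for the presentation $\mathrm{Ext}(A)^{-1}=\mathcal{E}(A)/\!\thickapprox$. Once this is done, since $\mathcal{E}(A)$ is Polish (hence standard Borel) and $\thickapprox$ is analytic, $\mathcal{E}(A)/\!\thickapprox$ is a semidefinable set whose group operations are definable, i.e.\ a semidefinable group, as claimed.

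For addition, fix once and for all a surjective linear isometry $W\colon H\to H\oplus H$ and set $\widehat m(\varphi,\varphi')(a):=W^{\ast}\big(\varphi(a)\oplus\varphi'(a)\big)W$. Using the identifications $K(H\oplus H)=M_2(K(H))$ and $Q(H\oplus H)=M_2(Q(H))$ one checks directly that $\widehat m(\varphi,\varphi')$ is again an ample ucp map that is multiplicative modulo $K(H)$, so $\widehat m$ is a well-defined map $\mathcal E(A)\times\mathcal E(A)\to\mathcal E(A)$; by the very definition of the addition on $\mathrm{Ext}(A)^{-1}$ it is a lift of $(x,y)\mapsto x+y$. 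It then remains to check that $\widehat m$ is Borel, and I would do this by showing it is continuous for the Polish topologies on $\mathcal E(A)$: conjugation by the fixed unitary $W$ preserves strong-$*$ convergence, so the strong-$*$ component of convergence is preserved, while the norm component is preserved because
\[
\widehat m(\varphi,\varphi')(xy)-\widehat m(\varphi,\varphi')(x)\widehat m(\varphi,\varphi')(y)=W^{\ast}\big[\big(\varphi(xy)-\varphi(x)\varphi(y)\big)\oplus\big(\varphi'(xy)-\varphi'(x)\varphi'(y)\big)\big]W
\]
and conjugation by $W$ is isometric.

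For the additive inverse, the idea is to dilate and then compress. Given $\varphi\in\mathcal E(A)$, use the Definable Stinespring Dilation Theorem (Lemma \ref{Lemma:definable-Stinespring}) to choose, Borel-ly in $\varphi$, an isometry $V_\varphi$ and a nondegenerate representation $\pi_\varphi\colon A\to B(H)$ with $\varphi(a)=V_\varphi^{\ast}\pi_\varphi(a)V_\varphi$, and put $P_\varphi:=V_\varphi V_\varphi^{\ast}$. Because $\pi\circ\varphi$ is multiplicative, the Schwarz inequality for $\varphi$ yields
\[
\varphi(a^{\ast}a)-\varphi(a)^{\ast}\varphi(a)=\big((1-P_\varphi)\pi_\varphi(a)V_\varphi\big)^{\ast}\big((1-P_\varphi)\pi_\varphi(a)V_\varphi\big)\in K(H),
\]
hence $(1-P_\varphi)\pi_\varphi(a)P_\varphi\in K(H)$ and therefore $[\pi_\varphi(a),P_\varphi]\in K(H)$ for every $a$. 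Fix one ample nondegenerate representation $\rho_0\colon A\to B(H)$ and set $Q_\varphi:=(1-P_\varphi)\oplus 1\in B(H\oplus H)$, which essentially commutes with $\pi_\varphi\oplus\rho_0$; the compression $a\mapsto Q_\varphi\big(\pi_\varphi(a)\oplus\rho_0(a)\big)Q_\varphi$ is then an ample ucp map that is multiplicative modulo $K$. Transporting it to $H$ along a Borel family of isometries $U_\varphi\colon H\to H\oplus H$ with $U_\varphi U_\varphi^{\ast}=Q_\varphi$ --- such a family being obtained by Gram--Schmidt on the $Q_\varphi$-images of a fixed orthonormal basis of $H\oplus H$ together with a reindexing, which is Borel and possible since $Q_\varphi(H\oplus H)$ is always infinite dimensional --- gives an element $\psi(\varphi):=U_\varphi^{\ast}\big(\pi_\varphi\oplus\rho_0\big)U_\varphi$ of $\mathcal E(A)$ depending Borel-ly on $\varphi$. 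A short computation using $[\pi_\varphi(a),P_\varphi]\in K(H)$ shows that, after the evident unitary identification, $\varphi(a)\oplus\psi(\varphi)(a)\equiv\pi_\varphi(a)\oplus\rho_0(a)\ \mathrm{mod}\ K(H\oplus H)$, so $\varphi\oplus\psi(\varphi)$ is a trivial extension and $[\psi(\varphi)]=-[\varphi]$. In particular $\psi$ is compatible with $\thickapprox$, and it is a Borel lift of $x\mapsto -x$.

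The algebraic verifications (that $\widehat m$ and $\psi$ land in $\mathcal E(A)$ and represent the correct classes) and the continuity of $\widehat m$ are routine, so the one step that genuinely needs care is the inverse: one must realize the compressed extension on the fixed Hilbert space $H$ by a \emph{Borel} choice of the conjugating isometries $U_\varphi$ out of the varying subspace $Q_\varphi(H\oplus H)$, and one must confirm that the resulting $\varphi\mapsto\psi(\varphi)$ is Borel into $\mathcal E(A)$ for the topology used there, i.e.\ that $\varphi\mapsto\psi(\varphi)(a)$ is strong-$*$ Borel and $\varphi\mapsto\psi(\varphi)(ab)-\psi(\varphi)(a)\psi(\varphi)(b)$ is norm Borel. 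The latter follows from continuity of the relevant products (norm times bounded-strong-$*$, into the norm topology of $K(H)$) once one notes that this defect equals $-U_\varphi^{\ast}\big([\pi_\varphi(a),P_\varphi]\oplus 0\big)\big(\pi_\varphi(b)\oplus\rho_0(b)\big)U_\varphi$. If desired, this Hilbert-space bookkeeping can instead be carried out using the Definable Voiculescu Theorem and its corollary on trivial representations (Lemma \ref{Lemma:trivial-representations}).
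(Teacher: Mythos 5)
Your proof is correct and follows essentially the same route as the paper: the same fixed-isometry lift $(\varphi,\varphi')\mapsto \mathrm{Ad}(W)\circ(\varphi\oplus\varphi')$ for addition, and the definable Stinespring dilation plus the complementary compression padded by a fixed ample representation for the additive inverse. The extra care you take in selecting the isometries $U_\varphi$ Borel-measurably onto the varying subspaces $Q_\varphi(H\oplus H)$ fills in a bookkeeping step that the paper's one-line formula for $\varphi'$ leaves implicit.
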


\begin{proof}
Fix a representation $A\subseteq B\left( H\right) $ such that $A\cap K\left(
H\right) =\left\{ 0\right\} $. The assertion for addition is clear, as the
Borel map $\mathcal{E}\left( A\right) \times \mathcal{E}\left( A\right)
\rightarrow \mathcal{E}\left( A\right) $, $\left( \varphi ,\varphi ^{\prime
}\right) \mapsto \mathrm{Ad}\left( W\right) \circ \left( \varphi \oplus
\varphi ^{\prime }\right) $ is a lift for the addition operation, where $W$
is a fixed surjective linear isometry $H\rightarrow H\oplus H$.

In order to obtain a lift for the function $\mathrm{Ext}\left( A\right)
^{-1}\rightarrow \mathrm{Ext}\left( A\right) ^{-1}$, $x\mapsto -x$, one can
use the definable Stinespring Dilation Theorem (Lemma \ref%
{Lemma:definable-Stinespring}). Thus, if $\varphi \in \mathcal{E}\left(
A\right) $, and $\pi $ and $V$ are the nondegenerate representation $\pi $
of $A$ and the isometry $V:H\rightarrow H$ obtained from $\varphi $ in a
Borel fashion as in Lemma \ref{Lemma:definable-Stinespring}, then defining
the projection $P:=I-VV^{\ast }\in B\left( H\right) $ and $\varphi ^{\prime
}:A\rightarrow B\left( H\right) $, $a\mapsto W^{\ast }\left( P\pi \left(
a\right) P\oplus a\right) W$, one has that $\varphi ^{\prime }\in \mathcal{E}%
\left( A\right) $ represents $-\left[ \varphi \right] $, where as above $%
W:H\rightarrow H\oplus H$ is a fixed surjective linear isometry; see also 
\cite[Theorem 3.4.7]{higson_analytic_2000}.
\end{proof}

Suppose that $A,B\subseteq B\left( H\right) $ are separable unital
C*-algebras. A unital *-homomorphism $\alpha :A\rightarrow B$ induces a 
\emph{definable }group homomorphism $\mathrm{Ext}\left( B\right)
^{-1}\rightarrow \mathrm{Ext}\left( A\right) ^{-1}$, as follows. If $\varphi
\in \mathcal{E}\left( B\right) $ is a representative for an injective,
unital extension, then one can consider $\alpha ^{\ast }\left( \varphi
\right) \in \mathcal{E}\left( A\right) $ defined by $a\mapsto W^{\ast
}\left( \left( \varphi \circ \alpha \right) \left( a\right) \oplus a\right)
W $ where $W:H\rightarrow H\oplus H$ is a fixed surjective linear isometry.
This defines a Borel function $\alpha ^{\ast }:\mathcal{E}\left( B\right)
\rightarrow \mathcal{E}\left( A\right) $, which induces a definable group
homomorphism $\alpha ^{\ast }:\mathrm{Ext}\left( B\right) ^{-1}\rightarrow 
\mathrm{Ext}\left( A\right) ^{-1}$. Thus, $\mathrm{Ext}\left( -\right) ^{-1}$
is a contravariant functor from the category of separable unital C*-algebras
to the category of semidefinable groups.

\subsection{$\mathrm{K}_{0}$-group and the Voiculescu property}

Let $\mathfrak{A}$ be a strict unital C*-algebra. Recall that two
projections $p,q\in \mathfrak{A}$ are Murray--von Neumann (MvN) equivalent
if there exists $v\in \mathfrak{A}$ such that $v^{\ast }v=p$ and $vv^{\ast
}=q$, in which case we write $p\sim _{\mathrm{MvN}}q$. We say that a
projection $p\in \mathfrak{A}$ is ample if $p\oplus 0$ is Murray--von
Neumann equivalent to $p\oplus 1$, and co-ample if $1-p$ is ample.

\begin{definition}
\label{Definition:V-property}Let $\mathfrak{A}$ be a strict unital
C*-algebra. We say that $\mathfrak{A}$ satisfies the\emph{\ Voiculescu
property} if the set of ample projections in $\mathfrak{A}$ is a Borel
subset of $\mathrm{Ball}\left( \mathfrak{A}\right) $ containing $1$, and
there exist strict unital *-isomorphisms $\Phi _{k,n}:M_{n}\left( \mathfrak{A%
}\right) \rightarrow M_{k}\left( \mathfrak{A}\right) $ for $n,k\geq 1$ such
that, for $n,k,m,n_{0},k_{0},n_{1},k_{1}\geq 1$:

\begin{enumerate}
\item for $n>k$, $\Phi _{k,n}\left( p\oplus 0_{n-k}\right) \sim _{\mathrm{MvN%
}}p$ for every projection $p\in M_{k}\left( \mathfrak{A}\right) $;

\item $\Phi _{n,n}=\mathrm{id}_{M_{n}\left( \mathfrak{A}\right) }$,

\item $\Phi _{k,m}\circ \Phi _{m,n}$ is unitarily equivalent to $\Phi _{k,n}$%
;

\item $\Phi _{k_{0},n_{0}}\oplus \Phi _{k_{1},n_{1}}$ is unitarily
equivalent to $\Phi _{k_{0}+k_{1},n_{0}+n_{1}}|_{M_{n_{0}}\left( \mathfrak{A}%
\right) \oplus M_{n_{1}}\left( \mathfrak{A}\right) }$.
\end{enumerate}

We then define $\mathrm{Z}_{0}^{\mathrm{A}}\left( \mathfrak{A}\right) $ to
be the Borel set of projections in $\mathfrak{A}$ that are both ample and
co-ample.
\end{definition}

\begin{remark}
\label{Remark:V-property}Let $\mathfrak{A}$ be a strict C*-algebra that
satisfies the Voiculescu property.\ Then for $n>k\geq 1$ and $p\in
M_{n}\left( \mathfrak{A}\right) $ we have $\Phi _{k,n}\left( p\right) \oplus
0_{n-k}\sim _{\mathrm{MvN}}p$. Indeed, by (1) we have that $\Phi
_{k,n}\left( \Phi _{k,n}\left( p\right) \oplus 0_{n-k}\right) \sim _{\mathrm{%
MvN}}\Phi _{k,n}\left( p\right) $. Therefore, $\Phi _{k,n}\left( p\right)
\oplus 0_{n-k}\sim _{\mathrm{MvN}}p$.
\end{remark}

\begin{lemma}
Suppose that $\mathfrak{A}$ is a strict unital C*-algebra that satisfies the
Voiculescu property. Then $\left[ 1\right] $ is the neutral element of $%
\mathrm{K}_{0}\left( \mathfrak{A}\right) $.
\end{lemma}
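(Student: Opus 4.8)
The plan is to prove the apparently stronger statement that $[1] = [1] + [1]$ in the abelian group $\mathrm{K}_0(\mathfrak{A})$; since the only element $g$ of an abelian group satisfying $g = g + g$ is the neutral element, this yields the claim. Here $[1]$ is, by definition, the class of the pair $(1_1, 0_1) \in \mathrm{Z}_0^{(1)}(\mathfrak{A})$, and we regard $\mathfrak{A}$ as the strict unital C*-pair $(\mathfrak{A}, 0)$, so that ``projection $\mathrm{mod}\ 0$'' means honest projection and ``$\sim_{\mathrm{MvN}}\ \mathrm{mod}\ 0$'' means Murray--von Neumann equivalence.

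First I would isolate two routine facts about the equivalence relation $\mathrm{B}_0(\mathfrak{A})$, both obtained by setting $m = n = 0$ in its definition (i.e.\ using neither an ``$r$''-summand nor a ``$0_n$''-summand): (a) $[p, p'] = [p \oplus 0_\ell, p' \oplus 0_\ell]$, since the two projections whose Murray--von Neumann equivalence is then required agree up to a permutation of direct summands, hence up to unitary conjugation; and (b) if $p \sim_{\mathrm{MvN}} q$ inside $M_k(\mathfrak{A})$, then $[p, r] = [q, r]$ for every projection $r \in M_k(\mathfrak{A})$, because a partial isometry implementing $p \sim_{\mathrm{MvN}} q$ may be direct-summed with $r$ to witness $p \oplus r \sim_{\mathrm{MvN}} q \oplus r$. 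Combining these with the group law $[p, p'] + [q, q'] = [p \oplus q, p' \oplus q']$ recorded in the ``$\mathrm{K}_0$-group'' subsection, I obtain $[1] + [1] = [1_1 \oplus 1_1, 0_1 \oplus 0_1] = [1_2, 0_2]$ and, by (a), $[1] = [1_1 \oplus 0_1, 0_2]$, where $1_2$ and $0_2$ denote the unit and zero of $M_2(\mathfrak{A})$. Thus it remains to check that $[1_1 \oplus 0_1, 0_2] = [1_2, 0_2]$.

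This is exactly where the Voiculescu property of $\mathfrak{A}$ enters: by Definition \ref{Definition:V-property} the unit $1$ is an ample projection of $\mathfrak{A}$, and by the definition of ampleness this means precisely that $1_1 \oplus 0_1 \sim_{\mathrm{MvN}} 1_1 \oplus 1_1 = 1_2$ in $M_2(\mathfrak{A})$. Applying fact (b) with $k = 2$ and $r = 0_2$ gives $[1_1 \oplus 0_1, 0_2] = [1_2, 0_2]$, and assembling the chain of equalities yields $[1] = [1_2, 0_2] = [1] + [1]$, hence $[1] = 0$. I do not anticipate any genuine obstacle here: the argument is a short unwinding of the definitions, and the only care required is bookkeeping of matrix sizes together with the (standard) stability of Murray--von Neumann equivalence under zero-padding and under adjoining a common direct summand. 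In particular, none of the remaining data in the Voiculescu property — the system of isomorphisms $\Phi_{k,n}$ and their compatibility conditions — is needed for this lemma.
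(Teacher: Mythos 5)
Your proof is correct and takes essentially the same route as the paper: both hinge on the single fact that $1$ is ample, i.e.\ $1\oplus 0\sim_{\mathrm{MvN}}1\oplus 1$, which forces $[1]$ to absorb under addition. The only cosmetic difference is that the paper establishes $[p]+[1]=[p]$ for an arbitrary projection $p$ and then cancels, whereas you specialize to $p=1$ and invoke the uniqueness of idempotents in a group --- the same cancellation argument in disguise.
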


\begin{proof}
Recall that, by Lemma \ref{Lemma:equivalent-B0}, given projections $p,q$
over $\mathfrak{A}$, we have that $[p]=[q]$ if and only if there exist $%
m,n,n^{\prime }\in \omega $ such that $p\oplus 1_{m}\oplus 0_{n}$ and $%
q\oplus 1_{m}\oplus 0_{n^{\prime }}$ are Murray--von Neumann equivalent.
Suppose that $p$ is a projection over $\mathfrak{A}$. As 
\begin{equation*}
1\oplus 0\sim _{\mathrm{MvN}}1\oplus 1\text{,}
\end{equation*}%
we have that%
\begin{equation*}
\left( p\oplus 0\right) \oplus 1\sim _{\mathrm{MvN}}\left( p\oplus 1\right)
\oplus 1\text{.}
\end{equation*}%
Therefore, $p$ and $p\oplus 1$ represent the same element of $\mathrm{K}%
_{0}\left( \mathfrak{A}\right) $. Therefore, we have that%
\begin{equation*}
\left[ p\right] +\left[ 1\right] =\left[ p\oplus 1\right] =\left[ p\right] 
\text{.}
\end{equation*}%
This shows that $\left[ 1\right] $ is the neutral element of $\mathrm{K}%
_{0}\left( \mathfrak{A}\right) $.
\end{proof}

\begin{lemma}
Suppose that $\mathfrak{A}$ is a strict unital C*-algebra that satisfies the
Voiculescu property. If $p,q\in M_{n}\left( \mathfrak{A}\right) $, then $%
\Phi _{1,2n+2}\left( p\oplus \left( 1-q\right) \oplus 1\oplus 0\right) \in 
\mathrm{Z}_{0}^{\mathrm{A}}\left( \mathfrak{A}\right) $ and $\left[ p\right]
-\left[ q\right] =\left[ \Phi _{1,2n+2}\left( p\oplus \left( 1-q\right)
\oplus 1\oplus 0\right) \right] $ in $\mathrm{K}_{0}\left( \mathfrak{A}%
\right) $.
\end{lemma}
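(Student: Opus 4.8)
Write $r_{0}:=p\oplus(1-q)\oplus 1\oplus 0\in M_{2n+2}(\mathfrak{A})$, which is a projection over $\mathfrak{A}$ (being a direct sum of projections), and set $e:=\Phi_{1,2n+2}(r_{0})\in\mathfrak{A}$. The plan is to prove, separately, that $e$ is both ample and co-ample (so that $e\in\mathrm{Z}_{0}^{\mathrm{A}}(\mathfrak{A})$) and that $[e]=[p]-[q]$ in $\mathrm{K}_{0}(\mathfrak{A})$.

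For ampleness, I first observe that $r_{0}\oplus 1_{1}\sim_{\mathrm{MvN}}r_{0}\oplus 0_{1}$ in $M_{2n+3}(\mathfrak{A})$: indeed $r_{0}\oplus 1_{1}=\bigl(p\oplus(1-q)\bigr)\oplus(1_{1}\oplus 0_{1}\oplus 1_{1})$ and $r_{0}\oplus 0_{1}=\bigl(p\oplus(1-q)\bigr)\oplus(1_{1}\oplus 0_{1}\oplus 0_{1})$, and $1_{1}\oplus 0_{1}\oplus 1_{1}\sim_{\mathrm{MvN}}1_{1}\oplus 0_{1}\oplus 0_{1}$ follows by conjugating with the coordinate-transposition unitary and then using that $1_{\mathfrak{A}}$ is ample, i.e.\ $1_{1}\oplus 1_{1}\sim_{\mathrm{MvN}}1_{1}\oplus 0_{1}$, which is built into the Voiculescu property. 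Now I apply the $*$-isomorphism $\Phi_{2,2n+3}$, which preserves Murray--von Neumann equivalence, and combine this with axiom (4) of Definition \ref{Definition:V-property} applied with $(k_{0},n_{0})=(1,2n+2)$ and $(k_{1},n_{1})=(1,1)$, together with $\Phi_{1,1}=\mathrm{id}$ (axiom (2)): these produce a unitary $u\in M_{2}(\mathfrak{A})$, independent of the arguments, with $\Phi_{2,2n+3}(x\oplus y)=u^{*}\bigl(\Phi_{1,2n+2}(x)\oplus y\bigr)u$ for all $x\in M_{2n+2}(\mathfrak{A})$ and $y\in M_{1}(\mathfrak{A})$. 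Specializing to $(x,y)=(r_{0},0_{1})$ and $(x,y)=(r_{0},1_{1})$ and chaining the resulting equivalences yields $e\oplus 0_{1}\sim_{\mathrm{MvN}}\Phi_{2,2n+3}(r_{0}\oplus 0_{1})\sim_{\mathrm{MvN}}\Phi_{2,2n+3}(r_{0}\oplus 1_{1})\sim_{\mathrm{MvN}}e\oplus 1_{1}$, so $e$ is ample. Co-ampleness is obtained by the same argument applied to $1_{2n+2}-r_{0}=(1_{n}-p)\oplus q\oplus 0\oplus 1$, which again contains a distinguished $1_{1}$ summand, using that $\Phi_{1,2n+2}(1_{2n+2}-r_{0})=1_{\mathfrak{A}}-e$ because $\Phi_{1,2n+2}$ is unital; this shows $1_{\mathfrak{A}}-e$ is ample, i.e.\ $e$ is co-ample. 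Hence $e\in\mathrm{Z}_{0}^{\mathrm{A}}(\mathfrak{A})$.

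For the $\mathrm{K}_{0}$-identity, Remark \ref{Remark:V-property} (applied with $(k,n)$ replaced by $(1,2n+2)$) gives $e\oplus 0_{2n+1}\sim_{\mathrm{MvN}}r_{0}$. Since Murray--von Neumann equivalent projections over $\mathfrak{A}$, and projections differing only by zero summands, represent the same class in $\mathrm{K}_{0}(\mathfrak{A})$ (immediate from the definition of the equivalence relation $\mathrm{B}_{0}(\mathfrak{A})$), we get $[e]=[r_{0}]=[p]+[1-q]+[1]+[0]$, using that addition in $\mathrm{K}_{0}(\mathfrak{A})$ is induced by $\oplus$. By the lemma preceding the statement, $[1]=[1_{\mathfrak{A}}]$ is the neutral element of $\mathrm{K}_{0}(\mathfrak{A})$, and $[0]$ is trivially neutral; moreover $q\oplus(1-q)\sim_{\mathrm{MvN}}1_{n}\oplus 0_{n}$ by the standard $2\times 2$ partial-isometry computation, so $[q]+[1-q]=[1_{n}]=n[1_{\mathfrak{A}}]=0$, i.e.\ $[1-q]=-[q]$. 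Combining these, $[e]=[p]-[q]$, as required.

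The only step carrying content beyond routine manipulation of projections (direct sums, permutation unitaries, padding by zeros) is the transfer of (co)ampleness between $\mathfrak{A}$ and its matrix amplification $M_{2n+2}(\mathfrak{A})$; this is precisely what the compatibility axioms (2) and (4) of the Voiculescu property are designed to enable, via $\Phi_{2,2n+3}$, and once those are invoked correctly the remainder is bookkeeping together with the two already-established facts that $1_{\mathfrak{A}}$ is ample and that $[1_{\mathfrak{A}}]=0$ in $\mathrm{K}_{0}(\mathfrak{A})$.
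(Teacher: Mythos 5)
Your proposal is correct and follows essentially the same route as the paper: the $\mathrm{K}_{0}$-identity via Remark \ref{Remark:V-property} together with $[1]=0$ and $[1-q]=-[q]$, and (co)ampleness by transferring $r_{0}\oplus 1_{1}\sim_{\mathrm{MvN}}r_{0}\oplus 0_{1}$ (which uses ampleness of $1$) through $\Phi_{2,2n+3}$ and axioms (2) and (4) of Definition \ref{Definition:V-property}. The only cosmetic difference is that you perform the absorption of the extra $1$-summand before applying $\Phi_{2,2n+3}$ whereas the paper does it after; the two chains of equivalences are the same.
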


\begin{proof}
If $p\in M_{n}\left( \mathfrak{A}\right) $ and $q\in M_{m}\left( \mathfrak{A}%
\right) $ are projections over $\mathfrak{A}$, then we have that%
\begin{eqnarray*}
\left[ p\right] -\left[ q\right] &=&\left[ p\right] +\left[ 1\right] -\left[
q\right] =\left[ p\right] +\left[ 1-q\right] \\
&=&\left[ p\right] +\left[ 1-q\right] +\left[ 1\right] \\
&=&\left[ p\oplus \left( 1-q\right) \oplus 1\oplus 0\right] \text{.}
\end{eqnarray*}%
As, by Remark \ref{Remark:V-property}, 
\begin{equation*}
\Phi _{1,2n+2}\left( p\oplus \left( 1-q\right) \oplus 1\oplus 0\right)
\oplus 0_{2n+1}\sim _{\mathrm{MvN}}p\oplus \left( 1-q\right) \oplus 1\oplus 0
\end{equation*}%
we have%
\begin{equation*}
\left[ p\right] -\left[ q\right] =\left[ p\oplus \left( 1-q\right) \oplus
1\oplus 0\right] =\left[ \Phi _{1,2n+2}\left( p\oplus \left( 1-q\right)
\oplus 1\oplus 0\right) \right] \text{.}
\end{equation*}%
We now show that $\Phi _{1,2n+2}\left( p\oplus \left( 1-q\right) \oplus
1\oplus 0\right) $ is ample and co-ample. Set $r:=p\oplus \left( 1-q\right) $%
. We have by (4) of Definition \ref{Definition:V-property} and since $1$ is
ample,%
\begin{align*}
\Phi _{1,2n+2}\left( r\oplus 1\oplus 0\right) \oplus 1& =\Phi
_{1,2n+2}\left( r\oplus 1\oplus 0\right) \oplus \Phi _{1,1}\left( 1\right) \\
& \sim _{\mathrm{MvN}}{}\Phi _{2,2n+3}\left( r\oplus 1\oplus 0\oplus 1\right)
\\
& \sim _{\mathrm{MvN}}{}\Phi _{2,2n+3}\left( r\oplus 1\oplus 0\oplus 0\right)
\\
& \sim _{\mathrm{MvN}}{}\Phi _{1,2n+2}\left( r\oplus 1\oplus 0\right) \oplus
\Phi _{1,1}\left( 0\right) \\
& \sim _{\mathrm{MvN}}{}\Phi _{1,2n+2}\left( r\oplus 1\oplus 0\right) \oplus
0\text{.}
\end{align*}%
This shows that $\Phi _{1,2n+2}\left( r\oplus 1\oplus 0\right) $ is ample.
Considering that%
\begin{equation*}
1-\Phi _{1,2n+2}\left( r\oplus 1\oplus 0\right) =\Phi _{1,2n+2}(\left(
1-r\right) \oplus 0\oplus 1)\sim _{\mathrm{MvN}}\Phi _{1,2n+2}(\left(
1-r\right) \oplus 1\oplus 0)
\end{equation*}%
we have by the above that $1-\Phi _{1,2n+2}\left( r\oplus 1\oplus 0\right) $
is also ample, and hence $\Phi _{1,2n+2}\left( r\oplus 1\oplus 0\right) $ is
co-ample.
\end{proof}

\begin{lemma}
Let $\mathfrak{A}$ be a strict unital C*-algebra that satisfies the
Voiculescu property. If $p,q\in \mathrm{Z}_{0}^{\mathrm{A}}\left( \mathfrak{A%
}\right) $ are ample and co-ample projections, then the following assertions
are equivalent:

\begin{enumerate}
\item $p,q$ represent the same element of $\mathrm{K}_{0}\left( \mathfrak{A}%
\right) $;

\item $p,q$ are Murray--von Neumann equivalent;

\item $p,q$ are unitary equivalent.
\end{enumerate}
\end{lemma}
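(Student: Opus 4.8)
The plan is to prove the cycle $(3)\Rightarrow(2)\Rightarrow(1)\Rightarrow(2)$ together with $(2)\Rightarrow(3)$, which gives the three equivalences. The implications $(3)\Rightarrow(2)$ and $(2)\Rightarrow(1)$ are routine and I would dispatch them first: if $u$ is a unitary of $\mathfrak{A}$ with $u^{\ast}qu=p$, then $v:=qu$ satisfies $v^{\ast}v=p$ and $vv^{\ast}=q$, so $p\sim_{\mathrm{MvN}}q$; and $p\sim_{\mathrm{MvN}}q$ gives $[p]=[q]$ in $\mathrm{K}_{0}(\mathfrak{A})$ immediately from the definition, e.g.\ taking $m=n=n'=0$ in Lemma~\ref{Lemma:equivalent-B0}. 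So the real content lies in $(1)\Rightarrow(2)$ and $(2)\Rightarrow(3)$.

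For $(1)\Rightarrow(2)$ I would proceed as follows. By Lemma~\ref{Lemma:equivalent-B0}, applied with the zero ideal and $p'=q'=0$, the equality $[p]=[q]$ produces $m,n\in\omega$ and a unitary $u$ in $M_{N}(\mathfrak{A})$, $N:=2+m+n$, with (after a fixed coordinate permutation) $u(p\oplus 1_{m}\oplus 0_{n+1})u^{\ast}=q\oplus 1_{m}\oplus 0_{n+1}$. Applying the strict $\ast$-isomorphism $\Phi_{1,N}\colon M_{N}(\mathfrak{A})\to\mathfrak{A}$ supplied by the Voiculescu property, the elements $\Phi_{1,N}(p\oplus 1_{m}\oplus 0_{n+1})$ and $\Phi_{1,N}(q\oplus 1_{m}\oplus 0_{n+1})$ are unitarily equivalent in $\mathfrak{A}$. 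The heart of the argument is the claim that $\Phi_{1,N}(p\oplus 1_{m}\oplus 0_{n+1})\sim_{\mathrm{MvN}}p$ whenever $p$ is ample (and likewise for $q$), after which $p\sim_{\mathrm{MvN}}q$ follows. To prove this claim I would (i) use ampleness of $p$ together with permutation unitaries in $M_{N}(\mathbb{C})$ to show by induction on $m$ that $p\oplus 1_{m}\oplus 0_{n+1}\sim_{\mathrm{MvN}}p\oplus 0_{N-1}$ in $M_{N}(\mathfrak{A})$; (ii) invoke Remark~\ref{Remark:V-property} to get $\Phi_{1,N}(p\oplus 1_{m}\oplus 0_{n+1})\oplus 0_{N-1}\sim_{\mathrm{MvN}}p\oplus 1_{m}\oplus 0_{n+1}\sim_{\mathrm{MvN}}p\oplus 0_{N-1}$; and (iii) observe that any Murray--von Neumann equivalence in $M_{N}(\mathfrak{A})$ between $a\oplus 0_{N-1}$ and $b\oplus 0_{N-1}$, for projections $a,b\in\mathfrak{A}$, is implemented by a partial isometry forced to be supported in the $(1,1)$-corner, hence restricts to a Murray--von Neumann equivalence of $a$ and $b$ inside $\mathfrak{A}$. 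This inductive absorption step is where properties (1)--(4) of the maps $\Phi_{k,n}$ in Definition~\ref{Definition:V-property} do the real work, much as in the two lemmas immediately preceding this one, and it is the only place I expect any genuine difficulty.

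For $(2)\Rightarrow(3)$ I would pass to complements. Assume $p\sim_{\mathrm{MvN}}q$. Since $p,q$ are co-ample, $1-p$ and $1-q$ are ample; since $p,q$ are ample, $1-p$ and $1-q$ are co-ample; and $[1-p]=[1]-[p]=[1]-[q]=[1-q]$ in $\mathrm{K}_{0}(\mathfrak{A})$, using that $(1-r)\oplus r\sim_{\mathrm{MvN}}1\oplus 0$ for every projection $r\in\mathfrak{A}$. Applying the already-established implication $(1)\Rightarrow(2)$ to the pair $1-p,1-q$ of ample and co-ample projections yields $1-p\sim_{\mathrm{MvN}}1-q$. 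Choosing partial isometries $v,w\in\mathfrak{A}$ with $v^{\ast}v=p$, $vv^{\ast}=q$, $w^{\ast}w=1-p$, $ww^{\ast}=1-q$, a short computation (using $v=vp$, $v^{\ast}=v^{\ast}q$, $w=w(1-p)$, $w^{\ast}=w^{\ast}(1-q)$, and orthogonality of $q$ and $1-q$) shows that $u:=v+w$ is a unitary of $\mathfrak{A}$ with $upu^{\ast}=q$, i.e.\ $u^{\ast}qu=p$, so $p$ and $q$ are unitarily equivalent.

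In summary, the architecture is: two formal implications, then the stabilization identity of Lemma~\ref{Lemma:equivalent-B0} combined with the collapsing isomorphisms $\Phi_{1,N}$ and ampleness to get $(1)\Rightarrow(2)$, then complementation plus the classical $u=v+w$ trick for $(2)\Rightarrow(3)$; the main obstacle is verifying that ampleness indeed lets one absorb the summands $1_{m}\oplus 0_{n+1}$ introduced by the stabilization, for which properties (1)--(4) and Remark~\ref{Remark:V-property} suffice.
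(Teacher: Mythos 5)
Your proof is correct and follows essentially the same route as the paper: stabilization via Lemma \ref{Lemma:equivalent-B0}, absorption of the $1_{m}$ summands by ampleness, collapsing with $\Phi_{1,N}$ to get Murray--von Neumann equivalence, and then the complement argument plus the $u=v+w$ trick for unitary equivalence. The only differences are organizational (the paper proves $(1)\Rightarrow(3)$ in one pass, and invokes property (1) of Definition \ref{Definition:V-property} directly instead of your corner-restriction step (iii)), and these are immaterial.
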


\begin{proof}
The implications (3)$\Rightarrow $(2)$\Rightarrow $(1) hold in general.

(1)$\Rightarrow $(3) Suppose that $p,q\in \mathrm{Z}_{0}^{\mathrm{A}}\left( 
\mathfrak{A}\right) $ are such that $\left[ p\right] =\left[ q\right] $.
Then there exist $n,k\in \omega $ such that 
\begin{equation*}
p\oplus 1_{n}\oplus 0_{k}\sim _{\mathrm{MvN}}q\oplus 1_{n}\oplus 0_{k}\text{.%
}
\end{equation*}%
Since $p,q$ are ample, we have%
\begin{equation*}
p\oplus 0_{n+k}\sim _{\mathrm{MvN}}p\oplus 1_{n}\oplus 0_{k}\sim _{\mathrm{%
MvN}}q\oplus 1_{n}\oplus 0_{k}\sim _{\mathrm{MvN}}q\oplus 0_{n+k}\text{.}
\end{equation*}%
Therefore, we have 
\begin{equation*}
p\sim _{\mathrm{MvN}}\Phi _{n+k+1}\left( p\oplus 0_{n+k}\right) \sim _{%
\mathrm{MvN}}\Phi _{n+k+1}\left( q\oplus 0_{n+k}\right) \sim _{\mathrm{MvN}}q%
\text{.}
\end{equation*}%
Using the fact that $p,q$ are co-ample, the same argument applied to $1-p$
and $1-q$ shows that $1-p\sim _{\mathrm{MvN}}1-q$. Thus, $p,q$ are unitarily
equivalent.
\end{proof}

\begin{lemma}
Let $\mathfrak{A}$ be a strict C*-algebra that satisfies the Voiculescu
property. If $p,q\in \mathrm{Z}_{0}^{\mathrm{A}}\left( \mathfrak{A}\right) $%
, then $\Phi _{1,2}\left( p\oplus q\right) \in \mathrm{Z}_{0}^{\mathrm{A}%
}\left( \mathfrak{A}\right) $.
\end{lemma}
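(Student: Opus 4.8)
The plan is to show separately that $\Phi_{1,2}(p\oplus q)$ is ample and that it is co-ample; membership in $\mathrm{Z}_{0}^{\mathrm{A}}(\mathfrak{A})$ then follows by definition, once we note that $\Phi_{1,2}(p\oplus q)$ is a projection in $\mathfrak{A}$, since $p\oplus q$ is a projection in $M_{2}(\mathfrak{A})$ and $\Phi_{1,2}\colon M_{2}(\mathfrak{A})\to M_{1}(\mathfrak{A})=\mathfrak{A}$ is a unital $*$-isomorphism. Throughout I would use the standard facts that unitary equivalence implies Murray--von Neumann equivalence and that both relations are preserved by strict unital $*$-isomorphisms.

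For ampleness I would run the same diagram chase as in the preceding lemma, using properties (2) and (4) of Definition \ref{Definition:V-property}. Since $\Phi_{1,1}=\mathrm{id}$, property (4) gives
\[
\Phi_{1,2}(p\oplus q)\oplus 1=\Phi_{1,2}(p\oplus q)\oplus\Phi_{1,1}(1)\sim _{\mathrm{MvN}}\Phi_{2,3}(p\oplus q\oplus 1).
\]
Because $q$ is ample, $q\oplus 1\sim _{\mathrm{MvN}}q\oplus 0$, hence $p\oplus q\oplus 1\sim _{\mathrm{MvN}}p\oplus q\oplus 0$, and applying the $*$-isomorphism $\Phi_{2,3}$ yields $\Phi_{2,3}(p\oplus q\oplus 1)\sim _{\mathrm{MvN}}\Phi_{2,3}(p\oplus q\oplus 0)$. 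A second application of (4) gives $\Phi_{2,3}(p\oplus q\oplus 0)\sim _{\mathrm{MvN}}\Phi_{1,2}(p\oplus q)\oplus\Phi_{1,1}(0)=\Phi_{1,2}(p\oplus q)\oplus 0$. Concatenating these equivalences shows $\Phi_{1,2}(p\oplus q)\oplus 1\sim _{\mathrm{MvN}}\Phi_{1,2}(p\oplus q)\oplus 0$, i.e.\ $\Phi_{1,2}(p\oplus q)$ is ample.

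For co-ampleness I would observe that, since $\Phi_{1,2}$ is unital,
\[
1-\Phi_{1,2}(p\oplus q)=\Phi_{1,2}\bigl(1_{2}-(p\oplus q)\bigr)=\Phi_{1,2}\bigl((1-p)\oplus(1-q)\bigr),
\]
and that $1-p$ and $1-q$ are ample because $p$ and $q$ are co-ample. Applying the ampleness argument of the previous paragraph with $1-p,1-q$ in place of $p,q$ shows that $\Phi_{1,2}\bigl((1-p)\oplus(1-q)\bigr)$ is ample, i.e.\ $\Phi_{1,2}(p\oplus q)$ is co-ample. Hence $\Phi_{1,2}(p\oplus q)\in\mathrm{Z}_{0}^{\mathrm{A}}(\mathfrak{A})$.

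I expect no serious obstacle here: the argument is a routine manipulation within the axioms for the Voiculescu property, entirely parallel to the proof of the preceding lemma. The only points that require (minor) care are keeping the indices in property (4) consistent under the identification $M_{n_{0}}(\mathfrak{A})\oplus M_{n_{1}}(\mathfrak{A})\subseteq M_{n_{0}+n_{1}}(\mathfrak{A})$, and invoking the elementary permanence properties of $\sim _{\mathrm{MvN}}$ recalled above.
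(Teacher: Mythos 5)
Your proof is correct and follows essentially the same route as the paper's: the same chain $\Phi_{1,2}(p\oplus q)\oplus 1\sim_{\mathrm{MvN}}\Phi_{2,3}(p\oplus q\oplus 1)\sim_{\mathrm{MvN}}\Phi_{2,3}(p\oplus q\oplus 0)\sim_{\mathrm{MvN}}\Phi_{1,2}(p\oplus q)\oplus 0$ via property (4), followed by applying the same argument to $(1-p)\oplus(1-q)$ for co-ampleness. You even make explicit the justification (ampleness of $q$ gives $q\oplus 1\sim_{\mathrm{MvN}}q\oplus 0$) that the paper leaves implicit in the middle step.
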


\begin{proof}
We need to show that $\Phi _{1,2}\left( p\oplus q\right) $ is ample and
co-ample. We have that%
\begin{align*}
\Phi _{1,2}\left( p\oplus q\right) \oplus 1 &=\Phi _{1,2}\left( p\oplus
q\right) \oplus \Phi _{1,1}\left( 1\right) \\
&=\Phi _{2,3}\left( p\oplus q\oplus 1\right) \\
&\sim _{\mathrm{MvN}}\Phi _{2,3}\left( p\oplus q\oplus 0\right) \\
&\sim _{\mathrm{MvN}}\Phi _{1,2}\left( p\oplus q\right) \oplus \Phi
_{1,1}\left( 0\right) \\
&\sim _{\mathrm{MvN}}\Phi _{1,2}\left( p\oplus q\right) \oplus 0\text{.}
\end{align*}%
This shows that $\Phi _{1,2}\left( p\oplus q\right) $ is ample. The same
argument applied to $1-\left( p\oplus q\right) =\left( 1-p\right) \oplus
\left( 1-q\right) $ shows that $\Phi _{1,2}\left( p\oplus q\right) $ is
co-ample.
\end{proof}

Suppose that $\mathfrak{A}$ is a strict unital C*-algebra satisfying
Voiculescu's property. Consider the unitary group $U\left( \mathfrak{A}%
\right) $, which is a strictly closed subset of $\mathrm{\mathrm{Ball}}%
\left( \mathfrak{A}\right) $ and hence a Polish group when endowed with the
strict topology, and the standard Borel space \textrm{Z}$_{0}^{\mathrm{A}%
}\left( \mathfrak{A}\right) $ of projections in $\mathfrak{A}$ that are both
ample and co-ample, which by assumption is a Borel subset of $\mathfrak{A}$
invariant under unitary conjugation. We can consider the Borel action $%
U\left( \mathfrak{A}\right) \curvearrowright \mathrm{Z}_{0}^{\mathrm{A}%
}\left( \mathfrak{A}\right) $ by conjugation. We let \textrm{B}$_{0}^{%
\mathrm{A}}\left( \mathfrak{A}\right) $ be the corresponding orbit
equivalence relation, and $\mathrm{K}_{0}^{\mathrm{A}}\left( \mathfrak{A}%
\right) :=\mathrm{Z}_{0}^{\mathrm{A}}\left( \mathfrak{A}\right) \left/ 
\mathrm{B}_{0}^{\mathrm{A}}\left( \mathfrak{A}\right) \right. $ be the
corresponding semidefinable set. For $p\in \mathrm{Z}_{0}^{\mathrm{A}}\left( 
\mathfrak{A}\right) $, we let $\left[ p\right] _{\mathrm{B}_{0}^{\mathrm{A}%
}\left( \mathfrak{A}\right) }$ be the $\mathrm{B}_{0}^{\mathrm{A}}\left( 
\mathfrak{A}\right) $-class of $p$. The Borel functions $\left( p,q\right)
\mapsto \Phi _{1,2}\left( p\oplus q\right) $ and $p\mapsto 1-p$ induce a
semidefinable group structure on $\mathrm{K}_{0}^{\mathrm{A}}\left( 
\mathfrak{A}\right) $ with trivial element $\left[ \Phi _{1,2}\left( 1\oplus
0\right) \right] _{\mathrm{B}_{0}^{\mathrm{A}}\left( \mathfrak{A}\right) }$.
Since $\mathrm{B}_{0}^{\mathrm{A}}\left( \mathfrak{A}\right) $ is the orbit
equivalence relation associated with a Borel action of a Polish group on $%
\mathrm{B}_{0}^{\mathrm{A}}\left( \mathfrak{A}\right) $, we have that $%
\mathrm{K}_{0}^{\mathrm{A}}\left( \mathfrak{A}\right) $ is in fact a \emph{%
definable group }by Corollary \ref{Corollary:definable-group}.

The following proposition is an immediate consequence of the lemmas above.

\begin{proposition}
\label{Proposition:V-Def}Suppose that $\mathfrak{A}$ is a strict C*-algebra
satisfying Voiculescu's property. Adopt the notation from Definition \ref%
{Definition:V-property}. Then $\mathrm{K}_{0}^{\mathrm{A}}\left( \mathfrak{A}%
\right) $ and $\mathrm{K}_{0}\left( \mathfrak{A}\right) $ are definably
isomorphic definable groups.
\end{proposition}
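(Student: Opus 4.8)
The plan is to write down an explicit definable group homomorphism $\phi\colon \mathrm{K}_{0}^{\mathrm{A}}(\mathfrak{A})\to \mathrm{K}_{0}(\mathfrak{A})$, verify that it is a bijection using the four lemmas established just above, and then upgrade it to a definable group isomorphism via Corollary \ref{Corollary:Kechris--MacDonald}. Essentially all the substantive work has already been carried out in those lemmas, so the proof of the Proposition itself is bookkeeping.

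I would take $\phi$ to be the map induced by the Borel inclusion $\mathrm{Z}_{0}^{\mathrm{A}}(\mathfrak{A})\to \mathrm{Z}_{0}(\mathfrak{A})$, $p\mapsto (p,0)$; explicitly, $\phi([p]_{\mathrm{B}_{0}^{\mathrm{A}}(\mathfrak{A})})=[p]\in \mathrm{K}_{0}(\mathfrak{A})$. Since $\mathrm{B}_{0}^{\mathrm{A}}(\mathfrak{A})$ is by definition the relation of unitary conjugacy in $\mathfrak{A}$ restricted to $\mathrm{Z}_{0}^{\mathrm{A}}(\mathfrak{A})$, and unitarily equivalent projections represent the same class in $\mathrm{K}_{0}(\mathfrak{A})$, the map $\phi$ is well defined; it is definable because $p\mapsto (p,0)$ is a Borel lift. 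To see that $\phi$ is a homomorphism, recall that the operations on $\mathrm{K}_{0}^{\mathrm{A}}(\mathfrak{A})$ are induced by $(p,q)\mapsto \Phi_{1,2}(p\oplus q)$ and $p\mapsto 1-p$, with neutral element the class of $\Phi_{1,2}(1\oplus 0)$. By Remark \ref{Remark:V-property}, $\Phi_{1,2}(p\oplus q)\oplus 0\sim_{\mathrm{MvN}}p\oplus q$, so $\phi$ carries $[p]+[q]$ to $[\Phi_{1,2}(p\oplus q)]=[p\oplus q]=[p]+[q]$ in $\mathrm{K}_{0}(\mathfrak{A})$; similarly it carries $[1-p]$ to $[1-p]=[1]-[p]=-[p]$ and $[\Phi_{1,2}(1\oplus 0)]$ to $[1]=0$, using the lemma identifying $[1]$ as the neutral element of $\mathrm{K}_{0}(\mathfrak{A})$.

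Bijectivity follows from the remaining two lemmas. For injectivity: if $p,q\in \mathrm{Z}_{0}^{\mathrm{A}}(\mathfrak{A})$ and $[p]=[q]$ in $\mathrm{K}_{0}(\mathfrak{A})$, then by the lemma asserting that $\mathrm{K}_{0}$-equivalence and unitary equivalence coincide on $\mathrm{Z}_{0}^{\mathrm{A}}(\mathfrak{A})$, the projections $p$ and $q$ are unitarily equivalent, hence $[p]_{\mathrm{B}_{0}^{\mathrm{A}}(\mathfrak{A})}=[q]_{\mathrm{B}_{0}^{\mathrm{A}}(\mathfrak{A})}$. For surjectivity: an arbitrary element of $\mathrm{K}_{0}(\mathfrak{A})$ is of the form $[p]-[q]$ with $p,q$ projections in some $M_{n}(\mathfrak{A})$, and by the lemma expressing such differences via the maps $\Phi_{1,2n+2}$, the projection $r:=\Phi_{1,2n+2}(p\oplus(1-q)\oplus 1\oplus 0)$ lies in $\mathrm{Z}_{0}^{\mathrm{A}}(\mathfrak{A})$ and satisfies $[p]-[q]=[r]=\phi([r]_{\mathrm{B}_{0}^{\mathrm{A}}(\mathfrak{A})})$.

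To conclude, I would invoke Corollary \ref{Corollary:Kechris--MacDonald}. The set $\mathrm{K}_{0}^{\mathrm{A}}(\mathfrak{A})$ is definable, being, as noted just before the statement, the quotient of $\mathrm{Z}_{0}^{\mathrm{A}}(\mathfrak{A})$ by the orbit equivalence relation of a Borel action of the Polish group $U(\mathfrak{A})$ (Corollary \ref{Corollary:definable-group}), while $\mathrm{K}_{0}(\mathfrak{A})$ is a priori only semidefinable. Since $\phi$ is a definable bijection, Corollary \ref{Corollary:Kechris--MacDonald} yields that $\mathrm{K}_{0}(\mathfrak{A})$ is in fact definable, that $\phi^{-1}$ is definable, and that $\phi$ is an isomorphism in $\mathbf{DSet}$; as the group operations on $\mathrm{K}_{0}(\mathfrak{A})$ are already known to be definable, $\mathrm{K}_{0}(\mathfrak{A})$ is a definable group and $\phi$ a definable group isomorphism. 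The only point that needs genuine care is checking that the lift $p\mapsto(p,0)$ respects all three group operations, which is precisely where Remark \ref{Remark:V-property} and the lemma identifying $[1]$ as the neutral element are used; there is no obstacle beyond this, as the combinatorial heart of the argument — the properties of the isomorphisms $\Phi_{k,n}$, the stabilization making ample projections close up into a group under $\oplus$, and the coincidence of $\mathrm{K}_{0}$-equivalence with unitary equivalence on $\mathrm{Z}_{0}^{\mathrm{A}}(\mathfrak{A})$ — is supplied by the preceding lemmas.
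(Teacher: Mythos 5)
Your proposal is correct and follows essentially the same route as the paper: the isomorphism is induced by the Borel lift $p\mapsto(p,0)$, bijectivity is exactly the content of the preceding lemmas (coincidence of $\mathrm{K}_{0}$-equivalence with unitary equivalence on ample co-ample projections, and the formula $[p]-[q]=[\Phi_{1,2n+2}(p\oplus(1-q)\oplus 1\oplus 0)]$), and definability of $\mathrm{K}_{0}(\mathfrak{A})$ is transferred from $\mathrm{K}_{0}^{\mathrm{A}}(\mathfrak{A})$. The paper writes down the second Borel lift $(p,q)\mapsto\Phi_{1,2n+2}(p\oplus(1-q)\oplus 1\oplus 0)$ explicitly and invokes Lemma \ref{Lemma:iso-semi}, whereas you get definability of the inverse for free from Corollary \ref{Corollary:Kechris--MacDonald}; this is only a cosmetic difference.
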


\begin{proof}
By the above remarks, $\mathrm{K}_{0}^{\mathrm{A}}\left( \mathfrak{A}\right) 
$ is a definable group. Furthermore, the Borel functions $\mathrm{Z}_{0}^{%
\mathrm{A}}\left( \mathfrak{A}\right) \rightarrow \mathrm{Z}_{0}\left( 
\mathfrak{A}\right) $, $p\mapsto \left( p,0\right) $ and $\mathrm{Z}%
_{0}\left( \mathfrak{A}\right) \rightarrow \mathrm{Z}_{0}^{\mathrm{A}}\left( 
\mathfrak{A}\right) $, $\left( p,q\right) \mapsto \Phi _{1,2n+2}\left(
p\oplus \left( 1-q\right) \oplus 1\oplus 0\right) $ induce mutually inverse
definable isomorphisms between $\mathrm{K}_{0}^{\mathrm{A}}\left( \mathfrak{A%
}\right) $ and $\mathrm{K}_{0}\left( \mathfrak{A}\right) $. By Lemma \ref%
{Lemma:iso-semi}, this shows that $\mathrm{K}_{0}\left( \mathfrak{A}\right) $
is also a definable group, definably isomorphic to $\mathrm{K}_{0}^{\mathrm{A%
}}\left( \mathfrak{A}\right) $.
\end{proof}

Suppose that $A$ is a separable, unital C*-algebra, and $\rho :A\rightarrow
B\left( H\right) $ is a nondegenerate ample representation. Define the
corresponding Paschke dual $\mathfrak{D}_{\rho }\left( A\right) $ as in
Example \ref{Exampl:Paschke-dual} to be the algebra%
\begin{equation*}
\mathfrak{D}_{\rho }\left( A\right) =\left\{ T\in B\left( H\right) :\forall
a\in B\left( H\right) ,T\rho \left( a\right) \equiv \rho \left( a\right) T%
\mathrm{\ \mathrm{mod}}\ K\left( H\right) \right\} \text{.}
\end{equation*}%
Then, $\mathfrak{D}_{\rho }\left( A\right) $ is a strict unital C*-algebra,
with respect to the strict topology on $\mathrm{\mathrm{Ball}}\left( 
\mathfrak{D}_{\rho }\left( a\right) \right) $ induced by the seminorms%
\begin{equation*}
T\mapsto \max \{\left\Vert TS\right\Vert ,\left\Vert ST\right\Vert
,\left\Vert T\rho \left( a\right) -\rho \left( a\right) T\right\Vert \}
\end{equation*}%
for $S\in K\left( H\right) $ and $a\in A$. We now observe that, as a
consequence of Voiculescu's theorem, $\mathfrak{D}_{\rho }\left( A\right) $
satisfies the Voiculescu property; see Definition \ref{Definition:V-property}%
.

Let $\rho ^{n}:A\rightarrow B\left( H^{n}\right) $ be the $n$-fold direct
sum of $\rho $. Notice that, under the usual identification of $B\left(
H^{n}\right) $ with $M_{n}\left( B\left( H\right) \right) $, $\mathfrak{D}%
_{\rho ^{n}}\left( A\right) $ corresponds to $M_{n}\left( \mathfrak{D}_{\rho
}\left( A\right) \right) $. For $k,n\geq 1$, as both $\rho ^{k}$ and $\rho
^{n}$ are ample representations of $A$, by Voiculescu's theorem there exists
a surjective isometry $V_{k,n}:H^{k}\rightarrow H^{n}$ such that \textrm{Ad}$%
\left( V\right) :B\left( H^{n}\right) \rightarrow B\left( H^{k}\right) $
satisfies $\left( \mathrm{Ad}\left( V\right) \circ \rho ^{n}\right) \left(
a\right) \equiv \rho ^{k}\left( a\right) \mathrm{\ \mathrm{mod}}\ K\left(
H\right) $ for every $a\in A$. This implies that \textrm{Ad}$\left( V\right) 
$ induces a strict *-isomorphism $\Phi _{k,n}:=\mathrm{Ad}\left( V\right)
:M_{n}\left( \mathfrak{D}_{\rho }\left( A\right) \right) \rightarrow
M_{k}\left( \mathfrak{D}_{\rho }\left( A\right) \right) $. By Voiculescu's
theorem, $\Phi _{k,n}$ does not depend, up to unitary equivalence, from the
choice of the surjective isometry $V_{k,n}:H^{k}\rightarrow H^{n}$. Thus, we
have that $\mathfrak{D}_{\rho }\left( A\right) $ satisfies (2), (3), and (4)
of Definition \ref{Definition:V-property}.

Every projection $P\in \mathfrak{D}_{\rho }\left( A\right) $ defines a
unital extension $\varphi _{P}:A\rightarrow B\left( PH\right) $, $a\mapsto
\pi \left( P\rho \left( a\right) |_{PH}\right) $. By \cite[Lemma 5.1.2]%
{higson_analytic_2000}, we have the following.

\begin{lemma}
\label{Lemma:Toeplitz}Suppose that $P,P_{1},P_{2}\in \mathfrak{D}_{\rho
}\left( A\right) $ are projections. The following assertions are equivalent:

\begin{enumerate}
\item $\varphi _{P_{1}},\varphi _{P_{2}}$ are equivalent extensions;

\item $P_{1},P_{2}$ are Murray--von Neumann equivalent.
\end{enumerate}

Furthermore, the following assertions are equivalent:

\begin{enumerate}
\item $P$ is ample;

\item $\varphi _{P}$ is injective.
\end{enumerate}
\end{lemma}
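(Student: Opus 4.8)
The plan is to transcribe the classical argument for the analogous statement (\cite[Lemma 5.1.2]{higson_analytic_2000}), taking care that every operation used stays inside the essential commutant $\mathfrak{D}_{\rho}(A)$. The one preliminary fact to keep in mind is that, since a projection $P\in\mathfrak{D}_{\rho}(A)$ commutes with each $\rho(a)$ modulo $K(H)$, the assignment $a\mapsto P\rho(a)P$ is multiplicative modulo $K(H)$, so $\varphi_{P}(a)=\pi\bigl(P\rho(a)|_{PH}\bigr)$ really is a unital extension, and $\ker\varphi_{P}=\{a:P\rho(a)P\in K(H)\}$ is what controls injectivity.

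For the first pair of equivalences I would argue as follows. If $P_{1}\sim_{\mathrm{MvN}}P_{2}$, choose $V\in\mathfrak{D}_{\rho}(A)$ with $V^{*}V=P_{1}$ and $VV^{*}=P_{2}$; regarding $V$ as a unitary $P_{1}H\to P_{2}H$, the relation $V\rho(a)\equiv\rho(a)V$ modulo $K(H)$ gives $V\bigl(P_{1}\rho(a)P_{1}\bigr)V^{*}\equiv P_{2}\rho(a)P_{2}$ modulo $K(H)$, so $\mathrm{Ad}(V)$ implements an equivalence $\varphi_{P_{1}}\sim\varphi_{P_{2}}$. Conversely, if a unitary $U$ of $H$ gives $\mathrm{Ad}(U)\circ\varphi_{P_{1}}=\varphi_{P_{2}}$, then after the identifications $U$ amounts to a partial isometry $V\in B(H)$ with $V^{*}V=P_{1}$, $VV^{*}=P_{2}$, and the only point to check is $V\in\mathfrak{D}_{\rho}(A)$: reading off $V\rho(a)V^{*}\equiv\rho(a)VV^{*}$ from the intertwining relation and using $P_{1}\rho(a)\equiv\rho(a)P_{1}$, one computes $V\rho(a)=VP_{1}\rho(a)\equiv V\rho(a)P_{1}=V\rho(a)V^{*}V\equiv\rho(a)VV^{*}V=\rho(a)V$ modulo $K(H)$, as needed. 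This proves $\varphi_{P_{1}}\sim\varphi_{P_{2}}\iff P_{1}\sim_{\mathrm{MvN}}P_{2}$, and since $M_{n}(\mathfrak{D}_{\rho}(A))=\mathfrak{D}_{\rho^{n}}(A)$ with $\rho^{n}$ again ample, the same statement holds for projections over $\mathfrak{D}_{\rho}(A)$.

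For the second pair of equivalences I would combine the first part with Voiculescu's theorem, using that $\pi\circ\rho$ is an injective (hence essential) trivial extension because $\rho$ is ample. If $P$ is ample, then $P\oplus 0\sim_{\mathrm{MvN}}P\oplus 1$ in $M_{2}(\mathfrak{D}_{\rho}(A))=\mathfrak{D}_{\rho\oplus\rho}(A)$, so by the first part $\varphi_{P}=\varphi_{P\oplus 0}\sim\varphi_{P\oplus 1}=\varphi_{P}\oplus(\pi\circ\rho)$; the right-hand side is injective since $\pi\circ\rho$ is, and unitary equivalence preserves injectivity, so $\varphi_{P}$ is injective. Conversely, if $\varphi_{P}$ is injective it is an essential extension, so Voiculescu absorption (the trivial extension, realized by $\pi\circ\rho$, is the neutral element of $\mathrm{Ext}(A)^{-1}$, see \cite[Theorem 3.4.7]{higson_analytic_2000}) gives $\varphi_{P}\oplus(\pi\circ\rho)\sim\varphi_{P}$, i.e. $\varphi_{P\oplus 1}\sim\varphi_{P\oplus 0}$; applying the first part over $\mathfrak{D}_{\rho\oplus\rho}(A)$ yields $P\oplus 1\sim_{\mathrm{MvN}}P\oplus 0$, that is, $P$ is ample.

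The main obstacle I anticipate is bookkeeping rather than substance: one must pin down the identifications $Q(P_{i}H)\cong Q(H)$ precisely enough that "equivalence of extensions implemented by a unitary of $H$" corresponds to Murray--von Neumann equivalence of projections \emph{inside $\mathfrak{D}_{\rho}(A)$} (and not merely in $B(H)$), and one must dispose of the degenerate case in which $PH$ is finite-dimensional, where $Q(PH)=0$ and $\varphi_{P}$ is the zero homomorphism --- this is consistent with the statement since such a $P$ is never ample and $\varphi_{P}$ is never injective when $A\neq 0$. Once these identifications are fixed, the rest is the routine transcription of the Paschke/Brown--Douglas--Fillmore picture, and I would simply cite \cite[Lemma 5.1.2]{higson_analytic_2000} for the details.
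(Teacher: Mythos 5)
Your proof is correct and follows essentially the same route as the paper, which gives no argument of its own and simply cites \cite[Lemma 5.1.2]{higson_analytic_2000}; your sketch is a faithful transcription of that cited proof (Murray--von Neumann partial isometries implementing equivalences of the compressions $\varphi_P$, plus Voiculescu absorption for the ampleness statement). The computation verifying that the implementing partial isometry lies in $\mathfrak{D}_{\rho}(A)$ and the reduction of ampleness to $P\oplus 0\sim_{\mathrm{MvN}}P\oplus 1$ in $\mathfrak{D}_{\rho^{2}}(A)$ are exactly the right points to check, and you handle them correctly.
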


From Lemma \ref{Lemma:Toeplitz} it is easy to deduce the following.

\begin{proposition}
\label{Proposition:P-V}Suppose that $A$ is a separable, unital C*-algebra, $%
\rho :A\rightarrow B\left( H\right) $ is a nondegenerate ample
representation of $A$, and $\mathfrak{D}_{\rho }\left( A\right) $ is the
corresponding Paschke dual. Then $\mathfrak{D}_{\rho }\left( A\right) $
satisfies Voiculescu's property.
\end{proposition}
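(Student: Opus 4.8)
The plan is to verify directly the two clauses in Definition~\ref{Definition:V-property} for $\mathfrak{A}=\mathfrak{D}_{\rho}(A)$. The strict unital *-isomorphisms $\Phi_{k,n}=\mathrm{Ad}(V_{k,n}):M_{n}(\mathfrak{D}_{\rho}(A))\to M_{k}(\mathfrak{D}_{\rho}(A))$, together with properties (2), (3), (4) of Definition~\ref{Definition:V-property}, have already been produced in the discussion preceding the statement, using the surjective isometries $V_{k,n}:H^{k}\to H^{n}$ with $\mathrm{Ad}(V_{k,n})\circ\rho^{n}\equiv\rho^{k}\bmod K(H^{k})$ supplied by Voiculescu's theorem. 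So two things remain: property (1), and the assertion that the set of ample projections in $\mathfrak{D}_{\rho}(A)$ is a Borel subset of $\mathrm{Ball}(\mathfrak{D}_{\rho}(A))$ containing $1$. Both will be extracted from Lemma~\ref{Lemma:Toeplitz}, which is exactly the dictionary between projections in a Paschke dual and Toeplitz-type extensions.

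For property (1), I would fix $n>k$ and a projection $p\in M_{k}(\mathfrak{D}_{\rho}(A))=\mathfrak{D}_{\rho^{k}}(A)$. Since $\Phi_{k,n}(p\oplus 0_{n-k})=V_{k,n}^{\ast}(p\oplus 0_{n-k})V_{k,n}$, the isometry $V_{k,n}$ restricts to a unitary from $\Phi_{k,n}(p\oplus 0_{n-k})H^{k}$ onto $(p\oplus 0_{n-k})H^{n}$; using $V_{k,n}\rho^{k}(a)V_{k,n}^{\ast}\equiv\rho^{n}(a)\bmod K$ for $a\in A$, this unitary implements an equivalence of the extensions $\varphi_{\Phi_{k,n}(p\oplus 0_{n-k})}$ and $\varphi_{p\oplus 0_{n-k}}$ of $A$. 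On the other hand $\varphi_{p\oplus 0_{n-k}}$ is equivalent to the extension $\varphi_{p}$ determined by $p\in\mathfrak{D}_{\rho^{k}}(A)$, because compressing $\rho^{n}$ to its first $k$ summands and then by $p$ gives the same operator as compressing $\rho^{k}$ by $p$. Hence $\varphi_{\Phi_{k,n}(p\oplus 0_{n-k})}$ and $\varphi_{p}$ are equivalent extensions, and the first equivalence in Lemma~\ref{Lemma:Toeplitz}, applied to the ample nondegenerate representation $\rho^{k}$, yields $\Phi_{k,n}(p\oplus 0_{n-k})\sim_{\mathrm{MvN}}p$ in $M_{k}(\mathfrak{D}_{\rho}(A))$, which is property (1). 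I expect this bookkeeping---tracking $V_{k,n}$ through the compressions so that the hypotheses of Lemma~\ref{Lemma:Toeplitz} apply verbatim---to be the only step requiring genuine care, and even so it is routine.

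For the Borel clause, the last equivalence in Lemma~\ref{Lemma:Toeplitz} says that a projection $P\in\mathfrak{D}_{\rho}(A)$ is ample if and only if $\varphi_{P}$ is injective, equivalently (a unital *-homomorphism being injective exactly when isometric) if and only if $\Vert\varphi_{P}(a)\Vert=\Vert a\Vert$ for every $a$ in a fixed countable dense subset $A_{0}$ of $A$. Now $\Vert\varphi_{P}(a)\Vert=\mathrm{dist}(P\rho(a)P,K(H))=\Vert\pi(P\rho(a)P)\Vert$ via the isometric embedding $B(PH)\hookrightarrow B(H)$, $T\mapsto T\oplus 0$; the map $P\mapsto P\rho(a)P$ into $B(H)$ is Borel, being the composition of the Borel inclusion $\mathfrak{D}_{\rho}(A)\hookrightarrow B(H)$ with operations that are strong-*-continuous on bounded sets; and $T\mapsto\Vert\pi(T)\Vert=\bigl(\sup\sigma_{\mathrm{ess}}(T^{\ast}T)\bigr)^{1/2}$ is Borel on $\mathrm{Ball}(B(H))$ by Lemma~\ref{Lemma:Borel-spectrum-B(H)}. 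Hence the set of ample projections is an intersection over $a\in A_{0}$ of Borel sets, so Borel, and it contains $1$ because $\rho$ is ample, whence $\varphi_{1}=\pi\circ\rho$ is injective. The set of co-ample projections is the image of the set of ample projections under the Borel map $P\mapsto 1-P$, hence also Borel, so $\mathrm{Z}_{0}^{\mathrm{A}}(\mathfrak{D}_{\rho}(A))$ is Borel. Together with the already-established properties (2), (3), (4), this verifies Definition~\ref{Definition:V-property}; the main (modest) obstacle throughout is making the extension-level identifications in property (1) precise, everything else being Borel-measurability bookkeeping of a standard kind.
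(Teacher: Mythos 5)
Your proof is correct and follows essentially the same route as the paper: both clauses of the Voiculescu property are extracted from Lemma \ref{Lemma:Toeplitz}, with property (1) obtained by identifying $\varphi_{\Phi_{k,n}(p\oplus 0_{n-k})}$ with $\varphi_{p}$ exactly as the paper does. The only divergence is in the Borel clause: the paper observes that injectivity of $\varphi_{P}$ is the condition $\left\Vert P\rho(a)-S\right\Vert>\left\Vert a\right\Vert-\varepsilon$ for all self-adjoint $a$, compact $S$, and $\varepsilon>0$, which is open by strict lower semicontinuity of the norm (so the ample projections even form a $G_{\delta}$ set), whereas you compute $\left\Vert\pi(P\rho(a)P)\right\Vert$ through the Borel essential-spectrum map of Lemma \ref{Lemma:Borel-spectrum-B(H)}; both yield Borelness, the paper's being marginally more direct.
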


\begin{proof}
By Lemma \ref{Lemma:Toeplitz}, a projection $P\in \mathfrak{D}_{\rho }\left(
A\right) $ is ample if and only if $\varphi _{P}$ is injective. This is
equivalent to the assertion that, for every self-adjoint $a\in A$, every $%
S\in K\left( H\right) $, and every $\varepsilon >0$,%
\begin{equation*}
\left\Vert P\rho \left( a\right) -S\right\Vert >\left\Vert a\right\Vert
-\varepsilon \text{.}
\end{equation*}%
By strict lower semicontinuity of the norm in $\mathfrak{D}_{\rho }\left(
A\right) $, this is an open condition. This shows that the set of ample
projections is a $G_{\delta }$ set.

Since $\rho $ is an ample representation, and $\varphi _{I}=\rho $, we have
that $I\in \mathfrak{D}_{\rho }\left( \mathfrak{A}\right) $ is ample.

Finally, we need to verify (1) of Definition \ref{Definition:V-property}.
For $n>k$, and projection $P\in M_{k}\left( \mathfrak{D}_{\rho }\left(
A\right) \right) =\mathfrak{D}_{\rho ^{k}}\left( A\right) $ we have 
\begin{equation*}
Q:=\Phi _{k,n}\left( P\oplus 0_{n-k}\right) =V_{k,n}^{\ast }\left( P\oplus
0_{n-k}\right) V_{k,n}\text{.}
\end{equation*}%
Thus, $\varphi _{Q}$ is equivalent to $\varphi _{P\oplus 0_{n-k}}=\varphi
_{P}$. Hence, by Lemma \ref{Lemma:Toeplitz}, $Q$ and $P$ are Murray--von
Neumann equivalent.
\end{proof}

As a consequence of Proposition \ref{Proposition:P-V} and Proposition \ref%
{Proposition:V-Def} we have the following.

\begin{proposition}
\label{Proposition:P-Def}Suppose that $A$ is a separable C*-algebra, and $%
\rho :A\rightarrow B\left( H\right) $ is a nondegenerate ample
representation of $A$. Then $\mathrm{K}_{0}\left( \mathfrak{D}_{\rho }\left(
A\right) \right) $ and $\mathrm{K}_{0}^{\mathrm{A}}\left( \mathfrak{D}_{\rho
}\left( A\right) \right) $ are definably isomorphic definable groups.
\end{proposition}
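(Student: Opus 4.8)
The plan is to reduce this statement immediately to the general criterion for strict C*-algebras satisfying Voiculescu's property, since all of the real work has already been isolated into the two cited propositions. First I would invoke Proposition \ref{Proposition:P-V}: for a separable C*-algebra $A$ and a nondegenerate ample representation $\rho:A\to B(H)$, the Paschke dual $\mathfrak{D}_\rho(A)$ is a strict unital C*-algebra satisfying the Voiculescu property. Concretely, this provides that the set $\mathrm{Z}_0^{\mathrm{A}}(\mathfrak{D}_\rho(A))$ of ample and co-ample projections is a $G_\delta$ (hence Borel) subset of $\mathrm{Ball}(\mathfrak{D}_\rho(A))$ containing $1$ and invariant under unitary conjugation, and that the strict $*$-isomorphisms $\Phi_{k,n}:M_n(\mathfrak{D}_\rho(A))\to M_k(\mathfrak{D}_\rho(A))$ of Definition \ref{Definition:V-property} are obtained, via $\mathrm{Ad}$ of a surjective isometry $V_{k,n}:H^k\to H^n$, from Voiculescu's theorem applied to the ample representations $\rho^k$ and $\rho^n$; their independence from the choice of $V_{k,n}$ up to unitary equivalence (again by Voiculescu) yields properties (2), (3), (4), while (1) follows from Lemma \ref{Lemma:Toeplitz} as spelled out in the discussion preceding Proposition \ref{Proposition:P-V}.

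Then I would feed this directly into Proposition \ref{Proposition:V-Def}. Since $\mathfrak{D}_\rho(A)$ is a strict C*-algebra satisfying Voiculescu's property, that proposition tells us that $\mathrm{K}_0^{\mathrm{A}}(\mathfrak{D}_\rho(A))$ is a definable group --- it is the quotient of the standard Borel space $\mathrm{Z}_0^{\mathrm{A}}(\mathfrak{D}_\rho(A))$ by the orbit equivalence relation $\mathrm{B}_0^{\mathrm{A}}(\mathfrak{D}_\rho(A))$ of the Borel conjugation action of the Polish group $U(\mathfrak{D}_\rho(A))$, and this relation is Borel and idealistic with Borel classes by Corollary \ref{Corollary:definable-group} --- that $\mathrm{K}_0(\mathfrak{D}_\rho(A))$ is likewise a definable group, and that the Borel maps $p\mapsto(p,0)$ and $(p,q)\mapsto\Phi_{1,2n+2}(p\oplus(1-q)\oplus1\oplus0)$ descend to mutually inverse definable group homomorphisms between $\mathrm{K}_0^{\mathrm{A}}(\mathfrak{D}_\rho(A))$ and $\mathrm{K}_0(\mathfrak{D}_\rho(A))$. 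This is exactly the asserted definable isomorphism, so the proposition follows.

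The one point that needs a brief remark is the hypothesis: Proposition \ref{Proposition:P-V} is stated for $A$ unital, whereas here $A$ is only assumed separable. I would handle this by observing that the construction of $\mathfrak{D}_\rho(A)$ and the verification of the Voiculescu property go through verbatim for an arbitrary separable $A$ with a nondegenerate ample representation, because the definable form of Voiculescu's theorem (Lemma \ref{Lemma:Voiculescu}, Lemma \ref{Lemma:trivial-representations}) is available in that generality; alternatively one may replace $\rho$ by its canonical extension to the unitization $A^+$, which is again nondegenerate and ample, without changing $\mathfrak{D}_\rho(A)$. There is therefore no substantive obstacle here: the only content beyond quoting the two propositions is the routine bookkeeping of matching the $\Phi_{k,n}$ produced by Voiculescu's theorem against the formal axioms (1)--(4) of Definition \ref{Definition:V-property}, and that bookkeeping has already been carried out. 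If anything, the mildly delicate step is making sure the $G_\delta$/Borel claims about the space of ample projections and the Borel measurability of $(p,q)\mapsto\Phi_{1,2n+2}(\cdots)$ are stated with the correct Borel structure on $\mathfrak{D}_\rho(A)$, but these are exactly what Proposition \ref{Proposition:P-V} already guarantees.
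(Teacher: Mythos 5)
Your proposal is correct and follows exactly the paper's own (one-line) proof: combine Proposition \ref{Proposition:P-V} with Proposition \ref{Proposition:V-Def}. Your additional remark about reconciling the unitality hypothesis of Proposition \ref{Proposition:P-V} with the merely-separable hypothesis here is a legitimate point the paper glosses over, and your fix (pass to $A^{+}$ or note the arguments work verbatim) is fine.
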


Suppose that $A,B$ are separable unital C*-algebras. Recall that, if $\rho
,\rho ^{\prime }$ are linear maps from $A$ to $B\left( H\right) $, and $%
V:H\rightarrow H$ is an isometry, then we write $\rho ^{\prime }\lesssim
_{V}\rho $ if $\rho ^{\prime }\left( a\right) \equiv V^{\ast }\rho \left(
a\right) V\mathrm{\ \mathrm{mod}}\ K\left( H\right) $ for every $a\in A$.
Suppose that $A,B$ are separable unital C*-algebras, $\alpha :A\rightarrow B$
is a unital *-homomorphism. Let $\rho _{A},\rho _{B}$ be ample
representations of $A,B$ on a Hilbert space $H$. An isometry $V_{\alpha
}:H\rightarrow H$ \emph{covers }$\alpha $ if $\rho _{A}\lesssim _{V_{\alpha
}}\rho _{B}\circ \alpha $.

We have that for every unital *-homomorphism $\alpha :A\rightarrow B$ there
exists an isometry $V_{\alpha }:H\rightarrow H$ that covers $\alpha $ \cite[%
Lemma 5.2.3]{higson_analytic_2000}. Furthermore, $\mathrm{Ad}\left(
V_{\alpha }\right) $ induces a strict unital *-homomorphism $\mathrm{Ad}%
\left( V_{\alpha }\right) :\mathfrak{D}_{\rho _{B}}\left( B\right)
\rightarrow \mathfrak{D}_{\rho _{A}}\left( A\right) $. In turn, $\mathrm{Ad}%
\left( V_{\alpha }\right) :\mathfrak{D}_{\rho _{B}}\left( B\right)
\rightarrow \mathfrak{D}_{\rho _{A}}\left( A\right) $ induces a definable
group homomorphism $\mathrm{K}_{0}\left( \mathfrak{D}_{\rho _{B}}\left(
B\right) \right) \rightarrow \mathrm{K}_{0}\left( \mathfrak{D}_{\rho
_{A}}\left( A\right) \right) $. This definable group isomorphism only
depends on $\alpha $, and not on the choice of the isometry $V_{\alpha }$
that covers $\alpha $ \cite[Lemma 5.2.4]{higson_analytic_2000}. This gives a
contravariant functor $A\mapsto \mathrm{K}_{0}\left( \mathfrak{D}_{\rho
_{A}}\left( A\right) \right) $ from the category of separable unital
C*-algebras to the category of definable groups. Similarly, one can regard $%
A\mapsto \mathrm{K}_{0}^{\mathrm{A}}\left( \mathfrak{D}_{\rho _{A}}\left(
A\right) \right) $ as a contravariant functor, naturally isomorphic to $%
\mathrm{K}_{0}\left( \mathfrak{D}_{\rho _{A}}\left( A\right) \right) $.

Using Proposition \ref{Proposition:P-Def} we can show the following.

\begin{proposition}
\label{Proposition:K-Ext}Suppose that $A$ is a separable, unital C*-algebra,
and $\rho :A\rightarrow B\left( H\right) $ is a nondegenerate ample
representation of $A$. Then $\mathrm{Ext}\left( A\right) ^{-1}$ is a
definable group, which is naturally definably isomorphic to $\mathrm{K}%
_{0}\left( \mathfrak{D}_{\rho }\left( A\right) \right) $.
\end{proposition}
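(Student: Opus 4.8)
The plan is to upgrade the classical Paschke duality isomorphism to a \emph{definable} one and then transfer definability via Corollary~\ref{Corollary:Kechris--MacDonald}. Since $\mathfrak{D}_{\rho}(A)$ satisfies the Voiculescu property (Proposition~\ref{Proposition:P-V}), the group $\mathrm{K}_{0}^{\mathrm{A}}(\mathfrak{D}_{\rho}(A))$ is a definable group and is definably isomorphic to $\mathrm{K}_{0}(\mathfrak{D}_{\rho}(A))$ (Proposition~\ref{Proposition:P-Def}); so it suffices to exhibit a definable bijection from $\mathrm{K}_{0}^{\mathrm{A}}(\mathfrak{D}_{\rho}(A))$ onto $\mathrm{Ext}(A)^{-1}=\mathcal{E}(A)/\!\thickapprox$ that is moreover a group homomorphism and natural in $A$.

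First I would construct a Borel map $\Theta\colon\mathrm{Z}_{0}^{\mathrm{A}}(\mathfrak{D}_{\rho}(A))\to\mathcal{E}(A)$ lifting the assignment $P\mapsto[\varphi_{P}]$. If $P\in\mathfrak{D}_{\rho}(A)$ is ample, then comparing $P\oplus 0_{H}\sim_{\mathrm{MvN}}P\oplus 1_{H}$ inside $B(H\oplus H)$ forces $PH$ to be infinite-dimensional, so one can choose, in a Borel way from $P$, a surjective isometry $W_{P}\colon H\to PH$ --- for instance by running Gram--Schmidt, Borel-measurably, on the sequence $(Pe_{n})_{n}$ obtained from a fixed orthonormal basis $(e_{n})_{n}$ of $H$ to get a Borel-in-$P$ enumeration of an orthonormal basis of $PH$, and letting $W_{P}$ send $e_{n}$ to its $n$-th term. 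Set $\Theta(P)\colon a\mapsto W_{P}^{\ast}\rho(a)W_{P}$. This is a unital completely positive map; using $W_{P}^{\ast}P=W_{P}^{\ast}$ and $\rho(a)P\equiv P\rho(a)\ \mathrm{mod}\ K(H)$ (because $P\in\mathfrak{D}_{\rho}(A)$) one gets $\Theta(P)(a)\Theta(P)(b)\equiv\Theta(P)(ab)\ \mathrm{mod}\ K(H)$; and by Lemma~\ref{Lemma:Toeplitz} ampleness of $P$ makes $\varphi_{P}$ injective, hence isometric, so $\Theta(P)$ is ample. Thus $\Theta(P)\in\mathcal{E}(A)$, and $\pi\circ\Theta(P)$ is $\varphi_{P}$ transported to $Q(H)$ via $W_{P}$. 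All the operations involved are Borel, so $\Theta$ is Borel.

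Next I would check that $\Theta$ descends to $\bar\Theta\colon\mathrm{K}_{0}^{\mathrm{A}}(\mathfrak{D}_{\rho}(A))\to\mathrm{Ext}(A)^{-1}$, $[P]\mapsto[\varphi_{P}]$. If $P,Q$ are unitarily conjugate in $\mathfrak{D}_{\rho}(A)$ they are Murray--von Neumann equivalent, so $\varphi_{P}\thickapprox\varphi_{Q}$ by Lemma~\ref{Lemma:Toeplitz}; conversely, if $\Theta(P)\thickapprox\Theta(Q)$ then $P\sim_{\mathrm{MvN}}Q$ and, $P,Q$ being ample and co-ample, they are unitarily equivalent in $\mathfrak{D}_{\rho}(A)$ by the equivalence of Murray--von Neumann and unitary equivalence for such projections established above. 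Hence $\bar\Theta$ is a well-defined injection with Borel lift $\Theta$. Under the isomorphism of Proposition~\ref{Proposition:P-Def}, $\bar\Theta$ coincides with the Paschke duality isomorphism $\mathrm{K}_{0}(\mathfrak{D}_{\rho}(A))\cong\mathrm{Ext}(A)^{-1}$ of \cite{paschke_theory_1981} (see also \cite[Chapter~5]{higson_analytic_2000}), which is in particular a bijective group homomorphism; alternatively, surjectivity can be seen directly by dilating a representative $\varphi\in\mathcal{E}(A)$ via the definable Stinespring theorem (Lemma~\ref{Lemma:definable-Stinespring}) and absorbing $\rho$ via the definable Voiculescu theorem (Lemmas~\ref{Lemma:Voiculescu} and \ref{Lemma:trivial-representations}) to obtain an isometry $W$ with $\varphi\equiv W^{\ast}\rho(\cdot)W\ \mathrm{mod}\ K(H)$ and taking $P=WW^{\ast}$, while the homomorphism property follows from $\Phi_{1,2}(P\oplus Q)\sim_{\mathrm{MvN}}P\oplus Q$ (Remark~\ref{Remark:V-property}) together with Lemma~\ref{Lemma:Toeplitz}. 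Since $\bar\Theta$ is a definable bijection from the definable group $\mathrm{K}_{0}^{\mathrm{A}}(\mathfrak{D}_{\rho}(A))$ onto the semidefinable group $\mathrm{Ext}(A)^{-1}$, Corollary~\ref{Corollary:Kechris--MacDonald} yields that $\mathrm{Ext}(A)^{-1}$ is a definable group and $\bar\Theta$ is an isomorphism in $\mathbf{DSet}$, hence of definable groups; composing with Proposition~\ref{Proposition:P-Def} gives the asserted definable isomorphism $\mathrm{K}_{0}(\mathfrak{D}_{\rho}(A))\cong\mathrm{Ext}(A)^{-1}$. Naturality in $A$ follows from the naturality of Paschke duality once one observes that the maps induced on $\mathrm{K}_{0}(\mathfrak{D}_{\rho_{A}}(A))$ by $\mathrm{Ad}(V_{\alpha})$ and on $\mathrm{Ext}(A)^{-1}$ by $\alpha^{\ast}$ are both computed from Borel lifts compatible, up to $\thickapprox$, with a choice of isometry $V_{\alpha}$ covering $\alpha$.

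I expect the main obstacle to be the Borel construction of $\Theta$: the extension $\varphi_{P}$ attached to $P$ a priori lives only on the corner $PH$, and turning it, uniformly and Borel-measurably in $P$, into an actual point of the Polish space $\mathcal{E}(A)$ of ample completely positive lifts $A\to B(H)$ is precisely what necessitates the explicit Borel selection of the identifying isometry $W_{P}\colon H\to PH$. Everything else is either a formal consequence of Lemma~\ref{Lemma:Toeplitz} and the lemmas on ample and co-ample projections, an application of Corollary~\ref{Corollary:Kechris--MacDonald} and Proposition~\ref{Proposition:P-Def}, or a citation of Paschke duality for the purely algebraic statements.
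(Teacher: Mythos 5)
Your proposal is correct and follows essentially the same route as the paper: a Borel selection of an isometry $V$ with $VV^{\ast}=P$ gives the Borel lift of $[P]\mapsto[\varphi_{P}]$, Paschke duality supplies bijectivity, the definable Voiculescu theorem handles the inverse direction, and definability of $\mathrm{Ext}(A)^{-1}$ is transferred from $\mathrm{K}_{0}^{\mathrm{A}}(\mathfrak{D}_{\rho}(A))$ via Proposition~\ref{Proposition:P-Def} and Corollary~\ref{Corollary:Kechris--MacDonald}. The only cosmetic difference is that the paper exhibits an explicit Borel lift of $\gamma^{-1}$ (replacing $V_{\varphi}V_{\varphi}^{\ast}$ by $\Phi_{1,3}(P\oplus 1\oplus 0)$ to land in the ample and co-ample projections) and then invokes Lemma~\ref{Lemma:iso-semi}, whereas you lean on Corollary~\ref{Corollary:Kechris--MacDonald} applied to the forward definable bijection; both are valid and interchangeable here.
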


\begin{proof}
Consider the corresponding Paschke dual $\mathfrak{D}_{\rho }\left( A\right) 
$. Recall that $\mathcal{E}\left( A\right) $ denotes the Polish space of
representatives of injective, unital, semi-split extensions of $A$ by $%
K\left( H\right) $, which are ample ucp maps $\varphi :A\rightarrow B\left(
H\right) $ satisfying $\varphi \left( xy\right) \equiv \varphi (x)\varphi (y)%
\mathrm{\ \mathrm{mod}}\ K\left( H\right) $ for $x,y\in A$. An ample and
co-ample projection $P\in \mathfrak{D}_{\rho }\left( A\right) $ determines
an extension $\varphi _{P}\in \mathcal{E}\left( A\right) $ of $A$, defined
as follows. Choose a linear isometry $V:H\rightarrow H$ such that $VV^{\ast
}=P$, and define $\varphi _{P}\left( a\right) =V^{\ast }\rho \left( a\right)
V$. (Notice that $V$ can be chosen in a Borel fashion from $P$.) The Borel
function $P\mapsto \varphi _{P}$ induces a definable group isomorphism 
\begin{equation*}
\gamma :\mathrm{K}_{0}^{\mathrm{A}}\left( \mathfrak{D}_{\rho }\left(
A\right) \right) \rightarrow \mathrm{Ext}\left( A\right) ^{-1};
\end{equation*}%
see \cite[Proposition 5.1.6]{higson_analytic_2000}.

We claim that the inverse group homomorphism $\gamma ^{-1}:\mathrm{Ext}%
\left( A\right) ^{-1}\rightarrow \mathrm{K}_{0}^{\mathrm{A}}\left( \mathfrak{%
D}_{\rho }\left( A\right) \right) $ is definable as well. Indeed, if $%
\varphi \in \mathcal{E}\left( A\right) $ then, by the Definable Voiculescu
Theorem (Lemma \ref{Lemma:Voiculescu}), one can choose in a Borel way an
isometry $V_{\varphi }:H\rightarrow H$ such that $\varphi \lesssim
_{V_{\varphi }}\rho $. Thus, we have that%
\begin{equation*}
\varphi \left( a\right) \equiv V_{\varphi }^{\ast }\rho \left( a\right)
V_{\varphi }\mathrm{\ \mathrm{mod}}\ K\left( H\right)
\end{equation*}%
for every $a\in A$. If $P:=V_{\varphi }V_{\varphi }^{\ast }$ then we have
that $P$ is a projection in $\mathfrak{D}_{\rho }\left( A\right) $ such that 
$\varphi _{P}$ is equivalent to $\varphi $. As $P$ is not necessarily ample
and co-ample, one can replace $P$ with $\Phi _{1,3}\left( P\oplus 1\oplus
0\right) $ to obtain an ample and co-ample projection $P_{\varphi }\in 
\mathrm{Z}_{0}\left( \mathfrak{D}_{\rho }\left( A\right) \right) $ such that 
$\varphi _{P_{\varphi }}$ is equivalent to $\varphi $. Thus the Borel
function $\varphi \mapsto P_{\varphi }$ is a lift of the inverse map $\gamma
^{-1}:\mathrm{Ext}\left( A\right) ^{-1}\rightarrow \mathrm{K}_{0}^{\mathrm{A}%
}\left( \mathfrak{D}_{\rho }\left( A\right) \right) $. This shows that $%
\gamma ^{-1}$ is also definable. Therefore, $\gamma $ is a natural
isomorphism in the category of semidefinable groups.

As $\mathrm{K}_{0}^{\mathrm{A}}\left( \mathfrak{D}_{\rho }\left( A\right)
\right) $ is in fact a \emph{definable} group, this implies that $\mathrm{Ext%
}\left( A\right) ^{-1}$ is a definable group. Since $\mathrm{K}_{0}^{\mathrm{%
A}}\left( \mathfrak{D}_{\rho }\left( A\right) \right) $ is naturally
definably isomorphic to $\mathrm{K}_{0}\left( \mathfrak{D}_{\rho }\left(
A\right) \right) $, we have that $\mathrm{Ext}\left( A\right) ^{-1}$ is
naturally definably isomorphic to $\mathrm{K}_{0}\left( \mathfrak{D}_{\rho
}\left( A\right) \right) $ as well.
\end{proof}

\subsection{Definable $\mathrm{K}$-theory of commutants in the Calkin
algebra \label{Subsection:K0-commutants}}

Suppose that $A$ is a unital separable C*-algebra, and $\rho $ is an ample
representation of $A$ on the infinite-dimensional separable Hilbert space $H$%
. Then $\rho $ induces an ample representation $\rho ^{+}$ of the
unitization $A^{+}$ on $H\oplus H$, defined by $\rho ^{+}\left( a\right)
=\rho \left( a\right) \oplus 0$ for $a\in A$. Recall that the Paschke dual
algebra is the strict unital C*-algebra%
\begin{equation*}
\mathfrak{D}_{\rho }\left( A\right) :=\left\{ T\in B\left( H\right) :\forall
a\in A,T\rho \left( a\right) \equiv \rho \left( a\right) T\mathrm{\ \mathrm{%
mod}}\ K\left( H\right) \right\} \text{.}
\end{equation*}%
We also have the Paschke dual algebra 
\begin{equation*}
\mathfrak{D}_{\rho ^{+}}\left( A^{+}\right) =\left\{ T\in B\left( H\oplus
H\right) :\forall a\in A,T\rho ^{+}\left( a\right) \equiv \rho ^{+}\left(
a\right) T\mathrm{\ \mathrm{mod}}\ K\left( H\oplus H\right) \right\} \text{.}
\end{equation*}%
Notice that%
\begin{equation*}
\mathfrak{D}_{\rho ^{+}}\left( A^{+}\right) =%
\begin{bmatrix}
\mathfrak{D}_{\rho }\left( A\right) & K\left( H\right) \\ 
K\left( H\right) & B\left( H\right)%
\end{bmatrix}%
\text{;}
\end{equation*}%
see \cite[Section 5.2]{higson_analytic_2000}.

Define $\mathfrak{J}$ to be the strict ideal%
\begin{equation*}
\begin{bmatrix}
K\left( H\right) & K\left( H\right) \\ 
K\left( H\right) & B\left( H\right)%
\end{bmatrix}%
\end{equation*}%
of $\mathfrak{D}_{\rho ^{+}}\left( A^{+}\right) $. Let also $\mathfrak{D}%
_{\rho ^{+}}\left( A//A\right) $ be the strict ideal%
\begin{equation*}
\left\{ T\in \mathfrak{D}_{\rho ^{+}}\left( A^{+}\right) :\forall a\in
A,T\rho ^{+}\left( a\right) \equiv 0\mathrm{\ \mathrm{mod}}\ K\left( H\oplus
H\right) \right\}
\end{equation*}%
of $\mathfrak{D}_{\rho ^{+}}\left( A^{+}\right) $; see Proposition \ref%
{Proposition:essential-annihilator}.

\begin{lemma}
\label{Lemma:trivial-K}The C*-algebras $\mathfrak{J}$ and $\mathfrak{D}%
_{\rho ^{+}}\left( A//A\right) $ defined above have trivial $\mathrm{K}$%
-theory.
\end{lemma}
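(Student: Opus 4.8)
The plan is to first observe that $\mathfrak{D}_{\rho^{+}}(A//A)$ and $\mathfrak{J}$ are in fact the \emph{same} C*-algebra, and then to show that $\mathfrak{J}$ has vanishing $\mathrm{K}$-theory by identifying it up to Morita equivalence with $B(H)$. For the first step: since $A$ is unital and $\rho$ is a nondegenerate representation, $\rho(1_{A})=1_{H}$, hence $\rho^{+}(1_{A})=1_{H}\oplus 0$. Thus for $T=\begin{bmatrix} T_{11} & T_{12}\\ T_{21} & T_{22}\end{bmatrix}\in\mathfrak{D}_{\rho^{+}}(A^{+})=\begin{bmatrix}\mathfrak{D}_{\rho}(A) & K(H)\\ K(H) & B(H)\end{bmatrix}$ one has $T\rho^{+}(a)=\begin{bmatrix}T_{11}\rho(a) & 0\\ T_{21}\rho(a) & 0\end{bmatrix}$, and the requirement that this lie in $K(H\oplus H)$ for every $a\in A$ is, since $\rho(1_{A})=1_{H}$ and $T_{21}\in K(H)$ already, equivalent to $T_{11}\in K(H)$. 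As membership in $\mathfrak{D}_{\rho^{+}}(A^{+})$ already forces $T_{12},T_{21}\in K(H)$ and $T_{22}\in B(H)$, we conclude $\mathfrak{D}_{\rho^{+}}(A//A)=\begin{bmatrix}K(H) & K(H)\\ K(H) & B(H)\end{bmatrix}=\mathfrak{J}$, so it remains only to prove $\mathrm{K}_{*}(\mathfrak{J})=0$.

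For that I would set $q=\begin{bmatrix}0 & 0\\ 0 & 1_{H}\end{bmatrix}\in\mathfrak{J}$, so that $q\mathfrak{J}q=\begin{bmatrix}0 & 0\\ 0 & B(H)\end{bmatrix}\cong B(H)$, and check that $q$ is a \emph{full} projection, i.e.\ $\overline{\mathfrak{J}q\mathfrak{J}}=\mathfrak{J}$; this is an immediate matrix computation using $\overline{K(H)K(H)}=\overline{K(H)B(H)}=\overline{B(H)K(H)}=K(H)$. Then $q\mathfrak{J}$ is a $(q\mathfrak{J}q)$--$\mathfrak{J}$ imprimitivity bimodule, so $\mathfrak{J}$ is Morita equivalent to $B(H)$, and since $\mathrm{K}$-theory is a Morita invariant, $\mathrm{K}_{*}(\mathfrak{J})\cong\mathrm{K}_{*}(B(H))$. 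Finally $\mathrm{K}_{*}(B(H))=0$: $\mathrm{K}_{0}(B(H))=0$ by the standard Eilenberg swindle (under an isomorphism $B(H)\cong B(\bigoplus_{n\in\omega}H)$ any projection $p$ satisfies $[p]+[\tilde p]=[\tilde p]$, where $\tilde p$ is the infinite ampliation of $p$), and $\mathrm{K}_{1}(B(H))=0$ by Kuiper's theorem on the contractibility of the unitary group of $H$.

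Alternatively, and staying entirely within the six-term machinery already used elsewhere in the paper, one can invoke the short exact sequence $0\to K(H\oplus H)\to\mathfrak{J}\xrightarrow{\pi}Q(H)\to 0$, where $\pi(T)=[T_{22}]$; this $\pi$ is a well-defined surjective $*$-homomorphism because $(TS)_{22}=T_{21}S_{12}+T_{22}S_{22}$ with $T_{21}S_{12}\in K(H)$, and its kernel is exactly the matrices with all entries compact. Feeding this into the six-term exact sequence, together with $\mathrm{K}_{0}(K(H))=\mathbb{Z}$, $\mathrm{K}_{1}(K(H))=0$, $\mathrm{K}_{0}(Q(H))=0$, $\mathrm{K}_{1}(Q(H))=\mathbb{Z}$ and the classical fact that the index map $\mathrm{K}_{1}(Q(H))\to\mathrm{K}_{0}(K(H))$ is an isomorphism, exactness forces the connecting map onto $\mathrm{K}_{0}(K(H))$ to be surjective and $\pi_{*}$ on $\mathrm{K}_{1}$ to be zero, whence $\mathrm{K}_{0}(\mathfrak{J})=\mathrm{K}_{1}(\mathfrak{J})=0$. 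Along either route there is no genuine obstacle; the only points requiring care are the explicit matrix descriptions of the dual algebras (for which I would cite \cite[Section 5.2]{higson_analytic_2000}) and the standard input $\mathrm{K}_{*}(B(H))=0$.
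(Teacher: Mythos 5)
Your proof is correct, and on one of the two algebras it takes a genuinely different (and more self-contained) route than the paper. For $\mathfrak{J}$ itself, your second argument --- the extension $0\to K(H\oplus H)\to\mathfrak{J}\xrightarrow{\pi}Q(H)\to 0$ fed into the six-term sequence --- is exactly the paper's argument (it invokes the six-term sequence for the pair $\left(\mathfrak{J},M_{2}(K(H))\right)$, referring to \cite[Exercise 4.10.9]{higson_analytic_2000}); your Morita-equivalence argument via the full corner $q\mathfrak{J}q\cong B(H)$ is an equally valid alternative. One small point on the six-term route: the connecting map of \emph{this} extension is not literally the classical index map of $0\to K(H)\to B(H)\to Q(H)\to 0$, so "exactness forces it to be surjective" is not quite the right justification; rather, apply naturality of the boundary map to the morphism of extensions induced by the corner embedding $d\mapsto 0\oplus d$ of $B(H)$ into $\mathfrak{J}$, together with the fact that the corner inclusion $K(H)\to K(H\oplus H)$ is a $\mathrm{K}_{0}$-isomorphism, to identify the connecting map with the Fredholm index. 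For $\mathfrak{D}_{\rho^{+}}(A//A)$ the paper simply cites \cite[Lemma 5.4.1]{higson_analytic_2000}, whereas you observe that under the standing hypotheses of this subsection ($A$ unital and $\rho$ nondegenerate, so $\rho^{+}(1_{A})=1_{H}\oplus 0$) one has $\mathfrak{D}_{\rho^{+}}(A//A)=\mathfrak{J}$ on the nose; this identification is correct and collapses the lemma to a single computation. The only caveat is that it uses unitality of $A$ essentially: the vanishing of $\mathrm{K}_{*}(\mathfrak{D}(A//A))$ is later invoked for a general separable C*-algebra $A$ (Lemma \ref{Lemma:full-ideal}), where the condition $T_{11}\rho(a)\in K(H)$ for all $a\in A$ no longer reduces to $T_{11}\in K(H)$ and the argument of \cite[Lemma 5.4.1]{higson_analytic_2000} is genuinely needed, so your shortcut proves the lemma as stated but not the more general statement the paper subsequently uses.
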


\begin{proof}
The assertion about $\mathfrak{J}$ follows by considering the six-term exact
sequence in $\mathrm{K}$-theory associated with the pair $\left( \mathfrak{J}%
,M_{2}\left( K\left( H\right) \right) \right) $; see also \cite[Exercise
4.10.9]{higson_analytic_2000}. The assertion about $\mathfrak{D}_{\rho
^{+}}\left( A//A\right) $ is \cite[Lemma 5.4.1]{higson_analytic_2000}.
\end{proof}

\begin{lemma}
\label{Lemma:K-commutants}Suppose that $A$ is a separable unital C*-algebra.
For $i\in \left\{ 0,1\right\} $:

\begin{enumerate}
\item the definable group homomorphism $\mathrm{K}_{i}\left( \mathfrak{D}%
_{\rho ^{+}}\left( A^{+}\right) \right) \rightarrow \mathrm{K}_{i}\left( 
\mathfrak{D}_{\rho ^{+}}\left( A^{+}\right) /\mathfrak{J}\right) $ is an
isomorphism in the category of semidefinable groups;

\item the strict *-homomorphism $\varphi :\mathfrak{D}_{\rho }\left(
A\right) \rightarrow \mathfrak{D}_{\rho ^{+}}\left( A^{+}\right) $, $%
x\mapsto x\oplus 0$ induces an isomorphism 
\begin{equation*}
\mathrm{K}_{i}\left( \mathfrak{D}_{\rho }\left( A\right) /K\left( H\right)
\right) \rightarrow \mathrm{K}_{i}\left( \mathfrak{D}_{\rho ^{+}}\left(
A^{+}\right) /\mathfrak{J}\right)
\end{equation*}
in the category of semidefinable groups.

\item The map $\mathrm{K}_{0}\left( \mathfrak{D}_{\rho }\left( A\right)
\right) \rightarrow \mathrm{K}_{0}\left( \mathfrak{D}_{\rho }\left( A\right)
/K\left( H\right) \right) $ is an isomorphism in the category of
semidefinable groups;

\item The subgroup $G$ of $\mathrm{K}_{1}\left( \mathfrak{D}_{\rho }\left(
A\right) /K\left( H\right) \right) $, consisting of the kernel of the
(surjective) index map 
\begin{equation*}
\partial _{0}:\mathrm{K}_{1}\left( \mathfrak{D}_{\rho }\left( A\right)
/K\left( H\right) \right) \rightarrow \mathrm{K}_{0}\left( K\left( H\right)
\right) \cong \mathbb{Z}
\end{equation*}%
is Borel, and the definable group homomorphism $\mathrm{K}_{1}\left( 
\mathfrak{D}_{\rho }\left( A\right) \right) \rightarrow \mathrm{K}_{1}\left( 
\mathfrak{D}_{\rho }\left( A\right) /K\left( H\right) \right) $ induces an
isomorphism $\mathrm{K}_{1}\left( \mathfrak{D}_{\rho }\left( A\right)
\right) \rightarrow G$ in the category of semidefinable groups.
\end{enumerate}
\end{lemma}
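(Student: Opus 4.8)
The plan is to read all four assertions off the six-term exact sequence in $\mathrm{K}$-theory for a strict unital C*-pair (Corollary~\ref{Corollary:exact-K-theory}, Proposition~\ref{Proposition:exact-K1}, Proposition~\ref{Proposition:exact-boundary0}, Proposition~\ref{Proposition:exact-boundary1}), whose connecting maps are definable group homomorphisms, applied to the strict C*-pairs $(\mathfrak{D}_{\rho^+}(A^+),\mathfrak{J})$ and $(\mathfrak{D}_\rho(A),K(H))$, and to combine this with Lemma~\ref{Lemma:trivial-K} and the classical computations $\mathrm{K}_0(K(H))\cong\mathbb{Z}$, $\mathrm{K}_1(K(H))=0$. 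The purely algebraic content of (1)--(4) is that of \cite[Chapter~5]{higson_analytic_2000}; what must be added is that the bijective definable homomorphisms so produced are isomorphisms in $\mathbf{SemiDSet}$, i.e.\ that their inverses are definable. For this I would use two mechanisms. First, $\mathrm{K}_0$ of a Paschke dual algebra is already a \emph{definable} group by Proposition~\ref{Proposition:P-Def}, so that Corollary~\ref{Corollary:Kechris--MacDonald} promotes any bijective definable homomorphism out of it to an isomorphism in $\mathbf{DSet}$, hence in $\mathbf{SemiDSet}$. Second, one can produce Borel lifts of projections and of index-zero unitaries modulo an ideal to honest projections and unitaries \emph{inside} a strict C*-algebra, using Borel functional calculus as in Lemma~\ref{Lemma:essential-projection} and the Borel polar decomposition of Lemma~\ref{Lemma:polar}, together with the fact that an essential commutant is closed under continuous functional calculus and under products.

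For (1) I would apply the six-term sequence to $(\mathfrak{D}_{\rho^+}(A^+),\mathfrak{J})$. Since $\mathrm{K}_*(\mathfrak{J})=0$ by Lemma~\ref{Lemma:trivial-K}, exactness makes the quotient map $\mathrm{K}_i(\mathfrak{D}_{\rho^+}(A^+))\to\mathrm{K}_i(\mathfrak{D}_{\rho^+}(A^+)/\mathfrak{J})$ a bijective definable homomorphism. For $i=0$ this is an isomorphism in $\mathbf{SemiDSet}$ by Corollary~\ref{Corollary:Kechris--MacDonald} and Proposition~\ref{Proposition:P-Def} (applied to the Paschke dual of $A^+$). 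For $i=1$ I would instead exhibit a Borel lift of the inverse: a mod-$\mathfrak{J}$ unitary over $\mathfrak{D}_{\rho^+}(A^+)$ corresponds, via the matrix description, to a Fredholm essential unitary $u$ over $\mathfrak{D}_\rho(A)$, and one lifts it to an honest unitary over $\mathfrak{D}_{\rho^+}(A^+)$ whose top-left corner is the polar part of $u$ and whose remaining entries absorb the Fredholm index of $u$ into the unconstrained $B(H)$-corner of $\mathfrak{D}_{\rho^+}(A^+)$, choosing the finite-rank data in a Borel way via Lemma~\ref{Lemma:polar} and Borel selections between finite-dimensional defect spaces. For (2), note that $\mathfrak{J}$ differs from $\mathfrak{D}_{\rho^+}(A^+)$ only in replacing the top-left corner $\mathfrak{D}_\rho(A)$ by $K(H)$, so $\varphi$ descends to a genuine $*$-isomorphism $\mathfrak{D}_\rho(A)/K(H)\to\mathfrak{D}_{\rho^+}(A^+)/\mathfrak{J}$ whose inverse is top-left corner compression; both directions are induced by Borel, norm-nonincreasing maps on the spaces of representatives, so the induced map on $\mathrm{K}_i$ is an isomorphism in $\mathbf{SemiDSet}$.

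For (3) and (4) I would use the six-term sequence for $(\mathfrak{D}_\rho(A),K(H))$. Since $\mathrm{K}_1(K(H))=0$, the map $\mathrm{K}_1(\mathfrak{D}_\rho(A))\to\mathrm{K}_1(\mathfrak{D}_\rho(A)/K(H))$ is an injective definable homomorphism with image the kernel $G$ of the index map $\partial_0$; and $G$ is Borel because $\partial_0$ is a definable homomorphism into the countable discrete group $\mathrm{K}_0(K(H))\cong\mathbb{Z}$ and $\{0\}$ is Borel there. For the inverse $G\to\mathrm{K}_1(\mathfrak{D}_\rho(A))$ one lifts an index-zero mod-$K(H)$ unitary $u$ over $\mathfrak{D}_\rho(A)$ to the honest unitary $v+w$, where $v$ is the partial isometry in the polar decomposition of $u$ (which lies in $\mathfrak{D}_\rho(A)$, the essential commutant being closed under continuous functional calculus and products) and $w$ is a finite-rank partial isometry between $\ker v$ and $\ker v^{*}$, these being equidimensional since $\mathrm{index}(u)=0$; every choice here is Borel in $u$ by Lemma~\ref{Lemma:polar} and Borel selections of orthonormal bases of the finite-dimensional kernels. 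Finally, $\mathrm{K}_0(\mathfrak{D}_\rho(A))\to\mathrm{K}_0(\mathfrak{D}_\rho(A)/K(H))$ is a bijective definable homomorphism (bijectivity uses that $\partial_0$ is surjective, which is classical) out of the definable group $\mathrm{K}_0(\mathfrak{D}_\rho(A))$ of Proposition~\ref{Proposition:P-Def}, so Corollary~\ref{Corollary:Kechris--MacDonald} again yields an isomorphism in $\mathbf{SemiDSet}$; alternatively one lifts mod-$K(H)$ projections to honest ones by the Borel functional calculus of Lemma~\ref{Lemma:essential-projection}, whose construction stays inside $\mathfrak{D}_\rho(A)$.

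The hard part will be the bookkeeping around definability rather than the algebra: a $\mathrm{K}$-theory isomorphism obtained from a diagram chase does not respect the descriptive-set-theoretic structure for free, so each inverse must be realized either by routing through a group already certified to be definable (via Proposition~\ref{Proposition:P-Def} and Corollary~\ref{Corollary:Kechris--MacDonald}) or by an explicit Borel selection of lifts of projections and unitaries modulo the relevant ideal, keeping track that such lifts remain inside the essential commutant. The exactness statements themselves and the surjectivity of the index map used in (3) are standard facts about the $\mathrm{K}$-theory of Paschke dual algebras and may be quoted from \cite[Chapter~5]{higson_analytic_2000}.
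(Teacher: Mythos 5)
Your proposal is correct and follows essentially the same route as the paper: the algebraic content comes from the vanishing of $\mathrm{K}_{\ast }\left( \mathfrak{J}\right) $ and $\mathrm{K}_{1}\left( K\left( H\right) \right) $ together with the six-term exact sequence, and definability of the inverses is supplied by explicit Borel lifts of $\mathrm{\mathrm{mod}}$-ideal projections and unitaries via Lemma \ref{Lemma:essential-projection} and the Borel polar decomposition of Lemma \ref{Lemma:polar} (the paper's lift in (1) for $i=1$ is exactly your "absorb the defect into the other corners": it takes the $2\times 2$ unitary with the polar part $v$ of $u_{11}$ in the top-left and the finite-rank defect projections $1-vv^{\ast }$, $1-v^{\ast }v$ in the off-diagonal $K\left( H\right) $ corners). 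Two small divergences are worth recording. First, for the $\mathrm{K}_{0}$ cases you shortcut through Proposition \ref{Proposition:P-Def} and Corollary \ref{Corollary:Kechris--MacDonald} rather than exhibiting the inverse lift; this is legitimate and not circular, since both results precede the lemma, whereas the paper uniformly exhibits explicit lifts. Second, in (4) the paper asserts that the partial isometry in the polar decomposition of an index-zero essential unitary is already a unitary, which is false as stated (e.g.\ $S\oplus S^{\ast }$ for the unilateral shift $S$); your correction --- adding a Borel-chosen finite-rank partial isometry between the equidimensional defect spaces, which stays in $\mathfrak{D}_{\rho }\left( A\right) $ because it is compact --- is the right repair and is actually more careful than the paper's own argument at this point.
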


\begin{proof}
(1) Since $\mathfrak{J}$ has trivial $\mathrm{K}$-theory, the group
homomorphism $\mathrm{K}_{i}\left( \mathfrak{D}_{\rho ^{+}}\left(
A^{+}\right) \right) \rightarrow \mathrm{K}_{i}\left( \mathfrak{D}_{\rho
^{+}}\left( A^{+}\right) /\mathfrak{J}\right) $ is an isomorphism. We need
to prove that the inverse group homomorphism $\mathrm{K}_{i}\left( \mathfrak{%
D}_{\rho ^{+}}\left( A^{+}\right) /\mathfrak{J}\right) \rightarrow \mathrm{K}%
_{i}\left( \mathfrak{D}_{\rho ^{+}}\left( A^{+}\right) \right) $ is
definable.

Consider first the case $i=0$. Consider $p\in \mathrm{Z}_{0}\left( \mathfrak{%
D}_{\rho ^{+}}\left( A^{+}\right) /\mathfrak{J}\right) $. Thus, $p\in 
\mathrm{Proj}\left( M_{d}\left( \mathfrak{D}_{\rho ^{+}}\left( A\right)
\right) /M_{d}\left( \mathfrak{J}\right) \right) $ for some $d\geq 1$. After
replacing $\rho $ with $\rho ^{d}$ we can assume that $d=1$. Thus, $p\in 
\mathfrak{D}_{\rho ^{+}}\left( A\right) $ is a $\mathrm{\mathrm{mod}}\ 
\mathfrak{J}$ projection. This implies that%
\begin{equation*}
p=%
\begin{bmatrix}
p_{11} & p_{12} \\ 
p_{21} & p_{22}%
\end{bmatrix}%
\end{equation*}%
where $p_{11}\in \mathfrak{D}_{\rho }\left( A\right) $ is a $\mathrm{\mathrm{%
mod}}\ K\left( H\right) $ projection. Then by Lemma \ref%
{Lemma:essential-projection}, one can choose in a Borel fashion from $p$ a
projection $q\in \mathfrak{D}_{\rho }\left( A\right) $ such that $q\equiv
p_{11}\mathrm{\ \mathrm{mod}}\ K\left( H\right) $ and hence $q\oplus 0\equiv
p\mathrm{\ \mathrm{mod}}\ \mathfrak{J}$.

We now consider the case when $i=1$. Consider $q\in \mathrm{Z}_{1}\left( 
\mathfrak{D}_{\rho ^{+}}\left( A^{+}\right) /\mathfrak{J}\right) $. Thus, $%
q\in U\left( M_{d}\left( \mathfrak{D}_{\rho ^{+}}\left( A\right) \right)
/M_{d}\left( \mathfrak{J}\right) \right) $ for some $d\geq 1$. After
replacing $\rho $ with $\rho ^{d}$ we can assume that $d=1$. Thus,%
\begin{equation*}
u=%
\begin{bmatrix}
u_{11} & u_{12} \\ 
u_{21} & u_{22}%
\end{bmatrix}%
\end{equation*}%
where $u_{11}\in \mathfrak{D}_{\rho ^{+}}\left( A\right) $ is a $\mathrm{%
\mathrm{mod}}\ K\left( H\right) $ unitary. Let $v\in \mathfrak{D}_{\rho
}\left( A\right) $ be the partial isometry in the polar decomposition of $%
u_{11}$, which depends in a Borel fashion from $u_{11}$ by Lemma \ref%
{Lemma:polar}. Then we have that%
\begin{equation*}
v:=%
\begin{bmatrix}
v & I-vv^{\ast } \\ 
I-v^{\ast }v & v^{\ast }%
\end{bmatrix}%
\in \mathfrak{D}_{\rho ^{+}}\left( A^{+}\right)
\end{equation*}%
is a unitary such that $v\equiv u\mathrm{\ \mathrm{mod}}\ \mathfrak{J}$.

(2) Since $\varphi $ induces a *-isomorphism $\mathfrak{D}_{\rho }\left(
A\right) /K\left( H\right) \rightarrow \mathfrak{D}_{\rho ^{+}}\left(
A^{+}\right) /\mathfrak{J}$, it induces a definable group isomorphism $%
\mathrm{K}_{i}\left( \mathfrak{D}_{\rho }\left( A\right) /K\left( H\right)
\right) \rightarrow \mathrm{K}_{i}\left( \mathfrak{D}_{\rho ^{+}}\left(
A^{+}\right) /\mathfrak{J}\right) $. It is immediate to verify that the
inverse group homomorphism is also definable, as it is induced by the Borel
function%
\begin{equation*}
\begin{bmatrix}
x_{11} & x_{12} \\ 
x_{21} & x_{22}%
\end{bmatrix}%
\mapsto x_{11}\text{.}
\end{equation*}

(3) and (4): Under the isomorphism $\mathrm{K}_{0}\left( K\left( H\right)
\right) \cong \mathbb{Z}$, the definable group homomorphism $\mathrm{K}%
_{1}\left( \mathfrak{D}_{\rho }\left( A\right) /K\left( H\right) \right)
\rightarrow K_{0}\left( K\left( H\right) \right) \cong \mathbb{Z}$ maps each 
$T\in U\left( \mathfrak{D}_{\rho }\left( A\right) /K\left( H\right) \right) $
to its Fredholm index, and in particular it is surjective. As the Fredholm
index is given by a Borel map, and $\mathrm{K}_{1}\left( K\left( H\right)
\right) =\left\{ 0\right\} $, it follows from the six-term exact sequence in 
$\mathrm{K}$-theory associated with $\mathfrak{D}_{\rho }\left( A\right) $
and $K\left( H\right) $ that $\mathrm{K}_{0}\left( \mathfrak{D}_{\rho
}\left( A\right) \right) \rightarrow \mathrm{K}_{0}\left( \mathfrak{D}_{\rho
}\left( A\right) /K\left( H\right) \right) $ is a definable group
isomorphism, and that $\mathrm{K}_{1}\left( \mathfrak{D}_{\rho }\left(
A\right) \right) \rightarrow \mathrm{K}_{1}\left( \mathfrak{D}_{\rho }\left(
A\right) /K\left( H\right) \right) $ is an injective definable group
homomorphism with range equal to $G$. The inverse $\mathrm{K}_{0}\left( 
\mathfrak{D}_{\rho }\left( A\right) /K\left( H\right) \right) \rightarrow 
\mathrm{K}_{0}\left( \mathfrak{D}_{\rho }\left( A\right) \right) $ is
definable by Lemma \ref{Lemma:essential-projection}. The inverse $%
G\rightarrow \mathrm{K}_{1}\left( \mathfrak{D}_{\rho }\left( A\right)
\right) $ is definable by Lemma \ref{Lemma:polar}, considering that given $%
T\in U\left( B\left( H\right) /K\left( H\right) \right) $ such that $\mathrm{%
index}\left( T\right) =0$, then the partial isometry $U$ in the polar
decomposition of $T$ is a unitary such that $U\equiv T\mathrm{\ \mathrm{mod}}%
\ K\left( H\right) $.
\end{proof}

\begin{corollary}
\label{Corollary:K^0-definable}Suppose that $A$ is a separable unital
C*-algebra, and $\rho $ is an ample representation of $A$. Then $\mathrm{Ext}%
\left( A\right) ^{-1}$, $\mathrm{K}_{0}\left( \mathfrak{D}_{\rho }\left(
A\right) \right) $, $\mathrm{K}_{0}\left( \mathfrak{D}_{\rho }\left(
A\right) /K\left( H\right) \right) $, $\mathrm{K}_{0}\left( \mathfrak{D}%
_{\rho ^{+}}\left( A^{+}\right) /\mathfrak{J}\right) $, and $\mathrm{K}%
_{0}\left( \mathfrak{D}_{\rho ^{+}}\left( A^{+}\right) \right) $ are
definably isomorphic definable groups.
\end{corollary}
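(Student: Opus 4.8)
The plan is to assemble the statement entirely from results already in hand, chaining together the definable and semidefinable isomorphisms relating these five groups. First I would record that, by Proposition \ref{Proposition:P-Def} combined with Proposition \ref{Proposition:K-Ext}, the group $\mathrm{K}_0(\mathfrak{D}_\rho(A))$ is a definable group and is naturally definably isomorphic to $\mathrm{Ext}(A)^{-1}$; in particular $\mathrm{Ext}(A)^{-1}$ is itself a definable group. This disposes of the two "outer" entries of the list and reduces matters to showing that the three intermediate groups are definable and definably isomorphic to $\mathrm{K}_0(\mathfrak{D}_\rho(A))$.

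Next I would invoke Lemma \ref{Lemma:K-commutants} with $i=0$. Its items (3), (2), and (1) provide a chain of isomorphisms in the category of semidefinable groups
\begin{equation*}
\mathrm{K}_0(\mathfrak{D}_\rho(A)) \longrightarrow \mathrm{K}_0(\mathfrak{D}_\rho(A)/K(H)) \longrightarrow \mathrm{K}_0(\mathfrak{D}_{\rho^+}(A^+)/\mathfrak{J}) \longleftarrow \mathrm{K}_0(\mathfrak{D}_{\rho^+}(A^+)),
\end{equation*}
where the first arrow is the map of item (3), the second is induced by the strict $*$-homomorphism $x \mapsto x \oplus 0$ of item (2), and the third is the map of item (1); since each of these is an isomorphism in $\mathbf{SemiDSet}$, the last may be inverted there, so all three groups in the middle are linked to $\mathrm{K}_0(\mathfrak{D}_\rho(A))$ by semidefinable group isomorphisms.

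I would then propagate definability along this chain. Since $\mathrm{K}_0(\mathfrak{D}_\rho(A))$ is a definable group, Lemma \ref{Lemma:iso-semi} shows $\mathrm{K}_0(\mathfrak{D}_\rho(A)/K(H))$ is a definable group; a second application shows $\mathrm{K}_0(\mathfrak{D}_{\rho^+}(A^+)/\mathfrak{J})$ is a definable group, and a third shows $\mathrm{K}_0(\mathfrak{D}_{\rho^+}(A^+))$ is a definable group. Moreover an isomorphism in $\mathbf{SemiDSet}$ between two definable sets is, by definition, an isomorphism in $\mathbf{DSet}$, so each arrow above is a definable group isomorphism; composing them and composing with the definable isomorphism $\mathrm{K}_0(\mathfrak{D}_\rho(A)) \cong \mathrm{Ext}(A)^{-1}$ of Proposition \ref{Proposition:K-Ext} yields that all five groups are definably isomorphic definable groups.

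I do not expect a genuine obstacle: the substantive content sits in Proposition \ref{Proposition:K-Ext} and Lemma \ref{Lemma:K-commutants}, and the only point demanding care is the bookkeeping that a semidefinable isomorphism with one definable endpoint forces the other endpoint to be definable — precisely Lemma \ref{Lemma:iso-semi} — together with the harmless observation that the group operations, being induced throughout by the same Borel lifts (orthogonal sums of projections, polar decompositions, and the functions from Lemma \ref{Lemma:essential-projection} and Lemma \ref{Lemma:polar}), are transported correctly along the chain, so that each semidefinable isomorphism is in fact a group homomorphism.
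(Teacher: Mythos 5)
Your proposal is correct and follows essentially the same route as the paper, which deduces the corollary from Lemma \ref{Lemma:K-commutants}, Proposition \ref{Proposition:K-Ext}, and the bookkeeping principle that a semidefinable isomorphism with a definable endpoint forces the other endpoint to be definable. The only cosmetic difference is that the paper cites Corollary \ref{Corollary:Kechris--MacDonald} for this last step whereas you invoke Lemma \ref{Lemma:iso-semi} directly; since Lemma \ref{Lemma:K-commutants} already supplies isomorphisms in $\mathbf{SemiDSet}$ (definable in both directions), your choice is entirely adequate.
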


\begin{proof}
This is a consequence of Lemma \ref{Lemma:K-commutants}, Proposition \ref%
{Proposition:K-Ext}, and Corollary \ref{Corollary:Kechris--MacDonald}.
\end{proof}

Suppose that $A$ is a unital separable C*-algebra. Let $C\left( \mathbb{T}%
,A\right) $ be the unital separable C*-algebra of continuous functions $f:%
\mathbb{T}\rightarrow A$. We identify $A$ with the C*-subalgebra of $C\left( 
\mathbb{T},A\right) $ consisting of constant functions. The \emph{suspension 
}$SA$ of $A$ is the C*-subalgebra $\left\{ f\in C\left( \mathbb{T},A\right)
:f\left( 1\right) =0\right\} $. The unital suspension $\Sigma A$ of $A$ is
the unitization of $SA$, which can be identified with $\left\{ f\in C\left( 
\mathbb{T},A\right) :f\left( 1\right) \in \mathbb{C}1\right\} $.

Consider an ample representation $\rho $ of $C\left( \mathbb{T},A\right) $
on a Hilbert space $H$, and let $\rho _{\Sigma A}$ be its restriction to $%
\Sigma A$ and $\rho _{A}$ be its restriction to $A$. We can then consider
the Paschke dual algebras $\mathfrak{D}_{\rho _{\Sigma A}}\left( \Sigma
A\right) \subseteq B\left( H\right) $ and $\mathfrak{D}_{\rho _{A}}\left(
A\right) \subseteq B\left( H\right) $.

\begin{lemma}
\label{Lemma:Paschke-iso}Suppose that $A$ is a separable unital C*-algebra.
Then $\mathrm{K}_{1}\left( \mathfrak{D}_{\rho _{A}}\left( A\right) /K\left(
H\right) \right) $ is a definable group, definably isomorphic to the
definable group $\mathrm{K}_{0}\left( \mathfrak{D}_{\rho _{\Sigma A}}\left(
\Sigma A\right) /K\left( H\right) \right) $.
\end{lemma}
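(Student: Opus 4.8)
The plan is to realize the asserted isomorphism as a composition of definable group homomorphisms arising from the six-term exact sequences of Section \ref{Section:K-theory}, thereby reducing the statement to the classical \emph{suspension} form of Paschke duality from \cite[Section 5.2]{higson_analytic_2000}, and then to upgrade the resulting definable bijection to a definable isomorphism. Note that the right-hand group is already under control: by Corollary \ref{Corollary:K^0-definable} applied to the separable unital C*-algebra $\Sigma A$ and its restriction $\rho_{\Sigma A}$, which is again a nondegenerate ample representation, the group $\mathrm{K}_{0}(\mathfrak{D}_{\rho_{\Sigma A}}(\Sigma A)/K(H))$ is a definable group, definably isomorphic to $\mathrm{K}_{0}(\mathfrak{D}_{\rho_{\Sigma A}}(\Sigma A))\cong\mathrm{Ext}(\Sigma A)^{-1}$. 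So the real content is to identify $\mathrm{K}_{1}(\mathfrak{D}_{\rho_{A}}(A)/K(H))$ with it \emph{definably}.

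The first ingredient is the elementary observation that the generator $z\otimes 1_{A}$ of $C(\mathbb{T})\otimes 1_{A}\subseteq C(\mathbb{T},A)$ is central, so that $u:=\rho(z\otimes 1_{A})$ is a unitary lying in $\mathfrak{D}_{\rho}(C(\mathbb{T},A))$, hence in both $\mathfrak{D}_{\rho_{A}}(A)$ and $\mathfrak{D}_{\rho_{\Sigma A}}(\Sigma A)$; since $\rho$ is ample, the essential spectrum of $u$ is all of $\mathbb{T}$, and $u$ together with the ideal $SA=\ker(\mathrm{ev}_{1}\colon C(\mathbb{T},A)\to A)$ records the suspension structure $\mathrm{mod}\ K(H)$. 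Using $u$, the inclusion $SA\subseteq\Sigma A$ (which does not change commutants), the evaluation homomorphism $\mathrm{ev}_{1}$, and the relative dual algebras in the sense of Example \ref{Exampl:Paschke-dual} and Proposition \ref{Proposition:essential-annihilator}, one sets up, as in \cite[Section 5.2]{higson_analytic_2000}, an exact sequence of strict unital C*-pairs through which the six-term exact sequence of Section \ref{Section:K-theory} --- all of whose maps, including the index and exponential maps $\partial_{0},\partial_{1}$, are definable group homomorphisms --- identifies $\mathrm{K}_{1}(\mathfrak{D}_{\rho_{A}}(A)/K(H))$ with $\mathrm{K}_{0}(\mathfrak{D}_{\rho_{\Sigma A}}(\Sigma A)/K(H))$, once the vanishing or $\mathbb{Z}$-valued terms have been handled as in the proofs of Lemma \ref{Lemma:trivial-K} and Lemma \ref{Lemma:K-commutants}. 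Each map in this chain is induced by an explicit Borel function on the Polish spaces of $\mathrm{mod}\ K(H)$ projections and unitaries, using the strictly continuous functional calculus on strict C*-algebras together with Lemma \ref{Lemma:polar}, Lemma \ref{Lemma:essential-projection}, and the Definable Voiculescu Theorem (Lemma \ref{Lemma:Voiculescu}), exactly as in the definability arguments of Section \ref{Section:Ext}; so the chain provides a definable bijection of semidefinable groups.

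Since the target $\mathrm{K}_{0}(\mathfrak{D}_{\rho_{\Sigma A}}(\Sigma A)/K(H))$ is a definable group, Corollary \ref{Corollary:Kechris--MacDonald} then shows that $\mathrm{K}_{1}(\mathfrak{D}_{\rho_{A}}(A)/K(H))$ is a definable group and that the bijection is an isomorphism in $\mathbf{DSet}$; being a group homomorphism, it is a definable isomorphism of definable groups. I expect the main obstacle to be the second step: carrying out, in the possibly non-norm-separable strict setting, the identification of the relative dual algebras and their quotients, and checking that the connecting maps genuinely implement the Paschke suspension isomorphism and not one differing by a free summand --- precisely the bookkeeping of ample versus co-ample projections and of the Fredholm index that Lemma \ref{Lemma:K-commutants} was engineered to control --- all while remaining compatible with the Borel structures.
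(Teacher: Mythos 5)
Your overall architecture matches the paper's at both ends: you correctly observe that $\mathrm{K}_{0}\left( \mathfrak{D}_{\rho _{\Sigma A}}\left( \Sigma A\right) /K\left( H\right) \right) $ is already a definable group by Corollary \ref{Corollary:K^0-definable}, and you correctly close the argument with Corollary \ref{Corollary:Kechris--MacDonald}, which is exactly how the paper finishes. You have also put your finger on the right object, namely the central unitary $z\otimes 1_{A}\in C\left( \mathbb{T},A\right) $. Where you diverge is in the middle: you propose to manufacture the definable bijection by assembling an exact sequence of strict unital C*-pairs and chasing the six-term sequence, handling the vanishing and $\mathbb{Z}$-valued terms as in Lemmas \ref{Lemma:trivial-K} and \ref{Lemma:K-commutants}. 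The paper does something much more economical: Paschke's Theorem 6 already provides the group isomorphism by the single explicit formula $p\mapsto pU+\left( 1-p\right) $, where $U$ is precisely your unitary $\lambda \mapsto \lambda 1$; this formula is visibly a Borel function $\mathrm{Z}_{0}\left( \mathfrak{D}_{\rho _{\Sigma A}}\left( \Sigma A\right) /K\left( H\right) \right) \rightarrow \mathrm{Z}_{1}\left( \mathfrak{D}_{\rho _{A}}\left( A\right) /K\left( H\right) \right) $, so all the definability one needs is free, and the algebraic fact that it induces an isomorphism is simply quoted from Paschke. Your route would in effect re-prove Paschke's theorem inside the strict/definable framework; that buys self-containedness, but at the cost of having to genuinely construct the relevant strict C*-pair, verify that its dual-algebra quotients are the right ones, and check the connecting maps --- precisely the step you yourself flag as the main obstacle and leave as a program rather than carrying out. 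As written, that middle step is the one genuine incompleteness in your argument: nothing in it is wrong in principle, but the isomorphism is asserted to come out of a diagram you have not actually set up, whereas a one-line appeal to the explicit Paschke formula would make the whole proof complete.
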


\begin{proof}
A definable group isomorphism 
\begin{equation*}
\mathrm{K}_{0}\left( \mathfrak{D}_{\rho _{\Sigma A}}\left( \Sigma A\right)
/K\left( H\right) \right) \rightarrow \mathrm{K}_{1}\left( \mathfrak{D}%
_{\rho _{A}}\left( A\right) /K\left( H\right) \right)
\end{equation*}%
is described in \cite[Theorem 6]{paschke_theory_1981}, as follows. Let $U\in
C\left( \mathbb{T},A\right) $ be the function $\lambda \mapsto \lambda 1$.
Let $p$ be a $\mathrm{\mathrm{mod}}\ K\left( H\right) $ projection in $%
\mathfrak{D}_{\rho _{\Sigma A}}\left( \Sigma A\right) $. Then $f\left(
p\right) :=pU+\left( 1-p\right) \in \mathfrak{D}_{\rho _{A}}\left( A\right) $
is a $\mathrm{\mathrm{mod}}\ K\left( H\right) $ unitary. A similar
definition for $\mathrm{\mathrm{mod}}\ K\left( H\right) $ projections over $%
\mathfrak{D}_{\rho _{\Sigma A}}\left( \Sigma A\right) $ defines a Borel
function \textrm{Z}$_{0}\left( \mathfrak{D}_{\rho _{\Sigma A}}\left( \Sigma
A\right) /K\left( H\right) \right) \rightarrow \mathrm{Z}_{1}\left( 
\mathfrak{D}_{\rho _{A}}\left( A\right) /K\left( H\right) \right) $, $%
p\mapsto f\left( p\right) $. It is proved in \cite[Theorem 6]%
{paschke_theory_1981} that this Borel function induces an isomorphism $%
\mathrm{K}_{0}\left( \mathfrak{D}_{\rho _{\Sigma A}}\left( \Sigma A\right)
/K\left( H\right) \right) \rightarrow \mathrm{K}_{1}\left( \mathfrak{D}%
_{\rho _{A}}\left( A\right) /K\left( H\right) \right) $.

By Corollary \ref{Corollary:K^0-definable} we have that $\mathrm{K}%
_{0}\left( \mathfrak{D}_{\rho _{\Sigma A}}\left( \Sigma A\right) /K\left(
H\right) \right) $ is a definable group.\ Thus, by Proposition \ref%
{Corollary:Kechris--MacDonald} we have that $\mathrm{K}_{1}\left( \mathfrak{D%
}_{\rho _{A}}\left( A\right) /K\left( H\right) \right) $ is a definable
group as well.
\end{proof}

\begin{proposition}
\label{Proposition:K^1-definable}Suppose that $A$ is a separable unital
C*-algebra, and $\rho $ is an ample representation of $A$. Then $\mathrm{Ext}%
\left( \Sigma A\right) ^{-1}$, $\mathrm{K}_{1}\left( \mathfrak{D}_{\rho
}\left( A\right) /K\left( H\right) \right) $, $\mathrm{K}_{1}\left( 
\mathfrak{D}_{\rho ^{+}}\left( A^{+}\right) /\mathfrak{J}\right) $, and $%
\mathrm{K}_{1}\left( \mathfrak{D}_{\rho ^{+}}\left( A^{+}\right) \right) $
are definably isomorphic definable groups.
\end{proposition}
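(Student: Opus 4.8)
The plan is to assemble the statement from definable isomorphisms already established: the definable $\mathrm{K}_0$-picture of $\mathrm{Ext}$ applied to the suspension, Paschke's suspension isomorphism (Lemma \ref{Lemma:Paschke-iso}), and the computations of $\mathrm{K}_1$ for the various corona-type algebras in Lemma \ref{Lemma:K-commutants}. First I would record that, applying Proposition \ref{Proposition:K-Ext} and Corollary \ref{Corollary:K^0-definable} to the separable unital C*-algebra $\Sigma A$, the group $\mathrm{Ext}(\Sigma A)^{-1}$ is a definable group and, for \emph{any} nondegenerate ample representation $\sigma$ of $\Sigma A$, there is a natural definable isomorphism $\mathrm{Ext}(\Sigma A)^{-1}\cong \mathrm{K}_0(\mathfrak{D}_\sigma(\Sigma A)/K(H))$.

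Next I would pass to a convenient representation of $A$. Fix a nondegenerate ample representation $\rho'$ of $C(\mathbb{T},A)$ and let $\rho'_A$ and $\rho'_{\Sigma A}$ be its restrictions to $A$ and to $\Sigma A$; both are nondegenerate ample, since $A$ and $\Sigma A$ are unital subalgebras of $C(\mathbb{T},A)$ containing its unit. Using the independence of the definable isomorphism type of $\mathrm{K}_1(\mathfrak{D}_\rho(A)/K(H))$ on the choice of nondegenerate ample representation $\rho$ of $A$ --- the $\mathrm{K}_1$-and-quotient counterpart of the functoriality established for $\mathrm{K}_0(\mathfrak{D}_\rho(A))$ via Lemma \ref{Lemma:Voiculescu} and \cite[Lemma 5.2.4]{higson_analytic_2000} --- the identity map of $A$ induces a natural definable isomorphism $\mathrm{K}_1(\mathfrak{D}_\rho(A)/K(H))\cong \mathrm{K}_1(\mathfrak{D}_{\rho'_A}(A)/K(H))$. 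By Lemma \ref{Lemma:Paschke-iso} the latter group is definable and definably isomorphic to $\mathrm{K}_0(\mathfrak{D}_{\rho'_{\Sigma A}}(\Sigma A)/K(H))$, which by the first paragraph (with $\sigma=\rho'_{\Sigma A}$) is definably isomorphic to $\mathrm{Ext}(\Sigma A)^{-1}$. Chaining these identifications shows that $\mathrm{K}_1(\mathfrak{D}_\rho(A)/K(H))$ is a definable group, definably isomorphic to $\mathrm{Ext}(\Sigma A)^{-1}$.

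It then remains to incorporate the remaining two groups. By parts (1) and (2) of Lemma \ref{Lemma:K-commutants}, the natural homomorphisms $\mathrm{K}_1(\mathfrak{D}_\rho(A)/K(H))\to \mathrm{K}_1(\mathfrak{D}_{\rho^+}(A^+)/\mathfrak{J})$ and $\mathrm{K}_1(\mathfrak{D}_{\rho^+}(A^+))\to \mathrm{K}_1(\mathfrak{D}_{\rho^+}(A^+)/\mathfrak{J})$ are isomorphisms in the category of semidefinable groups. Since $\mathrm{K}_1(\mathfrak{D}_\rho(A)/K(H))$ has just been shown to be a definable group, Lemma \ref{Lemma:iso-semi} (equivalently Corollary \ref{Corollary:Kechris--MacDonald}) promotes these to definable isomorphisms between definable groups; in particular $\mathrm{K}_1(\mathfrak{D}_{\rho^+}(A^+)/\mathfrak{J})$ and $\mathrm{K}_1(\mathfrak{D}_{\rho^+}(A^+))$ are definable groups. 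Composing the whole chain gives that the four groups in the statement are definably isomorphic definable groups, all isomorphisms being natural in $A$.

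The main obstacle I anticipate is the invariance used in the second paragraph: that the definable isomorphism type of $\mathrm{K}_1(\mathfrak{D}_\rho(A)/K(H))$ is independent of the nondegenerate ample representation $\rho$, with a canonical comparison isomorphism induced by $\mathrm{id}_A$. Establishing this requires checking that an isometry $V$ with $\rho\lesssim_V\rho'$ yields a strict unital $*$-homomorphism $\mathrm{Ad}(V)\colon \mathfrak{D}_{\rho'}(A)\to \mathfrak{D}_\rho(A)$ sending $K(H)$ into $K(H)$, that $V$ can be selected Borel in $(\rho,\rho')$ by the Definable Voiculescu Theorem, and that any two admissible choices of $V$ are joined by a Borel path and hence induce the same definable homomorphism on $\mathrm{K}_1$ of the quotient --- the exact quotient-and-$\mathrm{K}_1$ analogue of the argument already performed for $\mathrm{K}_0(\mathfrak{D}_\rho(A))$. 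Everything else is a formal diagram chase through Lemma \ref{Lemma:Paschke-iso}, Corollary \ref{Corollary:K^0-definable}, Lemma \ref{Lemma:K-commutants}, and Lemma \ref{Lemma:iso-semi}.
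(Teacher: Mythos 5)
Your proposal is correct and follows essentially the same route as the paper: identify $\mathrm{Ext}(\Sigma A)^{-1}$ with $\mathrm{K}_0(\mathfrak{D}_{\rho_{\Sigma A}}(\Sigma A)/K(H))$ via Corollary \ref{Corollary:K^0-definable}, transfer to $\mathrm{K}_1(\mathfrak{D}_{\rho_A}(A)/K(H))$ by Lemma \ref{Lemma:Paschke-iso}, compare with the given $\rho$, and then absorb the remaining two groups using Lemma \ref{Lemma:K-commutants} and Corollary \ref{Corollary:Kechris--MacDonald}. The change-of-ample-representation step you flag as the main obstacle is exactly what the paper disposes of with Lemma \ref{Lemma:trivial-representations}, whose second bullet supplies a Borel choice of \emph{unitary} $W$ with $\rho\thickapprox_W\rho_A$, so that $\mathrm{Ad}(W)$ gives an isomorphism $\mathfrak{D}_{\rho_A}(A)/K(H)\to\mathfrak{D}_{\rho}(A)/K(H)$ of unital C*-algebras with a strict cover and hence a semidefinable-group isomorphism on $\mathrm{K}_1$, with no further homotopy argument needed.
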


\begin{proof}
Let $\rho _{\Sigma A}$ and $\rho _{A}$ be the ample representations of $%
\Sigma A$ and $A$, respectively, as in Lemma \ref{Lemma:Paschke-iso}. Then
by Corollary \ref{Corollary:K^0-definable}, $\mathrm{Ext}\left( \Sigma
A\right) ^{-1}$ and $\mathrm{K}_{0}\left( \mathfrak{D}_{\rho _{\Sigma
A}}\left( \Sigma A\right) /K\left( H\right) \right) $ are definably
isomorphic definable groups. By Lemma \ref{Lemma:Paschke-iso}, $\mathrm{K}%
_{0}\left( \mathfrak{D}_{\rho _{\Sigma A}}\left( \Sigma A\right) /K\left(
H\right) \right) $ and $\mathrm{K}_{1}\left( \mathfrak{D}_{\rho _{A}}\left(
A\right) /K\left( H\right) \right) $ are definably isomorphic definable
groups. By Voiculescu's theorem, $\mathfrak{D}_{\rho _{A}}\left( A\right)
/K\left( H\right) $ and $\mathfrak{D}_{\rho }\left( A\right) /K\left(
H\right) $ are isomorphic in the category of unital C*-algebras with a
strict cover; see Lemma \ref{Lemma:trivial-representations}. In particular, $%
\mathrm{K}_{1}\left( \mathfrak{D}_{\rho _{A}}\left( A\right) /K\left(
H\right) \right) $ and $\mathrm{K}_{1}\left( \mathfrak{D}_{\rho }\left(
A\right) /K\left( H\right) \right) $ are isomorphic in the category of
semidefinable groups. From this and Corollary \ref%
{Corollary:Kechris--MacDonald}, it follows that $\mathrm{K}_{1}\left( 
\mathfrak{D}_{\rho }\left( A\right) /K\left( H\right) \right) $ is a
definable group. Finally, $\mathrm{K}_{1}\left( \mathfrak{D}_{\rho
^{+}}\left( A^{+}\right) /\mathfrak{J}\right) $ and $\mathrm{K}_{1}\left( 
\mathfrak{D}_{\rho ^{+}}\left( A^{+}\right) \right) $ are definable groups,
definably isomorphic to $\mathrm{K}_{1}\left( \mathfrak{D}_{\rho }\left(
A\right) /K\left( H\right) \right) $ by Lemma \ref{Lemma:K-commutants} and
Corollary \ref{Corollary:Kechris--MacDonald} again.
\end{proof}

\subsection{Definable $\mathrm{K}$-homology\label{Subsection:K-homology}}

Suppose that $A$ is a separable C*-algebra. Fix an ample representation $%
\rho ^{+}$ of $A^{+}$, and define $\mathfrak{D}\left( A\right) :=\mathfrak{D}%
_{\rho ^{+}}\left( A^{+}\right) $. The $\mathrm{K}$-homology groups of $A$
are the definable groups%
\begin{equation*}
\mathrm{K}^{1}\left( A\right) :=\mathrm{K}_{0}\left( \mathfrak{D}\left(
A\right) \right) \cong \mathrm{Ext}\left( A^{+}\right) ^{-1}
\end{equation*}%
and%
\begin{equation*}
\mathrm{K}^{0}\left( A\right) :=\mathrm{K}_{1}\left( \mathfrak{D}\left(
A\right) \right) \cong \mathrm{Ext}((SA)^{+})^{-1}\text{;}
\end{equation*}%
see \cite[Definition 5.2.7]{higson_analytic_2000}. By Proposition \ref%
{Proposition:K-Ext}, $\mathrm{K}^{p}(-)$ for $p\in \left\{ 0,1\right\} $ is
a contravariant functor from the category of separable C*-algebras to the
category of definable abelian groups.

When $A$ is a separable \emph{unital }C*-algebra, one can also define the 
\emph{reduced }$\mathrm{K}$-homology groups by considering an ample
representation $\rho $ of $A$ and the corresponding Pachke dual algebra $%
\mathfrak{\tilde{D}}\left( A\right) :=\mathfrak{D}_{\rho }(A)$ and set%
\begin{equation*}
\mathrm{\tilde{K}}^{1}\left( A\right) :=\mathrm{K}_{0}(\mathfrak{\tilde{D}}%
\left( A\right) )\cong \mathrm{Ext}\left( A\right) ^{-1}
\end{equation*}%
and%
\begin{equation*}
\mathrm{\tilde{K}}^{0}\left( A\right) :=\mathrm{K}_{1}(\mathfrak{\tilde{D}}%
\left( A\right) )\text{;}
\end{equation*}%
see \cite[Definition 5.2.1]{higson_analytic_2000}.

Suppose now that $A$ is a separable C*-algebra, and $J$ is a closed
two-sided ideal of $A$.\ Fix as above an ample representation $\rho ^{+}$ of 
$A^{+}$. Define $\mathfrak{D}\left( A\right) :=\mathfrak{D}_{\rho
^{+}}\left( A^{+}\right) $ as above, and set $\mathfrak{D}\left( A//J\right) 
$ to be the strict ideal%
\begin{equation*}
\left\{ T\in \mathfrak{D}\left( A\right) :\forall a\in J,T\rho ^{+}\left(
a\right) \equiv 0\mathrm{\ \mathrm{mod}}\ K\left( H\right) \right\}
\end{equation*}%
of $\mathfrak{D}\left( A\right) $.

\begin{lemma}
\label{Lemma:full-ideal}Suppose that $A$ is a separable C*-algebra, and $%
i\in \left\{ 0,1\right\} $. Then $\mathrm{K}_{i}\left( \mathfrak{D}\left(
A\right) /\mathfrak{D}\left( A//A\right) \right) $ is a definable group,
definably isomorphic to $\mathrm{K}^{1-i}\left( A\right) $.
\end{lemma}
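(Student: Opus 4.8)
The plan is to realize $\mathrm{K}_i\big(\mathfrak{D}(A)/\mathfrak{D}(A//A)\big)$, for $i\in\{0,1\}$, as the target of the definable homomorphism induced by the quotient map $q\colon\mathfrak{D}(A)\to\mathfrak{D}(A)/\mathfrak{D}(A//A)$, to show that this homomorphism is a bijection via the six-term exact sequence, and then to conclude using Corollary \ref{Corollary:Kechris--MacDonald} that it is a definable isomorphism. First I would note that $\big(\mathfrak{D}(A),\mathfrak{D}(A//A)\big)$ is a strict unital C*-pair: indeed $\mathfrak{D}(A)=\mathfrak{D}_{\rho^{+}}(A^{+})$ is a strict unital C*-algebra and, identifying $\mathfrak{D}(A//A)$ with the essential annihilator $\mathfrak{D}\big(\rho^{+}(A^{+})//\rho^{+}(A)\big)$, Proposition \ref{Proposition:essential-annihilator} shows it is a strict ideal of $\mathfrak{D}(A)$. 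Hence the machinery of Section \ref{Section:K-theory} furnishes the six-term exact sequence associated with this pair, a sequence of semidefinable groups and definable group homomorphisms, in which the maps $\mathrm{K}_i(\mathfrak{D}(A))\to\mathrm{K}_i\big(\mathfrak{D}(A)/\mathfrak{D}(A//A)\big)$ are precisely the ones induced by $q$ (see Corollary \ref{Corollary:exact-K-theory}, Proposition \ref{Proposition:exact-boundary1}, and Proposition \ref{Proposition:exact-boundary0}).

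Next I would invoke Lemma \ref{Lemma:trivial-K}, according to which $\mathfrak{D}(A//A)$ has trivial $\mathrm{K}$-theory. Since $\mathrm{K}_0(\mathfrak{D}(A//A))=\mathrm{K}_1(\mathfrak{D}(A//A))=0$, exactness of the six-term sequence forces $q_{\ast}\colon\mathrm{K}_i(\mathfrak{D}(A))\to\mathrm{K}_i\big(\mathfrak{D}(A)/\mathfrak{D}(A//A)\big)$ to be injective (by exactness at $\mathrm{K}_i(\mathfrak{D}(A))$ together with the vanishing of the incoming group) and surjective (by exactness at $\mathrm{K}_i(\mathfrak{D}(A)/\mathfrak{D}(A//A))$ together with the vanishing of the target of the adjacent boundary map), hence a group isomorphism, for $i\in\{0,1\}$. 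Recalling that $\mathrm{K}^{1}(A)=\mathrm{K}_0(\mathfrak{D}(A))$ and $\mathrm{K}^{0}(A)=\mathrm{K}_1(\mathfrak{D}(A))$, this is an algebraic isomorphism $\mathrm{K}^{1-i}(A)\cong\mathrm{K}_i\big(\mathfrak{D}(A)/\mathfrak{D}(A//A)\big)$.

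Finally I would upgrade this to a definable isomorphism. The homomorphism $q_{\ast}$ is definable, a Borel lift being given by the identity on $\mathrm{K}$-theory cycles: a projection (respectively, a unitary) over $\mathfrak{D}(A)$ is in particular a projection (respectively, a unitary) modulo $\mathfrak{D}(A//A)$ over $\mathfrak{D}(A)$, and on each matrix level this reinterpretation is Borel. Now $\mathrm{K}^{1-i}(A)=\mathrm{K}_i(\mathfrak{D}(A))$ is a definable group (Corollary \ref{Corollary:K^0-definable} for $i=0$, Proposition \ref{Proposition:K^1-definable} for $i=1$), and $\mathrm{K}_i\big(\mathfrak{D}(A)/\mathfrak{D}(A//A)\big)$ is a semidefinable group by the construction in Section \ref{Section:K-theory}. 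Since $q_{\ast}$ is a definable bijection out of a definable set, Corollary \ref{Corollary:Kechris--MacDonald} gives that $\mathrm{K}_i\big(\mathfrak{D}(A)/\mathfrak{D}(A//A)\big)$ is a definable set and that $q_{\ast}$ is an isomorphism in $\mathbf{DSet}$; being also a group homomorphism, it is a definable group isomorphism onto $\mathrm{K}^{1-i}(A)$, as required. I do not expect a serious obstacle here, since the natural map runs from the definable side to the semidefinable side and hence no Borel lift of the inverse must be produced by hand; the only point needing attention is bookkeeping — matching the description of $\mathfrak{D}(A//A)$ given here with the essential annihilator of Subsection \ref{Subsection:K0-commutants} so that Lemma \ref{Lemma:trivial-K} applies, using that by Voiculescu's theorem (Lemma \ref{Lemma:trivial-representations}) the choice of ample representation of $A^{+}$ affects $\mathfrak{D}(A)$ and $\mathfrak{D}(A//A)$ only up to isomorphism of unital C*-algebras with a strict cover.
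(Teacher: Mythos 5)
Your proposal is correct and follows essentially the same route as the paper: invoke Lemma \ref{Lemma:trivial-K} to see that $\mathfrak{D}\left( A//A\right)$ has trivial $\mathrm{K}$-theory, deduce from the six-term exact sequence that the definable homomorphism $\mathrm{K}_{i}\left( \mathfrak{D}\left( A\right) \right) \rightarrow \mathrm{K}_{i}\left( \mathfrak{D}\left( A\right) /\mathfrak{D}\left( A//A\right) \right)$ is a bijection, and conclude via Corollary \ref{Corollary:Kechris--MacDonald} together with the fact that $\mathrm{K}_{i}\left( \mathfrak{D}\left( A\right) \right) =\mathrm{K}^{1-i}\left( A\right)$ is a definable group by Corollary \ref{Corollary:K^0-definable} and Proposition \ref{Proposition:K^1-definable}. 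Your write-up simply makes explicit the bookkeeping (the strict C*-pair structure and the direction of the definable bijection) that the paper's two-line proof leaves implicit.
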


\begin{proof}
By Lemma \ref{Lemma:trivial-K}, $\mathfrak{D}\left( A//A\right) $ has
trivial $\mathrm{K}$-theory. Thus, by the six-term exact sequence in \textrm{%
K}-theory, the definable group homomorphism $\mathrm{K}_{i}\left( \mathfrak{D%
}\left( A^{+}\right) \right) \rightarrow \mathrm{K}_{i}\left( \mathfrak{D}%
\left( A^{+}\right) /\mathfrak{D}\left( A//A\right) \right) $ is an
isomorphism. Since $\mathrm{K}_{i}\left( \mathfrak{D}\left( A^{+}\right)
\right) $ is a definable group by Proposition \ref{Proposition:K^1-definable}
and Corollary \ref{Corollary:K^0-definable}, the conclusion follows from
Corollary \ref{Corollary:Kechris--MacDonald}.
\end{proof}

\begin{lemma}
\label{Lemma:Kasparov-Paschke}Suppose that $A$ is a separable C*-algebra,
and $J$ is a closed two-sided ideal of $A$. Then the inclusion map $%
\mathfrak{D}\left( A\right) \subseteq \mathfrak{D}\left( J^{+}\right) $
induces an isomorphism $\mathfrak{D}\left( A\right) /\mathfrak{D}\left(
A//J\right) \rightarrow \mathfrak{D}\left( J\right) /\mathfrak{D}\left(
J//J\right) $ in the category of separable unital C*-algebras with a strict
cover.
\end{lemma}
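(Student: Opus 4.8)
The plan is to make $\iota$ into an honest inclusion of essential commutants, check it descends to a definable unital $*$-homomorphism on the quotients, and then show that this descended map is a $*$-isomorphism with a Borel inverse via the classical quasicentral approximate unit argument, carrying the Borel bookkeeping along. First I would fix notation: since $J\trianglelefteq A\trianglelefteq A^{+}$ we have $J\trianglelefteq A^{+}$ and $J^{+}=J+\mathbb{C}1\subseteq A^{+}$, and the restriction $\rho^{+}|_{J^{+}}$ of the chosen ample representation of $A^{+}$ is again ample (it is unital, hence nondegenerate, and $\rho^{+}(J^{+})\subseteq\rho^{+}(A^{+})$ contains no nonzero compact). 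By Voiculescu's theorem (Lemma \ref{Lemma:trivial-representations}) the dual algebra $\mathfrak{D}(J)$ does not depend, up to isomorphism in the category of unital C*-algebras with a strict cover, on the ample representation of $J^{+}$ used to define it, so I may take $\mathfrak{D}(J)=\mathfrak{D}_{\rho^{+}|_{J^{+}}}(J^{+})$, the essential commutant of $\rho^{+}(J^{+})$ inside $B(H)$. Then $\mathfrak{D}(A)=\mathfrak{D}_{\rho^{+}}(A^{+})\subseteq\mathfrak{D}(J)$, and the inclusion $\iota$ is a strict unital $*$-homomorphism (the seminorms defining the strict topology of $\mathfrak{D}(J)$ form a subfamily of those defining that of $\mathfrak{D}(A)$), which directly from the definitions carries $\mathfrak{D}(A//J)$ into $\mathfrak{D}(J//J)$; so by the remarks of Section \ref{Subsection:strict-pair} it induces a definable unital $*$-homomorphism $\bar\iota\colon\mathfrak{D}(A)/\mathfrak{D}(A//J)\to\mathfrak{D}(J)/\mathfrak{D}(J//J)$. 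Injectivity of $\bar\iota$ is then immediate: its kernel is $\mathfrak{D}(A)\cap\mathfrak{D}(J//J)$, and a $T\in\mathfrak{D}(A)$ lies in $\mathfrak{D}(J//J)$ precisely when $T\rho^{+}(j)\in K(H)$ for all $j\in J$, which is exactly the defining condition for $T\in\mathfrak{D}(A//J)$.

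The crux is surjectivity of $\bar\iota$, together with a Borel selection witnessing it. I would fix once and for all, \emph{independently of} $T$, an approximate unit $(e_{n})$ for $J$ that is quasicentral relative to $A^{+}$, and set $E_{n}=\rho^{+}(e_{n})$ and $G_{n}=(E_{n}-E_{n-1})^{1/2}=\rho^{+}\big((e_{n}-e_{n-1})^{1/2}\big)\in\rho^{+}(J)$; thus $E_{n}\to P$ strongly (with $P$ the projection onto $\overline{\rho^{+}(J)H}$), $\sum_{n}G_{n}^{2}=P$ strongly, $\|[E_{n},\rho^{+}(y)]\|\to 0$ for $y\in A^{+}$, and $\|[E_{n},k]\|\to 0$ for $k\in K(H)$. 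Given $T\in\mathfrak{D}(J)$, put $T':=\sum_{n}G_{n}TG_{n}$ (strong limit), or the analogous expression built from a continuous quasicentral approximate unit if that is needed for the norm estimates below. One then checks, by the standard estimates, that $T'\in\mathfrak{D}(A)$: modulo norm-small terms, $[G_{n}TG_{n},\rho^{+}(y)]$ equals $G_{n}[T,\rho^{+}(y)]G_{n}$, and writing $G_{n}=\rho^{+}(h_{n})$ one has $G_{n}\rho^{+}(y)G_{n}=\rho^{+}(h_{n}yh_{n})$ with $h_{n}yh_{n}\in J$, so $[T,\rho^{+}(y)]$ may be replaced there by the compact operator $-[T,\rho^{+}(h_{n}yh_{n})]$; summing and using near-orthogonality of the $G_{n}$ gives a compact. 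Likewise $T-T'\in\mathfrak{D}(J//J)$: since $\sum_{n}h_{n}^{2}j\to j$ in $J$ one gets $(T-T')\rho^{+}(j)=[T,\rho^{+}(j)]-\sum_{n}\rho^{+}(h_{n})[T,\rho^{+}(h_{n}j)]$, where the first summand is compact ($j\in J^{+}$) and the second is a norm-convergent series of compacts because $\|h_{n}j\|\to 0$ and the $G_{n}$ are nearly orthogonal. (This step is also essentially Higson--Roe's computation underlying the six-term sequence in $\mathrm{K}$-homology, cf.\ \cite[Section 5.4]{higson_analytic_2000}, so one could instead cite it and only add the Borel refinement.)

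Finally I would observe that, because $(e_{n})$ and the $G_{n}$ are fixed, the assignment $T\mapsto T'$ is given by a single strongly convergent formula, hence is Borel (indeed strictly continuous on bounded sets after the usual rescaling), so it is a Borel lift of $\bar\iota^{-1}$; together with the Borel lift $\iota$ of $\bar\iota$ this exhibits $\bar\iota$ as a bijective unital $*$-homomorphism that is an isomorphism in the category of separable unital C*-algebras with a strict cover. The hard part will be the surjectivity step: arranging the (continuous or nested) quasicentral approximate unit so that all the "error'' series above converge in \emph{norm} rather than merely strongly, and so that the whole construction $T\mapsto T'$ is visibly a Borel function of $T$ — i.e.\ making correctness and definability hold simultaneously.
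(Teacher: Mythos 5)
Your setup is correct and matches the paper's: the identification $\mathfrak{D}\left( A//J\right) =\mathfrak{D}\left( J//J\right) \cap \mathfrak{D}\left( A\right) $ gives injectivity, and the problem reduces to exhibiting a Borel lift $\mathfrak{D}\left( J\right) \rightarrow \mathfrak{D}\left( A\right) $ of the inverse. The gap is in the surjectivity step, and it is not mere bookkeeping: the compression $T\mapsto T^{\prime }=\sum_{n}G_{n}TG_{n}$ built from an approximate unit of $J$ fixed \emph{independently of} $T$ does not land in $\mathfrak{D}\left( A\right) $ modulo compacts. Expanding, $[T^{\prime },\rho ^{+}(y)]=\sum_{n}\bigl( [G_{n},\rho ^{+}(y)]TG_{n}+G_{n}T[G_{n},\rho ^{+}(y)]\bigr) +\sum_{n}G_{n}[T,\rho ^{+}(y)]G_{n}$. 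The first two series can be made norm-convergent by imposing $\left\Vert [G_{n},\rho ^{+}(y)]\right\Vert \leq 2^{-n}$, but their terms lie in $\rho ^{+}(J)\cdot T\cdot \rho ^{+}(J)$ and are not compact, so their norm limits need not be compact. The third series has individually compact terms (your identity $G_{n}[T,\rho ^{+}(y)]G_{n}=[T,\rho ^{+}(h_{n}yh_{n})]-[T,G_{n}]\rho ^{+}(y)G_{n}-G_{n}\rho ^{+}(y)[T,G_{n}]$ shows this), but it converges only strongly: for a fixed bounded $S$ the tails $\sum_{n=N}^{M}G_{n}SG_{n}$ do not vanish in norm (already for $S=1$ they equal $E_{M}-E_{N-1}$), and a strong limit of compacts need not be compact. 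Forcing norm convergence requires decay of $\left\Vert [T,G_{n}]\right\Vert $ and $\left\Vert G_{n}[T,\rho ^{+}(y)]\right\Vert $, and quasicentrality of $\left( e_{n}\right) $ relative to $A^{+}$ and $K\left( H\right) $ gives no control on commutators with $T\in \mathfrak{D}\left( J\right) $, which does not lie in $\rho ^{+}(A^{+})+K\left( H\right) $; since $\mathrm{\mathrm{Ball}}\left( \mathfrak{D}\left( J\right) \right) $ is not norm-separable, no single approximate unit works for all $T$ simultaneously. So the $T$-independence you rely on for Borelness is incompatible with correctness.

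This is exactly why the paper's proof proceeds differently: it invokes Kasparov's Technical Theorem and builds, for each $T$, a positive contraction $X_{T}=\sum_{n}d_{n}^{T}u_{n}d_{n}^{T}$ whose building blocks $d_{n}^{T}$ come from an approximate unit of $K\left( H\right) $ chosen \emph{depending on} $T$ — Borel-measurably, via the Lusin--Novikov selection theorem — so that $d_{n}^{T}$ asymptotically commutes with the commutators $[a_{m},T]$ and absorbs $[T,j_{m}]$ and $u_{m_{1}}[a_{m_{2}},T]$. Then $X_{T}[T,a]$, $[X_{T},a]$ and $\left( 1-X_{T}\right) j$ are all compact, whence $X_{T}T\in \mathfrak{D}\left( A\right) $ and $\left( 1-X_{T}\right) T\in \mathfrak{D}\left( J//J\right) $, and $T\mapsto X_{T}T$ is the Borel lift; note also that one-sided multiplication by $X_{T}$ avoids the non-compact error terms $[G_{n},\rho ^{+}(y)]TG_{n}$ that your two-sided compression produces. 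Your verification that $T-T^{\prime }\in \mathfrak{D}\left( J//J\right) $ is essentially fine, but to repair the other half you would have to let the approximate unit depend on $T$ and then supply the selection argument you were hoping to avoid — at which point you have reproduced the paper's proof.
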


\begin{proof}
We identify $A^{+}$ with its image inside $B\left( H\right) $ under $\rho
^{+}$. It follows from the definition that $\mathfrak{D}\left( A//J\right) =%
\mathfrak{D}\left( J//J\right) \cap \mathfrak{D}\left( A\right) $. Thus, the
inclusion map $\mathfrak{D}\left( A\right) \subseteq \mathfrak{D}\left(
J\right) $ induces a definable injective unital *-homomorphism $\mathfrak{D}%
\left( A\right) /\mathfrak{D}\left( A//J\right) \rightarrow \mathfrak{D}%
\left( J\right) /\mathfrak{D}\left( J//J\right) $, which is in fact onto 
\cite[Theorem 5.4.5]{higson_analytic_2000}. It remains to prove that the
inverse unital *-isomorphism $\mathfrak{D}\left( J\right) /\mathfrak{D}%
\left( J//J\right) \rightarrow \mathfrak{D}\left( A\right) /\mathfrak{D}%
\left( A//J\right) $ is also definable. This amounts at noticing that the
proof of \cite[Theorem 5.4.5]{higson_analytic_2000} via Kasparov's Technical
Theorem \cite[Theorem 3.8.1]{higson_analytic_2000} can be used to describe a
Borel lift $\mathfrak{D}\left( J\right) \rightarrow \mathfrak{D}\left(
A\right) $ of the unital *-isomorphism $\mathfrak{D}\left( J\right) /%
\mathfrak{D}\left( J//J\right) \rightarrow \mathfrak{D}\left( A\right) /%
\mathfrak{D}\left( A//J\right) $.

For $T\in \mathfrak{D}\left( J\right) $ let $E\left( T\right) $ be closed
linear span of $\left\{ \left[ a,T\right] ,\left[ a,T^{\ast }\right] :a\in
A\right\} $. Fix a dense sequence $\left( j_{m}\right) $ in $\mathrm{\mathrm{%
Ball}}\left( J\right) $, a dense sequence $\left( a_{m}\right) $ in $\mathrm{%
\mathrm{Ball}}\left( A\right) $, and a dense sequence $\left( b_{m}\right) $
in $\mathrm{\mathrm{Ball}}\left( K\left( H\right) \right) $. Notice that,
for $j\in J$, and $a\in A$, and $T\in \mathfrak{D}\left( J\right) $, we have
that%
\begin{equation*}
j\left[ a,T\right] =jaT-jTa\equiv jaT-Tja\equiv 0\mathrm{\ \mathrm{mod}}\
K\left( H\right) \text{.}
\end{equation*}%
Fix an approximate unit $\left( u_{n}\right) $ for $J$ such that, for $m\leq
n$,%
\begin{equation*}
\left\Vert u_{n}j_{m}-j_{m}\right\Vert \leq 2^{-n}
\end{equation*}%
and 
\begin{equation*}
\left\Vert u_{n}a_{m}-a_{m}u_{n}\right\Vert \leq 2^{-n}.
\end{equation*}%
Fix an approximate unit $\left( w_{n}\right) _{n\in \omega }$ for $K\left(
H\right) $ such that, if we set 
\begin{equation*}
d_{n}:=\left( w_{n}-w_{n-1}\right) ^{1/2}\text{,}
\end{equation*}%
then we have, for $m\leq n$,%
\begin{equation*}
\left\Vert d_{n}b_{m}\right\Vert \leq 2^{-n}
\end{equation*}%
\begin{equation*}
\left\Vert d_{n}j_{m}-j_{m}d_{n}\right\Vert \leq 2^{-n}
\end{equation*}%
\begin{equation*}
\left\Vert d_{n}a_{m}-a_{m}d_{n}\right\Vert \leq 2^{-n}\text{.}
\end{equation*}%
One can see that such an approximate unit for $K\left( H\right) $ exists by
considering a approximate unit for $K\left( H\right) $ that is quasicentral
for $J$ and $A$ \cite[Theorem 3.2.6]{higson_analytic_2000} and then a
suitable subsequence via a diagonal argument.

Fix $T\in \mathfrak{D}\left( J\right) $. Then using the Lusin--Novikov
Selection Theorem \cite[Theorem 18.10]{kechris_classical_1995} and \cite[%
Theorem 3.2.6]{higson_analytic_2000} one can see that one can recursively
define, for $n\in \omega $, $\ell _{n}^{T}\in \omega $, $k_{n,0}^{T},\ldots
,k_{n,\ell _{n}^{T}}^{T}\geq n$, and $t_{n,0}^{T},\ldots ,t_{n,\ell
_{n}^{T}}^{T}\in \left[ 0,1\right] \cap \mathbb{Q}$ that depend in a Borel
fashion from $T$ such that, setting%
\begin{equation*}
w_{n}^{T}:=t_{n,0}^{T}w_{k_{n,0}^{T}}+\cdots +t_{n,\ell
_{n}^{T}}^{T}w_{k_{n,\ell _{n}^{T}}^{T}}
\end{equation*}%
and%
\begin{equation*}
d_{n}^{T}:=\left( w_{n}^{T}-w_{n-1}^{T}\right) ^{1/2}
\end{equation*}%
one has that $w_{n}^{T},d_{n}^{T}\in K\left( H\right) $ depend in a Borel
fashion from $T$ and, for $m_{1},m_{2},m\leq n$,%
\begin{equation*}
\left\Vert d_{n}^{T}\left[ a_{m},T\right] -\left[ a_{m},T\right]
d_{n}^{T}\right\Vert \leq 2^{-n}
\end{equation*}%
\begin{equation*}
\left\Vert d_{n}^{T}\left[ T,j_{m}\right] \right\Vert \leq 2^{-n}
\end{equation*}%
\begin{equation*}
\left\Vert d_{n}^{T}u_{m_{1}}\left[ a_{m_{2}},T\right] \right\Vert \leq
2^{-n}\text{.}
\end{equation*}%
Furthermore, we also have from the choice of $\left( w_{n}\right) $ that,
for $m\leq n$,%
\begin{equation*}
\left\Vert d_{n}^{T}b_{m}\right\Vert \leq 2^{-n}
\end{equation*}%
\begin{equation*}
\left\Vert d_{n}^{T}j_{m}-j_{m}d_{n}^{T}\right\Vert \leq 2^{-n}
\end{equation*}%
\begin{equation*}
\left\Vert d_{n}^{T}a_{m}-a_{m}d_{n}^{T}\right\Vert \leq 2^{-n}\text{.}
\end{equation*}%
As in the proof of Kasparov's Technical Theorem \cite[Theorem 3.8.1]%
{higson_analytic_2000}, one has that%
\begin{equation*}
\sum_{n\in \omega }d_{n}^{T}u_{n}d_{n}^{T}
\end{equation*}%
converges in the strong-* topology to some positive element $X_{T}\in 
\mathrm{\mathrm{Ball}}\left( B\left( H\right) \right) $. Furthermore, we
have that 
\begin{equation*}
\left( 1-X_{T}\right) j\equiv 0\mathrm{\ \mathrm{mod}}\ K\left( H\right)
\end{equation*}%
\begin{equation*}
X_{T}\left[ T,a\right] \equiv 0\mathrm{\ \mathrm{mod}}\ K\left( H\right)
\end{equation*}%
\begin{equation*}
\left[ X_{T},a\right] \equiv 0\mathrm{\ \mathrm{mod}}\ K\left( H\right)
\end{equation*}%
for $j\in J$ and $a\in A$. Thus, $X_{T}T\in \mathfrak{D}\left( A\right) $
and $\left( 1-X_{T}\right) T\in \mathfrak{D}\left( J//J\right) $. Indeed, if 
$a\in A$ then we have that%
\begin{eqnarray*}
\left[ X_{T}T,a\right] &=&X_{T}Ta-aX_{T}T \\
&=&X_{T}Ta-X_{T}aT+X_{T}aT-aX_{T}T \\
&=&X_{T}\left[ T,a\right] +\left[ X_{T},a\right] T\equiv 0\mathrm{\ \mathrm{%
mod}}\ K\left( H\right) \text{.}
\end{eqnarray*}%
If $j\in J$ then we have that%
\begin{equation*}
\left( 1-X_{T}\right) Tj\equiv \left( 1-X_{T}\right) jT\equiv 0\mathrm{\ 
\mathrm{mod}}\ K\left( H\right) \text{.}
\end{equation*}%
We have that the function $\mathfrak{D}\left( J\right) \mapsto K\left(
H\right) $, $T\mapsto \left[ X_{T},a\right] $ is Borel, being the pointwise
limit of Borel functions%
\begin{equation*}
T\mapsto \sum_{k=0}^{n}\left[ d_{k}^{T}u_{k}d_{k}^{T},a\right]
\end{equation*}%
for $n\in \omega $. Thus, the function $\mathfrak{D}\left( J\right) \mapsto
K\left( H\right) $, $T\mapsto \left[ X_{T},a\right] T$ is Borel as well. For
the same reasons, the function $\mathfrak{D}\left( J\right) \mapsto K\left(
H\right) $, $T\mapsto X_{T}\left[ T,a\right] $ is Borel, and hence the
function $\mathfrak{D}\left( J\right) \mapsto K\left( H\right) $, $T\mapsto %
\left[ X_{T}T,a\right] =X_{T}\left[ T,a\right] +\left[ X_{T},a\right] T$ is
Borel. A similar argument shows that the function $\mathfrak{D}\left(
J\right) \rightarrow K\left( H\right) $, $T\mapsto X_{T}Tb$ is Borel for $%
b\in K\left( H\right) $. Therefore, the function $\mathfrak{D}\left(
J\right) \rightarrow \mathfrak{D}\left( A\right) $, $T\mapsto X_{T}T$ is
Borel. Since $T-X_{T}T=\left( 1-X_{T}\right) T\in \mathfrak{D}\left(
J//J\right) $, we have that $T\mapsto X_{T}T$ is a lift of the unital
*-isomorphism $\mathfrak{D}\left( J\right) /\mathfrak{D}\left( J//J\right)
\rightarrow \mathfrak{D}\left( A\right) /\mathfrak{D}\left( A//J\right) $.
This concludes the proof.
\end{proof}

\begin{corollary}
\label{Corollary:relative-K-homology}Suppose that $A$ is a separable
C*-algebra, and $J$ is a closed two-sided ideal of $A$. Fix $i\in \left\{
0,1\right\} $. Then $\mathrm{K}_{i}\left( \mathfrak{D}\left( A^{+}\right) /%
\mathfrak{D}\left( A//J\right) \right) $ is a definable group, definably
isomorphic to $\mathrm{K}_{i}\left( \mathfrak{D}\left( J^{+}\right) /%
\mathfrak{D}\left( J//J\right) \right) $.
\end{corollary}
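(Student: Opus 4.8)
The plan is to obtain Corollary \ref{Corollary:relative-K-homology} by transport of structure along the isomorphism of Lemma \ref{Lemma:Kasparov-Paschke}, using the fact that the corresponding statement for $J$ in place of $A$ has already been settled by Lemma \ref{Lemma:full-ideal}. First I would apply Lemma \ref{Lemma:Kasparov-Paschke} to the pair $(A,J)$: the inclusion $\mathfrak{D}(A^{+})\subseteq \mathfrak{D}(J^{+})$ induces an isomorphism $\varphi\colon \mathfrak{D}(A^{+})/\mathfrak{D}(A//J)\to \mathfrak{D}(J^{+})/\mathfrak{D}(J//J)$ in the category of separable unital C*-algebras with a strict cover. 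By the definition of that category (Section \ref{Section:strict}), this means precisely that $\varphi$ is a definable unital $*$-homomorphism and that its inverse $\varphi^{-1}$ is definable as well.

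Next I would invoke the functoriality of $\mathrm{K}_{i}$ on the category of unital C*-algebras with a strict cover, established in Section \ref{Section:K-theory}: applying $\mathrm{K}_{i}$ to $\varphi$ and to $\varphi^{-1}$ produces a pair of mutually inverse \emph{definable} group homomorphisms between $\mathrm{K}_{i}(\mathfrak{D}(A^{+})/\mathfrak{D}(A//J))$ and $\mathrm{K}_{i}(\mathfrak{D}(J^{+})/\mathfrak{D}(J//J))$. Hence these two semidefinable groups are isomorphic in the category of semidefinable groups. Then I would apply Lemma \ref{Lemma:full-ideal} with $J$ in the role of $A$ — noting that the restriction of the fixed nondegenerate ample representation $\rho^{+}$ to $J^{+}\subseteq A^{+}$ is again nondegenerate and ample (ampleness of a representation is inherited by subalgebras, directly from the definition), and that in any case the conclusion of Lemma \ref{Lemma:full-ideal} is independent of the choice of ample representation by Voiculescu's theorem (Lemma \ref{Lemma:trivial-representations}). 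This gives that $\mathrm{K}_{i}(\mathfrak{D}(J^{+})/\mathfrak{D}(J//J))$ is a definable group.

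Finally, since $\mathrm{K}_{i}(\mathfrak{D}(J^{+})/\mathfrak{D}(J//J))$ is a definable group and there is a definable bijection from it onto the semidefinable group $\mathrm{K}_{i}(\mathfrak{D}(A^{+})/\mathfrak{D}(A//J))$ (namely $\mathrm{K}_{i}(\varphi^{-1})$), Corollary \ref{Corollary:Kechris--MacDonald} shows that $\mathrm{K}_{i}(\mathfrak{D}(A^{+})/\mathfrak{D}(A//J))$ is itself a definable group and that this bijection — hence also its inverse $\mathrm{K}_{i}(\varphi)$ — is an isomorphism in the category of definable groups, which is what is required. Since every ingredient is already in place, there is no serious obstacle; the only point that needs a moment of care is the bookkeeping between the notations $\mathfrak{D}(A^{+})$ and $\mathfrak{D}(A)$ and the verification that Lemma \ref{Lemma:full-ideal} is legitimately applicable to the ideal $J$, which, as indicated above, reduces to the trivial observation that restricting an ample representation to a unitized ideal yields an ample representation.
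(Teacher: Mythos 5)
Your proposal is correct and follows essentially the same route as the paper: Lemma \ref{Lemma:full-ideal} applied to $J$ gives definability of $\mathrm{K}_{i}\left( \mathfrak{D}\left( J^{+}\right) /\mathfrak{D}\left( J//J\right) \right) $, Lemma \ref{Lemma:Kasparov-Paschke} plus functoriality gives an isomorphism in the category of semidefinable groups, and one transfers definability across it (the paper cites Lemma \ref{Lemma:iso-semi} for this last step, which suffices since both directions of your isomorphism are already definable, but your appeal to Corollary \ref{Corollary:Kechris--MacDonald} works equally well).
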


\begin{proof}
By Lemma \ref{Lemma:full-ideal}, $\mathrm{K}_{i}\left( \mathfrak{D}\left(
J^{+}\right) /\mathfrak{D}\left( J//J\right) \right) $ is a definable group.
By Lemma \ref{Lemma:Kasparov-Paschke}, $\mathrm{K}_{i}\left( \mathfrak{D}%
\left( A^{+}\right) /\mathfrak{D}\left( A//J\right) \right) $ is isomorphic
to $\mathrm{K}_{i}\left( \mathfrak{D}\left( J^{+}\right) /\mathfrak{D}\left(
J//J\right) \right) $ in the category of semidefinable groups. Whence, the
conclusion follows from Lemma \ref{Lemma:iso-semi}.
\end{proof}

Suppose as above that $A$ is a separable C*-algebra, and $J$ is a closed
two-sided ideal of $A$. One defines for $i\in \left\{ 0,1\right\} $ the
relative $\mathrm{K}$-homology groups%
\begin{equation*}
\mathrm{K}^{i}\left( A,A/J\right) :=\mathrm{K}_{1-i}\left( \mathfrak{D}%
\left( A\right) /\mathfrak{D}\left( A//J\right) \right) ;
\end{equation*}%
see \cite[Definition 5.3.4]{higson_analytic_2000}. These are definable
groups by Corollary \ref{Corollary:relative-K-homology}. The assignment $%
\left( A,J\right) \mapsto \mathrm{K}^{i}\left( A,A/J\right) $ gives a
contravariant functor from the category of separable C*-pairs to the
category of definable groups. Here, a separable C*-pair is a pair $\left(
A,I\right) $ where $A$ is a separable C*-algebra and $I$ is a closed
two-sided ideal of $A$. A morphism $\left( A,I\right) \rightarrow \left(
B,J\right) $ of separable C*-pairs is a *-homomorphism $A\rightarrow B$ that
maps $I$ to $J$. If $\alpha :\left( A,I\right) \rightarrow \left( B,J\right) 
$ is a morphism of C*-pairs, and $V:H\rightarrow H$ is an isometry that
covers $\alpha ^{+}:A^{+}\rightarrow B^{+}$, then we have that the
corresponding strict unital *-homomorphism $\mathrm{Ad}\left( V\right) :%
\mathfrak{D}\left( B^{+}\right) \rightarrow \mathfrak{D}\left( A^{+}\right) $
maps $\mathfrak{D}\left( B//J\right) $ to $\mathfrak{D}\left( A//I\right) $.
Thus, it induces a definable unital *-homomorphism $\mathfrak{D}\left(
B\right) /\mathfrak{D}\left( B//J\right) \rightarrow \mathfrak{D}\left(
A\right) /\mathfrak{D}\left( A//I\right) $, and a definable group
homomorphisms $\mathrm{K}^{i}\left( B,B/J\right) \rightarrow \mathrm{K}%
^{i}\left( A,A/I\right) $.

Suppose that $\left( A,J\right) $ is a separable C*-pair. The natural
definable isomorphisms%
\begin{eqnarray*}
\mathrm{K}^{i}\left( A,A/J\right) &=&\mathrm{K}_{1-i}\left( \mathfrak{D}%
\left( A^{+}\right) /\mathfrak{D}\left( A//J\right) \right) \\
&\cong &\mathrm{K}_{1-i}\left( \mathfrak{D}\left( J\right) /\mathfrak{D}%
\left( J//J\right) \right) \cong \mathrm{K}_{1-i}\left( \mathfrak{D}\left(
J\right) \right) =\mathrm{K}^{i}\left( J\right)
\end{eqnarray*}%
from Lemma \ref{Lemma:Kasparov-Paschke} and Lemma \ref{Lemma:K-commutants}
give a natural definable isomorphism $\mathrm{K}^{i}\left( A,A/J\right)
\cong \mathrm{K}^{i}\left( J\right) $ called the \emph{excision isomorphism}%
; see \cite[Theorem 5.4.5]{higson_analytic_2000}.

Suppose that $\left( A,J\right) $ is a separable C*-pair. We say that $%
\left( A,J\right) $ is \emph{semi-split} if the short exact sequence%
\begin{equation*}
0\rightarrow J\rightarrow A\rightarrow A/J\rightarrow 0
\end{equation*}%
is semi-split in the sense of \cite[Definition 5.3.6]{higson_analytic_2000},
namely the quotient map $A^{+}\rightarrow A^{+}/J$ admits a ucp right
inverse. By the Choi--Effros lifting theorem \cite{choi_completely_1976},
every nuclear separable C*-pair is semi-split. Suppose that $\left(
A,J\right) $ is semisplit. If $V:H\rightarrow H$ is a linear isometry that
covers the quotient map $A\rightarrow A/J$, then the unital *-homomorphism $%
\mathrm{Ad}\left( V\right) :\mathfrak{D}\left( A/J\right) \rightarrow 
\mathfrak{D}\left( A//J\right) $ induces a natural definable isomorphism in $%
\mathrm{K}$-theory \cite[Proposition 5.3.7]{higson_analytic_2000}. In this
case, from the six-term exact sequence in $\mathrm{K}$-theory
\begin{center}
\begin{tikzcd}
\mathrm{K}_{1}\left( \mathfrak{D}\left( A//J\right) \right) \arrow[r] & \mathrm{K}_{1}\left( \mathfrak{D}\left( A\right) \right) \arrow[r]  & \mathrm{K}_{1}\left( \mathfrak{D}\left( A\right) \mathfrak{/D}\left(A//J\right) \right) \arrow[d] \\  
\mathrm{K}_{0}\left( \mathfrak{D}\left( A\right) \mathfrak{/D}\left(A//J\right) \right) \arrow[u]  & \mathrm{K}_{0}\left( \mathfrak{D}\left(A\right) \right) \arrow[l]  & \mathrm{K}_{0}\left( \mathfrak{D}\left(A//J\right) \right) \arrow[l] 
\end{tikzcd}
\end{center}
associated with the strict unital C*-pair $\left( \mathfrak{D}\left(
A\right) ,\mathfrak{D}\left( A//J\right) \right) $, one obtains the six-term
exact sequence in $\mathrm{K}$-homology associated with the separable
C*-pair $\left( A,J\right) $
\begin{center}
\begin{tikzcd}
\mathrm{K}^{0}\left( A/J\right) \arrow[r] & \mathrm{K}^{0}\left( A\right) \arrow[r] &  \mathrm{K}^{0}\left( A,A/J\right) \arrow[d] \\ 
\mathrm{K}^{1}\left( A,A/J\right)  \arrow[u] &  \mathrm{K}^{1}\left(A\right)  \arrow[l] &  \mathrm{K}^{1}\left( A/J\right)  \arrow[l]
\end{tikzcd}
\end{center}
as in \cite[Theorem 5.3.10]{higson_analytic_2000}, where the connecting maps
are definable homomorphisms.

\section{The Kasparov and Cuntz pictures of definable $\mathrm{K}$-homology 
\label{Section:Kasparov}}

In this section we recall the notion of graded Hilbert space and of (graded)
Fredholm module for a separable C*-algebra as in \cite[Chapter 8 and
Appendix A]{higson_analytic_2000}. We also recall Kasparov's description of $%
\mathrm{K}$-homology groups in terms of Fredholm modules from \cite%
{kasparov_topological_1975}. We then show that Kasparov's $\mathrm{K}$%
-homology groups can be regarded as definable groups, and are definably
isomorphic to the $\mathrm{K}$-homology groups as defined in the previous
section. We conclude by recalling the Cuntz picture for $\mathrm{K}$%
-homology from \cite{cuntz_new_1987}; see also \cite%
{higson_characterization_1987} and \cite[Chapter 5]{jensen_elements_1991}.
Again, we show that the Cuntz $\mathrm{K}$-homology groups can be seen as
definable groups, and are naturally definably isomorphic to the $\mathrm{K}$%
-homology groups as previously defined.

\subsection{Graded vector spaces and algebras}

Let $V$ be a vector space. A \emph{grading} of $V$ is a decomposition $%
V=V^{+}\oplus V^{-}$ as a direct sum of two subspaces, called the positive
and negative part of $V$. The corresponding grading operator $\gamma _{V}$
is the involution of $V$ whose eigenspaces for $1$ and $-1$ are $V^{+}$ and $%
V^{-}$, respectively. A vector space endowed with a grading is a\emph{\
graded vector space}. The \emph{opposite} of the graded vector space $V$ is
the graded vector space $V^{\mathrm{op}}$ obtained from $V$ by interchanging
the positive and the negative part. An endomorphism $T$ of $V$ is \emph{even}
if $T\left( V^{+}\right) \subseteq V^{+}$ and $T\left( V^{-}\right)
\subseteq V^{-}$ or, equivalently, $\gamma _{V}T=T\gamma _{V}$; it is \emph{%
odd }if $T\left( V^{+}\right) \subseteq V^{-}$ and $T\left( V^{-}\right)
\subseteq V^{+}$ or, equivalently, $\gamma _{V}T=-T\gamma _{V}$.

A \emph{graded Hilbert space} is Hilbert space endowed with a grading whose
positive and negative parts are closed orthogonal subspaces or,
equivalently, the grading operator is a self-adjoint unitary.

A \emph{graded algebra }is a complex algebra that is also a graded vector
space, and such that:%
\begin{equation*}
A^{+}\cdot A^{+}\cup A^{-}\cdot A^{-}\subseteq A^{+}
\end{equation*}%
and%
\begin{equation*}
A^{+}\cdot A^{-}\cup A^{-}\cdot A^{+}\subseteq A^{-}
\end{equation*}%
or, equivalently, the grading operator $\gamma _{A}$ is an algebra
automorphism of $A$. The elements of $A^{+}$ are \emph{even }elements of the
algebra, and the elements of $A^{-}$ are called \emph{odd }elements of the
algebra. An element is \emph{homogeneous} if it is either even or odd. The
degree $\partial a$ of an even element $a$ is $0$, while the degree $%
\partial a$ of an odd element $a$ is $1$. The graded commutator of elements
of $A$ is defined for homogeneous elements by%
\begin{equation*}
\left[ a,a^{\prime }\right] =aa^{\prime }-\left( -1\right) ^{\partial
a\partial a^{\prime }}a^{\prime }a
\end{equation*}%
and extended by linearity.

A\emph{\ graded\ C*-algebra} $A$ is a C*-algebra that is also a graded
algebra and such that $A^{+}$ and $A^{-}$ are closed self-adjoint subspaces
or, equivalently, the grading operator $\gamma _{A}$ is a C*-algebra
automorphism of $A$.

\begin{example}
Suppose that $V$ is a graded vector space. Then the algebra $\mathrm{\mathrm{%
End}}\left( V\right) $ of endomorphisms of $V$ is a graded algebra, with $%
\mathrm{\mathrm{End}}\left( V\right) ^{+}$ equal to the set of even
endomorphisms of $V$, and $\mathrm{\mathrm{End}}\left( V\right) ^{-}$ is the
set of odd endomorphisms of $V$.

If $H$ is a graded Hilbert space, then $B\left( H\right) \subseteq \mathrm{%
\mathrm{\mathrm{End}}}\left( H\right) $ is a graded C*-algebra.
\end{example}

\begin{example}
Fix $n\geq 1$. Define $\mathbb{C}_{n}$ to be the graded complex unital
*-algebra generated by $n$ odd operators $e_{1},\ldots ,e_{n}$ such that,
for distinct $i,j\in \left\{ 1,2,\ldots ,n\right\} $, 
\begin{equation*}
e_{i}e_{j}+e_{j}e_{i}=0\text{,}
\end{equation*}%
\begin{equation*}
e_{j}^{2}=-1\text{,}
\end{equation*}%
\begin{equation*}
e_{j}^{\ast }=-e_{j}\text{.}
\end{equation*}%
As a complex vector space, $\mathbb{C}_{n}$ has dimension $2^{n}$, where
monomials $e_{i_{1}}\cdots e_{i_{k}}$ for $1\leq i_{1}<\cdots <i_{k}\leq n$
and $0\leq k\leq n$ comprise a basis. Declaring these monomials to be
orthogonal defines an inner product on $\mathbb{C}_{n}$. The left regular
representation of $\mathbb{C}_{n}$ on the Hilbert space $\mathbb{C}_{n}$
turns $\mathbb{C}_{n}$ into a graded C*-algebra.
\end{example}

Suppose that $V_{1}$ and $V_{2}$ are graded vector spaces. The graded tensor
product $V:=V_{1}\hat{\otimes}V_{2}$ is the tensor product of $V_{1}$ and $%
V_{2}$ equipped with the grading operator $\gamma _{V}:=\gamma
_{V_{1}}\otimes \gamma _{V_{2}}$. Thus, we have that%
\begin{equation*}
V^{+}=\left( V_{1}^{+}\otimes V_{2}^{+}\right) \oplus \left(
V_{1}^{-}\otimes V_{2}^{-}\right)
\end{equation*}%
\begin{equation*}
V^{-}=\left( V_{1}^{+}\otimes V_{2}^{-}\right) \oplus \left(
V_{1}^{-}\otimes V_{2}^{+}\right) \text{.}
\end{equation*}%
If $A_{1}$ and $A_{2}$ are graded algebras, then the graded tensor product $%
A:=A_{1}\hat{\otimes}A_{2}$ (as graded vector spaces) is a graded algebra
with respect to the multiplication operation defined on homogeneous
elementary tensors by%
\begin{equation*}
\left( a_{1}\hat{\otimes}a_{2}\right) \left( a_{1}^{\prime }\hat{\otimes}%
a_{2}^{\prime }\right) =\left( -1\right) ^{\partial a_{2}\partial
a_{1}^{\prime }}\left( a_{1}a_{1}^{\prime }\hat{\otimes}a_{2}a_{2}^{\prime
}\right) \text{.}
\end{equation*}%
When $V_{1},V_{2}$ are graded vector spaces, then there is canonical
inclusion 
\begin{equation*}
\mathrm{\mathrm{End}}\left( V_{1}\right) \hat{\otimes}\mathrm{\mathrm{End}}%
\left( V_{2}\right) \subseteq \mathrm{\mathrm{End}}\left( V_{1}\hat{\otimes}%
V_{2}\right)
\end{equation*}
obtained by setting, for homogeneous $T_{i}\in \mathrm{\mathrm{End}}\left(
V_{i}\right) $ and $v_{i}\in V_{i}$, 
\begin{equation*}
\left( T_{1}\hat{\otimes}T_{2}\right) \left( v_{1}\hat{\otimes}v_{2}\right)
=\left( -1\right) ^{\partial v_{1}\partial T_{2}}\left( T_{1}v_{1}\otimes
T_{2}v_{2}\right) \text{.}
\end{equation*}
We have that $\mathrm{\mathrm{End}}\left( V_{1}\right) \hat{\otimes}\mathrm{%
\mathrm{End}}\left( V_{2}\right) =\mathrm{\mathrm{End}}\left( V_{1}\hat{%
\otimes}V_{2}\right) $ when $V_{1},V_{2}$ are finite-dimensional.

\begin{example}
There is a canonical isomorphism $\mathbb{C}_{m}\hat{\otimes}\mathbb{C}%
_{n}\cong \mathbb{C}_{m+n}$, obtained by mapping $e_{i}\hat{\otimes}1$ to $%
e_{i}$ and $1\hat{\otimes}e_{j}$ to $e_{m+j}$ for $i\in \left\{ 1,2,\ldots
,m\right\} $ and $j\in \left\{ 1,2,\ldots ,n\right\} $.
\end{example}

Fix $p\geq 0$. A $p$-graded Hilbert space is a graded Hilbert space endowed
with $p$ odd operators $\varepsilon _{1},\ldots ,\varepsilon _{p}$ such that 
$\varepsilon _{i}\varepsilon _{j}+\varepsilon _{j}\varepsilon _{i}=0$, $%
\varepsilon _{j}^{2}=-1$, and $\varepsilon _{j}^{\ast }=-\varepsilon _{j}$
for distinct $i,j\in \left\{ 1,2,\ldots ,p\right\} $. Equivalently, a $p$%
-graded Hilbert space can be thought of as a graded right module over $%
\mathbb{C}_{p}$, where one sets%
\begin{equation*}
xe_{i}:=\varepsilon _{i}(x)
\end{equation*}%
for $i\in \left\{ 1,2,\ldots ,p\right\} $ and $x\in H$. A $0$-graded Hilbert
space is simply a graded Hilbert space. By convention, a $\left( -1\right) $%
-graded Hilbert space is an ungraded Hilbert space. Suppose that $H_{0}$ and 
$H_{1}$ are $p$-graded Hilbert spaces. A $p$-graded bounded linear map $%
H_{0}\rightarrow H_{1}$ is a bounded linear map that is also a right $%
\mathbb{C}_{p}$-module map.

\begin{example}
Suppose that $H,H^{\prime }$ are $p$-graded Hilbert space. Then $H^{\mathrm{%
op}}$ is $p$-graded, where $\varepsilon _{i}^{H^{\mathrm{op}}}=-\varepsilon
_{i}^{H}$ for $1\leq i\leq n$, and $H\oplus H^{\prime }$ is $p$-graded,
where $\varepsilon _{i}^{H\oplus H^{\prime }}=\varepsilon _{i}^{H}\oplus
\varepsilon _{i}^{H^{\prime }}$.
\end{example}

\begin{example}
If $H_{1}$ is $p_{1}$-graded and $H_{2}$ is $p_{2}$-graded, then considering
the isomorphism $\mathbb{C}_{p_{1}}\hat{\otimes}\mathbb{C}_{p_{2}}\cong 
\mathbb{C}_{p_{1}+p_{2}}$, and the inclusion $B\left( H_{1}\hat{\otimes}%
H_{2}\right) \subseteq B\left( H_{1}\right) \hat{\otimes}B\left(
H_{2}\right) $, we have that $H_{1}\hat{\otimes}H_{2}$ is $\left(
p_{1}+p_{2}\right) $-graded.
\end{example}

A straightforward verification shows the following; see \cite[Proposition
A.3.4]{higson_analytic_2000}.

\begin{proposition}
For $p\geq 0$, the categories of $p$-multigraded and $\left( p+2\right) $%
-multigraded Hilbert spaces are equivalent. The categories of Hilbert spaces
and linear maps and $1$-graded Hilbert spaces and \emph{even} $1$-graded
linear maps are equivalent.
\end{proposition}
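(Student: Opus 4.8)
The plan is to deduce both assertions from the algebraic periodicity $\mathbb{C}_{p+2}\cong\mathbb{C}_{p}\hat{\otimes}\mathbb{C}_{2}$ noted above, together with the fact that $\mathbb{C}_{2}$ is isomorphic, as a graded $\ast$-algebra, to $\mathrm{End}(W)$, where $W$ is the graded Hilbert space with $\dim W^{+}=\dim W^{-}=1$. Concretely, one checks that $e_{1}\mapsto\left(\begin{smallmatrix}0&1\\-1&0\end{smallmatrix}\right)$ and $e_{2}\mapsto\left(\begin{smallmatrix}0&i\\i&0\end{smallmatrix}\right)$ define such an isomorphism, under which the grading operator $\gamma_{W}$ of $W$ corresponds to $-ie_{1}e_{2}$; in particular $\mathbb{C}_{2}$ contains the even self-adjoint idempotent $q=\tfrac12(1+\gamma_{W})$ of rank one. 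From the point of view of the excerpt, a $(p+2)$-graded Hilbert space $K$ is a $\mathbb{Z}/2$-graded Hilbert space carrying an action of $\mathbb{C}_{p+2}=\mathbb{C}_{p}\hat{\otimes}\mathbb{C}_{2}$; the operator $\omega_{K}:=i\varepsilon_{p+1}\varepsilon_{p+2}$ is then even, is a self-adjoint unitary, commutes with $\gamma_{K}$ and with $\varepsilon_{1},\dots,\varepsilon_{p}$, and anticommutes with $\varepsilon_{p+1}$ and $\varepsilon_{p+2}$ (it is, up to sign, the image of $2q-1$ under the $\mathbb{C}_{2}$-action).

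First I would define a functor $G$ from $(p+2)$-graded to $p$-graded Hilbert spaces by \emph{compression}: let $G(K)$ be the $(+1)$-eigenspace of $\omega_{K}$, equipped with the restrictions of $\gamma_{K}$ and of $\varepsilon_{1},\dots,\varepsilon_{p}$, which is legitimate precisely because each of these commutes with $\omega_{K}$; on a morphism $\phi$ put $G(\phi)=\phi|_{G(K)}$, which makes sense because $\phi$, being a $\mathbb{C}_{p+2}$-module map, commutes with $\omega_{K}$. In the reverse direction I would set $F(H):=H\otimes W$, with grading $\gamma_{H}\otimes\gamma_{W}$, with $\varepsilon_{i}$ acting as $\varepsilon_{i}^{H}\otimes 1$ for $i\le p$, and with the two new operators acting as $\gamma_{H}\otimes e_{1}$ and $\gamma_{H}\otimes e_{2}$; one verifies directly, using only that the $\varepsilon_{i}^{H}$ are odd and skew-adjoint with the Clifford relations and that $e_{1},e_{2},\gamma_{W}$ satisfy the relations defining $\mathbb{C}_{2}$, that $F(H)$ is a $(p+2)$-graded Hilbert space. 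Then I would check that $F$ and $G$ are mutually quasi-inverse: $G(F(H))$ is $H$ with its grading possibly replaced by $-\gamma_{H}$ — the same object up to the isomorphism $\mathrm{id}_{H}$, since morphisms of graded Hilbert spaces need not preserve the grading — while $F(G(K))\cong K$ via the graded-Morita identification $x\otimes w\mapsto x$ for $x$ in the $(+1)$-eigenspace of $\omega_{K}$ and $w$ spanning $W^{+}$, and $x\otimes w\mapsto\varepsilon_{p+1}x$ for $w$ spanning $W^{-}$, using that $\varepsilon_{p+1}$ restricts to a unitary isomorphism between the two eigenspaces of $\omega_{K}$. Both isomorphisms are manifestly natural.

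The one genuinely delicate point — the place to be careful rather than the place where anything deep happens — is the action of $F$ on morphisms. Since a $p$-graded linear map $T$ need not be even, one cannot simply take $T\otimes 1$ (that would fail to commute with $\gamma_{H}\otimes e_{j}$). Instead one writes $T=T^{+}+T^{-}$ for its even and odd parts, observes that each of $T^{\pm}$ still commutes with all $\varepsilon_{i}^{H}$, and sets $F(T):=T^{+}\otimes 1-T^{-}\otimes\gamma_{W}$; a short computation using that $\gamma_{W}$ anticommutes with $e_{1}$ and $e_{2}$ shows that $F(T)$ commutes with $\varepsilon_{1},\dots,\varepsilon_{p+2}$, and the sign is arranged precisely so that $G(F(T))=T$. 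Apart from this, everything is the Koszul-sign bookkeeping of graded tensor products.

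For the second statement I would argue directly rather than through the $p\mapsto p+2$ machinery, since there the morphisms are required to be even. In a $1$-graded Hilbert space $(H,\gamma,\varepsilon_{1})$ the operator $\varepsilon_{1}$ is odd, and from $\varepsilon_{1}^{\ast}=-\varepsilon_{1}$ and $\varepsilon_{1}^{2}=-1$ it is unitary, so it restricts to a unitary isomorphism $H^{+}\to H^{-}$. The assignment $(H,\gamma,\varepsilon_{1})\mapsto H^{+}$, sending an even morphism commuting with $\varepsilon_{1}$ to its restriction to the positive part, is then fully faithful (both faithfulness and fullness follow from the isomorphism $\varepsilon_{1}\colon H^{+}\to H^{-}$ together with evenness) and essentially surjective (a Hilbert space $H_{0}$ is the image of $H_{0}\oplus H_{0}$ with grading operator $\left(\begin{smallmatrix}1&0\\0&-1\end{smallmatrix}\right)$ and $\varepsilon_{1}=\left(\begin{smallmatrix}0&-1\\1&0\end{smallmatrix}\right)$), hence an equivalence; the natural isomorphism $H\cong H^{+}\oplus H^{+}$ is $x+\varepsilon_{1}y\mapsto(x,y)$ for $x,y\in H^{+}$. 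The heart of the whole proposition is thus the isomorphism $\mathbb{C}_{2}\cong\mathrm{End}(W)$ and the resulting graded Morita equivalence; everything else is routine verification, as in \cite[Proposition A.3.4]{higson_analytic_2000}.
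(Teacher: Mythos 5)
The paper offers no argument for this proposition beyond the citation of \cite[Proposition A.3.4]{higson_analytic_2000}, and your route --- the graded Morita equivalence coming from $\mathbb{C}_{p+2}\cong\mathbb{C}_{p}\hat{\otimes}\mathbb{C}_{2}$ and $\mathbb{C}_{2}\cong \mathrm{End}(W)$, implemented by compressing to an eigenspace of the even self-adjoint unitary $\omega_{K}=i\varepsilon_{p+1}\varepsilon_{p+2}$ --- is exactly the intended one. Your treatment of $F$ on morphisms via $F(T)=T^{+}\otimes 1-T^{-}\otimes\gamma_{W}$, your observation that $GF(H)=(H,-\gamma_{H})$ is isomorphic to $H$ via the identity because morphisms need not be even, and the entire second half (Hilbert spaces versus $1$-graded spaces with \emph{even} morphisms) are correct and complete.

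There is, however, a concrete defect in the unitary you exhibit for $FG(K)\cong K$. On $G(K)\otimes W^{-}$ you set $\Theta(x\otimes w)=\varepsilon_{p+1}x$; but for $1\leq i\leq p$ one computes
\begin{equation*}
\Theta\bigl((\varepsilon_{i}\otimes 1)(x\otimes w)\bigr)=\varepsilon_{p+1}\varepsilon_{i}x=-\varepsilon_{i}\varepsilon_{p+1}x=-\varepsilon_{i}\,\Theta(x\otimes w)\text{,}
\end{equation*}
so for $p\geq 1$ this $\Theta$ \emph{anticommutes} with the first $p$ Clifford generators and is not a morphism of $(p+2)$-graded Hilbert spaces. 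The repair is to insert the grading operator: $\Theta(x\otimes w):=\varepsilon_{p+1}\gamma_{K}x$ (up to a phase) commutes with $\varepsilon_{1},\ldots,\varepsilon_{p}$ and intertwines the gradings and $\varepsilon_{p+1}$ correctly. Even then a residual sign remains: chasing $\gamma\otimes e_{2}$ through this corrected $\Theta$ demands $\varepsilon_{p+2}=-i\varepsilon_{p+1}$ on the chosen eigenspace, whereas on the $(+1)$-eigenspace of $\omega_{K}$ the relation $i\varepsilon_{p+1}\varepsilon_{p+2}=1$ forces $\varepsilon_{p+2}=+i\varepsilon_{p+1}$. So you must additionally either take $G(K)$ to be the $(-1)$-eigenspace of $\omega_{K}$ or flip the sign of one of the two new generators of $F(H)$ (say, use $-\gamma_{H}\otimes e_{2}$). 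These are local, one-line adjustments that leave the rest of your argument intact, but as written the claimed natural isomorphism $FG\Rightarrow\mathrm{id}$ is not implemented by a morphism of the category.
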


\subsection{Fredholm modules}

Suppose that $A$ is a separable C*-algebra. We now recall the definition of
Fredholm module over $A$; see \cite[Definition 8.1.1]{higson_analytic_2000}.
For each dimension $d\in \omega \cup \left\{ \aleph _{0}\right\} $ fix a
Hilbert space $H_{d}$ of dimension $d$.

\begin{definition}
Suppose that $A$ is a separable C*-algebra. A \emph{Fredholm module} over $A$
is a triple $\left( F,\rho ,H\right) $ such that:

\begin{itemize}
\item $H$ is a separable Hilbert space $H_{d}$ for some $d\in \omega \cup
\left\{ \aleph _{0}\right\} $;

\item $\rho :A\rightarrow B\left( H\right) $ is a *-homomorphism;

\item $F\in B\left( H\right) $ satisfies $\left( F^{2}-1\right) \rho \left(
a\right) \equiv \left( F-F^{\ast }\right) \rho \left( a\right) \equiv \left[
F,\rho \left( a\right) \right] \equiv 0\mathrm{\ \mathrm{mod}}\ K\left(
H\right) $ for every $a\in A$.
\end{itemize}
\end{definition}

\begin{remark}
\label{Remark:Borel-F-1}Recall that, if $H$ is a separable Hilbert space,
then $B\left( H\right) $ is a standard Borel space with respect to the Borel
structure induced by the strong-* topology on $\mathrm{\mathrm{Ball}}\left(
B\left( H\right) \right) $. Similarly, the Banach space $L\left( A,B\left(
H\right) \right) $ of bounded linear maps $A\rightarrow B\left( H\right) $
is a standard Borel space when endowed with the Borel structure induced by
the topology of pointwise strong-* convergence on $\mathrm{\mathrm{Ball}}%
\left( L\left( A,B\left( H\right) \right) \right) $. The set $\mathrm{F}%
_{-1}\left( A\right) $ of Fredholm modules over $A$ can thus be naturally
regarded as a standard Borel space.
\end{remark}

The definition of \emph{graded }Fredholm module is similar, where one
replaces Hilbert spaces with \emph{graded }Hilbert spaces.

\begin{definition}
Suppose that $A$ is a separable C*-algebra. A \emph{graded} Fredholm module
over $A$ is a triple $\left( F,\rho ,H\right) $ such that:

\begin{itemize}
\item $H$ is a separable graded Hilbert space of the form $\left(
H_{d},\gamma \right) $ for some $d\in \omega \cup \left\{ \aleph
_{0}\right\} $ and some grading operator $\gamma $ on $H_{d}$;

\item $\rho :A\rightarrow B\left( H\right) $ is a *-homomorphism such that,
for every $a\in A$, $\rho \left( a\right) \in B\left( H\right) ^{+}$ is 
\emph{even}, and hence $\rho =\rho ^{+}\oplus \rho ^{-}$ for some
representations $\rho ^{\pm }$ of $A$ on $H^{\pm }$, where we regard $%
B\left( H\right) $ as a graded C*-algebra;

\item $F\in B\left( H\right) $ is an \emph{odd }operator that satisfies, for
every $a\in A$, $\left( F^{2}-1\right) \rho \left( a\right) \equiv \left(
F-F^{\ast }\right) \rho \left( a\right) \equiv \left[ F,\rho \left( a\right) %
\right] \equiv 0\mathrm{\ \mathrm{mod}}\ K\left( H\right) $.
\end{itemize}
\end{definition}

\begin{remark}
\label{Remark:Borel-F0}Again, we have that the set $\mathrm{F}_{0}\left(
A\right) $ of Fredholm modules over $A$ is a standard Borel space.
\end{remark}

The notions of graded and ungraded Fredholm modules can be recognized as
particular instances (for $p=0$ and $p=-1$, respectively) of the notion of $%
p $-graded Fredholm module; see \cite[Definition 8.1.11]%
{higson_analytic_2000}.

\begin{definition}
Suppose that $A$ is a separable C*-algebra. A $p$-multigraded Fredholm
module over $A$ is a triple $\left( F,\rho ,H\right) $ such that:

\begin{itemize}
\item $H$ is a separable $p$-multigraded Hilbert space $H$ of the form $%
\left( H_{d},\gamma ,\varepsilon _{1},\ldots ,\varepsilon _{p}\right) $ for $%
d\in \omega \cup \left\{ \aleph _{0}\right\} $, grading operator $\gamma $
on $H_{d}$, and odd operators $\varepsilon _{1},\ldots ,\varepsilon _{d}$ on 
$\left( H_{d},\gamma \right) $;

\item a *-homomorphism $\rho :A\rightarrow B\left( H\right) $ such that, for
every $a\in A$, $\rho \left( a\right) $ is an \emph{even }$p$-\emph{%
multigraded }operator on $H$;

\item $F\in B\left( H\right) $ is an \emph{odd }$p$-\emph{multigraded}
operator on $H$ such that, for every $a\in A$, $\left( F^{2}-1\right) \rho
\left( a\right) \equiv \left( F-F^{\ast }\right) \rho \left( a\right) \equiv %
\left[ F,\rho \left( a\right) \right] \equiv 0\mathrm{\ \mathrm{mod}}\
K\left( H\right) $.
\end{itemize}
\end{definition}

\begin{remark}
\label{Remark:Borel-Fp}As in the cases $p=0$ and $p=-1$, the set $\mathrm{F}%
_{p}\left( A\right) $ of $p$-multigraded Fredholm modules over $A$ is a
standard Borel space.
\end{remark}

We recall the notion of \emph{degenerate }$p$-multigraded Fredholm module;
see \cite[Definition 8.2.7]{higson_analytic_2000}.

\begin{definition}
Suppose that $A$ is a separable C*-algebra. A $p$-multigraded Fredholm
module $\left( F,\rho ,H\right) $ over $A$ is \emph{degenerate }if $\left(
F^{2}-1\right) \rho \left( a\right) =\left( F-F^{\ast }\right) \rho \left(
a\right) =\left[ F,\rho \left( a\right) \right] $ for every $a\in A$.
\end{definition}

It is clear that the set \textrm{D}$_{p}\left( A\right) $ of degenerate $p$%
-multigraded Fredholm modules is a Borel subset of $\mathrm{F}_{p}\left(
A\right) $.

Given $p$-multigraded Fredholm modules $x=\left( \rho ,H,F\right) $ and $%
x^{\prime }=\left( \rho ^{\prime },H^{\prime },F^{\prime }\right) $ over $A$%
, their \emph{sum} is the $p$-multigraded Fredholm module $x\oplus x^{\prime
}=\left( \rho \oplus \rho ^{\prime },H\oplus H^{\prime },F\oplus F^{\prime
}\right) $. The \emph{opposite} of $x$ is the $p$-multigraded Fredholm
module $x^{\mathrm{op}}=\left( \rho ,H^{\mathrm{op}},-F\right) $. The sum
and opposite define Borel functions $\mathrm{F}_{p}\left( A\right) \times 
\mathrm{F}_{p}\left( A\right) \rightarrow \mathrm{F}_{p}\left( A\right) $, $%
\left( x,x^{\prime }\right) \mapsto x\oplus x^{\prime }$ and $\mathrm{F}%
_{p}\left( A\right) \rightarrow \mathrm{F}_{p}\left( A\right) $, $x\mapsto
x^{\mathrm{op}}$.

Let $A$ be a separable C*-algebra, and fix $p\geq -1$. Suppose that $\left(
\rho ,H,F\right) $ and $\left( \rho ^{\prime },H^{\prime },F^{\prime
}\right) $ are $p$-multigraded Fredholm modules. Then $\left( \rho
,H,F\right) $ and $\left( \rho ,H^{\prime },F^{\prime }\right) $ are:

\begin{itemize}
\item \emph{unitarily equivalent} if there exists an \emph{even} $p$%
-multigraded unitary linear map $U:H^{\prime }\rightarrow H$ such that $%
F^{\prime }=\mathrm{Ad}\left( U\right) \left( F\right) $ and $\rho ^{\prime
}=\mathrm{Ad}\left( U\right) \circ \rho $ \cite[Definition 8.2.1]%
{higson_analytic_2000};

\item \emph{operator homotopic} if $\rho =\rho ^{\prime }$, $H=H^{\prime }$,
and there exists a \emph{norm-continuous} path $\left( F_{t}\right) _{t\in %
\left[ 0,1\right] }$ in $B\left( H\right) $ such that $F_{0}=F$, $%
F_{1}=F^{\prime }$ and, for every $t\in \left[ 0,1\right] $, $\left( \rho
,H,F_{t}\right) $ is a $p$-multigraded Fredholm module over $A$ \cite[%
Definition 8.2.2]{higson_analytic_2000}.
\end{itemize}

The notion of \emph{stable homotopy }is defined in terms of unitary
equivalence and operator homotopy; see \cite[Proposition 8.2.12]%
{higson_analytic_2000}.

\begin{definition}
\label{Definition:stable homotopy}Suppose that $A$ is a separable
C*-algebra, and $p\geq -1$. The relation $\mathrm{B}_{p}\left( A\right) $ of 
\emph{stable homotopy }of $p$-multigraded Fredholm modules over $A$ is the
relation defined by setting $x\mathrm{B}_{p}\left( A\right) x^{\prime }$ if
and only if there exists a degenerate $p$-multigraded Fredholm module $x_{0}$
over $A$ such that $x\oplus x_{0}$ and $x^{\prime }\oplus x_{0}$ are
unitarily equivalent to a pair of operator homotopic $p$-multigraded
Fredholm modules over $A$.
\end{definition}

One has that $\mathrm{B}_{p}\left( A\right) $ is an equivalence relation on $%
\mathrm{F}_{p}\left( A\right) $; see \cite[Proposition 8.2.12]%
{higson_analytic_2000}. Furthermore, $\mathrm{B}_{p}\left( A\right) $ is an 
\emph{analytic }equivalence relation, as it follows easily from the
definition and the fact that the set \textrm{D}$_{p}\left( A\right) $ of
degenerate $p$-multigraded Fredholm modules is a Borel subset of $\mathrm{F}%
_{p}\left( A\right) $, and the set of norm-continuous paths in $B\left(
H\right) $ is a Borel subset of the C*-algebra $C_{\beta }\left(
[0,1],B\left( H\right) \right) =M\left( C\left( [0,1],K\left( H\right)
\right) \right) $ of strictly continuous bounded functions $\left[ 0,1\right]
\rightarrow B\left( H\right) $ by Corollary \ref%
{Corollary:norm-continuous-paths}.

\subsection{The Kasparov $\mathrm{K}$-homology groups}

We now recall the definition of the Kasparov $\mathrm{K}$-homology groups in
terms of Fredholm modules; see \cite[Definition 8.2.5 and Proposition 8.2.12]%
{higson_analytic_2000}.

\begin{definition}
\label{Definition:Kasparov-K-homology}Let $A$ be a separable C*-algebra, and
fix $p\geq -1$. The \emph{Kasparov }$\mathrm{K}$-\emph{homology group }$%
\mathrm{KK}_{-p}\left( A;\mathbb{C}\right) $ is the \emph{semidefinable}
abelian group obtained as the quotient of the standard Borel space $\mathrm{F%
}_{p}\left( A\right) $ by the analytic equivalence relation $\mathrm{B}%
_{p}\left( A\right) $ of stable homotopy of $p$-multigraded Fredholm
modules, where the group operation on $\mathrm{KK}_{-p}\left( A;\mathbb{C}%
\right) $ is induced by the Borel function $\mathrm{F}_{p}\left( A\right)
\times \mathrm{F}_{p}\left( A\right) \rightarrow \mathrm{F}_{p}\left(
A\right) $, $\left( x,x^{\prime }\right) \mapsto x\oplus x^{\prime }$, and
the function $\mathrm{KK}_{-p}\left( A;\mathbb{C}\right) \rightarrow \mathrm{%
KK}_{-p}\left( A;\mathbb{C}\right) $ that maps each element to its additive
inverse is induced by the Borel function $\mathrm{F}_{p}\left( A\right)
\rightarrow \mathrm{F}_{p}\left( A\right) $, $x\mapsto x^{\mathrm{op}}$.
\end{definition}

The fact that $\mathrm{KK}_{-p}\left( A;\mathbb{C}\right) $ is indeed a
group is the content of \cite[Proposition 8.2.10, Corollary 8.2.11,
Proposition 8.2.12]{higson_analytic_2000}. The trivial element of $\mathrm{KK%
}_{-p}\left( A;\mathbb{C}\right) $ is given by the $\mathrm{B}_{p}\left(
A\right) $-class of any degenerate Fredholm module. The assignment $A\mapsto 
\mathrm{KK}_{-p}\left( A;\mathbb{C}\right) $ is easily seen to be a
contravariant functor from separable C*-algebras to semidefinable groups. We
will later show in Proposition \ref{Proposition:definable-Kasparov} that $%
\mathrm{KK}_{-p}\left( A;\mathbb{C}\right) $ is in fact a definable group.

Suppose that $A$ is a separable C*-algebra. Fix a representation $\rho
_{A}:A\rightarrow B\left( H_{A}\right) $ of $A$ that is the restriction to $%
A $ of an ample representation of the unitization $A^{+}$ of $A$. We then
let $\rho _{A}\oplus \rho _{A}$ be the corresponding representation (by even
operators) on the graded Hilbert space $H_{A}\oplus H_{A}$. Consider also
the Paschke dual algebra $\mathfrak{D}\left( A\right) =\mathfrak{D}_{\rho
_{A}}\left( A\right) $ associated with $\rho _{A}$; see Section \ref%
{Subsection:polar}

There is a natural definable group homomorphism $\Phi ^{1}:\mathrm{K}%
_{0}\left( \mathfrak{D}\left( A\right) \right) \rightarrow \mathrm{KK}%
_{1}\left( A,\mathbb{C}\right) $, $\left[ P\right] \mapsto \left[ x_{P}%
\right] $, defined as follows. Given a projection $P$ in $\mathfrak{D}\left(
A\right) $, define $x_{P}$ to be the ungraded Fredholm module $\left( \rho
_{A},H_{A},2P-I\right) $ over $A$; see \cite[Example 8.1.7]%
{higson_analytic_2000}. We also have a natural definable group homomorphism $%
\Phi ^{0}:\mathrm{K}_{1}\left( \mathfrak{D}\left( A\right) \right)
\rightarrow \mathrm{KK}_{0}\left( A,\mathbb{C}\right) $, $\left[ U\right]
\mapsto \left[ x_{U}\right] $, defined as follows. Given a unitary $U$ in $%
\mathfrak{D}\left( A\right) $, define $x_{U}$ to be the graded Fredholm
module $\left( \rho _{A}\oplus \rho _{A},H_{A}\oplus H_{A},F_{U}\right) $
where $H_{A}\oplus H_{A}$ is graded by $I_{H_{A}}\oplus \left(
-I_{H_{A}}\right) $, and%
\begin{equation*}
F_{U}=%
\begin{bmatrix}
0 & U^{\ast } \\ 
U & 0%
\end{bmatrix}%
\text{;}
\end{equation*}%
see \cite[Example 8.1.7]{higson_analytic_2000}.\ Then it is shown in \cite[%
Theorem 8.4.3]{higson_analytic_2000} that $\Phi ^{1}$ and $\Phi ^{0}$ are in
fact group isomorphism. From this, we obtain the following.

\begin{proposition}
\label{Proposition:definable-Kasparov}Suppose that $A$ is a separable
C*-algebra and $i\in \left\{ 0,1\right\} $. Then $\mathrm{KK}_{i}\left( A;%
\mathbb{C}\right) $ is a definable group, naturally definably isomorphic to $%
\mathrm{K}^{i}\left( A\right) $.
\end{proposition}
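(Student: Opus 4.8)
The plan is to recognize $\Phi^{1}$ and $\Phi^{0}$ as \emph{definable} bijective group homomorphisms and then invoke Corollary~\ref{Corollary:Kechris--MacDonald}. By Corollary~\ref{Corollary:K^0-definable} and Proposition~\ref{Proposition:K^1-definable}, $\mathrm{K}^{1}(A)$ and $\mathrm{K}^{0}(A)$ are definable groups, definably isomorphic to $\mathrm{K}_{0}(\mathfrak{D}_{\rho_{A}}(A))$ and $\mathrm{K}_{1}(\mathfrak{D}_{\rho_{A}}(A))$ respectively; moreover, by the Voiculescu property of Paschke duals (Proposition~\ref{Proposition:P-V}) together with Proposition~\ref{Proposition:V-Def}, $\mathrm{K}_{0}(\mathfrak{D}_{\rho_{A}}(A))$ is definably isomorphic to $\mathrm{K}_{0}^{\mathrm{A}}(\mathfrak{D}_{\rho_{A}}(A))$, whose representatives are the single ample and co-ample projections in $\mathfrak{D}_{\rho_{A}}(A)$. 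On the other side, $\mathrm{KK}_{1}(A;\mathbb{C})$ and $\mathrm{KK}_{0}(A;\mathbb{C})$ are, by Definition~\ref{Definition:Kasparov-K-homology}, the semidefinable groups obtained as quotients of the standard Borel spaces $\mathrm{F}_{-1}(A)$ and $\mathrm{F}_{0}(A)$ (Remarks~\ref{Remark:Borel-F-1} and~\ref{Remark:Borel-F0}) by the analytic equivalence relations $\mathrm{B}_{-1}(A)$ and $\mathrm{B}_{0}(A)$ of stable homotopy.

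First I would check definability. For $\mathrm{K}^{1}(A)$, on representatives the assignment is $P\mapsto x_{P}=(\rho_{A},H_{A},2P-I)$; this is an affine, hence Borel, map $\mathrm{Z}_{0}^{\mathrm{A}}(\mathfrak{D}_{\rho_{A}}(A))\to\mathrm{F}_{-1}(A)$ (the representation $\rho_{A}$ being fixed), and, as established in \cite[Chapter 8]{higson_analytic_2000}, it carries $\mathrm{B}_{0}^{\mathrm{A}}(\mathfrak{D}_{\rho_{A}}(A))$ into $\mathrm{B}_{-1}(A)$ and intertwines the Borel addition $(P,Q)\mapsto\Phi_{1,2}(P\oplus Q)$ with the Borel addition $(x,x')\mapsto x\oplus x'$ modulo stable homotopy; hence it is a Borel lift of $\Phi^{1}$. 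For $\mathrm{K}^{0}(A)$, the map $U\mapsto x_{U}=(\rho_{A}\oplus\rho_{A},H_{A}\oplus H_{A},F_{U})$, with $F_{U}$ the self-adjoint off-diagonal operator built from the unitary $U$, is likewise a Borel map from the space of representatives of $\mathrm{K}_{1}(\mathfrak{D}_{\rho_{A}}(A))$ into $\mathrm{F}_{0}(A)$ respecting the equivalence relations and group operations, hence a Borel lift of $\Phi^{0}$. Since $\Phi^{1}$ and $\Phi^{0}$ are already known to be bijective by \cite[Theorem 8.4.3]{higson_analytic_2000}, and their sources are definable groups while their targets are semidefinable groups, Corollary~\ref{Corollary:Kechris--MacDonald} shows that $\mathrm{KK}_{1}(A;\mathbb{C})$ and $\mathrm{KK}_{0}(A;\mathbb{C})$ are definable groups and that $\Phi^{1},\Phi^{0}$ are isomorphisms in $\mathbf{DSet}$, hence in the category of definable groups.

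It remains to establish naturality in $A$, which I expect to be the main obstacle. For a $*$-homomorphism $\alpha\colon A\to B$, the functorial map $\mathrm{K}^{i}(B)\to\mathrm{K}^{i}(A)$ is induced by $\mathrm{Ad}(V_{\alpha})$ for an isometry covering $\alpha$, while the functorial map $\mathrm{KK}_{i}(B;\mathbb{C})\to\mathrm{KK}_{i}(A;\mathbb{C})$ is induced by pulling back Fredholm modules along $\alpha$ and compressing by $V_{\alpha}$; the fact that these agree after composing with $\Phi^{i}$ is part of \cite[Chapter 8]{higson_analytic_2000}. What must be verified here is that the commuting square can be realized at the level of \emph{Borel} lifts: one needs to run the choice of covering isometry, the compression, and the homotopies witnessing well-definedness in a Borel-parameterized fashion, exactly as in the definable refinement of Voiculescu's theorem (Lemma~\ref{Lemma:Voiculescu}) and of the Paschke-duality isomorphisms carried out in Sections~\ref{Section:strict}--\ref{Section:Ext}. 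Granting this, the algebraic naturality of $\Phi^{i}$ from \cite{higson_analytic_2000}, combined once more with Corollary~\ref{Corollary:Kechris--MacDonald}, upgrades $\Phi^{i}$ to a natural isomorphism of functors valued in definable groups.
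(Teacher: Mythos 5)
Your proposal is correct and takes essentially the same route as the paper: the paper's proof likewise observes that $\Phi^{i}:\mathrm{K}_{1-i}(\mathfrak{D}(A))\rightarrow \mathrm{KK}_{i}(A;\mathbb{C})$ is a definable group homomorphism with Borel lifts $P\mapsto x_{P}$ and $U\mapsto x_{U}$, that it is bijective by \cite[Theorem 8.4.3]{higson_analytic_2000}, and then applies Corollary \ref{Corollary:Kechris--MacDonald} to conclude that $\mathrm{KK}_{i}(A;\mathbb{C})$ is a definable group definably isomorphic to $\mathrm{K}^{i}(A)$. Your additional remarks on realizing naturality at the level of Borel lifts are a reasonable elaboration of what the paper leaves implicit in its earlier construction of the functorial maps via covering isometries.
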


\begin{proof}
Since $\mathrm{K}_{1-i}\left( \mathfrak{D}\left( A\right) \right) =\mathrm{K}%
^{i}\left( A\right) $ is a definable group and $\Phi ^{i}:\mathrm{K}%
_{1-i}\left( \mathfrak{D}\left( A\right) \right) \rightarrow \mathrm{KK}%
_{i}\left( A;\mathbb{C}\right) $ is a definable group isomorphism, it
follows from Corollary \ref{Corollary:Kechris--MacDonald} that $\mathrm{KK}%
_{i}\left( A;\mathbb{C}\right) $ is a definable group, naturally definably
isomorphic to $\mathrm{K}^{i}\left( A\right) $.
\end{proof}

\begin{remark}
\label{Remark:normalization}Suppose that $A$ is a separable C*-algebra, and $%
p\geq -1$. A $p$-multigraded Fredholm module $\left( \rho ,H,F\right) $ over 
$A$ is self-adjoint if $F$ is self-adjoint, and contractive if $F$ is
contractive \cite[Definition 8.3.1]{higson_analytic_2000}. A self-adjoint,
contractive $p$-multigraded Fredholm module $\left( \rho ,H,F\right) $ is 
\emph{involutive} if $F^{2}=1$ \cite[Definition 8.3.4]{higson_analytic_2000}%
. Kasparov's $\mathrm{K}$-homology groups can be \emph{normalized }by
requiring that the Fredholm modules be involutive. This means that one
obtain the same definable abelian group (up to a natural isomorphism) if one
only considers in the definition of the Kasparov $\mathrm{K}$-homology
groups involutive Fredholm modules, where also stable homotopy is defined in
terms of involutive Fredholm modules; see \cite[Lemma 8.3.5]%
{higson_analytic_2000}.

A graded Fredholm module $\left( \rho ,H,F\right) $ over $A$ is \emph{%
balanced }if there is a separable Hilbert space $H^{\prime }$ such that $%
H=H^{\prime }\oplus H^{\prime }$ is graded by $I_{H^{\prime }}\oplus \left(
-I_{H^{\prime }}\right) $, and $\rho =\rho ^{+}\oplus \rho ^{-}$, where $%
\rho ^{+}$ and $\rho ^{-}$ are the same representation of $A$ on $H^{\prime
} $. Then one has that $\mathrm{KK}_{0}\left( A;\mathbb{C}\right) $ can be
normalized by requring that the graded Fredholm modules be involutive and
balanced \cite[Proposition 8.3.12]{higson_analytic_2000}.
\end{remark}

Suppose that $A$ is a separable C*-algebra. Fix $p\geq 0$. If $x=\left( \rho
,H,F\right) $ is a $p$-multigraded Fredholm module over $A$, then one can
assign to it the $\left( p+2\right) $-multigraded Fredholm module $x^{\prime
}=\left( \rho \oplus \rho ^{\mathrm{op}},H\oplus H^{\mathrm{op}},F\oplus F^{%
\mathrm{op}}\right) $ where $H\oplus H^{\mathrm{op}}$ is $\left( p+2\right) $%
-multigraded by the operators $\varepsilon _{i}\oplus \varepsilon _{i}^{%
\mathrm{op}}$ for $1\leq i\leq p$ together with%
\begin{equation*}
\begin{bmatrix}
0 & I \\ 
-I & 0%
\end{bmatrix}%
\text{ and }%
\begin{bmatrix}
0 & iI \\ 
iI & 0%
\end{bmatrix}%
\text{.}
\end{equation*}%
When $p=-1$ one can define $x^{\prime }$ to be the to be the $1$-graded
Fredholm module $\left( \rho \oplus \rho ,H\oplus H,F\oplus F\right) $ where 
$H\oplus H$ is graded by $I_{H}\oplus \left( -I_{H}\right) $ and $1$%
-multigraded by the odd operator 
\begin{equation*}
\begin{bmatrix}
0 & iI \\ 
iI & 0%
\end{bmatrix}%
\text{.}
\end{equation*}%
This gives for $p\geq -1$ a natural definable group isomorphism $\mathrm{KK}%
_{-p}\left( A;\mathbb{C}\right) \rightarrow \mathrm{KK}_{-p-2}\left( A;%
\mathbb{C}\right) $ \cite[Proposition 8.2.13]{higson_analytic_2000}.\ From
this, Proposition \ref{Corollary:Kechris--MacDonald}, and Proposition \ref%
{Proposition:definable-Kasparov}, we obtain that, for $p\geq -1$, $\mathrm{KK%
}_{-p}\left( A;\mathbb{C}\right) $ is a definable group, naturally definably
isomorphic to $\mathrm{K}_{-p-2}\left( A\right) $.

\subsection{Relative Kasparov $\mathrm{K}$-homology}

The \emph{relative }Kasparov $\mathrm{K}$-homology groups are defined as
above, by replacing Fredholm modules with \emph{relative }Fredholm modules;
see \cite[Definition 8.5.1]{higson_analytic_2000}.

\begin{definition}
Suppose that $\left( A,J\right) $ is a separable C*-pair. An ungraded
Fredholm module over $\left( A,J\right) $ is a triple $\left( \rho
,H,F\right) $ where:

\begin{itemize}
\item $H$ is a separable Hilbert space $H_{d}$ for some $d\in \omega \cup
\left\{ \aleph _{0}\right\} $;

\item $\rho :A\rightarrow B\left( H\right) $ is a *-homomorphism;

\item $F\in B\left( H\right) $ satisfies $\left( F^{2}-1\right) \rho \left(
j\right) \equiv \left( F-F^{\ast }\right) \rho \left( j\right) \equiv \left[
F,\rho \left( a\right) \right] \equiv 0\mathrm{\ \mathrm{mod}}\ K\left(
H\right) $ for every $a\in A$ and $j\in J$.
\end{itemize}

A graded Fredholm module over $\left( A,J\right) $ is a triple $\left( \rho
,H,F\right) $ where:

\begin{itemize}
\item $H$ is a separable graded Hilbert space of the form $\left(
H_{d},\gamma \right) $ for some $d\in \omega \cup \left\{ \aleph
_{0}\right\} $ and some grading operator $\gamma $ on $H_{d}$;

\item $\rho :A\rightarrow B\left( H\right) $ is a *-homomorphism such that,
for every $a\in A$, $\rho \left( a\right) \in B\left( H\right) ^{+}$ is 
\emph{even};

\item $F\in B\left( H\right) $ is an \emph{odd }operator that satisfies $%
\left( F^{2}-1\right) \rho \left( j\right) \equiv \left( F-F^{\ast }\right)
\rho \left( j\right) \equiv \left[ F,\rho \left( a\right) \right] \equiv 0%
\mathrm{\ \mathrm{mod}}\ K\left( H\right) $ for every $a\in A$ and $j\in J$.
\end{itemize}
\end{definition}

As above, one can consider the definable group $\mathrm{KK}_{-1}\left( A,J;%
\mathbb{C}\right) $ whose elements are stable homotopy equivalence classes
of Fredholm modules over $\left( A,J\right) $. Considering \emph{graded}
Fredholm modules over $\left( A,J\right) $ one obtains the definable group $%
\mathrm{KK}_{0}\left( A,J;\mathbb{C}\right) $. These groups are called the
relative Kasparov $\mathrm{K}$-homology groups of the pair $\left(
A,J\right) $, and turn out to be naturally definably isomorphic to the
relative $\mathrm{K}$-homology groups $\mathrm{K}^{1}\left( A,J\right) $ and 
$\mathrm{K}^{0}\left( A,J\right) $; see \cite[Section 8.5]%
{higson_analytic_2000}. More generally, one can define $\mathrm{KK}%
_{-p}\left( A,J;\mathbb{C}\right) $ in terms of $p$-multigraded Fredholm
modules over $\left( A,J\right) $.

In the Kasparov picture, the excision isomorphism $\mathrm{KK}_{-p}\left(
A,J;\mathbb{C}\right) \rightarrow \mathrm{K}_{-p}\left( J;\mathbb{C}\right) $
is induced by the inclusion map from the set of $p$-multigraded Fredholm
modules over $\left( A,J\right) $ into the set of $p$-multigraded Fredholm
modules over $J$. (Notice that a Fredholm module over $\left( A,J\right) $
is, in particular, a Fredholm module over $J$.)

\subsection{\textrm{KK}$_{h}$-cycles and $\mathrm{K}$-homology}

Suppose that $A$ is a separable C*-algebra. Let $H$ be the separable
infinite-dimensional Hilbert space. A $\mathrm{KK}_{h}$-cycle for $A$ is a
pair $\left( \phi _{+},\phi _{-}\right) $ of *-homomorphisms $A\rightarrow
B\left( H\right) $ such that $\phi _{+}\left( a\right) \equiv \phi
_{-}\left( a\right) \mathrm{\ \mathrm{mod}}\ K\left( H\right) $ for every $%
a\in A$; see \cite[Definition 4.1.1]{jensen_elements_1991}. Define $\mathbb{F%
}\left( A;\mathbb{C}\right) $ to be the standard Borel space of $\mathrm{KK}%
_{h}$-cycles for $A$. The standard Borel structure on $\mathbb{F}\left( A;%
\mathbb{C}\right) $ is induced by the Polish topology obtained by setting $%
(\phi _{+}^{\left( i\right) },\phi _{-}^{\left( i\right) })\rightarrow
\left( \phi _{+},\phi _{-}\right) $ if and only if, for every $a\in A$, $%
\phi _{+}^{\left( i\right) }\left( a\right) \rightarrow \phi _{+}\left(
a\right) $ and $\phi _{-}^{\left( i\right) }\rightarrow \phi _{-}\left(
a\right) $ in the strong-* topology, and $(\phi _{+}^{\left( i\right) }-\phi
_{-}^{\left( i\right) })\left( a\right) \rightarrow (\phi _{+}-\phi
_{-})\left( a\right) $ in norm. We regard $\mathbb{F}\left( A;\mathbb{C}%
\right) $ as a Polish space with respect to such a topology.

Define $\sim $ to be the relation of homotopy for elements of the Polish
space $\mathbb{F}\left( A;\mathbb{C}\right) $. Thus, for $x,x^{\prime }\in 
\mathbb{F}\left( A;\mathbb{C}\right) $, $x\sim x^{\prime }$ if and only if
there exists a continuous path $\left( x_{t}\right) _{t\in \left[ 0,1\right]
}$ in $\mathbb{F}\left( A;\mathbb{C}\right) $ such that $x_{0}=x$ and $%
x_{1}=x^{\prime }$; see \cite[Definition 4.1.2]{jensen_elements_1991}. As
discussed in Section \ref{Subsection:multiplier} one can regard a strong-*
continuous path $\left( \phi _{t}\right) _{t\in \left[ 0,1\right] }$ of
*-homomorphisms $A\rightarrow B\left( H\right) $ as an element of the unit
ball of $C_{\beta }\left( [0,1],B\left( H\right) \right) =M\left( C\left(
[0,1],K\left( H\right) \right) \right) $. This allows one to regard the set
of such paths as a Polish space endowed with the strict topology on $\mathrm{%
\mathrm{Ball}}\left( C_{\beta }\left( [0,1],B\left( H\right) \right) \right) 
$, such that norm-continuous paths form a Borel subset by Corollary \ref%
{Corollary:norm-continuous-paths}. It can be deduced from these observations
that the relation $\sim $ of homotopy in $\mathbb{F}\left( A;\mathbb{C}%
\right) $ is an \emph{analytic} equivalence relation.

One lets $\mathrm{KK}_{h}\left( A;\mathbb{C}\right) $ be the semidefinable
set obtained as a quotient of the Polish space $\mathbb{F}\left( A;\mathbb{C}%
\right) $ by the analytic equivalence relation $\sim $ \cite[Definition 4.1.3%
]{jensen_elements_1991}. One has that $\mathrm{KK}_{h}\left( A;\mathbb{C}%
\right) $ is a semidefinable group, where the group operation is induced by
the Borel function $\mathbb{F}\left( A;\mathbb{C}\right) \times \mathbb{F}%
\left( A;\mathbb{C}\right) \rightarrow \mathbb{F}\left( A;\mathbb{C}\right) $%
, $\left( \left( \phi _{+},\phi _{-}\right) ,\left( \psi _{+},\psi
_{-}\right) \right) \mapsto \left( \mathrm{Ad}\left( V\right) \circ \left(
\phi _{+}\oplus \psi _{+}\right) ,\mathrm{Ad}\left( V\right) \circ \left(
\phi _{-}\oplus \psi _{-}\right) \right) $, where $V:H\oplus H\rightarrow H$
is a fixed surjective linear isometry, and the function mapping each element
to its additive inverse is induced by the Borel function $\mathrm{KK}%
_{h}\left( A;\mathbb{C}\right) \rightarrow \mathrm{KK}_{h}\left( A;\mathbb{C}%
\right) $, $\left( \phi _{+},\phi _{-}\right) \mapsto \left( \phi _{-},\phi
_{+}\right) $; see \cite[Proposition 4.1.5]{jensen_elements_1991}. The
trivial element of $\mathrm{KK}_{h}\left( A;\mathbb{C}\right) $ is the
homotopy class of $\left( 0,0\right) $. The assignment $A\mapsto \mathrm{KK}%
_{h}\left( A;\mathbb{C}\right) $ gives a contravariant functor from
separable C*-algebras to semidefinable groups.

Let $A$ be a separable C*-algebra. We now observe that $\mathrm{KK}%
_{h}\left( A;\mathbb{C}\right) $ is in fact a definable group, definably
isomorphic to $\mathrm{KK}_{0}\left( A;\mathbb{C}\right) $ and hence to $%
\mathrm{K}^{0}\left( A\right) $. There is a natural definable isomorphism $%
\Psi :\mathrm{KK}_{h}\left( A;\mathbb{C}\right) \rightarrow \mathrm{KK}%
_{0}\left( A;\mathbb{C}\right) $ defined as follows; see \cite[Theorem 4.1.8]%
{jensen_elements_1991}. Suppose that $\left( \phi _{+},\phi _{-}\right) \in 
\mathbb{F}\left( A;\mathbb{C}\right) $. Then one can consider the graded
Kasparov module over $A$ defined as $\left( \phi _{+}\oplus \phi
_{-},H\oplus H,F\right) $ where $H\oplus H$ is graded by $I_{H}\oplus \left(
-I_{H}\right) $ and%
\begin{equation*}
F=%
\begin{bmatrix}
0 & I_{H} \\ 
I_{H} & 0%
\end{bmatrix}%
\text{.}
\end{equation*}%
Then one sets $\Psi \left( \lbrack \phi _{+},\phi _{-}]\right) =[\phi
_{+}\oplus \phi _{-},H\oplus H,F]$.

We now observe that the inverse function $\Psi ^{-1}:\mathrm{KK}_{0}\left( A;%
\mathbb{C}\right) \rightarrow \mathrm{KK}_{h}\left( A;\mathbb{C}\right) $ is
also definable, as it follows from the proof of \cite[Theorem 4.1.8]%
{jensen_elements_1991}. Let $\left( \rho _{0},H_{0},F_{0}\right) $ be a
graded Kasparov module, which can be assumed to be involutive and balanced
by normalization and where we can assume $H_{0}$ to be infinite-dimensional;
see \cite[Proposition 8.3.12]{higson_analytic_2000}. Then we have that $%
H_{0}=H\oplus H$ is graded by $I_{H}\oplus \left( -I_{H}\right) $ and $\rho
_{0}^{+}=\rho _{0}^{-}$ are the same representation of $A$ on $H$, and%
\begin{equation*}
F_{0}=%
\begin{bmatrix}
0 & u^{\ast } \\ 
u & 0%
\end{bmatrix}%
\end{equation*}%
for some unitary $u\in B\left( H^{\prime }\right) $. Then by \cite[E 2.1.3]%
{jensen_elements_1991}, the Kasparov modules 
\begin{equation*}
\left( \rho _{0},H_{0},F_{0}\right) \text{\quad and\quad }\left( \left( 
\mathrm{Ad}(u)\circ \rho ^{+}\right) \oplus \rho ^{-},H\oplus H,F\right)
\end{equation*}
represent the same element of $\mathrm{KK}_{0}\left( A;\mathbb{C}\right) $,
where as above%
\begin{equation*}
F=%
\begin{bmatrix}
0 & I_{H} \\ 
I_{H} & 0%
\end{bmatrix}%
\text{.}
\end{equation*}%
One has that $\Psi ^{-1}[\rho _{0},H_{0},F_{0}]=[\left( \left( \mathrm{Ad}%
(u)\circ \rho ^{+}\right) ,\rho ^{-}\right) ]$. As the assignment $\left(
\rho _{0},H_{0},F_{0}\right) \mapsto \left( \left( \mathrm{Ad}(u)\circ \rho
^{+}\right) ,\rho ^{-}\right) $ is given by a Borel function, this shows
that the inverse $\Psi ^{-1}:\mathrm{KK}_{0}\left( A;\mathbb{C}\right)
\rightarrow \mathrm{KK}_{h}\left( A;\mathbb{C}\right) $ is definable. We
thus obtain the following.

\begin{proposition}
\label{Proposition:KKh}Let $A$ be a separable C*-algebra. Then $\mathrm{KK}%
_{h}\left( A;\mathbb{C}\right) $ is a definable group, naturally definably
isomorphic to $\mathrm{K}^{0}\left( A\right) $.
\end{proposition}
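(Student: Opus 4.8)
The plan is to exhibit the map $\Psi\colon \mathrm{KK}_{h}\left(A;\mathbb{C}\right)\to \mathrm{KK}_{0}\left(A;\mathbb{C}\right)$ described above as a natural isomorphism in the category of semidefinable groups, and then transfer definability from the target. Recall that $\Psi$ is induced by the Borel function
\begin{equation*}
\mathbb{F}\left(A;\mathbb{C}\right)\to \mathrm{F}_{0}\left(A\right),\qquad \left(\phi_{+},\phi_{-}\right)\mapsto \Bigl(\phi_{+}\oplus\phi_{-},\,H\oplus H,\,\bigl[\begin{smallmatrix}0 & I_{H}\\ I_{H} & 0\end{smallmatrix}\bigr]\Bigr),
\end{equation*}
so $\Psi$ is a definable group homomorphism, and by \cite[Theorem 4.1.8]{jensen_elements_1991} it is bijective. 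By Proposition~\ref{Proposition:definable-Kasparov}, $\mathrm{KK}_{0}\left(A;\mathbb{C}\right)$ is a definable group, naturally definably isomorphic to $\mathrm{K}^{0}\left(A\right)$. Thus, by Corollary~\ref{Corollary:Kechris--MacDonald} applied to the definable bijection $\Psi^{-1}$ from the definable group $\mathrm{KK}_{0}\left(A;\mathbb{C}\right)$ to the semidefinable group $\mathrm{KK}_{h}\left(A;\mathbb{C}\right)$, it will suffice to verify that $\Psi^{-1}$ is definable; then $\mathrm{KK}_{h}\left(A;\mathbb{C}\right)$ is a definable group, $\Psi$ is an isomorphism in $\mathbf{DSet}$, and composing with the definable isomorphism $\mathrm{KK}_{0}\left(A;\mathbb{C}\right)\cong\mathrm{K}^{0}\left(A\right)$ gives the claim.

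To see that $\Psi^{-1}$ is definable I would follow the normalization argument from the proof of \cite[Theorem 4.1.8]{jensen_elements_1991} and check that each step is Borel. First, using the constructions underlying \cite[Lemma 8.3.5 and Proposition 8.3.12]{higson_analytic_2000}, one produces a Borel map $\mathrm{F}_{0}\left(A\right)\to \mathrm{F}_{0}\left(A\right)$ sending a graded Fredholm module to a stably homotopic one that is involutive and balanced, with infinite-dimensional even and odd parts; these normalization procedures are explicit (continuous functional calculus, direct sums with degenerate modules, Borel selections of auxiliary isometries and unitaries as provided by Lemma~\ref{Lemma:Voiculescu} and Lemma~\ref{Lemma:trivial-representations}), hence can be implemented by Borel functions. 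For an involutive, balanced module $\left(\rho_{0},H_{0},F_{0}\right)$ with $H_{0}=H\oplus H$ graded by $I_{H}\oplus\left(-I_{H}\right)$ and $\rho_{0}^{+}=\rho_{0}^{-}$, the operator $F_{0}$ has the form $\bigl[\begin{smallmatrix}0 & u^{\ast}\\ u & 0\end{smallmatrix}\bigr]$ for a unitary $u\in U\left(H\right)$, and $u$ depends in a Borel fashion on $F_{0}$ (it is the $(2,1)$-entry, or one extracts it from that entry via Lemma~\ref{Lemma:polar}). Then $\left(\rho_{0},H_{0},F_{0}\right)\mapsto\bigl(\mathrm{Ad}(u)\circ\rho_{0}^{+},\,\rho_{0}^{-}\bigr)\in\mathbb{F}\left(A;\mathbb{C}\right)$ is Borel and, by \cite[E 2.1.3]{jensen_elements_1991}, is a lift of $\Psi^{-1}$.

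Finally, naturality of $\Psi$ (equivalently of $\Psi^{-1}$) with respect to $*$-homomorphisms $A\to B$ follows from the explicit formulas above, exactly as in the non-definable setting of \cite{jensen_elements_1991}; combined with the naturality in Proposition~\ref{Proposition:definable-Kasparov}, this yields a natural definable isomorphism $\mathrm{KK}_{h}\left(A;\mathbb{C}\right)\cong\mathrm{K}^{0}\left(A\right)$. The only genuinely delicate point is the Borel measurability of the normalization step producing involutive, balanced representatives; everything else is a routine verification that the maps already written down are Borel. I expect that normalization step to be the main obstacle, to be handled by inspecting that the proofs of the cited lemmas are constructive and uniform in the module.
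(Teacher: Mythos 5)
Your proposal is correct and follows essentially the same route as the paper: exhibit $\Psi$ as a definable bijection, obtain definability of $\Psi^{-1}$ by normalizing to involutive balanced modules and extracting the unitary $u$ in a Borel fashion (citing \cite[E 2.1.3]{jensen_elements_1991}), and then transfer definability from $\mathrm{KK}_{0}\left( A;\mathbb{C}\right) \cong \mathrm{K}^{0}\left( A\right) $ via Corollary \ref{Corollary:Kechris--MacDonald}. The paper likewise treats the normalization step as routine, deferring to \cite[Lemma 8.3.5, Proposition 8.3.12]{higson_analytic_2000} as in Remark \ref{Remark:normalization}, so your flagged ``delicate point'' is handled there exactly as you anticipate.
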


\begin{proof}
By the above discussion, the natural definable homomorphism $\mathrm{KK}%
_{h}\left( A;\mathbb{C}\right) \rightarrow \mathrm{KK}_{0}\left( A;\mathbb{C}%
\right) $ is an isomorphism in the category of semidefinable groups.
Therefore, $\mathrm{KK}_{h}\left( A;\mathbb{C}\right) $ is also a definable
group, naturally isomorphic to $\mathrm{KK}_{0}\left( A;\mathbb{C}\right) $.
As in turn $\mathrm{KK}_{0}\left( A;\mathbb{C}\right) $ is naturally
definably isomorphic to $\mathrm{K}^{0}\left( A\right) $, the conclusion
follows.
\end{proof}

\subsection{Cuntz's $\mathrm{K}$-homology}

Suppose that $A,B$ are separable C*-algebras. Let $\mathrm{Hom}\left(
A,B\right) $ be the set of *-homomorphisms $A\rightarrow B$.\ Then $\mathrm{%
Hom}\left( A,B\right) $ is a Polish space when endowed with the topology of
pointwise norm-convergence. Two *-homomorphisms $\phi ,\phi ^{\prime
}:A\rightarrow B$ are homotopic, in which case we write $\phi \sim \phi
^{\prime }$, if they belong to the same path-connected component of $\mathrm{%
Hom}\left( A,B\right) $. Thus, two *-homomorphism $\phi ,\phi ^{\prime
}:A\rightarrow B$ satisfy $\phi \sim \phi ^{\prime }$ if and only if there
exists a continuous path $\left( \lambda _{t}\right) _{t\in \left[ 0,1\right]
}$ in \textrm{Hom}$\left( A,B\right) $ such that $\lambda _{0}=\phi $ and $%
\lambda _{1}=\phi ^{\prime }$; see \cite[Definition 1.3.10]%
{jensen_elements_1991}. Such a path $\left( \lambda _{t}\right) _{t\in \left[
0,1\right] }$ can be thought of as a *-homomorphism $\lambda :A\rightarrow
C([0,1],B)$, where $C([0,1],B)\cong C\left( [0,1]\right) \otimes B$ is the
C*-algebra of continuous functions $[0,1]\rightarrow B$. This shows that the
relation $\sim $ of homotopy in $\mathrm{Hom}\left( A,B\right) $ is an
analytic equivalence relation. We let $[A,B]$ be the semidefinable set of
homotopy classes of *-homomorphisms $A\rightarrow B$.

Recall that a separable C*-algebra $B$ is stable if $B\otimes K\left(
H\right) $ is *-isomorphic to $B$. Suppose in the following that $B$ is
stable. Thus we have that $M\left( B\right) \otimes M\left( K\left( H\right)
\right) \subseteq M\left( B\otimes K\left( H\right) \right) \cong M\left(
B\right) $. This implies that one can choose isometries $w_{0},w_{1}\in
M\left( B\right) $ satisfying $w_{0}w_{0}^{\ast }+w_{1}w_{1}^{\ast }=1$ and $%
w_{i}^{\ast }w_{j}=0$ for $i,j\in \left\{ 0,1\right\} $ distinct. (This is
equivalent to the assertion that $w_{0},w_{1}$ generate inside $M\left(
B\right) $ a copy of the Cuntz algebra $\mathcal{O}_{2}$.) One can then
define a *-isomorphism $\theta :M_{2}\left( B\right) \rightarrow B$, $%
x\mapsto w_{0}xw_{0}^{\ast }+w_{1}xw_{1}^{\ast }$. A *-isomorphism of this
form is called \emph{inner}; see \cite[Definition 1.3.8]%
{jensen_elements_1991}. Any two inner *-isomorphisms $\theta ,\theta
^{\prime }:M_{2}\left( B\right) \rightarrow B$ are unitary equivalent,
namely there exists a unitary $u\in M\left( B\right) $ such that $\mathrm{Ad}%
(u)\circ \theta =\theta ^{\prime }$ \cite[Lemma 1.3.9]{jensen_elements_1991}.

Under the assumption that $B$ is stable, one can endow the semidefinable set 
$\left[ A,B\right] $ with the structure of semidefinable semigroup. The
operation on $\left[ A,B\right] $ is induced by the Borel function $\mathrm{%
Hom}\left( A,B\right) ^{2}\rightarrow \mathrm{Hom}\left( A,B\right) $, $%
\left( \phi ,\psi \right) \mapsto \theta \circ \left( \phi \oplus \psi
\right) $, where $\theta $ is a fixed inner *-isomorphism $M_{2}\left(
B\right) \rightarrow B$; see \cite[Lemma 1.3.12]{jensen_elements_1991}.\ The
trivial element in $\left[ A,B\right] $ is the homotopy class of the zero
*-homomorphism. Furthermore, the argument of \cite[E 4.1.4]%
{jensen_elements_1991} shows that $\left[ A,B\right] $ is isomorphic to $%
\left[ K\left( H\right) \otimes A,B\right] $ in the category of
semidefinable semigroups.

Suppose that $A$ is a separable C*-algebra. Define $QA$ to be the separable
C*-algebra $A\ast A$, where $A\ast A$ denotes the \emph{free product }of $A$
with itself. We let $i,\overline{i}$ be the two canonical inclusions of $A$
inside $QA$. Let $qA$ be the closed two-sided ideal of $QA$ generated by the
elements of the form $i\left( a\right) -\overline{i}\left( a\right) $ for $%
a\in A$; see \cite[Definition 5.1.1]{jensen_elements_1991}.

If $B$ is a separable C*-algebra, and $\phi ,\psi :A\rightarrow B$ are
*-homomorphism, then there is a unique *-homomorphism $Q\left( \phi ,\psi
\right) :QA\rightarrow B$ such that $Q\left( \phi ,\psi \right) \circ i=\phi 
$ and $Q\left( \phi ,\psi \right) \circ \overline{i}=\psi $. The restriction
of $Q\left( \phi ,\psi \right) $ to $qA$ is denoted by $q\left( \phi ,\psi
\right) $. One has that the range of $q\left( \phi ,\psi \right) $ is
contained in an ideal $J$ of $B$ if and only if $\phi \left( a\right) \equiv
\psi \left( a\right) \mathrm{\ \mathrm{mod}}\ J$ for every $a\in A$, in
which case $q\left( \phi ,\psi \right) \in \mathrm{Hom}\left( qA,J\right) $.
One has that $qA$ is the kernel of the map $Q\left( \mathrm{id}_{A},\mathrm{%
id}_{A}\right) :QA\rightarrow A$; see \cite[Lemma 5.1.2]%
{jensen_elements_1991}.

If $B$ is a separable C*-algebra, then the semidefinable semigroup $\left[
qA,K\left( H\right) \otimes B\right] $ is in fact a semidefinable group,
where the function that maps each element to its additive inverse is induced
by the Borel function 
\begin{equation*}
\mathrm{Hom}\left( qA,K\left( H\right) \otimes B\right) \rightarrow \mathrm{%
Hom}\left( qA,K\left( H\right) \otimes B\right) ,\phi \mapsto -\phi \text{;}
\end{equation*}%
\cite[Theorem 5.1.6]{jensen_elements_1991}. The proof of \cite[Theorem 5.1.12%
]{jensen_elements_1991} shows that $\left[ qA,K\left( H\right) \otimes B%
\right] $ is isomorphic in the category of semidefinable groups to $\left[
qA,K\left( H\right) \otimes qB\right] $. In turn, $\left[ qA,K\left(
H\right) \otimes qB\right] $ is isomorphic to $\left[ K\left( H\right)
\otimes qA,K\left( H\right) \otimes qB\right] $ in the category of
semidefinable groups by \cite[E 4.1.4]{jensen_elements_1991}.

Observe that, for a fixed C*-algebra $B$, the assignment $A\mapsto \left[
qA,K\left( H\right) \otimes B\right] $ is a contravariant functor from
C*-algebras to semidefinable groups. Suppose that $A$ is a separable
C*-algebra. Then there is a natural definable homomorphism $S:\mathrm{KK}%
_{h}\left( A;\mathbb{C}\right) \rightarrow \left[ qA,K\left( H\right) \right]
$ defined by setting $S\left( [\phi _{+},\phi _{-}]\right) =\left[ \psi %
\right] $ where $\psi =q\left( \phi _{+},\phi _{-}\right) \in \mathrm{Hom}%
\left( qA,K\left( H\right) \right) $. One has that in fact $S$ is a group
isomorphism \cite[Theorem 5.2.4]{jensen_elements_1991}. Therefore, we obtain
from Proposition \ref{Proposition:KKh} and Corollary \ref%
{Corollary:Kechris--MacDonald} the following.

\begin{proposition}
Suppose that $A$ is a separable C*-algebra.\ Then $\left[ qA,K\left(
H\right) \right] $ is a definable group, naturally definably isomorphic to $%
\mathrm{K}^{0}\left( A\right) $.
\end{proposition}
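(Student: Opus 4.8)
The plan is to follow the same template used for the preceding propositions in this section: exhibit $[qA,K(H)]$ as the quotient of a standard Borel space by an analytic equivalence relation (already done in the discussion above, where it was shown to be a semidefinable group), produce a natural definable bijection onto it from a group already known to be definable, and then invoke Corollary \ref{Corollary:Kechris--MacDonald} to upgrade ``semidefinable'' to ``definable''.

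Concretely, first I would verify that the natural map $S\colon \mathrm{KK}_h(A;\mathbb{C})\to [qA,K(H)]$, $[\phi_+,\phi_-]\mapsto [q(\phi_+,\phi_-)]$, is definable. It is induced by the assignment $\mathbb{F}(A;\mathbb{C})\to \mathrm{Hom}(qA,K(H))$, $(\phi_+,\phi_-)\mapsto q(\phi_+,\phi_-)$, and this assignment is Borel: the value of $q(\phi_+,\phi_-)$ on a word in the generators $i(a)-\overline{i}(a)$ of $qA$ is a fixed $*$-polynomial expression in the operators $\phi_+(a)-\phi_-(a)$ and $\phi_\pm(a)$, so convergence of a sequence $(\phi_+^{(n)},\phi_-^{(n)})$ in the Polish topology of $\mathbb{F}(A;\mathbb{C})$ (strong-$*$ convergence of the two components, norm-convergence of their difference) forces pointwise norm-convergence of $q(\phi_+^{(n)},\phi_-^{(n)})$ on a countable dense $*$-subalgebra of $qA$, which suffices. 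By \cite[Theorem 5.2.4]{jensen_elements_1991}, $S$ is moreover a group isomorphism, hence in particular a definable bijection.

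Then I would note that $\mathrm{KK}_h(A;\mathbb{C})$ is a definable group by Proposition \ref{Proposition:KKh}, and that $[qA,K(H)]$ is a semidefinable group by the discussion preceding the statement (taking $B=\mathbb{C}$ in $[qA,K(H)\otimes B]$). Applying Corollary \ref{Corollary:Kechris--MacDonald} to the definable bijection $S$ would then yield that $[qA,K(H)]$ is a definable group and that $S$ is an isomorphism in $\mathbf{DSet}$. Composing $S^{-1}$ with the natural definable isomorphism $\mathrm{KK}_h(A;\mathbb{C})\cong \mathrm{K}^0(A)$ from Proposition \ref{Proposition:KKh} gives the desired definable isomorphism $[qA,K(H)]\cong \mathrm{K}^0(A)$. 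Naturality in $A$ follows from functoriality of $A\mapsto qA$ and of the free product construction, together with the already-established naturality of $\Psi$ and of $\mathrm{KK}_h(-;\mathbb{C})\cong \mathrm{K}^0(-)$.

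The only step requiring genuine attention is the Borel measurability of the lift $(\phi_+,\phi_-)\mapsto q(\phi_+,\phi_-)$, i.e.\ checking that the universal quotient construction defining $qA$ and the induced $*$-homomorphism out of it can be carried out compatibly with the Polish topology on $\mathrm{Hom}(qA,K(H))$; once this is in place, the statement is a purely formal consequence of \cite[Theorem 5.2.4]{jensen_elements_1991}, Proposition \ref{Proposition:KKh}, and Corollary \ref{Corollary:Kechris--MacDonald}.
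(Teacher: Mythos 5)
Your proposal is correct and follows essentially the same route as the paper: the paper likewise defines the natural definable homomorphism $S:\mathrm{KK}_{h}\left( A;\mathbb{C}\right) \rightarrow \left[ qA,K\left( H\right) \right] $ via $[\phi _{+},\phi _{-}]\mapsto \left[ q\left( \phi _{+},\phi _{-}\right) \right] $, cites \cite[Theorem 5.2.4]{jensen_elements_1991} for bijectivity, and then applies Proposition \ref{Proposition:KKh} together with Corollary \ref{Corollary:Kechris--MacDonald}. Your additional check of Borel measurability of the lift $\left( \phi _{+},\phi _{-}\right) \mapsto q\left( \phi _{+},\phi _{-}\right) $ is a point the paper leaves implicit, and your argument for it (norm-convergence of products in which at least one factor converges in norm to a compact operator while the others converge strong-$*$ on bounded sets) is sound.
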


This description of $\mathrm{K}$-homology is called Cuntz's picture, as it
was introduced by Cuntz in \cite{cuntz_new_1987}; see also \cite[Section 17.6%
]{blackadar_theory_1986} and \cite%
{cuntz_generalized_1983,cuntz_theory_1984,zekri_new_1989}. Using the Cuntz
picture, one can easily define the more general Kasparov \textrm{KK}-groups $%
\mathrm{KK}_{0}\left( A,B\right) $ for separable C*-algebras $A,B$, by
setting%
\begin{equation*}
\mathrm{KK}_{0}\left( A;B\right) =\left[ qA,K\left( H\right) \otimes B\right]
\cong \left[ K\left( H\right) \otimes qA,K\left( H\right) \otimes qB\right] 
\text{.}
\end{equation*}%
These are semidefinable groups, although we do not know whether they are
definable groups when $B$ is an arbitrary separable C*-algebra. In
particular, one has that $\mathrm{KK}_{0}\left( A;\mathbb{C}\right) \cong 
\mathrm{K}^{0}\left( A\right) $ and $\mathrm{KK}_{0}\left( A,C_{0}\left( 
\mathbb{R}\right) \right) \cong \mathrm{KK}_{0}\left( SA;\mathbb{C}\right)
\cong \mathrm{K}^{1}\left( A\right) $. The $\mathrm{K}$-theory groups are
also recovered as particular instances of the $\mathrm{KK}$-groups, as $%
\mathrm{KK}_{0}\left( \mathbb{C};A\right) \cong \mathrm{K}^{0}\left(
A\right) $ and $\mathrm{KK}_{0}\left( C_{0}\left( \mathbb{R}\right)
;A\right) \cong \mathrm{KK}_{0}\left( \mathbb{C};SA\right) \cong \mathrm{K}%
^{1}\left( A\right) $.

Given separable C*-algebras $A,B,C$, composition of *-homomorphisms $K\left(
H\right) \otimes qA\rightarrow K\left( H\right) \otimes qB$ and $K\left(
H\right) \otimes qB\rightarrow K\left( H\right) \otimes qC$ induces a
definable bilinear pairing (Kasparov product)%
\begin{equation*}
\mathrm{KK}_{0}\left( A;B\right) \times \mathrm{KK}_{0}\left( B;C\right)
\rightarrow \mathrm{KK}_{0}\left( A;C\right) \text{.}
\end{equation*}%
In particular, $\mathrm{KK}_{0}\left( A;A\right) $ is a (semidefinable)
ring, with identity element $\boldsymbol{1}_{A}$ corresponding to the
identity map of $K\left( H\right) \otimes qA$. The $\mathrm{KK}$-category of
C*-algebras is the category enriched over the category of (semidefinable)
abelian groups that has separable C*-algebras as objects and $\mathrm{KK}$%
-groups as hom-sets. Two separable C*-algebras are $\mathrm{KK}$-equivalent
if they are isomorphic in the $\mathrm{KK}$-category of C*-algebras.

By way of the Kasparov product and the natural isomorphisms $\mathrm{K}%
^{0}\left( A\right) \cong \mathrm{KK}_{0}\left( A;\mathbb{C}\right) $ and $%
\mathrm{K}^{1}\left( A\right) \cong \mathrm{KK}_{0}\left( SA;\mathbb{C}%
\right) $, one can regard $\mathrm{K}$-homology as a contravariant functor
from the \textrm{KK}-category of separable C*-algebras to the category of
definable groups, and $\mathrm{K}$-theory as a covariant functor from the 
\textrm{KK}-category of separable C*-algebras to the category of countable
groups. In particular, $\mathrm{KK}$-equivalent C*-algebras have definably
isomorphic $\mathrm{K}$-homology groups, and isomorphic $\mathrm{K}$-theory
groups.

\section{Properties of definable $\mathrm{K}$-homology\label%
{Section:properties}}

In this section we consider several properties of definable $\mathrm{K}$%
-homology, which can be seen as definable versions of the properties of an
abstract cohomology theory in the sense of \cite%
{schochet_topologicalIII_1984} that is C*-stable in the sense of \cite%
{cuntz_new_1987}.

\subsection{Products}

Suppose that $\left( X_{i}\right) _{i\in \omega }$ is a sequence of
semidefinable sets $X_{i}=\hat{X}_{i}/E_{i}$. Then the product $\prod_{i\in
\omega }X_{i}$ is the semidefinable set $\hat{X}/E$ where $\hat{X}%
=\prod_{i\in \omega }\hat{X}_{i}$ and $E$ is the (analytic) equivalence
relation on $\hat{X}$ defined by setting $\left( x_{i}\right) E\left(
y_{i}\right) $ if and only if $\forall i\in \omega $, $x_{i}E_{i}y_{i}$. If,
for every $i\in \omega $, $G_{i}$ is a semidefinable group, then $%
\prod_{i\in \omega }G_{i}$ is a semidefinable group when endowed with the
product group operation.

Suppose that $\left( A_{i}\right) _{i\in \omega }$ is a sequence of
separable C*-algebra. Define the \emph{direct sum} $\bigoplus_{i\in \omega
}A_{i}$ to be the C*-algebra $A$ consisting of the sequences $\left(
a_{i}\right) _{i\in \omega }\in \prod_{i\in \omega }A_{i}$ such that $%
\left\Vert a_{i}\right\Vert \rightarrow 0$; see \cite[Definition 7.4.1]%
{higson_analytic_2000}. If $B$ is a separable C*-algebra, then the canonical
maps $A_{i}\rightarrow A$ induce an isomorphism of Polish spaces%
\begin{equation*}
\mathrm{Hom}\left( A,B\right) \rightarrow \prod_{i\in \omega }\mathrm{Hom}%
\left( A_{i},B\right) \text{.}
\end{equation*}%
When $A_{i}$ is commutative with spectrum $X_{i}$, then $A$ is commutative
with spectrum the disjoint union of $X_{i}$ for $i\in \omega $. The
following result can be seen as a noncommutative version of the Cluster
Axiom for a homology theory for pointed compact spaces from \cite%
{milnor_steenrod_1995}.

\begin{proposition}
\label{Proposition:product}Suppose that $\left( A_{i}\right) _{i\in \omega }$
is a sequence of separable C*-algebras, and set $A=\bigoplus_{i\in \omega
}A_{i}$. Fix $p\in \left\{ 0,1\right\} $. Then $\prod_{i\in \omega }\mathrm{K%
}^{p}\left( A_{i}\right) $ is a definable group. Furthermore the canonical
maps $A_{i}\rightarrow A$ for $i\in \omega $ induce a natural definable
isomorphism%
\begin{equation*}
\mathrm{K}^{p}\left( A\right) \rightarrow \prod_{i\in \omega }\mathrm{K}%
^{p}\left( A_{i}\right) \text{.}
\end{equation*}
\end{proposition}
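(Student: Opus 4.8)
The plan is to exhibit the map in the statement as a definable bijection and then invoke Corollary \ref{Corollary:Kechris--MacDonald}. I will work in the Kasparov picture: for $p\in\{0,1\}$ identify $\mathrm{K}^p(A)$, definably, with $\mathrm{KK}_p(A;\mathbb{C})$, the quotient of the standard Borel space $\mathrm{F}_{*}(A)$ of Fredholm modules over $A$ by the analytic relation of stable homotopy (Proposition \ref{Proposition:definable-Kasparov}); likewise $\prod_i\mathrm{K}^p(A_i)$ is, by the conventions at the start of this section, the semidefinable group presented as $\prod_i\mathrm{F}_{*}(A_i)$ modulo the product of the stable homotopy relations. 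Each inclusion $\iota_i\colon A_i\to A$ induces the functorial definable homomorphism $\iota_i^{*}\colon\mathrm{K}^p(A)\to\mathrm{K}^p(A_i)$, with Borel lift $(F,\rho,H)\mapsto(F,\rho\circ\iota_i,H)$; assembling these coordinatewise gives a Borel map of covers $\mathrm{F}_{*}(A)\to\prod_i\mathrm{F}_{*}(A_i)$, hence a definable homomorphism $\gamma=(\iota_i^{*})_i\colon\mathrm{K}^p(A)\to\prod_i\mathrm{K}^p(A_i)$, which is precisely the canonical map of the statement. By Corollary \ref{Corollary:Kechris--MacDonald}, once $\gamma$ is shown to be bijective it follows that $\prod_i\mathrm{K}^p(A_i)$ is a definable group and that $\gamma$ is an isomorphism of definable groups; naturality is automatic since $\gamma$ is built from the $\iota_i$.

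It remains to see that $\gamma$ is a bijection, which is the classical countable additivity of $\mathrm{K}$-homology over $c_0$-direct sums and is the only substantive point. The mechanism is the following observation: if $x_j=(F_j,\rho_j,H)$ is a Fredholm module over $A_j$ for each $j$, then $\bigoplus_j x_j:=(\bigoplus_jF_j,\bigoplus_j\rho_j,\bigoplus_jH)$ — where $\bigoplus_j\rho_j$ is the representation $(a_j)_j\mapsto\bigoplus_j\rho_j(a_j)$ of $A=\bigoplus_jA_j$ — is a Fredholm module over $A$. Indeed, for $a=(a_j)_j\in A$ one has $\|a_j\|\to0$, so each of $(F_j^2-1)\rho_j(a_j)$, $(F_j-F_j^{*})\rho_j(a_j)$, $[F_j,\rho_j(a_j)]$ is compact of norm $O(\|a_j\|)$, and a block-diagonal operator whose blocks are compact of norm tending to $0$ is compact; the same estimate shows that homotopies over the individual $A_j$ assemble to a homotopy over $A$ with uniformly controlled modulus, and handles direct sums with degenerate modules, so $(x_j)_j\mapsto[\bigoplus_jx_j]$ descends to a homomorphism $\beta\colon\prod_j\mathrm{K}^p(A_j)\to\mathrm{K}^p(A)$. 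Restricting $\bigoplus_j\rho_j$ along $\iota_i$ yields $\rho_i$ on the $i$-th summand and $0$ elsewhere, so $\iota_i^{*}([\bigoplus_jx_j])=[x_i]$ (the off-diagonal summands are degenerate, hence neutral), i.e.\ $\gamma\circ\beta=\mathrm{id}$. For $\beta\circ\gamma=\mathrm{id}$ one checks that an arbitrary Fredholm module $(F,\rho,H)$ over $A$ is stably homotopic to $\bigoplus_i\iota_i^{*}(F,\rho,H)$: since $A_i$ and $A_j$ are orthogonal ideals in $A$, the representations $\rho|_{A_i}$ have pairwise orthogonal supports, so $\rho$ already decomposes as a direct sum along $A=\bigoplus_iA_i$ up to a degenerate (zero) summand, and $F$ may be moved by an operator homotopy to respect this decomposition; this is the additivity argument of Higson--Roe, see \cite[Chapter 8]{higson_analytic_2000} and \cite{schochet_topologicalIII_1984}. (One can also see $\prod_i\mathrm{K}^p(A_i)$ is definable independently of $\gamma$: each $\mathrm{K}^p(A_i)$ is definably isomorphic to a quotient of a standard Borel space by the orbit equivalence relation of a continuous Polish group action — the conjugation action underlying $\mathrm{K}_0^{\mathrm{A}}(\mathfrak{D}_{\rho_i}(A_i))$, resp.\ its suspension for $p=0$ — and a countable product of such is again of this form, so Corollary \ref{Corollary:definable-group} applies.)

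The main obstacle is the identity $\beta\circ\gamma=\mathrm{id}$, i.e.\ that a Fredholm module over $\bigoplus_iA_i$ is stably homotopic to the assembly of its restrictions; the remaining ingredients — the well-definedness of $\beta$, the easy identity $\gamma\circ\beta=\mathrm{id}$, the definability of $\gamma$, and the passage from ``definable bijection'' to ``isomorphism of definable groups'' via Corollary \ref{Corollary:Kechris--MacDonald} — are routine. If one prefers not to reproduce the classical absorption argument, it suffices to cite that $\mathrm{K}$-homology is countably additive to get that $\gamma$ is a bijection, and then conclude, using its definability, by Corollary \ref{Corollary:Kechris--MacDonald} alone.
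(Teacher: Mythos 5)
Your overall architecture --- exhibit the canonical map as a definable homomorphism, prove it is a bijection, and then invoke Corollary \ref{Corollary:Kechris--MacDonald} to upgrade it to a definable isomorphism and to get definability of the product for free --- is exactly the paper's, and your parenthetical observation that $\prod_i\mathrm{K}^p(A_i)$ can also be seen to be definable directly via Corollary \ref{Corollary:definable-group} is correct. The route through the bijection, however, is genuinely different. The paper reduces to $p=0$ by suspension and then replaces $\mathrm{K}^0$ by $\mathrm{KK}_h$ (Proposition \ref{Proposition:KKh}): in that picture a cycle is a pair of \emph{*-homomorphisms} $(\phi_+,\phi_-)$, so the restriction map $\mathbb{F}(A;\mathbb{C})\rightarrow\prod_i\mathbb{F}(A_i;\mathbb{C})$ is already an isomorphism at the level of Polish spaces of cycles, before any quotient is taken. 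The point of that choice is that *-homomorphisms are automatically contractive, so a sequence of cycles (or of homotopies of cycles) over the $A_i$ is automatically equicontinuous and assembles to a cycle (or homotopy) over $\bigoplus_iA_i$; the countable additivity is thus built into the model and nothing has to be re-proved.

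This is precisely where your main-line argument has a gap. For the assembly map $\beta$ to descend to $\prod_j\mathrm{K}^p(A_j)$ you must show that if $x_j$ and $x_j'$ are stably homotopic for every $j$, then so are $\bigoplus_jx_j$ and $\bigoplus_jx_j'$; but the witnessing operator homotopies $(F_t^{(j)})_{t\in[0,1]}$ carry no uniform modulus of norm-continuity in $j$, so $t\mapsto\bigoplus_jF_t^{(j)}$ need not be norm-continuous, and without first normalizing (Remark \ref{Remark:normalization}) the operators $F_j$ need not even be uniformly bounded, so $\bigoplus_jF_j$ need not lie in $B(\bigoplus_jH_j)$. Your phrase ``with uniformly controlled modulus'' asserts exactly the missing equicontinuity rather than proving it, and the absorption step for $\beta\circ\gamma=\mathrm{id}$ (``$F$ may be moved by an operator homotopy to respect this decomposition'') is likewise only gestured at; these are the actual content of the cluster axiom, not routine estimates. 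That said, your fallback is sound: once one cites the classical countable additivity of $\mathrm{K}$-homology for $c_0$-sums (\cite[Chapter 7]{higson_analytic_2000}, \cite{milnor_steenrod_1995}) to get that $\gamma$ is an algebraic bijection, the definability of $\gamma$ (which you do establish, the coordinatewise lift being Borel into the product) together with Corollary \ref{Corollary:Kechris--MacDonald} completes the proof. Either adopt that fallback explicitly, or switch to the $\mathrm{KK}_h$ picture, where the assembly of cycles and homotopies is genuinely automatic.
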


\begin{proof}
Since $\mathrm{K}^{p}\left( A\right) $ is a definable group, it suffices to
prove the second assertion. After replacing $A$ with its suspension, it
suffices to consider the case when $p=0$. In this case, we can replace $%
\mathrm{K}^{0}$ with $\mathrm{KK}_{h}$ by Proposition \ref{Proposition:KKh}.
Recall that we let $\mathbb{F}\left( A;\mathbb{C}\right) $ be the space of $%
\mathrm{KK}_{h}$-cycles for $A$. The canonical maps $A_{i}\rightarrow A$
induce an isomorphism of Polish spaces%
\begin{equation*}
\mathbb{F}\left( A;\mathbb{C}\right) \rightarrow \prod_{i\in \omega }\mathbb{%
F}\left( A_{i};\mathbb{C}\right) \text{.}
\end{equation*}%
In turn, this induces a definable isomorphism of the spaces of homotopy
classes.%
\begin{equation*}
\mathrm{KK}_{h}\left( A\right) \rightarrow \prod_{i\in \omega }\mathrm{KK}%
_{h}\left( A_{i}\right) \text{.}
\end{equation*}%
This concludes the proof.
\end{proof}

\subsection{Homotopy-invariance}

Suppose that $A,B$ are separable C*-algebra. Recall that $\mathrm{Hom}\left(
A,B\right) $ is a Polish space when endowed with the topology of pairwise
convergence. Thus, $\alpha ,\beta \in \mathrm{Hom}\left( A,B\right) $ if
there exists a path in $\mathrm{Hom}\left( A,B\right) $ from $\alpha $ to $%
\beta $; see \cite[Definition 4.4.1]{higson_analytic_2000}. This can be
thought of as an element $\gamma $ of $\mathrm{Hom}\left( A,IB\right) $ such
that $\mathrm{ev}_{0}\circ \gamma =\alpha $ and $\mathrm{ev}_{1}\circ \gamma
=\beta $ where $IB=C\left( [0,1],B\right) $ and $\mathrm{ev}%
_{t}:IB\rightarrow B$, $f\mapsto f\left( t\right) $ for $t\in \left[ 0,1%
\right] $. We let $\left[ A,B\right] $ be the semidefinable set of homotopy
classes of *-homomorphisms $A\rightarrow B$. The homotopy category of
C*-algebras has separable C*-algebras as objects and homotopy classes of
*-homomorphisms as morphisms. Two C*-algebras are homotopy equivalent if
they are isomorphic in the homotopy category of C*-algebras \cite[Definition
4.4.7]{higson_analytic_2000}.

\begin{proposition}
For $p\in \left\{ 0,1\right\} $, the $\mathrm{K}$-homology functor $\mathrm{K%
}^{p}\left( -\right) $ from separable C*-algebras is homotopy-invariant.
\end{proposition}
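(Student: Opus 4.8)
The plan is to reduce everything to the corresponding statement for one of the already-established pictures of $\mathrm{K}$-homology, where homotopy-invariance is either visibly built into the definition or follows from a known functorial computation. The cleanest route is via the Cuntz picture: by the last proposition of Section~\ref{Section:Kasparov}, for $p=0$ there is a natural definable isomorphism $\mathrm{K}^{0}\left( A\right) \cong \left[ qA,K\left( H\right) \right]$, and for $p=1$ one has $\mathrm{K}^{1}\left( A\right) \cong \mathrm{K}^{0}\left( SA\right) \cong \left[ q(SA),K\left( H\right) \right]$. Since $S\left( -\right)$ is a functor on separable C*-algebras that sends homotopic *-homomorphisms to homotopic *-homomorphisms (a homotopy $\gamma\in\mathrm{Hom}\left( A,IB\right)$ induces $S\gamma\in\mathrm{Hom}\left( SA,S(IB)\right)=\mathrm{Hom}\left( SA,I(SB)\right)$), it suffices to treat $p=0$.

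First I would fix separable C*-algebras $A,B$ and *-homomorphisms $\alpha,\beta:A\rightarrow B$ that are homotopic, witnessed by $\gamma\in\mathrm{Hom}\left( A,IB\right)$ with $\mathrm{ev}_{0}\circ\gamma=\alpha$ and $\mathrm{ev}_{1}\circ\gamma=\beta$. I want to show the induced definable group homomorphisms $\alpha^{*},\beta^{*}:\mathrm{K}^{0}\left( B\right)\rightarrow\mathrm{K}^{0}\left( A\right)$ coincide. Transporting along the natural definable isomorphism with $\left[ qB,K\left( H\right)\right]$ and $\left[ qA,K\left( H\right)\right]$, the map $\alpha^{*}$ is (up to this identification) induced by precomposition with $q\alpha:qA\rightarrow qB$, the functorial image of $\alpha$ under the $q(-)$ construction; similarly for $\beta$. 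The key algebraic fact is that $q(-)$ is a homotopy functor: the homotopy $\gamma:A\rightarrow IB$ induces $q\gamma:qA\rightarrow q(IB)$, and $q(IB)$ is naturally identified with $I(qB)=C([0,1],qB)$ because $Q(-)$ and hence $q(-)$ commute with tensoring by the (nuclear, unital) C*-algebra $C([0,1])$; composing $q\gamma$ with the two endpoint evaluations recovers $q\alpha$ and $q\beta$. Thus $q\alpha$ and $q\beta$ are homotopic *-homomorphisms $qA\rightarrow qB$, and so for any $\psi\in\mathrm{Hom}\left( qB,K\left( H\right)\right)$ the compositions $\psi\circ q\alpha$ and $\psi\circ q\beta$ are homotopic in $\mathrm{Hom}\left( qA,K\left( H\right)\right)$, hence represent the same class in $\left[ qA,K\left( H\right)\right]$. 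This shows $\alpha^{*}=\beta^{*}$ as maps of (semi)definable groups, and since by the earlier propositions these are in fact definable groups and the isomorphisms involved are definable, the equality holds in the category of definable groups.

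For the functoriality bookkeeping one should double-check two things: that the identification $\mathrm{K}^{0}\left( A\right)\cong\left[ qA,K\left( H\right)\right]$ is natural in $A$ (stated in Section~\ref{Section:Kasparov} via the chain of natural isomorphisms through $\mathrm{KK}_{h}$ and $\mathrm{KK}_{0}$, all of which were shown to be definable and natural), and that the passage $\mathrm{K}^{1}\left( A\right)=\mathrm{K}^{0}\left( SA\right)$ together with $S$ being a homotopy functor is compatible with the morphism structure; both are routine. The main obstacle, such as it is, is purely organizational: one must be careful that "homotopy-invariant" here is the statement about the induced maps on the invariant, not merely about isomorphism classes, so the argument genuinely needs the functoriality of $q(-)$ (equivalently of $\mathbb{F}\left(-;\mathbb{C}\right)$ in the $\mathrm{KK}_{h}$-picture) on the level of morphisms, and needs $q(IB)\cong I(qB)$ — the one place a small lemma about $q(-)$ commuting with $C([0,1])\otimes(-)$ is invoked. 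Alternatively, one can bypass $q(-)$ entirely and argue directly in the $\mathrm{KK}_{h}$-picture of Proposition~\ref{Proposition:KKh}: a homotopy $\gamma:A\rightarrow IB$ sends a $\mathrm{KK}_{h}$-cycle $(\phi_{+},\phi_{-})$ for $B$ to a path of $\mathrm{KK}_{h}$-cycles for $A$ interpolating between $\alpha^{*}(\phi_{+},\phi_{-})$ and $\beta^{*}(\phi_{+},\phi_{-})$, which exhibits these as homotopic and hence equal in $\mathrm{KK}_{h}\left( A;\mathbb{C}\right)$; this is perhaps the most transparent route and avoids any discussion of free products.
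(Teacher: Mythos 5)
Your closing alternative is precisely the paper's proof: the paper reduces $p=1$ to $p=0$ by suspension (as in its product axiom) and then observes that $\mathrm{KK}_{h}\left( -;\mathbb{C}\right) $ is homotopy-invariant immediately from its definition as the quotient of $\mathbb{F}\left( -;\mathbb{C}\right) $ by homotopy of cycles --- a homotopy $\left( \lambda _{t}\right) $ from $\alpha $ to $\beta $ turns a cycle $\left( \phi _{+},\phi _{-}\right) $ for $B$ into the continuous path $\left( \phi _{+}\circ \lambda _{t},\phi _{-}\circ \lambda _{t}\right) $ of cycles for $A$. Your primary route through the Cuntz picture is a genuinely different organization and can be made to work, but one step is justified incorrectly: $q(IB)$ is \emph{not} isomorphic to $I(qB)$, because the free product $QA=A\ast A$ does not commute with tensoring by $C([0,1])$ (already $Q(C[0,1])=C[0,1]\ast C[0,1]$ is a large noncommutative algebra). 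What you actually need is much weaker and true: either the natural \emph{map} $q(IB)\rightarrow I(qB)$ assembled from the evaluations $q(\mathrm{ev}_{t})$, or, more directly, the continuity of $\lambda \mapsto q\lambda $ as a map $\mathrm{Hom}\left( A,B\right) \rightarrow \mathrm{Hom}\left( qA,qB\right) $ (pointwise norm convergence on the generators $i(a),\overline{i}(a)$ of $QA$ suffices, since *-homomorphisms are contractive), so that a path $\left( \lambda _{t}\right) $ from $\alpha $ to $\beta $ yields a path $\left( q\lambda _{t}\right) $ from $q\alpha $ to $q\beta $. With that repair the Cuntz-picture argument goes through, but the $\mathrm{KK}_{h}$ argument you sketch at the end is both shorter and the one the paper uses.
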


\begin{proof}
As in the case of the proof of Proposition \ref{Proposition:product}, it
suffices to show that the functor $\mathrm{KK}_{h}\left( -\right) $ is
homotopy invariant, which is an immediate consequence of the definition.
\end{proof}

Suppose that $B$ is a separable C*-algebra. Recall that the suspension $SB$
of $B$ can be seen as the C*-subalgebra of $IB$ consisting of $f\in IB$ such
that $f\left( 0\right) =f\left( 1\right) =0$. Then $\left[ A,SB\right] $ is
a semidefinable abelian group, where the group operation is induced by the
Borel function $\left( f,g\right) \mapsto m\left( f,g\right) $ where 
\begin{equation*}
m\left( f,g\right) \left( t\right) =\left\{ 
\begin{array}{ll}
f\left( 2t\right) & t\in \left[ 0,1/2\right] \text{,} \\ 
g\left( 2t-1\right) & t\in \left[ 1/2,1\right] \text{.}%
\end{array}%
\right.
\end{equation*}%
The function that assigns each element to its additive inverse is induced by
the Borel function $f\mapsto \widehat{f}$ where 
\begin{equation*}
\widehat{f}\left( t\right) =f\left( 1-t\right) \text{.}
\end{equation*}%
The trivial element of $\left[ A,SB\right] $ is the homotopy class of $0$.
For $p\in \left\{ 0,1\right\} $, there map $\left[ A,SB\right] \rightarrow 
\mathrm{K}^{p}\left( SB,A\right) $ is a group homomorphism \cite[Proposition
6.3]{schochet_topologicalIII_1984}.

A separable C*-algebra $A$ is \emph{contractible} if it is homotopy
equivalent to the zero C*-algebra; see \cite[Definition 4.4.4]%
{higson_analytic_2000}. By homotopy invariance, $\mathrm{K}^{p}\left(
A\right) =\left\{ 0\right\} $ whenever $A$ is contractible and $p\in \left\{
0,1\right\} $. In particular, if $\left( A,J\right) $ is a separable
semi-split C*-pair such that $A$ is contractible, the boundary homomorphism $%
\mathrm{K}^{p}\left( J\right) \rightarrow \mathrm{K}^{p}\left( A/J\right) $
is a definable isomorphism.

If $A$ is a separable, nuclear C*-algebra, then its cone $CA$ is the
C*-subalgebra of $IA$ consisting of $f\in IA$ such that $f\left( 1\right) =0$%
. This is a contractible C*-algebra \cite[Example 4.4.6]%
{higson_analytic_2000}, and 
\begin{equation*}
0\rightarrow SA\rightarrow CA\rightarrow A\rightarrow 0
\end{equation*}%
is an exact sequence, where $CA\rightarrow A$ is the map $\mathrm{ev}_{0}$.
The boundary homomorphism $\sigma ^{A}:\mathrm{K}^{p}\left( SA\right)
\rightarrow \mathrm{K}^{p+1}\left( A\right) $ is thus an isomorphism; see 
\cite[Theorem 6.5]{schochet_topologicalIII_1984}.

\subsection{Mapping cones}

Suppose that $A,B$ are separable, nuclear C*-algebras, and $f:A\rightarrow B$
is a *-homomorphism. The mapping cone 
\begin{equation*}
Cf=\left\{ \left( x,y\right) \in CB\oplus A:f(y)=\mathrm{ev}_{0}(x)\right\}
\end{equation*}%
of $f$ is obtained as the pullback of $\mathrm{ev}_{0}:CB\rightarrow B$ and $%
f:A\rightarrow B$. As such, it is endowed with canonical *-homomorphisms $%
Cf\rightarrow CB$ and $Cf\rightarrow A$; see \cite[Definition 2.1]%
{schochet_topologicalIII_1984}. We have a natural exact sequence%
\begin{equation*}
0\rightarrow SB\rightarrow Cf\rightarrow A\rightarrow 0
\end{equation*}%
where $SB\rightarrow Cf$, $x\mapsto \left( x,0\right) $. This induces a
boundary homomorphism $\mathrm{K}^{p}\left( SB\right) \rightarrow \mathrm{K}%
^{p+1}\left( A\right) $.

Considering the commutative diagram%
\begin{equation*}
\begin{array}{ccccccccc}
0 & \rightarrow & SB & \rightarrow & Cf & \rightarrow & A & \rightarrow & 0
\\ 
&  & \downarrow &  & \downarrow &  & \downarrow f &  &  \\ 
0 & \rightarrow & SB & \rightarrow & CB & \rightarrow & B & \rightarrow & 0%
\end{array}%
\end{equation*}%
where $SB\rightarrow SB$ is the identity map, and $Cf\rightarrow CB$, $%
\left( x,y\right) \mapsto x$, we obtain by naturality of the six-term exact
sequence in $\mathrm{K}$-homology that the boundary morphism $\mathrm{K}%
^{p}\left( SB\right) \rightarrow \mathrm{K}^{p+1}\left( A\right) $ is equal
to the composition 
\begin{equation*}
\mathrm{K}^{p+1}\left( f\right) \circ \sigma ^{B}:\mathrm{K}^{p}\left(
SB\right) \rightarrow \mathrm{K}^{p+1}\left( B\right) \rightarrow \mathrm{K}%
^{p+1}\left( A\right) \text{.}
\end{equation*}%
The same argument together with the Five Lemma \cite[Proposition 2.72]%
{rotman_introduction_2009} shows that $\mathrm{K}^{p}\left( f\right) $ is an
isomorphism for every $p\in \left\{ 0,1\right\} $ if and only if $\mathrm{K}%
^{p}\left( Cf\right) =\left\{ 0\right\} $ for every $p\in \left\{
0,1\right\} $; see \cite[Theorem 6.5]{schochet_topologicalIII_1984}.

If $f:A\rightarrow B$ is a surjective *-homomorphism with kernel $J$, then
considering the exact sequence%
\begin{equation*}
0\rightarrow J\rightarrow Cf\rightarrow CB\rightarrow 0
\end{equation*}%
one sees that the map $J\rightarrow Cf$ induces an isomorphism $\mathrm{K}%
^{p}\left( J\right) \rightarrow \mathrm{K}^{p}\left( Cf\right) $. Similarly,
if $J$ is an ideal of $A$ and $f:J\rightarrow A$ is the inclusion, then
considering the exact sequence%
\begin{equation*}
0\rightarrow CJ\rightarrow Cf\rightarrow S\left( A/J\right) \rightarrow 0
\end{equation*}%
shows that the map $Cf\rightarrow S\left( A/J\right) $ induces an
isomorphism $\mathrm{K}^{p}\left( Cf\right) \rightarrow \mathrm{K}^{p}\left(
S\left( A/J\right) \right) $; see \cite[Proposition 6.6]%
{schochet_topologicalIII_1984}.

\subsection{Long exact sequence of a triple}

Consider a triple $J\subseteq H\subseteq A$ where $A$ is a separable,
nuclear C*-algebra and $J$ and $H$ are closed two-sided ideals of $A$.\ Then
we have a commutative diagram%
\begin{equation*}
\begin{array}{ccccccccc}
0 & \rightarrow & H & \rightarrow & A & \rightarrow & A/H & \rightarrow & 0
\\ 
&  & \downarrow &  & \downarrow &  & \downarrow &  &  \\ 
0 & \rightarrow & H/J & \rightarrow & A/J & \rightarrow & A/H & \rightarrow
& 0\text{.}%
\end{array}%
\end{equation*}
By naturality of the six-term exact sequence in $\mathrm{K}$-homology, we
have that the boundary map%
\begin{equation*}
\mathrm{K}^{p}\left( H/J\right) \rightarrow \mathrm{K}^{1-p}\left( A/H\right)
\end{equation*}%
is equal to the composition of the map $\mathrm{K}^{p}\left( H/J\right)
\rightarrow \mathrm{K}^{p}\left( H\right) $ induced by the quotient map with
the boundary map $\mathrm{K}^{p}\left( H\right) \rightarrow \mathrm{K}%
^{1-p}\left( A/H\right) $; see \cite[Theorem 6.10]%
{schochet_topologicalIII_1984}.

\subsection{Mayer--Vietoris sequence}

Consider separable, nuclear C*-algebras $P,A_{1},A_{2},B$, and
*-homomorphisms $f_{i}:A_{i}\rightarrow B$ and $g_{i}:P\rightarrow A_{i}$
for $i\in \left\{ 1,2\right\} $. Suppose that $f_{1},f_{2}$ are surjective,
and
\begin{center}
\begin{tikzcd}
P \arrow[r, "g_1"]  \arrow[d, "g_2"] & A_{1} \arrow[d, "f_1"] \\  
A_{2}  \arrow[r, "f_2"] &  B%
\end{tikzcd}
\end{center}
is a pushout diagram. Then there is a six-term exact sequence of definable
group homomorphisms
\begin{center}
\begin{tikzcd}
\mathrm{K}^{0}\left( B\right) \arrow[r] & \mathrm{K}^{0}\left(A_{1}\right) \oplus \mathrm{K}^{0}\left( A_{2}\right)  \arrow[r] & \mathrm{K}^{0}\left( P\right) \arrow[d,"\partial ^0"] \\ 
\mathrm{K}^{1}\left( P\right) \arrow[u, "\partial ^1"] & \mathrm{K}^{1}\left(A_{1}\right) \oplus \mathrm{K}^{1}\left( A_{2}\right) \arrow[l] & \mathrm{K}^{1}\left( B\right) \arrow[l]
\end{tikzcd}
\end{center}
see \cite[Theorem 6.11]{schochet_topologicalIII_1984}. The definable group
homomorphism $\mathrm{K}^{p}\left( B\right) \rightarrow \mathrm{K}^{p}\left(
A_{1}\right) \oplus \mathrm{K}^{p}\left( A_{2}\right) $ is $\left( -\mathrm{K%
}^{p}\left( f_{1}\right) ,\mathrm{K}^{p}\left( f_{2}\right) \right) $, the
definable group homomorphism $\mathrm{K}^{p}\left( A_{1}\right) \oplus 
\mathrm{K}^{p}\left( A_{2}\right) \rightarrow \mathrm{K}^{p}\left( P\right) $
is $\mathrm{K}^{p}\left( g_{1}\right) +\mathrm{K}^{p}\left( g_{2}\right) $.
Furthermore, the definable group homomorphism $\partial ^{p}:\mathrm{K}%
^{p}\left( P\right) \rightarrow \mathrm{K}^{1-p}\left( B\right) $ is defined
as follows. Let $g:P\rightarrow A_{1}\oplus A_{2}$ be defined by $x\mapsto
\left( g_{1}(x),g_{2}(x)\right) $. Consider the corresponding mapping cone $%
Cg$. We can regard $Cg$ as the set of triples $\left( \xi _{1},\xi
_{2},x\right) \in CA_{1}\oplus CA_{2}\oplus P$ such that $\left( \xi
_{1}\left( 0\right) ,\xi _{2}\left( 0\right) \right) =g(x)$. We have a
*-homomorphism $\psi :Cg\rightarrow SB$ defined by setting%
\begin{equation*}
\psi \left( \xi _{1},\xi _{2},x\right) \left( t\right) =\left\{ 
\begin{array}{ll}
f_{1}\left( \xi _{1}\left( 1-2t\right) \right) \text{,} & t\in \left[ 0,1/2%
\right] \text{;} \\ 
f_{2}\left( \xi _{2}\left( 2t-1\right) \right) \text{,} & t\in \left[ 1/2,1%
\right] \text{.}%
\end{array}%
\right.
\end{equation*}%
Then we have a natural short exact sequence%
\begin{equation*}
0\rightarrow CJ_{1}\oplus CJ_{2}\rightarrow Cg\overset{\psi }{\rightarrow }%
SB\rightarrow 0
\end{equation*}%
where $J_{i}=\mathrm{\mathrm{Ker}}\left( f_{i}\right) $ for $i\in \left\{
1,2\right\} $; see \cite[Proposition 4.5]{schochet_topologicalIII_1984}.\
Thus, $\psi $ induces a definable isomorphism $\mathrm{K}^{p}\left(
SB\right) \rightarrow \mathrm{K}^{p}\left( Cg\right) $. The definable group
homomorphism $\partial ^{p}:\mathrm{K}^{p}\left( P\right) \rightarrow 
\mathrm{K}^{1-p}\left( B\right) $ is defined as the composition of definable
homomorphisms%
\begin{equation*}
\mathrm{K}^{p}\left( P\right) \rightarrow \mathrm{K}^{p}\left( Cg\right)
\rightarrow \mathrm{K}^{p}\left( SB\right) \rightarrow \mathrm{K}%
^{1-p}\left( B\right)
\end{equation*}%
where the map $\mathrm{K}^{p}\left( P\right) \rightarrow \mathrm{K}%
^{p}\left( Cg\right) $ is associated with the canonical *-homomorphism $%
Cg\rightarrow P$ as in the definition of mapping cone, the map $\mathrm{K}%
^{p}\left( Cg\right) \rightarrow \mathrm{K}^{p}\left( SB\right) $ is the
inverse of the definable isomorphism $\mathrm{K}^{p}\left( SB\right)
\rightarrow \mathrm{K}^{p}\left( Cg\right) $ induced by $\psi $, and the map 
$\sigma ^{B}:\mathrm{K}^{p}\left( SB\right) \rightarrow \mathrm{K}%
^{1-p}\left( B\right) $ is the suspension isomorphism; see the proof of \cite%
[Theorem 6.11]{schochet_topologicalIII_1984}.

\subsection{The Milnor sequence of an inductive sequence}

A tower of countable abelian groups is a sequence $\boldsymbol{A}=\left(
A^{\left( n\right) },p^{\left( n,n+1\right) }\right) $ of countable abelian
groups and group homomorphism $p^{\left( n,n+1\right) }:A^{\left( n+1\right)
}\rightarrow A^{\left( n\right) }$. Given such a tower we let $p^{\left(
n,n\right) }$ be the identity map of $A^{\left( n\right) }$ and, for $n<m$, $%
p^{\left( n,m\right) }$ be the composition $p^{\left( n,n+1\right) }\circ
\cdots \circ p^{\left( m-1,m\right) }$. Towers of countable groups form a
category. A morphism from $\boldsymbol{A}=\left( A^{\left( n\right)
},p^{\left( n,n+1\right) }\right) $ to $\boldsymbol{B}=\left( B^{\left(
k\right) },p^{\left( k,k+1\right) }\right) $ is represented by a sequence $%
\left( n_{k},f^{\left( k\right) }\right) _{k\in \omega }$ where $\left(
n_{k}\right) $ is an increasing sequence in $\omega $ and $f^{\left(
k\right) }:A^{\left( n_{k}\right) }\rightarrow B^{\left( k\right) }$ is a
group homomorphism. Two such sequences $\left( n_{k},f^{\left( k\right)
}\right) _{k\in \omega }$ and $\left( n_{k}^{\prime },f^{\prime \left(
k\right) }\right) _{k\in \omega }$ represent the same morphism if there
exists an increasing sequence $\left( n_{k}^{\prime \prime }\right) _{k\in
\omega }$ in $\omega $ such that $n_{k}^{\prime \prime }\geq \max \left\{
n_{k},n_{k}^{\prime }\right\} $ and $f^{\left( k\right) }p^{\left(
n_{k},n_{k}^{\prime \prime }\right) }=f^{\left( k\right) }p^{\left(
n_{k}^{\prime },n_{k}^{\prime \prime }\right) }$ for every $k\in \omega $.
The identity morphism and composition of morphisms are defined in the
obvious way.

Given a tower $\boldsymbol{A}$ of countable abelian groups, one lets \textrm{%
lim}$^{1}\boldsymbol{A}$ be the definable group, which is in fact a group
with Polish cover (see Remark \ref{Remark:Polish-cover}), defined as
follows. Consider \textrm{Z}$^{1}\left( \boldsymbol{A}\right) $ to be the
product group%
\begin{equation*}
\prod_{n\in \omega }A^{\left( n\right) }
\end{equation*}%
endowed with the product topology, where each $A^{\left( n\right) }$ is
endowed with the discrete topology. Define $\mathrm{B}^{1}\left( \boldsymbol{%
A}\right) $ to be the Polishable Borel subgroup of $\mathrm{Z}^{1}\left( 
\boldsymbol{A}\right) $ obtained as an image of the continuous group
homomorphism%
\begin{equation*}
\Phi _{\boldsymbol{A}}:\prod_{n\in \omega }A^{\left( n\right) }\rightarrow 
\mathrm{Z}^{1}\left( \boldsymbol{A}\right) \text{, }\left( x_{n}\right)
\mapsto \left( x_{n}-p^{\left( n,n+1\right) }\left( x_{n+1}\right) \right)
_{n\in \omega }\text{.}
\end{equation*}%
Then $\mathrm{lim}^{1}\boldsymbol{A}$ is the corresponding definable group $%
\mathrm{Z}^{1}\left( \boldsymbol{A}\right) /\mathrm{B}^{1}\left( \boldsymbol{%
A}\right) $. The assignment $\boldsymbol{A}\mapsto $\textrm{lim}$^{1}%
\boldsymbol{A}$ is easily seen to be a functor from the category of towers
of countable abelian groups to the category of definable groups; see also 
\cite[Section 5]{bergfalk_ulam_2020}.

Given a tower $\boldsymbol{A}$ of countable abelian groups, we can also
consider the (inverse) limit $\mathrm{lim}\boldsymbol{A}$. This is the
Polish abelian group obtained as the \emph{kernel }of the continuous group
homomorphism $\Phi _{\boldsymbol{A}}$ described above. The assignment $%
\boldsymbol{A}\mapsto \mathrm{lim}\boldsymbol{A}$ is a functor from the
category of towers of countable abelian groups to the category of Polish
abelian groups.

Suppose that $\left( A_{n},\varphi _{n}\right) _{n\in \omega }$ is an
inductive sequence of separable, nuclear C*-algebras, and let $A=\mathrm{%
colim}_{n}\left( A_{n},\varphi _{n}\right) $ be its inductive limit. If $%
\mathrm{K}^{p}\left( A_{n}\right) $ is countable for every $n\in \omega $,
then $\left( \mathrm{K}^{p}\left( A_{n}\right) \right) _{n\in \omega }$ is a
tower of countable abelian groups, where $p^{\left( n,n+1\right) }:\mathrm{K}%
^{p}\left( A_{n+1}\right) \rightarrow \mathrm{K}^{p}\left( A_{n}\right) $ is
induced by $\varphi _{n}:A_{n}\rightarrow A_{n+1}$. The assignment $\left(
A_{n},\varphi _{n}\right) _{n\in \omega }\mapsto \left( \mathrm{K}^{p}\left(
A_{n}\right) \right) _{n\in \omega }$ defines a functor from the category of
inductive sequences of separable C*-algebras with countable $\mathrm{K}$%
-homology groups to the category of towers of countable abelian groups. The
Milnor sequence for $\left( A_{n},\varphi _{n}\right) _{n\in \omega }$
describes $\mathrm{K}^{p}\left( A\right) $ as an extension of groups defined
in terms of $\left( \mathrm{K}^{p}\left( A_{n}\right) \right) _{n\in \omega
} $; see \cite[Theorem 7.1]{schochet_topologicalIII_1984}. The proof is
inspired by Milnor's argument for the corresponding result about Steenrod
homology \cite{milnor_steenrod_1995}; see also \cite{milnor_axiomatic_1962}.
We let $\mathbb{N}$ denote the set of natural numbers not including zero,
and $\omega =\mathbb{N}\cup \left\{ 0\right\} $.

\begin{proposition}
\label{Proposition:Milnor}Suppose that $\left( A_{n},\varphi _{n}\right)
_{n\in \mathbb{N}}$ is an inductive sequence of separable, nuclear
C*-algebras with countable $\mathrm{K}$-homology groups, and $A$ is the
inductive limit of $\left( A_{n},\varphi _{n}\right) _{n\in \omega }$. Then
for $p\in \left\{ 0,1\right\} $ there is a natural short exact sequence of
definable group homomorphisms%
\begin{equation*}
0\rightarrow \mathrm{lim}_{n}^{1}\mathrm{K}^{1-p}\left( A_{n}\right)
\rightarrow \mathrm{K}^{p}\left( A\right) \rightarrow \mathrm{lim}_{n}%
\mathrm{K}^{p}\left( A_{n}\right) \rightarrow 0
\end{equation*}%
where the homomorphism $\mathrm{K}^{p}\left( A\right) \rightarrow \mathrm{lim%
}_{n}\mathrm{K}^{p}\left( A_{n}\right) $ is induced by the canonical maps $%
A_{n}\rightarrow A$.
\end{proposition}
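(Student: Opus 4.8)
The plan is to reduce the statement to a \emph{definable} version of Milnor's $\mathrm{lim}^1$-sequence argument, using the already-established Kasparov/Cuntz descriptions of $\mathrm{K}$-homology to produce the needed Borel structure. First I would use Proposition \ref{Proposition:KKh} (and its suspension consequence) to replace $\mathrm{K}^{0}$ and $\mathrm{K}^{1}$ throughout by the definable groups $\mathrm{KK}_{h}\left( -;\mathbb{C}\right)$, so that a class in $\mathrm{K}^{p}\left( A\right)$ is represented by a $\mathrm{KK}_{h}$-cycle $\left( \phi_{+},\phi_{-}\right)$ for $A$, which by the universal property of the inductive limit restricts to a compatible sequence of $\mathrm{KK}_{h}$-cycles for the $A_{n}$. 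As in Milnor's original argument for Steenrod homology \cite{milnor_steenrod_1995} and in \cite[Theorem 7.1]{schochet_topologicalIII_1984}, I would realize $A$ as the mapping telescope of the inductive system and apply the six-term exact sequence in $\mathrm{K}$-homology to the short exact sequence
\begin{equation*}
0\rightarrow \bigoplus_{n}SA_{n}\rightarrow \mathrm{Tel}\rightarrow \bigoplus_{n}A_{n}\rightarrow 0
\end{equation*}
together with the product formula of Proposition \ref{Proposition:product} identifying $\mathrm{K}^{p}\left( \bigoplus_{n}A_{n}\right)$ definably with $\prod_{n}\mathrm{K}^{p}\left( A_{n}\right)$. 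This presents $\mathrm{K}^{p}\left( A\right)$ as sitting in an exact sequence involving the map $\mathrm{id}-\mathrm{shift}$ on $\prod_{n}\mathrm{K}^{p}\left( A_{n}\right)$, whose cokernel is by definition $\mathrm{lim}^{1}_{n}\mathrm{K}^{p-1}\!\left( A_{n}\right)$ — except one must be careful with the grading shift coming from the suspension, which accounts for the $1-p$ index on the $\mathrm{lim}^{1}$ term.

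The second, and more delicate, task is to check that every map in the resulting short exact sequence is \emph{definable}, and that the outer terms are the definable groups $\mathrm{lim}^{1}_{n}\mathrm{K}^{1-p}\left( A_{n}\right)$ and $\mathrm{lim}_{n}\mathrm{K}^{p}\left( A_{n}\right)$ as defined in the preceding subsection (the former a group with a Polish cover as in Remark \ref{Remark:Polish-cover}, the latter a Polish group). The map $\mathrm{K}^{p}\left( A\right) \to \mathrm{lim}_{n}\mathrm{K}^{p}\left( A_{n}\right)$ is induced by the Borel restriction maps on $\mathrm{KK}_{h}$-cycles, hence definable; its kernel is a Borel subgroup, hence itself a definable group, and I would identify it with $\mathrm{lim}^{1}_{n}\mathrm{K}^{1-p}\left( A_{n}\right)$ by exhibiting mutually inverse Borel lifts — the forward direction using that a cycle for $A$ mapping to zero in each $\mathrm{lim}$ is, via the telescope presentation and the Borel homotopy data packaged in $C_{\beta}\left( [0,1],B\left( H\right)\right)$ (Corollary \ref{Corollary:norm-continuous-paths}), witnessed by an explicit element of $\mathrm{Z}^{1}\!\left( \mathrm{K}^{1-p}\left( A_{n}\right)_{n}\right)$; the reverse using that the bonding maps $p^{(n,n+1)}$ are represented by Borel maps on cycles. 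Here the fact that each $\mathrm{K}^{p}\left( A_{n}\right)$ is \emph{countable} is essential, since it is what makes $\prod_{n}\mathrm{K}^{p}\left( A_{n}\right)$ a Polish group with the product-of-discretes topology and makes $\mathrm{B}^{1}$ a Polishable Borel subgroup.

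I expect the main obstacle to be precisely this bookkeeping of Borel selections: passing from the abstract six-term sequence to an honest short exact sequence of \emph{definable} groups requires choosing, in a Borel way, representatives and homotopies realizing the telescope exact sequence, and then checking naturality of these choices so that the sequence is functorial in $\left( A_{n},\varphi_{n}\right)$. The algebraic content is exactly Milnor's and Schochet's; the new work is entirely in verifying that the connecting homomorphism $\mathrm{K}^{p}\left( A\right) \to \mathrm{lim}^{1}_{n}\mathrm{K}^{1-p}\left( A_{n}\right)$ and the section-type splitting at the $\mathrm{cokernel}$ level admit Borel lifts, for which the key tools are Lemma \ref{Lemma:definable-group}, Corollary \ref{Corollary:definable-group} (to see the outer groups are definable), Lemma \ref{Lemma:iso-semi} and Corollary \ref{Corollary:Kechris--MacDonald} (to upgrade semidefinable isomorphisms to definable ones once one side is known definable), and the definable product formula from Proposition \ref{Proposition:product}. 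Once all lifts are in place, exactness is inherited from the exactness of the underlying six-term sequence, and naturality follows from the naturality of the $\mathrm{KK}_{h}$-picture under $*$-homomorphisms.
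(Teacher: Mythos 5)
Your overall strategy is the same as the paper's: realize the limit via the C*-mapping telescope, use the definable product formula of Proposition \ref{Proposition:product} to identify the $\mathrm{K}$-homology of direct sums with products, extract the map $\mathrm{id}-\mathrm{shift}$ on $\prod_{n}\mathrm{K}^{p}\left( A_{n}\right)$ whose kernel and cokernel are $\mathrm{lim}$ and $\mathrm{lim}^{1}$, and observe that all connecting maps are definable because they arise from the already-definable six-term machinery. Where you diverge is in the decomposition used to produce that map: the paper writes the telescope ideal $J=\mathrm{\mathrm{Ker}}\left( T\left( \boldsymbol{A}\right) \rightarrow A\right)$ as a pullback of $D_{1}=\bigoplus_{n}M_{2n+1}$ and $D_{2}=\bigoplus_{n}M_{2n}$ over $B=\bigoplus_{n}A_{n}$ and applies the definable Mayer--Vietoris sequence, whereas you use the single ideal extension with kernel $\bigoplus_{n}SA_{n}$ and quotient $\bigoplus_{n}A_{n}$. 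Both are standard incarnations of Milnor's argument and both yield $\Phi^{p}$; the Mayer--Vietoris route lets the paper quote a sequence whose definability it has already established wholesale, while your route requires one extra (routine) verification that the extension is semi-split (automatic here by nuclearity and Choi--Effros) and that the composite of the boundary map with the suspension isomorphism is $\mathrm{id}-\mathrm{shift}$.

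Two points need tightening. First, the sequence you write, $0\rightarrow \bigoplus_{n}SA_{n}\rightarrow \mathrm{Tel}\rightarrow \bigoplus_{n}A_{n}\rightarrow 0$, is not correct with the telescope itself in the middle: in this C*-algebraic setting $T\left( \boldsymbol{A}\right)$ is \emph{contractible} and surjects onto $A$, and it is the kernel $J$ of that surjection that fits into your extension (via evaluation at the partition points $t_{n+1}$). You then still need the definable boundary isomorphism $\mathrm{K}^{1-p}\left( J\right) \rightarrow \mathrm{K}^{p}\left( A\right)$ coming from contractibility of $T\left( \boldsymbol{A}\right)$ to transfer the conclusion from $J$ to $A$; your phrase ``realize $A$ as the mapping telescope'' elides this step. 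Second, your plan to identify $\mathrm{\mathrm{Ker}}\left( \mathrm{K}^{p}\left( A\right) \rightarrow \mathrm{lim}_{n}\mathrm{K}^{p}\left( A_{n}\right) \right)$ with $\mathrm{lim}_{n}^{1}\mathrm{K}^{1-p}\left( A_{n}\right)$ ``by exhibiting mutually inverse Borel lifts'' by hand is more work than necessary: once the six-term sequence is known to consist of definable homomorphisms between definable groups, the identification of $\mathrm{coker}\left( \Phi ^{1-p}\right)$ with $\mathrm{lim}^{1}$ is literally the definition of $\mathrm{lim}^{1}$ as $\mathrm{Z}^{1}/\mathrm{B}^{1}$, and exactness plus Corollary \ref{Corollary:Kechris--MacDonald} does the rest; no fresh Borel selection argument is required at that stage.
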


The assertion that the group homomorphisms in Proposition \ref%
{Proposition:Milnor} are definable is a consequence of the proof of \cite[%
Theorem 7.1]{schochet_topologicalIII_1984}. This involves the notion of 
\emph{mapping telescope }$T\left( \boldsymbol{A}\right) $ of an inductive
sequence of $\boldsymbol{A}=\left( A_{n},\varphi _{n}\right) _{n\in \mathbb{N%
}}$ of separable C*-algebras; see \cite[Definition 5.2]%
{schochet_topologicalIII_1984}. Without loss of generality, we can assume
that $A_{0}=\left\{ 0\right\} $. Let $A$ be the corresponding direct limit
and $\varphi _{\left( \infty ,n\right) }:A_{n}\rightarrow A$ be the
canonical maps. For $n<m$ set $\varphi _{\left( m,n\right)
}:A_{n}\rightarrow A_{m}$, $\varphi _{\left( m,n\right) }=\varphi _{n}\circ
\varphi _{n+1}\circ \cdots \circ \varphi _{m-1}$. We also let $\varphi
_{\left( n,n\right) }$ be the identity of $A_{n}$. One fixes an increasing
sequence $\left( t_{n}\right) _{n\in \omega }$ in $[0,1)$ with $t_{0}=0$
converging to $1$. Let $\prod_{n\in \omega }C\left(
[t_{n},t_{n+1}],A_{n+1}\right) $ be the product of $(C\left(
[t_{n},t_{n+1}],A_{n+1}\right) )_{n\in \omega }$ in the category of
C*-algebras. Define then $\tilde{T}\left( \boldsymbol{A}\right) $ to be the
C*-subalgebra of $\prod_{n\in \omega }C\left( [t_{n},t_{n+1}],A_{n+1}\right) 
$ consisting of those elements $\left( \xi _{n}\right) _{n\in \omega }$ such
that, for every $n\in \omega $, $\varphi _{n+1}\left( \xi _{n}\left(
t_{n+1}\right) \right) =\xi _{n+1}\left( t_{n+1}\right) $. An element $%
\left( \xi _{n}\right) _{n\in \omega }$ of $\tilde{T}\left( \boldsymbol{A}%
\right) $ can be seen as a function $\xi :[0,1)\rightarrow \bigcup_{n\in
\omega }A_{n+1}$ where, for $n\in \omega $ and $t\in \lbrack t_{n},t_{n+1})$
one sets $\xi \left( t\right) :=\xi _{n}\left( t\right) $. The function $\xi
^{\infty }:[0,1)\rightarrow A$ defined by $\xi ^{\infty }\left( t\right)
=\varphi _{\left( \infty ,n+1\right) }\left( \xi \left( t\right) \right) $
for $t\in \lbrack t_{n},t_{n+1})$ is then continuous.

The mapping telescope $T\left( \boldsymbol{A}\right) $ consists of the set
of pairs $\left( \xi ,a\right) \in \tilde{T}\left( \boldsymbol{A}\right)
\oplus A$ such that:

\begin{enumerate}
\item for every $\varepsilon >0$ there exists $n_{0}\in \omega $ such that,
for $n\geq m\geq n_{0}$ and for $t\in \left[ t_{n},t_{n+1}\right] $ and $%
s\in \lbrack t_{m},t_{m+1}]$,%
\begin{equation*}
\left\Vert \varphi _{\left( n+1,m+1\right) }\left( \xi _{m}\left( s\right)
\right) -\xi _{n}\left( t\right) \right\Vert <\varepsilon \text{,}
\end{equation*}%
and

\item \textrm{lim}$_{t\rightarrow 1}\xi ^{\infty }\left( t\right) =a$.
\end{enumerate}

Then one has that $T\left( \boldsymbol{A}\right) $ is a contractible
separable C*-algebra; see \cite[Lemma 5.4]{schochet_topologicalIII_1984}.
Define the surjective *-homomorphism $e:T\left( \boldsymbol{A}\right)
\rightarrow A$, $\left( \xi ,a\right) \mapsto a$, and set $J=\mathrm{\mathrm{%
Ker}}\left( e\right) \subseteq T\left( \boldsymbol{A}\right) $. We also have
a map $p:J\rightarrow \bigoplus_{n\in \omega }A_{n+1}$, $\xi \mapsto \left(
\xi _{n}\left( t_{n+1}\right) \right) _{n\in \omega }$; see \cite[Lemma 5.5]%
{schochet_topologicalIII_1984}. As $T\left( \boldsymbol{A}\right) $ is
contractible, the short exact sequence%
\begin{equation*}
0\rightarrow J\rightarrow T\left( \boldsymbol{A}\right) \rightarrow
A\rightarrow 0
\end{equation*}%
gives rise to a definable boundary isomorphism $\partial :\mathrm{K}%
^{1-p}\left( J\right) \rightarrow \mathrm{K}^{p}\left( A\right) $.

For $n\in \omega $ define $M_{n}\subseteq C\left(
[t_{n},t_{n+1}],A_{n+1}\right) \oplus A_{n}$ to be the C*-subalgebra
consisting of $\left( \xi ,a\right) $ such that $\xi \left( t_{n}\right)
=\varphi _{n}\left( a\right) $. The *-homomorphism $M_{n}\rightarrow A_{n}$, 
$\left( \xi ,a\right) \mapsto a$ is a homotopy equivalence with homotopy
inverse $A_{n}\rightarrow M_{n}$, $a\mapsto \left( \xi ,a\right) $ where $%
\xi \left( t\right) =\varphi _{n}\left( a\right) $ for $t\in \left[ 0,1%
\right] $. Then we have that the composition $A_{n}\rightarrow
M_{n}\rightarrow A_{n}$ is the identity, while the composition $%
M_{n}\rightarrow A_{n}\rightarrow M_{n}$ maps $\left( \xi ,a\right) $ to $%
\left( \xi ^{\prime },a\right) $ where $\xi ^{\prime }\left( t\right) =\xi
\left( t_{n}\right) =\varphi _{n}\left( a\right) $ for $t\in \left[ 0,1%
\right] $. This map is homotopic to the identity via the homotopy $\left(
\phi _{t}\right) _{s\in \left[ 0,1\right] }$ defined by $\phi _{s}\left( \xi
,a\right) =\left( \xi _{s},a\right) $ where 
\begin{equation*}
\xi _{s}\left( t_{n}+t\left( t_{n+1}-t_{n}\right) \right) =\xi \left(
t_{n}+st\left( t_{n+1}-t_{n}\right) \right)
\end{equation*}
for $s,t\in \left[ 0,1\right] $.

Define 
\begin{equation*}
D_{1}:=\bigoplus_{n\in \omega }M_{2n+1}
\end{equation*}%
\begin{equation*}
D_{2}:=\bigoplus_{n\in \omega }M_{2n}\text{.}
\end{equation*}%
\begin{equation*}
B:=\bigoplus_{n\in \omega }A_{n}\text{.}
\end{equation*}%
As in \cite[Lemma 5.7]{schochet_topologicalIII_1984}, we have a pullback
diagram
\begin{center}
\begin{tikzcd}
J \arrow[r, "g_1"] \arrow[d, "g_2"] & D_{1} \arrow[d, "f_1"] \\
D_{2} \arrow[r, "f_2"] & B
\end{tikzcd}
\end{center}
where:

\begin{itemize}
\item $g_{1}:J\rightarrow D_{1}$ is defined by 
\begin{equation*}
\left( \xi _{k}\right) _{k\in \omega }\mapsto \left( \eta _{n},b_{n}\right)
_{n\in \omega }
\end{equation*}%
where $\xi _{k}\in C\left( [t_{k},t_{k+1}],A_{k+1}\right) $ for $k\in \omega 
$ and $\left( \eta _{n},b_{n}\right) =\left( \xi _{2n+1},\xi _{2n}\left(
t_{2n+1}\right) \right) \in M_{2n+1}$ for $n\in \omega $;

\item $g_{2}:J\rightarrow D_{2}$ is defined by%
\begin{equation*}
\left( \xi _{k}\right) _{k\in \omega }\mapsto \left( \eta _{n},b_{n}\right)
_{n\in \omega }
\end{equation*}%
where $\xi _{k}\in C\left( [t_{k},t_{k+1}],A_{k+1}\right) $ for $k\in \omega 
$, and $\left( \eta _{n},b_{n}\right) =\left( \xi _{2n},\xi _{2n-1}\left(
t_{2n}\right) \right) \in M_{2n}$;

\item $f_{1}:D_{1}\rightarrow B$ is defined by%
\begin{equation*}
\left( \eta _{n},b_{n}\right) _{n\in \omega }\mapsto \left( c_{n}\right)
_{n\in \omega }
\end{equation*}%
where $\left( \eta _{n},b_{n}\right) \in M_{2n+1}$, $c_{0}=0$, $%
c_{2n+1}=b_{n}$, and $c_{2n+2}=\eta _{n}\left( t_{2n+2}\right) $ for $n\in
\omega $;

\item $f_{2}:D_{2}\rightarrow B$ is defined by%
\begin{equation*}
\left( \eta _{n},b_{n}\right) _{n\in \omega }\mapsto \left( c_{n}\right)
_{n\in \omega }
\end{equation*}%
where $\left( \eta _{n},b_{n}\right) \in M_{2n}$, $c_{2n}=b_{n}$, and $%
c_{2n+1}=\eta _{n}\left( t_{2n+1}\right) $ for $n\in \omega $.
\end{itemize}

We thus have a corresponding Mayer--Vietoris definable six-term exact
sequence
\begin{center}
\begin{tikzcd}
\mathrm{K}^{0}\left( B\right)  \arrow[r]  & \mathrm{K}^{0}\left(D_{1}\right) \oplus \mathrm{K}^{0}\left( D_{2}\right) \arrow[r] & \mathrm{K}^{0}\left( J\right) \arrow[d,"\partial ^0"]  \\
\mathrm{K}^{1}\left( J\right)  \arrow[u,"\partial ^1"]  & \mathrm{K}^{1}\left(D_{1}\right) \oplus \mathrm{K}^{1}\left( D_{2}\right)   \arrow[l] & \mathrm{K}^{1}\left( B\right) \arrow[l]
\end{tikzcd}
\end{center}
associated with it. Combining this with the definable isomorphism $\mathrm{K}%
^{1-p}\left( J\right) \rightarrow \mathrm{K}^{p}\left( A\right) $ as above,
and with the definable isomorphisms%
\begin{equation*}
\mathrm{K}^{p}\left( B\right) \cong \prod_{n\in \omega }\mathrm{K}^{p}\left(
A_{n}\right)
\end{equation*}%
\begin{equation*}
\mathrm{K}^{p}\left( D_{1}\right) \oplus \mathrm{K}^{p}\left( D_{2}\right)
\cong \prod_{n\in \omega }\mathrm{K}^{p}\left( A_{2n}\right) \oplus
\prod_{n\in \omega }\mathrm{K}^{p}\left( A_{2n+1}\right) \cong \prod_{n\in
\omega }\mathrm{K}^{p}\left( A_{n}\right)
\end{equation*}%
obtained from Proposition \ref{Proposition:product} and from the homotopy
equivalences $M_{n}\rightarrow A_{n}$ for $n\in \omega $, one obtains a
definable six-term exact sequence%
\begin{equation*}
\begin{array}{ccccc}
\prod_{n\in \omega }\mathrm{K}^{0}\left( A_{n}\right) & \overset{\Phi ^{0}}{%
\rightarrow } & \prod_{n\in \omega }\mathrm{K}^{0}\left( A_{n}\right) & 
\rightarrow & \mathrm{K}^{1}\left( A\right) \\ 
\uparrow &  &  &  & \downarrow \partial \\ 
\mathrm{K}^{0}\left( A\right) & \leftarrow & \prod_{n\in \omega }\mathrm{K}%
^{1}\left( A_{n}\right) & \overset{\Phi ^{1}}{\leftarrow } & \prod_{n\in
\omega }\mathrm{K}^{1}\left( A_{n}\right)%
\end{array}%
\text{.}
\end{equation*}%
As in the proof of \cite[Theorem 7.1]{schochet_topologicalIII_1984}, the
group homomorphism%
\begin{equation*}
\Phi ^{p}:\prod_{n\in \omega }\mathrm{K}^{p}\left( A_{n}\right) \rightarrow
\prod_{n\in \omega }\mathrm{K}^{p}\left( A_{n}\right)
\end{equation*}%
for $p\in \left\{ 0,1\right\} $ is given by 
\begin{equation*}
\left( x_{n}\right) \mapsto \left( x_{n}-\mathrm{K}^{p}\left( \varphi
_{n}\right) \left( x_{n+1}\right) \right) _{n\in \omega }
\end{equation*}%
whereas the boundary homomorphism $\mathrm{K}^{1}\left( A\right) \rightarrow
\prod_{n\in \omega }\mathrm{K}^{1}\left( A_{n}\right) $ is induced by the
canonical maps $A_{n}\rightarrow A$. Thus, by definition of $\mathrm{lim}$
and $\mathrm{lim}^{1}$ of the tower $\left( \mathrm{K}^{p}\left(
A_{n}\right) \right) _{n\in \omega }$ we have that $\Phi ^{0}$ and $\Phi
^{1} $ yield a definable exact sequence%
\begin{equation*}
0\rightarrow \mathrm{lim}_{n}^{1}\mathrm{K}^{1-p}\left( A_{n}\right)
\rightarrow \mathrm{K}^{p}\left( A\right) \rightarrow \mathrm{lim}_{n}%
\mathrm{K}^{p}\left( A_{n}\right) \rightarrow 0\text{.}
\end{equation*}%
This concludes the proof of Proposition \ref{Proposition:Milnor}.

\subsection{C*-stability}

Suppose that $A$ is a separable C*-algebra, and $H$ is a (not necessarily
infinite-dimensional) separable Hilbert space. If $e\in K\left( H\right) $
is a rank one projection, then we can define a *-homomorphism $%
e_{A}:A\rightarrow K\left( H\right) \otimes A$, $a\mapsto e\otimes a$. In
turn, this induces a definable homomorphism $\mathrm{K}^{p}\left( K\left(
H\right) \otimes A\right) \rightarrow \mathrm{K}^{p}\left( A\right) $. The 
\emph{stability}---or C*-stability \cite{cuntz_new_1987}---property of $%
\mathrm{K}$-homology asserts that such a definable homomorphism $\mathrm{K}%
^{p}\left( K\left( H\right) \otimes A\right) \rightarrow \mathrm{K}%
^{p}\left( A\right) $ is in fact an definable isomorphism; see \cite[Theorem
9.4.1]{higson_analytic_2000}.

\begin{proposition}
\label{Proposition:stability}Suppose that $A$ is a separable C*-algebra, $H$
is a separable Hilbert space, $e\in K\left( H\right) $ is a rank one
projection, and $e_{A}:A\rightarrow K\left( H\right) \otimes A$ is the
*-homomorphism defined by $a\mapsto e\otimes a$. Then the induced map $%
\mathrm{K}^{p}\left( e_{A}\right) :\mathrm{K}^{p}\left( K\left( H\right)
\otimes A\right) \rightarrow \mathrm{K}^{p}\left( A\right) $ is a definable
isomorphism.
\end{proposition}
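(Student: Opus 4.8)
The plan is to deduce the statement from the classical C*-stability of $\mathrm{K}$-homology, which is already available as \cite[Theorem 9.4.1]{higson_analytic_2000}, together with the fact that a definable bijection between definable groups automatically has a definable inverse. So the only real task is to organize the pieces that have already been assembled.

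First I would note that $e_{A}:A\rightarrow K\left( H\right) \otimes A$ is a $*$-homomorphism of separable C*-algebras, and that $\mathrm{K}^{p}\left( -\right) $ has been established, for $p\in \left\{ 0,1\right\} $, as a contravariant functor from the category of separable C*-algebras to the category of definable abelian groups (Section \ref{Section:Ext}; see also the $\mathrm{KK}$-functoriality discussed at the end of Section \ref{Section:Kasparov}). Consequently the induced map $\mathrm{K}^{p}\left( e_{A}\right) :\mathrm{K}^{p}\left( K\left( H\right) \otimes A\right) \rightarrow \mathrm{K}^{p}\left( A\right) $ is a definable group homomorphism, and both its source and its target are definable groups.

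Next I would invoke \cite[Theorem 9.4.1]{higson_analytic_2000} to conclude that the underlying homomorphism of abelian groups $\mathrm{K}^{p}\left( e_{A}\right) $ is an isomorphism; in particular it is a bijection, and the choice of rank-one projection $e$ is immaterial, since any two such projections are unitarily equivalent and conjugation by a unitary acts trivially on $\mathrm{K}$-homology. An equivalent route, which in fact exhibits a definable inverse directly, is to observe that $e_{A}$ is a $\mathrm{KK}$-equivalence and to use the fact, recorded at the end of Section \ref{Section:Kasparov}, that $\mathrm{K}$-homology is a contravariant functor from the $\mathrm{KK}$-category of separable C*-algebras to the category of definable groups: then $\mathrm{K}^{p}$ applied to a $\mathrm{KK}$-inverse of $e_{A}$ is a definable two-sided inverse of $\mathrm{K}^{p}\left( e_{A}\right) $.

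Finally, staying with the first route, the definable bijection $\mathrm{K}^{p}\left( e_{A}\right) $ between definable groups is promoted to a definable isomorphism by Corollary \ref{Corollary:Kechris--MacDonald}: its source is a definable set and its target a (semi)definable set, so a definable bijection between them has a definable inverse and is an isomorphism in $\mathbf{DSet}$, hence an isomorphism of definable groups. The one genuinely delicate point in an unabridged argument, namely exhibiting a Borel lift of the inverse map, is precisely what Corollary \ref{Corollary:Kechris--MacDonald} supplies for free once bijectivity is known, so there is no real obstacle here; the mathematical content has been front-loaded into the classical C*-stability theorem and into the earlier development of definable $\mathrm{K}$-homology and the Kechris--Macdonald-type lemmas.
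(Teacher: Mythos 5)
Your argument is correct, but it takes a genuinely different route from the one in the paper. You front-load everything into previously established machinery: functoriality makes $\mathrm{K}^{p}\left( e_{A}\right) $ a definable homomorphism between groups already known to be definable, the classical stability theorem \cite[Theorem 9.4.1]{higson_analytic_2000} makes it a bijection, and Corollary \ref{Corollary:Kechris--MacDonald} then supplies a Borel lift of the inverse for free. The paper instead argues constructively at the level of $\mathrm{KK}_{h}$-cycles, following \cite[E 4.1.3]{jensen_elements_1991}: it exhibits an explicit Borel map $\left( \phi _{+},\phi _{-}\right) \mapsto (\bar{\lambda}^{-1}\circ \left( \mathrm{id}_{K\left( H\right) }\otimes \phi _{+}\right) ,\bar{\lambda}^{-1}\circ \left( \mathrm{id}_{K\left( H\right) }\otimes \phi _{-}\right) )$ inducing a definable homomorphism $G$, and verifies by hand, using strictly continuous paths of isometries in the relevant multiplier algebras, that both composites of $G$ with $\mathrm{KK}_{h}\left( e_{A};\mathbb{C}\right) $ are the identity. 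Your route is shorter and is the same soft pattern the paper itself uses elsewhere (e.g.\ in Lemma \ref{Lemma:full-ideal} and Proposition \ref{Proposition:definable-Kasparov}); what the longer argument buys is an explicit, natural Borel lift of the inverse that does not pass through the uniformization machinery behind Corollary \ref{Corollary:Kechris--MacDonald}, and which reproves the underlying algebraic isomorphism rather than importing it. Your alternative route via $\mathrm{KK}$-equivalence is consistent with the discussion at the end of Section \ref{Section:Kasparov}, but that discussion is less fully developed there (the groups $\mathrm{KK}_{0}\left( A;B\right) $ for general $B$ are only shown to be semidefinable), so the first route is the one to lean on.
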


\begin{proof}
It is easy to see that one can reduce to the case when $H$ is
infinite-dimensional. After replacing $A$ with its stabilization, we can
assume that $p=0$. As $\mathrm{K}^{0}\left( -\right) $ is naturally
isomorphic to $\mathrm{KK}_{h}\left( -;\mathbb{C}\right) $, it suffices to
prove the corresponding statement for $\mathrm{KK}_{h}\left( -;\mathbb{C}%
\right) $. One can then proceed as in \cite[E 4.1.3]{jensen_elements_1991}.
Fix an infinite-dimensional separable Hilbert space $H$, and let $\mathrm{KK}%
_{h}\left( -;\mathbb{C}\right) $ be defined with respect to $H$. Consider
the canonical inclusions $K\left( H\right) \otimes B\left( H\right)
\subseteq B\left( H\right) \otimes B\left( H\right) \subseteq B\left(
H\otimes H\right) $ and the injective *-homomorphism $e_{K\left( H\right)
}:K\left( H\right) \rightarrow K\left( H\right) \otimes K\left( H\right) $, $%
x\mapsto e\otimes x$. Consider also the *-isomorphism $\lambda :K\left(
H\right) \rightarrow K\left( H\right) \otimes K\left( H\right) \cong K\left(
H\otimes H\right) $ defined by setting $\lambda =\mathrm{Ad}\left( V\right)
\circ e_{K\left( H\right) }$ where $V\in B\left( H\otimes H\right) $ is an
isometry with $VV^{\ast }=e\otimes I$. Then $\lambda $ extends to strict
*-isomorphism $\bar{\lambda}=\mathrm{Ad}\left( V\right) \circ \bar{e}%
_{K\left( H\right) }:B\left( H\right) \rightarrow B\left( H\otimes H\right) $%
, where $\bar{e}_{K\left( H\right) }:B\left( H\right) \rightarrow B\left(
H\otimes H\right) $ is the strict extension of $e_{K\left( H\right)
}:K\left( H\right) \rightarrow K\left( H\right) \otimes K\left( H\right)
\cong K\left( H\otimes H\right) $.

One can then consider the definable homomorphism $G:\mathrm{KK}_{h}\left( A;%
\mathbb{C}\right) \rightarrow \mathrm{KK}_{h}\left( K\left( H\right) \otimes
A;\mathbb{C}\right) $ induced by the Borel function 
\begin{equation*}
\mathbb{F}\left( A;\mathbb{C}\right) \rightarrow \mathbb{F}\left( K\left(
H\right) \otimes A;\mathbb{C}\right) \text{, }\left( \phi _{+},\phi
_{-}\right) \mapsto (\bar{\lambda}^{-1}\circ \left( \mathrm{id}_{K\left(
H\right) }\otimes \phi _{+}\right) ,\bar{\lambda}^{-1}\circ \left( \mathrm{id%
}_{K\left( H\right) }\otimes \phi _{-}\right) )\text{.}
\end{equation*}%
Then we have that $\mathrm{KK}_{h}\left( e_{A};\mathbb{C}\right) \circ G:%
\mathrm{KK}_{h}\left( A;\mathbb{C}\right) \rightarrow \mathrm{KK}_{h}\left(
A;\mathbb{C}\right) $ is equal to the identity map. Indeed, $\mathrm{KK}%
_{h}\left( e_{A};\mathbb{C}\right) \circ G$ is induced by the function 
\begin{equation*}
\mathbb{F}\left( A;\mathbb{C}\right) \rightarrow \mathbb{F}\left( A;\mathbb{C%
}\right) \text{, }\left( \phi _{+},\phi _{-}\right) \mapsto (\bar{\lambda}%
^{-1}\circ \left( \mathrm{id}_{K\left( H\right) }\otimes \phi _{+}\right)
\circ e_{A},\bar{\lambda}^{-1}\circ \left( \mathrm{id}_{K\left( H\right)
}\otimes \phi _{-}\right) \circ e_{A})\text{.}
\end{equation*}%
We have that%
\begin{equation*}
(\bar{\lambda}^{-1}\circ \left( \mathrm{id}_{K\left( H\right) }\otimes \phi
_{+}\right) \circ e_{A},\bar{\lambda}^{-1}\circ \left( \mathrm{id}_{K\left(
H\right) }\otimes \phi _{-}\right) \circ e_{A})=\left( \bar{\lambda}%
^{-1}\circ \bar{e}_{K\left( H\right) }\circ \phi _{+},\bar{\lambda}%
^{-1}\circ \bar{e}_{K\left( H\right) }\otimes \phi _{-}\right) \sim \left(
\phi _{+},\phi _{-}\right)
\end{equation*}%
in $\mathbb{F}\left( A;\mathbb{C}\right) $ or, equivalently,%
\begin{equation*}
\left( \bar{e}_{K\left( H\right) }\circ \phi _{+},\bar{e}_{K\left( H\right)
}\otimes \phi _{-}\right) \sim \left( \bar{\lambda}\circ \phi _{+},\bar{%
\lambda}\circ \phi _{-}\right)
\end{equation*}%
in $\mathbb{F}_{H\otimes H}\left( A;\mathbb{C}\right) $, where $\mathbb{F}%
_{H\otimes H}\left( A;\mathbb{C}\right) $ is defined as $\mathbb{F}\left( A;%
\mathbb{C}\right) $ by replacing $H$ with $H\otimes H$. Indeed, by
definition of $\lambda $,%
\begin{equation*}
\left( \bar{\lambda}\circ \phi _{+},\bar{\lambda}\circ \phi _{-}\right)
=\left( \mathrm{Ad}\left( V\right) \circ \bar{e}_{K\left( H\right) }\circ
\phi _{+},\mathrm{Ad}\left( V\right) \circ \bar{e}_{K\left( H\right) }\circ
\phi _{-}\right) \text{.}
\end{equation*}%
By \cite[Lemma 1.3.7]{jensen_elements_1991} there exists a stritly
continuous path $\left( V_{t}\right) _{t\in \left[ 0,1\right] }$ of
isometries in $B\left( H\otimes H\right) $ connecting $I$ to $V$. Thus,%
\begin{equation*}
\left( \mathrm{Ad}\left( V_{t}\right) \circ \bar{e}_{K\left( H\right) }\circ
\phi _{+},\mathrm{Ad}\left( V_{t}\right) \circ \bar{e}_{K\left( H\right)
}\circ \phi _{-}\right)
\end{equation*}%
is a continuous path in $\mathbb{F}_{H\otimes H}\left( A;\mathbb{C}\right) $
connecting $\left( \bar{e}_{K\left( H\right) }\circ \phi _{+},\bar{e}%
_{K\left( H\right) }\otimes \phi _{-}\right) $ to $\left( \bar{\lambda}\circ
\phi _{+},\bar{\lambda}\circ \phi _{-}\right) $. This concludes the proof
that $\mathrm{KK}_{h}\left( e_{A};\mathbb{C}\right) \circ G$ is the identity
of $\mathrm{KK}_{h}\left( A;\mathbb{C}\right) $.

We now show that $G\circ \mathrm{KK}_{h}\left( e_{A};\mathbb{C}\right) $ is
the identity of $\mathrm{KK}_{h}\left( K\left( H\right) \otimes A;\mathbb{C}%
\right) $. We have that $G\circ \mathrm{KK}_{h}\left( A;\mathbb{C}\right) $
is the definable group homomorphism induced by the Borel function%
\begin{equation*}
\mathbb{F}\left( K\left( H\right) \otimes A;\mathbb{C}\right) \rightarrow 
\mathbb{F}\left( K\left( H\right) \otimes A;\mathbb{C}\right) \text{, }%
\left( \psi _{+},\psi _{-}\right) \mapsto \left( \bar{\lambda}^{-1}\circ 
\mathrm{id}_{K\left( H\right) }\otimes \left( \psi _{+}\circ e_{A}\right) ,%
\bar{\lambda}^{-1}\circ \mathrm{id}_{K\left( H\right) }\otimes \left( \psi
_{-}\circ e_{A}\right) \right) \text{.}
\end{equation*}%
We claim that%
\begin{equation*}
\left( \bar{\lambda}^{-1}\circ \mathrm{id}_{K\left( H\right) }\otimes \left(
\psi _{+}\circ e_{A}\right) ,\bar{\lambda}^{-1}\circ \mathrm{id}_{K\left(
H\right) }\otimes \left( \psi _{-}\circ e_{A}\right) \right) \sim \left(
\psi _{+},\psi _{-}\right)
\end{equation*}%
in $\mathbb{F}\left( K\left( H\right) \otimes A;\mathbb{C}\right) $ or,
equivalently%
\begin{eqnarray*}
\left( \mathrm{id}_{K\left( H\right) }\otimes \left( \psi _{+}\circ
e_{A}\right) ,\mathrm{id}_{K\left( H\right) }\otimes \left( \psi _{-}\circ
e_{A}\right) \right) &\sim &\left( \bar{\lambda}\circ \psi _{+},\bar{\lambda}%
^{-1}\circ \psi _{-}\right) \\
&=&\left( \mathrm{Ad}\left( V\right) \circ \bar{e}_{K\left( H\right) }\circ
\psi _{+},\mathrm{Ad}\left( V\right) \circ \bar{e}_{K\left( H\right) }\circ
\psi _{-}\right)
\end{eqnarray*}%
in $\mathbb{F}_{H\otimes H}\left( K\left( H\right) \otimes A;\mathbb{C}%
\right) $. Indeed, define $\sigma _{1},\sigma _{2}:K\left( H\right) \otimes
A\rightarrow K\left( H\right) \otimes K\left( H\right) \otimes A$ be the
(strict) *-homomorphisms given by%
\begin{equation*}
T\otimes a\mapsto T\otimes e\otimes a
\end{equation*}%
and%
\begin{equation*}
T\otimes a\mapsto e\otimes T\otimes a\text{.}
\end{equation*}%
We can consider their strict extensions $\bar{\sigma}_{1},\bar{\sigma}%
_{2}:M\left( K\left( H\right) \otimes A\right) \rightarrow M\left( K\left(
H\right) \otimes K\left( H\right) \otimes A\right) $. Then we have that%
\begin{equation*}
\mathrm{id}_{K\left( H\right) }\otimes \left( \psi _{\pm }\circ e_{A}\right)
=\psi _{\pm }\circ \sigma _{1}:K\left( H\right) \otimes A\rightarrow B\left(
H\otimes H\right)
\end{equation*}%
and%
\begin{equation*}
\bar{e}_{K\left( H\right) }\circ \psi _{\pm }=\left( \mathrm{id}_{K\left(
H\right) }\otimes \psi _{+}\right) \circ \sigma _{2}:K\left( H\right)
\otimes A\rightarrow B\left( H\otimes H\right) \text{.}
\end{equation*}%
We have that $\bar{\sigma}_{1}=\mathrm{Ad}\left( U\otimes 1\right) \circ 
\bar{\sigma}_{2}$ for some unitary $U\in M\left( K\left( H\right) \otimes
K\left( H\right) \otimes A\right) $. Since $M\left( K\left( H\right) \otimes
K\left( H\right) \otimes A\right) $ is connected in the strict topology \cite%
[Lemma 1.3.7]{jensen_elements_1991}, we have that%
\begin{eqnarray*}
\left( \mathrm{id}_{K\left( H\right) }\otimes \left( \psi _{+}\circ
e_{A}\right) ,\mathrm{id}_{K\left( H\right) }\otimes \left( \psi _{-}\circ
e_{A}\right) \right) &=&\left( \left( \mathrm{id}_{K\left( H\right) }\otimes
\psi _{+}\right) \circ \sigma _{1},\left( \mathrm{id}_{K\left( H\right)
}\otimes \psi _{-}\right) \circ \sigma _{1}\right) \\
&\sim &\left( \left( \mathrm{id}_{K\left( H\right) }\otimes \psi _{+}\right)
\circ \sigma _{2},\left( \mathrm{id}_{K\left( H\right) }\otimes \psi
_{-}\right) \circ \sigma _{2}\right) \\
&=&\left( \bar{e}_{K\left( H\right) }\circ \psi _{+},\bar{e}_{K\left(
H\right) }\circ \psi _{-}\right) \\
&\sim &\left( \mathrm{Ad}\left( V\right) \circ \bar{e}_{K\left( H\right)
}\circ \psi _{+},\mathrm{Ad}\left( V\right) \circ \bar{e}_{K\left( H\right)
}\circ \psi _{-}\right) \text{.}
\end{eqnarray*}%
This concludes the proof.
\end{proof}

\subsection{Split exactness}

Suppose that%
\begin{equation*}
0\rightarrow A\overset{i}{\rightarrow }B\overset{p}{\rightarrow }%
C\rightarrow 0
\end{equation*}%
is an exact sequence of definable groups and definable group homomorphisms.
We say that it is \emph{definably} split if $p$ is a split epimorphism in
the category of definable groups, namely there exists a definable group
homomorphism $g:C\rightarrow B$ such that $p\circ g$ is equal to the
identity of $C$. This is equivalent to the assertion that $i:A\rightarrow B$
is a split monomorphism in the category of definable groups, namely there
exists a definable group homomorphism $f:B\rightarrow A$ such that $f\circ i$
is equal to the identity of $A$. In turn, this is equivalent to the
assertion that there exists a definable isomorphism $\gamma :B\rightarrow
A\oplus C$ that makes the diagram
\begin{center}
\begin{tikzcd}
A \arrow[r] \arrow[d,"\mathrm{id}_A"] & B \arrow[d, "\gamma "] \arrow[r] & C \arrow[d, "\mathrm{id}_C"]\\
A \arrow[r] & A\oplus C \arrow[r] & C%
\end{tikzcd}
\end{center}
commute.

If $\left( A,J\right) $ is a separable C*-pair such that the exact sequence%
\begin{equation*}
0\rightarrow J\rightarrow A\rightarrow A/J\rightarrow 0
\end{equation*}%
splits, then $\left( A,J\right) $ is, in particular, semi-split. Thus, there
is a corresponding six-term exact sequence in $\mathrm{K}$-homology. This
reduces to two \emph{definably split }exact sequences of definable groups
and definable group homomorphisms%
\begin{equation*}
0\rightarrow \mathrm{K}^{p}\left( J\right) \rightarrow \mathrm{K}^{p}\left(
A\right) \rightarrow \mathrm{K}^{p}\left( A/J\right) \rightarrow 0
\end{equation*}%
for $p\in \left\{ 0,1\right\} $. This is the split-exactness property of
definable $\mathrm{K}$-homology in the sense of \cite{cuntz_new_1987}.

\section{A definable Universal Coefficient Theorem\label{Section:UCT}}

In this section we consider a definable version of the Universal Coefficient
Theorem for $\mathrm{K}$-homology due to Brown \cite{brown_universal_1984},
later generalized by Rosenberg and Schochet to $\mathrm{KK}$-theory \cite%
{rosenberg_kunneth_1987}. We also consider the \emph{fine structure }of the
definable $\mathrm{K}$-homology groups as in \cite{schochet_uct_1996} in
terms of the notion of filtration for a separable nuclear C*-algebra
introduced therein. As an application, we show that definable $\mathrm{K}$%
-homology is a complete invariant for UHF C*-algebras up to stable
isomorphism, while the same conclusion does not hold for the purely
algebraic $\mathrm{K}$-homology. In this section, we assume all the
C*-algebras to be separable and nuclear.

\subsection{Index pairing for $\mathrm{K}$-homology}

Suppose that $A$ is a separable, nuclear C*-algebra. Fix $p\in \left\{
0,1\right\} $. Then one can define a natural \emph{definable} \emph{index
pairing} $\mathrm{K}^{p}\left( A\right) \times \mathrm{K}_{p}\left( A\right)
\rightarrow \mathbb{Z}$, where $\mathbb{Z}$ and the countable group $\mathrm{%
K}_{p}\left( A\right) $ are regarded as standard Borel spaces with respect
to the trivial Borel structure. Suppose that $A$ is concretely realized as a
C*-subalgebra of $B\left( H\right) $ such that the inclusion map $%
A\rightarrow B\left( H\right) $ is an ample representation of $A$, and let $%
\mathfrak{D}\left( A\right) \subseteq B\left( H\right) $ be the
corresponding Paschke dual algebra. For $p=1$ the pairing is defined by%
\begin{equation*}
\left\langle \left[ P\right] ,\left[ u\right] \right\rangle =\mathrm{Index}%
_{PH^{k}}\left( P^{\oplus k}uP^{\oplus k}\right)
\end{equation*}%
where $k\geq 1$, $u\in U\left( M_{k}\left( A^{+}\right) \right) $ is a
unitary, $P\in \mathfrak{D}\left( A\right) $ is a projection, $P^{\oplus k}$
is the $k$-fold direct sum of $P$, $P^{\oplus k}uP^{\oplus k}\in B\left(
H^{k}\right) $ satisfies%
\begin{equation*}
\left( P^{\oplus k}uP^{\oplus k}\right) ^{\ast }\left( P^{\oplus
k}uP^{\oplus k}\right) \equiv \left( P^{\oplus k}uP^{\oplus k}\right) \left(
P^{\oplus k}uP^{\oplus k}\right) ^{\ast }\equiv P^{\oplus k}\mathrm{\ 
\mathrm{mod}}\ K\left( H^{k}\right) \text{,}
\end{equation*}%
and $\mathrm{Index}_{P^{\oplus k}H^{k}}\left( P^{\oplus k}uP^{\oplus
k}\right) $ is its Fredholm index of $P^{\oplus k}uP^{\oplus k}$ regarded as
a Fredholm operator on $P^{\oplus k}H^{k}$; see \cite[Definition 7.2.1]%
{higson_analytic_2000}. Such a pairing is definable, in the sense that it is
induced by a Borel function $\mathrm{Z}^{1}\left( \mathfrak{D}\left(
A\right) \right) \times \mathrm{K}_{1}\left( A\right) \rightarrow \mathbb{Z}$%
, considering that the Fredholm index is given by a Borel map; see Section %
\ref{Subsection:polar}.

The index pairing $\mathrm{K}^{0}\left( A\right) \times \mathrm{K}_{0}\left(
A\right) \rightarrow \mathbb{Z}$ is defined by%
\begin{equation*}
\left\langle \left[ U\right] ,\left[ p\right] -\left[ q\right] \right\rangle
=\mathrm{Index}_{pH^{k}}\left( pU^{\oplus k}p\right) -\mathrm{Index}%
_{qH^{k}}\left( qU^{\oplus k}q\right) \text{,}
\end{equation*}%
where $k\geq 1$, $p,q\in M_{k}\left( A^{+}\right) $ are projections that
satisfy $p\equiv q\mathrm{\ \mathrm{mod}}\ M_{k}\left( A\right) $, $%
pU^{\oplus k}p\in B\left( H^{k}\right) $ satisfies%
\begin{equation*}
\left( pU^{\oplus k}p\right) ^{\ast }\left( pU^{\oplus k}p\right) \equiv
\left( pU^{\oplus k}p\right) \left( pU^{\oplus k}p\right) ^{\ast }\equiv p%
\mathrm{\ \mathrm{mod}}\ K\left( H^{k}\right) \text{,}
\end{equation*}%
and $\mathrm{Index}_{pH^{k}}\left( pU^{\oplus k}p\right) $ is the Fredholm
index of $pU^{\oplus k}p$ regarded as a Fredholm operator on $pH^{k}$ and
similarly for $qU^{\oplus k}q$; see \cite[Definition 7.2.3]%
{higson_analytic_2000}. Again, this pairing is definable since the Fredholm
index is given by a Borel map.

\subsection{Extensions\textrm{\ }of groups}

Suppose that $C,D$ are countable abelian groups. A ($2$-)cocycle on $C$ with
coefficients in $D$ is a function $c:C\times C\rightarrow D$ such that, for
every $x,y,z\in C$:

\begin{itemize}
\item $c\left( x,y\right) +c\left( x+y,z\right) =c\left( x,y+z\right)
+c\left( y,z\right) $;

\item $c\left( x,y\right) =c\left( y,x\right) $.
\end{itemize}

A cocycle is a coboundary if it is of the form $\left( x,y\right) \mapsto
h(x)+h(y)-h\left( x,y\right) $ for some function $h:C\rightarrow D$. The set 
$\mathrm{Z}\left( C,D\right) $ of cocycles on $C$ with coefficients in $D$
is a closed subgroup of the Polish group $D^{C\times C}$ endowed with the
product topology (where $D$ is endowed with the discrete topology). The set $%
\mathrm{B}\left( C,D\right) $ of coboundaries is a Polishable Borel subgroup
of $\mathrm{Z}\left( C,D\right) $. A \emph{weak }coboundary is a cocycle $c$
such that, for every finite (or, equivalently, for every finitely-generated)
subgroup $S$ of $C$, the restriction of $c$ to $S\times S$ is a coboundary
for $S$. Weak coboundaries form a closed subgroup $\mathrm{B}_{\mathrm{w}%
}\left( C,D\right) $ of $\mathrm{Z}\left( C,D\right) $, which is in fact the
closure of $\mathrm{B}\left( C,D\right) $ inside of $\mathrm{Z}\left(
C,D\right) $.

The group $\mathrm{Ext}\left( C,D\right) $ is the definable group, which is
in fact a group with Polish cover (see Remark \ref{Remark:Polish-cover}),
obtained as the quotient $\mathrm{Z}\left( C,D\right) /\mathrm{B}\left(
C,D\right) $; see \cite[Section 7]{bergfalk_ulam_2020}. The \emph{pure }(or 
\emph{phantom}) Ext group $\mathrm{PExt}\left( C,D\right) $ is the definable
subgroup of $\mathrm{Ext}\left( C,D\right) $ obtained as $\mathrm{B}_{%
\mathrm{w}}\left( C,D\right) /\mathrm{B}\left( C,D\right) $; see \cite%
{schochet_pext_2003,christensen_phantom_1998}. We also define $\mathrm{Ext}_{%
\mathrm{w}}\left( C,D\right) $ to be the Polish group obtained as the
quotient of the Polish group $\mathrm{Z}\left( C,D\right) $ by the closed
subgroup $\mathrm{B}_{\mathrm{w}}\left( C,D\right) $. By definition, we have
a short exact sequence of definable groups%
\begin{equation*}
0\rightarrow \mathrm{PExt}\left( C,D\right) \rightarrow \mathrm{Ext}\left(
C,D\right) \rightarrow \mathrm{Ext}_{\mathrm{w}}\left( C,D\right)
\rightarrow 0\text{.}
\end{equation*}

The terminology is due to the fact that every cocycle on $D$ with
coefficients in $D$ gives rise to an extension of $C$ by $D$, in such a way
that two cocycles differ by a coboundary if and only if the corresponding
extensions are isomorphic. Furthermore, every extension of $C$ by $D$ arises
from a cocycle in this fashion. Explicitly, if%
\begin{equation*}
0\rightarrow D\overset{i}{\rightarrow }E\overset{p}{\rightarrow }%
C\rightarrow 0
\end{equation*}%
is an extension of $C$ by $D$, the corresponding cocycle $c$ is defined as
follows. Fix a right inverse $t:C\rightarrow E$ for the function $%
p:E\rightarrow C$. Then one defines $c\left( x,y\right) :=i^{-1}\left(
t\left( x\right) +t\left( y\right) -t\left( x+y\right) \right) \in D$ for $%
x,y\in C$. Conversely, given a cocycle $c$ on $C$ with coefficients in $D$
one can define an extension as above, where $E=C\times D$ is endowed with
the operation defined by 
\begin{equation*}
\left( x,y\right) +\left( x^{\prime },y^{\prime }\right) =\left( x+x^{\prime
},c\left( x,x^{\prime }\right) +y+y^{\prime }\right) \text{.}
\end{equation*}%
The \emph{weak }coboundaries correspond in this way to extension of $C$ by $%
D $ that are \emph{pure}, i.e. such that $i\left( D\right) $ is a pure
subgroup of $E$; see \cite[Section V.29]{fuchs_infinite_1970}.

If $\left( C_{i}\right) _{i\in \omega }$ is an inductive sequence of\emph{\
finitely-generated }abelian groups and $C=\mathrm{colim}_{i\in \omega }C_{i}$
is the corresponding inductive limit (colimit), then the definable Jensen
theorem asserts that $\mathrm{PExt}\left( C,D\right) $ is naturally
definably isomorphic to $\mathrm{lim}_{i}^{1}\mathrm{Hom}\left(
C_{i},D\right) $, and $\mathrm{Ext}_{\mathrm{w}}\left( C,D\right) $ is
naturally isomorphic as a Polish group to \textrm{lim}$_{i}\mathrm{Ext}%
\left( C_{i},D\right) $; see \cite[Theorem 7.4]{bergfalk_ulam_2020} and \cite%
[Theorem 6.1]{schochet_pext_2003}.

\subsection{The Universal Coefficient Theorem}

Suppose that $A$ is a separable, nuclear C*-algebra. The definable index
pairing $\mathrm{K}^{1}\left( A\right) \times \mathrm{K}_{1}\left( A\right)
\rightarrow \mathbb{Z}$ induces a definable homomorphism%
\begin{equation*}
\mathrm{Index}_{A}:\mathrm{K}^{1}\left( A\right) \rightarrow \mathrm{Hom}%
\left( \mathrm{K}_{1}\left( A\right) ,\mathbb{Z}\right) \text{,}
\end{equation*}%
where we adopt the notation from \cite[Definition 7.2.3]%
{higson_analytic_2000}. Recall that $\mathrm{K}^{1}\left( A\right) $ is
defined as \textrm{Ext}$\left( A^{+}\right) ^{-1}$ where $A^{+}$ is the
unitization of $A$. The definable homomorphism $\mathrm{Index}_{A}:\mathrm{%
Ext}\left( A^{+}\right) ^{-1}\rightarrow \mathrm{Hom}\left( \mathrm{K}%
_{1}\left( A\right) ,\mathbb{Z}\right) $ can be equivalently described as
follows; see \cite{rosenberg_comparing_1981}. Let $\tau $ be an injective
unital extension 
\begin{equation*}
0\rightarrow K\left( H\right) \rightarrow E\rightarrow A^{+}\rightarrow 0%
\text{,}
\end{equation*}%
of $A^{+}$ by $K\left( H\right) $. Then $\tau $ gives rise to a six-term
exact sequence in $\mathrm{K}$-theory
\begin{center}
\begin{tikzcd}
\mathrm{K}_{0}\left( K\left( H\right) \right) =\mathbb{Z} \arrow[r] & \mathrm{K}_{0}\left( E\right) \arrow[r] & \mathrm{K}_{0}\left(A^{+}\right) \arrow[d, "\partial ^0"] \\
\mathrm{K}_{1}\left( A^{+}\right) =\mathrm{K}_{1}\left( A\right) \arrow[u, "\partial ^1"] &  \mathrm{K}_{1}\left( E\right) \arrow[l] & \mathrm{K}_{1}\left( K\left(H\right) \right) =\left\{ 0\right\} \arrow[l]
\end{tikzcd}
\end{center}
The group homomorphism $\mathrm{K}_{1}\left( A\right) \rightarrow \mathbb{Z}$
induced by $\tau $ in the diagram above depends only on the corresponding
element $\left[ \tau \right] $ of $\mathrm{Ext}\left( A^{+}\right) ^{-1}$,
and it is equal to $\mathrm{Index}_{A}\left( [\tau ]\right) $. As in \cite[%
Definition 7.6.7]{higson_analytic_2000}, we let $^{\circ }\mathrm{K}%
^{1}\left( A\right) $ be the definable subgroup of $\mathrm{K}_{1}\left(
A\right) $ obtained as the kernel of the index homomorphism $\mathrm{Index}%
_{A}:\mathrm{K}^{1}\left( A\right) \rightarrow \mathrm{Hom}\left( \mathrm{K}%
_{1}\left( A\right) ,\mathbb{Z}\right) $.

There is also a definable group homomorphism $\varkappa _{A}:{}^{\circ }%
\mathrm{K}^{1}\left( A\right) \rightarrow \mathrm{Ext}\left( \mathrm{K}%
_{0}\left( A\right) ,\mathbb{Z}\right) $, defined as follows. Suppose that $%
\tau $ is an injective unital extension of $A^{+}$ by $K\left( H\right) $ as
above, such that moreover $[\tau ]\in {}^{\circ }\mathrm{K}^{1}\left(
A\right) $. Then the six-term exact sequence above reduces to a short exact
sequence%
\begin{equation*}
0\rightarrow \mathrm{K}_{0}\left( K\left( H\right) \right) =\mathbb{Z}%
\rightarrow \mathrm{K}_{0}\left( E\right) \rightarrow \mathrm{K}_{0}\left(
A^{+}\right) \rightarrow 0
\end{equation*}%
This defines an element of $\mathrm{Ext}\left( \mathrm{K}_{0}\left(
A^{+}\right) ,\mathbb{Z}\right) $, which in turn defines an element of $%
\mathrm{Ext}\left( \mathrm{K}_{0}\left( A\right) ,\mathbb{Z}\right) $ via
the inclusion $\mathrm{K}_{0}\left( A\right) \rightarrow \mathrm{K}%
_{0}\left( A^{+}\right) $. This element $\varkappa _{A}\left( [\tau ]\right) 
$ of $\mathrm{Ext}\left( \mathrm{K}_{0}\left( A\right) ,\mathbb{Z}\right) $
depends only on the class $[\tau ]$ in $\mathrm{Ext}\left( A^{+}\right) ^{-1}
$ of the extension $\tau $. This gives a group homomorphism $\varkappa
_{A}:{}^{\circ }\mathrm{K}^{1}\left( A\right) \rightarrow \mathrm{Ext}\left( 
\mathrm{K}_{0}\left( A\right) ,\mathbb{Z}\right) $, $[\tau ]\mapsto
\varkappa _{A}\left( [\tau ]\right) $, which is easily seen to be definable.
In a similar fashion, by replacing $A$ with its suspension, one can define a
definable group homomorphism 
\begin{equation*}
\mathrm{Index}_{A}:\mathrm{K}^{0}\left( A\right) \rightarrow \mathrm{Hom}%
\left( \mathrm{K}_{0}\left( A\right) ,\mathbb{Z}\right) 
\end{equation*}%
with kernel $^{\circ }\mathrm{K}^{0}\left( A\right) $, and a definable group
homomorphism%
\begin{equation*}
\varkappa _{A}:{}^{\circ }\mathrm{K}^{0}\left( A\right) \rightarrow \mathrm{%
Ext}\left( \mathrm{K}_{1}\left( A\right) ,\mathbb{Z}\right) \text{.}
\end{equation*}

We recall the following definition of a C*-algebra satisfying the Universal
Coefficient Theorem (UCT); see \cite[Definition 4.4]%
{rosenberg_comparing_1981}.

\begin{definition}
\label{Definition:UCT}A separable C*-algebra $A$ is said to satisfy the 
\emph{Universal Coefficient Theorem} (UCT) for $\mathbb{C}$, or the pair $%
\left( A,\mathbb{C}\right) $ satisfies the UCT, if for $p\in \left\{
0,1\right\} $ the group homomorphisms $\mathrm{Index}_{A}:\mathrm{K}%
^{p}\left( A\right) \rightarrow \mathrm{Hom}\left( \mathrm{K}_{p}\left(
A\right) ,\mathbb{Z}\right) $ is surjective, and the group homomorphism $%
\varkappa _{A}:\mathrm{\mathrm{Ker}}\left( \gamma _{A}\right) ={}^{\circ }%
\mathrm{K}^{p}\left( A\right) \rightarrow \mathrm{Ext}\left( \mathrm{K}%
_{1-p}\left( A\right) ,\mathbb{Z}\right) $ is an isomorphism.
\end{definition}

It is proved in \cite{brown_universal_1984} that all the separable nuclear
C*-algebras in the so-called \emph{bootstrap class} satisfy the UCT for $%
\mathbb{C}$; see also \cite{brown_operator_1975}. In fact one can more
generally consider the UCT for $B$, where $B$ is any separable C*-algebra,
defined in terms of Kasparov's \textrm{KK}-groups; see \cite%
{rosenberg_kunneth_1987}. It is unknown whether there exists a separable
nuclear C*-algebra that does \emph{not }satisfy the UCT.

\subsection{Weak and asymptotic $\mathrm{K}$-homology groups}

We now recall the notion of a \emph{filtration }(or $\mathrm{KK}$%
-filtration) for a separable nuclear C*-algebra as in \cite[Definition 1.4]%
{schochet_uct_1996}, and we define the weak and asymptotic $\mathrm{K}$%
-homology groups for C*-algebras with a filtration.

\begin{definition}
\label{Definition:filtration}Suppose that $A$ is a separable, nuclear
C*-algebra. An inductive sequence $\left( A_{n},\eta _{n}\right) _{n\in
\omega }$ of separable, nuclear C*-algebras is a \emph{filtration }of $A$ if:

\begin{itemize}
\item for every $n\in \omega $, $A_{n}$ satisfies the Universal Coefficient
Theorem for $\mathbb{C}$ (as in\ Definition \ref{Definition:UCT});

\item for every $n\in \omega $ and $p\in \left\{ 0,1\right\} $, $\mathrm{K}%
_{p}\left( A_{n}\right) $ is a finitely generated group;

\item $A$ is $\mathrm{KK}$-equivalent to the inductive limit of the sequence 
$\left( A_{n},\eta _{n}\right) _{n\in \omega }$.
\end{itemize}
\end{definition}

\begin{remark}
A slightly more restrictive definition is considered in \cite[Definition 1.4]%
{schochet_uct_1996}, where the C*-algebras $A_{n}$ are supposed to
commutative.
\end{remark}

We let $\mathcal{C}$ be the category that has separable, nuclear C*-algebras
with a filtration as objects, and *-homomorphisms as morphisms.

Suppose that $A$ is a separable, nuclear C*-algebra with a filtration $%
\left( A_{n}\right) _{n\in \omega }$. Then the inductive limit $\mathrm{colim%
}_{n}A_{n}$ of the sequence $\left( A_{n}\right) _{n\in \omega }$ satisfies
the UCT for $\mathbb{C}$ by \cite[Theorem 4.1]{schochet_uct_1996}, whence $A$
satisfies the UCT as well. Thus, the definable group homomorphism $\kappa
_{A}:{}^{\circ }\mathrm{K}^{p}\left( A\right) =\mathrm{\mathrm{Ker}}\left( 
\mathrm{Index}_{A}\right) \rightarrow \mathrm{Ext}\left( \mathrm{K}%
_{p}\left( A\right) ,\mathbb{Z}\right) $ is an isomorphism. After replacing $%
A$ with $\mathrm{colim}_{n}A_{n}$ we can assume that $A=\mathrm{colim}%
_{n}A_{n}$. Notice that, as \textrm{K}$_{0}\left( A_{n}\right) $ and $%
\mathrm{K}_{1}\left( A_{n}\right) $ are finitely-generated and $A_{n}$
satisfies the UCT for $\mathbb{C}$, it follows that $\mathrm{K}^{0}\left(
A_{n}\right) $ and \textrm{K}$^{1}\left( A_{n}\right) $ are \emph{countable}
groups.

We define the \emph{weak }$\mathrm{K}$-homology group $\mathrm{K}_{\mathrm{w}%
}^{p}\left( A\right) $ to be Polish group \textrm{lim}$_{n}\mathrm{K}%
^{p}\left( A_{n}\right) $. The assignment $A\mapsto \mathrm{K}_{\mathrm{w}%
}^{p}\left( A\right) $ defines a homotopy-invariant functor from $\mathcal{C}
$ to the category of Polish groups. The weak $\mathrm{K}$-homology group $%
\mathrm{K}_{\mathrm{w}}^{1}\left( A\right) $ is isomorphic to the group $%
\mathrm{KL}\left( A,\mathbb{C}\right) $ from \cite[Section 4]%
{rordam_classification_1995}; see also \cite[2.4.8]%
{rordam_classification_2002} and \cite[Corollary 3.8]{schochet_uct_1996}. A
description of $\mathrm{K}_{\mathrm{w}}^{1}\left( A\right) $ in terms of the
sum $\underline{K}\left( A\right) $ of all the $\mathrm{K}$-theory groups of 
$A$ in all degrees and all cyclic coefficient groups is obtained in \cite%
{dadarlat_universal_1996}; see also \cite[Theorem 3.10]{schochet_uct_1996}.

We have a canonical surjective definable homomorphism $\mathrm{K}^{p}\left(
A\right) \rightarrow \mathrm{K}_{\mathrm{w}}^{p}\left( A\right) $ as in
Milnor's exact sequence. We define the \emph{asymptotic }$\mathrm{K}$%
-homology group $\mathrm{K}_{\infty }^{p}\left( A\right) $ to be the kernel
of such a definable homomorphism. As $A$ satisfies the UCT for $\mathbb{C}$,
the definable isomorphism $\varkappa _{A}:{}^{\circ }\mathrm{K}^{p}\left(
A\right) \rightarrow \mathrm{Ext}\left( \mathrm{K}_{1-p}\left( A\right) ,%
\mathbb{Z}\right) $ is an isomorphism. Since, for every $n\in \omega $, $%
A_{n}$ satisfies the UCT for $\mathbb{C}$, $\mathrm{K}_{\infty }^{p}\left(
A\right) \subseteq {}^{\circ }\mathrm{K}^{p}\left( A\right) $ is equal to
the inverse image of $\mathrm{PExt}\left( \mathrm{K}_{1-p}\left( A\right) ,%
\mathbb{Z}\right) $ under $\varkappa _{A}$. In particular, this shows that $%
\mathrm{K}_{\infty }^{p}\left( A\right) $ does not depend on the choice of
the filtration for $A$. The assignment $A\mapsto \mathrm{K}_{\infty
}^{p}\left( A\right) $ defines a homotopy-invariant functor from $\mathcal{C}
$ to the category of definable groups. As noticed above, $\mathrm{K}_{\infty
}^{p}\left( A\right) $ is naturally definably isomorphic to $\mathrm{PExt}%
\left( \mathrm{K}_{1-p}\left( A\right) ,\mathbb{Z}\right) $. We also have
that $\mathrm{K}_{1-p}\left( A\right) =\mathrm{colim}_{n}\mathrm{K}%
_{1-p}\left( A_{n}\right) $, and hence $\mathrm{K}_{\infty }^{p}\left(
A\right) $ is definably isomorphic to $\mathrm{lim}_{n}^{1}\mathrm{Hom}%
\left( \mathrm{K}_{1-p}\left( A_{n}\right) ,\mathbb{Z}\right) $ by the
definable Jensen theorem \cite[Theorem 7.4]{bergfalk_ulam_2020}.

\begin{lemma}
Suppose that $A$ is a separable, nuclear C*-algebra, and $\left( A_{n},\eta
_{n}\right) _{n\in \omega }$ is a filtration of $A$. Then the definable
homomorphism $\mathrm{K}^{p}\left( A\right) \rightarrow \mathrm{K}_{\mathrm{w%
}}^{p}\left( A_{n}\right) $ has a definable right inverse $\mathrm{K}_{%
\mathrm{w}}^{p}\left( A_{n}\right) \rightarrow \mathrm{K}^{p}\left( A\right) 
$, which is not necessarily a group homomorphism.
\end{lemma}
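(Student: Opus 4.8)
The plan is to reduce to a concrete inductive model, peel off the ``Polish part'' of $\mathrm{K}^{p}(A)$ using the definable Universal Coefficient Theorem, and then realize the remaining lift by a telescope construction at the level of representatives. First, by $\mathrm{KK}$-invariance of $\mathrm{K}^{p}(-)$ and the definition of a filtration one may assume $A=\mathrm{colim}_{n}A_{n}$, so that the canonical $\ast$-homomorphisms $\eta _{n}\colon A_{n}\to A$ are available and $\pi \colon \mathrm{K}^{p}(A)\to \mathrm{K}_{\mathrm{w}}^{p}(A)=\mathrm{lim}_{n}\mathrm{K}^{p}(A_{n})$ is the map $\zeta \mapsto (\mathrm{K}^{p}(\eta _{n})(\zeta ))_{n}$ of the definable Milnor sequence. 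Since each $A_{n}$ satisfies the UCT for $\mathbb{C}$ and has finitely generated $\mathrm{K}$-theory, each $\mathrm{K}^{p}(A_{n})$ is a countable group, and hence $\mathrm{K}_{\mathrm{w}}^{p}(A)$ is a closed subgroup of the Polish group $\prod _{n}\mathrm{K}^{p}(A_{n})$, so it is Polish; the task is to produce a definable right inverse of $\pi $.

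Next I would invoke the definable UCT and the definable Jensen theorem to obtain a commuting ladder of short exact sequences whose top row is the definably exact sequence
\[0\to \mathrm{Ext}(\mathrm{K}_{1-p}(A),\mathbb{Z})\to \mathrm{K}^{p}(A)\xrightarrow{\ \mathrm{Index}_{A}\ }\mathrm{Hom}(\mathrm{K}_{p}(A),\mathbb{Z})\to 0\]
and whose bottom row is the short exact sequence of Polish groups
\[0\to \mathrm{Ext}_{\mathrm{w}}(\mathrm{K}_{1-p}(A),\mathbb{Z})\to \mathrm{K}_{\mathrm{w}}^{p}(A)\xrightarrow{\ \bar{\pi}\ }\mathrm{Hom}(\mathrm{K}_{p}(A),\mathbb{Z})\to 0\]
obtained by applying $\mathrm{lim}_{n}$ to the UCT sequences of the $A_{n}$ (here $\mathrm{lim}_{n}^{1}\mathrm{Ext}(\mathrm{K}_{1-p}(A_{n}),\mathbb{Z})=0$ because those groups are finite, and the identification of the left-hand term uses the definable Jensen theorem). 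In this ladder the middle vertical is $\pi $, the left vertical is the canonical quotient $\mathrm{Ext}\to \mathrm{Ext}_{\mathrm{w}}$ with kernel $\mathrm{PExt}(\mathrm{K}_{1-p}(A),\mathbb{Z})=\mathrm{K}_{\infty }^{p}(A)$, the right vertical is the identity, and $\mathrm{Index}_{A}=\bar{\pi }\circ \pi $ by naturality of the index pairing. Now the bottom row, being an extension of Polish groups, is a quotient map of Polish groups, so $\bar{\pi }$ admits a definable section; and $\mathrm{Ext}(\Lambda ,\mathbb{Z})\to \mathrm{Ext}_{\mathrm{w}}(\Lambda ,\mathbb{Z})$ is the map $\mathrm{Z}(\Lambda ,\mathbb{Z})/\mathrm{B}(\Lambda ,\mathbb{Z})\to \mathrm{Z}(\Lambda ,\mathbb{Z})/\overline{\mathrm{B}(\Lambda ,\mathbb{Z})}$, which admits a definable section $\sigma $ obtained by lifting along a Borel section of the Polish-group quotient $\mathrm{Z}(\Lambda ,\mathbb{Z})\to \mathrm{Ext}_{\mathrm{w}}(\Lambda ,\mathbb{Z})$ and then passing to $\mathrm{Ext}(\Lambda ,\mathbb{Z})$. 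Granting in addition a definable section $\mu $ of $\mathrm{Index}_{A}$, the map $w\mapsto \mu (\bar{\pi }(w))+\sigma \bigl(w-\pi (\mu (\bar{\pi }(w)))\bigr)$ is a definable right inverse of $\pi $: the argument of $\sigma $ lies in $\mathrm{Ext}_{\mathrm{w}}(\mathrm{K}_{1-p}(A),\mathbb{Z})$ because $\bar{\pi }$ kills it, and a direct computation using $\mathrm{Index}_{A}\circ \mu =\mathrm{id}$, $\bar{\pi }\circ \pi =\mathrm{Index}_{A}$, and $\pi \circ \sigma =\mathrm{id}$ on $\mathrm{Ext}_{\mathrm{w}}$ gives $\pi \circ s=\mathrm{id}$. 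So everything reduces to sectioning $\mathrm{Index}_{A}\colon \mathrm{K}^{p}(A)\to \mathrm{Hom}(\mathrm{K}_{p}(A),\mathbb{Z})=\mathrm{lim}_{n}\mathrm{Hom}(\mathrm{K}_{p}(A_{n}),\mathbb{Z})$.

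To section $\mathrm{Index}_{A}$ I would argue with concrete representatives. Each homomorphism $\mathrm{K}_{p}(A_{n})\to \mathbb{Z}$ is realized, via the six-term boundary of an essential unital extension of $A_{n}^{+}$ by $K(H)$ (equivalently, a Fredholm module over $A_{n}$), and since there are only countably many of them one chooses such a representing extension in a Borel way in the homomorphism at each finite stage. Given a point $(h_{n})_{n}$ of $\mathrm{lim}_{n}\mathrm{Hom}(\mathrm{K}_{p}(A_{n}),\mathbb{Z})$, one then splices the representing extensions stagewise, conjugating each stage onto the previous one modulo $K(H)$ by unitaries selected from the data in a Borel fashion via the definable Voiculescu theorem and its corollaries on absorption of trivial extensions, and assembles an essential extension over the mapping telescope $T(\boldsymbol{A})$; transporting it to $\mathrm{K}^{p}(A)$ through the definable boundary isomorphism $\mathrm{K}^{1-p}(J)\xrightarrow{\ \sim\ }\mathrm{K}^{p}(A)$ coming from the contractible extension $0\to J\to T(\boldsymbol{A})\to A\to 0$ and the Mayer--Vietoris identifications (all of whose inverses were already shown to be definable) yields an element with the prescribed index data, Borel in $(h_{n})_{n}$.

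The main obstacle is precisely the Borel-uniformity of this last construction: Milnor's and Schochet's telescope patching --- the choice of representing extensions, of the Voiculescu unitaries conjugating one stage onto the next modulo the compacts, and of the lifts along the surjections occurring in the six-term sequence --- must be carried out as Borel functions of the input, uniformly in $(h_{n})_{n}$. This is exactly where the definable enhancements of Voiculescu's theorem, of the Stinespring and Arveson theorems, and the measurable-selection and uniformization lemmas of the earlier sections are used; modulo these, the argument is the classical one with the extra bookkeeping needed to keep every choice Borel. An alternative, more structural route would be to verify directly, from the same telescope data, that $\mathrm{K}^{p}(A)$ is a group with a Polish cover under the filtration hypothesis; then the Borel lift of the homomorphism $\pi $ is automatically a continuous homomorphism between Polish groups with Polish image $\mathrm{K}_{\mathrm{w}}^{p}(A)$, and the classical Borel section theorem for quotient maps of Polish groups finishes the proof --- but exhibiting that Polish cover reduces to the same analysis.
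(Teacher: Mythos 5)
There is a genuine gap, and it sits exactly where you place the weight of the argument: the construction of the definable section $\mu$ of $\mathrm{Index}_{A}$. You propose to choose, for each homomorphism $h\colon \mathrm{K}_{p}(A_{n})\rightarrow \mathbb{Z}$, a representing extension, and then, given $(h_{n})_{n}$, to splice these stagewise by conjugating each chosen extension onto the restriction of the next one via Voiculescu unitaries. But two extensions with the same index data need not be unitarily equivalent modulo the compacts: they agree only modulo the kernel ${}^{\circ }\mathrm{K}^{p}(A_{n})\cong \mathrm{Ext}(\mathrm{K}_{1-p}(A_{n}),\mathbb{Z})$ of $\mathrm{Index}_{A_{n}}$, which is finite but in general nonzero (it is nonzero whenever $\mathrm{K}_{1-p}(A_{n})$ has torsion). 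So if $\tau _{n+1}$ represents $h_{n+1}$ and $\tau _{n}$ represents $h_{n}$, then $\tau _{n+1}\circ \eta _{n}$ and $\tau _{n}$ have the same index but may lie in different classes of $\mathrm{Ext}(A_{n}^{+})^{-1}$, and the conjugating unitaries you want to select through the definable Voiculescu theorem simply do not exist. The obstacle is therefore not, as you suggest, the Borel-uniformity of the classical patching, but the existence of the patching data at all. Repairing it forces you first to lift $(h_{n})_{n}$ in a Borel way to a compatible sequence in $\mathrm{lim}_{n}\mathrm{K}^{p}(A_{n})$ and then to realize that sequence by a single extension of $A$ --- which is precisely the statement of the lemma, so the UCT ladder has bought nothing.

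The correct (and much shorter) route, which is the paper's, is to run your splicing idea directly on $\mathrm{K}_{\mathrm{w}}^{p}(A)=\mathrm{lim}_{n}\mathrm{Ext}(A_{n}^{+})^{-1}$ with no UCT decomposition at all. Since each $\mathrm{K}^{p}(A_{n})$ is countable, one fixes once and for all a representative in $\mathcal{E}(A_{\ell }^{+})$ of each class of $\mathrm{Ext}(A_{\ell }^{+})^{-1}$ and, for each pair of classes whose restrictions to the previous stage coincide, a unitary intertwining the restrictions of the chosen representatives modulo $K(H)$ --- these unitaries exist precisely because equality in $\mathrm{Ext}^{-1}$ after restriction \emph{means} unitary equivalence modulo the compacts, which is the point your version loses. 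Given a point of the inverse limit one then conjugates the representatives recursively into a genuinely compatible sequence and takes the colimit $\ast$-homomorphism into $B(H)$; since the input space is a closed subspace of a product of countable discrete spaces, this assignment is automatically Borel. Your formula $w\mapsto \mu (\bar{\pi}(w))+\sigma \bigl(w-\pi (\mu (\bar{\pi}(w)))\bigr)$ and the definable section $\sigma$ of $\mathrm{Ext}\rightarrow \mathrm{Ext}_{\mathrm{w}}$ are fine as far as they go, but they become unnecessary once the direct construction is available.
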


\begin{proof}
After replacing $A$ with it suspension, we can assume that $p=1$.
Furthermore, after replacing $A$ with $\mathrm{colim}_{n}A_{n}$, we can
assume that $A=\mathrm{colim}_{n}A_{n}$. Finally, after replacing $A$ with $%
A^{+}$ and $A_{n}$ with $A_{n}^{+}$, we can assume that $A$ and $A_{n}$ for $%
n\in \omega $ are unital, and $\eta _{n}:A_{n}\rightarrow A_{n+1}$ is a
unital *-homomorphism. In this case, we have that $\mathrm{K}^{1}\left(
A\right) =\mathrm{Ext}\left( A\right) ^{-1}$ and $\mathrm{K}^{1}\left(
A_{n}\right) =\mathrm{Ext}\left( A_{n}\right) ^{-1}$ for $n\in \omega $. We
need to show that the definable group homomorphism $\mathrm{Ext}\left(
A\right) ^{-1}\rightarrow \mathrm{lim}_{n}\mathrm{Ext}\left( A_{n}\right)
^{-1}$ has a definable right inverse, which is not necessarily a group
homomorphism. Recall that \textrm{Ext}$\left( A\right) ^{-1}$ is the
quotient of the Polish space $\mathcal{E}\left( A\right) $ of
representatives of injective, unital extensions of $A$ by the equivalence
relation $\thickapprox $ as in Section \ref{Subsection:definable-Ext}.

Fix, for every $\ell \in \omega $ an enumeration $(x_{n}^{(\ell )})_{n\in
\omega }$ of $\mathrm{Ext}\left( A_{\ell }\right) ^{-1}$. For $\ell
_{0}<\ell _{1}$ define the bonding map%
\begin{equation*}
\eta _{\left( \ell _{1},\ell _{0}\right) }=\eta _{\ell _{1}-1}\circ \cdots
\circ \eta _{\ell _{0}}:A_{\ell _{0}}\rightarrow A_{\ell _{1}}\text{,}
\end{equation*}%
and set $\eta _{\left( \ell ,\ell \right) }=\mathrm{id}_{A_{\ell }}$ for $%
\ell \in \omega $. Define%
\begin{equation*}
\eta _{\left( \infty ,\ell \right) }:A_{\ell }\rightarrow A
\end{equation*}%
to be the canonical map. Let also $p^{\left( \ell _{0},\ell _{1}\right) }:%
\mathrm{Ext}\left( A_{\ell _{1}}\right) ^{-1}\rightarrow \mathrm{Ext}\left(
A_{\ell _{0}}\right) ^{-1}$ be the group homomorphism induced by the bonding
map $\eta _{\left( \ell _{1},\ell _{0}\right) }:A_{\ell _{0}}\rightarrow
A_{\ell _{1}}$.\ Then an element of $\mathrm{lim}_{n}\mathrm{Ext}\left(
A_{n}\right) ^{-1}$ is a sequence $(x_{n_{\ell }}^{\left( \ell \right)
})_{\ell \in \omega }$ such that, for $\ell _{0}<\ell _{1}$, $p^{\left( \ell
_{0},\ell _{1}\right) }(x_{n_{\ell _{1}}}^{\ell _{1}})=x_{n_{\ell
_{0}}}^{\ell _{0}}$. For every $\ell ,n\in \omega $ fix $\varphi
_{n}^{\left( \ell \right) }\in \mathcal{E}\left( A_{\ell }\right) $ such
that $[\varphi _{n}^{(\ell )}]=x_{n}^{\left( \ell \right) }$. For every $%
\ell \in \omega $ and $n,m\in \omega $ such that $p^{\left( \ell -1,\ell
\right) }(x_{n}^{\left( \ell \right) })=p^{\left( \ell -1,\ell \right)
}(x_{m}^{\left( \ell \right) })$ fix $U_{n,m}^{\left( \ell \right) }\in
U\left( H\right) $ such that $\mathrm{Ad}(U_{n,m}^{\left( \ell \right)
})\circ \varphi _{n}^{\left( \ell \right) }\circ \eta _{\ell -1}=\varphi
_{m}^{\left( \ell \right) }\circ \eta _{\ell -1}$. If $(x_{n_{\ell }}^{(\ell
)})_{n\in \omega }$ is an element of $\mathrm{lim}_{n}\mathrm{Ext}\left(
A_{n}\right) ^{-1}$, then setting $\psi ^{\left( \ell \right) }:=\mathrm{Ad}%
(U_{n_{\ell },n_{\ell -1}}^{\left( \ell \right) }U_{n_{\ell -1},n_{\ell
-2}}^{\left( \ell -1\right) }\cdots U_{n_{1}n_{0}}^{\left( 1\right) })\circ
\varphi _{n_{\ell }}^{\left( \ell \right) }\in \mathcal{E}\left( A_{\ell
}\right) $, one obtains a sequence $\left( \psi ^{\left( \ell \right)
}\right) _{\ell \in \omega }$ such that $\psi ^{\left( \ell \right) }\circ
\eta _{\ell -1}=\psi ^{\left( \ell -1\right) }$ for every $\ell >0$.
Therefore, setting $\psi =\mathrm{colim}_{\ell }\psi ^{\left( \ell \right)
}:A\rightarrow B\left( H\right) $ defines an element of $\mathcal{E}\left(
A\right) $ such that $\left[ \psi \circ \eta _{\left( \infty ,\ell \right) }%
\right] =x_{n_{\ell }}^{\ell }$ for every $\ell \in \omega $, and hence the
image of $[\psi ]\in \mathrm{Ext}\left( A\right) ^{-1}$ under the definable
homomorphism $\mathrm{Ext}\left( A\right) ^{-1}\rightarrow \mathrm{lim}_{n}%
\mathrm{Ext}\left( A_{n}\right) ^{-1}$ is equal to $\left( x_{n_{\ell
}}^{\ell }\right) _{\ell \in \omega }$. This construction describes a
definable function $\mathrm{lim}_{n}\mathrm{Ext}\left( A_{n}\right)
^{-1}\rightarrow \mathrm{Ext}\left( A\right) ^{-1}$, which is a right
inverse for $\mathrm{Ext}\left( A\right) ^{-1}\rightarrow \mathrm{lim}_{n}%
\mathrm{Ext}\left( A_{n}\right) ^{-1}$. This concludes the proof.
\end{proof}

Suppose that $A$ is a separable, nuclear\emph{\ }C*-algebra with a
filtration $\left( A_{n},\eta _{n}\right) _{n\in \omega }$. The index
homomorphisms 
\begin{equation*}
\mathrm{Index}_{A_{n}}:\mathrm{K}^{p}\left( A_{n}\right) \rightarrow \mathrm{%
Hom}\left( \mathrm{K}_{p}\left( A_{n}\right) ,\mathbb{Z}\right)
\end{equation*}%
for $n\in \omega $ induce a continuous group homomorphism 
\begin{equation*}
\mathrm{K}_{\mathrm{w}}^{p}\left( A\right) \rightarrow \mathrm{Hom}\left( 
\mathrm{K}_{p}\left( A\right) ,\mathbb{Z}\right) =\mathrm{lim}_{n}\mathrm{Hom%
}\left( \mathrm{K}_{p}\left( A_{n}\right) ,\mathbb{Z}\right) \text{.}
\end{equation*}%
Similarly the definable group homomorphisms 
\begin{equation*}
\varkappa _{A_{n}}^{-1}:\mathrm{Ext}\left( \mathrm{K}_{p}\left( A_{n}\right)
,\mathbb{Z}\right) \rightarrow \mathrm{K}^{p}\left( A_{n}\right)
\end{equation*}%
for $n\in \omega $ induce a definable group homomorphism 
\begin{equation*}
\mathrm{lim}_{n}\mathrm{Ext}\left( \mathrm{K}_{p}\left( A_{n}\right) ,%
\mathbb{Z}\right) =\mathrm{Ext}_{\mathrm{w}}\left( \mathrm{K}_{p}\left(
A\right) ,\mathbb{Z}\right) \rightarrow \mathrm{K}_{\mathrm{w}}^{p}\left(
A\right) .
\end{equation*}%
This gives a short exact sequence of definable groups%
\begin{equation*}
0\rightarrow \mathrm{Ext}_{\mathrm{w}}\left( \mathrm{K}_{p}\left( A\right) ,%
\mathbb{Z}\right) \rightarrow \mathrm{K}_{\mathrm{w}}^{p}\left( A\right)
\rightarrow \mathrm{Hom}\left( \mathrm{K}_{p}\left( A\right) ,\mathbb{Z}%
\right) \rightarrow 0\text{.}
\end{equation*}%
By definition of $\mathrm{PExt}$ and $\mathrm{Ext}_{\mathrm{w}}$, we also
have a short exact sequence of definable groups%
\begin{equation*}
0\rightarrow \mathrm{PExt}\left( \mathrm{K}_{p}\left( A\right) ,\mathbb{Z}%
\right) \rightarrow \mathrm{Ext}\left( \mathrm{K}_{p}\left( A\right) ,%
\mathbb{Z}\right) \rightarrow \mathrm{Ext}_{\mathrm{w}}\left( \mathrm{K}%
_{p}\left( A\right) ,\mathbb{Z}\right) \rightarrow 0
\end{equation*}%
where $\mathrm{PExt}\left( \mathrm{K}_{p}\left( A\right) ,\mathbb{Z}\right)
\rightarrow \mathrm{Ext}\left( \mathrm{K}_{p}\left( A\right) ,\mathbb{Z}%
\right) $ is the inclusion map and $\mathrm{Ext}\left( \mathrm{K}_{p}\left(
A\right) ,\mathbb{Z}\right) \rightarrow \mathrm{Ext}_{\mathrm{w}}\left( 
\mathrm{K}_{p}\left( A\right) ,\mathbb{Z}\right) $ is the quotient map.

\begin{proposition}
\label{Proposition:KK}Suppose that $A$ is a separable, nuclear C*-algebra
with a filtration and $p\in \left\{ 0,1\right\} $. If $\mathrm{K}_{p}\left(
A\right) $ is torsion-free, then $\mathrm{K}_{\infty }^{p}\left( A\right) $
is naturally isomorphic to $\mathrm{Ext}\left( \mathrm{K}_{1-p}\left(
A\right) ,\mathbb{Z}\right) $, and $\mathrm{K}_{\mathrm{w}}^{p}\left(
A\right) $ is naturally isomorphic as a Polish group to $\mathrm{Hom}\left( 
\mathrm{K}_{p}\left( A\right) ,\mathbb{Z}\right) $.
\end{proposition}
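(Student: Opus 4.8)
The plan is to read both assertions off the two short exact sequences of definable groups established just above — the one presenting $\mathrm{K}_{\mathrm{w}}^{p}(A)$ as an extension of $\mathrm{Hom}(\mathrm{K}_{p}(A),\mathbb{Z})$ by a weak-$\mathrm{Ext}$ group, and the one presenting that weak-$\mathrm{Ext}$ group as the cokernel of the inclusion $\mathrm{PExt}(-,\mathbb{Z})\hookrightarrow\mathrm{Ext}(-,\mathbb{Z})$ — together with the identification of $\mathrm{K}_{\infty}^{p}(A)$ with $\mathrm{PExt}(\mathrm{K}_{1-p}(A),\mathbb{Z})$ and the fact that $A$ satisfies the Universal Coefficient Theorem (which holds since $A$ has a filtration). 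The only new input is a purely algebraic observation.

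That observation is: \emph{if a countable abelian group $G$ is torsion-free, then every extension $0\to\mathbb{Z}\xrightarrow{i}E\xrightarrow{\pi}G\to 0$ is pure, and consequently $\mathrm{PExt}(G,\mathbb{Z})=\mathrm{Ext}(G,\mathbb{Z})$ and $\mathrm{Ext}_{\mathrm{w}}(G,\mathbb{Z})=0$.} Indeed, if $ne=i(k)$ for some $e\in E$, $k\in\mathbb{Z}$, $n\geq 1$, then $n\pi(e)=0$ in $G$, so $\pi(e)=0$ by torsion-freeness, whence $e\in i(\mathbb{Z})$; thus $i(\mathbb{Z})$ is a pure subgroup of $E$. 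Hence $\mathrm{PExt}(G,\mathbb{Z})=\mathrm{Ext}(G,\mathbb{Z})$, and the exact sequence $0\to\mathrm{PExt}(G,\mathbb{Z})\to\mathrm{Ext}(G,\mathbb{Z})\to\mathrm{Ext}_{\mathrm{w}}(G,\mathbb{Z})\to 0$ then forces $\mathrm{Ext}_{\mathrm{w}}(G,\mathbb{Z})=0$.

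Granting this, for the weak group I would apply the vanishing to the relevant $\mathrm{K}$-group: the short exact sequence $0\to\mathrm{Ext}_{\mathrm{w}}(\mathrm{K}_{p}(A),\mathbb{Z})\to\mathrm{K}_{\mathrm{w}}^{p}(A)\to\mathrm{Hom}(\mathrm{K}_{p}(A),\mathbb{Z})\to 0$ degenerates to an isomorphism of definable groups $\mathrm{K}_{\mathrm{w}}^{p}(A)\to\mathrm{Hom}(\mathrm{K}_{p}(A),\mathbb{Z})$. Since $\mathrm{K}_{\mathrm{w}}^{p}(A)=\mathrm{lim}_{n}\mathrm{K}^{p}(A_{n})$ and $\mathrm{Hom}(\mathrm{K}_{p}(A),\mathbb{Z})=\mathrm{lim}_{n}\mathrm{Hom}(\mathrm{K}_{p}(A_{n}),\mathbb{Z})$ are Polish groups and this map is the continuous group homomorphism induced by the index homomorphisms $\mathrm{Index}_{A_{n}}$, and since a continuous bijective homomorphism between Polish groups is automatically a topological isomorphism, this upgrades to the asserted isomorphism of Polish groups; naturality is immediate from the construction. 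For the asymptotic group I would use that, $A$ satisfying the UCT, the map $\varkappa_{A}\colon{}^{\circ}\mathrm{K}^{p}(A)\to\mathrm{Ext}(\mathrm{K}_{1-p}(A),\mathbb{Z})$ is a definable isomorphism under which $\mathrm{K}_{\infty}^{p}(A)$ corresponds to the definable subgroup $\mathrm{PExt}(\mathrm{K}_{1-p}(A),\mathbb{Z})$; the torsion-freeness hypothesis, via the algebraic observation, makes this $\mathrm{PExt}$ subgroup fill all of $\mathrm{Ext}(\mathrm{K}_{1-p}(A),\mathbb{Z})$, so the restriction of $\varkappa_{A}$ is the desired natural definable isomorphism $\mathrm{K}_{\infty}^{p}(A)\to\mathrm{Ext}(\mathrm{K}_{1-p}(A),\mathbb{Z})$.

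Most of this is formal given the prior machinery — the definable UCT of Brown \cite{brown_universal_1984}, the excision-type identification of $\mathrm{K}_{\infty}^{p}$ with $\mathrm{PExt}$, and the Milnor-type sequence — so the real points of care will be the topological upgrade in the weak statement (verifying that the natural map between the two inverse limits is indeed a continuous bijection of Polish groups, so that the open-mapping principle for Polish groups applies) and keeping the four indices $p$ versus $1-p$ and the naturality squares consistent when transporting the algebraic vanishing through $\varkappa_{A}$ and through the definable Jensen theorem \cite{bergfalk_ulam_2020}.
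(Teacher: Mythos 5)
Your proof is correct and takes essentially the same route as the paper's: both read the two assertions off the short exact sequences $0\to\mathrm{PExt}\to\mathrm{Ext}\to\mathrm{Ext}_{\mathrm{w}}\to 0$ and $0\to\mathrm{Ext}_{\mathrm{w}}\to\mathrm{K}_{\mathrm{w}}^{p}\to\mathrm{Hom}\to 0$ together with the identification of $\mathrm{K}_{\infty}^{p}(A)$ with a $\mathrm{PExt}$ group, the only substantive input being that $\mathrm{PExt}(G,\mathbb{Z})=\mathrm{Ext}(G,\mathbb{Z})$ (equivalently $\mathrm{Ext}_{\mathrm{w}}(G,\mathbb{Z})=0$) for torsion-free $G$, which you prove via purity and the paper simply asserts. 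The only caveat is the $p$ versus $1-p$ bookkeeping you yourself flag at the end: the first conclusion involves $\mathrm{Ext}(\mathrm{K}_{1-p}(A),\mathbb{Z})$, so it really requires $\mathrm{K}_{1-p}(A)$ to be torsion-free rather than $\mathrm{K}_{p}(A)$ --- an index mismatch already present in the statement and in the paper's own proof, and immaterial in the applications, where $\mathrm{K}_{1-p}(A)$ is trivial.
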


\begin{proof}
Since $\mathrm{K}_{p}\left( A\right) $ is torsion-free, we have that $%
\mathrm{PExt}\left( \mathrm{K}_{p}\left( A\right) ,\mathbb{Z}\right) =%
\mathrm{Ext}\left( \mathrm{K}_{p}\left( A\right) ,\mathbb{Z}\right) $.
Therefore,%
\begin{equation*}
\mathrm{K}_{\infty }^{p}\left( A\right) \cong \mathrm{PExt}\left( \mathrm{K}%
_{p}\left( A\right) ,\mathbb{Z}\right) =\mathrm{Ext}\left( \mathrm{K}%
_{p}\left( A\right) ,\mathbb{Z}\right) \text{.}
\end{equation*}%
From the exact sequence%
\begin{equation*}
0\rightarrow \mathrm{PExt}\left( \mathrm{K}_{p}\left( A\right) ,\mathbb{Z}%
\right) \rightarrow \mathrm{Ext}\left( \mathrm{K}_{p}\left( A\right) ,%
\mathbb{Z}\right) \rightarrow \mathrm{Ext}_{\mathrm{w}}\left( \mathrm{K}%
_{p}\left( A\right) ,\mathbb{Z}\right) \rightarrow 0
\end{equation*}%
we conclude that%
\begin{equation*}
\mathrm{Ext}_{\mathrm{w}}\left( \mathrm{K}_{p}\left( A\right) ,\mathbb{Z}%
\right) =\left\{ 0\right\} \text{.}
\end{equation*}%
From this and the exact sequence%
\begin{equation*}
0\rightarrow \mathrm{Ext}_{\mathrm{w}}\left( \mathrm{K}_{p}\left( A\right) ,%
\mathbb{Z}\right) \rightarrow \mathrm{K}_{\mathrm{w}}^{p}\left( A\right)
\rightarrow \mathrm{Hom}\left( \mathrm{K}_{p}\left( A\right) ,\mathbb{Z}%
\right) \rightarrow 0
\end{equation*}%
we conclude that%
\begin{equation*}
\mathrm{K}_{\mathrm{w}}^{p}\left( A\right) \cong \mathrm{Hom}\left( \mathrm{K%
}_{p}\left( A\right) ,\mathbb{Z}\right) \text{.}
\end{equation*}%
This concludes the proof.
\end{proof}

\begin{corollary}
\label{Corollary:KK1}Suppose that $A$ is a separable, nuclear C*-algebra
with a filtration and $p\in \left\{ 0,1\right\} $ is such that $\mathrm{K}%
_{p}\left( A\right) $ is a finite-rank torsion-free abelian group and $%
\mathrm{K}_{1-p}\left( A\right) $ is trivial. We can write%
\begin{equation*}
\mathrm{K}_{p}\left( A\right) =\Lambda \oplus \Lambda ^{\prime }
\end{equation*}%
where $\Lambda ^{\prime }$ is finitely-generated and $\Lambda $ has no
nonzero finitely-generated direct summand. Then%
\begin{equation*}
\mathrm{K}^{p}\left( A\right) \cong \mathrm{Hom}\left( \mathrm{K}_{p}\left(
A\right) ,\mathbb{Z}\right) \cong \mathrm{Hom}\left( \Lambda ^{\prime },%
\mathbb{Z}\right)
\end{equation*}%
and%
\begin{equation*}
\mathrm{K}^{1-p}\left( A\right) \cong \mathrm{Ext}\left( \mathrm{K}%
_{p}\left( A\right) ,\mathbb{Z}\right) \cong \mathrm{Ext}\left( \Lambda ,%
\mathbb{Z}\right)
\end{equation*}%
as definable groups.
\end{corollary}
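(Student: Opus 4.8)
The statement follows by combining Proposition \ref{Proposition:KK} with two elementary observations about the decomposition $\mathrm{K}_{p}(A)=\Lambda\oplus\Lambda'$ and with the behaviour of $\mathrm{Hom}(-,\mathbb{Z})$ and $\mathrm{Ext}(-,\mathbb{Z})$ under finite direct sums. First I would record that $\mathrm{Hom}(\Lambda,\mathbb{Z})=0$ and $\mathrm{Ext}(\Lambda',\mathbb{Z})=0$. The second is immediate: $\Lambda'$ is finitely generated and torsion-free, hence free of finite rank, and $\mathrm{Ext}$ of a free group is trivial. For the first, if $f\colon\Lambda\to\mathbb{Z}$ were nonzero, its image would be a nonzero, hence infinite cyclic, subgroup of $\mathbb{Z}$; dividing $f$ by the corresponding positive integer produces a surjection $\Lambda\to\mathbb{Z}$, which splits because $\mathbb{Z}$ is free, so $\Lambda$ would have an infinite cyclic---in particular, nonzero finitely generated---direct summand, contradicting the hypothesis on $\Lambda$. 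Since $\mathrm{Hom}(-,\mathbb{Z})$ and $\mathrm{Ext}(-,\mathbb{Z})$ are additive functors and the splitting of a direct sum of abelian groups is realized by the inclusion and projection homomorphisms, which induce Borel-definable maps on the corresponding groups with a Polish cover, one obtains from these two vanishing statements natural \emph{definable} isomorphisms $\mathrm{Hom}(\mathrm{K}_{p}(A),\mathbb{Z})\cong\mathrm{Hom}(\Lambda',\mathbb{Z})$ and $\mathrm{Ext}(\mathrm{K}_{p}(A),\mathbb{Z})\cong\mathrm{Ext}(\Lambda,\mathbb{Z})$.

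Next I would compute $\mathrm{K}^{p}(A)$. By hypothesis $\mathrm{K}_{1-p}(A)=0$, so $\mathrm{PExt}(\mathrm{K}_{1-p}(A),\mathbb{Z})=0$; since $\mathrm{K}_{\infty}^{p}(A)$ is naturally definably isomorphic to $\mathrm{PExt}(\mathrm{K}_{1-p}(A),\mathbb{Z})$, it follows that $\mathrm{K}_{\infty}^{p}(A)=0$. Hence the canonical definable surjection $\mathrm{K}^{p}(A)\to\mathrm{K}_{\mathrm{w}}^{p}(A)$ from the Milnor exact sequence of Proposition \ref{Proposition:Milnor} is a definable bijection, and therefore an isomorphism of definable groups by Corollary \ref{Corollary:Kechris--MacDonald}. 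As $\mathrm{K}_{p}(A)$ is torsion-free, Proposition \ref{Proposition:KK} identifies $\mathrm{K}_{\mathrm{w}}^{p}(A)$ with $\mathrm{Hom}(\mathrm{K}_{p}(A),\mathbb{Z})$; together with the first paragraph this gives $\mathrm{K}^{p}(A)\cong\mathrm{Hom}(\mathrm{K}_{p}(A),\mathbb{Z})\cong\mathrm{Hom}(\Lambda',\mathbb{Z})$ as definable groups.

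Finally I would compute $\mathrm{K}^{1-p}(A)$ by the dual argument. The group $\mathrm{K}_{1-p}(A)=0$ is torsion-free, so Proposition \ref{Proposition:KK} applied with $1-p$ in place of $p$ gives $\mathrm{K}_{\mathrm{w}}^{1-p}(A)\cong\mathrm{Hom}(\mathrm{K}_{1-p}(A),\mathbb{Z})=0$; the Milnor exact sequence then shows that the inclusion $\mathrm{K}_{\infty}^{1-p}(A)\to\mathrm{K}^{1-p}(A)$ is a definable bijection, hence an isomorphism of definable groups by Corollary \ref{Corollary:Kechris--MacDonald}. Now $\mathrm{K}_{\infty}^{1-p}(A)$ is naturally definably isomorphic to $\mathrm{PExt}(\mathrm{K}_{p}(A),\mathbb{Z})$, which coincides with $\mathrm{Ext}(\mathrm{K}_{p}(A),\mathbb{Z})$ because $\mathrm{K}_{p}(A)$ is torsion-free; combining with the first paragraph yields $\mathrm{K}^{1-p}(A)\cong\mathrm{Ext}(\mathrm{K}_{p}(A),\mathbb{Z})\cong\mathrm{Ext}(\Lambda,\mathbb{Z})$ as definable groups. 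The only point that requires care is that every identification above be realized by Borel-definable maps rather than merely abstract group or Polish-group isomorphisms; this is already built into Proposition \ref{Proposition:KK}, into the description of $\mathrm{K}_{\infty}^{p}$ and $\mathrm{K}_{\mathrm{w}}^{p}$ preceding it, and into the treatment of $\mathrm{Hom}$, $\mathrm{Ext}$ and $\mathrm{PExt}$ as groups with a Polish cover in \cite{bergfalk_ulam_2020}, so no additional work is needed. The expected main obstacle is thus purely a matter of keeping track of the definable structure; the homological inputs $\mathrm{Hom}(\Lambda,\mathbb{Z})=0$ and $\mathrm{Ext}(\Lambda',\mathbb{Z})=0$ are the only substantive ingredients and are elementary.
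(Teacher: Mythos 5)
Your proposal is correct and follows essentially the same route as the paper: both arguments use the vanishing of $\mathrm{K}_{\infty}^{p}$ (via $\mathrm{PExt}$ of the trivial group) to identify $\mathrm{K}^{p}(A)$ with $\mathrm{K}_{\mathrm{w}}^{p}(A)\cong\mathrm{Hom}(\mathrm{K}_{p}(A),\mathbb{Z})$, and the vanishing of $\mathrm{K}_{\mathrm{w}}^{1-p}$ to identify $\mathrm{K}^{1-p}(A)$ with $\mathrm{K}_{\infty}^{1-p}(A)\cong\mathrm{PExt}(\mathrm{K}_{p}(A),\mathbb{Z})=\mathrm{Ext}(\mathrm{K}_{p}(A),\mathbb{Z})$, all via Proposition \ref{Proposition:KK}. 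The only cosmetic differences are that the paper first reduces to $p=0$ by passing to the suspension and leaves implicit the elementary facts $\mathrm{Hom}(\Lambda,\mathbb{Z})=0$ and $\mathrm{Ext}(\Lambda',\mathbb{Z})=0$, which you correctly spell out.
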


\begin{proof}
After replacing $A$ with $SA$, we can assume that $p=0$. We have that%
\begin{equation*}
\mathrm{K}_{\infty }^{0}\left( A\right) \cong \mathrm{PExt}\left( \mathrm{K}%
_{1}\left( A\right) ,\mathbb{Z}\right) \cong \left\{ 0\right\} \text{.}
\end{equation*}%
Therefore,%
\begin{equation*}
\mathrm{K}^{0}\left( A\right) \cong \mathrm{K}_{\mathrm{w}}^{0}\left(
A\right) \cong \mathrm{Hom}\left( \mathrm{K}_{0}\left( A\right) ,\mathbb{Z}%
\right) \cong \mathrm{Hom}\left( \Lambda ^{\prime },\mathbb{Z}\right) \text{.%
}
\end{equation*}%
Similarly, we have that%
\begin{equation*}
\mathrm{K}_{\mathrm{w}}^{1}\left( A\right) \cong \mathrm{Hom}\left( \mathrm{K%
}_{1}\left( A\right) ,\mathbb{Z}\right) \cong \left\{ 0\right\}
\end{equation*}%
and hence, since $\mathrm{K}_{0}\left( A\right) $ is torsion-free, 
\begin{equation*}
\mathrm{K}^{1}\left( A\right) \cong \mathrm{K}_{\infty }^{1}\left( A\right)
\cong \mathrm{PExt}\left( \mathrm{K}_{0}\left( A\right) ,\mathbb{Z}\right)
\cong \mathrm{Ext}\left( \mathrm{K}_{0}\left( A\right) ,\mathbb{Z}\right)
\cong \mathrm{Ext}\left( \Lambda ,\mathbb{Z}\right) \text{.}
\end{equation*}%
This concludes the proof.
\end{proof}

\begin{corollary}
\label{Corollary:KK2}Suppose that $p\in \left\{ 0,1\right\} $ and $A,B$ are
separable, nuclear C*-algebras with a filtration, such that $\mathrm{K}%
_{p}\left( A\right) $ and $\mathrm{K}_{p}\left( B\right) $ are finite-rank
torsion-free abelian groups, and $\mathrm{K}_{1-p}\left( A\right) $ and $%
\mathrm{K}_{1-p}\left( B\right) $ are trivial. Then the following assertions
are equivalent:

\begin{enumerate}
\item $\mathrm{K}^{i}\left( A\right) $ and $\mathrm{K}^{i}\left( B\right) $
are \emph{definably} isomorphic for $i\in \left\{ 0,1\right\} $;

\item $\mathrm{K}_{p}\left( A\right) $ and $\mathrm{K}_{p}\left( B\right) $
are isomorphic.
\end{enumerate}

If furthermore $\mathrm{K}_{p}\left( A\right) $ and $\mathrm{K}_{p}\left(
B\right) $ have no nonzero finitely-generated direct summand, then the
following assertions are equivalent:

\begin{enumerate}
\item $\mathrm{K}^{1-p}\left( A\right) $ and $\mathrm{K}^{1-p}\left(
B\right) $ are \emph{definably} isomorphic;

\item $\mathrm{K}_{p}\left( A\right) $ and $\mathrm{K}_{p}\left( B\right) $
are isomorphic.
\end{enumerate}
\end{corollary}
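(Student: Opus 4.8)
The plan is to derive both equivalences from Corollary~\ref{Corollary:KK1} together with the rigidity theorem for definable $\mathrm{PExt}$ of torsion-free finite-rank abelian groups from \cite[Section~7]{bergfalk_ulam_2020}. Write $\mathrm{K}_{p}(A)=\Lambda_{A}\oplus\Lambda'_{A}$ and $\mathrm{K}_{p}(B)=\Lambda_{B}\oplus\Lambda'_{B}$ as in Corollary~\ref{Corollary:KK1}, so that $\Lambda'_{A},\Lambda'_{B}$ are finitely generated torsion-free, hence free of finite rank, while $\Lambda_{A},\Lambda_{B}$ are torsion-free of finite rank with no nonzero finitely generated direct summand. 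Corollary~\ref{Corollary:KK1} supplies natural definable isomorphisms
\begin{equation*}
\mathrm{K}^{p}(A)\cong\mathrm{Hom}(\mathrm{K}_{p}(A),\mathbb{Z})\cong\mathrm{Hom}(\Lambda'_{A},\mathbb{Z}),\qquad \mathrm{K}^{1-p}(A)\cong\mathrm{Ext}(\mathrm{K}_{p}(A),\mathbb{Z})\cong\mathrm{Ext}(\Lambda_{A},\mathbb{Z}),
\end{equation*}
and likewise for $B$; here $\mathrm{Hom}(\Lambda'_{A},\mathbb{Z})$ is free abelian of finite rank with the discrete Borel structure, and $\mathrm{Ext}(\Lambda_{A},\mathbb{Z})=\mathrm{PExt}(\Lambda_{A},\mathbb{Z})$ as definable groups since every extension of a torsion-free group by $\mathbb{Z}$ is pure, so that $\mathrm{Ext}_{\mathrm{w}}(\Lambda_{A},\mathbb{Z})=0$.

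For $(2)\Rightarrow(1)$ in both equivalences I would use functoriality: an abstract isomorphism $\mathrm{K}_{p}(A)\cong\mathrm{K}_{p}(B)$ induces, through the definable functors $\mathrm{Hom}(-,\mathbb{Z})$ and $\mathrm{Ext}(-,\mathbb{Z})$ recalled in Section~\ref{Section:UCT}, definable isomorphisms of $\mathrm{Hom}(\mathrm{K}_{p}(-),\mathbb{Z})$ and of $\mathrm{Ext}(\mathrm{K}_{p}(-),\mathbb{Z})$, hence, by the displayed identifications above (using $\mathrm{Hom}(\Lambda_{A},\mathbb{Z})=0$ and $\mathrm{Ext}(\Lambda'_{A},\mathbb{Z})=0$), definable isomorphisms $\mathrm{K}^{p}(A)\cong\mathrm{K}^{p}(B)$ and $\mathrm{K}^{1-p}(A)\cong\mathrm{K}^{1-p}(B)$. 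In the second equivalence the extra hypothesis forces $\Lambda'_{A}=\Lambda'_{B}=0$, so $\mathrm{K}_{p}(A)=\Lambda_{A}$ and $\mathrm{K}_{p}(B)=\Lambda_{B}$, and the same argument for $\mathrm{Ext}(-,\mathbb{Z})$ alone gives $(2)\Rightarrow(1)$.

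For $(1)\Rightarrow(2)$ in the first equivalence, the definable isomorphism $\mathrm{K}^{p}(A)\cong\mathrm{K}^{p}(B)$ unwinds to $\mathrm{Hom}(\Lambda'_{A},\mathbb{Z})\cong\mathrm{Hom}(\Lambda'_{B},\mathbb{Z})$, free abelian groups of ranks equal to those of $\Lambda'_{A}$ and $\Lambda'_{B}$, whence $\Lambda'_{A}\cong\Lambda'_{B}$; the definable isomorphism $\mathrm{K}^{1-p}(A)\cong\mathrm{K}^{1-p}(B)$ unwinds to a definable isomorphism $\mathrm{PExt}(\Lambda_{A},\mathbb{Z})\cong\mathrm{PExt}(\Lambda_{B},\mathbb{Z})$, so the rigidity theorem of \cite[Section~7]{bergfalk_ulam_2020} yields $\Lambda_{A}\cong\Lambda_{B}$, and combining gives $\mathrm{K}_{p}(A)=\Lambda_{A}\oplus\Lambda'_{A}\cong\Lambda_{B}\oplus\Lambda'_{B}=\mathrm{K}_{p}(B)$. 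For the second equivalence one has $\Lambda'_{A}=\Lambda'_{B}=0$, and the lone definable isomorphism $\mathrm{K}^{1-p}(A)\cong\mathrm{K}^{1-p}(B)$, i.e.\ $\mathrm{PExt}(\Lambda_{A},\mathbb{Z})\cong\mathrm{PExt}(\Lambda_{B},\mathbb{Z})$, gives $\mathrm{K}_{p}(A)=\Lambda_{A}\cong\Lambda_{B}=\mathrm{K}_{p}(B)$ by rigidity.

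I expect the main obstacle to be purely bookkeeping: ensuring that a definable isomorphism of the $\mathrm{K}$-homology groups is transported, via the natural identifications of Corollary~\ref{Corollary:KK1}, to an isomorphism of $\mathrm{PExt}(\Lambda_{A},\mathbb{Z})$ and $\mathrm{PExt}(\Lambda_{B},\mathbb{Z})$ precisely as \emph{groups with a Polish cover}, which is the hypothesis under which the rigidity theorem is stated, and verifying that the identification $\mathrm{Ext}(\Lambda,\mathbb{Z})=\mathrm{PExt}(\Lambda,\mathbb{Z})$ holds definably, and not merely abstractly, for torsion-free finite-rank $\Lambda$. Everything else is immediate from Corollary~\ref{Corollary:KK1} and the functoriality of $\mathrm{Hom}(-,\mathbb{Z})$ and $\mathrm{Ext}(-,\mathbb{Z})$ recalled in Section~\ref{Section:UCT}.
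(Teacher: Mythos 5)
Your proof is correct and follows essentially the same route as the paper: decompose $\mathrm{K}_{p}$ as $\Lambda\oplus\Lambda'$, apply Corollary \ref{Corollary:KK1} to identify $\mathrm{K}^{p}$ with $\mathrm{Hom}(\Lambda',\mathbb{Z})$ and $\mathrm{K}^{1-p}$ with $\mathrm{Ext}(\Lambda,\mathbb{Z})=\mathrm{PExt}(\Lambda,\mathbb{Z})$, and invoke the rigidity theorem of \cite[Section 7]{bergfalk_ulam_2020}. The only (harmless) difference is that you obtain $(2)\Rightarrow(1)$ by functoriality of $\mathrm{Hom}(-,\mathbb{Z})$ and $\mathrm{Ext}(-,\mathbb{Z})$ applied to the abstract isomorphism $\mathrm{K}_{p}(A)\cong\mathrm{K}_{p}(B)$, whereas the paper passes through the uniqueness of the decomposition $\Lambda\oplus\Lambda'$.
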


\begin{proof}
After passing to the suspension, we can assume that $p=0$. Since $\mathrm{K}%
_{0}\left( A\right) $ and $\mathrm{K}_{0}\left( B\right) $ are finite-rank
torsion-free abelian groups, we can write%
\begin{equation*}
\mathrm{K}_{0}\left( A\right) =\Lambda _{A}\oplus \Lambda _{A}^{\prime }
\end{equation*}%
\begin{equation*}
\mathrm{K}_{0}\left( B\right) =\Lambda _{B}\oplus \Lambda _{B}^{\prime }
\end{equation*}%
where $\Lambda _{A},\Lambda _{B}$ have no nonzero finitely-generated direct
summand, and $\Lambda _{A}^{\prime },\Lambda _{B}^{\prime }$ are
finitely-generated. Then we have that $\mathrm{K}_{0}\left( A\right) \cong 
\mathrm{K}_{0}\left( B\right) $ if and only if $\Lambda _{A}\cong \Lambda
_{B}$ and $\Lambda _{A}^{\prime }\cong \Lambda _{B}^{\prime }$. We have that 
$\Lambda _{A}^{\prime }\cong \Lambda _{B}^{\prime }$ if and only if 
\begin{equation*}
\mathrm{Hom}\left( \Lambda _{A}^{\prime },\mathbb{Z}\right) \cong \mathrm{Hom%
}\left( \Lambda _{B}^{\prime },\mathbb{Z}\right)
\end{equation*}%
Furthermore, by \cite[Corollary 7.6]{bergfalk_ulam_2020}, we have that $%
\Lambda _{A}\cong \Lambda _{B}$ if and only if $\mathrm{Ext}\left( \Lambda
_{A},\mathbb{Z}\right) $ and $\mathrm{Ext}\left( \Lambda _{B},\mathbb{Z}%
\right) $ are definably isomorphic. The conclusion thus follows from
Corollary \ref{Corollary:KK1}.
\end{proof}

We now show that Corollary \ref{Corollary:KK2} does not hold if $\mathrm{K}%
^{p}\left( A\right) $ and $\mathrm{K}^{p}\left( B\right) $ are merely asked
to be isomorphic, rather than \emph{definably }isomorphic; see Theorem \ref%
{Theorem:UHF}.

\subsection{Stable isomorphism of UHF algebras\label{Subsection:UHF}}

Recall that a uniformly hyperfinite (UHF) C*-algebra is an
infinite-dimensional separable unital C*-algebra that is the limit of an
inductive sequence of full matrix algebras \cite[Example III.5.1]%
{davidson_algebras_1996}. Since finite-dimensional C*-algebras are nuclear,
satisfy the UCT for $\mathbb{C}$, and have finitely-generated $\mathrm{K}%
_{0} $ and $\mathrm{K}_{1}$ groups, UHF C*-algebras are nuclear and have a
filtration. If $A$ is a UHF C*-algebra, then $\mathrm{K}_{0}\left( A\right) $
is a rank $1$ torsion-free abelian group that is not isomorphic to $\mathbb{Z%
}$, while $\mathrm{K}_{1}\left( A\right) $ is trivial. Given a rank $1$
torsion-free abelian group $\Lambda $ that is not isomorphic to $\mathbb{Z}$%
, there exists a UHF C*-algebra $A_{\Lambda }$ such that $\mathrm{K}%
_{0}\left( A_{\Lambda }\right) \cong \Lambda $. By Proposition \ref%
{Proposition:KK}, we have that $\mathrm{K}^{1}\left( A_{\Lambda }\right) $
is definably isomorphic to $\mathrm{Ext}\left( \Lambda ,\mathbb{Z}\right) $,
while $\mathrm{K}^{0}\left( A_{\Lambda }\right) $ is trivial.

Recall that a rank $1$ torsion-free abelian group is an abelian group that
is isomorphic to a subgroup of $\mathbb{Q}$. Given a torsion-free group $%
\Lambda $ and a prime number $p$, one defines its $p$-corank $\mathrm{rank}%
^{p}\Lambda $ to be the dimension of $\Lambda /p\Lambda $ as a $\mathbb{Z}/p%
\mathbb{Z}$-vector space. As a particular instance of \cite[Theorem A.7]%
{bergfalk_ulam_2020} we have that, given rank $1$ torsion-free abelian
groups $\Lambda ,\Lambda ^{\prime }$, $\mathrm{Ext}\left( \Lambda ,\mathbb{Z}%
\right) $ and $\mathrm{Ext}\left( \Lambda ^{\prime },\mathbb{Z}\right) $ are
isomorphic as discrete groups if and only if $\mathrm{\mathrm{\mathrm{rank}}}%
^{p}\Lambda =\mathrm{\mathrm{rank}}^{p}\Lambda ^{\prime }$ for every prime $%
p $. It easily follows that there exists an uncountable family $\left(
\Lambda _{i}\right) _{i\in \mathbb{R}}$ of pairwise nonisomorphic rank $1$
torsion-free abelian groups such that $\mathrm{Ext}\left( \Lambda _{i},%
\mathbb{Z}\right) $ and $\mathrm{Ext}\left( \Lambda _{j},\mathbb{Z}\right) $
are isomorphic as discrete groups for $i,j\in \mathbb{R}$.

The following result is an immediate consequence of these observations
together with Corollary \ref{Corollary:KK2} and the Elliott classification
of approximately finite-dimensional (AF) C*-algebras \cite%
{elliott_classification_1976}, or Glimm's classification of UHF C*-algebras 
\cite{glimm_certain_1960}; see also \cite[Chapter 7]%
{rordam_introduction_2000}. Recall that two separable C*-algebras $A,B$ are 
\emph{stably isomorphic }(or, equivalently, Morita-equivalent; see \cite[%
Definition Theorem 5.55]{raeburn_morita_1998}) if $A\otimes K\left( H\right)
\cong B\otimes K\left( H\right) $, where $K\left( H\right) $ is the
C*-algebra of compact operators on the separable infinite-dimensional
Hilbert space.

\begin{theorem}
\label{Theorem:UHF}Definable $\mathrm{K}^{1}$ is a complete invariant for
UHF C*-algebras up to stable isomorphism. In contrast, there exists an un
uncountable family of pairwise non stably isomorphic UHF C*-algebras whose $%
\mathrm{K}^{1}$-groups are isomorphic as discrete groups (but not definably
isomorphic).
\end{theorem}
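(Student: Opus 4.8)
The plan is to combine Corollary \ref{Corollary:KK2} with the classification of UHF C*-algebras up to stable isomorphism and with the structure theory of rank-$1$ torsion-free abelian groups, as already sketched in the discussion preceding the statement. First I would record that every UHF C*-algebra $A$ is separable and nuclear and admits a filtration (being an inductive limit of full matrix algebras, which are nuclear, satisfy the UCT for $\mathbb{C}$, and have finitely generated $\mathrm{K}$-theory), that $\mathrm{K}_{1}\left( A\right) =0$, and that $\mathrm{K}_{0}\left( A\right) $ is a rank-$1$ torsion-free abelian group not isomorphic to $\mathbb{Z}$. The key elementary observation is that such a group has no nonzero finitely generated direct summand: a finitely generated subgroup of $\mathbb{Q}$ is either trivial or isomorphic to $\mathbb{Z}$, so a $\mathbb{Z}$-summand of a rank-$1$ group would force the complementary summand to have rank $0$, hence be trivial, making the group itself isomorphic to $\mathbb{Z}$.

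Consequently the second equivalence in Corollary \ref{Corollary:KK2} applies with $p=0$: for UHF C*-algebras $A$ and $B$, the definable groups $\mathrm{K}^{1}\left( A\right) $ and $\mathrm{K}^{1}\left( B\right) $ are definably isomorphic if and only if $\mathrm{K}_{0}\left( A\right) \cong \mathrm{K}_{0}\left( B\right) $. By Glimm's classification of UHF C*-algebras \cite{glimm_certain_1960} (or Elliott's classification of AF C*-algebras \cite{elliott_classification_1976}), $\mathrm{K}_{0}\left( A\right) \cong \mathrm{K}_{0}\left( B\right) $ as abelian groups if and only if $A$ and $B$ are stably isomorphic. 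Together with the fact that stable isomorphism implies $\mathrm{K}^{1}\left( A\right) \cong \mathrm{K}^{1}\left( K\left( H\right) \otimes A\right) \cong \mathrm{K}^{1}\left( K\left( H\right) \otimes B\right) \cong \mathrm{K}^{1}\left( B\right) $ definably, using C*-stability and functoriality of definable $\mathrm{K}$-homology (Proposition \ref{Proposition:stability}), this proves the first assertion.

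For the second assertion I would invoke the existence, noted in the paragraph preceding the statement, of an uncountable family $\left( \Lambda _{i}\right) _{i\in \mathbb{R}}$ of pairwise non-isomorphic rank-$1$ torsion-free abelian groups (none isomorphic to $\mathbb{Z}$) for which all the groups $\mathrm{Ext}\left( \Lambda _{i},\mathbb{Z}\right) $ are isomorphic as discrete abelian groups; this follows from the computation of $\mathrm{Ext}\left( \Lambda ,\mathbb{Z}\right) $ for rank-$1$ groups $\Lambda $ via their $p$-coranks in \cite[Theorem A.7]{bergfalk_ulam_2020}, together with the fact that there are uncountably many isomorphism types of rank-$1$ torsion-free groups sharing a fixed pattern of $p$-coranks. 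For each $i$ choose a UHF C*-algebra $A_{\Lambda _{i}}$ with $\mathrm{K}_{0}\left( A_{\Lambda _{i}}\right) \cong \Lambda _{i}$. By Proposition \ref{Proposition:KK}, $\mathrm{K}^{1}\left( A_{\Lambda _{i}}\right) $ is definably isomorphic to $\mathrm{Ext}\left( \Lambda _{i},\mathbb{Z}\right) $ (and $\mathrm{K}^{0}\left( A_{\Lambda _{i}}\right) $ is trivial), so all the groups $\mathrm{K}^{1}\left( A_{\Lambda _{i}}\right) $ are isomorphic as discrete abelian groups. On the other hand, if $\mathrm{K}^{1}\left( A_{\Lambda _{i}}\right) $ and $\mathrm{K}^{1}\left( A_{\Lambda _{j}}\right) $ were definably isomorphic then, again by Corollary \ref{Corollary:KK2}, $\Lambda _{i}\cong \mathrm{K}_{0}\left( A_{\Lambda _{i}}\right) \cong \mathrm{K}_{0}\left( A_{\Lambda _{j}}\right) \cong \Lambda _{j}$, whence $i=j$; and the $A_{\Lambda _{i}}$ are pairwise non stably isomorphic precisely because their $\mathrm{K}_{0}$-groups $\Lambda _{i}$ are pairwise non-isomorphic. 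This yields the desired uncountable family.

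The routine verifications are the filtration claim, the direct-summand observation, and the bookkeeping with Proposition \ref{Proposition:KK}; the substantive input is entirely imported, namely Corollary \ref{Corollary:KK2} (and hence the rigidity theorem for definable $\mathrm{Ext}$ of finite-rank torsion-free abelian groups, \cite[Corollary 7.6]{bergfalk_ulam_2020}) and the classification of UHF C*-algebras. The one point that calls for a little care, rather than being a genuine obstacle, is checking that the hypotheses of the stronger (second) equivalence in Corollary \ref{Corollary:KK2} are met — that $\mathrm{K}_{0}$ of a UHF algebra has no nonzero finitely generated direct summand — so that definable $\mathrm{K}^{1}$ alone, and not the pair $\left( \mathrm{K}^{0},\mathrm{K}^{1}\right) $, already determines $\mathrm{K}_{0}$ and hence the stable isomorphism class.
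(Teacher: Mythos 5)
Your proposal is correct and follows essentially the same route as the paper: both parts reduce to Corollary \ref{Corollary:KK2} together with the Glimm/Elliott classification for the completeness claim, and to the uncountable family of rank-$1$ torsion-free groups with isomorphic $\mathrm{Ext}(\Lambda,\mathbb{Z})$ for the counterexample. Your explicit check that $\mathrm{K}_{0}$ of a UHF algebra has no nonzero finitely generated direct summand (so that the second equivalence in Corollary \ref{Corollary:KK2} applies) is a point the paper leaves implicit, and it is verified correctly.
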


\begin{proof}
It follows from the classification of AF C*-algebras by $\mathrm{K}$-theory
that the (unordered)\textrm{\ }$\mathrm{K}_{0}$-group is a complete
invariant for UHF C*-algebras up to \emph{stable }isomorphism; see \cite[%
Chapter IV]{davidson_algebras_1996}. From this and Corollary \ref%
{Corollary:KK2}, it follows that the \emph{definable }$\mathrm{K}^{1}$-group
is also a complete invariant for UHF C*-algebras up to stable isomorphism.

If, adopting the notations above, $\left( \Lambda _{i}\right) _{i\in \mathbb{%
R}}$ is an uncountable family of pairwise nonisomorphic rank $1$
torsion-free abelian groups not isomorphic to $\mathbb{Z}$ such that $%
\mathrm{Ext}\left( \Lambda _{i},\mathbb{Z}\right) $ and $\mathrm{Ext}\left(
\Lambda _{j},\mathbb{Z}\right) $ are isomorphic as discrete groups for $%
i,j\in \mathbb{R}$, then $\left( A_{\Lambda _{i}}\right) _{i\in \mathbb{R}}$
is an uncountable family of pairwise non stably isomorphic UHF C*-algebras
whose $\mathrm{K}^{1}$-groups are isomorphic as discrete groups but not
definably isomorphic.
\end{proof}

\section{Definable $\mathrm{K}$-homology of compact metrizable spaces\label%
{Section:spaces}}

In this section, we consider definable $\mathrm{K}$-homology of compact
metrizable spaces, which can be seen as a particular instance of definable $%
\mathrm{K}$-homology when restricted to unital, commutative, separable
C*-algebras. As another application of the definable Universal Coefficient
Theorem, we show that definable $\mathrm{K}$-homology of compact metrizable
spaces is a finer invariant than its purely algebraic version, even when
restricted to connected $1$-dimensional subspaces of $\mathbb{R}^{3}$.

\subsection{$\mathrm{K}$-homology and topological $\mathrm{K}$-theory of
spaces}

The notion (definable) of $\mathrm{K}$-homology for compact metrizable
spaces is obtained as a particular instance of the corresponding notion for
separable C*-algebras, by considering the contravariant functor $X\mapsto
C\left( X\right) $ assigning to a compact metrizable space the separable
unital C*-algebra $C\left( X\right) $ of continuous complex-valued functions
on $X$. Thus, if $X$ is a compact metrizable space, its \emph{definable} $%
\mathrm{K}$-homology groups are given by%
\begin{equation*}
\mathrm{K}_{p}\left( X\right) :=\mathrm{K}^{p}\left( C\left( X\right) \right)
\end{equation*}%
for $p\in \left\{ 0,1\right\} $; see \cite[Chapter 7]{higson_analytic_2000}.
The \emph{reduced }definable $\mathrm{K}$-homology groups are similarly
defined by%
\begin{equation*}
\mathrm{\tilde{K}}_{p}\left( X\right) :=\mathrm{\tilde{K}}^{p}\left( C\left(
X\right) \right) \text{.}
\end{equation*}%
In particular, one sets%
\begin{equation*}
\mathrm{Ext}\left( X\right) :=\mathrm{Ext}\left( C\left( X\right) \right)
\end{equation*}%
and%
\begin{equation*}
\mathrm{\tilde{K}}_{1}\left( X\right) =\mathrm{\tilde{K}}^{1}\left( C\left(
X\right) \right) \text{.}
\end{equation*}%
Notice that, by definition, 
\begin{equation*}
\mathrm{\tilde{K}}_{1}\left( X\right) =\mathrm{\tilde{K}}^{1}\left( C\left(
X\right) \right) =\mathrm{Ext}\left( C\left( X\right) \right) =\mathrm{Ext}%
\left( X\right) \text{.}
\end{equation*}%
Similarly, the \emph{topological }$\mathrm{K}$\emph{-theory} groups of $X$
can be defined in terms of the $\mathrm{K}$-theory of $C\left( X\right) $ by
setting%
\begin{equation*}
\mathrm{K}^{p}\left( X\right) :=\mathrm{K}_{p}\left( C\left( X\right)
\right) \text{;}
\end{equation*}%
see \cite[3.3.7]{rordam_introduction_2000}. Equivalently, the topological $%
\mathrm{K}$-groups can be defined in terms of vector bundles over $X$; see 
\cite[Chapter II]{karoubi_theory_2008} and \cite[Chapter 13]%
{wegge-olsen_theory_1993}. One can also define the reduced $\mathrm{K}$%
-group $\mathrm{\tilde{K}}^{p}\left( X\right) $ to be the quotient of $%
\mathrm{K}^{p}\left( X\right) $ by the subgroup obtained as the image of $%
\mathrm{K}^{p}\left( \left\{ \ast \right\} \right) $ under the homomorphism
induced by the map $X\rightarrow \left\{ \ast \right\} $. (Notice that $%
\mathrm{K}^{1}\left( \left\{ \ast \right\} \right) $ is trivial and $\mathrm{%
K}^{0}\left( \left\{ \ast \right\} \right) \cong \mathbb{Z}$.)

\subsection{The Universal Coefficient Theorem}

Recall that a \emph{compact polyhedron} is a compact metrizable space that
is obtained as the topological realization of a finite simplicial complex;
see \cite[Appendix 1]{mardesic_shape_1982}. (In the following, we assume
that all the polyhedra are compact.) The topological $\mathrm{K}$-groups of
a polyhedron are finitely-generated \cite[Proposition 7.14]%
{higson_analytic_2000}. Furthermore, if $P$ is a polyhedron, then it can be
proved by induction on the number of simplices of the corresponding
simplicial complex that the unital C*-algebra $C\left( P\right) $ satisfies
the UCT for $\mathbb{C}$ \cite{brown_operator_1975,brown_universal_1984}.

If $X$ is a compact metrizable space, then one can write $X$ as the
(inverse) limit of a tower $\left( X_{n}\right) _{n\in \omega }$ of compact
polyhedra \cite[Section I.6]{mardesic_shape_1982}. Such a tower, called a 
\emph{polyhedral resolution }of $X$ in \cite{mardesic_shape_1982}, can be
obtained by considering the topological realizations of the nerves of a
sequence of finite open covers of $X$ that is cofinal in the ordered set of
finite open covers of $X$. If $\left( X_{n}\right) $ is a polyhedral
resolution for $X$, then $\left( C\left( X_{n}\right) \right) _{n\in \omega
} $ is a filtration for $C\left( X\right) $ in the sense of Definition \ref%
{Definition:filtration}. Thus, one can consider the weak $\mathrm{K}$%
-homology group%
\begin{equation*}
\mathrm{K}_{p}^{\mathrm{w}}\left( X\right) :=\mathrm{K}_{\mathrm{w}%
}^{p}\left( C\left( X\right) \right) =\mathrm{lim}_{n}\mathrm{K}_{p}\left(
X_{n}\right)
\end{equation*}%
and the asymptotic $\mathrm{K}$-homology groups%
\begin{equation*}
\mathrm{K}_{p}^{\infty }\left( X\right) :=\mathrm{K}_{\infty }^{p}\left(
C\left( X\right) \right) \cong \mathrm{PExt}\left( \mathrm{K}^{1-p}\left(
X\right) ,\mathbb{Z}\right) \text{.}
\end{equation*}%
We can also consider their \emph{reduced} versions, by letting $\mathrm{%
\tilde{K}}_{p}^{\mathrm{w}}\left( X\right) $ be the kernel of the definable
group homomorphism $\mathrm{K}_{p}^{\mathrm{w}}\left( X\right) \rightarrow 
\mathrm{K}_{p}^{\mathrm{w}}\left( \left\{ \ast \right\} \right) $ induced by
the map $X\rightarrow \left\{ \ast \right\} $, and similarly for $\mathrm{%
\tilde{K}}_{p}^{\infty }\left( X\right) $. It is then easy to see that%
\begin{equation*}
\mathrm{\tilde{K}}_{p}^{\mathrm{w}}\left( X\right) =\mathrm{lim}_{n}\mathrm{%
\tilde{K}}_{p}\left( X_{n}\right)
\end{equation*}%
and%
\begin{equation*}
\mathrm{\tilde{K}}_{p}^{\infty }\left( X\right) \cong \mathrm{K}_{p}^{\infty
}\left( X\right) \text{.}
\end{equation*}%
By definition, we have definable short exact sequences%
\begin{equation*}
0\rightarrow \mathrm{K}_{p}^{\infty }\left( X\right) \rightarrow \mathrm{K}%
_{p}\left( X\right) \rightarrow \mathrm{K}_{p}^{\mathrm{w}}\left( X\right)
\rightarrow 0
\end{equation*}%
and%
\begin{equation*}
0\rightarrow \mathrm{\tilde{K}}_{p}^{\infty }\left( X\right) \rightarrow 
\mathrm{\tilde{K}}_{p}\left( X\right) \rightarrow \mathrm{\tilde{K}}_{p}^{%
\mathrm{w}}\left( X\right) \rightarrow 0\text{.}
\end{equation*}%
As particular instances of Proposition \ref{Proposition:KK}, Corollary \ref%
{Corollary:KK1}, and Corollary \ref{Corollary:KK2} (or, precisely, their
analogues for reduced $\mathrm{K}$-homology), one obtains the following.

\begin{proposition}
\label{Proposition:KK-abelian}Suppose that $X$ is a compact metrizable space
and $p\in \left\{ 0,1\right\} $. If $\mathrm{\tilde{K}}^{p}\left( X\right) $
is torsion-free, then $\mathrm{\tilde{K}}_{p}^{\infty }\left( X\right) $ is
naturally definably isomorphic to $\mathrm{Ext}(\mathrm{\tilde{K}}%
^{1-p}\left( X\right) ,\mathbb{Z)}$, and $\mathrm{\tilde{K}}_{p}^{\mathrm{w}%
}\left( X\right) $ is naturally isomorphic to $\mathrm{Hom}(\mathrm{\tilde{K}%
}^{p}\left( X\right) ,\mathbb{Z)}$.
\end{proposition}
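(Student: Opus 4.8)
The plan is to deduce Proposition \ref{Proposition:KK-abelian} from its C*-algebraic counterparts---Proposition \ref{Proposition:KK}, Corollary \ref{Corollary:KK1}, and Corollary \ref{Corollary:KK2}---applied to the unital commutative C*-algebra $A=C(X)$, together with the standard device of splitting off a basepoint to pass from the unreduced to the reduced theory. First I would recall that any compact metrizable space $X$ admits a polyhedral resolution, i.e.\ a tower $(X_{n})_{n\in\omega}$ of compact polyhedra with $X\cong\varprojlim_{n}X_{n}$; then $(C(X_{n}))_{n\in\omega}$ is a filtration of $C(X)$ in the sense of Definition \ref{Definition:filtration}. This uses three facts already recorded in the text: $C(X_{n})$ satisfies the UCT for $\mathbb{C}$ (by induction on the number of simplices of the corresponding simplicial complex), the topological $\mathrm{K}$-groups $\mathrm{K}^{p}(X_{n})=\mathrm{K}_{p}(C(X_{n}))$ are finitely generated, and $C(X)\cong\varinjlim_{n}C(X_{n})$ by continuity of $C(-)$ under inverse limits of compact spaces, so in particular $C(X)$ is $\mathrm{KK}$-equivalent to $\varinjlim_{n}C(X_{n})$. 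Hence Proposition \ref{Proposition:KK} and its corollaries apply verbatim to $A=C(X)$, with the dictionary $\mathrm{K}_{p}(C(X))=\mathrm{K}^{p}(X)$, $\mathrm{K}^{p}(C(X))=\mathrm{K}_{p}(X)$, $\mathrm{K}_{\mathrm{w}}^{p}(C(X))=\mathrm{K}_{p}^{\mathrm{w}}(X)$, and $\mathrm{K}_{\infty}^{p}(C(X))=\mathrm{K}_{p}^{\infty}(X)$.

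Next I would pass to the reduced theory. Fix a basepoint $x_{0}\in X$. The point inclusion $\{\ast\}\hookrightarrow X$ is a section of $X\to\{\ast\}$, which on C*-algebras corresponds to the split extension $0\to C_{0}(X\setminus\{x_{0}\})\to C(X)\to\mathbb{C}\to 0$, split by $\mathrm{ev}_{x_{0}}$ and by the inclusion of the constants. By split-exactness and naturality---which hold simultaneously for $\mathrm{K}^{\bullet}(-)$, $\mathrm{K}_{\bullet}^{\mathrm{w}}(-)$, $\mathrm{K}_{\bullet}^{\infty}(-)$ and, on the algebraic side, for $\mathrm{Hom}(-,\mathbb{Z})$, $\mathrm{Ext}(-,\mathbb{Z})$ and $\mathrm{PExt}(-,\mathbb{Z})$---every one of these functors $F$ decomposes naturally and definably as $F(X)\cong\widetilde{F}(X)\oplus F(\{\ast\})$, where by definition $\widetilde{F}(X)=\ker\big(F(X)\to F(\{\ast\})\big)$. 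Since $\mathrm{K}^{1}(\{\ast\})=0$ and $\mathrm{K}^{0}(\{\ast\})\cong\mathbb{Z}$ is torsion-free, $\mathrm{K}^{p}(X)$ is torsion-free if and only if $\widetilde{\mathrm{K}}^{p}(X)$ is; and since $\mathrm{Ext}(\mathbb{Z},\mathbb{Z})=\mathrm{PExt}(\mathbb{Z},\mathbb{Z})=0$ while $\mathrm{Hom}(\mathbb{Z},\mathbb{Z})=\mathbb{Z}$, the basepoint summand contributes nothing to the $\mathrm{Ext}$/$\mathrm{PExt}$ computations and exactly the splitting factor to the $\mathrm{Hom}$ computation.

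To conclude, assume $\widetilde{\mathrm{K}}^{p}(X)$ is torsion-free, hence $\mathrm{K}^{p}(X)$ is torsion-free. I would apply Proposition \ref{Proposition:KK} to $C(X)$ to get the natural definable isomorphism $\mathrm{K}_{p}^{\mathrm{w}}(X)\cong\mathrm{Hom}(\mathrm{K}^{p}(X),\mathbb{Z})$, then restrict both sides to the kernels of the maps induced by $X\to\{\ast\}$, obtaining $\widetilde{\mathrm{K}}_{p}^{\mathrm{w}}(X)\cong\mathrm{Hom}(\widetilde{\mathrm{K}}^{p}(X),\mathbb{Z})$. For the asymptotic part I would invoke the identification $\widetilde{\mathrm{K}}_{p}^{\infty}(X)\cong\mathrm{K}_{p}^{\infty}(X)\cong\mathrm{PExt}(\mathrm{K}^{1-p}(X),\mathbb{Z})\cong\mathrm{PExt}(\widetilde{\mathrm{K}}^{1-p}(X),\mathbb{Z})$ recorded above, and then use that $\mathrm{PExt}$ and $\mathrm{Ext}$ agree on torsion-free groups to rewrite it as $\mathrm{Ext}(\widetilde{\mathrm{K}}^{1-p}(X),\mathbb{Z})$. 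Naturality and definability of all these isomorphisms are inherited from the corresponding statements for separable nuclear C*-algebras; this is the same mechanism by which Corollary \ref{Corollary:KK1} and Corollary \ref{Corollary:KK2} were derived from Proposition \ref{Proposition:KK}.

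The main obstacle is bookkeeping rather than conceptual: one must verify that $(C(X_{n}))_{n}$ really is a filtration---which, as noted, reduces to the UCT for $C$ of a finite polyhedron (an induction using mapping cones and Mayer--Vietoris together with the known UCT for points and spheres) and to continuity of $C(-)$ under inverse limits---and that the passage $F(X)\rightsquigarrow\widetilde{F}(X)$ is compatible with every natural isomorphism used, which is automatic once one observes that a basepoint yields a genuine $\ast$-homomorphism $C(X)\to\mathbb{C}\to C(X)$ through which all the constructions are functorial. The one point warranting a word of care is the reduced $\mathrm{Ext}$-versus-$\mathrm{PExt}$ identification, where strictly speaking one also wants $\widetilde{\mathrm{K}}^{1-p}(X)$ torsion-free; this is harmless for the applications in Theorem \ref{Theorem:B}, where the complementary $\mathrm{K}$-theory group vanishes.
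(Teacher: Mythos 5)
Your proof follows the paper's route exactly: the paper obtains Proposition \ref{Proposition:KK-abelian} precisely as ``a particular instance'' of Proposition \ref{Proposition:KK} and Corollaries \ref{Corollary:KK1}--\ref{Corollary:KK2} applied to $A=C(X)$ with the filtration $(C(X_n))_{n}$ coming from a polyhedral resolution, the reduced versions being handled by the basepoint splitting you describe. Your closing caveat about needing $\mathrm{\tilde{K}}^{1-p}(X)$ torsion-free for the $\mathrm{PExt}=\mathrm{Ext}$ identification is well taken, but that index subtlety is inherited verbatim from the statement of Proposition \ref{Proposition:KK} itself rather than introduced by your argument.
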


\begin{corollary}
\label{Corollary:KK1-abelian}Suppose that $X$ is a compact metrizable space
and $p\in \left\{ 0,1\right\} $ is such that $\mathrm{\tilde{K}}^{p}\left(
X\right) $ is a finite-rank torsion-free abelian group and $\mathrm{\tilde{K}%
}^{1-p}\left( X\right) $ is trivial. We can write%
\begin{equation*}
\mathrm{\tilde{K}}^{p}\left( X\right) =\Lambda \oplus \Lambda ^{\prime }
\end{equation*}%
where $\Lambda ^{\prime }$ is finitely-generated and $\Lambda $ has no
nonzero finitely-generated direct summand. Then%
\begin{equation*}
\mathrm{\tilde{K}}_{p}^{\mathrm{w}}\left( X\right) \cong \mathrm{Hom}(%
\mathrm{\tilde{K}}^{p}\left( X\right) ,\mathbb{Z)}\cong \mathrm{Hom}\left(
\Lambda ^{\prime },\mathbb{Z}\right)
\end{equation*}%
and%
\begin{equation*}
\mathrm{\tilde{K}}_{1-p}^{\infty }\left( A\right) \cong \mathrm{Ext}(\mathrm{%
\tilde{K}}^{p}\left( X\right) ,\mathbb{Z)}\cong \mathrm{Ext}\left( \Lambda ,%
\mathbb{Z}\right)
\end{equation*}%
as definable groups.
\end{corollary}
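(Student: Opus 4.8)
The plan is to obtain Corollary~\ref{Corollary:KK1-abelian} as the reduced, topological translation of Proposition~\ref{Proposition:KK} and Corollary~\ref{Corollary:KK1}, applied to the commutative, separable, nuclear C*-algebra $C(X)$. As recalled just before the statement, a polyhedral resolution $(X_n)_{n\in\omega}$ of $X$ gives a filtration $(C(X_n))_{n\in\omega}$ of $C(X)$ in the sense of Definition~\ref{Definition:filtration}, so the whole apparatus of Section~\ref{Section:UCT} is available for $C(X)$. First I would translate the hypotheses: fixing a basepoint $\ast\in X$, the unital inclusion $\mathbb{C}\hookrightarrow C(X)$ together with evaluation at $\ast$ and the retraction $X\to\{\ast\}$ produce, by functoriality, natural splittings $\mathrm{K}_q(C(X))\cong\tilde{K}^q(X)\oplus\mathrm{K}_q(\mathbb{C})$, and likewise, at the level of definable groups, $\mathrm{K}_q^{\mathrm{w}}(X)\cong\tilde{K}_q^{\mathrm{w}}(X)\oplus\mathrm{K}_q^{\mathrm{w}}(\{\ast\})$ and $\mathrm{K}_q^{\infty}(X)\cong\tilde{K}_q^{\infty}(X)\oplus\mathrm{K}_q^{\infty}(\{\ast\})$, where $\mathrm{K}_q(\mathbb{C})=\mathrm{K}_q^{\mathrm{w}}(\{\ast\})$ equals $\mathbb{Z}$ for $q=0$ and $0$ for $q=1$, while $\mathrm{K}_q^{\infty}(\{\ast\})=0$. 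Under the standing hypotheses ($\tilde{K}^p(X)$ finite-rank torsion-free, $\tilde{K}^{1-p}(X)=0$) this makes $\mathrm{K}_p(C(X))$ finite-rank torsion-free and $\mathrm{K}_{1-p}(C(X))$ torsion-free, so Proposition~\ref{Proposition:KK} and Corollary~\ref{Corollary:KK1} apply to $C(X)$.

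Next I would run the argument of Proposition~\ref{Proposition:KK} in this setting. Combining the identity $\mathrm{K}_q^{\infty}(X)\cong\mathrm{PExt}(\mathrm{K}^{1-q}(X),\mathbb{Z})$ recorded in the previous subsection with the definable short exact sequences
\begin{equation*}
0\to\mathrm{K}_q^{\infty}(X)\to\mathrm{K}_q(X)\to\mathrm{K}_q^{\mathrm{w}}(X)\to 0,\qquad 0\to\mathrm{Ext}_{\mathrm{w}}(\mathrm{K}^q(X),\mathbb{Z})\to\mathrm{K}_q^{\mathrm{w}}(X)\to\mathrm{Hom}(\mathrm{K}^q(X),\mathbb{Z})\to 0,
\end{equation*}
with the vanishing $\mathrm{Ext}_{\mathrm{w}}(\Gamma,\mathbb{Z})=0$ and the identity $\mathrm{PExt}(\Gamma,\mathbb{Z})=\mathrm{Ext}(\Gamma,\mathbb{Z})$ valid for torsion-free $\Gamma$, and with additivity of $\mathrm{Hom}(-,\mathbb{Z})$, $\mathrm{Ext}(-,\mathbb{Z})$, $\mathrm{PExt}(-,\mathbb{Z})$ in the first variable, one passes to the reduced summands (the basepoint contributes only a free group, on which $\mathrm{PExt}(-,\mathbb{Z})$ vanishes and which splits off a $\mathrm{Hom}(\mathbb{Z},\mathbb{Z})$-summand of the weak group) and obtains natural definable isomorphisms $\tilde{K}_p^{\mathrm{w}}(X)\cong\mathrm{Hom}(\tilde{K}^p(X),\mathbb{Z})$ and $\tilde{K}_{1-p}^{\infty}(X)\cong\mathrm{PExt}(\tilde{K}^p(X),\mathbb{Z})=\mathrm{Ext}(\tilde{K}^p(X),\mathbb{Z})$. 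Alternatively, this step is literally the reduced analogue of Proposition~\ref{Proposition:KK} (and Proposition~\ref{Proposition:KK-abelian}) asserted before the statement.

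Finally I would dispose of the elementary homological algebra of the decomposition $\tilde{K}^p(X)=\Lambda\oplus\Lambda'$. Additivity gives $\mathrm{Hom}(\tilde{K}^p(X),\mathbb{Z})\cong\mathrm{Hom}(\Lambda,\mathbb{Z})\oplus\mathrm{Hom}(\Lambda',\mathbb{Z})$ and $\mathrm{Ext}(\tilde{K}^p(X),\mathbb{Z})\cong\mathrm{Ext}(\Lambda,\mathbb{Z})\oplus\mathrm{Ext}(\Lambda',\mathbb{Z})$, the latter a splitting of \emph{definable} groups by \cite[Section~7]{bergfalk_ulam_2020} (as already used in the proof of Corollary~\ref{Corollary:KK2}). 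Since $\Lambda$ is finite-rank torsion-free with no nonzero finitely-generated direct summand, any nonzero homomorphism $\Lambda\to\mathbb{Z}$ has image a nonzero, hence free, subgroup of $\mathbb{Z}$, so it would split off a copy of $\mathbb{Z}$ from $\Lambda$, a contradiction; thus $\mathrm{Hom}(\Lambda,\mathbb{Z})=0$ and $\mathrm{Hom}(\tilde{K}^p(X),\mathbb{Z})\cong\mathrm{Hom}(\Lambda',\mathbb{Z})$. Since $\Lambda'$ is finitely generated and torsion-free it is free, so $\mathrm{Ext}(\Lambda',\mathbb{Z})=0$ and $\mathrm{Ext}(\tilde{K}^p(X),\mathbb{Z})\cong\mathrm{Ext}(\Lambda,\mathbb{Z})$ as definable groups.

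I expect the only real difficulty to be organizational: keeping the index shift $q\mapsto 1-q$ straight, confirming that the reduced forms of the weak and asymptotic formulas hold verbatim (they do, precisely because the basepoint contributes only a free group, on which $\mathrm{Hom}(-,\mathbb{Z})$ is free and $\mathrm{Ext}(-,\mathbb{Z})=\mathrm{PExt}(-,\mathbb{Z})=0$), and verifying that each abstract group isomorphism used above is implemented by a Borel lift, so that it is an isomorphism of definable — not merely abstract — groups.
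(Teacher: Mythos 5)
Your proposal is correct and follows the paper's own route: the paper obtains this corollary as the reduced analogue of Proposition \ref{Proposition:KK} and Corollary \ref{Corollary:KK1} applied to $C(X)$, with the filtration supplied by a polyhedral resolution, which is exactly the reduction you carry out (making explicit the basepoint splitting and the elementary facts $\mathrm{Hom}(\Lambda,\mathbb{Z})=0$ and $\mathrm{Ext}(\Lambda',\mathbb{Z})=0$ that the paper's proof of Corollary \ref{Corollary:KK1} uses implicitly). The only slight imprecision is the passing claim that Corollary \ref{Corollary:KK1} ``applies to $C(X)$'' — for $p=1$ one has $\mathrm{K}_0(C(X))\cong\mathbb{Z}\neq 0$, so its hypotheses fail literally — but your subsequent treatment of the basepoint summand (a free group contributing nothing to $\mathrm{PExt}$ and only a $\mathrm{Hom}(\mathbb{Z},\mathbb{Z})$-summand to the weak group) handles this correctly.
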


\begin{corollary}
\label{Corollary:KK2-abelian}Suppose that $p\in \left\{ 0,1\right\} $, and $%
X,Y$ are compact metrizable spaces, such that $\mathrm{\tilde{K}}^{p}\left(
X\right) $ and $\mathrm{\tilde{K}}^{p}\left( Y\right) $ are finite-rank
torsion-free abelian groups, and $\mathrm{\tilde{K}}^{1-p}\left( X\right) $
and $\mathrm{\tilde{K}}^{1-p}\left( Y\right) $ are trivial. Then the
following assertions are equivalent:

\begin{enumerate}
\item $\mathrm{\tilde{K}}_{i}\left( X\right) $ and $\mathrm{\tilde{K}}%
_{i}\left( Y\right) $ are \emph{definably} isomorphic for $i\in \left\{
0,1\right\} $;

\item $\mathrm{\tilde{K}}^{p}\left( A\right) $ and $\mathrm{\tilde{K}}%
^{p}\left( B\right) $ are isomorphic.
\end{enumerate}

If furthermore $\mathrm{K}^{p}\left( X\right) $ and $\mathrm{K}^{p}\left(
Y\right) $ have no nonzero finitely-generated direct summand, then the
following assertions are equivalent:

\begin{enumerate}
\item $\mathrm{\tilde{K}}_{1-p}\left( X\right) $ and $\mathrm{\tilde{K}}%
_{1-p}\left( Y\right) $ are \emph{definably} isomorphic;

\item $\mathrm{\tilde{K}}^{p}\left( A\right) $ and $\mathrm{\tilde{K}}%
^{p}\left( B\right) $ are isomorphic.
\end{enumerate}
\end{corollary}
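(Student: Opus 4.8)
The plan is to derive this as the reduced analogue of Corollary~\ref{Corollary:KK2}, with the commutative C*-algebras $C(X)$ and $C(Y)$ in the roles of the abstract C*-algebras there; the only substantive input is the rigidity theorem for definable $\mathrm{Ext}$ of finite-rank torsion-free abelian groups, \cite[Corollary~7.6]{bergfalk_ulam_2020}. First I would record that for a compact metrizable space $X$ the algebra $C(X)$ is separable, unital and nuclear, and that any polyhedral resolution $(X_n)_{n\in\omega}$ of $X$ as in Section~\ref{Section:spaces} yields a filtration $(C(X_n))_{n\in\omega}$ of $C(X)$ in the sense of Definition~\ref{Definition:filtration}, since each $C(X_n)$ is nuclear, satisfies the UCT for $\mathbb{C}$, and has finitely generated $\mathrm{K}$-theory. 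Hence Corollary~\ref{Corollary:KK1-abelian} and Proposition~\ref{Proposition:KK-abelian} apply to both $X$ and $Y$.

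Fix $p$ as in the statement and write $\tilde{\mathrm{K}}^{p}(X)=\Lambda_X\oplus\Lambda'_X$ and $\tilde{\mathrm{K}}^{p}(Y)=\Lambda_Y\oplus\Lambda'_Y$ with $\Lambda'_X,\Lambda'_Y$ finitely generated (hence free) and $\Lambda_X,\Lambda_Y$ having no nonzero finitely generated direct summand, as in Corollary~\ref{Corollary:KK1-abelian}. Combining the reduced short exact sequences $0\to\tilde{\mathrm{K}}_i^{\infty}(X)\to\tilde{\mathrm{K}}_i(X)\to\tilde{\mathrm{K}}_i^{\mathrm{w}}(X)\to 0$ with Corollary~\ref{Corollary:KK1-abelian} and Proposition~\ref{Proposition:KK-abelian}---the latter giving that $\tilde{\mathrm{K}}_p^{\infty}(X)$ and $\tilde{\mathrm{K}}_{1-p}^{\mathrm{w}}(X)$ both vanish because $\tilde{\mathrm{K}}^{1-p}(X)$ is trivial---I would obtain natural definable isomorphisms $\tilde{\mathrm{K}}_p(X)\cong\mathrm{Hom}(\Lambda'_X,\mathbb{Z})$ and $\tilde{\mathrm{K}}_{1-p}(X)\cong\mathrm{Ext}(\Lambda_X,\mathbb{Z})$, and likewise for $Y$. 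For the first equivalence: if $\tilde{\mathrm{K}}^{p}(X)\cong\tilde{\mathrm{K}}^{p}(Y)$ then, by the essential uniqueness of the splitting off of a maximal finitely generated free summand (exactly as in the proof of Corollary~\ref{Corollary:KK2}), $\Lambda'_X\cong\Lambda'_Y$ and $\Lambda_X\cong\Lambda_Y$, and the displayed isomorphisms yield definable isomorphisms $\tilde{\mathrm{K}}_i(X)\cong\tilde{\mathrm{K}}_i(Y)$ for $i\in\{0,1\}$. Conversely, a definable isomorphism $\tilde{\mathrm{K}}_p(X)\cong\tilde{\mathrm{K}}_p(Y)$ forces $\mathrm{Hom}(\Lambda'_X,\mathbb{Z})\cong\mathrm{Hom}(\Lambda'_Y,\mathbb{Z})$, hence $\Lambda'_X\cong\Lambda'_Y$ by double duality of finitely generated free groups, while a definable isomorphism $\tilde{\mathrm{K}}_{1-p}(X)\cong\tilde{\mathrm{K}}_{1-p}(Y)$ forces $\mathrm{Ext}(\Lambda_X,\mathbb{Z})$ and $\mathrm{Ext}(\Lambda_Y,\mathbb{Z})$ to be definably isomorphic, hence $\Lambda_X\cong\Lambda_Y$ by \cite[Corollary~7.6]{bergfalk_ulam_2020}; together these give $\tilde{\mathrm{K}}^{p}(X)\cong\tilde{\mathrm{K}}^{p}(Y)$. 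The second equivalence is the same argument in the case $\Lambda'_X=\Lambda'_Y=0$, where $\tilde{\mathrm{K}}_p(X)$ and $\tilde{\mathrm{K}}_p(Y)$ are trivial and only the $\mathrm{Ext}$-term carries information, so that $\tilde{\mathrm{K}}_{1-p}(X)\cong\tilde{\mathrm{K}}_{1-p}(Y)$ definably if and only if $\tilde{\mathrm{K}}^{p}(X)\cong\tilde{\mathrm{K}}^{p}(Y)$.

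The genuine mathematical content here is entirely imported---from \cite[Corollary~7.6]{bergfalk_ulam_2020} (rigidity of definable $\mathrm{PExt}$) and from Corollary~\ref{Corollary:KK1-abelian}, which in turn rests on the definable Universal Coefficient Theorem and the definable Jensen theorem---so I do not expect a serious obstacle. The points that need care are the systematic use of \emph{reduced} rather than unreduced $\mathrm{K}$-homology and $\mathrm{K}$-theory groups throughout, the verification that $C(X)$ carries a filtration in the required sense, and the recollection, already invoked in the proof of Corollary~\ref{Corollary:KK2}, that for a finite-rank torsion-free abelian group the decomposition into a finitely generated free summand and a complement with no nonzero finitely generated direct summand is unique up to isomorphism.
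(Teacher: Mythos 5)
Your proposal is correct and follows the same route the paper takes: the paper obtains this corollary precisely as the reduced analogue of Corollary \ref{Corollary:KK2} applied to $C(X)$ and $C(Y)$, via the decomposition $\tilde{\mathrm{K}}^{p}=\Lambda\oplus\Lambda'$, the identifications $\tilde{\mathrm{K}}_{p}\cong\mathrm{Hom}(\Lambda',\mathbb{Z})$ and $\tilde{\mathrm{K}}_{1-p}\cong\mathrm{Ext}(\Lambda,\mathbb{Z})$ from Corollary \ref{Corollary:KK1-abelian}, and the rigidity theorem \cite[Corollary 7.6]{bergfalk_ulam_2020}. Your write-up simply makes explicit the steps (filtration of $C(X)$ by a polyhedral resolution, uniqueness of the splitting, double duality for the finitely generated free part) that the paper leaves implicit.
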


\subsection{Solenoids}

A ($1$-dimensional) solenoid is a compact metrizable space $X$ that is
homeomorphic to a $1$-dimensional compact connected abelian group other than 
$\mathbb{T}$. Thus, if $\Lambda $ is a rank $1$ torsion-free abelian group
(or, equivalently, a subgroup of $\mathbb{Q}$) other than $\mathbb{Z}$, then
its Pontryagin dual group $X_{\Lambda }:=\Lambda ^{\ast }$ is a solenoid,
and every solenoid arises in this fashion (up to homeomorphism). When $%
\Lambda =\mathbb{Z}[1/p]$ for some prime number $p$, then the corresponding
solenoid $X_{\Lambda }$ is called the $p$-adic solenoid. A solenoid $X$ can
be realized as a compact subset of $\mathbb{R}^{3}$ (but not of $\mathbb{R}%
^{2}$) \cite[Exercise VIII.E]{eilenberg_foundations_1952}; see also \cite%
{jiang_tame_2011,jiang_embedding_2008,bognar_embedding_1988,bognar_embedding_1988-1}%
. Solenoids were originally considered by Vietoris \cite{vietoris_uber_1927}
and van Danztig \cite{van_dantzig_theorie_1932}. They arise in the context
of dynamical systems, and they provided in the work of Smale the first
examples of attractors of dynamical systems that are \emph{strange} \cite%
{ruelle_what_2006,smale_differentiable_1967,williams_expanding_1974}.

If $\mathbb{T}$ is the circle, then one has that $\mathrm{\tilde{K}}%
^{1}\left( \mathbb{T}\right) =\mathbb{Z}$ and $\mathrm{\tilde{K}}^{0}\left( 
\mathbb{T}\right) =\left\{ 0\right\} $. Furthermore, if $\varphi :\mathbb{T}%
\rightarrow \mathbb{T}$ is a continuous map of degree $n\in \mathbb{Z}$,
then the induced map $\varphi ^{\ast }:\mathrm{\tilde{K}}^{1}\left( \mathbb{T%
}\right) \rightarrow \mathrm{\tilde{K}}^{1}\left( \mathbb{T}\right) $ is
given by $x\mapsto nx$. It follows easily from this that, if $\Lambda $ is a
subgroup of $\mathbb{Q}$, then $\mathrm{\tilde{K}}^{1}\left( X_{\Lambda
}\right) \cong \Lambda $ and $\mathrm{\tilde{K}}^{0}\left( X_{\Lambda
}\right) \cong \left\{ 0\right\} $. Thus, by Proposition \ref%
{Proposition:KK-abelian}, we have that $\mathrm{\tilde{K}}_{0}\left(
X_{\Lambda }\right) \cong \mathrm{Ext}\left( \Lambda ,\mathbb{Z}\right) $
and $\mathrm{\tilde{K}}_{1}\left( X_{\Lambda }\right) \cong \left\{
0\right\} $ as definable groups. (The reduced \textrm{K}-homology of $p$%
-adic solenoids is also computed in \cite[Theorem 6.8]{kaminker_theory_1977}%
.) As in the proof of Theorem \ref{Theorem:UHF}, we have the following.

\begin{theorem}
Definable $\mathrm{\tilde{K}}_{0}$ is a complete invariant for $1$%
-dimensional solenoids up to homeomorphism. In contrast, there exist
uncountably many pairwise non homeomorphic $1$-dimensional solenoids whose $%
\mathrm{\tilde{K}}_{0}$-groups are isomorphic as discrete groups (but not
definably isomorphic).
\end{theorem}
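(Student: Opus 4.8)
The plan is to run the argument of the proof of Theorem~\ref{Theorem:UHF}, with the Elliott--Glimm classification of UHF algebras replaced by the Pontryagin classification of solenoids and with Corollary~\ref{Corollary:KK2-abelian} used in place of Corollary~\ref{Corollary:KK2}. First I would recall that every $1$-dimensional solenoid is homeomorphic to $X_{\Lambda}=\Lambda^{\ast}$ for some rank $1$ torsion-free abelian group $\Lambda\not\cong\mathbb{Z}$, and that, as established in the discussion preceding the statement, a polyhedral resolution of $X_{\Lambda}$ by circles is a filtration of $C(X_{\Lambda})$ in the sense of Definition~\ref{Definition:filtration}, so that Proposition~\ref{Proposition:KK-abelian} yields natural definable isomorphisms $\tilde{K}_{0}(X_{\Lambda})\cong\mathrm{Ext}(\Lambda,\mathbb{Z})$ and $\tilde{K}_{1}(X_{\Lambda})\cong 0$. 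I would then record the homeomorphism classification of solenoids in the form we need: if $\Lambda\cong\Lambda'$ then dualizing gives a topological group isomorphism $X_{\Lambda}\cong X_{\Lambda'}$, in particular a homeomorphism; conversely a homeomorphism $X_{\Lambda}\to X_{\Lambda'}$ induces an isomorphism of topological $K$-theory $\tilde{K}^{1}(X_{\Lambda'})\to\tilde{K}^{1}(X_{\Lambda})$, which under the identification $\tilde{K}^{1}(X_{\Lambda})\cong\Lambda$ recalled in the excerpt gives $\Lambda\cong\Lambda'$. Thus $X_{\Lambda}$ and $X_{\Lambda'}$ are homeomorphic if and only if $\Lambda\cong\Lambda'$.

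For the completeness half I would then apply the rigidity theorem for definable $\mathrm{Ext}$ of torsion-free finite-rank abelian groups without nonzero finitely-generated direct summands from \cite[Corollary~7.6]{bergfalk_ulam_2020}. Its hypotheses are met because every rank $1$ torsion-free abelian group $\not\cong\mathbb{Z}$ is indecomposable (a subgroup of $\mathbb{Q}$ of rank $1$ cannot split off a nonzero summand) and is not finitely generated, so it has no nonzero finitely-generated direct summand. Hence $\mathrm{Ext}(\Lambda,\mathbb{Z})$ and $\mathrm{Ext}(\Lambda',\mathbb{Z})$ are definably isomorphic precisely when $\Lambda\cong\Lambda'$. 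Combining this with the two conclusions above: for solenoids $X\cong X_{\Lambda}$ and $Y\cong X_{\Lambda'}$, the definable groups $\tilde{K}_{0}(X)$ and $\tilde{K}_{0}(Y)$ are isomorphic in the category of definable groups if and only if $\Lambda\cong\Lambda'$, if and only if $X$ and $Y$ are homeomorphic; the implication from homeomorphism is immediate from functoriality of definable $\tilde{K}_{0}$. One can equally route the reverse implication through Corollary~\ref{Corollary:KK2-abelian} with $p=1$, since $\tilde{K}^{1}(X_{\Lambda})$ is a finite-rank torsion-free group with trivial complementary group $\tilde{K}^{0}(X_{\Lambda})$ and no nonzero finitely-generated direct summand.

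For the contrast I would reuse the group-theoretic input behind Theorem~\ref{Theorem:UHF}: by \cite[Theorem~A.7]{bergfalk_ulam_2020}, for rank $1$ torsion-free $\Lambda,\Lambda'$ the groups $\mathrm{Ext}(\Lambda,\mathbb{Z})$ and $\mathrm{Ext}(\Lambda',\mathbb{Z})$ are isomorphic as discrete groups if and only if $\mathrm{rank}^{p}\Lambda=\mathrm{rank}^{p}\Lambda'$ for every prime $p$. Choosing an uncountable family $(\Lambda_{i})_{i\in\mathbb{R}}$ of pairwise non-isomorphic rank $1$ torsion-free groups, none isomorphic to $\mathbb{Z}$, all with $\mathrm{rank}^{p}\Lambda_{i}=1$ for every prime $p$ (for instance of type $(k_{p}^{i})_{p}$ with $k_{p}^{i}\in\{0,1\}$, infinitely many values $0$ and infinitely many values $1$, pairwise inequivalent under finite modification), the groups $\mathrm{Ext}(\Lambda_{i},\mathbb{Z})\cong\tilde{K}_{0}(X_{\Lambda_{i}})$ are all isomorphic as discrete groups, while the solenoids $X_{\Lambda_{i}}$ are pairwise non-homeomorphic and, by the rigidity theorem, their definable $\tilde{K}_{0}$-groups are pairwise non-definably-isomorphic; and $\tilde{K}_{1}(X_{\Lambda_{i}})\cong 0$. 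The only point beyond bookkeeping is the homeomorphism classification of solenoids, which I handle as above via functoriality of topological $K$-theory rather than any direct topological argument, together with the verification that the hypotheses of \cite[Corollary~7.6]{bergfalk_ulam_2020} apply — and this last reduces to the elementary indecomposability remark.
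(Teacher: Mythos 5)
Your proposal is correct and follows essentially the same route as the paper: identify $\mathrm{\tilde{K}}_{0}(X_{\Lambda})$ with $\mathrm{Ext}(\Lambda,\mathbb{Z})$ via Proposition \ref{Proposition:KK-abelian}, get completeness from Corollary \ref{Corollary:KK2-abelian} (equivalently, the rigidity theorem \cite[Corollary 7.6]{bergfalk_ulam_2020}), and get the contrast from the same uncountable family of rank $1$ groups with equal $p$-coranks used for Theorem \ref{Theorem:UHF}. You merely make explicit two points the paper leaves implicit — the homeomorphism classification of solenoids via functoriality of $\mathrm{\tilde{K}}^{1}$ and Pontryagin duality, and the indecomposability check that puts rank $1$ groups within the scope of the rigidity theorem — both of which are handled correctly.
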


\begin{proof}
If $\Lambda $ is a $1$-dimensional solenoid, then $\mathrm{\tilde{K}}%
^{1}\left( X_{\Lambda }\right) \cong \Lambda $ and $\mathrm{\tilde{K}}%
^{0}\left( X_{\Lambda }\right) \cong \left\{ 0\right\} $.\ It follows from
this and Corollary \ref{Corollary:KK2-abelian} that definable $\mathrm{%
\tilde{K}}_{0}$ is a complete invariant for $1$-dimensional solenoids up to
homeomorphism.

If $\left( \Lambda _{i}\right) _{i\in \mathbb{R}}$ is an uncountable family
of pairwise nonisomorphic rank $1$ torsion-free abelian groups such that $%
\mathrm{Ext}\left( \Lambda _{i},\mathbb{Z}\right)$ and $\mathrm{Ext}\left(
\Lambda _{j},\mathbb{Z}\right) $ are isomorphic as discrete groups for $i,j\in \mathbb{R}$ as in Section \ref%
{Subsection:UHF}, then $\left( X_{\Lambda _{i}}\right) _{i\in \mathbb{R}}$
is an uncountable family of pairwise non homeomorphic solenoids whose $%
\mathrm{\tilde{K}}_{0}$-groups are isomorphic as discrete groups but not
definably isomorphic.
\end{proof}


\bibliographystyle{amsalpha}
\bibliography{cohomology-BE2}
\printindex

\end{document}